\numberwithin{equation}{section}
\theoremstyle{plain}
\newtheorem{theorem}{Theorem}[section]
\newtheorem{definition}[theorem]{Definition}
\newtheorem{proposition}[theorem]{Proposition}
\newtheorem{lemma}[theorem]{Lemma}
\newtheorem{corollary}[theorem]{Corollary}
\newtheorem{question}[theorem]{Question}
\theoremstyle{definition}
\newtheorem{remark}[theorem]{Remark}
\newtheorem{example}[theorem]{Example}
\renewcommand{\b}{\bullet}
\newcommand{\beast}{\begin{eqnarray*}}
\newcommand{\east}{\end{eqnarray*}}
\newcommand{\N}{{\Bbb N}}
\newcommand{\Z}{{\Bbb Z}}
\renewcommand{\P}{{\Bbb P}}
\newcommand{\Q}{{\Bbb Q}}
\newcommand{\R}{{\Bbb R}}
\newcommand{\C}{{\Bbb C}}
\newcommand{\G}{{\Bbb G}}
\newcommand{\fG}{\wh{{\Bbb G}}}
\newcommand{\F}{{\Bbb F}}
\newcommand{\Af}{{\Bbb A}}
\newcommand{\fAf}{\wh{{\Bbb A}}}
\newcommand{\ba}{{\bold a}}
\newcommand{\bb}{{\bold b}}
\newcommand{\Spec}{{\mathrm{Spec}}\,}
\newcommand{\Spf}{{\mathrm{Spf}}\,}
\newcommand{\Spm}{{\mathrm{Spm}}\,}
\newcommand{\lra}{\longrightarrow}
\newcommand{\ra}{\rightarrow}
\newcommand{\hra}{\hookrightarrow}
\newcommand{\hla}{\hookleftarrow}
\newcommand{\lla}{\longleftarrow}
\newcommand{\ti}[1]{\widetilde{#1}}
\newcommand{\wt}[1]{\widetilde{#1}}
\newcommand{\ol}[1]{\overline{#1}}
\newcommand{\os}{\overset}
\newcommand{\et}{{\mathrm{et}}}
\newcommand{\liset}{{\mathrm{lis}\text{-}\mathrm{et}}}
\newcommand{\conv}{{\mathrm{conv}}}
\newcommand{\Hom}{{\mathrm{Hom}}}
\newcommand{\Ker}{{\mathrm{Ker}}}
\newcommand{\im}{{\mathrm{Im}}}
\newcommand{\Coker}{{\mathrm{Coker}}}
\newcommand{\Aut}{{\mathrm{Aut}}}
\newcommand{\sat}{{\mathrm{sat}}}
\newcommand{\Sch}{{\mathrm{Sch}}}
\newcommand{\LSch}{{\mathrm{LSch}}}
\newcommand{\Sta}{{\mathrm{Sta}}}
\newcommand{\LSta}{{\mathrm{LSta}}}
\newcommand{\id}{{\mathrm{id}}}
\newcommand{\Rep}{{\mathrm{Rep}}}
\newcommand{\End}{{\mathrm{End}}}
\newcommand{\res}{{\mathrm{res}}}
\newcommand{\Coh}{{\mathrm{Coh}}}
\newcommand{\red}{{\mathrm{red}}}
\newcommand{\dlog}{{\mathrm{dlog}}\,}
\newcommand{\an}{{\mathrm{an}}}
\newcommand{\Frac}{{\mathrm{Frac}}\,}
\newcommand{\cC}{{\cal C}}
\newcommand{\cE}{{\cal E}}
\newcommand{\cF}{{\cal F}}
\newcommand{\cG}{{\cal G}}
\newcommand{\cO}{{\cal O}}
\newcommand{\cS}{{\cal S}}
\newcommand{\cT}{{\cal T}}
\newcommand{\cU}{{\cal U}}
\newcommand{\cV}{{\cal V}}
\newcommand{\cX}{{\cal X}}
\newcommand{\cY}{{\cal Y}}
\newcommand{\cZ}{{\cal Z}}
\newcommand{\fm}{\mathfrak{m}}
\newcommand{\fD}{{\frak D}}
\newcommand{\fU}{{\frak U}}
\newcommand{\fX}{{\mathfrak{X}}}
\newcommand{\fY}{{\mathfrak{Y}}}
\newcommand{\NM}{{\mathrm{NM}}}
\newcommand{\LNM}{{\mathrm{LNM}}}
\newcommand{\ULNM}{{\mathrm{ULNM}}}
\newcommand{\SLNM}{{\mathrm{SLNM}}}
\newcommand{\NID}{{\mathrm{(NID)}}}
\newcommand{\NLD}{{\mathrm{(NLD)}}}
\newcommand{\NRD}{{\mathrm{(NRD)}}}
\newcommand{\SNLD}{{\mathrm{(SNLD)}}}
\renewcommand{\c}{\circ}
\newcommand{\Sm}{{\mathrm{Sm}}}
\newcommand{\gss}{{\mathrm{gss}}}
\newcommand{\sing}{{\mathrm{sing}}}
\newcommand{\rk}{{\mathrm{rk}}\,}
\newcommand{\0}{{\bold 0}}
\newcommand{\vv}{{\bold v}}
\newcommand{\wh}{\widehat}
\renewcommand{\wt}{\widetilde}
\renewcommand{\res}{{\mathrm{res}}}
\renewcommand{\d}{\dagger}
\newcommand{\lam}{\lambda}
\newcommand{\pa}{\partial}
\renewcommand{\0}{{\bold 0}}
\renewcommand{\1}{{\bold 1}}
\renewcommand{\End}{{\mathrm{End}}}
\newcommand{\Isoc}{{\mathrm{Isoc}}}
\newcommand{\Isocd}{\Isoc^{\dagger}}
\newcommand{\FIsoc}{F\text{-}\Isoc}
\newcommand{\FIsocd}{F\text{-}\Isocd}
\newcommand{\fin}{{\mathrm{fin}}}
\newcommand{\codim}{{\mathrm{codim}}}
\newcommand{\Repfin}{{\mathrm{Rep}}^{\mathrm{fin}}}
\newcommand{\sm}{\mathrm{sm}}
\newcommand{\Isocl}{\Isoc^{\log}}
\newcommand{\FIsocl}{\FIsoc^{\log}}
\newcommand{\Par}{{\mathrm{Par}}}
\newcommand{\PIsoc}{\Par\text{-}\Isocl}
\newcommand{\PFIsoc}{\Par\text{-}F\text{-}\Isocl}
\newcommand{\Vect}{{\mathrm{Vect}}}
\newcommand{\PVect}{\Par\text{-}{\mathrm{Vect}}}
\newcommand{\FVect}{F\text{-}{\mathrm{Vect}}}
\newcommand{\PFVect}{\Par\text{-}F\text{-}{\mathrm{Vect}}}
\newcommand{\FLatt}{F\text{-}{\mathrm{Latt}}}
\newcommand{\PFLatt}{\Par\text{-}F\text{-}{\mathrm{Latt}}}
\newcommand{\bX}{{\bold X}}
\newcommand{\bZ}{{\bold Z}}
\newcommand{\Sigmass}{\Sigma\text{-}{\mathrm{ss}}}
\newcommand{\olSigmass}{\ol{\Sigma}\text{-}{\mathrm{ss}}}
\renewcommand{\ss}{\text{-}{\mathrm{ss}}}
\begin{document}
\title{Parabolic log convergent isocrystals}
\author{Atsushi Shiho
\footnote{
Graduate School of Mathematical Sciences, 
University of Tokyo, 3-8-1 Komaba, Meguro-ku, Tokyo 153-8914, JAPAN. 
E-mail address: shiho@ms.u-tokyo.ac.jp \, 
Mathematics Subject Classification (2000): 12H25, 14F35.}}
\date{}
\maketitle

\begin{abstract}
In this paper, we introduce the notion of parabolic 
log convergent isocrystals on smooth varieties endowed with 
a simple normal crossing divisor, 
which is a kind of $p$-adic analogue of the notion of parabolic 
bundles on smooth varieties defined by Seshadri, Maruyama-Yokogawa, 
Iyer-Simpson, Borne. We prove that the equivalence between 
the category of $p$-adic representations of the fundamental group 
and the category of unit-root convergent $F$-isocrystals (proven by 
Crew) induces the equivalence between the category of 
 $p$-adic representations 
of the tame fundamental group and the category of 
semisimply adjusted parabolic unit-root log convergent $F$-isocrystals. 
We also prove equivalences which relate 
categories of log convergent isocrystals on certain fine log algebraic 
stacks with some conditions and 
categories of adjusted parabolic log convergent isocrystals with 
some conditions. 
We also give an interpretation of unit-rootness in terms of 
the generic semistability with slope $0$. 
Our result can be regarded as a $p$-adic analogue of some results of 
Seshadri, Mehta-Seshadri, Iyer-Simpson and Borne. 
\end{abstract}

\tableofcontents

\section*{Introduction}
For a proper smooth curve $X$ over $\C$ of genus $\geq 2$, 
Narasimhan-Seshadri \cite{ns} proved the equivalence 
\begin{equation}\label{eq0}
\left( 
\begin{aligned}
& \text{irreducible unitary} \\ 
& \text{representation of $\pi_1(X^{\an})$} 
\end{aligned}
\right) 
\os{=}{\lra} 
\left( 
\begin{aligned}
& \text{vector bundle on $X$} \\ 
& \text{stable of degree $0$} 
\end{aligned}
\right). 
\end{equation}
(Here, for a scheme $S$ of finite type over $\C$, 
$S^{\an}$ denotes the analytic space associated to $S$.) 
In the case where a given smooth curve $X$ over $\C$ is open with 
compactification $\ol{X}$, there are two approaches to prove an 
analogue of the above result: 
In \cite{seshadri}, Seshadri took a `stacky' approach and proved 
the equivalence 
\begin{equation}\label{eq1}
\left( 
\begin{aligned}
& \text{irreducible unitary} \\ 
& \text{representation of $\pi_1(X^{\an})$} 
\end{aligned}
\right) 
\os{=}{\lra} 
\varinjlim_{Y \ra X \in \cG_X}
\left( 
\begin{aligned}
& \text{vector bundle on $[\ol{Y}/G_Y]$} \\ 
& \text{stable of degree $0$} 
\end{aligned}
\right), 
\end{equation}
where $\cG_X$ denotes the category of finite etale Galois covering 
of $X$, $\ol{Y}$ denotes the smooth compactification of $Y$, 
$G_Y := \Aut (Y/X)$ and  $[\ol{Y}/G_Y]$ denotes the quotient 
stack. (Note that a vector bundle on  $[\ol{Y}/G_Y]$ is nothing but 
a $G_Y$-equivariant vector bundle on $\ol{Y}$.) On the other hand, 
in \cite{ms}, Mehta-Seshadri took a `parabolic' approach: 
They defined the notion of parabolic vector bundles on $(\ol{X},Z)$ 
(where $Z := \ol{X} \setminus X$) and proved the following equivalence: 
\begin{equation}\label{eq2}
\left( 
\begin{aligned}
& \text{irreducible unitary} \\ 
& \text{representation of $\pi_1(X^{\an})$} 
\end{aligned}
\right) 
\os{=}{\lra} 
\left( 
\begin{aligned}
& \text{parabolic vector} \\ 
& \text{bundle on $(\ol{X},Z)$} \\ 
& \text{parabolic stable} \\
& \text{of parabolic degree $0$} 
\end{aligned}
\right). 
\end{equation}
Note that the notion of parabolic vector bundles and the moduli of them 
are studied also by Maruyama-Yokogawa \cite{my}, 
including the higher-dimensional
 case. \par 
Let us recall that there exists another `stacky' interpretation: 
For an open immersion $X \hra \ol{X}$ of smooth varieties over $\C$ 
such that $Z := \ol{X} \setminus X$ is a simple normal crossing divisor, 
Iyer-Simpson \cite{is} and Borne \cite{borne1} \cite{borne2} 
introduced the notion of 
`stack of roots' $(\ol{X},Z)^{1/n}$ ($n \in \N$) and 
established the equivalence 
\begin{equation}\label{eq3}
\left( 
\begin{aligned}
& \text{parabolic vector} \\ 
& \text{bundle on $(\ol{X},Z)$} 
\end{aligned}
\right) 
\os{=}{\lra} 
\varinjlim_{n} 
\left( 
\begin{aligned}
& \text{vector bundle} \\ 
& \text{on $(\ol{X},Z)^{1/n}$} 
\end{aligned}
\right)
\end{equation}
such that the parabolic degree on the left hand side coincides with 
the degree on the right hand side. (There is also a related work by 
Biswas \cite{biswas}.) 
So, when $X$ is a curve, the equivalences \eqref{eq2} and 
\eqref{eq3} imply the equivalence 
\begin{equation}\label{eq4}
\left( 
\begin{aligned}
& \text{irreducible unitary} \\ 
& \text{representation of $\pi_1(X^{\an})$} 
\end{aligned}
\right) 
\os{=}{\lra} 
\varinjlim_n 
\left( 
\begin{aligned}
& \text{vector bundle on $(\ol{X},Z)^{1/n}$} \\ 
& \text{stable of degree $0$} 
\end{aligned}
\right). 
\end{equation}

A generalization of \eqref{eq0} to higher-dimensional case and 
non-unitary case (where Higgs bundle appears) 
is established by many people including Donaldson \cite{donaldson}, 
Mehta-Ramanathan \cite{mr}, Uhlenbeck-Yau \cite{uy}, 
Corlette \cite{corlette} and Simpson \cite{simpson}. As for 
open case, a generalization of \eqref{eq2} to higher-dimensional 
non-unitary case (where parabolic Higgs bundle appears) 
is given also by many people including Simpson \cite{simpsonopen}, 
Jost-Zuo \cite{jz} and Mochizuki 
\cite{mochizuki1} \cite{mochizuki1.5} \cite{mochizuki2}. \par 
Now let us turn to the $p$-adic situation. Let $p$ be a prime, let 
$q$ be a fixed power of $p$ and let 
$K$ be a complete discrete 
valuation field of characteristic zero with ring of integers $O_K$ and 
perfect residue field $k$ of characteristic $p>0$ containing $\F_q$. 
Assume moreover that we have an endomorphism $\sigma: K \lra K$ 
which respects $O_K$ and which lifts the $q$-th power map on $k$. 
Let $K^{\sigma}$ be the fixed field of $\sigma$. Then, for a connected 
smooth $k$-variety $X$ (not necessarily proper), Crew \cite{crew} 
proved the equivalence 
\begin{equation}\label{eq5}
G: \Rep_{K^{\sigma}}(\pi_1(X)) \os{=}{\lra} \FIsoc(X)^{\circ} 
\end{equation}
between the category $\Rep_{K^{\sigma}}(\pi_1(X))$ of finite dimensional 
continuous representation of the algebraic fundamental group 
$\pi_1(X)$ of $X$ over $K^{\sigma}$ and 
the category $\FIsoc(X)^{\circ}$ of unit-root convergent $F$-isocrystals 
on $X$ over $K$. When $X$ is proper smooth, we regard 
\eqref{eq5} as an analogue of \eqref{eq0}. (When $X$ is not proper, 
the categories in \eqref{eq5} are considered 
to be too big.) \par 
Now let 
$X \hra \ol{X}$ be an open immersion of smooth $k$-varieties such that 
$Z := \ol{X} \setminus X$ is a simple normal crossing divisor. 
Weng \cite{weng} 
raised a question on the construction of the $p$-adic analogues of 
\eqref{eq1} and \eqref{eq2} for $(X,\ol{X})$ (at least for curves), starting 
from the equivalence \eqref{eq5}. In this paper, we will prove 
several equivalences which can be regarded as $p$-adic analogues 
of \eqref{eq1}, \eqref{eq2} and \eqref{eq4} (so we think it answers 
the question of Weng in some sense). \par 
Let us explain our main results 
more precisely. Let $X,\ol{X}$ be as above and let 
$\ol{X} \setminus X =: Z = \bigcup_{i=1}^r Z_i$ be the decomposition 
of $Z$ into irreducible components. For $1 \leq i \leq r$, 
let $v_i$ be the discrete valuation of $k(X)$ corresponding to the 
generic point of $Z_i$, let $k(X)_{v_i}$ be the completion of $k(X)$ with 
respect to $v_i$ and let $I_{v_i}$ be the inertia group of 
$k(X)_{v_i}$. (Then we have homomorphisms $I_{v_i} \lra \pi_1(X)$ which are 
well-defined up to conjugate.) Let us define 
$\Repfin_{K^{\sigma}}(\pi_1(X))$ by 
$$ \Repfin_{K^{\sigma}}(\pi_1(X)) := 
\{\rho \in \Rep_{K^{\sigma}}(\pi_1(X)) \,|\, 
\forall i, \rho|_{I_{v_i}} \text{ has finite image}\}. $$
Then first we prove the equivalence 
\begin{equation}\label{seq1}
\Repfin_{K^{\sigma}}(\pi_1(X)) \os{=}{\lra} 
\varinjlim_{Y\ra X \in \cG_X} \FIsoc ([\ol{Y}^{\sm}/G_Y])^{\circ}, 
\end{equation}
where $\cG_X$ is the category of finite etale Galois covering 
of $X$, $\ol{Y}$ is the normalization of $\ol{X}$ in $k(Y)$, 
$\ol{Y}^{\sm}$ is the smooth locus of $\ol{Y}$, 
$G_Y := \Aut (Y/X)$, $[\ol{Y}^{\sm}/G_Y]$ is the quotient 
stack and the right hand side is the limit of the category of unit-root 
convergent $F$-isocrystals on stacks $[\ol{Y}^{\sm}/G_Y]$
 (which we will define in Section 2). This is a $p$-adic analogue of 
\eqref{eq1}. The above equivalence induces the 
equivalence 
\begin{equation}\label{seq2}
\Rep_{K^{\sigma}}(\pi_1^t(X)) \os{=}{\lra} 
\varinjlim_{Y\ra X \in \cG_X^t} \FIsoc ([\ol{Y}^{\sm}/G_Y])^{\circ}, 
\end{equation}
where $\pi_1^t(X)$ is the tame fundamental group of $X$ (tamely ramified at 
the valuations $v_i \,(1 \leq i \leq r)$), 
$\cG_X^t$ is the category of finite etale Galois covering 
of $X$ tamely ramified at $v_i \,(1 \leq i \leq r)$ and the other notations 
are the same as before. Next, we prove the equivalence 
\begin{equation}\label{seq3}
\Rep_{K^{\sigma}}(\pi_1^t(X)) \os{=}{\lra} 
\varinjlim_{(n,p)=1} \FIsoc ((\ol{X},Z)^{1/n})^{\circ}, 
\end{equation}
where the right hand side is the limit of the category of unit-root 
convergent $F$-isocrystals on stacks of roots $(\ol{X},Z)^{1/n}$. 
This is a $p$-adic analogue of \eqref{eq4}. Also, we introduce 
the category $\PFIsoc(\ol{X},Z)^{\circ}_{\0\ss}$ 
of semisimply adjusted parabolic unit-root log convergent $F$-isocrystals 
on $(\ol{X},Z)$ and prove the equivalence 
\begin{equation}\label{seq4}
\Rep_{K^{\sigma}}(\pi_1^t(X)) \os{=}{\lra} 
\PFIsoc (\ol{X},Z)^{\circ}_{\0\ss}, 
\end{equation}
which is a $p$-adic analogue of \eqref{eq2}. The key ingredients of 
the proof are results of Tsuzuki in \cite{tsuzuki} and results of 
the author in \cite{sigma} and \cite{purity}. We also discuss 
the relations among the variants (without Frobenius structure, 
with log structure and with exponent condition) of the categories 
on the right hand side of \eqref{seq2}, \eqref{seq3} and \eqref{seq4}. \par 
In the case where $\ol{X}$ is liftable to a smooth formal scheme 
$\ol{\cX}_{\circ}$ over $\Spf W(k)$ 
together with a suitable lift of Frobenius endomorphism 
$F_{\circ}:\ol{\cX}_{\circ} \lra \ol{\cX}_{\circ}$, 
the equivalence \eqref{eq5} of Crew 
factors through an equivalence of Katz (\cite{katzcrew}, see also \cite{crew})
$$ G: \Rep_{O_K^{\sigma}}(\pi_1(\ol{X})) \os{=}{\lra} 
\FLatt(\ol{\cX})^{\c} $$ 
between the category $\Rep_{O_K^{\sigma}}(\pi_1(\ol{X}))$ 
of 
continuous representations of $\pi_1(\ol{X})$ to free 
$O_K^{\sigma}$-modules of finite 
rank (here $O_K^{\sigma} := K^{\sigma} \cap O_K$) 
and 
the category $\FLatt(\ol{\cX})^{\c}$ of unit-root $F$-lattices 
on $\ol{\cX} := \ol{\cX}_{\circ} \otimes_{W(k)} O_K$. 
We also prove in the paper 
that, when $(\ol{X},Z)$ lifts to 
a smooth formal scheme 
$(\ol{\cX}_{\circ},\cZ_{\circ})$ over $\Spf W(k)$ 
endowed with a relative simple normal crossing divisor 
together with a lift of Frobenius endomorphism 
$F_{\circ}: (\ol{\cX}_{\circ},\cZ_{\circ}) \lra 
(\ol{\cX}_{\circ},\cZ_{\circ})$ (endomorphism as log formal schemes), 
there exist equivalences of the form 
\begin{equation}\label{seq2-}
\Rep_{O_K^{\sigma}}(\pi_1^t(X)) \os{=}{\lra} 
\varinjlim_{Y\ra X \in \cG_X^t} \FLatt ([\ol{\cY}^{\sm}/G_Y])^{\c}, 
\end{equation}
\begin{equation}\label{seq3-}
\Rep_{O_K^{\sigma}}(\pi_1^t(X)) \os{=}{\lra} 
\varinjlim_{(n,p)=1} \FLatt ((\ol{\cX},\cZ)^{1/n})^{\c} 
\end{equation}
(where $(\ol{\cX},\cZ) := (\ol{\cX}_{\circ}, \cZ_{\circ}) \otimes_{W(k)} O_K$, 
$\ol{\cY}^{\sm}$ is a certain lift of the smooth locus $\ol{Y}^{\sm}$ 
of the normalization $\ol{Y}$ of $\ol{X}$ in $k(Y)$, 
the left hand sides are the category 
of continuous representations of $\pi_1^t(X)$ to free 
$O_K^{\sigma}$-modules of finite rank 
and 
the right hand sides are the limits of the categories of 
unit-root $F$-lattices on the ind-stacks 
$[\ol{\cY}^{\sm}/G_Y]$, $(\ol{\cX},\cZ)^{1/n}$, respectively)
which can be proven in the same way as \eqref{seq2} and \eqref{seq3}. 
Moreover, we will introduce the category 
$\PFLatt(\ol{\cX},\cZ)^{\c}$ of 
of locally abelian 
parabolic unit-root $F$-lattices on $(\ol{\cX},\cZ)$ and 
prove the equivalence 
\begin{equation}\label{seq4-}
\Rep_{O_K^{\sigma}}(\pi_1^t(X)) \os{=}{\lra} 
\PFLatt (\ol{\cX},\cZ)^{\c}. 
\end{equation}

Note that, 
in the $p$-adic equivalences we have explained above, the notion of 
`the stability of degree $0$' does not appear, which appears in 
 \eqref{eq1}, \eqref{eq2}, \eqref{eq4}. To see the $p$-adic analogue 
 of this notion more clearly, we introduce the notion of 
generic semistability (gss) and the invariant $\mu$ 
for objects in the category $\FIsoc([\ol{Y}^{\sm}/G_Y])$ 
(resp. $\FIsoc((\ol{X},Z)^{1/n}), \PFIsoc(\ol{X},Z)_{\0}$) of 
convergent $F$-isocrystals on $[\ol{Y}^{\sm}/G_Y]$ (resp. 
convergent $F$-isocrystals on $(\ol{X},Z)^{1/n}$, 
adjusted parabolic log convergent $F$-isocrystals on 
$(\ol{X},Z)$) (where the notations are as in \eqref{seq1}, \eqref{seq3} and 
\eqref{seq4}) and rewrite the equivalences 
\eqref{seq1}, 
\eqref{seq2}, \eqref{seq3} and 
\eqref{seq4} as 
\begin{align}
& \Rep_{K^{\sigma}}^{\fin}(\pi_1(X)) \os{=}{\lra} 
\varinjlim_{Y \ra X \in \cG_X} 
\FIsoc([\ol{Y}^{\sm}/G_Y])^{\gss,\mu=0}, \label{sseq1} \\ 
& \Rep_{K^{\sigma}}(\pi_1^t(X)) \os{=}{\lra} 
\varinjlim_{Y \ra X \in \cG^t_X} 
\FIsoc([\ol{Y}^{\sm}/G_Y])^{\gss,\mu=0}, \label{sseq2} \\ 
& \Rep_{K^{\sigma}}(\pi_1^t(X)) \os{=}{\lra} 
\varinjlim_{(n,p)=1} 
\FIsoc((\ol{X},Z)^{1/n})^{\gss,\mu=0}, \label{sseq3} \\ 
& \Rep_{K^{\sigma}}(\pi_1^t(X)) \os{=}{\lra} 
\PFIsoc(\ol{X},Z)^{\gss,\mu=0}_{\0}, \label{sseq4} 
\end{align}
where ${}^{\gss,\mu=0}$ means the subcategory consisting 
of generically semistable objects with $\mu=0$. The proof is 
an easy application 
of some results of Katz \cite{katzslope} and Crew \cite{crewsp}, 
\cite{crew}. Also, 
 we introduce the notion of 
generic semistability (gss) and the invariant $\mu$ 
for objects in the $\Q$-linearization $\FLatt([\ol{\cY}^{\sm}/G_Y])_{\Q}$ 
(resp. $\FLatt((\ol{\cX},\cZ)^{1/n})_{\Q}, \PFLatt(\ol{\cX},\cZ)$) 
of the category of 
$F$-lattices on $[\ol{\cY}^{\sm}/G_Y]$ (resp. 
$F$-lattices on $(\ol{\cX},\cZ)^{1/n}$, 
locally abelian parabolic $F$-lattices on 
$(\ol{\cX},\cZ)$) (where the notations are as in 
\eqref{seq2-}, \eqref{seq3-} and 
\eqref{seq4-}) and rewrite the $\Q$-linearization of the equivalences 
\eqref{seq2-}, \eqref{seq3-} and 
\eqref{seq4-} as 
\begin{align}
& \Rep_{K^{\sigma}}(\pi_1^t(X)) \os{=}{\lra} 
\varinjlim_{Y \ra X \in \cG^t_X} 
\FLatt([\ol{\cY}^{\sm}/G_Y])_{\Q}^{\gss,\mu=0}, \label{sseq2-} \\ 
& \Rep_{K^{\sigma}}(\pi_1^t(X)) \os{=}{\lra} 
\varinjlim_{(n,p)=1} 
\FLatt((\ol{\cX},\cZ)^{1/n})_{\Q}^{\gss,\mu=0}, \label{sseq3-} \\ 
& \Rep_{K^{\sigma}}(\pi_1^t(X)) \os{=}{\lra} 
\PFLatt(\ol{\cX},\cZ)_{\Q}^{\gss,\mu=0}. \label{sseq4-} 
\end{align}

The equivalences \eqref{sseq1}--\eqref{sseq4-} might be better 
$p$-adic analogues of \eqref{eq1}, \eqref{eq2}, \eqref{eq4}, 
but they are not good in the following point: 
In the equivalences 
\eqref{sseq1}, \eqref{sseq2}, \eqref{sseq3} and \eqref{sseq4}, 
an object in the category on the right hand side contains 
an isocrystal structure ($=$ $p$-adic version of 
connection structure) unlike the equivalences \eqref{eq1}, 
\eqref{eq2}, \eqref{eq4}. In the equivalences 
\eqref{sseq2-}, \eqref{sseq3-} and \eqref{sseq4-}, 
an object in the category on the right hand side contains 
a lattice structure ($=$ $p$-adic version of 
metric) unlike the equivalences \eqref{eq1}, 
\eqref{eq2}, \eqref{eq4}. To overcome this, we introduce the category 
$\FVect([\ol{\cY}^{\sm}/G_Y]_K)$ (resp. 
$\FVect((\ol{\cX},\cZ)^{1/n}_K)$, 
$\PFVect((\ol{\cX},\cZ)_K)$) of `$F$-vector bundles on 
rigid analytic stack $[\ol{\cY}^{\sm}/G_Y]_K$') 
(resp. `$F$-vector bundles on 
rigid analytic stack $(\ol{\cX},\cZ)^{1/n}_K$', 
`locally abelian parabolic $F$-vector bundles on 
log rigid analytic space $(\ol{\cX},\cZ)_K$') and the notion of 
 generic semistablity and the invariant $\mu$ for objects in it. 
(Attention: We do not develop the general theory of rigid analytic stacks 
nor log rigid spaces. We only define the above categories.) 
An object in these categories does not contain an information on 
isocrystals nor lattices. Then, in the case of curves, we can rewrite 
the equivalences \eqref{sseq2-}, \eqref{sseq3-} and \eqref{sseq4-} 
further to obtain the equivalences 
\begin{align}
& \Rep_{K^{\sigma}}(\pi_1^t(X)) \os{=}{\lra} 
\varinjlim_{Y \ra X \in \cG^t_X} 
\FVect([\ol{\cY}/G_Y]_K)^{\gss,\mu=0}, \label{sseq2+} \\ 
& \Rep_{K^{\sigma}}(\pi_1^t(X)) \os{=}{\lra} 
\varinjlim_{(n,p)=1} 
\FVect((\ol{\cX},\cZ)^{1/n}_K)^{\gss,\mu=0}, \label{sseq3+} \\ 
& \Rep_{K^{\sigma}}(\pi_1^t(X)) \os{=}{\lra} 
\PFVect((\ol{\cX},\cZ)_K)^{\gss,\mu=0}, \label{sseq4+} 
\end{align}
which will be further better $p$-adic analogues of 
\eqref{eq1}, \eqref{eq2}, \eqref{eq4}. These 
equivalences are essentially conjectured by Weng \cite{weng} as a 
micro reciprocity law in log rigid analytic geometry. \par 
In the case of $p$-torsion coefficient, Ogus-Vologodsky \cite{ov}
and Gros-Le Stum-Quir\'os \cite{glsq} prove 
the Simpson correspondence between the category of 
integrable connections and Higgs bundles, and the logarithmic version of it 
is proved by Schepler \cite{schepler}. 
Our results are different from theirs because 
we treat $p$-adic coefficient. On the other hand, we have to say that 
our results are not fully developed in 
the sense that the Higgs bundle does not appear in our equivalence. 
We expect that certain generalization of the results of 
Schepler (to the $p$-adic 
coefficient case) is related to certain generalization (to the 
Higgs case) of our result. \par 
The content of each section is as follows: 
In the first section, we review the definition and some results concerning 
certain properties on log-$\nabla$-modules and isocrystals which we developed 
in \cite{sigma}. Note that we also add some results which were not proved 
there but useful in this paper. 
In the second section, we give a definition of the category of 
(log) convergent isocrystals on (fine log) algebraic stacks and prove 
the equivalences \eqref{seq1}, \eqref{seq2} and \eqref{seq3}. 
We also relate the variants of right hand sides of 
\eqref{seq2} and \eqref{seq3} without Frobenius structures, with log 
strucutes and with exponent conditions in the case of curves. In the third 
section, we introduce the category 
of semisimply adjusted parabolic unit-root log convergent $F$-isocrystals 
and prove the equivalence \eqref{seq4}. In the course of the proof, 
we prove the equivalence of the variants of right hand sides of 
\eqref{seq3} and \eqref{seq4} without Frobenius structures, with log 
structures and with exponent conditions. In the fourth 
section, we work in the lifted situation and prove the equiavelences 
\eqref{seq2-}. \eqref{seq3-} and \eqref{seq4-}. We also prove 
a comparison result between the category of vector bundles on 
certain ind-stacks 
and the category of parabolic vector bundles on formal schemes 
which is a formal version of the results of 
Iyer-Simpson \cite{is} and Borne \cite{borne1} \cite{borne2}. 
In the fifth section, we introduce the notion of generic 
semistablity and the invariant $\mu$ for objects in several 
categories and prove the equivalences 
\eqref{sseq2}--\eqref{sseq4+}, using results of 
Katz \cite{katzslope} and Crew \cite{crewsp}, 
\cite{crew}. \par 
The author would like to thank to Professor Lin Weng 
for useful discussion and for sending 
the author the preprint version of the paper
 \cite{weng}, which made the author to consider the 
topics in this paper. 
The author is partly supported by Grant-in-Aid for Young Scientists (B) 
21740003 (representative: Atsushi Shiho) from the Ministry of Education, 
Culture, Sports, Science and Technology, Japan 
and 
Grant-in-Aid for Scientific Research (B) 22340001 
(representative: Nobuo Tsuzuki) from 
Japan Society for the Promotion of Science. 

\section*{Convention}
Throughout this paper, $p$ is a fixed prime number and 
$q$ is a fixed power of $p$. $K$ is a complete discrete valuation 
field of caracteristic zero with ring of integers $O_K$ and 
perfect residue field $k$ containing $\F_q$. 
The maximal ideal of $O_K$ is denoted by $\fm_K$. 
We fix a valuation $|\cdot|: K \lra \R_{\geq 0}$ induced by the 
discrete valuation on $K$ and let us put $\Gamma^* := \sqrt{|K^{\times}|} \cup 
\{0\} \subseteq \R_{\geq 0}$. 
Assume moreover that there exists an endomorphism $\sigma:K \lra K$ 
inducing the endomorphism $O_K \lra O_K$ (denoted also by 
$\sigma$) which lifts the $q$-th power map on $k$. 
Let $K^{\sigma}$ be the fixed field of $\sigma$ and let 
$O_K^{\sigma} := K^{\sigma} \cap O_K$. \par 
The category of schemes separated of finite type over $k$ is 
denoted by $\Sch$. Following \cite{kedlayaI}, a variety over $k$ 
(or a $k$-variety) means an object in $\Sch$ which is reduced. 
For $X \in \Sch$ with $X$ connected, 
let $\Rep_{K^{\sigma}}(\pi_1(X))$ 
be the category of 
finite dimensional continuous representations of the fundamental 
group $\pi_1(X)$ of $X$ over $K^{\sigma}$ and 
let $\Rep_{O_K^{\sigma}}(\pi_1(X))$ 
be the category of 
continuous representations of the fundamental 
group $\pi_1(X)$ of $X$ to free $O_K^{\sigma}$-modules of 
finite rank. For $X \in \Sch$, 
let $\Sm_{K^{\sigma}}(X)$ be the category of smooth $K^{\sigma}$-sheaves 
on $X_{\et}$ and let 
$\Sm_{O_K^{\sigma}}(X)$ be the category of smooth $O_K^{\sigma}$-sheaves 
on $X_{\et}$. 
We have the well-known equivalences 
$\Rep_{K^{\sigma}}(\pi_1(X)) \cong \Sm_{K^{\sigma}}(X), 
\Rep_{O_K^{\sigma}}(\pi_1(X)) \cong \Sm_{O_K^{\sigma}}(X)$ for 
$X \in \Sch$ with $X$ connected. For a $p$-adic formal 
scheme $\cX$ separated of finite type over $\Spf O_K$, we define 
the categories $\Sm_{K^{\sigma}}(\cX)$, $\Sm_{O_K^{\sigma}}(\cX)$ 
in the same way. \par 
For $X \in \Sch$, we denote the category of 
convergent isocrystals (resp. convergent $F$-isocrystals, 
unit-root convergent $F$-isocrystals) on $X$ over $K$ by 
$\Isoc(X)$ (resp. $\FIsoc(X), \FIsoc(X)^{\circ}$). 
(For precise definition and basic properties, see 
\cite{ogus}, \cite{berthelot}, \cite{lestum} and \cite{crew}. 
For the definition of $\FIsoc(X)$ and $\FIsoc(X)^{\c}$, 
we follow the definition in 
\cite{crew}, not that in \cite{ogus}.) 
For an open immersion $X \hra \ol{X}$ in $\Sch$, we denote 
the category of 
overconvergent isocrystals (resp. overconvergent $F$-isocrystals, 
unit-root overconvergent $F$-isocrystals) on $(X,\ol{X})$ 
over $K$ by 
$\Isocd(X,\ol{X})$ (resp. $\FIsocd(X,\ol{X}), \FIsocd(X,\ol{X})^{\circ}$). 
(For precise definition and basic properties, see 
\cite{berthelot}, \cite{lestum}, \cite{crew} and \cite{kedlayaI}.) 
Let $\LSch$ be the category of fine log schemes separated of 
finite type over $k$. 
For $(X,M_X) \in \LSch$,  
the category of locally free log convergent isocrystals on 
$(X,M_X)$ over $K$ (called locally free isocrystals on 
the log convergent site $((X,M_X)/\Spf O_K)_{\conv}$ in 
\cite{sigma}, \cite{relativeI}) by $\Isocl(X,M_X)$. 
The category of locally free log convergent $F$-isocrystals on 
$(X,M_X)$ over $K$ (that is, the category of pairs 
$(\cE,\Psi)$ consisting of $\cE \in \Isocl(X,M_X)$ and 
an isomorphism $\Psi: F^*\cE \os{=}{\lra} \cE$, where 
$F$ is the $\sigma$-linear endofunctor on $\Isocl(X,M_X)$ 
induced by $q$-th power map on $(X,M_X)$ and $\sigma$) by 
$\FIsocl(X,M_X)$. (For precise definition and basic properties, 
see \cite{kedlayaI}, \cite{relativeI}. See also \cite{crysI}, 
\cite{crysII}.) \par 
For a functor $\Phi: \cC \lra \Sch$, we define the category 
$\Sm_{K^{\sigma}}(\Phi)$ of smooth $K^{\sigma}$-sheaves on $\Phi$ 
(resp.  the category 
$\Sm_{O_K^{\sigma}}(\Phi)$ of smooth $O_K^{\sigma}$-sheaves on $\Phi$) 
as the 
category of pairs $$(\{\cE_Y\}_{Y \in {\rm Ob}(\cC)}, 
\{\varphi_{\cE}\}_{\varphi:Y \ra Y' \in 
{\rm Mor}(\cC)}),$$ where $\cE_Y \in \Sm_{K^{\sigma}}(Y)$ 
(resp. $\cE_Y \in \Sm_{O_K^{\sigma}}(Y)$) 
and $\varphi_{\cE}$ is an isomorphism 
$\Phi(\varphi)^*\cE_{Y'} \os{=}{\lra} \cE_Y$ in $\Sm_{K^{\sigma}}(Y)$ 
(resp. $\Sm_{O_K^{\sigma}}(Y)$) satisfying the cocycle condition 
$\varphi_{\cE} \circ \Phi(\varphi)^* \varphi'_{\cE} = 
(\varphi' \circ \varphi)_{\cE}$ for $Y \os{\varphi}{\lra} Y' 
\os{\varphi'}{\lra} Y''$ in $\cC$. 
For a functor $\Phi: \cC \lra \Sch$, we define the category 
$\Isoc(\Phi)$ of convergent isocrystals on $\Phi$ over $K$ 
as the 
category of pairs $$(\{\cE_Y\}_{Y \in {\rm Ob}(\cC)}, 
\{\varphi_{\cE}\}_{\varphi:Y \ra Y' \in 
{\rm Mor}(\cC)}),$$ where $\cE_Y \in \Isoc(\Phi(Y))$ 
and $\varphi_{\cE}$ is an isomorphism 
$\Phi(\varphi)^*\cE_{Y'} \os{=}{\lra} \cE_Y$ in $\Isoc(\Phi(Y))$ 
satisfying the cocycle condition 
$\varphi_{\cE} \circ \Phi(\varphi)^* \varphi'_{\cE} = 
(\varphi' \circ \varphi)_{\cE}$ for $Y \os{\varphi}{\lra} Y' 
\os{\varphi'}{\lra} Y''$ in $\cC$. 
We can also define the category $\FIsoc(\Phi)$ of 
convergent $F$-isocrystals on $\Phi$ over $K$ and the category 
$\FIsoc(\Phi)^{\circ}$ of unit-root convergent $F$-isocrystals 
on $\Phi$ over $K$ in the same way. Similarly, for a functor 
$\Phi: \cC \lra \text{(open immersions in $\Sch$)}$, 
we can define the category 
$\Isocd(\Phi)$ of overconvergent isocrystals on $\Phi$ over $K$, 
 the category 
$\FIsocd(\Phi)$ of overconvergent $F$-isocrystals on $\Phi$ over $K$ and  
the category 
$\FIsocd(\Phi)^{\circ}$ of unit-root 
overconvergent $F$-isocrystals on $\Phi$ over $K$. 
Also, for a functor 
$\Phi: \cC \lra \LSch$, 
we can define the category 
$\Isocl(\Phi)$ of locally free 
log convergent isocrystals on $\Phi$ over $K$ and 
the category 
$\FIsocl(\Phi)$ of locally free 
log convergent $F$-isocrystals on $\Phi$ over $K$. \par 
Note that a diagram of schemes $X_{\b}$ 
in $\Sch$ can be regarded as a functor $\cC \lra \Sch$ as above. 
So we can define the categories $\Sm_{K^{\sigma}}(X_{\b}), 
\Sm_{O_K^{\sigma}}(X_{\b}), 
\Isoc(X_{\b}), \allowbreak \FIsoc(X_{\b}), 
\FIsoc(X_{\b})^{\circ}$ in the above way. 
Also, for a diagram $X_{\b} \hra \ol{X}_{\b}$ of open immersions in 
$\Sch$, we can define the categories $\Isocd(X_{\b},\ol{X}_{\b}), 
\FIsocd(X_{\b},\ol{X}_{\b}), \allowbreak 
\FIsocd(X_{\b},\ol{X}_{\b})^{\circ}$ 
and for a diagram $(X_{\b},M_{X_{\b}})$ in 
$\LSch$, we can define the categories $\Isocl(X_{\b},M_{X_{\b}}), 
\FIsocl(X_{\b},M_{X_{\b}})$. \par 
For a $p$-adic formal scheme $\cX$ separated 
of finite type over $\Spf O_K$, we denote the associated rigid 
space over $K$ by $\cX_K$. For a fine log structure $M$ on 
a (formal) scheme $X$, we put $\ol{M} := M/\cO^{\times}_X$. 
A morphism of log (formal) schemes $f:(X,M_X) \lra (Y,M_Y)$ is called 
strict when $f^*M_Y = M_X$. When $X$ 
is a smooth scheme over $O_K/\fm_K^a$ for 
some $a$ or a $p$-adic formal scheme smooth over $\Spf O_K$ and $Z$ is a 
relative simple normal crossing divisor on $X$, we denote by $(X,Z)$ 
the fine log (formal) scheme with underlying (formal) scheme $X$ whose 
log structure is associated to $Z$. \par 
For subsets $\Sigma, \Sigma'$ of the form 
$\Sigma = \prod_{i=1}^r \Sigma_i, \Sigma' = \prod_{i=1}^r \Sigma'_i$ in 
$\Z_p^r$ and $n \in \N, a = (a_i)_{1 \leq i \leq r} \in \N^r$, 
we define $\Sigma + \Sigma', n\Sigma, a\Sigma$ by 
\begin{align*}
& \Sigma + \Sigma' := \prod_{i=1}^r \{\xi_i + \xi'_i \,|\, \xi_i \in \Sigma_i, 
\xi'_i \in \Sigma'_i\}, \\ 
& n\Sigma := \prod_{i=1}^r \{n\xi_i \,|\, \xi_i \in \Sigma_i\}, \qquad 
a\Sigma := \prod_{i=1}^r \{a_i\xi_i \,|\, \xi_i \in \Sigma_i\}. 
\end{align*}
Also, the set $\{0\}^r$ in $\Z_p^r$ is denoted simply by $\0$. \par 
Finally, a discrete valuation always means a discrete valuation of 
rank one.

\section{Log-$\nabla$-modules and isocrystals} 

In this section, we review the definition and some results concerning 
certain properties on log-$\nabla$-modules and isocrystals which we developed 
in \cite{sigma} (which generalizes some results in \cite{kedlayaI}).
We also add some more terminologies and 
results which were not treated there but useful in this paper. \par 

\subsection{Log-$\nabla$-modules}

Let $L$ be a field containing $K$ complete with respect to a 
multiplicative norm (also denoted by $| \cdot |$) which extends 
the given absolute value on $K$. 
For a morphism $f: \fX \lra \fY$ of 
rigid spaces over $L$, a $\nabla$-module of on $\fX$ relative to $\fY$ 
is defined to be a pair $(E,\nabla)$ consisting of a coherent module $E$ on 
$\fX$ endowed with an integrable $f^{-1}\cO_{\fY}$-linear connection 
$\nabla: E \lra E \otimes_{\cO_{\fX}} \Omega^1_{\fX/\fY}$. 
In the case $\fY = \Spm L$, we omit the term `relative to $\fY$'. 
We denote the category of $\nabla$-modules on $\fX$ by $\NM_{\fX}$. \par 
For a morphism $f: \fX \lra \fY$ of rigid spaces over $L$ and elements 
$x_1,...,x_r$ in $\Gamma(\fX,\cO_{\fX})$, a log-$\nabla$-module 
 on $\fX$ with respect to $x_1,...,x_r$ relative to $\fY$ 
is defined to be a pair $(E,\nabla)$ consisting of a locally free 
module of finite rank $E$ on 
$\fX$ endowed with an integrable $f^{-1}\cO_{\fY}$-linear log connection 
$\nabla: E \lra E \otimes_{\cO_{\fX}} \omega^1_{\fX/\fY}$. (Here 
$\omega^1_{\fX/\fY}$ is defined by 
\begin{equation}\label{deflogdiff}
\omega^1_{\fX/\fY} := (\Omega^1_{\fX/\fY} \oplus \bigoplus_{i=1}^r 
\cO_{\fX} \cdot 
\dlog x_i)/N,
\end{equation}
where $N$ is the sheaf locally generated by 
$(dx_i,0)-(0,x_i\dlog x_i)$ $(1 \leq i \leq r)$.) 
In the case $\fY = \Spm L$, we omit the term `relative to $\fY$'. 
We denote the category of 
log-$\nabla$-modules on $\fX$ with respect to $x_1,...,x_r$ 
by $\LNM_{\fX}$. 

\begin{remark}\label{rem1.1}
When $\cX$ is a $p$-adic formal scheme smooth separated of finite type 
over $\Spf O_K$ and $\cZ = \bigcup_{i=1}^r \cZ_i$ 
is a relative simple normal crossing divisor on $\cX$ 
(with each $\cZ_i$ irreducible), we have sections 
$x_1, ..., x_r \in \Gamma(\cX,\cO_{\cX})$ which cut out $\cZ_1,...,\cZ_r$ 
Zariski locally on $\cX$. So, locally on $\cX$, 
we can define the notion of log-$\nabla$-modules on 
$\cX_K$ with respect to $x_1,...,x_r$ (relative to $\Spm K$). 
In this case, we see that this definition is independent of 
the choice of $x_i$'s above, because $\omega^1_{\cX_K/\Spm K}$ 
defined in \eqref{deflogdiff} is nothing but the coherent sheaf on 
$\cX_K$ induced by the log differential module $\Omega^1_{\cX}(\log \cZ)$ 
on the formal scheme $\cX$. So, in this case, we can define 
the notion of log-$\nabla$-module on $(\cX_K, \cZ_K)$ globally, by 
patching the above local definition of log-$\nabla$-modules on 
$\cX$ with respect to $x_1,...,x_r$. We denote the category 
of log-$\nabla$-modules on $(\cX_K, \cZ_K)$ by $\LNM_{(\cX_K,\cZ_K)}$ 
and also by $\LNM_{\cX_K}$ when there will be no confusion on $\cZ_K$. 
\end{remark}

Next, let $\fX$ be a smooth rigid space over $L$ endowed with 
sections $x_1,...,x_r \in \Gamma(\fX,\cO_{\fX})$ whose zero loci are 
affinoid, smooth and meet transversally. Let us put 
$\fD_i := \{x_i = 0\}$ and $M_i:={\rm Im}(\Omega^1_{\fX/K} \oplus 
\bigoplus_{j\not=i}\cO_{\fX}\,\dlog x_j \lra 
\omega^1_{\fX})$. Then the composite map 
$$ 
E \os{\nabla}{\lra} E \otimes_{\cO_{\fX}} \omega^1_{\fX} \lra 
E \otimes_{\cO_{\fX}} (\omega^1_{\fX}/M_i) \cong 
E |_{\fD_i} \dlog x_i \cong E |_{\fD_i}
$$ 
naturally induces an element $\res_i$ in 
$\End_{\cO_{\fD_i}}(E |_{\fD_i})$, which we call the 
residue of $(E,\nabla)$ along $\fD_i$. By \cite[1.24]{curvecut}, 
we can take the minimal monic polynomial $P_i(x) \in K[x]$ satisfying 
$P_i(\res_i)=0$. We call the roots of $P_i(x)$ the exponents of 
$(E,\nabla)$ along $\fD_i$. For $\fX, x_1,...,x_r, \fD_1,...,\fD_r$ 
as above and 
$\Sigma := \prod_{i=1}^r \Sigma_i \subseteq \Z_p^r$, 
we denote the category of log-$\nabla$-modules 
on $\fX$ with respect to $x_1,...,x_r$ whose exponents along $\fD_i$ 
are contained in $\Sigma_i\,(1 \leq i \leq r)$ by $\LNM_{\fX,\Sigma}$. 
In the situation of Remark \ref{rem1.1}, the category 
$\LNM_{(\cX_K,\cZ_K),\Sigma}$ is defined by patching this definition. 
\par 
We call an interval $I$ in $[0,\infty)$ aligned if any endpoint 
of $I$ at which it is closed is contained in 
$\Gamma^*$. We call an interval 
$I$ in $[0,\infty)$ quasi-open if it is open at non-zero endpoints. 
For an aligned interval $I$, we define the rigid space $A_L^n(I)$ by 
$A_L^n(I) := \{(t_1,...,t_n) \in 
\Af^{n,\an}_L \,|\, \forall i, |t_i| \in I\}$. \par 
Following \cite[3.2.4]{kedlayaI}, we use the following convention: 
For a smooth 
affinoid rigid space $\fX$ over $L$ 
we put 
$\omega^1_{\fX \times A^n_L[0,0]} := 
\Omega^1_{\fX} \oplus \bigoplus_{i=1}^n\cO_{\fX} \dlog t_i$, where 
$\dlog t_i$ is the free generator `corresponding to 
the $i$-th coordinate of $A^n_L[0,0]$'. Using this, we can define the 
notion of a log-$\nabla$-module $(E,\nabla)$ 
on $\fX \times A_L^n[0,0]$ with respect to 
$t_1, ..., t_n$ and the notion of the residue, the exponents 
of $(E,\nabla)$ along $\{t_i=0\}$ in natural way: 
To give a log-$\nabla$-module 
on $\fX \times A_L^n[0,0]$ with respect to 
$t_1, ..., t_n$ is equivalent to give a 
$\nabla$-module $(E,\nabla)$
on $\fX$ and commuting endomorphisms $\pa_i := t_i 
\dfrac{\pa}{\pa t_i}$ of $(E,\nabla)$ 
$(1 \leq i \leq n)$. Also, we can define the 
category $\LNM_{\fX \times A^n_L[0,0],\Sigma}$ for $\Sigma = 
\prod_{i=1}^{n}\Sigma_i \subseteq \Z_p^{n}$ as above: 
A log-$\nabla$-module 
on $\fX \times A_L^n[0,0]$ with respect to 
$t_1, ..., t_n$, regarded as a $\nabla$-module $(E,\nabla)$ 
on $\fX$ endowed with commuting endomorphisms $\pa_i := t_i 
\dfrac{\pa}{\pa t_i}$ $(1 \leq i \leq n)$, is in the category 
$\LNM_{\fX \times A^n_L[0,0],\Sigma}$ if and only if 
all the eigenvalues of $\pa_i$ are 
in $\Sigma_{i}$ $(1 \leq i \leq n)$. \par 
For an aligned interval $I \subseteq [0,\infty)$ and 
$\xi := (\xi_1, ..., \xi_n) \in \Z_p^n$, we define the log-$\nabla$-module 
$(M_{\xi},\nabla_{M_{\xi}})$ on $A^n_L(I)$ with respect to 
$t_1, ..., t_n$ (which are the coordinates) 
as the log-$\nabla$-module 
$(\cO_{A_L^n(I)},d + \sum_{i=1}^n \xi_i \dlog t_i)$. 
Following \cite[1.3]{sigma} (cf. \cite[3.2.5]{kedlayaI}), we define the notion of $\Sigma$-constance and 
$\Sigma$-unipotence of log-$\nabla$-modules as follows (we also introduce 
the notion of $\Sigma$-semisimplicity):

\begin{definition}\label{unipdef}
Let $\fX$ be a smooth rigid space over $L$. 
Let $I \subseteq [0,\infty)$ 
be an aligned interval and 
fix $\Sigma := \prod_{i=1}^{n}\Sigma_i \subseteq 
\Z_p^{n}$. \\
$(1)$ \,\,\, 
An object $(E,\nabla)$ in 
$\LNM_{\fX \times A_L^n(I),\Sigma}$ 
$(\fX \times A_L^n(I)$ is endowed with $t_1, ..., t_n$, 
where $t_i$'s are the 
coordinates in $A_L^n(I))$ 
is called $\Sigma$-constant if 
$(E,\nabla)$ has the form 
$\pi_1^*(F,\nabla_F) \otimes \pi_2^*(M_{\xi},\nabla_{M_{\xi}})$ for some 
$\nabla$-module $(F,\nabla_F)$ on $\fX$ and $\xi \in \Sigma$, where 
$\pi_1: \fX \times A_L^n(I) \lra \fX$, $\pi_2:\fX \times A_L^n(I) 
\lra A_L^n(I)$ 
denote the projections. An object in 
$\LNM_{X \times A_L^n(I),\Sigma}$ is called $\Sigma$-semisimple if 
it is a direct sum of $\Sigma$-constant ones. \\
$(2)$ \,\,\, 
An object $(E,\nabla)$ in 
$\LNM_{\fX \times A_L^n(I),\Sigma}$ is called $\Sigma$-unipotent if 
$(E,\nabla)$ admits a filtration 
$$ 0 = E_0 \subset E_1 \subset \cdots \subset E_m=E $$ 
by sub log-$\nabla$-modules whose successive quotients are 
$\Sigma$-constant log-$\nabla$-modules. 

We denote the category of $\Sigma$-semisimple $(\Sigma$-unipotent$)$ 
log-$\nabla$-modules on 
$\fX \times A_L^n(I)$ with respect to $t_1, ..., t_n$ by 
$\SLNM_{\fX \times A^n_L(I),\Sigma}\, (\ULNM_{\fX \times A^n_L(I),\Sigma})$. 
Note that we have $\SLNM_{\fX \times A^n_L(I),\Sigma} \subseteq 
\ULNM_{\fX \times A^n_L(I),\Sigma}$. 
\end{definition}

Here we give a remark which is the same as \cite[1.4]{sigma}: 
When $I$ does not 
contain $0$, the log-$\nabla$-modules 
$M_{\xi}$ and $M_{\xi'}$ $(\xi,\xi' \in \Sigma)$ are isomorphic if 
$\xi-\xi'$ is contained in $\Z^n$. So we see that 
the notion of $\Sigma$-semisimplicity and $\Sigma$-unipotence
only depends on the image 
$\ol{\Sigma}$ of $\Sigma$ in $\Z_p^n/\Z^n$ in the following sense: 
An object $(E,\nabla)$ in 
$\LNM_{\fX \times A_L^n(I),\Sigma}$ is 
$\Sigma$-semisimple ($\Sigma$-unipotent) if and only if 
it is $\tau(\ol{\Sigma})$-semisimple 
($\tau(\ol{\Sigma})$-unipotent) for some (or any) section 
$\tau: \Z_p^n/\Z^n \lra \Z_p^n$ of the form $\tau = \prod_{i=1}^n \tau_i$ 
of the canonical projection 
$\Z_p^n \lra \Z_p^n/\Z^n$. So, in this case, we will say also that 
$(E,\nabla)$ is $\ol{\Sigma}$-semisimple 
($\ol{\Sigma}$-unipotent), by abuse of terminology. \par 
Before giving properties on $\Sigma$-semisimple and $\Sigma$-unipotent 
log-$\nabla$-modules, we recall (and introduce) several terminologies on 
subsets in $\Z_p^r$. 
(Some of them are used not in this subsection but in later sections.) 
Recall that an element $\alpha$ in $\Z_p$ is 
called $p$-adically non-Liouville if we have the equalities 
$$ \varliminf_{n\to\infty} |\alpha + n |^{1/n} = 
\varliminf_{n\to\infty} |\alpha - n |^{1/n} = 1. $$

\begin{definition}\label{nl}
\begin{enumerate}
\item 
A subset 
$\Sigma$ in $\Z_p$ is $\NID$ $($resp. $\NRD)$ if, 
for any $\alpha,\beta \in \Sigma$, $\alpha - \beta$ is not a 
non-zero integer $($resp. $\alpha - \beta$ is not a 
non-zero rational number$)$. 
\item 
A subset 
$\Sigma$ in $\Z_p$ is $\NLD$ $($resp. $\SNLD)$ if, 
for any $\alpha,\beta \in \Sigma$, $\alpha - \beta$ is a 
$p$-adically non-Liouville number 
$($resp. for any $\alpha,\beta \in \Sigma$ and $a \in \Z_{(p)}$, 
$\alpha - \beta + a$ is a $p$-adically non-Liouville number$)$. 
\item 
A subset $\Sigma$ in $\Z_p^r$ of the form $\Sigma = \prod_{i=1}^r \Sigma_i$ 
is $\NID$ $($resp. $\NRD, \NLD, \allowbreak \SNLD)$ 
if so is each $\Sigma_i \,(1 \leq i \leq r)$. 
\item 
A subset $\ol{\Sigma}$ in $\Z_p^r/\Z^r$ $($resp. $\Z_p^r/\Z_{(p)}^r)$ 
of the form 
$\ol{\Sigma} = \prod_{i=1}^r \ol{\Sigma}_i$ is 
$\NLD$ if, for any section $\tau: \Z_p^r/\Z^r \lra \Z_p^r$ 
$($resp. $\tau: \Z_p^r/\Z_{(p)}^r \lra \Z_p^r)$ 
of the natural projection $\Z_p^r \lra \Z_p^r/\Z^r$ 
$($resp. $\Z_p^r \lra \Z_p^r/\Z_{(p)}^r)$ 
of the form $\tau = \prod_{i=1}^r \tau_i$, $\tau(\ol{\Sigma})$ is 
$\NLD$. $($Note that, when this condition is satisfied, 
$\tau(\ol{\Sigma})$ is $\NID$ and $\NLD$ $($resp. 
$\NRD$ and $\SNLD)$. 
Note also that, if $\Sigma = \prod_{i=1}^r \Sigma_i \subseteq \Z_p^r$ is $\NID$ and $\NLD$ $($resp. $\NRD$ and $\SNLD)$, the image $\ol{\Sigma}$ of 
$\Sigma$ in $(\Z_p/Z)^r$ $($resp. $\Z_p^r/\Z_{(p)}^r)$ is $\NLD$ and we have 
$\Sigma = \tau(\ol{\Sigma})$ for some section $\tau$ as above.$)$ 
\end{enumerate}
\end{definition}

We have the following property, which we use in the later sections.

\begin{lemma}\label{lem1.4}
Let $\Sigma := \prod_{i=1}^r \Sigma_i$ be a subset in $\Z_p^r$ which is 
$\NRD$ $($resp. $\SNLD)$. Then, for any $m \in \N$ prime to $p$, 
$m\Sigma$ $($see Convention for definition$)$ is $\NID$ $($resp. $\NLD)$. 
\end{lemma}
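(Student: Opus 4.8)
The plan is to reduce immediately to the case $r=1$, since all four properties ($\NID$, $\NRD$, $\NLD$, $\SNLD$) are defined componentwise on a product $\Sigma = \prod_{i=1}^r \Sigma_i$, and the operation $m\Sigma$ is also taken componentwise. So it suffices to prove: if $\Sigma \subseteq \Z_p$ is $\NRD$ (resp. $\SNLD$) and $m \in \N$ is prime to $p$, then $m\Sigma$ is $\NID$ (resp. $\NLD$).

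For the $\NRD \Rightarrow \NID$ part, take $\alpha, \beta \in \Sigma$ and suppose $m\alpha - m\beta = n$ for some $n \in \Z$. Then $\alpha - \beta = n/m \in \Q$, and since $\alpha - \beta \in \Z_p$ and $(m,p)=1$, this is a well-defined element of $\Z_{(p)} \subseteq \Q$. If $n \neq 0$ then $\alpha - \beta = n/m$ is a nonzero rational number, contradicting $\NRD$; hence $n = 0$, which is exactly what $\NID$ for $m\Sigma$ requires. For the $\SNLD \Rightarrow \NLD$ part, take $\alpha, \beta \in \Sigma$; I must show $m\alpha - m\beta = m(\alpha-\beta)$ is $p$-adically non-Liouville. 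The point is that $\SNLD$ says $\alpha - \beta + a$ is non-Liouville for every $a \in \Z_{(p)}$, and the set of $p$-adically non-Liouville numbers in $\Z_p$ is stable under multiplication by units of $\Z_p$ (in particular by $m$, since $(m,p)=1$ means $m \in \Z_p^\times$) and under translation by elements of $\Z$; so I need to express $m(\alpha-\beta)$ in terms of a unit multiple and integer translates of quantities of the form $\alpha - \beta + a$. Concretely, writing $\gamma := \alpha - \beta$, one has $m\gamma = \gamma + (m-1)\gamma$, and $(m-1)\gamma$ is not obviously of the controlled form; instead I would argue directly: $\liminf_n |m\gamma + n|^{1/n} = \liminf_n |m|^{1/n}|\gamma + n/m|^{1/n} = \liminf_n |\gamma + n/m|^{1/n}$, and as $n$ ranges over $\Z$, $n/m$ ranges over $\frac{1}{m}\Z \subseteq \Z_{(p)}$, so each $\gamma + n/m$ is of the form $\alpha - \beta + a$ with $a \in \Z_{(p)}$, hence non-Liouville; a uniformity check (that the $\liminf$ over the subsequence controls the full $\liminf$) then gives the claim, and symmetrically for $|m\gamma - n|^{1/n}$.

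The main obstacle is the last uniformity point: being non-Liouville is a statement about a $\liminf$ over all $n$, and replacing $n$ by $n/m$ reshuffles the indexing set, so I should be careful that the non-Liouville property of the individual elements $\gamma + n/m$ (which controls $\liminf_{k}|(\gamma + n/m) \pm k|^{1/k}$ as $k \to \infty$ for each fixed $n$) really assembles into control of $\liminf_n |m\gamma \pm n|^{1/n}$. The clean way is to observe $|m\gamma + n| = |m| \cdot |\gamma + n/m| = |\gamma + n/m|$ since $|m|=1$, and then note that $\gamma + n/m$, as $n$ runs through $\Z$, hits each residue class in $\Z_{(p)}$ modulo $\Z$ along an arithmetic progression; combining this with the standard fact (see the references to non-Liouville numbers in the discussion preceding Definition 1.3) that $\delta \in \Z_p$ is non-Liouville iff $\delta + a$ is non-Liouville for one/any $a \in \Z$, one reduces to finitely many representatives $\gamma + a_j$ ($a_j \in \Z_{(p)}$, $1 \le j \le m$), each non-Liouville by $\SNLD$, and the $\liminf$ over a finite union of arithmetic progressions is the minimum of the individual $\liminf$s, all equal to $1$. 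This handles both $\pm$ cases and completes the argument.
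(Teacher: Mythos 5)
Your proposal is correct and takes essentially the same route as the paper's proof: reduce to $r=1$, settle $\NRD \Rightarrow \NID$ by clearing the denominator $m$, and for $\SNLD \Rightarrow \NLD$ use $|m|=1$ together with the division $n=mq+r$ to reduce to the non-Liouville property of the finitely many elements $\alpha-\beta\pm\frac{r}{m}$, which is exactly what $\SNLD$ provides. The ``uniformity check'' you flag is the same estimate the paper writes out explicitly: since $\bigl|(\alpha-\beta\pm\frac{r}{m})\pm q\bigr|\le 1$ and $\frac{1}{n}\le\frac{1}{q}$, one has $|m(\alpha-\beta)\pm n|^{1/n}\ge \bigl|(\alpha-\beta\pm\frac{r}{m})\pm q\bigr|^{1/q}$, whose limit inferior is $1$ for each of the finitely many residues $r$.
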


\begin{proof}
We only prove that $m\Sigma$ is $\NLD$ when $\Sigma$ is $\SNLD$, because 
the other assertion is easy. We may assume that $r=1$. Take any 
$\alpha, \beta \in \Sigma$ and put $\xi := m(\alpha - \beta)$. 
Then, for any $n \in \N$ with $n = mq + r \,(q,r \in \N, 0 \leq r < m)$, 
we have 
\begin{align*}
1 & \geq |\xi \pm n |^{1/n} = |m|^{1/n} 
\left|
\left(\alpha - \beta \pm \frac{r}{m} \right) \pm q 
\right|^{\frac{1}{q}\cdot \frac{q}{n}} \\ 
& \geq 
\left|
\left(\alpha - \beta \pm \frac{r}{m}\right) \pm q 
\right|^{\frac{1}{q} \cdot 
\frac{1}{m+1}} \\ 
& \geq 
\min_{a \in (-1,1) \cap \frac{1}{m}\Z} 
(|(\alpha - \beta + a) \pm q |^{\frac{1}{q} \cdot \frac{1}{m+1}})  
\end{align*}
and the limit inferior of the right hand side as $n\to\infty$ is $1$ 
since $(\alpha - \beta + a)$'s are $p$-adically non-Liouville by assumption.  
So we have $\varliminf_{n\to\infty} |\xi \pm n |^{1/n} = 1$ and so 
$m\Sigma$ is $\NLD$. 
\end{proof}

Now let us recall the following proposition, which is partly 
proven in \cite[1.13, 1.15]{sigma}: 

\begin{proposition}\label{homexteq}
Let $\Sigma = \prod_{i=1}^n\Sigma_i$ be a subset of $\Z_p^n$ which is 
$\NID$ and $\NLD$. Then, for a smooth rigid space $\fX$ over $L$ and 
a quasi-open interval $I \subseteq [0,\infty)$, 
 we have the equivalences of categories 
\begin{align*}
& \cU_I: \LNM_{\fX \times A_L^n[0,0],\Sigma} = 
\ULNM_{\fX \times A_L^n[0,0],\Sigma} \os{=}{\lra} 
\ULNM_{\fX \times A_L^n(I),\Sigma}, \\ 
&  \cU_I: \SLNM_{\fX \times A_L^n[0,0],\Sigma} \os{=}{\lra} 
\SLNM_{\fX \times A_L^n(I),\Sigma} 
\end{align*}
defined as follows$:$ An object $((E,\nabla),\{\partial_i\})$ in 
$\ULNM_{\fX \times A_L^n[0,0],\Sigma}$ or 
$\SLNM_{\fX \times A_L^n[0,0],\Sigma}$ 
$($so $(E,\nabla)$ is a $\nabla$-module on $\fX)$ 
is sent to 
$\pi^*E$ $($where $\pi$ is the projection $\fX \times A_L^n(I) \lra \fX)$ 
endowed with the log connection 
$$ \vv 
\mapsto \pi^*\nabla(\vv) + \sum_{i=1}^n \pi^*(\pa_i)(\vv) \dlog t_i. $$
\end{proposition}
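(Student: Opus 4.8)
The plan is to establish the three assertions of the proposition in turn: the equality of the two source categories, the equivalence on the unipotent categories, and its restriction to the semisimple categories. The inclusion $\ULNM_{\fX \times A_L^n[0,0],\Sigma} \subseteq \LNM_{\fX \times A_L^n[0,0],\Sigma}$ is tautological, and the reverse one is pure linear algebra: an object of $\LNM_{\fX \times A_L^n[0,0],\Sigma}$ is a $\nabla$-module $(E,\nabla)$ on $\fX$ with commuting endomorphisms $\pa_1,\dots,\pa_n$ such that the eigenvalues of $\pa_i$ lie in $\Sigma_i$, and decomposing $E$ into the joint generalized eigenspaces of the $\pa_i$ and then refining each summand to a filtration whose successive quotients have every $\pa_i$ acting as a scalar presents it as an iterated extension of $\Sigma$-constant objects. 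That $\cU_I$ is well defined is formal: it is $L$-linear, additive, exact and local on $\fX$, it carries the $\Sigma$-constant object ``$(F,\nabla_F)$ with $\pa_i=\xi_i$'' to $\pi_1^*(F,\nabla_F)\otimes\pi_2^*(M_\xi,\nabla_{M_\xi})$, hence sends filtrations to filtrations and direct sums to direct sums, so it maps $\ULNM_{\fX\times A_L^n[0,0],\Sigma}$ into $\ULNM_{\fX\times A_L^n(I),\Sigma}$ and $\SLNM$ into $\SLNM$. Moreover $\cU_I$ is compatible with internal Hom: for source objects $A,B$ one has $\Inthom(\cU_I A,\cU_I B)=\cU_I(\Inthom(A,B))$, where $\Inthom(A,B)$ on the source side is the $\nabla$-module $\Inthom_{\cO_\fX}$ of the underlying modules equipped with the operators $\varphi\mapsto\pa_i^B\circ\varphi-\varphi\circ\pa_i^A$, whose eigenvalues are differences of elements of $\Sigma_i$ and hence, since $\Sigma_i$ is $\NID$, are never nonzero integers.

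The assertion that $\cU_I\colon \ULNM_{\fX\times A_L^n[0,0],\Sigma}\to\ULNM_{\fX\times A_L^n(I),\Sigma}$ is an equivalence is essentially \cite[1.13, 1.15]{sigma} (and a general quasi-open $I$ is handled exactly as $I=[0,r)$ there); I would recall the argument as follows. By the internal-Hom compatibility, both the full faithfulness of $\cU_I$ and the comparison of $\Ext^1$-groups reduce to computing the de Rham cohomology of $\cU_I(P)$ on $\fX\times A_L^n(I)$ for a source object $P$ whose operators $\pa_i$ have no nonzero integer eigenvalue. Grading by the degree $\nu\in\Z^n$ in the coordinates $t_i$, the $\dlog t_i$-component of the connection of $\cU_I(P)$ acts on the $t^\nu$-part as $\nu_i+\pa_i$; for $\nu\neq 0$ this operator is invertible, because $\nu_i+\lambda=0$ for an eigenvalue $\lambda=\beta-\alpha$ with $\alpha,\beta\in\Sigma_i$ would force $\beta-\alpha$ to be a nonzero integer, contradicting $\NID$; and the resulting contracting homotopies are bounded uniformly in $\nu$ precisely because each such $\beta-\alpha$ is $p$-adically non-Liouville by the $\NLD$ hypothesis, which is exactly what makes the completed de Rham complex acyclic away from $\nu=0$. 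The $\nu=0$ summand is the de Rham (Koszul) complex of $P$ regarded as a source object, so $\Hom$ and $\Ext^1$ over $\fX\times A_L^n(I)$ agree with those over $\fX$; full faithfulness is then immediate, and essential surjectivity follows by induction along the defining filtration of a $\Sigma$-unipotent object, whose graded pieces $\pi_1^*F_j\otimes\pi_2^*M_{\xi_j}$ are $\cU_I$ of source-side $\Sigma$-constant objects. The several coordinates $t_i$ are handled one at a time. I expect this cohomological comparison, resting on the Dwork--Robba type non-Liouville estimates of \cite{kedlayaI}, \cite{sigma}, to be the only genuinely difficult point; everything else is formal.

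Finally, to see that $\cU_I$ restricts to an equivalence $\SLNM_{\fX\times A_L^n[0,0],\Sigma}\to\SLNM_{\fX\times A_L^n(I),\Sigma}$: since $\cU_I$ is already an equivalence on the ambient $\ULNM$ categories and sends $\SLNM$ into $\SLNM$, it remains only to show that $\cU_I(M)\in\SLNM_{\fX\times A_L^n(I),\Sigma}$ forces $M\in\SLNM_{\fX\times A_L^n[0,0],\Sigma}$. For this I would use the characterization: an object $N$ of $\ULNM$ over either base is $\Sigma$-semisimple if and only if the commuting operators $\pa_i$ (the $\dlog t_i$-components of the connection) act semisimply on it. ``Only if'' is clear, since on a $\Sigma$-constant object $\pa_i$ is the scalar $\xi_i$; for ``if'', the eigenvalues of the $\pa_i$ lie in $\Sigma_i\subseteq\Z_p\subseteq L$, so $N$ decomposes into the joint eigenspaces of the $\pa_i$, each of which is a sub-log-$\nabla$-module (the $\pa_i$ commute with the rest of the connection by integrability) on which every $\pa_i$ acts as a scalar, hence each of which is $\Sigma$-constant. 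Since $\cU_I$ intertwines, by its very definition, the operators $\pa_i$ on $M$ with those on $\cU_I(M)=\pi_1^*(\text{underlying module of }M)$, and since $\pa_i$ acts semisimply on the underlying module of $M$ if and only if its pullback along $\pi_1$ does (restrict along a point of $A_L^n(I)$), we conclude that $\cU_I(M)\in\SLNM$ implies $M\in\SLNM$. Hence $\cU_I$ restricts to an equivalence of the semisimple categories, which completes the proof.
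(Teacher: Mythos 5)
Your handling of the first equivalence follows the same route as the paper, which simply invokes \cite[1.15]{sigma} (your cohomological sketch is a plausible reconstruction of that result), and your deduction of full faithfulness of the second functor from the first is exactly the paper's argument. The divergence, and the one genuine problem, is in your proof of essential surjectivity of the semisimple functor. You reduce it to the claim that $\cU_I(M)\in\SLNM_{\fX\times A_L^n(I),\Sigma}$ forces $M\in\SLNM_{\fX\times A_L^n[0,0],\Sigma}$, and you justify this by a ``characterization'' of $\Sigma$-semisimplicity over either base in terms of semisimple action of the $\dlog t_i$-components of the connection. Over $\fX\times A_L^n(I)$ this is not well formed when $0\notin I$ (which is allowed, e.g. $I=(\lambda,1)$): there the $\dlog t_i$-component is a differential operator, not an $\cO$-linear endomorphism (it satisfies $\pa_i(fe)=t_i\frac{\pa f}{\pa t_i}e+f\pa_i(e)$), so ``acts semisimply'' has no meaning, and on the $\Sigma$-constant object $\pi_1^*F\otimes\pi_2^*M_{\xi}$ it is not the scalar $\xi_i$: it acts by $\xi_i+\nu_i$ on the $t^{\nu}$-isotypic part. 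Hence the ``only if'' direction of your characterization, which is precisely the step you use, fails as stated.

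The implication you want is nevertheless true and follows in one line from what you already have: if $\cU_I(M)$ is $\Sigma$-semisimple, then by definition $\cU_I(M)\cong\bigoplus_j\pi_1^*F_j\otimes\pi_2^*M_{\xi_j}=\bigoplus_j\cU_I(C_j)$ with each $C_j$ a $\Sigma$-constant object over $\fX\times A_L^n[0,0]$, and full faithfulness of $\cU_I$ on the unipotent categories transports this decomposition to $M\cong\bigoplus_j C_j$, so $M$ is $\Sigma$-semisimple. Better still, the whole converse implication is unnecessary: as in the paper, essential surjectivity of the semisimple functor is immediate from the definition of $\Sigma$-semisimplicity, since every $\Sigma$-constant object on $\fX\times A_L^n(I)$ is visibly $\cU_I$ of a $\Sigma$-constant source object and $\cU_I$ is additive. (Your linear-algebra argument for $\LNM_{\fX\times A_L^n[0,0],\Sigma}=\ULNM_{\fX\times A_L^n[0,0],\Sigma}$ and your eigenspace characterization of semisimplicity are fine over the base $A_L^n[0,0]$, where the $\pa_i$ really are commuting horizontal endomorphisms; the trouble lies only in importing that picture to the annulus side.)
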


\begin{proof} 
The first equivalence is a special case of \cite[1.15]{sigma}. 
The essential surjectivity of the second functor follows from 
the definition of $\Sigma$-semisimplicity and the full faithfulness 
of the second functor follows from that of the first functor. 
\end{proof}

We also prove certain full faithfulness 
property of log-$\nabla$-modules on 
relative annulus, which is a variant of \cite[1.14]{sigma}. 

\begin{proposition}\label{uniqueinj0}
Let $\fX$ be a smooth rigid space over $L$, let $\lambda$ be an 
element in $(0,1) \cap \Gamma^*$ and let 
$\Sigma_1 := \prod_{j=1}^n \Sigma_{1j}, \Sigma_2 := \prod_{j=1}^n\Sigma_{2j}$ 
be subsets of $\Z_p^n$ such that for any $j$ and for any 
$\xi_1 \in \Sigma_{1j}, \xi_2 \in \Sigma_{2j}$, we have 
$\xi_1 - \xi_2 \in \Z_p \setminus \Z_{<0}$. For $i=1,2$, let 
$E_i := (E_i,\nabla_i)$ be a $\Sigma_i$-unipotent 
log-$\nabla$-module on $\fX \times A_L^n[0,1)$ with respect to 
$t_1,...,t_n$ $($where $t_1,...,t_n$ are the canonical coordinate of 
$A_L^n[0,1))$ and let us put $E'_i := E_i |_{\fX \times A_K^n[\lambda,1)}$, 
which is a $\Sigma_i$-unipotent 
log-$\nabla$-module on $\fX \times A_L^n[\lambda,1)$. Then the 
restriction functor induces the isomorphism 
$$ \Hom (E_1,E_2) \os{=}{\lra} \Hom (E'_1,E'_2), $$
where $\Hom$ means the set of homomorphisms as log-$\nabla$-modules. 
\end{proposition}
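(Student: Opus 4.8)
The plan is to reduce the statement to the case of $\Sigma$-constant log-$\nabla$-modules and then to the structure of horizontal sections of a rank-one $M_\xi$ on a relative annulus, which is essentially the content of \cite[1.14]{sigma}. First I would note that $\Hom(E_1,E_2)$ is the space of global horizontal sections of the internal Hom $\cH om(E_1,E_2) := E_1^\vee \otimes E_2$, which is again a log-$\nabla$-module on $\fX \times A_L^n[0,1)$; by the hypothesis on $\Sigma_1,\Sigma_2$ it is $\Sigma$-unipotent for $\Sigma := \prod_j \{\xi_2 - \xi_1 \mid \xi_1 \in \Sigma_{1j}, \xi_2 \in \Sigma_{2j}\}$, a subset of $\prod_j(\Z_p \setminus \Z_{<0})$. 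So it suffices to show: for a $\Sigma$-unipotent log-$\nabla$-module $F$ on $\fX \times A_L^n[0,1)$ with $\Sigma \subseteq \prod_j(\Z_p\setminus \Z_{<0})$, restriction induces a bijection on horizontal sections $H^0_\nabla(\fX\times A_L^n[0,1),F) \os{=}{\lra} H^0_\nabla(\fX \times A_L^n[\lambda,1),F|_{\dots})$. Injectivity is immediate since a horizontal section vanishing on a non-empty admissible open of an annulus vanishes (the restriction map on $\cO$ is injective), so the real content is surjectivity.

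For surjectivity, I would argue by induction on the length $m$ of a filtration $0 = F_0 \subset F_1 \subset \cdots \subset F_m = F$ with $\Sigma$-constant quotients, reducing the extension problem along the short exact sequences $0 \to F_{m-1} \to F_m \to F_m/F_{m-1} \to 0$ and their duals/Homs, using a five-lemma-style diagram chase in the cohomology of $\nabla$; the point is that both $\Hom$ and $\Ext^1$ (i.e.\ $H^0_\nabla$ and $H^1_\nabla$) of log-$\nabla$-modules are controlled here. Concretely, for the base case $F$ is $\Sigma$-constant, so $F = \pi_1^* G \otimes \pi_2^* M_\xi$ with $\xi \in \Sigma$ and $G$ a $\nabla$-module on $\fX$; a horizontal section is then a section $s$ of $\pi_1^*G$ on the annulus with $\pi_2^*(\partial_i) s + \xi_i s = 0$, and expanding $s = \sum_{\mathbf{a} \in \Z^n} s_{\mathbf a}\, t^{\mathbf a}$ in a Laurent expansion in the $t_j$ (allowed on $A_L^n[\lambda,1)$ and on $[0,1)$ with only $\mathbf a \geq 0$ terms), the horizontality equation forces $(a_i + \xi_i)s_{\mathbf a} = 0$ for all $i$. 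Since $\xi_i \in \Z_p \setminus \Z_{<0}$, the scalar $a_i + \xi_i$ is a nonzero $p$-adic number whenever $a_i < 0$ (as $-a_i$ would be a positive integer equal to $\xi_i$ only if $\xi_i \in \Z_{>0}$, handled separately, and even then $a_i + \xi_i = 0$ forces $a_i \geq 0$ — wait, one must check: $a_i + \xi_i = 0$ with $a_i < 0$ gives $\xi_i = -a_i \in \Z_{>0}$, which is allowed, but then $a_i = -\xi_i < 0$; so in fact that coefficient survives). Re-examining: the condition $\xi_1 - \xi_2 \in \Z_p \setminus \Z_{<0}$ is precisely what guarantees that no such "negative" term can appear, so all Laurent coefficients with some $a_i < 0$ vanish, meaning every horizontal section on $A_L^n[\lambda,1)$ already extends (its Laurent expansion has no polar part), giving surjectivity.

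The step I expect to be the main obstacle is making the induction on the unipotence filtration clean: passing from horizontal sections to the $\Ext^1$ terms requires knowing that $H^1_\nabla$ of a $\Sigma$-constant log-$\nabla$-module also satisfies the analogous "restriction is injective" property (so that an extension class trivial on the sub-annulus is trivial on the whole annulus), which again comes down to the Laurent-coefficient analysis above but now for the connecting maps, and one must be careful that the filtration of $F = \cH om(E_1,E_2)$ is compatible with restriction and that the relevant $\Sigma$ for the graded pieces still lies in $\prod_j(\Z_p\setminus\Z_{<0})$ — this uses exactly the asymmetry in the hypothesis (differences $\xi_1 - \xi_2$, not $\xi_2 - \xi_1$). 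Once the diagram chase is set up, the result follows formally from Proposition \ref{homexteq} applied over $[0,1)$ and over $[\lambda,1)$ together with the base-case computation. I would model the write-up closely on the proof of \cite[1.14]{sigma}, indicating only the modifications needed to handle the relative base $\fX$ and the half-open interval $[0,1)$ in place of a closed disc.
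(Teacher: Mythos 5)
Your overall strategy --- pass to the internal Hom $E_1^\vee\otimes E_2$, d\'evissage along the unipotence filtration, and a Laurent-coefficient computation for the $\Sigma$-constant pieces --- is the same as the paper's, but as written there are two genuine problems. First, a sign error that your own aside exposes: the exponents of $E_1^\vee\otimes E_2$ are the differences $\xi_2-\xi_1$, and the hypothesis $\xi_1-\xi_2\in\Z_p\setminus\Z_{<0}$ puts them in $\Z_p\setminus\Z_{>0}$, not in $\Z_p\setminus\Z_{<0}$ as you claim. Run with your stated sign, the base-case computation does exactly what your parenthetical says: a Laurent coefficient $s_{\mathbf{a}}$ with some $a_i<0$ survives when $\xi_i=-a_i\in\Z_{>0}$, and the closing sentence (``the condition ... is precisely what guarantees ...'') does not repair this. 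The repair is to use the correct exponents $\eta_j=\xi_2-\xi_1\in\Z_p\setminus\Z_{>0}$, for which $a_i<0$ would force $\eta_i=-a_i\in\Z_{>0}$, excluded by hypothesis; this is exactly how the paper computes, where the internal Hom is a successive extension of modules $\pi^*M_\xi$ with $\eta_j\in\Z_p\setminus\Z_{>0}$ and the relative $H^0$ on both sides vanishes unless all $\eta_j\in\Z_{\leq 0}$, in which case it equals $\Gamma(\fX,\cO_{\fX})\prod_j t_j^{-\eta_j}$.

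Second, the step you yourself flag as the main obstacle --- the induction along the unipotence filtration --- is where the real content lies, and your proposal does not supply it. The five-lemma argument needs, besides the $H^0$-isomorphism for constant pieces, injectivity of the comparison map on $H^1$; asserting that this ``again comes down to the Laurent-coefficient analysis for the connecting maps'' is not an argument, and for your direct restriction map from $\fX\times A^n_L[0,1)$ to $\fX\times A^n_L[\lambda,1)$ it is not clear how the coefficient extraction is organized in the relative setting over $\fX$ (one also needs to reduce to affinoid $\fX$ and free modules before expanding in Laurent series). The paper resolves precisely this point: it first uses Proposition \ref{homexteq} to write $E_i=\cU_{[0,1)}(F_i)$, $E'_i=\cU_{[\lambda,1)}(F_i)$ with $F_i$ on $\fX\times A^n_L[0,0]$ and $\Hom(F_1,F_2)\cong\Hom(E_1,E_2)$, so that only the comparison between the $[0,0]$-model and $[\lambda,1)$ is needed; it reduces to affinoid $\fX$ and to $F=F_0\otimes\pi^*M_\xi$ with $F_0$ free via admissible hypercoverings and the five lemma; it passes to cohomology relative to $\fX$ by a Katz--Oda spectral sequence; and the key device is that ``taking the constant coefficient'' is a left inverse of the map of relative log de Rham complexes, giving injectivity on every $H^a$ at once (in particular on $H^1$), while $H^0$ is computed explicitly. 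If you keep your direct route, you must prove the analogous $H^1$-injectivity for constant modules yourself; as it stands the proposal reduces the proposition to exactly the point the paper's argument is designed to handle.
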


\begin{proof}
The proof is analogous to that of \cite[1.14]{sigma}. 
First, by Proposition \ref{homexteq}, there exists a 
$\Sigma_i$-unipotent log-$\nabla$-module $F_i \,(i=1,2)$ on 
$\fX \times A_L^n[0,0]$ with respect to $t_1,...,t_n$ (which are 
`the coodinate of $A_L^n[0,0]$') such that $E_i = \cU_{[0,1)}(F_i)$, and 
we have $\Hom(F_1,F_2) \os{=}{\lra} \Hom(E_1,E_2)$, 
$E'_i = \cU_{[\lambda,1)}(F_i)$. So it suffices to prove that 
the functor $\cU_{[\lambda,1)}$ induces the isomorphism 
$$ \Hom (F_1,F_2) \os{=}{\lra} \Hom (\cU_{[\lambda,1)}(F_1), 
\cU_{[\lambda,1)}(F_2)). $$ 
In the following, for a log-$\nabla$-module $N$ on $\fX \times A_L^n(J)$ 
($J = [0,0]$ or $[\lambda,1)$) with respect to $t_1,...,t_n$, 
we denote the associated log-$\nabla$-module 
on $\fX \times A_L^n(J)$ with respect to $t_1,...,t_n$ relative to $\fX$ 
by $\ol{N}$. \par 
Let us put 
$F = F_1^{\vee}\otimes F_2$
(as log-$\nabla$-module). Then it suffices to prove that the map 
$$ 
H^a(\fX, 
F \otimes \omega^{\b}_{\fX \times A^n_L[0,0]}) \lra 
H^a(\fX \times A_L^n[\lambda,1), 
\cU_{[\lambda,1)}(F) \otimes \omega^{\b}_{\fX \times A^n_L[\lambda,1)}) $$
is an isomorphism for $a=0$ and injective for $a=1$. 
We may assume that $\fX$ is affinoid by considering the 
spectral sequence induced by admissible hypercovering by affinoids. 
By the same technique and the five lemma, 
we may assume that $\fX$ is affinoid and that $F$ has the form 
$F_0 \otimes \pi^*M_{\xi}$ 
(where $\pi$ is `the projection $\fX \times A^n_L[0,0] \lra A^n_L[0,0]$') 
for some 
$\xi := \xi_2 - \xi_1 \, (\xi_i \in \Sigma_i)$ and 
for some $\nabla$-module 
$F_0$ on $\fX$ with $F_0$ free as $\cO_{\fX}$-module. \par 
By considering the Katz-Oda type spectral sequence for the diagram 
$\fX \times A^n_L(J) \allowbreak \lra \fX \lra \Spm K$ for $J = 
[0,0]$ or $[\lambda,1)$, we obtain the spectral sequence 
\begin{align*}
E_2^{a,b}& = H^a(\Gamma(\fX,\omega^{\b}_{\fX/L}) \otimes 
H^b(\fX\times A^n_L(J), \ol{F_J}
\otimes\omega^{\b}_{\fX \times A^n_L(J)/\fX})) \\ 
& \,\Longrightarrow\, 
H^{a+b}(\fX\times A^n_L(J), F_J \otimes\omega^{\b}_{\fX\times A^n_L(J)/L}), 
\end{align*}
where $F_J = F$ (resp. $\cU_{[\lambda,1)}(F)$) when $J=[0,0]$ (resp. 
$J=[\lambda, 1)$). 
From this, we see that it suffices to prove the map 
\begin{equation}\label{rest}
H^a(\fX, \ol{F} \otimes 
\omega^{\b}_{\fX \times A^n_L[0,0]/\fX}) \lra 
H^a(\fX \times A_L^n[\lambda,1), \ol{\cU_{[\lambda,1)}(F)} \otimes 
\omega^{\b}_{\fX \times A^n_L[\lambda,1)/\fX})
\end{equation} 
induced by $\cU_{[\lambda,1)}$ 
is an isomorphism for $a=0$ and injective for $a=1$. \par 
Since 
$\ol{F}$ is a finite direct sum of $\pi^*M_{\xi}$, we may assume that 
$\ol{F} = \pi^*M_{\xi}$. If we put $\xi = (\eta_j)_{j=1}^n$, 
we have $\eta_j \in \Z_p \setminus \Z_{> 0}$ for all $j$ 
by assumption on $\Sigma_i$'s. Then, when $a=0$, the both hand sides on 
\eqref{rest} are equal to $0$ if there exists some $j$ with 
$\eta_j \in \Z_p \setminus \Z$ and 
equal to $\Gamma(\fX, \cO_{\fX})\prod_{j=1}^n t_j^{-\eta_j}$ 
if $\eta_j \in \Z_{\leq 0}$ for all $j$. Hence 
they are isomorphic when $a=0$. \par 
For general $a$, the left hand side (resp. the right hand side) of 
\eqref{rest} (in the case $\ol{F} = \pi^*M_{\xi}$) 
is the $a$-th cohomology of the left hand side 
(resp. the right hand side) of the following map of complexes 
which is induced by $\cU_{[\lambda,1)}$: 
\begin{equation}\label{resti}
\Gamma(\fX, \ol{\pi^*M_{\xi}} \otimes 
\omega^{\b}_{\fX \times A^n_L[0,0]/\fX}) \lra 
\Gamma(\fX \times A_L^n[\lambda,1), \ol{\cU_{[\lambda,1)}(\pi^*M_{\xi})} 
\otimes 
\omega^{\b}_{\fX \times A^n_L[\lambda,1)/\fX}). 
\end{equation}
(Here the complex is defined as the log de Rham complex 
associated to the log-$\nabla$-module structure on 
$\ol{\pi^*M_{\xi}}$ and $\ol{\cU_{[\lambda,1)}(\pi^*M_{\xi})}$.) 
Since the map 
$$ 
\Gamma(\fX \times A_L^n[\lambda,1), \ol{\cU_{[\lambda,1)}(\pi^*M_{\xi})} 
\otimes 
\omega^{\b}_{\fX \times A^n_L[\lambda,1)/\fX}) \lra 
\Gamma(\fX, \ol{\pi^*M_{\xi}} \otimes 
\omega^{\b}_{\fX \times A^n_L[0,0]/\fX}) 
$$ 
of `taking the constant coefficient' gives a left inverse of \eqref{rest}, 
we see that the map \eqref{resti} induces injection on 
cohomologies. So the map \eqref{rest} is injective for any $a$ and 
the proof is finished. 
\end{proof}

\subsection{Isocrystals}

In this subsection, we review the definition of certain properties 
on isocrystals 
which we introduced in \cite{sigma} and recall some results proven there. 
We also add some more new terminologies and results which are 
useful in later sections. \par 
To do this, first we recall terminologies on frames. 

\begin{definition}[{\cite[2.2.4, 4.2.1]{kedlayaI}}] 
\begin{enumerate}
\item 
A frame $($or an affine frame$)$ is a triple 
$(X,\ol{X},\allowbreak \ol{\cX})$ consisting of 
$k$-varieties $X, \ol{X}$ and a $p$-adic affine formal scheme 
$\ol{\cX}$ of finite type over $\Spf O_K$ 
endowed with a closed immersion 
$i: \ol{X} \hra \ol{\cX}$ over $\Spf O_K$ and an open immersion 
$j: X \hra \ol{X}$ over $k$ such that 
$\ol{\cX}$ is formally smooth over $O_K$ on a neighborhood 
of $X$. 
We say that the frame encloses the pair $(X,\ol{X})$. 
\item 
A small frame is a frame $(X,\ol{X},\ol{\cX})$ such that 
$\ol{X}$ is isomorphic $($via $i$ as in $(1))$ 
to $\ol{\cX} \times_{\Spf O_K} \Spec k$ 
and that there exists 
an element $f \in \Gamma(\ol{X},\cO_{\ol{X}})$ with $X=\{f\not=0\}$. 
\end{enumerate}
\end{definition} 

\begin{remark}
In \cite{kedlayaI}, a frame is written as a tuple 
$(X,\ol{X},\ol{\cX},i,j)$, but we denote it here simply as a 
triple $(X,\ol{X},\ol{\cX})$. 
\end{remark}

\begin{definition}[{\cite[3.3]{sigma}}]\label{sigma3.3} 
Let $X \hra \ol{X}$ be an open immersion of smooth $k$-varieties such 
that $Z := \ol{X} \setminus X$ is a simple normal crossing 
divisor and let $Z = \bigcup_{i=1}^r Z_i$ be a decomposition of 
$Z$ such that $Z=\bigcup_{\scriptstyle 1 \leq i \leq r \atop 
\scriptstyle Z_i \not= \emptyset} Z_i$ gives the decomposition of 
$Z$ into irreducible components. 
A standard small frame enclosing $(X,\ol{X})$ 
is a small frame $\bX := (X,\ol{X},\ol{\cX})$ enclosing $(X,\ol{X})$ 
which satisfies the following condition$:$ 
There exist $t_1, ..., t_r \in \Gamma(\ol{\cX},\cO_{\ol{\cX}})$ such that, 
if we denote the zero locus of $t_i$ in $\ol{\cX}$ by $\cZ_i$, 
each $\cZ_i$ is irreducible $($possibly empty$)$ and that 
$\cZ = \bigcup_{i=1}^r\cZ_i$ $($which we call the lift of $Z)$ 
is a relative simple normal crossing divisor of $\ol{\cX}$ 
satisfying $Z_i = \cZ_i \times_{\ol{\cZ}} \ol{X}$. 
We call a pair $(\bX, (t_1,...,t_r))$ a charted standard small frame. 
When $r=1$, we call $\bX$ a smooth standard small frame and 
the pair $(\bX,t_1)$ a charted smooth standard small frame. 
\end{definition} 

We also introduce the notion of charted smooth standard small frame 
with generic point as follows (It is essentially already appeared in 
the paper \cite{sigma}, but it is convenient to give it a name): 

\begin{definition}
Let $X \hra \ol{X}$ be an open immersion of smooth $k$-varieties such 
that $Z := \ol{X} \setminus X$ is an irreducible smooth 
divisor. A charted smooth standard small frame 
with generic point enclosing $(X,\ol{X})$ is a triple 
$(\bX,t_1,L)$ consisting of a charted smooth standard small frame 
$(\bX,t_1) = ((X,\ol{X},\ol{\cX}),t_1)$ and an injection 
$\Gamma(\cZ,\cO_{\cZ}) \hra L$ $($where $\cZ := \{t_1=0\}$ is the 
lift of $Z)$ into a field $L$ endowed with a complete multiplicative 
norm which restricts to the supremum norm on $\Gamma(\cZ,\cO_{\cZ})$. 
\end{definition}

We introduce the notion of a morphism of charted smooth standard small frames 
(with generic points) as follows: 

\begin{definition}
A morphism $f:((X',\ol{X}',\ol{\cX}'),t') \lra ((X,\ol{X},\ol{\cX}),t)$ 
of charted smooth standard small frames is morphisms 
$X' \lra X, \ol{X}' \lra \ol{X}, \ol{\cX}' \os{\ti{f}}{\lra} \ol{\cX}$ 
compatible with the structure of smooth standard small frames 
such that $\ti{f}^*t = {t'}^n$ for some positive integer $n$. 
A morphism $((X',\ol{X}',\ol{\cX}'),t',L') \lra ((X,\ol{X},\ol{\cX}),t,L)$ 
of charted smooth standard small frames with generic points is a 
morphism $f: ((X',\ol{X}',\ol{\cX}'),t') \lra ((X,\ol{X},\ol{\cX}),t)$ 
of charted smooth standard small frames endowed with a continuous 
morphism $g: L \lra L'$ of fields such that, if we put 
$\cZ \allowbreak := \allowbreak \{t=0\}, \cZ' := \{t'=0\}$, the diagram 
\begin{equation*}
\begin{CD}
\Gamma(\cZ,\cO_{\cZ}) @>{f^*}>> \Gamma(\cZ',\cO_{\cZ'}) \\ 
@V{\cap}VV @V{\cap}VV \\ 
L @>g>> L'
\end{CD}
\end{equation*}
is commutative, where $f^*$ is the homomorphism induced by $f$. 
\end{definition}

As for the existence of a morphism of charted smooth standard small frames 
(with generic points), the following lemma will be useful later. 

\begin{lemma}\label{lifting}
Let $X \hra \ol{X}$ $(X' \hra \ol{X}')$ be an open immersion of 
affine smooth $k$-varieties such that $Z = \ol{X} \setminus X$ 
$(Z' = \ol{X}' \setminus X')$ is an irreducible smooth divisor defined as 
the zero section of $\ol{t} \in \Gamma(\ol{X},\cO_{\ol{X}})$ 
$(\ol{t}' \in \Gamma(\ol{X}',\cO_{\ol{X}'}))$. Let 
$f: (\ol{X}',Z') \lra (\ol{X},Z)$ be a log smooth morphism such that 
$f^*\ol{t} = {\ol{t}'}^n$ for some $n \in \N$ prime to $p$. 
Denote the morphism of pairs of schemes 
$(X',\ol{X}') \lra (X,\ol{X})$ induced by $f$ also by $f$. Assume 
moreover that we are given a charted smooth standard small frame 
$((X,\ol{X},\ol{\cX}),t)$ enclosing $(X,\ol{X})$ such that $t \in 
\Gamma(\ol{\cX},\cO_{\ol{\cX}})$ is a lift of $\ol{t}$. Then there exists 
a charted smooth standard small frame 
$((X',\ol{X}',\ol{\cX}'),t')$ enclosing $(X',\ol{X}')$ 
such that $t' \in 
\Gamma(\ol{\cX}',\cO_{\ol{\cX}'})$ is a lift of $\ol{t}'$ and a 
morphism $\ti{f}: ((X',\ol{X}',\ol{\cX}'),t') \lra ((X,\ol{X},\ol{\cX}),t)$ 
with $\ti{f}^*t = {t'}^n$ which is compatible with 
$f:(X',\ol{X}') \lra (X,\ol{X})$. 
\end{lemma}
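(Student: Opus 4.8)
The plan is to construct $\ol{\cX}'$ by lifting $\ol{X}'$ to a $p$-adic formal scheme over $\ol{\cX}$, using the log smoothness of $f$ together with the affineness and smoothness of all schemes in sight. First I would reduce to the case where $\ol{X}' \lra \ol{X}$ factors as a composite of the Kummer cover obtained by adjoining an $n$-th root of $\ol{t}$ and an \'etale morphism; concretely, since $f^*\ol{t} = {\ol{t}'}^n$ with $(n,p)=1$, the morphism $(\ol{X}',Z') \lra (\ol{X},Z) \times_{(\Af^1,0),\,x \mapsto x^n} (\Af^1,0)$ is log \'etale, hence (as both sides are log smooth over $k$ with the divisorial log structure) it is \'etale on underlying schemes in a neighborhood of $Z'$; shrinking $\ol{X}$ if necessary this holds globally. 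So it suffices to lift the Kummer part and then lift an \'etale morphism.

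For the Kummer part, set $\ol{\cX}'_0 := \ol{\cX}[s]/(s^n - t)$, a $p$-adic affine formal scheme over $\Spf O_K$; since $(n,p)=1$ this is formally smooth over $O_K$ on the locus where $t \neq 0$ together with the divisor $\{s=0\}$ (the Jacobian criterion: $\partial(s^n-t)/\partial s = n s^{n-1}$ is a unit away from $s=0$, and along $s=0$ the pair is log smooth). Its reduction mod $\fm_K$ is the corresponding Kummer cover of $\ol{X}$, which receives a map from the Kummer cover $\ol{X} \times_{(\Af^1,0)}(\Af^1,0)$; set $t' := s$, so $\ti f^* t = {t'}^n$ by construction. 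For the remaining \'etale part: given that $\ol{X}' \lra \ol{X} \times_{(\Af^1,0)} (\Af^1,0)$ is \'etale and affine, and $\ol{\cX}'_0$ is a $p$-adic formal lift of the target, standard formal lifting of \'etale morphisms (e.g.\ via the topological invariance of the \'etale site, applied mod $\fm_K^a$ for each $a$ and passing to the limit) produces an affine $p$-adic formal scheme $\ol{\cX}'$ \'etale over $\ol{\cX}'_0$, lifting $\ol{X}'$, with an induced morphism $\ol{\cX}' \lra \ol{\cX}$ lifting $f$. Pull back $t'$ from $\ol{\cX}'_0$; it is a lift of $\ol{t}'$ and still satisfies $\ti f^* t = {t'}^n$. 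One checks that $\cZ' := \{t'=0\}$ is irreducible (it maps finite flat of degree prime to $p$, or is empty, onto $\cZ$, and $Z'$ is irreducible by hypothesis, so its lift is too since $\cZ'$ is a formal neighborhood of $Z'$) and is a relative simple normal crossing (here just a smooth divisor) on $\ol{\cX}'$ with $Z' = \cZ' \times_{\ol{\cZ'}} \ol{X}'$; and that $X' = \{f' \neq 0\}$ for a suitable lift $f'$ of the defining equation, so that $((X',\ol{X}',\ol{\cX}'),t')$ is indeed a charted smooth standard small frame. Finally, one verifies $\ol{\cX}'$ is formally smooth over $O_K$ near $X'$: it is \'etale over $\ol{\cX}'_0$, which is formally smooth over $O_K$ away from $\{s=0\} = \cZ'$, and $X' \cap \cZ' = \emptyset$.

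The main obstacle I expect is organizing the reduction to the Kummer-times-\'etale factorization cleanly while keeping control of the chart $t'$: one must arrange that after the \'etale base change the function pulling back to ${t'}^n$ is literally the pulled-back $s$, not merely an associate of it, so that the condition $\ti f^* t = {t'}^n$ holds on the nose rather than up to a unit. This is handled by choosing $t'$ to be the pullback of $s$ from $\ol{\cX}'_0$ at the very end, but it requires that the \'etale lift $\ol{\cX}' \lra \ol{\cX}'_0$ be chosen compatibly with the divisor $\{s=0\}$, i.e.\ that $Z' \subseteq \ol{X}'$ is the preimage of $\{s=0\}$ — which is exactly the hypothesis $f^*\ol t = {\ol t'}^n$ read on $\ol{X}'$, so no genuine difficulty arises, only bookkeeping. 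A secondary technical point is the affineness of $\ol{\cX}'$: the \'etale lift of an affine scheme over an affine base is affine by descent/the structure of \'etale morphisms, so this is automatic.
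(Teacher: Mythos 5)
There is a genuine gap at the first step of your reduction. From the hypotheses you only know that $f$ is log \emph{smooth}, and the factored morphism $(\ol{X}',Z') \lra (\ol{X},Z) \times_{(\Spec k[\ol{t}],\{\ol{t}=0\})} (\Spec k[\ol{t}'],\{\ol{t}'=0\})$ is therefore log smooth (the second leg of the factorization is log \'etale, so one cancels it via Kato's criterion), but it is \emph{not} log \'etale in general, and your assertion that it becomes \'etale on underlying schemes near $Z'$ is false. For example, take $n=1$ and $f$ any strict smooth morphism of positive relative dimension (say $\ol{X}' = \ol{X} \times \Af^1$ with the pulled-back divisor): the factored map is $f$ itself, smooth but nowhere \'etale. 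This is not a marginal case — the lemma is invoked in the paper precisely for surjective strict smooth morphisms (see the proof of Proposition \ref{sig_pre1}/\ref{sig_pre2}(1)), so a proof that only handles the \'etale case does not suffice. Consequently your lifting mechanism, topological invariance of the \'etale site, does not apply: it lifts \'etale morphisms, not smooth ones.

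The paper's argument has the same skeleton as yours (factor through the Kummer base change $\ol{X} \times_{\Spec k[\ol{t}]} \Spec k[\ol{t}']$, lift the Kummer part explicitly to $\ol{\cX} \times_{\Spf O_K\{t\}} \Spf O_K\{t'\}$ with $t \mapsto {t'}^n$, then lift the remaining morphism), but it only claims that the remaining morphism is strict and log smooth, hence \emph{smooth} on underlying affine schemes, and then lifts a smooth morphism of affine (formal) schemes over the formal Kummer cover — possible because the relevant deformation-theoretic obstructions vanish on affines. Your construction can be repaired along these lines: replace ``\'etale'' by ``smooth'' in the factorization, and replace the \'etale-site lifting by lifting of smooth affine morphisms mod $\fm_K^a$ and passing to the limit; the bookkeeping you describe for keeping $\ti{f}^*t = {t'}^n$ on the nose (pulling back $s=t'$ from the Kummer cover at the end) then goes through as in the paper, and formal smoothness of $\ol{\cX}'$ over $O_K$ holds everywhere (not just away from $\cZ'$), since $\ol{\cX}'$ is smooth over the Kummer cover, which is itself smooth over $\Spf O_K$ because $(n,p)=1$.
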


\begin{proof}
In the situation of the lemma, we have a diagram 
\begin{equation*}
\begin{CD}
(\ol{X}',Z') @>f>> (\ol{X},Z) \\ 
@VVV @VVV \\ 
(\Spec k[\ol{t}'], \{\ol{t}'=0\}) @>{f_0}>> (\Spec k[\ol{t}], \{\ol{t}=0\}), 
\end{CD}
\end{equation*}
where the vertical arrows are induced from $\ol{t}, \ol{t}'$ and 
$f_0$ is induced by the ring homomorphism 
$k[\ol{t}] \lra k[\ol{t}']: \, \ol{t} \mapsto {\ol{t}'}^n$. 
Then $f$ is factorized as 
\begin{equation}\label{qwerty}
(\ol{X}',Z') \lra (\ol{X},Z) \times_{(\Spec k[\ol{t}], \{\ol{t}=0\})} 
(\Spec k[\ol{t}'], \{\ol{t}'=0\}) \lra  (\ol{X},Z). 
\end{equation}
Then $f$ is log smooth by assumption and 
the second morphism in \eqref{qwerty} is log etale because so is $f_0$. 
Hence the first morphism in \eqref{qwerty} is also log smooth by 
\cite[3.5]{kato}. Since it is strict by assumption, it induces 
the smooth morphism $g: \ol{X}' \lra \ol{X} \times_{\Spec k[\ol{t}]} 
\Spec k[\ol{t}']$ of affine schemes. Hence there exists a 
formal scheme $\ol{\cX}'$ with $\ol{\cX}' \otimes_{O_K} k = \ol{X}'$ and 
a smooth morphism $\ti{g}: \ol{\cX}' \lra \ol{\cX} 
\times_{\Spf O_K\{t\},\ti{f}_0} \Spf O_K\{t'\}$ lifting $g$, where 
$\ti{f}_0:\Spf O_K\{t'\} \lra \Spf O_K\{t\}$ is the morphism induced by 
the ring homomorphism $O_K\{t\} \lra O_K\{t'\}; \, t \mapsto {t'}^n$. 
Then $((X',\ol{X}',\ol{\cX}'),t':=\ti{g}^*t')$ defines a 
charted smooth standard small frame, and the composite 
$$ 
 \ol{\cX}' \os{\ti{g}}{\lra} \ol{\cX} 
\times_{\Spf O_K\{t\},\ti{f}_0} \Spf O_K\{t'\} \os{\text{proj.}}{\lra} 
 \ol{\cX} $$ 
induces the morphism $\ti{f}$ of charted smooth standard small frames 
as in the statement of the lemma. So we are done. 
\end{proof}

\begin{remark}\label{liftingrem}
In this remark, let the notation be as in Lemma \ref{lifting} and 
we denote the zero locus of $t$ ($t'$) in $\ol{\cX}$ ($\ol{\cX}'$) 
by $\cZ$ ($\cZ'$). Then, 
by looking the proof of Lemma \ref{lifting} carefully, we see the following: 
The morphism $(\ol{\cX}',\cZ') \lra (\ol{\cX},\cZ)$ defining 
$\ti{f}$ is log smooth, and it is strict smooth when $n=1$. Also, if the morphism $Z' \lra Z$ induced by $f$ is an isomorphism, the morphism 
$\cZ' \lra \cZ$ induced by $\ti{f}$ 
is also an isomorphism because it is 
a lift of the morphism $Z' \lra Z$ to a morphism of $p$-adic smooth formal 
schemes over $\Spf O_K$. 
So, if the morphism $Z' \lra Z$ induced by $f$ is an isomorphism and 
if we are given 
a charted smooth standard small frame with generic point 
$((X,\ol{X},\ol{\cX}),t,L)$, the morphism $\ti{f}$ can be enriched to 
a morphism of charted smooth standard small frames with generic points of 
the form 
$(\ti{f},\id): 
((X',\ol{X}',\ol{\cX}'),t',L) \lra ((X,\ol{X},\ol{\cX}),t,L).$
\end{remark}

Let $j: X \hra \ol{X}$ be an open immersion of smooth $k$-varieties 
such that $\ol{X} \setminus X =: Z$ is a 
simple normal crossing divisor. Then we have the log scheme 
$(\ol{X},Z)$ (see Convention for this notation) and 
the category of locally free log convergent 
isocrystals $\Isocl(\ol{X},Z)$ on $(\ol{X},Z)$ over $K$. 
We recall the notion of `having exponents in $\Sigma$' for an object in 
$\Isocl(\ol{X},Z)$, following \cite[3.7]{sigma}. (We also introduce 
the notion of `having exponents in $\Sigma$ with semisimple residues'.) 

\begin{definition}\label{defsigexpo}
Let $X \hra \ol{X}$ be an open immersion of smooth $k$-varieties 
such that $Z:=\ol{X} \setminus X$ is a simple normal crossing divisor and let 
$Z = \bigcup_{i=1}^r Z_i$ be the decomposition of $Z$ by irreducible 
components. Let $\Sigma = \prod_{i=1}^r\Sigma_i$ be a 
subset of $\Z_p^r$. 
Then we say that an object $\cE$ in $\Isocl(\ol{X},Z)$ 
has exponents in $\Sigma$ $($resp. has exponents in $\Sigma$ with 
semisimple residues$)$ 
if there exist an affine open covering 
$\ol{X} = \bigcup_{\alpha\in \Delta} \ol{U}_{\alpha}$ and 
charted standard small frames 
$(
(U_{\alpha},\ol{U}_{\alpha},\ol{\cX}_{\alpha}), 
(t_{\alpha,1}, ..., t_{\alpha,r}))$ enclosing 
$(U_{\alpha},\ol{U}_{\alpha})$ $(\alpha \in \Delta$, where we put 
$U_{\alpha} := X \cap \ol{U}_{\alpha})$ such that, for 
any $\alpha \in \Delta$ and any $i$ $(1 \leq i \leq r)$, 
all the exponents of the log-$\nabla$-module $E_{\cE, \alpha}$ 
on $\ol{\cX}_{\alpha,K}$ induced by $\cE$ along the locus 
$\{t_{\alpha,i}=0\}$ are contained in $\Sigma_i$ $($resp. 
all the exponents of the log-$\nabla$-module $E_{\cE, \alpha}$ 
on $\ol{\cX}_{\alpha,K}$ induced by $\cE$ along the locus 
$\{t_{\alpha,i}=0\}$ are contained in $\Sigma_i$ and there exists 
some polynomial $P_{\alpha,i}(x) \in \Z_p[x]$ 
without any multiple roots such that 
$P_{\alpha,i}(\res_i)=0$ holds, 
where $\res_i$ is the residue of $E_{\cE, \alpha}$ 
along $\{t_{\alpha,i}=0\})$. 
We denote the category of objects in $\Isocl(\ol{X},Z)$ 
having exponents in $\Sigma$ $($resp. having exponents in $\Sigma$ with 
semisimple residues$)$ by $\Isocl(\ol{X},Z)'_{\Sigma}$ $($resp. 
$\Isocl(\ol{X},Z)'_{\Sigmass})$. 
\end{definition}

As a variant of \cite[3.8]{sigma}, we can prove the following: 

\begin{lemma}\label{wdlemma}
Let $(X,\ol{X}), Z := \bigcup_{i=1}^r Z_i, \Sigma$ be as above and let 
$\cE$ be an object in the category $\Isocl(\ol{X},Z)$. Then$:$ 
\begin{enumerate}
\item 
$\cE$ 
has exponents in $\Sigma$ $($resp. has exponents in $\Sigma$ with 
semisimple residues$)$ if and only if the following condition 
is satisfied$:$ For any 
affine open subscheme $\ol{U} \hra \ol{X}$, 
any charted standard small frame 
$((U,\ol{U},\ol{\cX}), 
(t_{1}, ..., t_{r}))$ enclosing 
$(U,\ol{U})$ $($where we put 
$U:= X \cap \ol{U})$ and for any $i$ $(1 \leq i \leq r)$, 
all the exponents of the log-$\nabla$-module $E_{\cE}$ 
on $\ol{\cX}_{K}$ induced by $\cE$ along the locus $\{t_{i}=0\}$ 
are contained in $\Sigma_i$ $($resp. 
all the exponents of the log-$\nabla$-module $E_{\cE}$ 
on $\ol{\cX}_{K}$ induced by $\cE$ along the locus 
$\{t_{i}=0\}$ are contained in $\Sigma_i$ and there exists 
some polynomial $P_i(x) \in \Z_p[x]$ 
without multiple roots such that 
$P_i(\res_i)=0$ holds, where $\res_i$ is the residue of $E_{\cE}$ 
along $\{t_{i}=0\})$. 
\item 
$\cE$ 
has exponents in $\Sigma$ $($resp. has exponents in $\Sigma$ with 
semisimple residues$)$ if and only if the following condition 
is satisfied$:$ 
there exist affine open subschemes 
$\ol{U}^{(\alpha)} \subseteq \ol{X} \setminus Z_{\sing}$ 
$($where $Z_{\sing}$ is the set of singular points of $Z)$ 
containing 
the generic point of $Z_{\alpha}$  $(1 \leq \alpha \leq r)$, 
charted smooth standard small frames 
$(
(U^{(\alpha)},\ol{U}^{(\alpha)},
\ol{\cX}^{(\alpha)}), 
t^{(\alpha)})$ enclosing 
$(U^{(\alpha)},\ol{U}^{(\alpha)})$ $($where we put 
$U^{(\alpha)} := X \cap \ol{U}^{(\alpha)})$, 
such that, for any $\alpha$ $(1 \leq \alpha \leq r)$, 
all the exponents of the log-$\nabla$-module $E_{\cE}^{(\alpha)}$ 
on $\ol{\cX}^{(\alpha)}_{K}$ induced by $\cE$ along the locus 
$\{t^{(\alpha)}=0\}$ 
are contained in $\Sigma_{\alpha}$ $($resp. 
all the exponents of the log-$\nabla$-module $E_{\cE}^{(\alpha)}$ 
on $\ol{\cX}^{(\alpha)}_{K}$ induced by $\cE$ along the locus 
$\{t^{(\alpha)}=0\}$ are contained in $\Sigma_{\alpha}$ and there exists 
some polynomial $P^{(\alpha)}(x) \in \Z_p[x]$ 
without multiple roots such that 
$P^{(\alpha)}(\res^{(\alpha)})=0$ holds, where 
$\res^{(\alpha)}$ is the residue of $E_{\cE,\alpha}$ 
along $\{t^{(\alpha)}=0\})$. 
\end{enumerate}
\end{lemma}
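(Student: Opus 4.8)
The plan is to reduce both statements to the well-definedness result \cite[3.8]{sigma}, which presumably asserts that the condition "has exponents in $\Sigma$" (without the semisimple residue refinement) is independent of the choice of affine open covering and charted standard small frames. The key observation is that the extra data in the refined notion—the existence of a polynomial $P_i(x) \in \Z_p[x]$ without multiple roots annihilating $\res_i$—is a condition on the residue endomorphism, and residues are functorial for morphisms of charted standard small frames. So the whole argument hinges on comparing residues computed with respect to two different frames. First I would treat (1). The "only if" direction is trivial once we know the condition is well-defined (a single frame is a special case of the data in Definition \ref{defsigexpo}), so the content is the "if" direction, or rather the claim that the pointwise-on-frames condition and the cover-existence condition agree. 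I would argue that given two charted standard small frames $((U,\ol{U},\ol{\cX}),(t_i))$ and $((U,\ol{U},\ol{\cX}'),(t'_i))$ enclosing the same $(U,\ol{U})$, one can (after shrinking $\ol{U}$, which is harmless by a connectedness/density argument since $Z_i$ is irreducible) find a common refinement frame mapping to both, using the smoothness of the formal schemes over $O_K$ to lift the identity on $\ol{U}$; then the residues along $\{t_i=0\}$ and $\{t'_i=0\}$ are conjugate (this is exactly the computation underlying \cite[3.8]{sigma} and \cite[1.24]{curvecut}), hence have the same minimal polynomial, so having a square-free annihilating polynomial in $\Z_p[x]$ is preserved. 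This gives (1) for both versions simultaneously.

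For (2) I would exploit that the generic point of each $Z_\alpha$ lies in $\ol{X} \setminus Z_{\sing}$, where the divisor $Z$ is smooth and irreducible locally, so the relevant log structure near that point involves a single branch and the residue $\res_\alpha$ is computed by restricting the log-$\nabla$-module to (a formal neighborhood of) $Z_\alpha$. The point is that the exponents along $Z_i$, and the minimal polynomial of $\res_i$, are determined by the behavior of $\cE$ at the generic point of $Z_i$: restriction of a log-$\nabla$-module to a dense open of the component does not change the residue or its minimal polynomial, since both are defined via the induced endomorphism on $E|_{\fD_i}$ and $E|_{\fD_i}$ is a locally free sheaf on an irreducible space (so its endomorphism algebra injects into the stalk at the generic point). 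Thus the condition checked on the big covering of Definition \ref{defsigexpo} and the condition checked only near the generic points agree. Concretely, from a covering as in Definition \ref{defsigexpo} one produces frames as in (2) by intersecting with $\ol{X}\setminus Z_{\sing}$ and passing to an open containing the generic point of $Z_\alpha$; conversely, from frames as in (2) together with (1) one recovers the condition on any frame by spreading out from the generic point. Lemma \ref{lifting} and Remark \ref{liftingrem} are the tools that let us compare frames: they produce morphisms of charted smooth standard small frames lifting a given log smooth (here strict smooth, $n=1$) morphism of pairs, and pullback along such a morphism sends exponents to exponents and is compatible with residues.

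The main obstacle I expect is the frame-comparison step: given two standard small frames over overlapping (or shrunk) opens, one must produce a third frame dominating both and check that the residue endomorphisms match up to conjugation. This is not automatic because the formal lifts $\ol{\cX}$ are only unique up to non-canonical isomorphism, and the sections $t_i$ cutting out the lifts $\cZ_i$ are also only canonical modulo higher-order terms; one needs the deformation-theoretic fact that any two smooth formal lifts of the same smooth affine $k$-variety are (locally, hence here globally since everything is affine) isomorphic compatibly with a lift of a chosen section, together with the fact established in \cite[1.24]{curvecut} and \cite[3.2.4]{kedlayaI} that the residue is insensitive to such choices. Once this comparison lemma is in hand—which is essentially the content of \cite[3.8]{sigma}, and I would cite it rather than reprove it—the semisimple-residue refinement comes along for free because "minimal polynomial is square-free" is a conjugation-invariant, base-extension-compatible property, and the only new thing to verify is that the polynomial can be taken in $\Z_p[x]$ rather than $K[x]$, which follows from integrality of the residue (its eigenvalues lie in $\Sigma \subseteq \Z_p$) combined with the fact that its minimal polynomial already has coefficients in $K$ by \cite[1.24]{curvecut} and has roots that are algebraic integers over $\Z_p$, hence lies in $\Z_p[x]$ after clearing the (trivial) denominators.
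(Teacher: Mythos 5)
Your proposal is correct and follows essentially the same route as the paper: part (1) is reduced to \cite[3.8]{sigma} together with the compatibility of residues under the frame comparison carried out there (so a square-free annihilating polynomial in $\Z_p[x]$ transfers directly, with no need for your detour through integrality of the minimal polynomial), and part (2) rests, exactly as in the paper, on the fact that the exponent/residue condition along $\{t_i=0\}$ may be checked after restricting to a small open meeting $Z_i$ and then compared with the generic-point frame. The one cosmetic caveat is that your appeal to ``the stalk at the generic point'' of $E|_{\fD_i}$ should be phrased rigid-analytically --- the paper instead invokes the injectivity of the restriction map on global sections of $\cE{\rm nd}(E_{\cE})$ over the tube $\{t_i=0\}$ (the argument between 1.2 and 1.3 of \cite{sigma}) --- but this is the same mechanism.
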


\begin{proof}
First we prove (1). 
In the case of `having exponents in $\Sigma$', this is proven in 
\cite[3.8]{sigma}. In the case of `having exponents in $\Sigma$ with 
semisimple residues', we can prove the lemma in the same way, as follows: 
Let $\cE$ be an object in $\Isocl(\ol{X},Z)$ 
with exponents in $\Sigma$ with semisimple residues and take 
$((U,\ol{U},\ol{\cX}), (t_{1}, ..., t_{r}))$, $E_{\cE}$ as in 
(1). It suffices to prove that there exists 
some polynomial $P_i(x) \in \Z_p[x]$ without multiple roots such that 
$P_i(\res_i)=0$ holds, where $\res_i$ is the residue of $E_{\cE}$ 
along $\{t_{i}=0\}$. \par 
Let $\ol{X} = \bigcup_{\alpha \in \Delta} \ol{U}_{\alpha}$ and take 
$(
(U_{\alpha},\ol{U}_{\alpha},\ol{\cX}_{\alpha}), 
(t_{\alpha,1}, ..., t_{\alpha,r}))$, $E_{\cE,\alpha}$ and $P_{\alpha,i}(x) 
\in \Z_p[x]$ 
as in Definition \ref{defsigexpo}. By shrinking $\ol{\cX}$ and 
$\ol{\cX}_{\alpha}$ appropriately, we may assume that 
$\ol{U} = \ol{U}_{\alpha}$ for some $\alpha$. Then, in the proof of 
\cite[3.8]{sigma}, we constructed the diagram 
\begin{equation}\label{wdeq1}
\ol{\cX}_{\alpha,K}=]\ol{U}[_{\ol{\cX}_{\alpha}} \os{\pi_1}{\lla}\,
]\ol{U}[^{\log}_{\ol{\cX}_{\alpha} \times \ol{\cX}} \,\cong\, 
]\ol{U}[_{\hat{{\Bbb A}}^d \times \ol{\cX}} \os{s}{\lla}\, ]\ol{U}[_{\ol{\cX}} 
= \ol{\cX}_K 
\end{equation}
and proved that the residue of $E_{\cE,\alpha}$ along $\{t_{\alpha,i}=0\}$ 
is pulled back by \eqref{wdeq1} 
to the residue $\res_i$ of $E_{\cE}$ along $\{t_{i}=0\}$. 
 So we have $P_{\alpha,i}(\res_i)=0$ and so we have proved (1). \par
Next we prove (2). Let us take 
$((U,\ol{U},\ol{\cX}), (t_{1}, ..., t_{r}))$, $E_{\cE}$ as in 
(1). Then it suffices to prove that 
all the exponents of $E_{\cE}$ along 
$\{t_{i}=0\}$ are contained in $\Sigma_i$ (and there exists 
some polynomial $P_i(x) \in \Z_p[x]$ without multiple roots such that 
$P_i(\res_i)=0$ holds, where $\res_i$ is the residue of $E_{\cE}$ 
along $\{t_{i}=0\}$). Let us consider the intersection 
$\ol{U} \cap \ol{U}^{(i)} \cap Z_i$. When it is 
empty, we have $\ol{U} \cap Z_{i} = \emptyset$ and so the 
locus $\{t_{i}=0\}$ is empty. Hence the assertion 
we have to prove is vacuous in this case. So let us consider the case 
where $\ol{U} \cap \ol{U}^{(i)} \cap Z_i$ is non-empty. 
Then we can take a non-empty 
affine open formal subscheme $\ol{\cX}_{i}$ of $\ol{\cX}$ with 
$\ol{\cX}_i \otimes_{O_K} k \subseteq \ol{U} \cap \ol{U}^{(i)}$ and 
$\ol{\cX}_i \cap \{t_i = 0\}$ non-empty. 
Let 
$F_{\cE} := {\cE}\mathrm{nd} (E_{\cE})$. Then, by the argument in 
\cite[between 1.2 and 1.3]{sigma}, the restriction map 
$$\Gamma(\{t_i = 0\}, F_{\cE}) \lra 
\Gamma(\ol{\cX}_{i,K} \cap \{t_i = 0\}, F_{\cE})$$
is injective. So, it suffices to show that 
$E_{\cE}|_{\ol{\cX}_{i,K}}$ has exponents in $\Sigma_i$ (and 
$P^{(i)}(\res_i) = 0$), that is, we may assume that 
$\ol{U} \subseteq \ol{U}^{(i)}$ to prove the desired assertion. 
Then, by shrinking $\ol{U}^{(i)}$, we may assume that 
$\ol{U} = \ol{U}^{(i)}$. In this case, we have the diagram 
\eqref{wdeq1} with $\ol{\cX}_{\alpha,K}$ replaced by $\ol{\cX}^{(i)}_K$. 
So the residue of $E_{\cE}^{(i)}$ along $\{t^{(i)}=0\}$ 
is pulled back 
to the residue $\res_i$ of $E_{\cE}$ along $\{t_{i}=0\}$. 
Hence $E_{\cE}$ has exponents in $\Sigma_i$ (and $P^{(i)}(\res_i)=0$) and 
thus we have also proved (2). 
\end{proof} 

Next we recall the notion of `having $\Sigma$-unipotent generic 
monodromy' for an object in $\Isocd(X,\ol{X})$. We introduce also the 
notion of `having $\Sigma$-semisimple generic 
monodromy'. 

\begin{definition}\label{defsiggenmono}
Let $X \hra \ol{X}$ be an open immersion of smooth $k$-varieties 
such that $Z:=\ol{X} \setminus X$ is a simple normal crossing divisor.
Let $Z = \bigcup_{i=1}^rZ_i$ be the decomposition of $Z$ by 
irreducible components and let 
$Z_{\sing}$ be the set of singular points of $Z$. 
Let $\Sigma = \prod_{i=1}^r\Sigma_i$ be a 
subset of $\Z_p^r$. Then we say that an overconvergent isocrystal 
$\cE$ on $(X,\ol{X})$ over $K$ has $\Sigma$-unipotent generic monodromy 
$($resp. $\Sigma$-semisimple generic monodromy$)$ 
if there exist affine open subschemes 
$\ol{U}_{\alpha} \subseteq \ol{X} \setminus Z_{\sing}$ containing 
the generic point of $Z_{\alpha}$  $(1 \leq \alpha \leq r)$, 
charted smooth standard small frames with generic point 
$(
(U_{\alpha},\ol{U}_{\alpha},\ol{\cX}_{\alpha}), 
t_{\alpha},L_{\alpha})$ enclosing 
$(U_{\alpha},\ol{U}_{\alpha})$ $($where we put 
$U_{\alpha} := X \cap \ol{U}_{\alpha})$ 
satisfying 
the following condition$:$ For 
any $1 \leq \alpha \leq r$, there exists some $\lam \in (0,1)\cap\Gamma^*$ 
such that the $\nabla$-module $E_{\cE, \alpha}$ associated to 
$\cE$ is defined on $\{x \in \ol{\cX}_{\alpha,K} \,\vert\, 
|t_{\alpha}(x)| \geq \lam\}$ and that the restriction of $E_{\cE,\alpha}$ 
to $A^1_{L_{\alpha}}[\lam,1)$ is $\Sigma_{\alpha}$-unipotent $($resp. 
$\Sigma_{\alpha}$-semisimple$)$. \par 
We denote the category of objects in $\Isocd(X,\ol{X})$ 
having $\Sigma$-unipotent generic monodromy $($resp. 
having $\Sigma$-semisimple generic 
monodromy$)$ by $\Isocd(X, \ol{X})'_{\Sigma}$ $($resp. 
$\Isocd(X, \ol{X})'_{\Sigmass})$. 
\end{definition} 

Note that the notion of having $\Sigma$-unipotent generic monodromy 
($\Sigma$-semisimple generic monodromy) 
depends 
only on the image $\ol{\Sigma}$ of $\Sigma$ in $\Z_p^r/\Z^r$ in the 
sense that $\cE$ has $\Sigma$-unipotent generic monodromy 
($\Sigma$-semisimple generic monodromy) 
if and only if 
$\cE$ has $\tau(\ol{\Sigma})$-unipotent generic monodromy 
($\tau(\ol{\Sigma})$-semisimple generic monodromy) 
for some (or any) 
section $\tau:\Z_p^r/\Z^r \lra \Z_p$ of the form $\tau = \prod_{i=1}^r 
\tau_i$ of the canonical projection 
$\Z_p^r \lra \Z_p^r/\Z^r$. (See \cite[1.4]{sigma} or the paragraph 
after Definition \ref{unipdef}.) Hence 
it is allowed to say 
that $\cE$ has $\ol{\Sigma}$-unipotent generic 
monodromy ($\ol{\Sigma}$-semisimple generic 
monodromy) and denote by 
$\Isocd(X, \ol{X})'_{\ol{\Sigma}}$ $($resp. 
$\Isocd(X, \ol{X})'_{\olSigmass})$
by abuse of terminology. \par 
Then the main theorem of \cite{sigma} is described as follows. 
(We add also the `semisimple' variant of the theorem.)

\begin{theorem}\label{sigmamain}
Let $X \hra \ol{X}$ be an open immersion of smooth $k$-varieties 
such that $Z :=\ol{X} \setminus X$ is a simple normal crossing divisor and let 
$Z=\bigcup_{i=1}^rZ_i$ be the decomposition of $Z$ into irreducible 
components. Let $\ol{\Sigma} := \prod_{i=1}^r\ol{\Sigma}_i$ be a subset of 
$\Z_p^r/\Z^r$ which is $\NLD$ and let $\tau:\Z_p^r/\Z^r \lra \Z_p$ be
 a section of the form $\tau = \prod_{i=1}^r 
\tau_i$ of the canonical projection 
$\Z_p^r \lra \Z_p^r/\Z^r$. Then we have the canonical 
equivalences of categories 
\begin{align}
& j^{\dagger}: \Isocl(\ol{X},Z)'_{\tau(\ol{\Sigma})} \os{=}{\lra} 
  \Isocd(X,\ol{X})'_{\ol{\Sigma}}, \label{sigmaeq1} \\ 
& j^{\dagger}: \Isocl(\ol{X},Z)'_{\tau(\ol{\Sigma})\text{-}{\rm ss}} 
\os{=}{\lra} \Isocd(X,\ol{X})'_{\olSigmass}, \label{sigmaeq2}
\end{align}
which are defined by the restriction. 
\end{theorem}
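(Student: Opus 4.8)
The statement to prove is Theorem \ref{sigmamain}, giving the equivalences \eqref{sigmaeq1} and \eqref{sigmaeq2} via restriction $j^{\dagger}$. Since \eqref{sigmaeq1} is the main theorem of \cite{sigma}, the real content here is to add the ``semisimple residues'' variant \eqref{sigmaeq2}, and the plan is to deduce it from \eqref{sigmaeq1} by showing that the restriction functor $j^{\dagger}$ matches up the semisimplicity conditions on both sides. First I would recall that $j^{\dagger}$ of \eqref{sigmaeq1} is already known to be an equivalence; it therefore suffices to check that an object $\cE \in \Isocl(\ol{X},Z)'_{\tau(\ol{\Sigma})}$ has exponents in $\tau(\ol{\Sigma})$ \emph{with semisimple residues} if and only if $j^{\dagger}\cE \in \Isocd(X,\ol{X})'_{\ol{\Sigma}}$ has $\ol{\Sigma}$-\emph{semisimple} generic monodromy, because then \eqref{sigmaeq2} is obtained by restricting the equivalence \eqref{sigmaeq1} to the corresponding full subcategories.

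The verification is local and can be done one irreducible component $Z_\alpha$ at a time. By Lemma \ref{wdlemma}(2), ``semisimple residues along $Z_\alpha$'' can be checked on a single charted smooth standard small frame $((U^{(\alpha)},\ol{U}^{(\alpha)},\ol{\cX}^{(\alpha)}),t^{(\alpha)})$ around the generic point of $Z_\alpha$, i.e.\ one asks that the residue endomorphism $\res_\alpha$ of the log-$\nabla$-module $E^{(\alpha)}_{\cE}$ be annihilated by a squarefree polynomial in $\Z_p[x]$; equivalently, $\res_\alpha$ is semisimple (diagonalizable) with eigenvalues in $\tau(\ol{\Sigma})_\alpha$. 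On the overconvergent side, by Definition \ref{defsiggenmono} the condition of $\ol{\Sigma}$-semisimple generic monodromy says that the restriction of $E_{\cE,\alpha}$ to the relative annulus $A^1_{L_\alpha}[\lambda,1)$ is $\Sigma_\alpha$-semisimple, i.e.\ a direct sum of $\Sigma_\alpha$-constant log-$\nabla$-modules $\pi_1^*(F)\otimes\pi_2^*(M_\xi)$. The bridge between the two is exactly the construction used in \cite{sigma} to prove \eqref{sigmaeq1}: passing from $\cE$ on the frame with generic point to its associated log-$\nabla$-module on $A^1_{L_\alpha}[0,1)$, and then invoking Proposition \ref{homexteq} (the equivalence $\cU_I$ between $\LNM$ on $A^n_L[0,0]$ and $\ULNM$ on $A^n_L(I)$, together with its restriction to $\SLNM$). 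Under $\cU_{[0,1)}$ a $\nabla$-module on the ``point'' $A^1_{L_\alpha}[0,0]$ equipped with its residue endomorphism $\partial$ corresponds to the log-$\nabla$-module near $t_\alpha=0$, and the eigenvalues of $\partial$ are precisely the exponents/residue eigenvalues; hence being a \emph{direct sum} of $\Sigma_\alpha$-constant pieces (i.e.\ $\Sigma_\alpha$-semisimple) is equivalent to $\partial$ being semisimple with eigenvalues in $\Sigma_\alpha$, which is equivalent to the squarefree-annihilator condition of Definition \ref{defsigexpo}.

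So the steps, in order, are: (i) reduce to proving the matching of the semisimplicity subconditions under the already-established equivalence \eqref{sigmaeq1}; (ii) localize to a frame with generic point around each $Z_\alpha$ using Lemma \ref{wdlemma}(2) and the corresponding local description of $\Isocd(X,\ol{X})'_{\olSigmass}$ from Definition \ref{defsiggenmono}; (iii) translate both conditions into statements about the residue endomorphism $\partial$ of the associated $\nabla$-module on $A^1_{L_\alpha}[0,0]$, via the construction of \cite{sigma} and Proposition \ref{homexteq}, and observe that ``$\partial$ diagonalizable with eigenvalues in $\Sigma_\alpha$'' $\iff$ ``$\Sigma_\alpha$-semisimple'' $\iff$ ``squarefree $\Z_p[x]$-annihilator, eigenvalues in $\Sigma_\alpha$''; (iv) conclude that $j^{\dagger}$ restricts to the claimed equivalence \eqref{sigmaeq2}. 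The main obstacle I anticipate is step (iii): one must be careful that the dictionary between $\cE$ on the charted frame with generic point and the log-$\nabla$-module on the relative annulus, as set up in \cite{sigma}, genuinely carries the residue to $\partial = t\tfrac{\partial}{\partial t}$ on the nose (so that ``direct-sum decomposition as $\Sigma$-constant objects'' on the annulus corresponds exactly to ``$\partial$ semisimple'' at the origin), and that the full faithfulness statements (Proposition \ref{homexteq}, and the restriction Proposition \ref{uniqueinj0}) are invoked in the right place to guarantee that a decomposition of $E_{\cE,\alpha}|_{A^1_{L_\alpha}[\lambda,1)}$ extends to, and is detected by, the residue datum; the rest is a repackaging of the already-proven non-semisimple case.
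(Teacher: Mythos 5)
Your proposal is correct and follows essentially the same route as the paper: the non-semisimple equivalence \eqref{sigmaeq1} is quoted from \cite{sigma}, and \eqref{sigmaeq2} is obtained by matching the two semisimplicity conditions locally at the generic point of each $Z_\alpha$, translating both into semisimplicity of the residue endomorphism $\pa$ on the fibre $A^1_{L_\alpha}[0,0]$ via Lemma \ref{wdlemma}(2) and the $\cU$-equivalences of Proposition \ref{homexteq} (for $\ULNM$ and $\SLNM$, applied over $[0,1)$ and the quasi-open interval $(\lambda,1)$). The technical point you flag at the end is exactly what the paper checks, namely that the residue of $E_{\cE,\alpha}$ along $\cZ_{\alpha,K}$ is detected on $\Spm L_\alpha$ (injectivity of restriction of endomorphism sections), so that the squarefree-annihilator condition passes back and forth between $\res_\alpha$ and $\pa$.
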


\begin{proof}
The equivalence \eqref{sigmaeq1} follows from \cite[3.12, 3.16, 3.17]{sigma}. 
Let us prove the equivalence \eqref{sigmaeq2}. First let us take 
$\cE \in \Isocl(\ol{X},Z)'_{\tau(\ol{\Sigma})\text{-}{\rm ss}}$ 
and let us take open subschemes 
$\ol{U}_{\alpha} \subseteq \ol{X} \setminus Z_{\sing}$ containing 
the generic point of $Z_{\alpha}$  $(1 \leq \alpha \leq r)$ and 
charted smooth standard small frames with generic point 
$(
(U_{\alpha},\ol{U}_{\alpha},\ol{\cX}_{\alpha}), 
t_{\alpha}, L_{\alpha})$ enclosing 
$(U_{\alpha},\ol{U}_{\alpha})$ $($where we put 
$U_{\alpha} := X \cap \ol{U}_{\alpha})$. 
Let $E_{\cE, \alpha}$ be the log-$\nabla$-module on 
$\ol{\cX}_{\alpha,K}$ induced by $\cE$, let 
$E_{\cE,L,\alpha}$ be the restriction of $E_{\cE, \alpha}$ to 
$A^1_{L_{\alpha}}[0,1)$ and let $\cZ_{\alpha}$ be the zero locus of 
$t_{\alpha}$ in $\ol{\cX}_{\alpha}$. 
Then, by \cite[3.6, 2.12]{sigma}, $E_{\cE,\alpha}|_{\cZ_{\alpha,K} 
\times A_K^1[0,1)}$ 
is $\Sigma_{\alpha}$-unipotent. So $E_{\cE,L,\alpha}$ is 
also 
$\Sigma_{\alpha}$-unipotent and hence 
$E_{\cE,L,\alpha} = \cU_{[0,1)}(E,\pa)$ 
for some $(E,\pa) \in \ULNM_{A^1_{L_{\alpha}}[0,0]}$. 
On the other hand, by Lemma \ref{wdlemma}, there exists some 
$P_{\alpha}(x) \in \Z_p[x]$ without 
multiple roots such that $P_{\alpha}(\res_{\alpha})=0$, where 
$\res_{\alpha}$ denotes the residue of $E_{\cE,\alpha}$ along 
$\cZ_{\alpha,K}$. Then the residue $\res_{L,\alpha}$ 
of $E_{\cE,L,\alpha}$ satisfies the same equation. 
Note that, when we restrict $E_{\cE,L,\alpha}$ 
to the locus $\{t=0\}$ (where $t$ denotes the coordinate of 
$A_{L_{\alpha}}^1[0,1)$), we obtain $(E,\pa)$. So we have the 
equation $P_{\alpha}(\pa)=0$, that is, $\pa$ acts semisimply on 
$E$. Hence $E_{\cE,L,\alpha} = \cU_{[0,1)}(E,\pa)$ is $\Sigma$-semisimple 
and so is the restriction of $E_{\cE,L,\alpha}$ to 
$A_{L_{\alpha}}^1[\lambda,1)$ for any $\lambda \in [0,1) \cap \Gamma^*$. 
Therefore, we have $j^{\dagger}\cE \in  \Isocd(X,\ol{X})'_{\olSigmass}$, 
that is, the functor \eqref{sigmaeq2} is well-defined. \par 
The full faithfulness of \eqref{sigmaeq2} follows from 
that of \eqref{sigmaeq1}. Let us prove the essential surjectivity of 
\eqref{sigmaeq2}. Let us take $\cE' \in \Isocd(X,\ol{X})'_{\olSigmass}$ 
and take $\cE \in \Isocl(\ol{X},Z)'_{\tau(\ol{\Sigma})}$ with 
$j^{\dagger}\cE = \cE'$. Let us take 
$(
(U_{\alpha},\ol{U}_{\alpha},\ol{\cX}_{\alpha}), 
t_{\alpha}, L_{\alpha})$, $\cZ_{\alpha}$ and $\lambda$ such that 
the $\nabla$-module $E_{\cE', \alpha}$ associated to 
$\cE'$ is defined on $\{x \in \ol{\cX}_{\alpha,K} \,\vert\, 
|t_{\alpha}(x)| \geq \lam\}$ and that the restriction 
$E_{\cE',L,\alpha}$ of $E_{\cE',\alpha}$ 
to $A^1_{L_{\alpha}}[\lam,1)$ is $\Sigma_{\alpha}$-semisimple. 
By definition of $\cE$, $E_{\cE',L,\alpha}$ extends to 
the log-$\nabla$-module $E_{\cE,L,\alpha}$ on $A_{L_{\alpha}}^1[0,1)$ 
induced by $\cE$ and it is $\Sigma_{\alpha}$-unipotent 
by \cite[3.6, 2.12]{sigma}. Now let us note the equivalences 
\begin{align*}
& \ULNM_{A_{L_{\alpha}}[0,0],\Sigma_{\alpha}} \os{\cU_{[0,1)}}{\lra} 
\ULNM_{A_{L_{\alpha}}[0,1),\Sigma_{\alpha}} \os{=}{\lra} 
\ULNM_{A_{L_{\alpha}}(\lambda,1),\Sigma_{\alpha}}, \\ 
& \SLNM_{A_{L_{\alpha}}[0,0],\Sigma_{\alpha}} \os{\cU_{[0,1)}}{\lra} 
\SLNM_{A_{L_{\alpha}}[0,1),\Sigma_{\alpha}} \os{=}{\lra} 
\SLNM_{A_{L_{\alpha}}(\lambda,1),\Sigma_{\alpha}} 
\end{align*}
induced by \eqref{homexteq}: By the first equivalences, we see that 
there exists an object $(E,\pa) \in 
\ULNM_{A_{L_{\alpha}}[0,0],\Sigma_{\alpha}}$ which is sent to 
$E_{\cE,L,\alpha} \in \ULNM_{A_{L_{\alpha}}[0,1),\Sigma_{\alpha}}$ 
and it is sent to 
$E_{\cE',L,\alpha}|_{A_{L_{\alpha}}(\lambda,1)}$. 
Then, since 
 $E_{\cE',L,\alpha}|_{A_{L_{\alpha}}(\lambda,1)}$ belongs to 
$\SLNM_{A_{L_{\alpha}}(\lambda,1),\Sigma_{\alpha}}$, we see by 
the second equivalences that  $(E,\pa)$ is actually in 
$\SLNM_{A_{L_{\alpha}[0,0]},\Sigma_{\alpha}}$. Hence we have 
$E_{\cE,L,\alpha} \in \SLNM_{A_{L_{\alpha}[0,1)},\Sigma_{\alpha}}$ and 
this implies that there exists a polynomial $P_{\alpha}(x) \in \Z_p[x]$ 
without multiple roots such that 
the residue $\res$ of $E_{\cE,L,\alpha}$ along $\{t=0\}$ (where $t$ denotes the coordinate of $A_{L_{\alpha}}^1[0,1)$) satisfies $P_{\alpha}(\res)=0$. 
Then, since the restriction map 
$\Gamma(\cZ_{\alpha,K}, \cE{\rm nd}(E_{\cE,\alpha} |_{\cZ_{\alpha,K}})) \lra 
\Gamma(\Spm L_{\alpha}, \cE{\rm nd}(E_{\cE,L,\alpha} |_{\{t=0\}}))$
is injective, the residue $\res_{\alpha}$ of $E_{\cE,\alpha}$ along 
$\cZ_{\alpha,K}$ also satisfies $P_{\alpha}(\res_{\alpha})=0$. Hence 
$\cE$ belongs to $\Isocl(\ol{X},Z)'_{\tau(\ol{\Sigma})\text{-}{\rm ss}}$, 
by Lemma \ref{wdlemma}. So we are done. 
\end{proof}

We have also the following
variant of the full faithfulness of the functor \eqref{sigmaeq2}, which 
is useful in this paper: 

\begin{proposition}\label{uniqueinj}
Let $X \hra \ol{X}, Z :=\ol{X} \setminus X$ be as above and 
let $\ol{\Sigma} := \prod_{i=1}^r\ol{\Sigma}_i$ be a subset of 
$\Z_p^r/\Z^r$ which is $\NLD$. Let 
$\tau_i:\Z_p^r/\Z^r \lra \Z_p^r \,(i=1,2)$ be sections of the form 
$\tau_i = \prod_{j=1}^r \tau_{ij}$ of the canonical projection 
$\Z_p^r \lra \Z_p^r/\Z^r$ such that 
for any $j$ and any $\xi \in \ol{\Sigma}_j$, $\tau_{1j}(\xi) \geq 
\tau_{2j}(\xi)$. 
Then, for any $\cE_i \in 
\Isocl(\ol{X},Z)'_{\tau_i(\ol{\Sigma})} \,(i=1,2)$, 
the homomophism 
$$ \Hom(\cE_1,\cE_2) \lra \Hom (j^{\dagger}\cE_1, j^{\dagger}\cE_2) $$ 
is an isomorphism. 
\end{proposition}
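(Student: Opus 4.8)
The plan is to establish this as a variant of the full faithfulness of $j^{\dagger}$ in Theorem \ref{sigmamain} (i.e. of \cite[3.12, 3.16, 3.17]{sigma}): one runs the same argument, but with \cite[1.14]{sigma}, which forces the exponents of the source and the target into one common set, replaced by Proposition \ref{uniqueinj0}, whose hypothesis $\xi_1 - \xi_2 \in \Z_p \setminus \Z_{<0}$ is tailored to the present asymmetric condition $\tau_{1j} \geq \tau_{2j}$. The injectivity of $\Hom(\cE_1,\cE_2) \lra \Hom(j^{\dagger}\cE_1,j^{\dagger}\cE_2)$ is the easy half: $j^{\dagger}: \Isocl(\ol{X},Z) \lra \Isocd(X,\ol{X})$ is faithful, since a morphism of log convergent isocrystals, realized on a charted standard small frame $((U,\ol{U},\ol{\cX}),(t_1,\dots,t_r))$ as a morphism of log-$\nabla$-modules $E_1 \lra E_2$ on $\ol{\cX}_K$, is a horizontal section of $E_1^{\vee} \otimes E_2$ and is hence determined by its restriction to any strict neighbourhood of $]U[_{\ol{\cX}}$ in $]\ol{U}[_{\ol{\cX}} = \ol{\cX}_K$ (cf. the argument of \cite[between 1.2 and 1.3]{sigma}); so a morphism killed by $j^{\dagger}$ already vanishes.

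For surjectivity I would first reduce to a statement on a single frame. Fix an affine open covering $\ol{X} = \bigcup_{\alpha} \ol{U}_{\alpha}$ together with charted standard small frames $((U_{\alpha},\ol{U}_{\alpha},\ol{\cX}_{\alpha}),(t_{\alpha,1},\dots,t_{\alpha,r}))$ enclosing $(U_{\alpha},\ol{U}_{\alpha})$, and let $E_{i,\alpha}$ be the log-$\nabla$-module on $\ol{\cX}_{\alpha,K}$ induced by $\cE_i$; by Lemma \ref{wdlemma} its exponents along $\{t_{\alpha,j}=0\}$ lie in $\tau_{ij}(\ol{\Sigma}_j)$. A morphism $f: j^{\dagger}\cE_1 \lra j^{\dagger}\cE_2$ is realized on each frame as a morphism $f_{\alpha}$ of $\nabla$-modules from $E_{1,\alpha}$ to $E_{2,\alpha}$ over some strict neighbourhood $V_{\alpha}$ of $]U_{\alpha}[_{\ol{\cX}_{\alpha}}$ (on which the log structure is invisible, since $V_{\alpha}$ misses the lift of $Z$). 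If each $f_{\alpha}$ extends to a morphism $\ol{f}_{\alpha}: E_{1,\alpha} \lra E_{2,\alpha}$ of log-$\nabla$-modules, then, using the injectivity just proved on the intersections $\ol{U}_{\alpha} \cap \ol{U}_{\beta}$ (treated via the product frames $\ol{\cX}_{\alpha} \times_{\Spf O_K} \ol{\cX}_{\beta}$), the $\ol{f}_{\alpha}$ glue to a morphism $\cE_1 \lra \cE_2$ in $\Isocl(\ol{X},Z)$ restricting to $f$. Hence it is enough to show: on a single charted standard small frame, every morphism $E_1|_V \lra E_2|_V$ of $\nabla$-modules over a strict neighbourhood $V$ of $]U[_{\ol{\cX}}$ extends (uniquely) to a morphism $E_1 \lra E_2$ of log-$\nabla$-modules on $\ol{\cX}_K$.

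This local statement is then handled by the same d\'evissage as in \cite[3.12, 3.16, 3.17]{sigma}: passing to the log tubes $]\ol{U}[^{\log}_{\ol{\cX} \times \ol{\cX}}$ and reducing, by a spectral sequence and the five lemma, to a local product situation in which $\ol{\cX}_K$ is replaced by $\fX \times A^r_L[0,1)$ — with $t_1,\dots,t_r$ the coordinates of $A^r_L[0,1)$ and $\fX$ a smooth affinoid over a suitable $L$ — and in which $E_i$ becomes a $\Sigma_i$-unipotent log-$\nabla$-module, $\Sigma_i := \prod_{j=1}^r \tau_{ij}(\ol{\Sigma}_j)$ (here one uses \cite[3.6, 2.12]{sigma} and, via Theorem \ref{sigmamain}, the $\NLD$ hypothesis on $\ol{\Sigma}$, which guarantees that $E_i$ genuinely extends across the lift of $Z$ with these exponents). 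The extension problem then becomes exactly the full faithfulness of the restriction $\fX \times A^r_L[0,1) \to \fX \times A^r_L[\lambda,1)$ asserted in Proposition \ref{uniqueinj0}, and its hypothesis is met: for each $j$, an exponent of $E_1$ along $\{t_j=0\}$ is of the form $\tau_{1j}(\ol{\xi}_1)$ and one of $E_2$ of the form $\tau_{2j}(\ol{\xi}_2)$ with $\ol{\xi}_1,\ol{\xi}_2 \in \ol{\Sigma}_j$; if $\ol{\xi}_1 = \ol{\xi}_2$ then $\tau_{1j}(\ol{\xi}_1) - \tau_{2j}(\ol{\xi}_1) \in \Z_{\geq 0}$ by the hypothesis $\tau_{1j} \geq \tau_{2j}$, and if $\ol{\xi}_1 \neq \ol{\xi}_2$ then $\tau_{1j}(\ol{\xi}_1) - \tau_{2j}(\ol{\xi}_2) \notin \Z$ because $\tau_{1j},\tau_{2j}$ are sections of $\Z_p \to \Z_p/\Z$; in either case the difference lies in $\Z_p \setminus \Z_{<0}$.

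The main obstacle is this middle step: descending the global extension problem to the product situation of Proposition \ref{uniqueinj0}, i.e. reproducing essentially verbatim from \cite{sigma} the reduction through the log tubes and the spectral-sequence and five-lemma bookkeeping, now carrying the asymmetric bound $\tau_{1j} \geq \tau_{2j}$ instead of a single exponent set, and making sure the frame-by-frame extensions are compatible on overlaps so that the gluing step is legitimate — for which the faithfulness proved at the outset is exactly the tool. Everything else (the reduction to affinoid $\fX$, the passage to constant-coefficient pieces, the explicit cohomology computation) is routine, being already carried out in \cite{sigma} and in the proof of Proposition \ref{uniqueinj0}.
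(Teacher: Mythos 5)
Your overall strategy has the right core: the statement is indeed proved by combining the unipotence of the induced log-$\nabla$-modules near the divisor (\cite[3.6, 2.12]{sigma}) with Proposition \ref{uniqueinj0}, and your check that the hypothesis $\xi_1-\xi_2\in\Z_p\setminus\Z_{<0}$ of that proposition follows from $\tau_{1j}\geq\tau_{2j}$ (same class gives a difference in $\Z_{\geq 0}$, different classes give a non-integer) is exactly the point of the asymmetric assumption. The problem is the step you yourself flag as the main obstacle: as described it would not literally go through, and it is also unnecessary. One cannot ``replace $\ol{\cX}_K$ by $\fX\times A^r_L[0,1)$'' -- the product-with-polydisc structure is available only on the tube of the lift of $Z$, not on all of $\ol{\cX}_K$ -- and no rerun of the log-tube/spectral-sequence d\'evissage of \cite{sigma} is needed at this level (that machinery already sits inside the proof of Proposition \ref{uniqueinj0}). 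The paper's proof is much more direct: since both $\Hom$-sets are Zariski sheaves on $\ol{X}$, one localizes once and for all so that there is a single charted standard small frame $((X,\ol{X},\ol{\cX}),(t_1,\dots,t_r))$; then one covers $\ol{\cX}_K$ by the admissible opens $\fX_I=\{|t_i(x)|<1\ (i\in I),\ |t_i(x)|\geq\lam\ (i\notin I)\}$, $I\subseteq\{1,\dots,r\}$, and $\fY=\{\forall i,\,|t_i(x)|\geq\lam\}$ by the corresponding $\fY_I$. Each $\fX_I$ is identified with (the locus $\{t_i=0\ (i\in I),\ |t_i|\geq\lam\ (i\notin I)\}$)$\,\times A^{|I|}[0,1)$, on which $\wt{E}_i$ is $\prod_{j\in I}\tau_i(\Sigma_{ij})$-unipotent by \cite[3.6, 2.12]{sigma}, so Proposition \ref{uniqueinj0} gives $\Hom_{\fX_I}(\wt{E}_1,\wt{E}_2)\cong\Hom_{\fY_I}(E_1,E_2)$, and the same argument applies to the pairwise intersections $\fX_I\cap\fX_J$ versus $\fY_I\cap\fY_J$, which are again of this product shape. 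Gluing over this covering yields $\Hom_{\ol{\cX}_K}(\wt{E}_1,\wt{E}_2)\cong\Hom_{\fY}(E_1,E_2)$ in one stroke; this subsumes both your separate injectivity argument and the overlap-compatibility bookkeeping you planned to handle by hand via product frames.

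Two smaller corrections. The $\NLD$ hypothesis on $\ol{\Sigma}$ is not used to ``extend $E_i$ across the lift of $Z$ via Theorem \ref{sigmamain}'': the $\cE_i$ are already log convergent isocrystals, so their realizations live on all of $\ol{\cX}_K$ from the start. It enters because Proposition \ref{uniqueinj0} rests on Proposition \ref{homexteq}, which requires $\NID$ and $\NLD$, and $\tau_i(\ol{\Sigma})$ has these properties precisely because $\ol{\Sigma}$ is $\NLD$. Your faithfulness argument (horizontal sections determined on a strict neighbourhood) is fine, but as noted above it is not needed as a separate step once the covering argument is in place.
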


\begin{proof}
Since we may work Zariski locally on $\ol{X}$, 
we may assume that there exists 
a charted standard small frame 
$((X,\ol{X},\ol{\cX}), (t_{1}, ..., t_{r}))$ enclosing 
$(X,\ol{X})$. Let $\wt{E}_i$ (resp. $E_i$) be the log-$\nabla$-module 
(resp. $\nabla$-module) on $\ol{\cX}_K$ (resp. on a strict 
neighborhood of $]X[_{\ol{\cX}}$ in $\ol{\cX}_K$) induced by 
$\cE_i$ (resp. $j^{\dagger}\cE_i$) \,$(i=1,2)$. 
We may assume that $E_i$'s are both defined on $\fY := \{x \in 
\ol{\cX}_K \,\vert\, \forall i, |t_i(x)| \geq \lam\}.$ It suffices to prove the isomorphism 
\begin{equation}\label{e*}
\Hom_{\ol{\cX}_K}(\wt{E}_1,\wt{E}_2) \os{=}{\lra} \Hom_{\fY}(E_1,E_2).
\end{equation}
(Here $\Hom$ denotes the set of homomorphism as (log-)$\nabla$-modules.) 
Let us consider the admissible covering 
$\ol{\cX}_K = \bigcup_{I \subseteq \{1,...,r\}} \fX_I$, where 
$\fX_I$ is defined by 
$$ 
\fX_I := \{x \in \ol{\cX}_K\,\vert\, 
|t_i(x)|<1 \,(i \in I), \, |t_i(x)|\geq \lam 
\,(i \notin I)\}. 
$$
This covering induces the admissible covering 
$\fY = \bigcup_{I \subseteq \{1,...,r\}} \fY_I$, where 
$$ 
\fY_I := 
\{x \in \ol{\cX}_K\,\vert\, 
\lam \leq |t_i(x)|<1 \,(i \in I), \, |t_i(x)|\geq \lam 
\,(i \notin I)\}. 
$$ 
For $i=1,2$, $\wt{E}_i$ is $\prod_{j\in I}\tau_i(\Sigma_{ij})$-unipotent on 
$$\fX_I = 
\{x \in \ol{\cX}_K\,\vert\,t_i(x)=0 \,(i \in I), \, 
|t_i(x)|\geq \lam \,(i \notin I)\} \times A^{|I|}[0,1)$$ 
by \cite[3.6, 2.12]{sigma}. Therefore, we have the 
isomorphism 
$$  \Hom_{\fX_I}(\wt{E}_1,\wt{E}_2) \os{=}{\lra} \Hom_{\fY_I}(E_1,E_2) $$
by Proposition \ref{uniqueinj0}. By 
noting the equalities 
\begin{align*}
\fX_I \cap \fX_J = 
\{x \in P_K\,\vert\, \lam 
\leq |t_i(x)| < 1 \,& (i \in (I\cup J) \setminus (I\cap J)), \\ 
& \lam \leq |t_i(x)| \,(i \notin I \cup J)\} \times 
A^{|I\cap J|}_K[0,1),
\end{align*} 
\begin{align*}
\fY_I \cap \fY_J = 
\{x \in P_K\,\vert\, \lam \leq |t_i(x)| < 1 \,& 
(i \in (I\cup J) \setminus (I\cap J)), \\ 
& \lam \leq |t_i(x)| \,(i \notin I \cup J)\} \times 
A^{|I\cap J|}_K[\lam,1), 
\end{align*}  
we also see the 
isomorphism 
$$  \Hom_{\fX_I\cap\fX_J}(\wt{E}_1,\wt{E}_2) 
\os{=}{\lra} \Hom_{\fY_I\cap\fY_J}(E_1,E_2) $$
by the same argument. So we have the isomorphism \eqref{e*}. 
\end{proof}

Finally in this section, we give a slightly different formulation 
concerning the definition of log convergent isocrystals 
having exponents in $\Sigma$ (with semisimple residues) and 
overconvergent isocrystals having $\Sigma$-unipotent ($\Sigma$-semisimple) 
generic monodromy. The formulation given below is useful when we 
discuss the functoriality. 

\begin{definition}\label{defsigexpo2}
Let $X \hra \ol{X}$ be an open immersion of smooth $k$-varieties such that 
$Z = \ol{X} \setminus X$ is a simple normal crossing divisor and 
assume that we are given a family of simple normal crossing 
subdivisors $\{Z_i\}_{i=1}^r$ of $Z$ with $Z = \sum_{i=1}^r Z_i$ 
$($we call such a family $\{Z_i\}_{i=1}^r$ a decomposition of $Z)$
 and a subset $\Sigma = \prod_{i=1}^r \Sigma_i$ of $\Z_p^r$. 
Let $Z_i = \bigcup_{j=1}^{r_i} Z_{ij}$ be the decomposition of $Z_i$ 
into irreducible components and let us put $\Sigma' := \prod_{i=1}^r 
\Sigma_i^{r_i} \subseteq \Z_p^{\sum_{i=1}^r r_i}$. Then we say that 
an object $\cE$ in $\Isocl(\ol{X},Z)$ has exponents in $\Sigma$ 
$($resp. has exponents in $\Sigma$ with semisimple residues$)$ with 
respect to the decomposition $\{Z_i\}_{i=1}^r$ when it has 
exponents in $\Sigma'$ $($resp. it has 
exponents in $\Sigma'$ with semisimple residues$)$ in the sense of 
Definition \ref{defsigexpo}. 
We denote the category of objects in $\Isocl(\ol{X},Z)$ 
having exponents in $\Sigma$ $($resp. having exponents in $\Sigma$ with 
semisimple residues$)$ with 
respect to the decomposition $\{Z_i\}_{i=1}^r$ 
by $\Isocl(\ol{X},Z)_{\Sigma}$ $($resp. 
$\Isocl(\ol{X},Z)_{\Sigmass})$. 
\end{definition}

\begin{definition}\label{defsiggenmono2}
Let $X \hra \ol{X}$ be an open immersion of smooth $k$-varieties such that 
$Z = \ol{X} \setminus X$ is a simple normal crossing divisor and 
assume that we are given a decomposition $\{Z_i\}_{i=1}^r$ of $Z$ 
in the sense of Definition \ref{defsigexpo2} and 
a subset $\Sigma = \prod_{i=1}^r \Sigma_i$ of $\Z_p^r$ or 
$\Z_p^r/\Z^r$. 
Let $Z_i = \bigcup_{j=1}^{r_i} Z_{ij}$ be the decomposition of $Z_i$ 
into irreducible components and let us put $\Sigma' := \prod_{i=1}^r 
\Sigma_i^{r_i}$, which is a subset of $\Z_p^{\sum_{i=1}^r r_i}$ or 
$(\Z_p/\Z)^{\sum_{i=1}^r r_i}$. Then we say that 
an object $\cE$ in $\Isocd(X,\ol{X})$ has $\Sigma$-unipotent generic 
monodromy 
$($resp. has $\Sigma$-semisimple generic monodromy$)$ with 
respect to the decomposition $\{Z_i\}_{i=1}^r$ when it has 
$\Sigma'$-unipotent generic monodromy $($resp. it has 
$\Sigma'$-semisimple generic monodromy$)$ in the sense of 
Definition \ref{defsiggenmono}. 
We denote the category of objects in $\Isocd(X,\ol{X})$ 
having $\Sigma$-unipotent generic monodromy 
$($resp. $\Sigma$-semisimple generic monodromy$)$ with 
respect to the decomposition $\{Z_i\}_{i=1}^r$ 
by $\Isocd(X,\ol{X})_{\Sigma}$ $($resp. 
$\Isocd(X,\ol{X})_{\Sigmass})$. 
\end{definition}

Note that, when each $Z_i$ is irreducible and non-empty, we have 
$\Isocl(\ol{X},Z)_{\Sigma} = \Isocl(\ol{X},Z)'_{\Sigma}$,  
$\Isocl(\ol{X},Z)_{\Sigmass} = \Isocl(\ol{X},Z)'_{\Sigmass}$, 
$\Isocd(X,\ol{X})_{\Sigma} = \Isocd(X,\ol{X})'_{\Sigma}$ and 
$\Isocd(X,\ol{X})_{\Sigmass} = \Isocd(X,\ol{X})'_{\Sigmass}$. 
We prove here certain functoriality results for the categories of the form 
$\Isocl(\ol{X},Z)_{\Sigma(\ss)}$. 

\begin{proposition}\label{sig_pre1}
Let $X,\ol{X},Z$ be as above and assume we are given a decomposition 
$\{Z_i\}_{i=1}^r$ of $Z$ and a subset $\Sigma = \prod_{i=1}^r \Sigma_i$ 
of $\Z_p^r$. Let $f: \ol{X}' \lra \ol{X}$ one of the following$:$ 
\begin{enumerate}
\item 
$f$ is an open immersion such that the image of $\ol{X}'$ contains 
all the generic points of $Z$. 
\item 
$f$ is the morphism of the form $\ol{X}' = \coprod_{\beta} \ol{X}_{\beta} \lra 
\ol{X}$ for an open covering $\ol{X} = \bigcup_{\beta}\ol{X}_{\beta}$ by 
finite number of open subschemes. 
\end{enumerate}
Let us put $Z' := Z \times_{\ol{X}} \ol{X}'$ and consider that we are given 
the decomposition $\{Z'_i\}_{i=1}^r = \{Z_i \times_{\ol{X}} \ol{X}'\}$ of 
$Z'$. Then, for an object $\cE$ in $\Isocl(\ol{X},Z)$, it is actually 
contained in 
$\Isocl(\ol{X},Z)_{\Sigma(\ss)}$ 
if and only if $f^*\cE \in \Isocl(\ol{X}',Z')$ 
is contained in $\Isocl(\ol{X}',Z')_{\Sigma(\ss)}$. 
\end{proposition}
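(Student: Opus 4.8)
The plan is to reduce the statement to a local assertion about log-$\nabla$-modules and their residues/exponents, using the characterization of ``having exponents in $\Sigma$'' given in Lemma~\ref{wdlemma}. Since the property ``$\cE \in \Isocl(\ol{X},Z)_{\Sigma(\ss)}$'' is defined (via Definition~\ref{defsigexpo2}) in terms of the exponents of the log-$\nabla$-modules $E_{\cE,\alpha}$ on charted standard small frames along the loci $\{t_{\alpha,i}=0\}$, and since Lemma~\ref{wdlemma}(1) says this can be checked on \emph{any} charted standard small frame, it suffices to compare these local data for $\cE$ and for $f^*\cE$. In case (2), where $f$ is the disjoint union of an open covering $\ol{X} = \bigcup_\beta \ol{X}_\beta$, this is essentially immediate: any charted standard small frame for $(X_\beta, \ol{X}_\beta)$ is in particular one for a piece of $\ol{X}$, and conversely an open covering of $\ol{X}$ by such frames can be refined to be compatible with the $\ol{X}_\beta$'s; the exponents and residue polynomials are computed identically on both sides. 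So the ``only if'' direction is trivial in case (2), and the ``if'' direction follows because an open covering of $\ol{X}'$ by frames pushes forward to an open covering of $\ol{X}$ by the same frames, and Lemma~\ref{wdlemma}(1) lets us conclude.

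The substance is in case (1), where $f: \ol{X}' \hra \ol{X}$ is an open immersion whose image contains all the generic points of $Z$. The ``only if'' direction is again formal: restricting a charted standard small frame $((U_\alpha, \ol{U}_\alpha, \ol{\cX}_\alpha), (t_{\alpha,\bullet}))$ along $f$ gives a charted standard small frame enclosing $(U_\alpha \cap \ol{X}', \ol{U}_\alpha \cap \ol{X}')$, and $f^*\cE$ induces the restricted log-$\nabla$-module, whose exponents (and residue annihilating polynomials) along $\{t_{\alpha,i}=0\} \cap \ol{\cX}'_{\alpha}$ are inherited from those of $E_{\cE,\alpha}$ — here one uses that the restriction map on $\mathcal{E}nd$ of the residue, $\Gamma(\{t_i=0\}, F_\cE) \to \Gamma(\{t_i=0\}\cap\ol{\cX}'_{\alpha,K}, F_\cE)$, is injective (the argument cited from \cite[between 1.2 and 1.3]{sigma}), provided $\{t_i=0\}\cap\ol{\cX}'_\alpha$ is nonempty, which we can arrange because $\ol{X}'$ meets the generic point of each $Z_i$.

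For the ``if'' direction in case (1), suppose $f^*\cE \in \Isocl(\ol{X}',Z')_{\Sigma(\ss)}$. I would invoke Lemma~\ref{wdlemma}(2): to check that $\cE$ has exponents in $\Sigma$ (with semisimple residues) it is enough to exhibit, for each irreducible component $Z_{ij}$ of each $Z_i$, \emph{one} charted smooth standard small frame $((U^{(ij)}, \ol{U}^{(ij)}, \ol{\cX}^{(ij)}), t^{(ij)})$ with $\ol{U}^{(ij)} \subseteq \ol{X}\setminus Z_{\sing}$ containing the generic point of $Z_{ij}$, on which the exponents of the induced log-$\nabla$-module along $\{t^{(ij)}=0\}$ lie in $\Sigma_i$ (and, in the semisimple case, $\res$ is killed by a polynomial in $\Z_p[x]$ without multiple roots). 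Since $f(\ol{X}')$ contains the generic point of every $Z_{ij}$ and $\ol{X}'$ is open, we may choose each such $\ol{U}^{(ij)}$ to lie inside $\ol{X}'$; then the relevant frame is simultaneously a frame for $\ol{X}$ and for $\ol{X}'$, and the log-$\nabla$-module it induces from $\cE$ coincides with the one induced from $f^*\cE$. Applying the hypothesis $f^*\cE \in \Isocl(\ol{X}',Z')_{\Sigma(\ss)}$ together with Lemma~\ref{wdlemma}(2) (the ``only if'' part, now on $\ol{X}'$) produces exactly the required local data, and another application of Lemma~\ref{wdlemma}(2) on $\ol{X}$ finishes the proof. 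The main point to get right is the bookkeeping in Definition~\ref{defsigexpo2}, namely translating between the decomposition $\{Z_i\}$ with the group $\Z_p^r$ and the finer decomposition into irreducible components with $\Sigma' = \prod_i \Sigma_i^{r_i}$; once one works with $\Sigma'$ throughout, everything reduces cleanly to the two parts of Lemma~\ref{wdlemma}, and I do not expect any genuine obstacle beyond this indexing.
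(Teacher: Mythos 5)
Your proposal is correct and follows essentially the paper's own route: after reducing to the case where each $Z_i$ is irreducible, everything rests on Lemma \ref{wdlemma}, and in particular the crucial ``if'' direction of case (1) is handled exactly as in the paper, by taking frames at the generic points witnessing the condition for $f^*\cE$ on $\ol{X}'$ and reapplying the criterion of Lemma \ref{wdlemma}(2) on $\ol{X}$. The only differences are cosmetic: the paper invokes Lemma \ref{wdlemma}(2) uniformly in all four implications (handling the re-indexing of irreducible components in case (2) via the auxiliary $\Sigma'$), whereas you treat case (2) and the easy direction of case (1) by direct restriction/covering arguments; also your appeal to the injectivity of the restriction map in that easy direction is superfluous, since the restricted residue is automatically killed by any polynomial killing the original residue and its exponents form a subset of the original ones.
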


\begin{proof}
We may assume that each $Z_i$ is irreducible (and so 
$Z = \bigcup_{i=1}^r Z_i$ is the decomposition of $Z$ into irreducible 
components). First consider the case (1). In this case, 
$Z' = \bigcup_{i=1}^r Z'_i$ is also the decomposition of $Z$ into irreducible 
components. If we assume that $\cE$ is in the category 
$\Isocl(\ol{X},Z)_{\Sigma(\ss)}$, we can take charted smooth standard 
small frames $((U^{(\alpha)},\ol{U}^{(\alpha)},\ol{\cX}^{(\alpha)}),
t^{(\alpha)})$\,$(1 \leq \alpha \leq r)$ satisfying the conclusion of 
Lemma \ref{wdlemma}(2). Then, by shrinking $\ol{\cX}^{(\alpha)}$, we can 
assume that each $\ol{U}^{(\alpha)}$ is contained in $\ol{X}'$. Then 
we have $f^*\cE \in \Isocl(\ol{X}',Z')_{\Sigma(\ss)}$ by 
Lemma \ref{wdlemma}(2). Conversely, if we assume that $f^*\cE$ is 
in the category 
$\Isocl(\ol{X}',Z')_{\Sigma(\ss)}$, we can take charted smooth standard 
small frames $((U^{(\alpha)},\ol{U}^{(\alpha)},\ol{\cX}^{(\alpha)}),
t^{(\alpha)})$\,($1 \leq \alpha \leq r$) for $(\ol{X}', Z')$ and $f^*\cE$
satisfying the conclusion of 
Lemma \ref{wdlemma}(2). Then these charted smooth standard 
small frames satisfy the conclusion of 
Lemma \ref{wdlemma}(2) for $(\ol{X},Z)$ and $\cE$. So 
$\cE$ is in the category $\Isocl(\ol{X},Z)_{\Sigma(\ss)}$. So we are done 
in this case. \par 
Next consider the case (2). 
In this case, $Z'_i = 
\bigcup_{\scriptstyle \beta \atop \scriptstyle Z'_i \cap \ol{X}_{\beta} \not= 
\emptyset} Z'_i \cap \ol{X}_{\beta}$ gives the decomposition of 
$Z'_i$ into irreducible components. If we assume that $\cE$ is in the category 
$\Isocl(\ol{X},Z)_{\Sigma(\ss)}$, we can take charted smooth standard 
small frames $((U^{(\alpha)},\ol{U}^{(\alpha)},\ol{\cX}^{(\alpha)}),
t^{(\alpha)})$\,$(1 \leq \alpha \leq r)$ satisfying the conclusion of 
Lemma \ref{wdlemma}(2). Then 
$$ ((U^{(\alpha)} \cap \ol{X}_{\beta},\ol{U}^{(\alpha)} \cap \ol{X}_{\beta}, 
\ol{\cX}^{(\alpha)}_{\beta}), t^{(\alpha)})$$
(where $\ol{\cX}^{(\alpha)}_{\beta}$ is the open formal subscheme of 
$\ol{\cX}^{(\alpha)}$ whose special fiber is equal to 
$\ol{U}^{(\alpha)} \cap \ol{X}_{\beta}$) 
for $(\alpha, \beta)$ with $Z'_{\alpha} \cap \ol{X}_{\beta} \not= \emptyset$ 
satisfies the conclusion of Lemma \ref{wdlemma}(2) for 
$(\ol{X}',Z')$, $f^*\cE$ and 
$\Sigma' := 
\prod_{\scriptstyle (\alpha, \beta) \atop \scriptstyle Z'_{\alpha} \cap \ol{X}_{\beta} \not= \emptyset} \Sigma_{\alpha}$. So we have $f^*\cE \in 
\Isocl(\ol{X}',Z')_{\Sigma(\ss)}$. Conversely, if 
$f^*\cE$ is in $\Isocl(\ol{X}',Z')_{\Sigma(\ss)}$, we can take 
charted smooth standard 
small frames $((U^{(\alpha,\beta)},\ol{U}^{(\alpha,\beta)},
\ol{\cX}^{(\alpha,\beta)}),
t^{(\alpha,\beta)})$\,($(\alpha, \beta)$ runs through the indices with 
$Z'_{\alpha} \cap \ol{X}_{\beta} \not= \emptyset$) with 
$U^{(\alpha,\beta)}$ containing the generic point of 
$Z'_{\alpha} \cap \ol{X}_{\beta}$ such that they 
satisfy the conclusion of Lemma \ref{wdlemma}(2) for $(\ol{X}',Z'), f^*\cE$ 
and $\Sigma'$. Noting the fact that $Z'_{\alpha} \cap \ol{X}_{\beta}$ is 
nonempty for some $\beta$ for any $1 \leq \alpha \leq r$, we see that 
these charted smooth standard 
small frames satisfy the conclusion of 
Lemma \ref{wdlemma}(2) for $(\ol{X},Z)$ and $\cE$. So 
$\cE$ is in the category $\Isocl(\ol{X},Z)_{\Sigma(\ss)}$. So we are done. 
\end{proof}

\begin{proposition}\label{sig_pre2}
Let $X \hra \ol{X}$ $(X' \hra \ol{X}')$ be an open immersion of 
smooth $k$-varieties such that $Z = \ol{X} \setminus X$ 
$(Z' = \ol{X}' \setminus X')$ is a simple normal crossing divisor and 
let us assume given a decomposition $\{Z_i\}_{i=1}^r$ $(\{Z'_i\}_{i=1}^r)$ 
of $Z$ $(Z')$. Let $\Sigma = \prod_{i=1}^r \Sigma_i$ be a subset of 
$\Z_p^r$ and let $f: (\ol{X}',Z') \lra (\ol{X},Z)$ be a morphism. Then$:$ 
\begin{enumerate}
\item 
If $f$ is a surjective strict smooth morphism with $f^*Z_i = Z'_i 
\,(1 \leq i \leq r)$, an object $\cE \in \Isocl(\ol{X},Z)$ is in 
$\Isocl(\ol{X},Z)_{\Sigma(\ss)}$ if and only if $f^*\cE$ is in 
$\Isocl(\ol{X}',Z')_{\Sigma(\ss)}$. 
\item 
If $f$ is log smooth and if there exists some 
$n := (n_i)_{i=1}^r$ with $n_i$'s prime to $p$ 
such that $f^*Z_i = n_iZ'_i \,(1 \leq i \leq r)$ 
etale locally on $\ol{X}$ and $\ol{X}'$, the pull-back $f^*\cE$ 
of an object $\cE$ in $\Isocl(\ol{X},Z)_{\Sigma(\ss)}$ is contained in 
$\Isocl(\ol{X}',Z')_{n\Sigma(\ss)}$. 
\end{enumerate}
\end{proposition}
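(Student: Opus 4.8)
The plan is to reduce both assertions to a local computation on charted (smooth) standard small frames, exploiting the criterion of Lemma~\ref{wdlemma}(2), which says that having exponents in a product set $\Sigma$ (with semisimple residues) can be checked on a single charted smooth standard small frame with generic point over (a neighborhood of) each irreducible component of the divisor. So I would first reduce to the case where each $Z_i$ (hence, in part (1), each $Z'_i$) is irreducible and nonempty; this is harmless exactly as in the proof of Proposition~\ref{sig_pre1}, since adding empty components changes nothing and the decomposition into irreducibles only refines the index set over which we take the product $\Sigma'$. For (1), since $f$ is strict smooth surjective with $f^*Z_i = Z'_i$, the generic point of each $Z'_i$ maps to the generic point of $Z_i$; I would pick charted smooth standard small frames $((U^{(\alpha)},\ol U^{(\alpha)},\ol\cX^{(\alpha)}),t^{(\alpha)})$ for $(\ol X,Z)$ as in Lemma~\ref{wdlemma}(2), and then lift the smooth morphism $f$ over a neighborhood of each generic point to a (strict) smooth morphism of formal frames $\ol\cX'^{(\alpha)}\to\ol\cX^{(\alpha)}$ with $f^*t^{(\alpha)}=t'^{(\alpha)}$ — this is precisely the content of Lemma~\ref{lifting} and Remark~\ref{liftingrem} in the case $n=1$, where strictness guarantees the lifted morphism of zero loci $\cZ'\to\cZ$ is again smooth. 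Under such a strict smooth pullback the log-$\nabla$-module $E_{f^*\cE}$ along $\{t'^{(\alpha)}=0\}$ is the pullback of $E_\cE$ along $\{t^{(\alpha)}=0\}$, so the residue endomorphism is pulled back and its minimal polynomial is unchanged (the pullback map $\Gamma(\cZ,\cE\mathrm{nd}(E_\cE|_\cZ))\to\Gamma(\cZ',\cE\mathrm{nd}(E_{f^*\cE}|_{\cZ'}))$ is injective because $\cZ'\to\cZ$ is a smooth surjection onto a reduced scheme). Hence the exponents and the semisimplicity-of-residue condition transfer in both directions, giving the "if and only if".

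For (2), only the forward direction is asserted, which is the more robust one. Again I would work on charted smooth standard small frames near the generic points; the hypothesis says $f^*Z_i = n_i Z'_i$ étale locally with $n_i$ prime to $p$, so étale-locally $f$ looks like a Kummer-type covering $t_i\mapsto t_i'^{\,n_i}$ followed by a strict smooth morphism, exactly as in the factorization \eqref{qwerty} of Lemma~\ref{lifting}. I would lift this situation to a morphism of charted smooth standard small frames $\ti f$ with $\ti f^* t^{(\alpha)} = (t'^{(\alpha)})^{n_\alpha}$ using Lemma~\ref{lifting}. The key local fact is then the behavior of the residue under a ramified pullback of degree $n$ prime to $p$: if $E$ has residue $\res$ along $\{t=0\}$, then the pullback by $t\mapsto t'^{\,n}$ has residue $n\cdot\res$ along $\{t'=0\}$ (the factor of $n$ coming from $\dlog t = n\,\dlog t'$). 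Therefore if all exponents of $E_\cE$ along $\{t^{(\alpha)}=0\}$ lie in $\Sigma_\alpha$, all exponents of $E_{f^*\cE}$ along $\{t'^{(\alpha)}=0\}$ lie in $n_\alpha\Sigma_\alpha$, and if moreover $P^{(\alpha)}(\res)=0$ for some $P^{(\alpha)}\in\Z_p[x]$ without multiple roots, then $Q^{(\alpha)}(n_\alpha\res) = 0$ where $Q^{(\alpha)}(x):=P^{(\alpha)}(x/n_\alpha)\cdot(\text{unit})$, and $Q^{(\alpha)}$ still has no multiple roots and lies in $\Z_p[x]$ since $n_\alpha$ is a unit in $\Z_p$. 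Finally I would invoke Lemma~\ref{wdlemma}(2) again to conclude $f^*\cE\in\Isocl(\ol X',Z')_{n\Sigma(\ss)}$.

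The main obstacle I anticipate is carefully justifying the local model of $f$ near the generic points of $Z'$ and lifting it to formal frames compatibly with the chosen uniformizers. The statement of (2) only assumes the "étale-locally $t_i\mapsto t_i'^{\,n_i}$" shape of $f$, but Lemma~\ref{lifting} is stated for a single affine chart with a global factorization through $\Spec k[\ol t']\to\Spec k[\ol t]$; so I would need to pass to a suitable étale (or Zariski) neighborhood of each generic point of $Z'_\alpha$ on which the Kummer coordinate is available, check that charted smooth standard small frames with generic point survive this localization (they do, since the generic point of $Z'_\alpha$ has a Zariski-open affine neighborhood inside $\ol X'\setminus Z'_{\sing}$ on which a local equation of $Z'_\alpha$ can be used as $\ol t'$), and then apply Lemma~\ref{lifting} there. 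The rest — transfer of residues under strict smooth pullback and under $t\mapsto t'^{\,n}$ — is a direct computation of the type already carried out around \eqref{wdeq1} in the proof of Lemma~\ref{wdlemma}, together with the injectivity of the relevant restriction maps on sections of $\cE\mathrm{nd}$ that was recalled "between 1.2 and 1.3 of \cite{sigma}".
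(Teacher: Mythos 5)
Your overall route is the same as the paper's: reduce to the case of an affine base with an irreducible smooth divisor and $r=1$ (the paper does this through Proposition \ref{sig_pre1}, which itself rests on Lemma \ref{wdlemma}(2)), lift $f$ to a morphism of charted smooth standard small frames by Lemma \ref{lifting} and Remark \ref{liftingrem}, note that the residue pulls back as $\res \mapsto \ti{f}^*_K\res$ in the strict case and as $\res \mapsto n\,\ti{f}^*_K\res$ when $\ti{f}^*t = {t'}^n$, rescale the squarefree polynomial by the $p$-adic unit $n$, and use injectivity of the pullback on sections of $\cE{\rm nd}$ for the converse direction in (1). All of these ingredients agree with the paper's proof.

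Two steps need repair, and the first is a genuine gap as written. In (2), your fallback to a Zariski neighborhood of the generic point of $Z'_{\alpha}$ is not enough: Zariski-locally one only gets $f^*\ol{t} = u\,{\ol{t}'}^{\,n_{\alpha}}$ for a unit $u$ (and the hypothesis $f^*Z_i = n_iZ'_i$ is itself only assumed etale locally), whereas Lemma \ref{lifting} requires the exact relation $f^*\ol{t} = {\ol{t}'}^{\,n}$; one must extract an $n_{\alpha}$-th root of $u$ on an etale cover, which exists because $(n_{\alpha},p)=1$. Once you pass to such a cover, your final appeal to Lemma \ref{wdlemma}(2) no longer applies directly, since the frame you build encloses an open of the etale cover rather than an open of $\ol{X}'$; the missing step is to invoke part (1) — already proved, with surjective etale morphisms being a special case of strict smooth surjective ones — to transfer the exponent condition back to $\ol{X}'$, which is exactly what the paper does when it says ``we may work etale locally on $\ol{X}$ and $\ol{X}'$ by (1)''. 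Secondly, in (1) the map $\cZ'\to\cZ$ coming from a single local lift need not be surjective (your affine open of $\ol{X}'$ only dominates a dense open of $\ol{U}^{(\alpha)}$), so the injectivity of the pullback on residue endomorphisms should be secured either by covering all of $f^{-1}(\ol{U}^{(\alpha)})$ (resp.\ all of $\ol{X}'$, after arranging one global frame on $\ol{X}$ via Proposition \ref{sig_pre1}(2), as the paper does) by finitely many affine frames, or by combining flatness with the restriction-to-a-dense-tube injectivity from \cite{sigma} that you already cite; with either repair, and with the etale descent via (1) made explicit in (2), your argument closes and coincides with the paper's.
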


\begin{proof}
First let us prove (1). By Proposition \ref{sig_pre1}(1), 
we may replace $\ol{X}, \ol{X}'$ by 
$\ol{X} \setminus Z_{\sing}, f^{-1}(\ol{X} \setminus Z_{\sing})$, 
respectively. (Then $Z, Z'$ will be smooth.) Then, 
for an open covering $\ol{X} = \bigcup_{\beta} \ol{X}_{\beta}$ by 
finite number of open subschemes, 
we may replace $\ol{X}, \ol{X}'$ by 
$\coprod_{\beta} \ol{X}_{\beta}, 
\coprod_{\beta}f^{-1}(\ol{X}_{\beta})$ 
by Proposition \ref{sig_pre1}(2). Then, 
to prove the assertion in this situation, 
we may replace $\ol{X}, \ol{X}'$ by 
$\ol{X}_{\beta}, f^{-1}(\ol{X}_{\beta})$. 
So we may assume that $\ol{X}$ is affine connected, $Z$ is a connected 
smooth divisor in $\ol{X}$ and $r=1$ (and so $Z=Z_1$). 
Moreover, we may assume that there exists a charted smooth standard small 
frame $((X, \ol{X},\ol{\cX}),t)$ enclosing 
$(X, \ol{X})$. Moreover, by taking an 
open covering $\ol{X}' = \bigcup_{\beta} 
\ol{X}'_{\beta}$ by finite number of affine opens 
and replacing $\ol{X}'$ by $\coprod_{\beta} \ol{X}'_{\beta}$, 
we may assume that $\ol{X}'$ is a disjoint union of connected affine schemes 
$\ol{X}'_{\beta}$. Then, by Lemma \ref{lifting} and Remark \ref{liftingrem}, 
there exists a morphism of charted smooth standard small frames 
$\ti{f}_{\beta}: 
((\ol{X}'_{\beta} \setminus Z', \ol{X}'_{\beta},\ol{\cX}_{\beta}),t'_{\beta}) 
\lra ((X,\ol{X},\ol{\cX}),t)$ compatible with the composite 
$(\ol{X}'_{\beta} \setminus Z', \ol{X}'_{\beta}) \hra (X',\ol{X}') \os{f}{\lra}  (X,\ol{X})$ such that $\ti{f}: \ol{\cX}'_{\beta} \lra \ol{\cX}$ is 
smooth and that $\ti{f}^*t = t'_{\beta}$. Then 
$\cE$ induces the log-$\nabla$-module $(E,\nabla)$ on $\ol{\cX}_K$ with respect  to $t$, $f^*\cE$ induces the log-$\nabla$-module $(E'_{\beta},
\nabla'_{\beta})$ on $\ol{\cX}'_{\beta,K}$ with respect to $t'_{\beta}$ for 
each $\beta$ and we have $\ti{f}_{\beta,K}^*(E,\nabla) = 
(E'_{\beta},\nabla'_{\beta})$, where $\ti{f}_{\beta,K}: \ol{\cX}'_{\beta,K} 
\lra \ol{\cX}_{K}$ is the morphism induced by $\ti{f}_{\beta}$. 
By definition, $\cE$ has exponents in 
$\Sigma$ (with semisimple residues) if and only if 
the exponents of $(E,\nabla)$ along $\{t=0\}$ is contained in $\Sigma$ 
(and there exists some $P(x) \in \Z_p[x]$ without multiple roots 
with $P(\res)=0$, where $\res$ is the residue of 
$(E,\nabla)$ along $\{t=0\}$) and $f^*\cE$ has exponents in 
$\Sigma$ (with semisimple residues) if and only if, for any $\beta$, 
the exponents of $(E'_{\beta},\nabla'_{\beta})$ 
along $\{t'_{\beta}=0\}$ is contained in $\Sigma$ 
(and there exists some $P_{\beta}(x) \in \Z_p[x]$ without multiple roots 
with $P_{\beta}(\res'_{\beta})=0$, where 
$\res'_{\beta}$ is the residue of $(E'_{\beta},\nabla'_{\beta})$ 
along $\{t'_{\beta}=0\}$). 
Note now that, since $\coprod_{\beta} \ti{f}_{\beta}$ is 
surjective and smooth, the induced map 
$$ \End(E |_{\{t=0\}}) \lra \prod_{\beta} 
\End(E'_{\beta}|_{\{t'_{\beta}=0\}}), $$ 
which sends $\res$ to $(\res'_{\beta})_{\beta}$ 
is injective. From this injectivity, we see that, for any $P(x) \in \Z_p[x]$, 
we have $P(\res) =0$ if and only if $P(\res_{\beta})=0$ for 
any $\beta$. So 
the exponents of $(E,\nabla)$ along $\{t=0\}$ is contained in $\Sigma$ 
(and there exists some $P(x) \in \Z_p[x]$ without multiple roots 
with $P(\res)=0$) if and only if 
the exponents of $(E'_{\beta},\nabla'_{\beta})$ 
along $\cZ_{\beta}$ is contained in $\Sigma$ 
(and there exists some $P_{\beta}(x) \in \Z_p[x]$ without multiple roots 
with $P_{\beta}(\res'_{\beta})=0$) for any $\beta$. 
So $\cE$ has exponents in 
$\Sigma$ (with semisimple residues) 
if and only if $f^*\cE$ has exponents in 
$\Sigma$ (with semisimple residues). Hence we have proved the assertion 
(1). \par 
Next we prove (2). By the argument as in the proof of (1), we may assume that 
$\ol{X}$ is affine connected, $Z$ is a connected 
smooth divisor in $\ol{X}$ and $r=1$ (and so $Z=Z_1$). 
By using Proposition \ref{sig_pre1}(1), we may assume that $Z'$ is smooth, and 
by replacing $\ol{X}'$ by its affine open subschemes, we may assume 
 $\ol{X}'$ is affine connected and $Z'$ is a connected smooth divisor. 
Since we may work etale locally on $\ol{X}$ and $\ol{X}'$ by (1), 
we may assume that $Z, Z'$ are defined as the zero locus of $\ol{t}, \ol{t}'$ 
with $f^*\ol{t} = {\ol{t}'}^n$. 
Also, we may assume that there exists a charted 
smooth standard small frame 
$((X, \ol{X},\ol{\cX}),t)$ enclosing 
$(X, \ol{X})$ such that $t$ lifts $\ol{t}$. Then, 
by Lemma \ref{lifting}, there exists 
a charted smooth standard small frame 
$((X',\ol{X}',\ol{\cX}'),t')$ enclosing $(X',\ol{X}')$ 
such that $t' \in 
\Gamma(\ol{\cX}',\cO_{\ol{\cX}'})$ is a lift of $\ol{t}'$ and a 
morphism $\ti{f}: ((X',\ol{X}',\ol{\cX}'),t') \lra ((X,\ol{X},\ol{\cX}),t)$ 
with $\ti{f}^*t = {t'}^n$ which is compatible with 
$f:(X',\ol{X}') \lra (X,\ol{X})$. Then 
$\cE$ induces the log-$\nabla$-module $(E,\nabla)$ on $\ol{\cX}_K$ with respect  to $t$, $f^*\cE$ induces the log-$\nabla$-module $(E',
\nabla')$ on $\ol{\cX}'_{K}$ with respect to $t'$ and 
we have $\ti{f}_{K}^*(E,\nabla) = 
(E',\nabla')$, where $\ti{f}_{K}: \ol{\cX}'_{K} 
\lra \ol{\cX}_{K}$ is the morphism induced by $\ti{f}$. 
When $\cE$ has exponents in 
$\Sigma$ (with semisimple residues), 
the exponents of $(E,\nabla)$ along $\{t=0\}$ is contained in $\Sigma$ 
(and there exists some $P(x) \in \Z_p[x]$ without multiple roots 
with $P(\res)=0$, where $\res$ is the residue of 
$(E,\nabla)$ along $\{t=0\}$). Then, by the equality $\ti{f}^*t = {t'}^n$, 
we see that the residue $\res'$ of $(E',\nabla')$ 
along $\{t'=0\}$ satisfies $\res' = n\ti{f}^*_K(\res)$. Hence 
the exponents of $(E',\nabla')$ 
along $\{t'=0\}$ is contained in $n \Sigma$ 
(and we have $P(\res'/n)=0$). Hence $f^*\cE$ has exponents in 
$n\Sigma$ (with semisimple residues) and so we have proven 
the assertion (2). 
\end{proof}

\section{Convergent isocrystals on stacks}
In this section, we give a definition of the category of 
(log) convergent isocrystals on (fine log) algebraic stacks and prove 
the equivalences \eqref{seq1}, \eqref{seq2} and \eqref{seq3}. 

\subsection{Basic definitions}

In this subsection, we give a definition of the category of 
(log) convergent isocrystals on (fine log) algebraic stacks. 
For an algebraic stack $X$ of finite type over $k$, we define the 
category $\Sch/X$ as follows: An object in $\Sch/X$ is an object $Y$ in $\Sch$ 
 endowed with a $1$-morphism $a_Y: Y \lra X$. A morphism 
$(Y,a_Y) \lra (Y',a_{Y'})$ in $\Sch/X$ is a morphism of schemes 
$\varphi: Y \lra Y'$ endowed with a $2$-isomorphism 
$\tau: a_{Y'} \circ \varphi \os{=}{\lra} a_Y$. 
(Note that, when $X$ is in $\Sch$, the category $\Sch/X$ is nothing 
but the category of objects in $\Sch$ over $X$.) 
Using this category, we define the notion of convergent isocrystals on 
algebraic stacks as follows: 

\begin{definition}\label{stackconv}
Let $X$ be an algebraic stack of finite type over $k$. 
We define the category $\Isoc(X)$ of convergent isocrystals on $X$ 
over $K$ $($resp. the category $\FIsoc(X)$ of 
convergent $F$-isocrystals on $X$ over $K$, the category 
$\FIsoc(X)^{\circ}$ of unit-root convergent $F$-isocrystals on $X$ over $K)$ 
as the 
category of pairs $$(\{\cE_Y\}_{Y \in {\rm Ob}(\Sch/X)}, 
\{\varphi_{\cE}\}_{\varphi:Y \ra Y' \in 
{\rm Mor}(\Sch/X)}),$$ where $\cE_Y \in \Isoc(Y)$ 
$($resp. $\cE_Y \in \FIsoc(Y)$, $\cE_Y \in \FIsoc(Y)^{\circ})$
and $\varphi_{\cE}$ is an isomorphism 
$\varphi^*\cE_{Y'} \os{=}{\lra} \cE_Y$ in $\Isoc(Y)$ 
$($resp. $\FIsoc(Y)$, 
$\FIsoc(Y)^{\circ})$ satisfying the cocycle condition 
$\varphi_{\cE} \circ \varphi^* \varphi'_{\cE} = 
(\varphi' \circ \varphi)_{\cE}$ for $Y \os{\varphi}{\lra} Y' 
\os{\varphi'}{\lra} Y''$ in $\Sch/X$. 
\end{definition}

For a scheme $X$ in $\Sch$, 
the above definition of the categories 
$\Isoc(X)$, $\FIsoc(X)$, $\FIsoc(X)^{\circ}$ 
coincides with the usual definition: The 
equivalence is given by 
\begin{align*}
& 
\cE \mapsto 
(\{a_Y^*\cE \}_{(Y,a_Y) \in {\rm Ob}(\Sch/X)}, 
\{\varphi_{\cE}: \varphi^*a_{Y'}^*\cE \os{=}{\ra} a_Y^*\cE 
\}_{\varphi:Y \ra Y' \in {\rm Mor}(\Sch/X)}), \\ 
& (\{\cE_Y\}_{(Y,a_Y) \in {\rm Ob}(\Sch/X)}, 
\{\varphi_{\cE}\}_{\varphi:Y \ra Y' \in 
{\rm Mor}(\Sch/X)}) \mapsto \cE_X. 
\end{align*}

Let $X$ be a Deligne-Mumford stack of finite type over $k$, 
let $\epsilon: X_0 \lra X$ be an etale surjective morphism from 
a scheme $X_0$ in $\Sch$ and let 
$X_n \,(n=0,1,2)$ 
be the $(n+1)$-fold fiber product of $X_0$ over $X$. Then we have 
a $2$-truncated simplicial scheme $X_{\b}$ endowed with a morphism 
$X_{\b} \lra X$ and we have canonical functors 
\begin{align}
& \Isoc(X) \lra \Isoc(X_{\b}), \,\,\,\, \FIsoc(X) \lra \FIsoc(X_{\b}), 
\label{func} 
\\ 
& \FIsoc(X)^{\circ} \lra \FIsoc(X_{\b})^{\circ}. \nonumber 
\end{align} 
Then we have the following: 

\begin{proposition}\label{func_eq}
Let the notations be as above. Then the functors \eqref{func} are 
equivalences. 
\end{proposition}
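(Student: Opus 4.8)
The plan is to prove this by faithfully-flat-type descent for convergent (F-)isocrystals along an étale cover, using the standard description of the target category as the category of objects on the truncated simplicial scheme $X_\bullet$ equipped with a descent datum. First I would unwind the definitions: an object of $\Isoc(X)$ is a compatible family $(\cE_Y,\varphi_\cE)$ indexed by all of $\Sch/X$, while an object of $\Isoc(X_\bullet)$ is the much smaller datum of $\cE_0\in\Isoc(X_0)$ together with an isomorphism $\varphi$ between the two pullbacks to $\Isoc(X_1)$ satisfying the cocycle condition on $\Isoc(X_2)$. The functor \eqref{func} simply restricts the big family to the indices $X_0,X_1,X_2$. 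Full faithfulness is the easy half: given $\cE,\cE'\in\Isoc(X)$ and a morphism $f_\bullet\colon\cE_\bullet\to\cE'_\bullet$ on $X_\bullet$, one must produce a unique morphism $\cE_Y\to\cE'_Y$ for every $(Y,a_Y)\in\Sch/X$; since $\epsilon\colon X_0\to X$ is étale surjective, the fiber product $Y_0:=Y\times_X X_0\to Y$ is étale surjective in $\Sch$, and the pair $(Y\times_X X_0, Y\times_X X_1)$ forms (the beginning of) an étale hypercover of $Y$, so a morphism of isocrystals on $Y$ can be glued from a morphism on $Y_0$ compatible on $Y_1$. The compatibility data transported from $f_\bullet$ via the structure maps $Y_i\to X_i$ provide exactly this, and the cocycle conditions on the $\varphi$'s guarantee well-definedness and uniqueness.

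Next, for essential surjectivity I would start with a descent datum $(\cE_0,\varphi)$ on $X_\bullet$ and construct an object of $\Isoc(X)$. For each $(Y,a_Y)\in\Sch/X$ set $Y_n:=Y\times_X X_n$; then $Y_\bullet\to Y$ is an étale hypercover by schemes in $\Sch$, and pulling back $(\cE_0,\varphi)$ along $Y_\bullet\to X_\bullet$ yields a descent datum for $\Isoc$ on $Y_\bullet$ over $Y$. The key input is that $\Isoc(-)$ (and likewise $\FIsoc(-)$, $\FIsoc(-)^\circ$) satisfies étale descent on $\Sch$, i.e. the functor $\Isoc(Y)\to\Isoc(Y_\bullet)$ is an equivalence for an étale cover $Y_0\to Y$; this is standard (it follows from étale descent for the convergent topos, or concretely from descent of overconvergent/convergent isocrystals along finite étale — here just étale — morphisms, cf. the references to \cite{ogus}, \cite{berthelot}, \cite{crew} cited in the Convention). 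Granting that, one obtains a canonical $\cE_Y\in\Isoc(Y)$. For a morphism $\varphi\colon(Y,a_Y)\to(Y',a_{Y'})$ in $\Sch/X$, the $2$-isomorphism $\tau\colon a_{Y'}\circ\varphi\isom a_Y$ induces a map of hypercovers $Y_\bullet\to Y'_\bullet$ compatible with the maps to $X_\bullet$, whence an isomorphism $\varphi^*\cE_{Y'}\isom\cE_Y$ by functoriality of the descent equivalence; the cocycle condition for these $\varphi_\cE$ follows from that of the $\tau$'s and functoriality. Finally one checks that restricting this constructed family back to $X_0,X_1,X_2$ recovers $(\cE_0,\varphi)$ up to canonical isomorphism, which is immediate since $X_n\times_X X_0\to X_n$ has the tautological section-style simplification making the descent equivalence on $X_n$ the identity on the relevant piece.

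The main obstacle is pinning down the étale descent statement for $\Isoc$, $\FIsoc$, $\FIsoc^\circ$ on the category $\Sch$ cleanly enough to invoke it, together with the bookkeeping of $2$-morphisms: because $\Sch/X$ has morphisms carrying $2$-isomorphism data $\tau$, one must be careful that the constructed $\varphi_\cE$ are genuinely functorial and satisfy the strict cocycle identity $\varphi_\cE\circ\varphi^*\varphi'_\cE=(\varphi'\circ\varphi)_\cE$ rather than merely up to higher coherence. I would handle this by fixing, once and for all, for each $Y$ a choice of the hypercover $Y_\bullet=Y\times_X X_\bullet$ and using the \emph{naturality} of the descent equivalence $\Isoc(Y)\simeq\Isoc(Y_\bullet)$ in $Y$: then all the required identities reduce to identities already satisfied in $\Isoc(X_\bullet)$ (the cocycle condition on $\varphi$) and in the $2$-category of stacks (the cocycle condition on the $\tau$'s). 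The $F$-structure and unit-root conditions are preserved throughout because all functors in sight are compatible with Frobenius pullback and with the Newton-slope (unit-root) condition is checked after pulling back to a scheme, so the same proof applies verbatim to $\FIsoc$ and $\FIsoc^\circ$; I would simply remark this at the end rather than repeat the argument three times.
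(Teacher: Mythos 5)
Your proposal is correct and follows essentially the same route as the paper: the heart of both arguments is to apply étale descent for convergent isocrystals (Ogus, 4.4) to the Čech hypercovering $Y_\bullet = X_\bullet \times_X Y \to Y$ for each $(Y,a_Y) \in \Sch/X$, thereby producing the (quasi-)inverse functor, with the $F$- and unit-root cases handled identically. Your splitting into full faithfulness plus essential surjectivity is just a mild repackaging of the paper's explicit construction of the inverse.
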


\begin{proof}
We treat only the case for $\Isoc(X)$. (The other cases can be proven 
in the same way.) 
We define the inverse functor as follows: Given an object 
$\cE_{\b} \in \Isoc(X_{\b})$ and $Y \lra X$ in $\Sch/X$, 
let us put $Y_{\b} := X_{\b} \times_X Y$. Then $\cE_{\b}$ naturally 
induces an object in $\Isoc(Y_{\b})$, which we denote by 
$\cE'_{\b}$. Then, since $Y_{\b} \lra Y$ is an etale \v{C}ech 
hypercovering of schemes, we have the equivalence 
$\Isoc(Y) \os{=}{\lra} \Isoc(Y_{\b})$ by \cite[4.4]{ogus}. So 
$\cE'_{\b}$ induces an object $\cE_Y$ in $\Isoc(Y)$. Then 
$\{\cE_Y\}_{Y \in \Sch/X}$ forms in a narutal way an object in 
$\Isoc(X)$. One can prove that this functor gives 
the desired inverse functor by using the etale descent property 
for the category of convergent isocrystals \cite[4.4]{ogus} again. 
\end{proof}

\begin{example}\label{exam}
Let $X$ be an object in $\Sch$ and let $G$ be a finite etale group scheme 
over $k$ acting on $X$. Then we have the quotient stack $[X/G]$, which is a 
Deligne-Mumford stack. $X_0: = X \lra [X/G]$ is a surjective 
finite etale morphism and if we denote the $(n+1)$-fold fiber 
product of $X$ over $[X/G]$ by $X_n \,(n=0,1,2)$, we have 
$X_n \cong X \times_k G^{n} $. In this case, an object in 
$\Isoc([X/G]) \cong \Isoc(X_{\b})$ 
is nothing but a convergent isocrystal $\cE$ on $X$ endowed with 
an equivariant action of $G$. 
\end{example}

In the following, we generalize the above definition of the category 
of convergent ($F$-)isocrystals on algebraic stacks to the case with 
log structures. First recall that the notion of a fine log structure 
on an algebraic stack $X$ is defined in \cite[5.1]{olsson} as 
a sheaf of monoids $M_X$ on the lisse-etale site $X_{\liset}$ endowed 
with a monoid homomorphism 
$M_X \lra \cO_{X_{\liset}}$ such that, for any object $U$ in 
$X_{\liset}$, the log structure $M_X |_{U_{\et}}$ on $U_{\et}$ is fine and 
that, for any morphism $\varphi:U' \lra U$ in $X_{\liset}$, the 
map $\varphi^*(M_X|_{U_{\et}}) \lra M_X|_{U'_{\et}}$ is an 
isomorphism. 
We call a pair $(X,M_X)$ of an algebraic stack $X$ and a fine log 
structure $M_X$ on it a fine log algebraic stack. 
For a fine log algebraic stack $(X,M_X)$ and a morphism of algebraic 
stacks $f:Y \lra X$, one can define the pull-back log structure 
$f^*M_X$ on $Y_{\liset}$ (see \cite[p.773]{olsson}). When $Y$ is 
a scheme, the restriction $f^*M_X |_{Y_{\et}}$ of $f^*M_X$ to the 
etale site $Y_{\et}$ gives a fine log structure on $Y$ 
(see \cite[5.3]{olsson}). Now 
we define the notion of locally free 
log convergent isocrystals on fine log algebraic stacks as follows: 

\begin{definition}\label{logstackconv}
Let $(X,M_X)$ be a fine log algebraic stack of finite type over $k$. 
We define the category $\Isocl(X,M_X)$ of locally free log 
convergent isocrystals on $(X,M_X)$ 
over $K$ $($resp. the category $\FIsocl(X,M_X)$ of locally free log 
convergent $F$-isocrystals on $(X,M_X)$ over $K)$ 
as the category of pairs $$(\{\cE_Y\}_{(Y,a_Y) \in {\rm Ob}(\Sch/X)}, 
\{\varphi_{\cE}\}_{\varphi:Y \ra Y' \in 
{\rm Mor}(\Sch/X)}),$$ where $\cE_Y \in \Isocl(Y,a_Y^*M_X|_{Y_{\et}})$ 
$($resp. $\cE_Y \in \FIsocl(Y,a_Y^*M_X|_{Y_{\et}}))$
and $\varphi_{\cE}$ is an isomorphism 
$\varphi^*\cE_{Y'} \os{=}{\lra} \cE_Y$ in $\Isocl(Y,a_Y^*M_X|_{Y_{\et}})$  $($resp. $\FIsocl(Y,a_Y^*M_X|_{Y_{\et}}))$ satisfying the cocycle condition 
$\varphi_{\cE} \circ \varphi^* \varphi'_{\cE} = 
(\varphi' \circ \varphi)_{\cE}$ for $Y \os{\varphi}{\lra} Y' 
\os{\varphi'}{\lra} Y''$ in $\Sch/X$. 
\end{definition}

Note that, 
for $(Y,M_Y) \in \LSch$ and a strict etale \v{C}ech hypercovering 
$(Y_{\b}, M_{Y_{\b}}) \lra (Y,M_Y)$, we can prove the equivalence of 
categories 
$\Isocl(Y,M_Y) \os{=}{\lra} \Isocl(Y_{\b}, \allowbreak M_{Y_{\b}})$ 
in the same way as \cite[4.4]{ogus}. (See also 
\cite[5.1.7]{crysI}.) So we can prove the following proposition 
in the same way as Proposition \ref{func_eq} (so we omit the proof): 

\begin{proposition}
Let $(X,M_X)$ be a fine log Deligne-Mumford stack of finite type over $k$, 
let $\epsilon: X_0 \lra X$ be an etale surjective morphism from 
a scheme $X_0$ in $\Sch$. Let 
$X_n \,(n=0,1,2)$ 
be the $(n+1)$-fold fiber product of $X_0$ over $X$ and let 
$M_{X_n}$ be the restriction of $M_X$ to $X_{n,\et}$. 
Then the canonical functors 
\begin{equation*}
\Isocl(X,M_X) \lra \Isocl(X_{\b},M_{X_{\b}}), \,\, 
\FIsocl(X,M_X) \lra \FIsocl(X_{\b},M_{X_{\b}}) 
\end{equation*}
are equivalences. 
\end{proposition}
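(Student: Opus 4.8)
The plan is to imitate the proof of Proposition~\ref{func_eq}, replacing the etale descent for convergent isocrystals (\cite[4.4]{ogus}) by its logarithmic analogue. I will treat only the case of $\Isocl$; the $\FIsocl$ case follows by carrying the Frobenius isomorphism $\Psi: F^*\cE \os{=}{\lra} \cE$ along at every step, since all functors and equivalences below commute with $F^*$.

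First I would record the descent input to be used repeatedly: for $(Y,M_Y) \in \LSch$ and a strict etale \v{C}ech hypercovering $(Y_{\b},M_{Y_{\b}}) \lra (Y,M_Y)$, the pullback functor $\Isocl(Y,M_Y) \os{=}{\lra} \Isocl(Y_{\b},M_{Y_{\b}})$ is an equivalence, functorially in $(Y,M_Y)$ and in the hypercovering. As noted just before the statement, this is proven exactly as \cite[4.4]{ogus} (see also \cite[5.1.7]{crysI}); strictness of the hypercovering is what matches the log structures on the various $Y_n$ with those appearing in Definition~\ref{logstackconv}.

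Next I would build the quasi-inverse. Given $\cE_{\b} \in \Isocl(X_{\b},M_{X_{\b}})$ and $(Y,a_Y) \in {\rm Ob}(\Sch/X)$, put $Y_{\b} := X_{\b} \times_X Y$ and give $Y_n$ the log structure $M_{Y_n}$ obtained by restricting $a_Y^*M_X$, equivalently by pulling $M_{X_n}$ back along $Y_n \lra X_n$; since $X_n \lra X$ is strict and strictness is stable under base change, $Y_{\b} \lra Y$ is a strict etale \v{C}ech hypercovering of $(Y,a_Y^*M_X|_{Y_{\et}})$. Pulling $\cE_{\b}$ back along $Y_{\b} \lra X_{\b}$ gives an object $\cE'_{\b} \in \Isocl(Y_{\b},M_{Y_{\b}})$, which by the descent equivalence comes from a well-defined $\cE_Y \in \Isocl(Y,a_Y^*M_X|_{Y_{\et}})$. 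For a morphism $\varphi:(Y,a_Y) \lra (Y',a_{Y'})$ in $\Sch/X$, functoriality of the descent equivalence yields a canonical isomorphism $\varphi_{\cE}:\varphi^*\cE_{Y'} \os{=}{\lra} \cE_Y$, and the cocycle condition for $\{\varphi_{\cE}\}$ is checked after restriction along $Y_{\b} \lra Y$, where (by faithfulness of descent) it reduces to the cocycle condition satisfied by $\cE_{\b}$. Thus $(\{\cE_Y\}_Y,\{\varphi_{\cE}\}_{\varphi})$ is an object of $\Isocl(X,M_X)$, and $\cE_{\b} \mapsto (\{\cE_Y\},\{\varphi_{\cE}\})$ is functorial.

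Finally I would check that this functor is a two-sided quasi-inverse of the canonical functor $\Isocl(X,M_X) \lra \Isocl(X_{\b},M_{X_{\b}})$: starting from $\cE_{\b}$ and evaluating the resulting object of $\Isocl(X,M_X)$ on the objects $X_n \lra X$ of $\Sch/X$ recovers $\cE_{\b}$ together with its transition data by the very definition of the descent equivalence, and starting from an object of $\Isocl(X,M_X)$ its value at $Y$ is recovered from its pullback to $Y_{\b}$ by descent; both comparisons are natural. The one genuine point beyond this bookkeeping is the strict etale descent equivalence for $\Isocl$ recalled in the first step --- the place where the argument of \cite[4.4]{ogus} must actually be repeated in the logarithmic setting --- and I expect that (routine) verification to be the main obstacle; everything else is the same manipulation of transition isomorphisms and cocycle conditions as in the proof of Proposition~\ref{func_eq}.
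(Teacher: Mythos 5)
Your proposal is correct and is exactly the argument the paper intends: the paper omits the proof, stating that it proceeds "in the same way as Proposition \ref{func_eq}", with Ogus's etale descent \cite[4.4]{ogus} replaced by the strict etale \v{C}ech descent for $\Isocl$ recalled immediately before the statement, which is precisely the descent input and quasi-inverse construction you carry out.
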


Once we define the categories 
$\Isoc(X), \FIsoc(X), \FIsoc(X)^{\circ}$ (resp. 
$\Isocl(X, \allowbreak M_X), \FIsocl(X,M_X)$) for algebraic stacks $X$ 
(resp. fine log algebraic stacks $(X, \allowbreak M_X)$), we can generalize 
the definition to the case of diagram of algebraic stacks 
(resp. diagram of fine log algebraic stacks) as follows. 

\begin{definition} 
Let $\Sta$ $($resp. $\LSta)$ be the $2$-category of 
algebraic stacks $($resp. fine log algebraic stacks$)$ 
of finite type over $k$. Then, for a category $\cC$ 
$($regarded also as a discrete $2$-category$)$
and a $2$-functor $\Phi: \cC \lra \Sta$ $($resp. $\Phi: \cC \lra \LSta)$, 
we define the category 
$\Isoc(\Phi)$, $\FIsoc(\Phi)$, $\FIsoc(\Phi)^{\circ}$ 
$($resp. $\Isocl(\Phi)$, $\FIsocl(\Phi))$ 
as the 
category of pairs $$(\{\cE_Y\}_{Y \in {\rm Ob}(\cC)}, 
\{\varphi_{\cE}\}_{\varphi:Y \ra Y' \in 
{\rm Mor}(\cC)}),$$ where $\cE_Y$ is an object in 
$\Isoc(\Phi(Y))$, $\FIsoc(\Phi(Y))$, $\FIsoc(\Phi(Y))^{\circ}$ 
$($resp. $\Isocl \allowbreak (\Phi(Y))$, $\FIsocl(\Phi(Y)))$
and $\varphi_{\cE}$ is an isomorphism 
$\Phi(\varphi)^*\cE_{Y'} \os{=}{\lra} \cE_Y$ 
satisfying the cocycle condition 
$\varphi_{\cE} \circ \Phi(\varphi)^* \varphi'_{\cE} = 
(\varphi' \circ \varphi)_{\cE}$ for $Y \os{\varphi}{\lra} Y' 
\os{\varphi'}{\lra} Y''$ in $\cC$. 
\end{definition}

In particular, we can define the categories 
$\Isoc(X_{\b})$, $\FIsoc(X_{\b})$, $\FIsoc(X_{\b})^{\circ}$
(resp. $\Isocl(X_{\b},M_{X_{\b}})$, $\FIsocl(X_{\b},M_{X_{\b}})$) 
for $2$-truncated simplicial algebraic stacks $X_{\b}$
(resp. $2$-truncated simplicial fine log algebraic stacks 
$(X_{\b},M_{X_{\b}})$). It is easy to see that, for 
$X \in \Sta$ and a $2$-truncated etale \v{C}ech hypercovering 
$X_{\b} \lra X$, we have the equivalences 
$$ \Isoc(X) \os{=}{\ra} 
\Isoc(X_{\b}), \,\,\, \FIsoc(X) \os{=}{\ra} 
\FIsoc(X_{\b}), \,\,\, \FIsoc(X)^{\circ} \os{=}{\ra} 
\FIsoc(X_{\b})^{\circ} $$ 
and for $X \in \LSta$ and a $2$-truncated strict etale \v{C}ech hypercovering 
$(X_{\b},M_{X_{\b}}) \lra (X,M_X)$, we have the equivalences 
$$ \Isocl(X,M_X) \os{=}{\ra} 
\Isocl(X_{\b},M_{X_{\b}}), \,\,\, \FIsocl(X,M_X) \os{=}{\ra} 
\FIsocl(X_{\b},M_{X_{\b}}). $$ 

Next we give a definition of the category of locally free log convergent 
isocrystals on certain algebraic stacks `with exponent condition'. 
Let $(X,M_X)$ be a fine log algebraic stack of finite type over $k$ 
satisfying the following condition $(*)$: \\
\quad \\
$(*)$ \,\,\, There exists a  smooth surjective morphism 
$a_Y: Y \lra X$ with $Y \in \Sch$ such that $Y$ is 
smooth over $k$ and that 
$M_Y :=  a_Y^*M_X |_{Y_{\et}}$ is associated to a simple normal 
crossing divisor $Z$ on $Y$. \\
\quad \\
Under this condition, we define the notion of a decomposition of $M_X$ as 
follows: 

\begin{definition}\label{defdecomp}
Let $(X,M_X)$ be as above. Then a family of sub fine log structures 
$\{M_{X,i}\}_{i=1}^r$ of $M_X$ 
is called a decomposition of $M_X$ 
$($with respect to $a_Y:Y \lra X)$ 
if the log structure 
$M_{Y,i} := a_Y^*M_{X,i}|_{Y_{\et}}$ is associated to a simple normal 
crossing subdivisor $Z_i$ of $Z$ $(1 \leq i \leq r)$ with 
$Z = \sum_{i=1}^rZ_i$. 
\end{definition}

Concerning the above definition, 
we have the following independence result. 

\begin{lemma}
Let $(X,M_X)$ be as above and let $\{M_{X,i}\}_{i=1}^r$ be 
a family of sub fine log structures of $M_X$. Then, if  
$\{M_{X,i}\}_{i=1}^r$ is a decomposition of $M_X$ with respect to 
one morphism $a_Y:Y \lra X$ as in $(*)$, it is 
a decomposition of $M_X$ with respect to 
any morphism $a_Y:Y \lra X$ as in $(*)$. 
\end{lemma}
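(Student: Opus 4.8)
The plan is to reduce the statement to a comparison of two smooth surjective covers and then to exploit the étale-local structure of fine log structures associated to simple normal crossing divisors. First I would set up the situation: let $a_Y : Y \lra X$ and $a_{Y'} : Y' \lra X$ be two morphisms as in $(*)$, with $M_Y = a_Y^*M_X|_{Y_{\et}}$ (resp. $M_{Y'} = a_{Y'}^*M_X|_{Y'_{\et}}$) associated to a simple normal crossing divisor $Z$ on $Y$ (resp. $Z'$ on $Y'$). Suppose $\{M_{X,i}\}_{i=1}^r$ is a decomposition of $M_X$ with respect to $a_Y$, so that $M_{Y,i} := a_Y^*M_{X,i}|_{Y_{\et}}$ is associated to a simple normal crossing subdivisor $Z_i$ of $Z$ with $Z = \sum_{i=1}^r Z_i$. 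I want to show $M_{Y',i} := a_{Y'}^*M_{X,i}|_{Y'_{\et}}$ is associated to a simple normal crossing subdivisor $Z'_i$ of $Z'$ with $Z' = \sum_{i=1}^r Z'_i$. The key point is that the property ``a fine log structure on a smooth $k$-scheme is associated to a (sub)divisor of $Z'$ cut out by a subset of the branches'' can be checked étale-locally and is preserved under the smooth pullback appearing when we pass to $Y \times_X Y'$.

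The main steps, in order: (1) Form the fiber product $Y'' := Y \times_X Y'$ with its two projections $p : Y'' \lra Y$ and $p' : Y'' \lra Y'$; both are smooth surjective since $a_Y, a_{Y'}$ are, so $Y''$ is smooth over $k$ and we may work on it. Let $M_{Y''} := (a_Y \circ p)^*M_X|_{Y''_{\et}} \cong p^*M_Y$; since $p$ is a smooth morphism of smooth $k$-schemes and $M_Y$ is associated to the SNC divisor $Z$, the pullback $M_{Y''}$ is associated to the SNC divisor $p^{-1}(Z)$, and likewise $M_{Y''} \cong {p'}^*M_{Y'}$ is associated to ${p'}^{-1}(Z')$ — so $p^{-1}(Z) = {p'}^{-1}(Z')$ as divisors (both being the zero locus of the ideal sheaf generated by the image of $M_{Y''} \setminus \cO^{\times}$). (2) For the subobject: $p^*M_{Y,i} = (a_Y \circ p)^*M_{X,i}|_{Y''_{\et}} = {p'}^*M_{Y',i}$ inside $M_{Y''}$, and $p^*M_{Y,i}$ is associated to $p^{-1}(Z_i)$, an SNC subdivisor of $p^{-1}(Z)$; hence ${p'}^*M_{Y',i}$ is associated to the SNC subdivisor $p^{-1}(Z_i)$ of ${p'}^{-1}(Z')$, and $\sum_i p^{-1}(Z_i) = p^{-1}(Z) = {p'}^{-1}(Z')$. (3) Descend along ${p'}$: since ${p'}$ is smooth surjective, a divisor $D$ on $Y'$ is SNC if and only if ${p'}^{-1}(D)$ is SNC, and an SNC subdivisor of ${p'}^{-1}(Z')$ that is itself the pullback of a divisor on $Y'$ is the pullback of an SNC subdivisor of $Z'$ (a subset of branches of ${p'}^{-1}(Z')$ is ${p'}$-pulled-back from a subset of branches of $Z'$ because the branches of $Z'$ correspond bijectively to the branches of ${p'}^{-1}(Z')$ over a connected component, by smoothness). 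The log structure ${p'}^*M_{Y',i}$ being associated to a ${p'}$-pulled-back SNC subdivisor then forces $M_{Y',i}$ to be associated to the corresponding SNC subdivisor $Z'_i$ of $Z'$, and the relation $\sum_i Z'_i = Z'$ follows from $\sum_i {p'}^{-1}(Z'_i) = {p'}^{-1}(Z')$ by faithfully flat descent of divisors.

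I expect the main obstacle to be the bookkeeping in step (3): verifying cleanly that ``being associated to an SNC divisor'' and ``the divisor being pulled back from the base'' both descend along a smooth surjective morphism, and that the branch-by-branch decomposition $Z = \sum Z_i$ descends. This is a standard fact about SNC divisors and smooth morphisms — locally in the étale topology $Y''$ looks like $Y' \times \mathbb{A}^m$ (or an étale neighborhood thereof) and the branches of the SNC divisor are exactly those pulled back from $Y'$ — but stating it at the right level of generality (allowing the empty subdivisors in Definition~\ref{sigma3.3}'s style) and matching it with Olsson's definition of fine log structures on stacks (so that $p^*M_{X,i}|_{Y''_{\et}} = (a_Y \circ p)^*M_{X,i}|_{Y''_{\et}}$, which is where the stacky hypotheses enter) requires some care. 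Everything else is formal manipulation with pullbacks of log structures and the compatibility $a^*M_X|_{Y_{\et}}$ built into the setup.
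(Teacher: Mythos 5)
Your overall route is the same as the paper's: compare the two pullbacks over (a scheme cover of) $Y\times_X Y'$ and then descend along the smooth surjection $p'$ to $Y'$; your steps (1) and (2) are fine, apart from the minor point that $Y\times_X Y'$ is a priori only an algebraic space, which the paper fixes by passing to a surjective \'etale scheme cover $Y''$ of it. The genuine gap is in step (3). The parenthetical justification --- that the branches of $Z'$ correspond bijectively to the branches of ${p'}^{-1}(Z')$, so that any subset of branches upstairs is pulled back from a subset of branches downstairs --- is false for a smooth surjective morphism: an irreducible component of $Z'$ can split into several components upstairs. This already happens for a connected finite \'etale cover (e.g.\ $\G_m \lra \G_m$, $z\mapsto z^2$, and a point that splits), and it is exactly the situation relevant to this paper, where $X$ is typically a quotient stack and $Y\times_X Y'$ is an \'etale Galois cover of $Y'$; for the same reason your biconditional ``$D$ is SNC if and only if ${p'}^{-1}(D)$ is SNC'' fails in the ``if'' direction (harmless here, since any $Z'_i$ you produce is a union of components of the SNC divisor $Z'$, but it signals that SNC properties do not descend by branch-counting). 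What actually makes $p^{-1}(Z_i)$ descend is not a bijection of branches but the fact that it is the divisor attached to the pulled-back log structure ${p'}^*M_{Y',i}$: since this pullback is strict, its non-triviality locus at a point of $Y''$ depends only on the image point in $Y'$, so $p^{-1}(Z_i)$ is automatically a union of full $p'$-preimages of components of $Z'$; alternatively, as in the paper, one descends the divisors along the \v{C}ech nerve $Y''_{\b}\lra Y'$, using uniqueness of the divisor attached to a log structure to verify the gluing and cocycle conditions.

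The second, related gap is the final assertion that ${p'}^*M_{Y',i}$ being associated to a ${p'}$-pulled-back subdivisor ``forces'' $M_{Y',i}$ to be associated to $Z'_i$. This is a descent statement for log structures along the smooth cover $p'$ and is not automatic: an isomorphism ${p'}^*M_{Y',i}\cong {p'}^*N_i$ (with $N_i$ the log structure of $Z'_i$) gives $M_{Y',i}\cong N_i$ only when it is compatible with descent data. The paper closes this point by checking that the two log structures agree on $Y''_{\b}$ and invoking Olsson's descent theorem (Theorem A.1 of \cite{olsson}); alternatively one can observe that both $M_{Y',i}$ and $N_i$ sit canonically inside $M_{Y'}$ (the log structure of an NC divisor has no nontrivial automorphisms over $\cO_{Y'}$), so their equality can be checked on characteristic monoids at geometric points, which lift along the surjective smooth $p'$. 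Some such argument is required; as written, both descent steps are asserted rather than proved.
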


\begin{proof}
Assume that $\{M_{X,i}\}_{i=1}^r$ is a decomposition of $M_X$ with respect to 
a morphism $a_Y:Y \lra X$ as in $(*)$ and let us take another morphism 
$a_{Y'}: Y' \lra X$ as in $(*)$. Then, by taking a suitable surjective etale 
morphism $f: Y'' \lra Y \times_X Y'$, we see that the composite 
$a_{Y''}: Y''\os{f}{\lra} Y \times_X Y' \lra X$ also satisfies the condition 
$(*)$. Also, the morphisms $Y'' \lra Y, Y'' \lra Y'$ induced by 
projection are smooth. 
For $m=0,1,2$, let $Y''_m$ be the $(m+1)$-fold fiber product of $Y''$ over 
$Y'$. Let $\pi_j: Y''_1 \lra Y''_0 = Y'' \,(j=0,1)$ be the projections and 
let $a_{Y''_m}$ be the natural morphism $Y''_m \lra X$ induced by 
(any) projection $Y''_m \lra Y''$ and $a_{Y''}$. \par 
By assumption, $a_Y^*M_{X}|_{Y_{\et}}$ 
is associated to a simple normal crossing 
divisor $Z$ on $Y$ and $a_Y^*M_{X,i}|_{Y_{\et}}$ 
is associated to a simple normal 
crossing subdivisor $Z_i$ of $Z$ with $Z= \sum_{i=1}^r Z_i$. 
By \cite[5.3]{olsson}, we see that the log structure 
$a_Y^*M_X$ (resp. $a_Y^*M_{X,i}$) on $Y_{\liset}$ is also 
associated to $Z$ (resp. $Z_i$). So, if we define 
$Z''$ (resp. $Z''_i$) to be the pull-back of $Z$ (resp. $Z_i$) to 
$Y''$, we see that the log structure $a_{Y''}^*M_X$ (resp. 
$a_{Y''}^*M_{X,i}$) is associated to $Z''$ (resp. $Z''_i$), and 
by pulling it back by $\pi_j$, we see that 
the log structure $a_{Y''_1}^*M_X$ (resp. 
$a_{Y''_1}^*M_{X,i}$) is associated to $\pi_0^*Z''$ (resp. $\pi_0^*Z''_i$) 
and also to $\pi_1^*Z''$ (resp. $\pi_1^*Z''_i$). 
So we have the equality $\pi_0^*Z'' = \pi_1^*Z''$ (resp. 
$\pi_0^*Z''_i = \pi_1^*Z''_i$) and it satisfies the cocycle condition on 
$Y''_2$. So there exists a normal crossing divisor $Z'$ (resp. $Z'_i$) 
on $Y'$ which is pulled back to $Z''$ (resp. $Z''_i$) by the morphism 
$Y'' \lra Y'$. (Then we see also the equality $Z' = \sum_{i=1}^r Z'_i$.) 
Let $N$ (resp. $N_i$) be the log structure on 
$Y'_{\et}$ associated to $Z'$ (resp. $Z'_i$). Then 
$N$ and $M_{Y'} := a_{Y'}^*M_X |_{Y'_{\et}}$ 
(resp. $N_i$ and $M_{Y',i} := a_{Y'}^*M_{X,i}|_{Y'_{\et}}$) 
coincide on $Y''_{\b,\et}$. So they are equal by 
\cite[Theorem A.1]{olsson}. Hence 
$M_{Y'}$ 
(resp. $M_{Y',i}$) 
is associated to $Z'$ (resp. $Z'_i$). So, by the condition $(*)$, 
we see that $Z'$ (which is a priori a normal crossing divisor) is in 
fact a simple normal crossing divisor and so are $Z'_i$'s. 
Hence $\{M_{X,i}\}_{i=1}^r$ is also a decomposition of $M_X$ with respect to 
the morphism $a_{Y'}:Y' \lra X$. So we are done. 
\end{proof}

Now we define the category of locally free log convergent isocrystals 
with exponents in $\Sigma$ for certain fine log algebraic stacks, 
as follows: 

\begin{definition}\label{defstacksigma}
Let $(X,M_X)$ be a fine log algebraic stack of finite type 
over $k$ satisfying the condition 
$(*)$ above and let $\{M_{X,i}\}_{i=1}^r$ be a decomposition of 
$M_X$. Let us take $\Sigma = \prod_{i=1}^r \Sigma_i \subseteq \Z_p^r$ 
and let us take $a_Y: Y \lra X, M_Y, Z = \sum_{i=1}^r Z_i$ as in 
$(*)$ and Definition \ref{defdecomp}. 
Then we say that a locally free log convergent isocrystal $\cE$ on 
$(X,M_X)$ over $K$ 
has exponents in $\Sigma$ $($resp. has exponents in $\Sigma$ 
with semisimple residues$)$ with respect to the decomposition 
$\{M_{X,i}\}_{i=1}^r$ 
if the object $\cE_{Y} \in \Isocl(Y,M_Y)$ 
induced by $\cE$ has exponents in $\Sigma$ $($resp. 
has exponents in $\Sigma$ with semisimple residues$)$ with 
respect to the decomposition $\{Z_i\}_{i=1}^r$ of $Z$. 
We denote the category of locally free log convergent isocrystals on 
$(X,M_X)$ having exponents in $\Sigma$ 
$($resp. having exponents in $\Sigma$ with semisimple residues$)$ 
by 
$\Isocl(X,M_X)_{\Sigma}$ $($resp. $\Isocl(X,M_X)_{\Sigma\ss})$. 
\end{definition} 

We have the following independence result. 

\begin{lemma}
The above defininition is independent of the choice of 
$a_Y: Y \lra X$ as in $(*)$. 
\end{lemma}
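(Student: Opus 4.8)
The plan is to reduce the independence statement to the corresponding independence result already established for schemes, namely Lemma~\ref{wdlemma}, by the same descent technique used in the proof of the preceding lemma on decompositions of $M_X$. First I would fix two smooth surjective morphisms $a_Y: Y \lra X$ and $a_{Y'}: Y' \lra X$ as in condition $(*)$, with $M_Y$ (resp. $M_{Y'}$) associated to a simple normal crossing divisor $Z = \sum_{i=1}^r Z_i$ (resp. $Z' = \sum_{i=1}^r Z'_i$) realizing the given decomposition $\{M_{X,i}\}_{i=1}^r$. I need to show that $\cE_Y \in \Isocl(Y,M_Y)$ has exponents in $\Sigma$ with respect to $\{Z_i\}$ if and only if $\cE_{Y'} \in \Isocl(Y',M_{Y'})$ has exponents in $\Sigma$ with respect to $\{Z'_i\}$.

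Next I would form, exactly as in the proof of the lemma on decompositions, a scheme $Y''$ with a surjective morphism to $Y \times_X Y'$ satisfying $(*)$, so that the induced morphisms $g: Y'' \lra Y$ and $g': Y'' \lra Y'$ are both smooth and surjective; moreover, by shrinking / base-changing, these can be arranged to be strict (since the pullback log structures are all associated to the pullbacks of the relevant divisors $Z$, $Z'$, which agree on $Y''$). The pullbacks $g^*\cE_Y$ and $g'^*\cE_{Y'}$ are both canonically identified with $\cE_{Y''}$, and $g^*Z_i$, $g'^*Z'_i$ are the same simple normal crossing subdivisors $Z''_i$ of the common pullback divisor $Z''$ (this is where the proof of the previous lemma's identification of the $Z''_i$ gets reused verbatim). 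Then Proposition~\ref{sig_pre2}(1) applied to the strict smooth surjective morphism $g$ (and to the decomposition $\{Z_i\}$, $\{Z''_i\}$) shows that $\cE_Y$ has exponents in $\Sigma$ (with semisimple residues) with respect to $\{Z_i\}$ if and only if $g^*\cE_Y = \cE_{Y''}$ has exponents in $\Sigma$ (with semisimple residues) with respect to $\{Z''_i\}$; likewise $g'$ gives the same equivalence for $\cE_{Y'}$. Chaining these two equivalences through the common object $\cE_{Y''}$ yields the claim.

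The main obstacle I expect is purely bookkeeping: making sure $Y''$ can genuinely be chosen so that both $g$ and $g'$ are \emph{strict} smooth surjective (so that Proposition~\ref{sig_pre2}(1) — not merely the log-smooth case (2), which would change $\Sigma$ — applies on both sides), and that the divisors $g^*Z_i$ and $g'^*Z'_i$ literally coincide as subdivisors of $Z''$. Both points are handled by repeating the argument in the proof of the decomposition lemma: one shows $a_{Y''}^*M_{X,i}$ is associated to $Z''_i$ regardless of whether one pulls back via $Y$ or via $Y'$, using \cite[5.3]{olsson} to pass between the lisse-étale and étale versions of these log structures and \cite[Theorem A.1]{olsson} (or the cocycle/descent argument) to get the equality on the nose. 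Since this is essentially a reprise of an already-completed argument, the proof can be kept short, and indeed the paper states it with proof omitted or with only a one-line pointer to Lemma~\ref{wdlemma} and Proposition~\ref{sig_pre2}(1).

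\begin{proof}
Let $a_Y: Y \lra X$ and $a_{Y'}: Y' \lra X$ be two morphisms as in $(*)$, with $M_Y = a_Y^*M_X|_{Y_{\et}}$ associated to a simple normal crossing divisor $Z = \sum_{i=1}^r Z_i$ and $M_{Y'} = a_{Y'}^*M_X|_{Y'_{\et}}$ associated to $Z' = \sum_{i=1}^r Z'_i$, where $Z_i$ (resp. $Z'_i$) is the simple normal crossing subdivisor corresponding to $M_{X,i}$. As in the proof of the preceding lemma, choose a surjective étale morphism $Y'' \lra Y \times_X Y'$ with $Y''$ satisfying $(*)$; then the two projections $g: Y'' \lra Y$ and $g': Y'' \lra Y'$ are smooth and surjective, and by shrinking we may assume they are strict. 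By \cite[5.3]{olsson} the log structures $a_Y^*M_X$ and $a_Y^*M_{X,i}$ on $Y_{\liset}$ are associated to $Z$ and $Z_i$; pulling back, $a_{Y''}^*M_X$ and $a_{Y''}^*M_{X,i}$ are associated to $g^*Z =: Z''$ and $g^*Z_i =: Z''_i$, and the same computation via $g'$ shows they are equally associated to $g'^*Z'$ and $g'^*Z'_i$. Hence $g^*Z' = g'^*Z' = Z''$ and $g^*Z_i = g'^*Z'_i = Z''_i$ inside $Y''$, and $Z'' = \sum_{i=1}^r Z''_i$.

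Now $g^*\cE_Y$ and $g'^*\cE_{Y'}$ are both canonically identified with $\cE_{Y''} \in \Isocl(Y'', a_{Y''}^*M_X|_{Y''_{\et}})$. Since $g$ is a surjective strict smooth morphism with $g^*Z_i = Z''_i$, Proposition~\ref{sig_pre2}(1) gives that $\cE_Y$ has exponents in $\Sigma$ (resp. with semisimple residues) with respect to $\{Z_i\}_{i=1}^r$ if and only if $\cE_{Y''}$ has exponents in $\Sigma$ (resp. with semisimple residues) with respect to $\{Z''_i\}_{i=1}^r$. Applying Proposition~\ref{sig_pre2}(1) to $g'$ in the same way, the latter condition is equivalent to $\cE_{Y'}$ having exponents in $\Sigma$ (resp. with semisimple residues) with respect to $\{Z'_i\}_{i=1}^r$. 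Therefore the condition defining $\Isocl(X,M_X)_{\Sigma}$ (resp. $\Isocl(X,M_X)_{\Sigma\ss}$) via $a_Y$ is equivalent to the one via $a_{Y'}$, which is the desired independence.
\end{proof}
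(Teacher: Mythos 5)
Your proof is correct and follows essentially the same route as the paper: pass to a common refinement $Y''$ surjective étale over $Y\times_X Y'$ satisfying $(*)$, identify the pulled-back divisors, and apply Proposition \ref{sig_pre2}(1) to the (automatically strict) smooth surjective projections to chain the two conditions through $\cE_{Y''}$. The only cosmetic remark is that strictness of $g,g'$ needs no shrinking — it holds on the nose because the log structure on $Y''$ is by definition the pull-back $a_{Y''}^*M_X|_{Y''_{\et}} \cong g^*(a_Y^*M_X|_{Y_{\et}})$.
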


\begin{proof}
Let $\cE \in \Isocl(X,M_X)$ and 
let us take two morphisms 
$a_Y: Y \lra X$ 
$a_{Y'}: Y' \lra X$ as in $(*)$. Then, by taking a suitable surjective etale 
morphism $f: Y'' \lra Y \times_X Y'$, we see that the composite 
$a_{Y''}: Y''\os{f}{\lra} Y \times_X Y' \lra X$ also satisfies the condition 
$(*)$, and the morphisms $Y'' \lra Y, Y'' \lra Y'$ induced by 
projection are surjective and smooth. So, it suffices to prove the 
following: For two morphism 
$a_Y: Y \lra X$, $a_{Y'}: Y' \lra X$ as in $(*)$ 
and a surjective smooth morphism 
$f: Y' \lra Y$ with $a_Y \circ f$ $2$-isomorphic to $a_{Y'}$, 
$\cE$ has exponents in $\Sigma$ (with semisimple residues) for the morphism 
$a_Y: Y \lra X$ if and only if so does it for the morphism 
$a_{Y'}: Y' \lra X$. If we take $Z = \sum_{i=1}^r Z_i$ as in 
$(*)$ and Definition 
\ref{defdecomp} and if we put $Z' := f^{*}Z, Z'_i := f^{*}Z_i$, 
the claim we should prove is rewritten as follows: 
$a_Y^*\cE$ has exponents in $\Sigma$ (with semisimple residues) with respect 
to the decomposition $\{Z_i\}_i$ of $Z$ if and only if 
$a_{Y'}^*\cE \cong f^*(a_Y^*\cE)$ has exponents in 
$\Sigma$ (with semisimple residues) with respect 
to the decomposition $\{Z'_i\}$ of $Z'$. This follows from 
Proposition \ref{sig_pre2}(1). So we are done. 
\end{proof}

We can also define the category of locally free log convergent isocrystals 
with exponent condition for certain diagram of fine log algebraic 
stacks. 

\begin{definition}\label{flas_ex}
Let $(X_{\b},M_{X_{\b}})$ be a diagram of 
fine log algebraic stacks of finite type over $k$ 
indexed by a small category $\cC$ such that 
each $(X_{c},M_{X_{c}}) \,(c \in \cC)$ satisfies the condition $(*)$ and 
let $\{M_{X_{\b},i}\}_{i=1}^r$ be a family of sub fine log structures 
of $M_{X_{\b}}$ such that, for any $c \in \cC$, the induced family 
$\{M_{X_c,i}\}_{i=1}^r$ gives a decomposition of 
$M_{X_c}$. $($We call such a family $\{M_{X_{\b},i}\}_{i=1}^r$ a 
decomposition of $M_{X_{\b}}.)$ 
Then we say that an object $\cE_{\b}$ in $\Isocl(X_{\b},M_{X_{\b}})$  
has exponents in $\Sigma$ $($resp. has exponents in $\Sigma$ 
with semisimple residues$)$ with respect to the decomposition 
$\{M_{X_{\b},i}\}_{i=1}^r$ 
if, for any $c \in \cC$, the object $\cE_{c} \in \Isocl(X_{c},M_{X_{c}})$
induced by $\cE_{\b}$ is contained in $\Isocl(X_{c},M_{X_{c}})_{\Sigma}$ 
$($resp. $\Isocl(X_{c},M_{X_{c}})_{\Sigma\ss}).$ 
We denote the category of objects in in $\Isocl(X_{\b},M_{X_{\b}})$ 
having exponents in $\Sigma$ $($resp. having exponents in $\Sigma$ 
with semisimple residues$)$ with respect to the decomposition 
$\{M_{X_{\b},i}\}_{i=1}^r$ by $\Isocl(X_{\b},M_{X_{\b}})_{\Sigma}$ 
$($resp. $\Isocl(X_{\b},M_{X_{\b}})_{\Sigma\ss}).$ 
\end{definition}

We have a functoriality property of the category of the form 
$\Isocl(X,M_X)_{\Sigma(\ss)}$ for certain morphism of 
fine log algebraic stacks. 

\begin{proposition}\label{ex_ex}
Let $(X,M_X)$ be a fine log algebraic stack of finite type over $k$ 
satisfying the 
condition $(*)$ and let $\{M_{X,i}\}_{i=1}^r$ be a decomposition 
of $M_X$. Let us assume given another fine log algebraic stack 
$(X',M_{X'})$ of finite type over $k$ and a strict smooth morphism 
$f:(X',M_{X'}) \lra (X,M_X)$ over $k$. 
Then, $(X',M_{X'})$ also satisfies the 
condition $(*)$ and $\{f^*M_{X,i}\}_{i=1}^r$ gives a decomposition 
of $M_{X'}$. Moreover, for a subset $\Sigma = \prod_{i=1}^r \Sigma_i$ 
in $\Z_p^r$, $f$ induces the functor 
$f^*:\Isocl(X,M_X)_{\Sigma(\ss)} \lra \Isocl(X',M_{X'})_{\Sigma(\ss)}$. 
\end{proposition}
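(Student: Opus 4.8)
The plan is to choose, for the morphism $f$, a chart of $(X',M_{X'})$ that is compatible with the given chart of $(X,M_X)$, and then to deduce all three assertions from the scheme-level statements of Section~1. So fix a chart $a_Y\colon Y\lra X$ as in $(*)$, with $M_Y := a_Y^*M_X|_{Y_{\et}}$ associated to the simple normal crossing divisor $Z=\sum_{i=1}^r Z_i$, the decomposition being the one determined by $\{M_{X,i}\}$ and $Z_i := $ the divisor to which $a_Y^*M_{X,i}|_{Y_{\et}}$ is associated. Form the algebraic stack $Y\times_X X'$. Since $f$ is smooth, the projection $Y\times_X X'\lra Y$ is smooth, so $Y\times_X X'$ is smooth over $k$; and the projection $Y\times_X X'\lra X'$ is smooth and surjective, being the base change of $a_Y$. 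Choosing a smooth surjective morphism $W\lra Y\times_X X'$ with $W\in\Sch$, I obtain a scheme $W$ smooth over $k$ together with a smooth surjective morphism $a_W\colon W\lra X'$ and a smooth morphism $g\colon W\lra Y$ such that $f\circ a_W$ is $2$-isomorphic to $a_Y\circ g$. (Recall also that $f^*\colon \Isocl(X,M_X)\lra\Isocl(X',M_{X'})$ makes sense because $f$ is strict, so composition with $f$ gives a functor $\Sch/X'\lra\Sch/X$ compatible with the relevant log structures.)

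Now for the first two assertions. Since $f$ is strict, $M_{X'}=f^*M_X$, hence $a_W^*M_{X'}|_{W_{\et}}=g^*\bigl(a_Y^*M_X\bigr)|_{W_{\et}}$; using that $a_Y^*M_X$ on $Y_{\liset}$ is itself associated to $Z$ by \cite[5.3]{olsson}, this log structure is associated to the divisor $Z^W:=g^{-1}(Z)$ on $W$. Because $g$ is smooth and $Z$ is simple normal crossing, $Z^W$ is again simple normal crossing: it is reduced (smooth morphisms preserve reducedness), each of its irreducible components is smooth (each lies in some $g^{-1}(Z_j)$, which is smooth over the smooth component $Z_j$, hence over $k$), and a local coordinate system on $Y$ adapted to $Z$ pulls back to one on $W$ adapted to $Z^W$. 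Thus $(X',M_{X'})$ satisfies $(*)$. Running the same computation with $M_{X,i}$ in place of $M_X$ shows that $a_W^*(f^*M_{X,i})|_{W_{\et}}$ is associated to $Z^W_i:=g^{-1}(Z_i)$; since $Z_i$ is a union of irreducible components of $Z$, the divisor $Z^W_i$ is a union of irreducible components of $Z^W$, so $\{Z^W_i\}_{i=1}^r$ is a decomposition of $Z^W$ and therefore $\{f^*M_{X,i}\}_{i=1}^r$ is a decomposition of $M_{X'}$.

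For the functoriality, let $\cE\in\Isocl(X,M_X)_{\Sigma(\ss)}$ with respect to $\{M_{X,i}\}$; by Definition~\ref{defstacksigma} this says precisely that $\cE_Y\in\Isocl(Y,M_Y)$ has exponents in $\Sigma$ (with semisimple residues) with respect to $\{Z_i\}$. Applying Definition~\ref{defstacksigma} to $f^*\cE$ with the chart $a_W$, it suffices to show that $(f^*\cE)_W$ has exponents in $\Sigma$ (with semisimple residues) with respect to $\{Z^W_i\}$, and the $2$-isomorphism $f\circ a_W\cong a_Y\circ g$ identifies $(f^*\cE)_W$ with $g^*\cE_Y$. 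The morphism $g\colon W\lra Y$ is strict smooth with $g^*Z_i=Z^W_i$, but it need not be surjective, so I would first pass to the open subscheme $Y_0:=g(W)\subseteq Y$ (open because $g$ is smooth): then $g\colon W\lra Y_0$ is a surjective strict smooth morphism, $\cE_Y|_{Y_0}$ still has exponents in $\Sigma$ (with semisimple residues) with respect to $\{Z_i\cap Y_0\}$ (restrict the open covering and charted standard small frames of Definition~\ref{defsigexpo2}), and Proposition~\ref{sig_pre2}(1) then yields that $g^*(\cE_Y|_{Y_0})=g^*\cE_Y\cong(f^*\cE)_W$ has exponents in $\Sigma$ (with semisimple residues) with respect to $\{Z^W_i\}$. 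This gives the functor $f^*\colon\Isocl(X,M_X)_{\Sigma(\ss)}\lra\Isocl(X',M_{X'})_{\Sigma(\ss)}$.

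The proof is essentially a bookkeeping assembly of results already in hand, so I do not expect a genuine difficulty; the points that need a little care are (i) that a scheme atlas $W$ of the algebraic stack $Y\times_X X'$ inherits smoothness over $k$ and carries the correct pulled-back log structure, (ii) the elementary but necessary facts that preimages of simple normal crossing divisors and of the subdivisors $Z_i$ under a smooth morphism are again simple normal crossing and again form a decomposition, and — most of all — (iii) the failure of $Y\times_X X'\lra Y$ (equivalently $g$) to be surjective in general, which forces the passage to $Y_0=g(W)$ before Proposition~\ref{sig_pre2}(1) can be invoked; only the easy implication of that ``if and only if'' is actually used here.
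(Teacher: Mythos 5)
Your proof is correct and follows essentially the same route as the paper: choose a scheme atlas $W$ (the paper's $Y'$) covering $Y\times_X X'$, note that the pulled-back log structures are associated to the preimage divisors $g^{-1}(Z)$, $g^{-1}(Z_i)$, and conclude the exponent condition for the pullback via Proposition \ref{sig_pre2}(1). The only difference is your extra care with the surjectivity hypothesis of that proposition — the paper applies it directly to the possibly non-surjective morphism $b\colon Y'\to Y$ (only the pullback direction is needed, which in fact holds without surjectivity), whereas you first restrict to the open image $Y_0=g(W)$; this is a legitimate, slightly more careful patch of the same argument.
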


\begin{proof}
Let us take $a_Y: Y \lra X, Z = \sum_i Z_i$ as in $(*)$. Then 
we can take a diagram 
$a_{Y'}: Y' \lra Y \times_X X' \lra X'$ for some $Y' \in \Sch$ such that the 
first map is surjective etale. Denote the composite 
$Y' \lra Y \times_X X' \os{\text{proj.}}{\lra} Y$ by $b$. 
Then $a_{Y'}$ is surjective smooth and the log structure 
$a_{Y'}^*M_{X'}$ is equal to $a_{Y'}^*f^*M_X \cong b^*a_Y^*M_X$, 
which is equal to 
the log structure associated to the simple normal crossing divisor 
$b^{-1}(Z)$ in $Y'$. So $(X',M_{X'})$ satisfies the 
condition $(*)$. Moreover, since the log structure 
$a_{Y'}^*f^*M_{X,i}|_{Y'_{\et}} \cong 
b^*a_Y^*M_{X,i} |_{Y'_{\et}}$ is associated to 
the simple normal crossing subdivisor 
$b^{-1}(Z_i)$ of $b^{-1}(Z)$ in $Y'$, we see that $\{f^*M_{X,i}\}_{i=1}^r$ gives a decomposition 
of $M_{X'}$. \par 
Let us prove the last asssertion. Let us take an object $\cE$ in 
$\Isocl(X,M_X)_{\Sigma(\ss)}$. Then the object $\cE_Y$ in $\Isocl(Y,Z)$ induced by $\cE$ is contained in $\Isocl(Y,Z)_{\Sigma(\ss)}$. Since 
the morphism $b: (Y',b^{-1}(Z)) \lra (Y,Z)$ is strict smooth, 
$b^*(\cE_Y) \cong 
(f^*\cE)_{Y'}(:=$ the object in $\Isocl(Y',b^{-1}(Z))$ induced 
by $f^*\cE$) is contained in the category 
$\Isocl \allowbreak (Y',b^{-1}(Z))_{\Sigma(\ss)}$ by 
Proposition \ref{sig_pre2}(1). Hence we have $f^*\cE \in 
\Isocl(X',M_{X'})_{\Sigma(\ss)}$, as desired. 
\end{proof}

\begin{corollary}\label{starlogcor}
Let $(X,M_X)$ be a fine log algebraic stack of finite type over $k$ 
satisfying the condition $(*)$ and let $(X_{\b},M_{X_{\b}}) \lra 
(X,M_X)$ be a $2$-truncated strict etale \v{C}ech hypercovering. Let 
$\{M_{X,i}\}_{i=1}^r$ be a decomposition of $M_X$, let 
$\{M_{X_{\b},i}\}$ be the induced decomposition of $M_{X_{\b}}$ and 
let $\Sigma = \prod_{i=1}^r \Sigma_i$ be a subset of $\Z_p^r$. Then we have 
an equivalence of categories 
\begin{equation}\label{galn}
\Isocl(X,M_X)_{\Sigma(\ss)} \os{=}{\lra} 
\Isocl(X_{\b},M_{X_{\b}})_{\Sigma(\ss)}. 
\end{equation}
\end{corollary}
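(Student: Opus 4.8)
The plan is to deduce the equivalence \eqref{galn} from the already-established unconditional equivalence $\Isocl(X,M_X) \os{=}{\lra} \Isocl(X_{\b},M_{X_{\b}})$ (valid for a $2$-truncated strict etale \v{C}ech hypercovering) together with the functoriality and descent-type results for the exponent condition proven in Section~1, most notably Proposition~\ref{sig_pre2}(1) and its stacky upgrade Proposition~\ref{ex_ex}. The key observation is that the exponent condition on both sides is \emph{detected after a single smooth surjective pullback to a scheme}: by Definition~\ref{defstacksigma}, an object of $\Isocl(X,M_X)$ lies in $\Isocl(X,M_X)_{\Sigma(\ss)}$ if and only if its pullback to some (hence any) chart $a_Y: Y \lra X$ as in $(*)$ lies in $\Isocl(Y,M_Y)_{\Sigma(\ss)}$; and similarly on the simplicial side via Definition~\ref{flas_ex}.

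First I would observe that the functor in \eqref{galn} is well-defined: given $\cE \in \Isocl(X,M_X)_{\Sigma(\ss)}$ and the structure maps $u_c: X_c \lra X$ of the hypercovering (each of which is strict smooth, being a composite of the strict etale maps in the \v{C}ech nerve), Proposition~\ref{ex_ex} applied to $f = u_c$ shows that the induced object $\cE_c \in \Isocl(X_c,M_{X_c})$ lies in $\Isocl(X_c,M_{X_c})_{\Sigma(\ss)}$; by Definition~\ref{flas_ex} this means the whole descent datum $\cE_{\b}$ lies in $\Isocl(X_{\b},M_{X_{\b}})_{\Sigma(\ss)}$. Full faithfulness is then immediate: it is inherited from the full faithfulness of the ambient unconditional equivalence $\Isocl(X,M_X) \os{=}{\lra} \Isocl(X_{\b},M_{X_{\b}})$, since $\Isocl(X,M_X)_{\Sigma(\ss)}$ and $\Isocl(X_{\b},M_{X_{\b}})_{\Sigma(\ss)}$ are by definition full subcategories.

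The substantive point is essential surjectivity. Given $\cE_{\b} \in \Isocl(X_{\b},M_{X_{\b}})_{\Sigma(\ss)}$, the unconditional equivalence produces a unique $\cE \in \Isocl(X,M_X)$ with image $\cE_{\b}$; I must check $\cE \in \Isocl(X,M_X)_{\Sigma(\ss)}$, i.e. that for some chart $a_Y: Y \lra X$ the pullback $\cE_Y$ has exponents in $\Sigma$ (with semisimple residues) with respect to $\{Z_i\}$. Here is where one uses that $X_0 \lra X$ is itself smooth surjective from a scheme: after refining the chart if necessary I may arrange that the chosen chart $Y$ factors as a smooth surjection $Y \lra X_0$ composed with $X_0 \lra X$, so that $\cE_Y$ is the pullback of $\cE_0 := \cE_{X_0}$ along a strict smooth map of schemes $Y \lra X_0$ (with the simple normal crossing divisors matching up). But $X_0$ need not be smooth over $k$ nor its log structure of simple-normal-crossing type, so I must instead pass to a chart $a_{Y_0}: Y_0 \lra X_0$ of $X_0$ as in $(*)$ — which is automatically a chart of $X$ — and use that $\cE_0 \in \Isocl(X_0,M_{X_0})_{\Sigma(\ss)}$ by hypothesis on $\cE_{\b}$, so $\cE_{Y_0}$ has the required exponents. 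Since $\cE_{Y_0}$ is exactly the object induced by $\cE$ through the chart $a_{Y_0}: Y_0 \lra X$, Definition~\ref{defstacksigma} (and the independence lemma following it, via Proposition~\ref{sig_pre2}(1)) gives $\cE \in \Isocl(X,M_X)_{\Sigma(\ss)}$.

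I expect the main obstacle to be purely bookkeeping: carefully matching up the various charts — a chart of $X$, the chosen $X_0 \lra X$ of the hypercovering, and a chart $Y_0 \lra X_0$ of $X_0$ — and invoking the independence lemmas (independence of the decomposition of $M_X$, and independence of the exponent condition from the choice of chart) to reduce the statement to a single application of Proposition~\ref{sig_pre2}(1) on the level of schemes. No new geometric input is needed beyond what Section~1 provides; the content is entirely that the exponent condition is local in the strict smooth topology, which Proposition~\ref{ex_ex} already encodes at the stack level.
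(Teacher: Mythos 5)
Your proposal is correct and follows essentially the same route as the paper: well-definedness via Proposition \ref{ex_ex}, full faithfulness inherited from the unconditional descent equivalence on full subcategories, and essential surjectivity by choosing a chart $Y_0 \lra X_0$ as in $(*)$, noting that the composite $Y_0 \lra X_0 \lra X$ is again a chart of $X$ and that $\cE_{Y_0}=\cE_{0,Y_0}$ already has exponents in $\Sigma$ (with semisimple residues). The brief detour about $X_0$ possibly not being a scheme is immediately and correctly repaired, so no gap remains.
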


\begin{proof}
By Proposition \ref{ex_ex} (see also Definition \ref{flas_ex}), 
we have the functor \eqref{galn}, and it is fully faithful because 
the functor $\Isocl(X,M_X) {\lra} 
\Isocl(X_{\b},M_{X_{\b}})$ is an equivalence. Let us prove the 
essential surjectivity. So let us take an object $\cE_{\b} \in 
\Isocl(X_{\b},M_{X_{\b}})_{\Sigma(\ss)}$ and take an object 
$\cE \in \Isocl(X,M_X)$ which is sent to $\cE_{\b}$. 
Then 
there exists a morphism $a_{Y}: Y \lra X_0$ and $Z \subseteq Y$ 
as in $(*)$ for $X_0$ and the object $\cE_{0,Y}$ in $\Isocl(Y,Z)$ 
induced by $\cE_0$ is actually contained in $\Isocl(Y,Z)_{\Sigma(\ss)}$. 
Then the composite $Y \os{a_Y}{\lra} X_0 \lra X$ and $Z \subseteq Y$ 
satisfy the 
condition $(*)$ for $X$ and the object $\cE_{0,Y}$ is equal to 
the object $\cE_Y$ in $\Isocl(Y,Z)$ 
induced by $\cE$. Hence we have $\cE \in \Isocl(X,M_X)_{\Sigma(\ss)}$, 
as desired. 
\end{proof}

Finally in this subsection, we give how to define fine log structures 
on Deligne-Mumford stacks and give examples which are useful 
in this paper. \par 
Let $X$ be a Deligne-Mumford stack of finite type over $k$, 
let $\epsilon: X_0 \lra X$ be an etale surjective morphism from 
a scheme $X_0$ separated of finite type over $k$ and let 
$X_n \,(n=0,1,2)$ 
be the $(n+1)$-fold fiber product of $X_0$ over $X$. 
Assume that there exists a fine log structure $M_{X_{\b}}$ on 
the $2$-trucated simplicial scheme $X_{\b}$ such that the transition 
maps $(X_a,M_{X_{a}}) \lra (X_{b},M_{X_b}) \,(a,b \in \{0,1,2\})$ are 
all strict. Then we can define the fine log structure $M_X$ on $X$ 
in the following way: For an object $Y \lra X$ in $X_{\liset}$, 
we have the $2$-truncated etale \v{C}ech hypercovering 
$X_{\b} \times_X Y \lra Y$. Then the pull-back $M_{X_{\b}\times_X Y}$ 
of $M_{X_{\b}}$ to $X_{\b} \times_X Y$ descends to the fine log 
structure $M_Y$ on $Y_{\et}$ and it is functorial with respect to $Y$. 
So $\{M_Y\}_{Y}$ defines a fine log structure on $X$. 

\begin{example}\label{exam1}
Let $(X,M_X)$ be a connected Noetherian 
fs log scheme and 
let $(Y,M_Y) \lra (X,M_X)$ be a finite Kummer log etale 
(finite log etale of Kummer type in the terminology in \cite{nakayama}) 
Galois covering with Galois group $G$. For $m=0,1,2$, let 
$(Y_m,M_{Y_m})$ be the $(m+1)$-fold fiber product of $(Y,M_Y)$ over 
$(X,M_X)$ in the category of fs log schemes. 
Then we have $(Y_m,M_{Y_m}) \cong (Y_0,M_{Y_0}) \times G^m$ 
naturally (see \cite{illusie}) and so
$Y_{\b} \lra [Y/G]$ gives an etale \v{C}ech hypercovering of 
$[Y/G]$. So the log structure $M_{Y_{\b}}$ on $Y_{\b}$ induces the 
log structure $M_{[Y/G]}$ on $[Y/G]$. 
\end{example}

\begin{example}\label{exam2}
Let $X$ be a scheme smooth separated of finite type over $k$ and 
let $M_X$ be a log structure on $X$ associated to a simple normal 
crossing divisor $Z = \bigcup_{i=1}^r Z_i$ (with each $Z_i$ irreducible). 
Let $f: (Y,M_Y) \lra (X,M_X)$ be a finite Kummer log etale Galois 
covering with Galois group $G$, let $Y^{\sm}$ be the smooth locus 
of $Y$ and put $M_{Y^{\sm}} := M_Y|_{Y^{\sm}}$. Also, let 
$X'$ be the image of $Y^{\sm}$ in $X$ and put $M_{X'} := M_X |_{X'}$. 
Then $Y^{\sm}$ is $G$-stable and $f$ induces the morphism 
$f': (Y^{\sm}, M_{Y^{\sm}}) \lra (X',M_{X'})$, which is again 
a finite Kummer log etale Galois covering with Galois group $G$. 
So, by Example \ref{exam1}, 
the log structure $M_{[Y^{\sm}/G]}$ is 
defined. (By construction, we have $M_{[Y^{\sm}/G]} = 
M_{[Y/G]} |_{[Y^{\sm}/G]}$, where $M_{[Y/G]}$ is as in Example \ref{exam1}.) 
Note that, by \cite[5.2]{niziol} and the perfectness of $k$, 
the log structure $M_{Y^{\sm}}$ is associated to some 
normal crossing divisor $Z'$ and the Kummer type assumption of $f$ implies 
that $Z'$ is in fact a simple normal crossing divisor. 
For any $i$, $({f'}^*(X' \cap Z_i))_{\red}$ defines a simple normal crossing 
subdivisor $Z'_i$ of $Z'$ which is $G$-stable and we have 
$Z' = \sum_{i=1}^r Z'_i$. Then, the log structure on 
$Y^{\sm}_{\b} := X' \times_X Y_{\b} \cong Y^{\sm} \times G^{\b}$ 
(where $Y_{\b}$ is as in Example \ref{exam1}) 
associated to $Z'_i \times G^{\b}$ induces a 
sub log structure $M_{[Y^{\sm}/G],i}$ 
of $M_{[Y^{\sm}/G]}$ and the family 
$\{M_{[Y^{\sm}/G],i}\}_{i=1}^r$ defines a decomposition of 
$M_{[Y^{\sm}/G]}$. 
\end{example}

\subsection{First stacky equivalence}

In this subsection, we give a proof of the equivalences 
\eqref{seq1} and \eqref{seq2}. To do so, first let us recall 
the equivalence of Crew. \par 
Let $X$ be a connected smooth varieties over $k$. 
Crew proved in \cite{crew} the equivalence 
\begin{equation}\label{eq5'}
G: \Rep_{K^{\sigma}}(\pi_1(X)) \os{=}{\lra} \FIsoc(X)^{\circ} 
\end{equation}
(which is equal to \eqref{eq5}). 
Let us recall the definition of the functor \eqref{eq5'}, using 
the notion of convergent site which is introduced in \cite{ogus}. 
Let $\rho$ be an object in $\Rep_{K^{\sigma}}(\pi_1(X))$ and let 
$\cF := \cF_{0,\Q}$ be the corresponding object in 
$\Sm_{K^{\sigma}}(X) = \Sm_{O_K^{\sigma}}(X)_{\Q}$. 
(Here, for an additive category $\cC$, $\cC_{\Q}$ denotes the 
$\Q$-linearlization of it and for an object $A$ in 
$\cC$, $A_{\Q}$ denotes the object $A$ regarded as an object in 
$\cC_{\Q}$.) For an enlargement $(T,z_T)$ in the convergent site 
$(X/\Spf O_K)_{\conv}$ of $X$ over $\Spf O_K$ (that is, 
a $p$-adic formal scheme $T$ of finite type over $\Spf O_K$ and 
a morphism $z_T: T_0 := (T \otimes_{O_K} O_K/\fm_K)_{\red} \lra X$ 
over $k$), let $\cF_{0,T}$ be the object in 
$\Sm_{O^{\sigma}_K}(T)$ corresponding to $z_T^{-1}(\cF_0)$ via the 
equivalence $\Sm_{O^{\sigma}_K}(T) \cong \Sm_{O^{\sigma}_K}(T_0)$ 
and let us define $\cE_T$ by $\cE_T := 
(\cF_{0,T} \otimes_{O^{\sigma}_K} \cO_T)_{\Q}$ as an object 
in the $\Q$-linearized category of the category of coherent sheaves on 
$T_{\et}$. Then $\cE := \{\cE_T\}_{(T,z_T)}$ defines a locally free 
convergent isocrystal on $X$ over $K$. Moreover, the $q$-th 
power Frobenius endomorphism $F:X \lra X$ induces the equivalence 
$F^{-1}: \Sm_{O^{\sigma}_K}(X) \os{=}{\lra} \Sm_{O^{\sigma}_K}(X)$ 
with $F^{-1}(\cF_0) \cong \cF_0$, and so we have the isomorphism 
$$\Psi_T: (F^*\cE)_T = (F^{-1}(\cF_0)_T 
\otimes_{O^{\sigma}_K} \cO_T)_{\Q} \os{=}{\lra} 
(\cF_{0,T} \otimes_{O^{\sigma}_K} \cO_T)_{\Q} = \cE_T$$
 for any enlargement $(T,z_T)$, where 
 $(F^*\cE)_T$ denotes the sheaf on $T_{\et}$ induced by the locally free 
convergent isocrystal $F^*\cE$ and $F^{-1}(\cF_0)_T$ is the object in 
$\Sm_{O^{\sigma}_K}(T)$ corresponding to $z_T^{-1}(F^{-1}(\cF_0))$ via the 
equivalence $\Sm_{O^{\sigma}_K}(T) \cong \Sm_{O^{\sigma}_K}(T_0)$. 
Then, if we put $\Psi := \{\Psi_T\}_{(T,z_T)}$, we see that the pair 
$(\cE,\Psi)$ defines an object in $\FIsoc(X)^{\circ}$, which we define 
to be the image of $\rho$ by the functor $G$. In view of this 
description, we see that Crew's equivalence \eqref{eq5'} is 
written as the equivalence 
\begin{equation}\label{neweq}
G: \Sm_{K^{\sigma}}(X) \os{=}{\lra} \FIsoc(X)^{\circ}. 
\end{equation}
(The choice of a base point in the definition of $\pi_1(X)$ 
is not essential.) Also, we see easily the functoriality of the 
equivalence \eqref{neweq}. \par 
Now we proceed to prove the equivalences 
\eqref{seq1} and \eqref{seq2}. In the following in this subsection, 
let $X \hra \ol{X}$ be an open immersion of connected smooth varieties over 
$k$ such that $\ol{X} \setminus X =: Z = \bigcup_{i=1}^r Z_i$ is a 
simple normal crossing divisor (with each $Z_i$ irreducible). 
For $1 \leq i \leq r$, 
let $v_i$ be the discrete valuation of $k(X)$ corresponding to the 
generic point of $Z_i$, let $k(X)_{v_i}$ be the completion of $k(X)$ with 
respect to $v_i$ and let $I_{v_i}$ be the inertia group of 
$k(X)_{v_i}$. (Then we have homomorphisms $I_{v_i} \lra \pi_1(X)$ which are 
well-defined up to conjugate.) Let us define 
$\Repfin_{K^{\sigma}}(\pi_1(X))$ by 
$$ \Repfin_{K^{\sigma}}(\pi_1(X)) := 
\{\rho \in \Rep_{K^{\sigma}}(\pi_1(X)) \,|\, 
\forall i, \rho|_{I_{v_i}} \text{ has finite image}\}.$$
Let us also define categories consisting of isocrystals. 
Let 
$\cG_X$ be the category of finite etale Galois (connected) coverings 
of $X$ and for an object $Y \ra X$ in $\cG_X$, let 
$\ol{Y}$ be the normalization of $\ol{X}$ in $k(Y)$, let 
$\ol{Y}^{\sm}$ be the smooth locus of $\ol{Y}$ and put 
$G_Y := \Aut (Y/X)$. Then $G_Y$ acts on $\ol{Y}^{\sm}$ and so 
we can define the categories $\Isoc([\ol{Y}^{\sm}/G_Y])$ 
(resp. $\FIsoc([\ol{Y}^{\sm}/G_Y])$, $\FIsoc([\ol{Y}^{\sm}/G_Y])^{\circ}$) 
of convergent isocrystals (resp. convergent $F$-isocrystals, 
unit-root convergent $F$-isocrystals) on the quotient 
stack $[\ol{Y}^{\sm}/G_Y]$ over $K$. 
Then we have the following: 

\begin{proposition}\label{wd}
Let $Y \ra X, Y' \ra X$ be objects in $\cG_X$ and let 
$f:Y' \lra Y$ be a morphism in $\cG_X$. Then we have the canonical 
functors 
\begin{align*}
& f^*: \FIsoc([\ol{Y}^{\sm}/G_{Y}]) \lra \FIsoc([\ol{Y'}^{\sm}/G_{Y'}]), \\ 
& f^*: \FIsoc([\ol{Y}^{\sm}/G_{Y}])^{\circ} 
\lra \FIsoc([\ol{Y'}^{\sm}/G_{Y'}])^{\circ}
\end{align*}
induced by $f$. When $X$ is a curve, we also have the canonical functor 
$$f^*: \Isoc([\ol{Y}^{\sm}/G_{Y}]) \lra \Isoc([\ol{Y'}^{\sm}/G_{Y'}]). $$
\end{proposition}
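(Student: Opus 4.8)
The plan is to construct the pullback functor $f^*$ in the case $X$ is a curve first (the general case being a formal consequence via the diagram-of-stacks formalism set up in Section 2, once one observes that $f$ induces a morphism of the relevant $2$-truncated simplicial schemes). First I would record that since $f : Y' \to Y$ is a morphism in $\cG_X$, it is a finite \'etale morphism of smooth curves over $X$, hence extends uniquely to a finite morphism $\ol{f} : \ol{Y'} \to \ol{Y}$ of the smooth compactifications (here, $X$ being a curve, $\ol{Y} = \ol{Y}^{\sm}$ and likewise for $\ol{Y'}$, so the smooth-locus issue is vacuous). The key point is that $\ol{f}$ is equivariant for the natural group homomorphism $G_{Y'} \to G_Y$ coming from $\Aut(Y'/X) \to \Aut(Y/X)$ (this uses that $Y'/X, Y/X$ are Galois, so $G_{Y'}$ maps to $G_Y$ compatibly with the action on $Y'$ over $Y$, hence on $\ol{Y'}$ over $\ol{Y}$). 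Therefore $\ol{f}$ descends to a $1$-morphism of quotient stacks $[\ol{f}] : [\ol{Y'}^{\sm}/G_{Y'}] \to [\ol{Y}^{\sm}/G_Y]$, and one sets $f^* := [\ol{f}]^*$ using the evident contravariant functoriality of $\Isoc(-)$ (resp.\ $\FIsoc(-)$, $\FIsoc(-)^{\circ}$) for $1$-morphisms of algebraic stacks, which is immediate from Definition \ref{stackconv}: a $1$-morphism $g : \cX' \to \cX$ of stacks induces a functor $\Sch/\cX \to \Sch/\cX'$ by $ (T, a_T) \mapsto (T \times_{\cX} \cX', \mathrm{pr})$, but more simply just composes $a_T$ with $g$, and pulls back the family $(\{\cE_T\}, \{\varphi_{\cE}\})$ accordingly.

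Next I would check that $f^*$ preserves the Frobenius structure and the unit-root condition. The Frobenius structure is preserved because $[\ol{f}]$ commutes with the $q$-th power Frobenius endomorphisms on source and target (Frobenius being functorial and compatible with the group action, as it is defined by the $\sigma$-linear $q$-th power map), so pulling back an isomorphism $\Psi : F^*\cE \xrightarrow{\sim} \cE$ yields an isomorphism $F^*(f^*\cE) \cong f^*(F^*\cE) \xrightarrow{\sim} f^*\cE$. The unit-root condition is local on a smooth atlas and is preserved under pullback of convergent $F$-isocrystals along any morphism of schemes (it is a condition on the Newton polygon / the slopes being zero, which is stable under base change); hence it descends to stacks. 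For the last assertion, one notes that when $X$ is a curve the stacks $[\ol{Y}^{\sm}/G_Y]$ are genuinely Deligne--Mumford stacks that are \emph{proper} (being quotients of the proper curves $\ol{Y}$ by finite groups), so the category $\Isoc([\ol{Y}^{\sm}/G_Y])$ of convergent isocrystals on the stack is the natural one to work with (on the non-proper stacks $[\ol{Y}^{\sm}/G_Y]$ for $X$ higher-dimensional, $\ol{Y}^{\sm}$ is not proper and one restricts to the $F$-structure versions); the pullback $[\ol{f}]^*$ on $\Isoc(-)$ is then defined by exactly the same recipe, with no Frobenius or unit-root condition to check.

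The main obstacle I expect is verifying the $G_{Y'}$-equivariance of $\ol{f}$ cleanly, i.e.\ pinning down the homomorphism $G_{Y'} \to G_Y$ and checking that $\ol{f}$ is equivariant for it in a way that is well-defined up to the $2$-isomorphisms that are part of the data of a morphism in $\Sch/X$. Concretely: a morphism in $\cG_X$ is a morphism of $X$-schemes, so $f$ commutes with the structure maps to $X$; since $Y'/X$ is Galois, every $X$-automorphism of $Y'$ descends through $f$ to an $X$-automorphism of $Y$ (this is the statement that $k(Y)/k(X)$ factors through $k(Y')/k(X)$ as Galois extensions, giving the surjection $\Gal(k(Y')/k(X)) \twoheadrightarrow \Gal(k(Y)/k(X))$, i.e.\ $G_{Y'} \twoheadrightarrow G_Y$), and this descent is compatible with passing to normalizations $\ol{Y'} \to \ol{Y}$ since normalization is functorial. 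Once equivariance is established, the descent of $\ol{f}$ to a $1$-morphism of quotient stacks is standard (a $G_{Y'}$-equivariant map $\ol{Y'}^{\sm} \to \ol{Y}^{\sm}$ over the group homomorphism $G_{Y'} \to G_Y$ induces $[\ol{Y'}^{\sm}/G_{Y'}] \to [\ol{Y}^{\sm}/G_Y]$), and the rest is the routine functoriality recorded in Section 2. I would also remark, for use later in the paper, that this construction is visibly compatible with composition and with the identity, so that $Y \mapsto \FIsoc([\ol{Y}^{\sm}/G_Y])$ (etc.) becomes a genuine functor on $\cG_X$ (or $\cG_X^t$), which is what is needed to form the inductive limit appearing in \eqref{seq1} and \eqref{seq2}.
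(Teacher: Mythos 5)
There is a genuine gap: your construction presupposes that the finite morphism $\ol{f}\colon \ol{Y'} \lra \ol{Y}$ restricts to a ($G_{Y'}$-equivariant) morphism $\ol{Y'}^{\sm} \lra \ol{Y}^{\sm}$, so that a $1$-morphism of stacks $[\ol{Y'}^{\sm}/G_{Y'}] \lra [\ol{Y}^{\sm}/G_{Y}]$ exists and $f^*$ is just its pullback. When $\dim X \geq 2$ this is false in general: $\ol{Y}$ is only a normal variety, and a smooth point of $\ol{Y'}$ may well lie over a singular point of $\ol{Y}$, so $\ol{f}(\ol{Y'}^{\sm}) \not\subseteq \ol{Y}^{\sm}$ and no such morphism of quotient stacks is available. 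Your reduction of the general case to a ``formal consequence'' of the curve case is exactly where the real difficulty sits; the equivariance of $\ol{f}$ and the surjection $G_{Y'} \twoheadrightarrow G_Y$, which you treat as the main obstacle, are indeed routine.

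The paper's proof handles this by setting $\ol{Y'}^{\sm,\circ} := f^{-1}(\ol{Y}^{\sm}) \cap \ol{Y'}^{\sm}$, which is a $G_{Y'}$-stable open subscheme whose complement in $\ol{Y'}^{\sm}$ has codimension $\geq 2$ (since the singular locus of the normal variety $\ol{Y}$ has codimension $\geq 2$ and $\ol{f}$ is finite). One does have a morphism $[\ol{Y'}^{\sm,\circ}/G_{Y'}] \lra [\ol{Y}^{\sm}/G_{Y}]$, and the purity theorem for convergent $F$-isocrystals (\cite[3.1]{purity}), applied to $\ol{Y'}^{\sm} \times G_{Y'}^n$ for $n=0,1,2$, shows that restriction $\FIsoc([\ol{Y'}^{\sm}/G_{Y'}]) \lra \FIsoc([\ol{Y'}^{\sm,\circ}/G_{Y'}])$ is an equivalence; $f^*$ is then defined as pullback to $[\ol{Y'}^{\sm,\circ}/G_{Y'}]$ composed with the inverse of this equivalence (and similarly in the unit-root case). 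This purity step is the essential content of the proposition: it is why the Frobenius structure is required in general, and why the statement asserts the $\Isoc$-version only for curves (where $\ol{Y}^{\sm} = \ol{Y}$ and a genuine stack morphism exists). Your argument, if it were correct, would produce the functor on $\Isoc$ in all dimensions, which the proposition deliberately does not claim — a sign that the purely formal route cannot succeed without the purity input.
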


\begin{proof}
The latter assertion is obvious because we have the equalities 
$\ol{Y}^{\sm} = \ol{Y}, \ol{Y'}^{\sm} \allowbreak 
= \ol{Y'}$ in one dimensional case 
and we have a morphism $[\ol{Y'}/G_{Y'}] \lra [\ol{Y}/G_{Y}]$. 
Let us prove the former asssertion. Let us put 
$\ol{Y'}^{\sm,\circ} := f^{-1}(\ol{Y}^{\sm}) \cap \ol{Y'}^{\sm}$. 
Then it is a $G_{Y'}$-stable open subscheme of $\ol{Y'}^{\sm}$ 
such that the complement $\ol{Y'}^{\sm} \setminus \ol{Y'}^{\sm,\circ}$ 
has codimension at least $2$ in $\ol{Y'}^{\sm}$. So, for $n =0,1,2$, 
we have the equivalence of categories 
$\FIsoc(\ol{Y'}^{\sm} \times G_{Y'}^n) \os{=}{\lra} 
\FIsoc(\ol{Y'}^{\sm,\circ} \times G_{Y'}^n)$
 by \cite[3.1]{purity} and this implies the equivalence 
$\FIsoc([\ol{Y'}^{\sm}/G_{Y'}]) \os{=}{\lra} 
\FIsoc([\ol{Y'}^{\sm,\circ}/G_{Y'}])$ (see Example \ref{exam}). 
Then we can define the desired functor as the composite 
$$ 
\FIsoc([\ol{Y}^{\sm}/G_{Y}]) \lra 
\FIsoc([\ol{Y'}^{\sm,\circ}/G_{Y'}]) \os{=}{\lla} 
\FIsoc([\ol{Y'}^{\sm}/G_{Y'}]), $$
where the first functor is induced by the morphism 
$[\ol{Y'}^{\sm,\circ}/G_{Y'}] \lra [\ol{Y}^{\sm}/G_{Y'}]$ and the 
second functor is the equivalence above. We can define 
the functor also in the unit-root case in the same way. 
\end{proof}

By Proposition \ref{wd}, we can define the limit 
$$\varinjlim_{Y\ra X \in \cG_X} \FIsoc ([\ol{Y}^{\sm}/G_Y]), \qquad
\varinjlim_{Y\ra X \in \cG_X} \FIsoc ([\ol{Y}^{\sm}/G_Y])^{\circ} $$ 
and we can define the limit 
$\varinjlim_{Y\ra X \in \cG_X} \Isoc ([\ol{Y}^{\sm}/G_Y])$ when 
$X$ is a curve. Now we prove the following theorem: 

\begin{theorem}\label{thm1}
Let the notation be as above. 
\begin{enumerate}
\item There exists an equivalence of categories 
\begin{equation}\label{eqeq1} 
\Repfin_{K^{\sigma}}(\pi_1(X)) \os{=}{\lra} \FIsocd(X,\ol{X})^{\circ}
\end{equation}
which is compatible with the equivalence \eqref{eq5'} of Crew. 
\item 
There exists an equivalence of categories 
\begin{equation}\label{eqeq2}
\Repfin_{K^{\sigma}}(\pi_1(X)) \os{=}{\lra} 
\varinjlim_{Y\ra X \in \cG_X} \FIsoc ([\ol{Y}^{\sm}/G_Y])^{\circ} 
\end{equation}
and a natural restriction functor 
\begin{equation}\label{eqeq3}
\varinjlim_{Y\ra X \in \cG_X} 
\FIsoc ([\ol{Y}^{\sm}/G_Y]) {\lra} \FIsocd(X,\ol{X})
\end{equation}
which makes the following diagram commutative$:$
\begin{equation}\label{eqeq4}
\begin{CD}
\Repfin_{K^{\sigma}}(\pi_1(X)) @>{\text{\eqref{eqeq2}}}>> 
\varinjlim_{Y\ra X \in \cG_X} \FIsoc ([\ol{Y}^{\sm}/G_Y])^{\circ} \\ 
@| @V{\text{\eqref{eqeq3}}^{\circ}}VV \\ 
\Repfin_{K^{\sigma}}(\pi_1(X)) @>{\text{\eqref{eqeq1}}}>> 
\FIsocd(X,\ol{X})^{\circ}. 
\end{CD}
\end{equation}
When $X$ is a curve, there exists also a natural restriction 
functor 
\begin{equation}\label{eqeq3'}
\varinjlim_{Y\ra X \in \cG_X} 
\Isoc ([\ol{Y}^{\sm}/G_Y]) {\lra} \Isocd(X,\ol{X})
\end{equation}
with $F\text{-\eqref{eqeq3'}} = \text{\eqref{eqeq3}}$. 
\end{enumerate}
\end{theorem}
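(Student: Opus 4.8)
The plan is to reduce everything to Crew's equivalence \eqref{neweq} together with the overconvergent descent package of Tsuzuki and the purity statement \cite[3.1]{purity} that is already invoked in Proposition~\ref{wd}. First I would construct \eqref{eqeq1}. Given $\rho \in \Repfin_{K^{\sigma}}(\pi_1(X))$, apply Crew's functor $G$ to get a unit-root convergent $F$-isocrystal $G(\rho) \in \FIsoc(X)^{\circ}$; the content to be checked is that $G(\rho)$ extends (necessarily uniquely) to a unit-root \emph{overconvergent} $F$-isocrystal on $(X,\ol{X})$, and that this extended object is itself unit-root. The finiteness hypothesis on $\rho|_{I_{v_i}}$ is exactly Tsuzuki's criterion (``unit-root overconvergence $\Leftrightarrow$ finite local monodromy'') from \cite{tsuzuki}: a unit-root $F$-isocrystal on $X$ is overconvergent along $Z$ iff the associated representation has finite image on each inertia group $I_{v_i}$. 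So \eqref{eqeq1} is Crew's equivalence restricted to the subcategories carved out on each side by the finite-local-monodromy condition, and full faithfulness is inherited from \eqref{eq5'} (restriction $\FIsocd(X,\ol X)^\circ \to \FIsoc(X)^\circ$ being fully faithful by the theory of overconvergent isocrystals). The compatibility with \eqref{eq5'} is built into the construction.

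Next I would build \eqref{eqeq2}. The idea is the usual ``kill the monodromy by a cover'' trick: for $\rho \in \Repfin_{K^{\sigma}}(\pi_1(X))$, the images $\rho(I_{v_i})$ are all finite, so there is a finite étale Galois cover $Y \to X$ in $\cG_X$ such that $\rho|_{\pi_1(Y)}$ is unramified along every component of $\ol Y \setminus Y$ lying over $Z$, i.e.\ $\rho|_{\pi_1(Y)}$ extends to a representation of $\pi_1(\ol Y^{\sm})$ (here one uses Zariski--Nagata purity for the smooth scheme $\ol Y^{\sm}$, whose boundary in $\ol Y$ has codimension $\geq 2$ away from $Z$, so that a representation unramified in codimension one on $\ol Y^{\sm}$ is everywhere unramified). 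Then $\rho$ carries a $G_Y$-equivariant structure on this extension, hence via Crew's equivalence \eqref{neweq} applied to $\ol Y^{\sm}$ (and its functoriality) gives a $G_Y$-equivariant unit-root convergent $F$-isocrystal on $\ol Y^{\sm}$, that is, an object of $\FIsoc([\ol Y^{\sm}/G_Y])^\circ$ (Example~\ref{exam}). This defines a functor into the direct limit; independence of the choice of $Y$ and compatibility with the transition functors of Proposition~\ref{wd} is where one spends effort but no new ideas. For the inverse: an object of $\varinjlim \FIsoc([\ol Y^{\sm}/G_Y])^\circ$ restricts to a $G_Y$-equivariant object of $\FIsoc(Y)^\circ$ for some $Y$, i.e.\ (by Crew on $Y$, plus equivariant descent $\FIsoc([Y/G_Y])^\circ = \FIsoc(X)^\circ$) an object of $\FIsoc(X)^\circ$ whose associated representation, by construction, is unramified along the boundary of $\ol Y^{\sm}$ over $Z$, hence has finite local monodromy at each $v_i$ (the image of $I_{v_i}$ factors through $\Gal(Y/X)$). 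So we land in $\Repfin_{K^{\sigma}}(\pi_1(X))$, and the two functors are mutually inverse by construction.

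The restriction functor \eqref{eqeq3} is then simply: restrict a $G_Y$-equivariant convergent $F$-isocrystal on $\ol Y^{\sm}$ to $Y$ (getting a $G_Y$-equivariant, hence descending, object on $X$), and then take its canonical extension to an overconvergent $F$-isocrystal on $(X,\ol X)$ — the point being that after passing far enough in the limit this overconvergent extension exists; concretely, for an object coming from $[\ol Y^{\sm}/G_Y]$ one first gets an overconvergent $F$-isocrystal on $(Y,\ol Y^{\sm})$ by Tsuzuki/Kedlaya (finite monodromy along the boundary $\Rightarrow$ overconvergence), then descends along the Galois cover using Tsuzuki's descent for overconvergent isocrystals \cite{tsuzuki}. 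Commutativity of \eqref{eqeq4} is immediate from the constructions since every arrow is, after unwinding, an instance of Crew's functor or a restriction, and these were arranged to match. For the curve case, $\ol Y^{\sm} = \ol Y$ automatically, one has the honest morphism $[\ol{Y'}/G_{Y'}] \to [\ol Y/G_Y]$ from Proposition~\ref{wd}, and the construction of \eqref{eqeq3'} is the same restriction-then-overconvergent-extension recipe applied to (not necessarily unit-root) convergent isocrystals; the compatibility $F\text{-}\eqref{eqeq3'} = \eqref{eqeq3}$ holds because adding a Frobenius structure commutes with all the functors involved.

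The main obstacle I anticipate is the descent step for \emph{overconvergent} objects along the Galois cover $Y \to X$ — i.e.\ showing that the overconvergent $F$-isocrystal one produces on $(Y, \ol Y^{\sm})$, together with its $G_Y$-equivariant structure, actually descends to an object of $\FIsocd(X,\ol X)^\circ$ (and that this is compatible with unit-root descent on the non-boundary part). Convergent descent is clean (Ogus \cite[4.4]{ogus}, as used in Proposition~\ref{func_eq}), but overconvergent descent along a ramified-at-the-boundary cover is exactly the delicate input one needs Tsuzuki's work for, and matching it up with the purity equivalence \cite[3.1]{purity} used to handle the codimension-$\geq 2$ locus $\ol Y^{\sm} \setminus \ol Y'^{\sm,\circ}$ will require care. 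Everything else is bookkeeping with $2$-truncated simplicial (log-free) stacks and the functoriality of Crew's equivalence.
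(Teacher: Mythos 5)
Your overall route coincides with the paper's: part (1) is the finite-local-monodromy criterion (quoted from \cite[4.2]{purity}, resp. Tsuzuki/Kedlaya), and for \eqref{eqeq2} you kill the local monodromy by a Galois cover, extend across the boundary by Zariski--Nagata purity, and apply Crew's equivalence to the simplicial scheme $\ol{Y}^{\sm}_{\b}$, exactly as in the paper. The trouble is in your construction of \eqref{eqeq3}. Your intermediate step ``finite monodromy along the boundary $\Rightarrow$ overconvergence'' is both unnecessary and unavailable there: \eqref{eqeq3} (and \eqref{eqeq3'}, which carries no Frobenius structure at all) must be defined on the whole category $\varinjlim_{Y\ra X}\FIsoc([\ol{Y}^{\sm}/G_Y])$, not only on unit-root objects, so there is no associated representation to which such a criterion could apply; the correct (and much simpler) step is just the canonical restriction functor $\FIsoc(\ol{Y}^{\sm}_{\b}) \lra \FIsocd(Y_{\b},\ol{Y}^{\sm}_{\b})$, valid for arbitrary convergent isocrystals.

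The genuine gap is the step you flag but do not carry out: descending the resulting $G_Y$-equivariant overconvergent object to $(X,\ol{X})$. This is not an off-the-shelf descent statement, because although $Y \ra X$ is finite etale, the morphism of pairs $(Y_{\b},\ol{Y}^{\sm}_{\b}) \ra (X,\ol{X}')$ is ramified along the boundary, so etale descent for overconvergent isocrystals does not apply directly. The paper proves the needed equivalence $\FIsocd(X,\ol{X}') \os{=}{\lra} \FIsocd(Y_{\b},\ol{Y}^{\sm}_{\b})$ by exhibiting the quasi-inverse $\cE \mapsto (\pi_*\cE)^{G_Y}$ with Tsuzuki's push-forward and showing that the two trace morphisms are isomorphisms: they are isomorphisms after restriction to the convergent categories (etale descent of Ogus), and since the restriction functors $\FIsocd \ra \FIsoc$ are exact and faithful they are isomorphisms already; the discrepancy between $\ol{X}'$ (the image of $\ol{Y}^{\sm}$) and $\ol{X}$ is then absorbed by \cite[3.1]{purity}, as you anticipated. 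Without this (or an equivalent) argument your functor \eqref{eqeq3} is not constructed. Relatedly, commutativity of \eqref{eqeq4} is not formally ``immediate'': the two functors into $\FIsocd(X,\ol{X})^{\circ}$ arise from different descents, and the paper verifies the square only after composing with the fully faithful restriction to $\FIsoc(X)^{\circ}$ and unwinding the functoriality of Crew's equivalence --- routine, but only once the descent equivalence above is in place.
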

The equivalence \eqref{eqeq2} is the same as \eqref{seq1}, which is 
a $p$-adic version of \eqref{eq1}. 

\begin{proof}
The equivalence \eqref{eqeq1} is already proven in 
\cite[4.2]{purity}. (In one dimensional case, this is a result of Tsuzuki 
\cite{tsuzukicurve}. 
In higher dimensional case, there is a related result by Kedlaya 
\cite[2.3.7, 2.3.9]{kedlayaswanII}.) \par 
Let us prove the equivalence \eqref{eqeq2}. 
Let $\rho: \pi_1(X) \lra GL_d(K^{\sigma})$ 
be an object in $\Repfin_{K^{\sigma}}(\pi_1(X))$. 
By \cite[4.1, 4.2]{purity} or \cite[2.3.7, 2.3.9]{kedlayaswanII}, 
there exists an object $\varphi: Y \lra X$ in 
$\cG_X$ such that, for any discrete valuation $v$ of $k(Y)$ centered on 
$\ol{Y} \setminus Y$, the restriction of $\rho$ to the inertia group 
$I_v$ at $v$ is trivial. In particular, $\rho |_{\pi_1(Y)}$ is 
unramified at any generic points of $\ol{Y}^{\sm} \setminus Y$ 
(note that $\ol{Y}^{\sm} \setminus Y$ is generically smooth). 
So, by Zariski-Nagata purity, we see that $\rho|_{\pi_1(Y)}$ factors 
through $\pi_1(\ol{Y}^{\sm})$. 
In this way, $\rho \in \Repfin_{K^{\sigma}}(\pi_1(X)) \subseteq 
\Rep_{K^{\sigma}}(\pi_1(X)) \cong \Sm_{K^{\sigma}}(X)$ 
induces a $G_Y$-equivariant object in $\Sm_{K^{\sigma}}(\ol{Y}^{\sm})$ 
corresponding to $\rho|_{\pi_1(Y)}$. \par 
On the other hand, let $\rho$ be an object in $\Rep_{K^{\sigma}}(\pi_1(X))$ 
such that $\rho |_{\pi_1(Y)}$ factors through $\pi_1(\ol{Y}^{\sm})$ for some 
$Y \lra X$ in $\cG_X$. 
Let $\ol{X}'$ be the image of $\ol{Y}^{\sm}$ in $\ol{X}$. 
Then $v_i$'s are centered on $\ol{X}'$ and 
so we can take an extension $v'_i$ of $v_i$ to $k(Y)$ whose center 
$x$ is contained in $\ol{Y}^{\sm}$. 
Let 
$k(Y)_{v'_i}$ be the completion of $k(Y)$ with respect to $v'_i$ and 
denote the valuation ring by $O_{v'_i}$. Then the composite 
$$ \Spec k(Y)_{v'_i} \lra \Spec k(Y) \lra Y \lra \ol{Y}^{\sm} $$ 
factors as 
$$ \Spec k(Y)_{v'_i} \lra \Spec O_{v'_i} \lra \Spec 
\cO_{\ol{Y}^{\sm},x} \lra \ol{Y}^{\sm}. $$ 
So we see that the restriction of 
$\rho |_{\pi_1(Y)}: \pi_1(Y) \lra \pi_1(\ol{Y}^{\sm}) \lra GL_d(K^{\sigma})$ 
to $I_{v'_i}$ is trivial. 
Therefore, 
$\rho |_{I_{v_i}}$ factors through the finite group $I_{v_i}/I_{v'_i}$ and 
we see that $\rho$ is contained in 
$\Repfin_{K^{\sigma}}(\pi_1(X))$. \par 
Let $G_Y\text{-}\Sm_{K^{\sigma}}(\ol{Y}^{\sm})$
be the category of $G_Y$-equivariant objects in 
the category $\Sm_{K^{\sigma}}(\ol{Y}^{\sm})$. 
Then we see from the argument 
in the previous two paragraphs that the 
above construction gives an equivalence 
\begin{equation}\label{eqeqeq11}
\Repfin_{K^{\sigma}}(\pi_1(X)) \os{=}{\lra} 
\varinjlim_{Y \ra X \in \cG_X} 
G_Y\text{-}\Sm_{K^{\sigma}}(\ol{Y}^{\sm}). 
\end{equation}
For $m=0,1,2$, let $\ol{Y}^{\sm}_m$ be the $(m+1)$-fold 
fiber product of $\ol{Y}^{\sm}$ over $[\ol{Y}^{\sm}/G_Y]$. Then 
they form a $2$-truncated simplicial scheme $\ol{Y}^{\sm}_{\b}$ and 
we have the canonical equivalence 
$G_Y\text{-}\Sm_{K^{\sigma}}(\ol{Y}^{\sm}) = 
\Sm_{K^{\sigma}}(\ol{Y}^{\sm}_{\b})$. Hence, 
by Crew's equivalence and Example \ref{exam}, we have the 
equivalence of categories 
\begin{align}
\varinjlim_{Y \ra X \in \cG_X} 
G_Y\text{-}\Sm_{K^{\sigma}}(\ol{Y}^{\sm}) & \os{=}{\lra} 
\varinjlim_{Y \ra X \in \cG_X} 
\Sm_{K^{\sigma}}(\ol{Y}^{\sm}_{\b}) \label{grepfisoc} \\ 
& \os{=}{\lra} 
\varinjlim_{Y\ra X \in \cG_X} \FIsoc (\ol{Y}^{\sm}_{\b})^{\circ} \nonumber \\ 
& \os{=}{\lra} 
\varinjlim_{Y\ra X \in \cG_X} \FIsoc ([\ol{Y}^{\sm}/G_Y])^{\circ}. \nonumber 
\end{align}
Combining this equivalence with \eqref{eqeqeq11}, we obtain the 
equivalence \eqref{eqeq2}. \par 
Next let us define the functor \eqref{eqeq3}. 
Let $\ol{Y}^{\sm}_{\b}$ be as above, let 
$Y_{\b}$ be the $2$-truncated simplicial scheme such that 
$Y_m \,(m=0,1,2)$ is naturally the $(m+1)$-fold fiber product of 
$Y$ over $X$ and let $\ol{X}'$ be the image of $\ol{Y}^{\sm}$ in $\ol{X}$. 
We would like to define the composite functor 
\begin{align}
\FIsoc([\ol{Y}^{\sm}/G_Y]) & 
\os{=}{\lra} 
\FIsoc(\ol{Y}^{\sm}_{\b}) \label{precompos} \\ 
& \lra \FIsocd(Y_{\b}, \ol{Y}^{\sm}_{\b}) \nonumber \\ 
& \os{=}{\lla} \FIsocd(X, \ol{X}') \nonumber \\ 
& \os{=}{\lla} \FIsocd(X, \ol{X}). \nonumber 
\end{align} 
In order that the functor is well-defined, we should prove that 
the third arrow in the above composite is an equivalence. 
(The fourth arrow is an equivalence by \cite[3.1]{purity}.) 
Then it suffices to prove the equivalence 
\begin{equation}\label{2barbox}
\Isocd(X, \ol{X}') \os{=}{\lra} \Isocd(Y_{\b}, \ol{Y}^{\sm}_{\b}). 
\end{equation}
Note that the right hand side in \eqref{2barbox} 
is the category of objects in 
$\Isocd(Y_{0}, \ol{Y}^{\sm}_0)$ endowed with equivariant 
$G_Y$-action. 
Then, if we denote the projection $\ol{Y}^{\sm}_0 \lra \ol{X}'$ by $\pi$, 
we have the functor 
\begin{equation}\label{aa2barbox}
\Isocd(Y_{\b}, \ol{Y}^{\sm}_{\b}) \lra 
\Isocd(X, \ol{X}'); 
\quad \cE \mapsto (\pi_*\cE)^{G_Y}, 
\end{equation}
where $\pi_*$ is the push-out functor defined by Tsuzuki \cite{tsuzuki}. 
By \cite{tsuzuki} and \cite[2.6.8]{kedlayaI}, we have the 
trace morphisms $(\pi_*\pi^*\cE)^{G_Y} \lra \cE$, 
$\pi^*((\pi_*\cE)^{G_Y}) \lra \cE$, and they are 
isomorphic in $\Isoc(X)$ and $\Isoc(Y_{\b})$ by etale descent. Since 
$\Isocd(X,\ol{X}') \lra \Isoc(X)$, 
$\Isocd(Y_{\b}, \ol{Y}^{\sm}_{\b}) \lra \Isoc(Y_{\b})$ are exact and 
faithful, they are actually isomorphic. 
Hence \eqref{aa2barbox} is a quasi-inverse of 
\eqref{2barbox} and so \eqref{2barbox} is an equivalence. 
So the functor \eqref{precompos} is well-defined and it induces the 
functor \eqref{eqeq3}. When $X$ is a curve, we can define the 
functor \eqref{eqeq3'} in the same way using \eqref{2barbox}. 
(Note that we do not have to use \cite[3.1]{purity} because 
$\ol{X} = \ol{X}'$ in the case of curves.) \par 
To prove the commutativity of the diagram \eqref{eqeq4}, it suffice to 
check it after we compose with the restriction functor 
\begin{equation}\label{hoho}
\FIsocd(X,\ol{X})^{\circ} \lra \FIsoc(X)^{\circ},  
\end{equation}
which is known to be fully faithful (\cite{tsuzuki}). 
Then $\text{\eqref{hoho}$\circ$\eqref{precompos}}^{\circ}$ 
is equal to the composite 
\begin{align*}
\FIsoc([\ol{Y}^{\sm}/G_Y])^{\circ} & \os{=}{\lra} 
\FIsoc(\ol{Y}^{\sm}_{\b})^{\circ} \\ 
& \lra \FIsoc(Y_{\b})^{\circ} \os{=}{\lla} 
\FIsoc(X)^{\circ}. 
\end{align*}
Hence $\text{\eqref{hoho}}
\circ\text{\eqref{precompos}}^{\circ}\circ\text{\eqref{eqeq2}}^{\circ}$ 
is the composite 
\begin{align*}
\Repfin_{K^{\sigma}}(\pi_1(X)) & \os{\text{\eqref{eqeqeq11}}}{\lra}
\varinjlim_{Y\ra X \in \cG_X} 
G_Y\text{-}\Sm_{K^{\sigma}}(\ol{Y}^{\sm}) \\ 
& \os{=}{\lra} 
\varinjlim_{Y\ra X \in \cG_X} 
\Sm_{K^{\sigma}}(\ol{Y}^{\sm}_{\b}) \\ 
& \os{G}{\lra} 
\varinjlim_{Y\ra X \in \cG_X} 
\FIsoc(\ol{Y}^{\sm}_{\b})^{\circ} \\ 
& \lra 
\varinjlim_{Y\ra X \in \cG_X} 
\FIsoc(Y_{\b})^{\circ} \os{=}{\lla} \FIsoc(X)^{\circ}. 
\end{align*}
By the functoriality of \eqref{neweq}, it is equal to 
the composite 
\begin{align*}
\Repfin_{K^{\sigma}}(\pi_1(X)) & \os{\text{\eqref{eqeqeq11}}}{\lra}
\varinjlim_{Y\ra X \in \cG_X} 
G_Y\text{-}\Sm_{K^{\sigma}}(\ol{Y}^{\sm}) \\ 
& \os{=}{\lra} 
\varinjlim_{Y\ra X \in \cG_X} 
\Sm_{K^{\sigma}}(\ol{Y}^{\sm}_{\b}) \\ 
& \lra 
\varinjlim_{Y\ra X \in \cG_X} 
\Sm_{K^{\sigma}}(Y_{\b}) \\ 
& \os{G}{\lra} 
\varinjlim_{Y\ra X \in \cG_X} 
\FIsoc(Y_{\b})^{\circ} \os{=}{\lla} \FIsoc(X)^{\circ}, 
\end{align*}
and by definition of \eqref{eqeqeq11}, it is rewritten as the composite 
\begin{align*}
\Repfin_{K^{\sigma}}(\pi_1(X)) & \os{\subset}{\lra}
\Rep_{K^{\sigma}}(\pi_1(X)) \\ 
& \os{=}{\lra} \Sm_{K^{\sigma}}(X) \\ 
& \os{=}{\lra} 
\varinjlim_{Y\ra X \in \cG_X} 
\Sm_{K^{\sigma}}(Y_{\b}) \\ 
& \os{G}{\lra} 
\varinjlim_{Y\ra X \in \cG_X} 
\FIsoc(Y_{\b})^{\circ} \os{=}{\lla} \FIsoc(X)^{\circ}. 
\end{align*}
Again by the functoriality of \eqref{neweq} and the assertion (1), 
it is further rewritten as the composite 
$$ 
\Repfin_{K^{\sigma}}(\pi_1(X)) \os{\text{\eqref{eqeq1}}}{\lra} 
\FIsocd(X,\ol{X})^{\circ} \os{\text{\eqref{hoho}}}{\lra} 
\FIsoc(X)^{\circ}. $$
So we have proved the commutativity of the diagram \eqref{eqeq4} and 
we are done. 
\end{proof}

Next let us consider `the tame variant' of the equivalence 
\eqref{eqeq2}. Let $\pi_1^t(X)$ be 
the tame fundamental group of $X$ (tamely ramified at 
the valuations $v_i \,(1 \leq i \leq r)$) and let 
$\Rep_{K^{\sigma}}(\pi_1^t(X))$ be the category of finite dimensional 
continuous representations of $\pi_1^t(X)$ over $K^{\sigma}$. 
On the other hand, 
let $\cG_X^t$ be the category of finite etale Galois tame covering 
of $X$ (tamely ramified at $v_i \,(1 \leq i \leq r)$). 
For an object $Y \ra X$ in $\cG_X^t$, let 
$\ol{Y}, \ol{Y}^{\sm}, G_Y$ be as before. 
Then we have the following: 

\begin{theorem}\label{cor1}
Let the notations be as above. Then 
there exists an equivalence of categories 
\begin{equation}\label{eqeq2t}
\Rep_{K^{\sigma}}(\pi_1^t(X)) \os{=}{\lra} 
\varinjlim_{Y\ra X \in \cG_X^t} \FIsoc ([\ol{Y}^{\sm}/G_Y])^{\circ} 
\end{equation}
and a natural restriction functor 
\begin{equation}\label{eqeq3t}
\varinjlim_{Y\ra X \in \cG_X^t} 
\FIsoc ([\ol{Y}^{\sm}/G_Y]) \lra \FIsocd(X,\ol{X})
\end{equation}
which makes the following diagram commutative$:$
\begin{equation}\label{eqeq4t}
\begin{CD}
\Rep_{K^{\sigma}}(\pi_1^t(X)) @>{\text{\eqref{eqeq2t}}}>> 
\varinjlim_{Y\ra X \in \cG_X^t} \FIsoc ([\ol{Y}^{\sm}/G_Y])^{\circ} \\ 
@V{\cap}VV @V{\text{\eqref{eqeq3t}}^{\circ}}VV \\ 
\Repfin_{K^{\sigma}}(\pi_1(X)) @>{\text{\eqref{eqeq1}}}>> 
\FIsocd(X,\ol{X})^{\circ}. 
\end{CD}
\end{equation}
When $X$ is a curve, we have also a natural restriction functor 
\begin{equation}\label{eqeq3t'}
\varinjlim_{Y\ra X \in \cG_X^t} 
\Isoc ([\ol{Y}^{\sm}/G_Y]) \lra \Isocd(X,\ol{X})
\end{equation}
with $F\text{-\eqref{eqeq3t'}}=\text{\eqref{eqeq3t}}$. 
\end{theorem}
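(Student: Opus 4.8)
The plan is to deduce Theorem~\ref{cor1} from Theorem~\ref{thm1} by restriction along the inclusion $\cG_X^t \hra \cG_X$ of the category of tame Galois coverings into that of all finite étale Galois coverings. The first point to record is that $\Rep_{K^{\sigma}}(\pi_1^t(X))$ is naturally a full subcategory of $\Repfin_{K^{\sigma}}(\pi_1(X))$ via the canonical surjection $\pi_1(X) \twoheadrightarrow \pi_1^t(X)$: for $\rho$ tame, $\rho|_{I_{v_i}}$ factors through the tame quotient $I_{v_i}^t$, which is procyclic of pro-order prime to $p$, and a continuous finite dimensional $K^{\sigma}$-representation of such a group has finite image (a compact subgroup of $GL_d$ over a $p$-adic field is virtually pro-$p$, so a pro-prime-to-$p$ one is finite); thus $\rho(I_{v_i})$ is a finite cyclic group of order prime to $p$ and $\rho \in \Repfin_{K^{\sigma}}(\pi_1(X))$. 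Conversely, if $\rho \in \Repfin_{K^{\sigma}}(\pi_1(X))$ and $\rho|_{\pi_1(Y)}$ factors through $\pi_1(\ol{Y}^{\sm})$ for some $Y \in \cG_X^t$, then, exactly as in the proof of Theorem~\ref{thm1}, $\rho|_{I_{v_i}}$ factors through $I_{v_i}/I_{v'_i}$ for a suitable extension $v'_i$ of $v_i$ to $k(Y)$; since $Y/X$ is tamely ramified, $I_{v'_i}$ contains the wild inertia subgroup $P_{v_i}$ of $I_{v_i}$, so $\rho|_{I_{v_i}}$ factors through $I_{v_i}^t$ and $\rho$ lies in $\Rep_{K^{\sigma}}(\pi_1^t(X))$.

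Next I would check that the equivalence~\eqref{eqeqeq11} used in the proof of Theorem~\ref{thm1} restricts to an equivalence
\[
\Rep_{K^{\sigma}}(\pi_1^t(X)) \os{=}{\lra} \varinjlim_{Y\ra X \in \cG_X^t} G_Y\text{-}\Sm_{K^{\sigma}}(\ol{Y}^{\sm}).
\]
The essential surjectivity amounts to: for $\rho$ tame there is a tame Galois cover $Y \ra X$ with $\rho|_{\pi_1(Y)}$ factoring through $\pi_1(\ol{Y}^{\sm})$. Since each $\rho(I_{v_i})$ is finite cyclic of order $n_i$ prime to $p$, one may take $Y$ to be (the Galois closure of) a Kummer cover of order $n := \mathrm{lcm}_i (n_i)$ ramified along the $Z_i$; such a cover lies in $\cG_X^t$, and along it $\rho$ becomes unramified at the generic points of $\ol{Y}^{\sm}\setminus Y$ — which lie over the valuations $v_i$ — so by Zariski--Nagata purity $\rho|_{\pi_1(Y)}$ factors through $\pi_1(\ol{Y}^{\sm})$, as required. (Alternatively, the semistable reduction statements cited in the proof of Theorem~\ref{thm1} already yield a tame $Y$ when $\rho$ is tame.) Composing with the chain of equivalences~\eqref{grepfisoc} — Crew's equivalence together with Example~\ref{exam} and the identification $G_Y\text{-}\Sm_{K^{\sigma}}(\ol{Y}^{\sm}) = \Sm_{K^{\sigma}}(\ol{Y}^{\sm}_{\b})$ — now taken over $\cG_X^t$, one obtains the equivalence~\eqref{eqeq2t}.

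For the functor~\eqref{eqeq3t} I would simply restrict the functor~\eqref{eqeq3} along $\cG_X^t \hra \cG_X$: the constituent functors~\eqref{precompos} constructed in the proof of Theorem~\ref{thm1} are defined for every $Y \in \cG_X$ (tameness plays no role there) and are compatible with morphisms, hence induce a functor out of the colimit over $\cG_X^t$; likewise for~\eqref{eqeq3t'} in the curve case, where $\ol{Y}^{\sm}=\ol{Y}$ and $\ol{X}'=\ol{X}$, and the compatibility $F\text{-}\eqref{eqeq3t'}=\eqref{eqeq3t}$ is inherited from $F\text{-}\eqref{eqeq3'}=\eqref{eqeq3}$. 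Finally, the commutativity of~\eqref{eqeq4t} follows from that of~\eqref{eqeq4}: writing $\iota$ for the natural functor $\varinjlim_{Y \ra X \in \cG_X^t}\FIsoc([\ol{Y}^{\sm}/G_Y])^{\circ} \lra \varinjlim_{Y \ra X \in \cG_X}\FIsoc([\ol{Y}^{\sm}/G_Y])^{\circ}$, the construction above gives $\eqref{eqeq2}\circ(\cap) = \iota\circ\eqref{eqeq2t}$ and, by definition of~\eqref{eqeq3t}, $\eqref{eqeq3}^{\circ}\circ\iota = \eqref{eqeq3t}^{\circ}$, so that $\eqref{eqeq1}\circ(\cap) = \eqref{eqeq3}^{\circ}\circ\eqref{eqeq2}\circ(\cap) = \eqref{eqeq3t}^{\circ}\circ\eqref{eqeq2t}$. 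The one point needing genuine care — and the main obstacle — is the interplay in the second paragraph between tame representations, tame coverings and Abhyankar/Kummer theory: producing a tame cover which simultaneously trivializes all the inertia images $\rho(I_{v_i})$ and to which Zariski--Nagata purity applies, and conversely reading off tameness of $\rho$ from the existence of such a cover.
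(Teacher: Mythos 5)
Your overall strategy (restrict the proof of Theorem \ref{thm1} to the tame setting, check that the two sides match up under the inclusions $\Rep_{K^{\sigma}}(\pi_1^t(X)) \subseteq \Repfin_{K^{\sigma}}(\pi_1(X))$ and $\cG_X^t \hra \cG_X$, and define \eqref{eqeq3t}, \eqref{eqeq3t'} by composing with \eqref{eqeq3}, \eqref{eqeq3'}) is the same as the paper's, and your first and third paragraphs are essentially fine. The genuine gap is exactly the step you flag as ``the main obstacle'': the construction of a tame Galois covering trivializing the inertia images. You propose to take $Y$ to be ``(the Galois closure of) a Kummer cover of order $n$ ramified along the $Z_i$'', but such a cover need not exist globally: extracting $n$-th roots of equations of the $Z_i$ requires divisibility of the classes of the $Z_i$ in the Picard group, and even the weaker statement ``there exists a tame Galois cover of $X$ with ramification index divisible by $n$ over every $v_i$'' is false in general without reference to $\rho$ --- e.g.\ for a curve $\ol{X}$ with $Z$ a single point, no cyclic cover ramified only over that point exists (abelianization kills the local monodromy), and for $X=\Af^1_k$, $Z=\{\infty\}$ there is no nontrivial tame cover at all. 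Your fallback, that the results cited in the proof of Theorem \ref{thm1} (\cite[4.1, 4.2]{purity}, \cite[2.3.7, 2.3.9]{kedlayaswanII}) ``already yield a tame $Y$ when $\rho$ is tame'', is also unjustified: those statements produce some finite etale Galois covering killing the local monodromy, with no tameness assertion.

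The paper closes this gap by letting $\rho$ itself produce the covering: choose an $O_K^{\sigma}$-lattice and let $Y \ra X$ be the covering corresponding to $\Ker(\pi_1^t(X) \os{\rho}{\ra} GL_d(O_K^{\sigma}) \twoheadrightarrow GL_d(O_K^{\sigma}/2pO_K^{\sigma}))$; this lies in $\cG_X^t$ by construction, and for any discrete valuation $v$ of $k(Y)$ centered on $\ol{Y}$ the image $\rho(I_v)$ is contained in the pro-$p$ congruence kernel while being finite of order prime to $p$, hence trivial, after which Zariski--Nagata purity applies as in Theorem \ref{thm1}. The prime-to-$p$ finiteness is Lemma \ref{cor1lem}, which the paper proves for \emph{all} divisorial valuations of $k(X)$ centered on $\ol{X}$ (using a local Kummer construction on a neighborhood mapping smoothly to $\Af^r_k$ together with Katz's local-to-global extension result), not only for the $v_i$ as in your first paragraph; for the purity step one only needs the valuations at the generic points of $\ol{Y}^{\sm}\setminus Y$, which do restrict to the $v_i$, but your argument should at least say this explicitly. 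With the covering chosen as above, the rest of your outline (the converse direction, the definition of \eqref{eqeq3t}, \eqref{eqeq3t'} by restriction along $\cG_X^t \hra \cG_X$, and the commutativity of \eqref{eqeq4t} deduced from \eqref{eqeq4}) agrees with the paper's proof.
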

The equivalence \eqref{eqeq2t} is the same as \eqref{seq2}, which is 
a tame $p$-adic version of \eqref{eq1}. Before the proof of 
Theorem \ref{cor1}, we prove a lemma. 

\begin{lemma}\label{cor1lem}
Let $X \hra \ol{X}$, $v_i \,(1 \leq i \leq r)$ be as above and 
let us take $\rho \in \Rep_{K^{\sigma}}(\pi_1^t(X))$. 
Let $v$ be a discrete valuation on $k(X)$ centered on $\ol{X}$, 
let $k(X)_v$ be the completion of $k(X)$ with respect to $v$ and 
let $I_v$ be the inertia group of $k(X)_v$. Then 
$|\im(\rho|_{I_v})|$ is finite and prime to $p$. 
$($In particular, we have the inclusion 
$\Rep_{K^{\sigma}}(\pi_1^t(X)) \subseteq 
\Rep_{K^{\sigma}}^{\fin}(\pi_1(X)).)$ 
\end{lemma}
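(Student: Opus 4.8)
The plan is to reduce the statement to two largely independent ingredients: a group-theoretic fact about $GL_d(K^{\sigma})$, and a form of Abhyankar's lemma saying that a cover of $X$ tamely ramified along $Z$ stays tamely ramified (or becomes étale) at every discrete valuation of $k(X)$ centered on $\ol{X}$. To begin, I would fix a separable closure $\Omega$ of $k(X)$ and an extension $\ol{v}$ of $v$ to $\Omega$, thereby fixing the inertia group $I_v$ together with its wild (pro-$p$) part $P_v \subseteq I_v$, and recall that $I_v/P_v$ is a procyclic group of order prime to $p$. Since $\ol{X} = X \sqcup Z$ as sets, the center $w$ of $v$ lies in $X$ or in $Z$, and I would treat these two cases separately: in the first I will show $\rho|_{I_v}$ is trivial, in the second that $\rho(P_v) = 1$, so that $\rho|_{I_v}$ factors through the procyclic prime-to-$p$ quotient $I_v/P_v$.

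For the group-theoretic ingredient, note that since $\sigma$ lifts the $q$-th power map on $k$, the residue field of the complete discretely valued field $K^{\sigma}$ is $\F_q$; hence $K^{\sigma}$ is a finite extension of $\Q_p$, the group $GL_d(O_K^{\sigma})$ is a maximal compact subgroup of $GL_d(K^{\sigma})$, and $1 + \fm_{K^{\sigma}}\mathrm{Mat}_d(O_K^{\sigma})$ is an open pro-$p$ normal subgroup of it with finite quotient $GL_d(\F_q)$. Therefore any continuous homomorphism from a procyclic group of order prime to $p$ into $GL_d(K^{\sigma})$ has image conjugate into $GL_d(O_K^{\sigma})$ and meeting the pro-$p$ congruence subgroup trivially, hence embedding into $GL_d(\F_q)$; in particular the image is finite of order prime to $p$. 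Applying this to $\rho|_{I_v/P_v}$ will finish the second case once $\rho(P_v) = 1$ is established; for $v$ equal to one of the $v_i$ the vanishing $\rho(P_{v_i}) = 1$ is precisely the assumption that $\rho$ factors through $\pi_1^t(X)$, which already yields the parenthetical assertion.

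For the geometric ingredient, I would write $\rho = \cF_{0,\Q}$ with $\cF_0 \in \Sm_{O_K^{\sigma}}(X)$, so that $\cF_0$ is a compatible system of lisse $O_K^{\sigma}/\fm_{K^{\sigma}}^n$-sheaves $\cF_{0,n}$, each corresponding to a finite étale cover of $X$ whose normalization over $\ol{X}$ is tamely ramified along $Z$ (because $\rho$ is tamely ramified along $Z$). It suffices to show that this normalization, base-changed along the local injection $\cO_{\ol{X},w} \hra \cO_v$, is finite étale over $\cO_v$ when $w \in X$ (so $I_v$ acts trivially on every $\cF_{0,n}$, hence on $\cF_0$ and $\rho$) and is at worst tamely ramified over $\cO_v$ when $w \in Z$ (so $P_v$ acts trivially). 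The case $w \in X$ is immediate, the cover being finite étale over $\cO_{X,w}$. For $w \in Z$ I would pass to compatibly chosen strict henselizations $\cO_{\ol{X},w}^{\mathrm{sh}} \hra \cO_v^{\mathrm{sh}}$ inside $\Omega$ and use Abhyankar's lemma: a tame cover of the complement of $Z$ in $\Spec \cO_{\ol{X},w}^{\mathrm{sh}}$ is a product of Kummer extensions $u_i^{n_i} = t_i$ with $n_i$ prime to $p$ and $t_i$ a local equation for the component of $Z$ through $w$, so after base change the cover becomes over $\cO_v^{\mathrm{sh}}$ a product of extensions obtained by adjoining prime-to-$p$ roots of elements of $\cO_v$, which is tame. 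I expect the crux to be exactly this last point when $v$ is a general (possibly non-divisorial, ``analytic'') rank-one discrete valuation whose center is a deep stratum of $Z$: one must arrange the geometric points compatibly and check that Abhyankar's lemma applies in this generality — alternatively, one may invoke directly from the references the equivalence between tameness of a cover along $Z$ and tameness at every discrete valuation of $k(X)$ centered on $\ol{X}$.
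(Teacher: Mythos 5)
Your outline is sound and shares the paper's two-ingredient skeleton — a congruence-subgroup fact about $GL_d(O_K^{\sigma})$ (the kernel of reduction is pro-$p$ with finite quotient, so a continuous image of a pro-prime-to-$p$ group is finite of prime-to-$p$ order) plus a geometric statement that tameness at the divisorial valuations $v_1,\dots,v_r$ propagates to every discrete valuation $v$ centered on $\ol{X}$ — but you realize the geometric half by a genuinely different device, and that half is exactly the part you leave as a flagged "crux." The paper does not kill the wild inertia $P_v$ locally via Abhyankar's lemma at $\cO_{\ol{X},w}^{\mathrm{sh}}$; instead it first proves the case $v=v_i$ with the congruence argument modulo $2p$, recording the extra observation that $\rho|_{I_{v_i}}$ then factors through \emph{any} prime-to-$p$ quotient $I_{v_i}/J$ whose order is divisible by $|\im(\rho|_{I_{v_i}})|$, and then, for general $v$ with center $x$, chooses a neighborhood $\ol{U}$ with a smooth chart $f:\ol{U}\lra \Af^r_k$ adapted to $Z$, pulls back the $n$-th power map ($n$ prime to $p$ divisible by all $|\im(\rho|_{I_{v_i}})|$) to get a prime-to-$p$ Kummer cover $\ol{U}'\lra\ol{U}$ of degree $n^r$, uses Zariski--Nagata purity to extend $\rho|_{\pi_1(U')}$ to $\pi_1(\ol{U}')$, and concludes that $\rho|_{I_v}$ factors through $I_v/I_{v'}$ of order dividing $n^r$. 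That argument gives finiteness and prime-to-$p$-ness of the whole image in one stroke, uniformly in the center (no case split $w\in X$ versus $w\in Z$, and no need to isolate $P_v$ or use the procyclic structure of $I_v/P_v$), and it is self-contained given purity, which the paper already uses elsewhere. Your route — Grothendieck--Murre/Abhyankar over the strictly henselian local ring of the center, base-changed along a compatible map to $\cO_v^{\mathrm{sh}}$, so that the cover is generated by prime-to-$p$ roots of the $t_i$ and hence tame over $\cO_v$ — does work and is perhaps more citational, but the bookkeeping you postpone (compatible strict henselizations inside $\Omega$, tameness of the base-changed cover along the pulled-back components, and the passage from the finite levels $\cF_{0,n}$ to $\rho$) is precisely what the paper's explicit construction replaces; also note that the reference you would "invoke directly" (divisor-tameness implies valuation-tameness) is not among the paper's citations, so you would either import it or carry out the henselian argument in full. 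Two minor economies: since you have already fixed a stable lattice $\cF_0$, the maximal-compact conjugation step is unnecessary, and the identification of the residue field of $K^{\sigma}$ with $\F_q$ (rather than merely a subfield of it) needs the small Hensel-lifting argument for roots of $X^q-X$, though any finite residue field suffices for your purpose.
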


\begin{proof}
Let us take a suitable $O_{K}^{\sigma}$-lattice $\pi_1^t(X) \lra 
GL_d(O_{K}^{\sigma})$ of $\rho$. First we prove the lemma in the case 
where $v=v_i$ for some $i$. In this case, $\rho|_{I_v}$ factors through 
the tame quotient $I^t_v$ by definition. Note that 
$I^t_v$ is a pro-prime-to-$p$ group and that 
$N := \Ker(GL_d(O_K^{\sigma}) \lra GL_d(O_K^{\sigma}/2pO_K^{\sigma}))$ 
is a pro-$p$ group. So the image of 
$\rho|_{I_v}: I_v \lra I_v^t \lra GL_d(O_{K}^{\sigma})$ is isomorphic to 
the image of the composite 
$I_v \lra I_v^t \lra GL_d(O_{K}^{\sigma}) \lra GL_d(O_{K}^{\sigma})/N$.
So $|\im(\rho|_{I_v})|$ is finite. Then, since $I^t_v$ is a pro-prime-to-$p$ 
group, we see also that  $|\im(\rho|_{I_v})|$ is prime to $p$. So we are 
done in this case. Note also that, since we have the isomorphism 
$I_v^t \cong \prod_{l\not=p} \Z_l$ and $\Ker(I_v \lra I^t_v)$ is 
a pro-$p$-group, $\rho|_{I_v}$ factors through 
$I_v/J$ for any $J \lhd I_v$ such that $|I_v/J|$ is prime to $p$ and divisible 
by $|\im(\rho|_{I_v})|$. \par 
Next we prove the lemma for general $v$. Let $x$ be the center of $v$ and 
take an open neighborhood $\ol{U} \subseteq \ol{X}$ of $x$ such that 
$\ol{U}$ admits a smooth morphism $f:\ol{U} \lra \Af_k^r = \Spec 
k[t_1,...,t_r]$ with $f^{-1}(\{t_i=0\}) = \ol{U} \cap Z_i \, (1 \leq i 
\leq r)$. Let us put $U := \ol{U} \cap X = f^{-1}(\G_{m,k}^r)$. 
Let us take a positive integer $n$ prime to $p$ which is divisible 
by all the $|\im (\rho|_{I_{v_i}})|$'s for $1 \leq i \leq r$ 
(there exists such $n$ by the lemma for $v_i$'s). Let 
$n: \Af_k^r \lra \Af_k^r$ be the $n$-th power map and put 
$\ol{U}' := \ol{U} \times_{\Af_k^r,n}\Af_k^r$, 
$U' := U \times_{\Af_k^r,n}\Af_k^r$. Let us denote the 
inverse image of the $i$-th coodinate hyperplane by the projection 
$\ol{U}' = \ol{U} \times_{\Af_k^r,n}\Af_k^r \lra \Af_k^r$ by $Z'_i$ 
\,$(1 \leq i \leq r)$. Then we have $\ol{U}' \setminus U' = :Z' 
= \bigcup_{i=1}^r Z'_i$ and it is a simple normal crossing divisor. 
For $i$ with $Z'_i \not= \emptyset$, let $v'_i$ be the discrete 
valuation on $k(U')$ corresponding to the generic point of $Z'_i$, 
let $k(U')_{v'_i}$ be the completion of $k(U')$ with respect to $v'_i$ 
and let $I_{v'_i}$ be the inertia group of $k(U')_{v'_i}$. 
Then, by definition, $v'_i$ is an extension of $v_i$ and we have 
$|I_{v_i}/I_{v'_i}| = n$. So, by the argument in the previous paragraph, 
$\rho|_{I_{v_i}}$ factors through $I_{v_i}/I_{v'_i}$. So 
$\rho|_{I_{v'_i}}$ is trivial and hence $\rho|_{\pi_1(U')}$ factors 
through $\pi_1(\ol{U}')$. Now let $v'$ be an extension of the 
valuation $v$ to $k(U')$ centered on $\ol{U}'$ 
and let $x' \in \ol{U}'$ be the center of $v'$. Let 
$k(U')_{v'}$ be the completion of $k(U')$ with respect to $v'$ and 
denote the valuation ring by $O_{v'}$. Then the composite 
$$ \Spec k(U')_{v'} \lra \Spec k(U') \lra U' \lra \ol{U}' $$ 
factors as 
$$ \Spec k(U')_{v'} \lra \Spec O_{v'} \lra \Spec 
\cO_{\ol{U}',x'} \lra \ol{U}'. $$ 
So we see that the restriction of 
$\rho |_{\pi_1(U')}: \pi_1(U') \lra \pi_1(\ol{U}') \lra GL_d(\cO_K^{\sigma})$ 
to $I_{v'}$ is trivial. Hence $\rho|_{I_v}$ factors through 
$I_v/I_{v'}$. Since $|I_v/I_{v'}|$ divides 
$[k(U'):k(U)] = n^r$, we can conclude that $|\im (\rho |_{I_v})|$ is 
finite and prime to $p$. So we are done. 
\end{proof}

Now we prove Theorem \ref{cor1}, using the above lemma. 

\begin{proof}[Proof of Theorem \ref{cor1}]
First let us prove the equivalence \eqref{eqeq2t}. 
Let $\rho$ be an object in $\Rep_{K^{\sigma}}(\pi_1^t(X))$ and 
take a suitable $O_K^{\sigma}$-lattice $\pi_1^t(X) \lra 
GL_d(O_K^{\sigma})$ of $\rho$. Let 
$Y \lra X$ be a finite etale Galois tame covering of $X$ 
(tamely ramified along $v_i$'s) which corresponds to 
the subgroup $\Ker(\pi_1^t(X) \os{\rho}{\ra} GL_d(O_K^{\sigma}) 
\twoheadrightarrow GL_d(O_K^{\sigma}/2pO_K^{\sigma}))$. 
Let $v$ be any discrete valuation of $k(Y)$ centered on $\ol{Y}$. 
Then $v|_{k(X)}$ is a discrete valuation of $k(X)$ centered on $\ol{X}$. 
So, by Lemma \ref{cor1lem}, $|\im (\rho|_{I_{v|_{k(X)}}})|$ is finite and 
prime to $p$. Hence so is $|\im (\rho|_{I_{v}})|$. 
On the other hand, $\im (\rho|_{I_v})$ is contained in 
$\Ker(GL_d(O_K^{\sigma}) 
\twoheadrightarrow GL_d(O_K^{\sigma}/2pO_K^{\sigma}))$, which is a 
pro-$p$ group. Hence $\rho|_{I_v}$ is trivial. By using this fact, 
we see in the same way as the proof of Theorem \ref{thm1} that 
$\rho |_{\pi_1(Y)}$ factors through $\pi_1(\ol{Y}^{\sm})$ and so 
induces a $G_Y$-equivariant object of 
$\Sm_{K^{\sigma}}(\ol{Y}^{\sm})$. \par 
On the other hand, let $\rho$ be an object in $\Rep_{K^{\sigma}}(\pi_1(X))$ 
such that $\rho |_{\pi_1(Y)}$ factors through $\pi_1(\ol{Y}^{\sm})$ for some 
$Y \ra X$ in $\cG_X^t$. Then we see in the same way as the proof of 
Theorem \ref{thm1} that, for 
an extension $v'_i$ of $v_i$ to $k(Y)$ $\,(1 \leq i \leq r)$ centered on 
$\ol{Y}^{\sm}$, $\rho|_{I_{v'_i}}$ is trivial. Then 
$\rho |_{I_{v_i}}$ factors through $I_{v_i}/I_{v'_i}$ and 
this is a quotient of the tame inertia quotient $I^t_{v_i}$ of $I_{v_i}$ 
because $Y \lra X$ is in $\cG_X^t$. 
Hence $\rho$ factors through $\pi_1^t(X)$. So we have an equivalence 
\begin{equation}\label{eqeqeq1}
\Repfin_{K^{\sigma}}(\pi_1^t(X)) \os{=}{\lra} 
\varinjlim_{Y \ra X \in \cG_X^t} 
G_Y\text{-}\Sm_{K^{\sigma}}(\ol{Y}^{\sm}). 
\end{equation}
(Here $G_Y\text{-}\Sm_{K^{\sigma}}(\ol{Y}^{\sm})$ is as in 
the proof of Theorem \ref{thm1}.)  
Combining this with the equivalence 
$$ \varinjlim_{Y \ra X \in \cG_X^t} 
G_Y\text{-}\Sm_{K^{\sigma}}(\ol{Y}^{\sm}) \os{=}{\lra} 
\varinjlim_{Y\ra X \in \cG_X^t} \FIsoc ([\ol{Y}^{\sm}/G_Y])^{\circ} $$
which is defined in the same way as \eqref{grepfisoc}, 
we obtain the equivalence \eqref{eqeq2t}. \par 
The functor \eqref{eqeq3t} is defined as the composite of the 
canonical `inclusion functor' 
$\varinjlim_{Y\ra X \in \cG_X^t} \FIsoc ([\ol{Y}^{\sm}/G_Y]) 
\lra \varinjlim_{Y\ra X \in \cG_X} \FIsoc ([\ol{Y}^{\sm}/G_Y])$ 
and the functor \eqref{eqeq3}. 
By construction, we have the commutative diagram 
\begin{equation*}
\begin{CD}
\Rep(\pi_1^t(X)) @>{\text{\eqref{eqeq2t}}}>> 
\varinjlim_{Y\ra X \in \cG_X^t} \FIsoc ([\ol{Y}^{\sm}/G_Y])^{\circ} \\
@V{\cap}VV @V{\text{incl.}}VV \\ 
\Repfin(\pi_1(X)) @>{\text{\eqref{eqeq2}}}>> 
\varinjlim_{Y\ra X \in \cG_X} \FIsoc ([\ol{Y}^{\sm}/G_Y])^{\circ} 
\end{CD}
\end{equation*}
(where $\text{incl.}$ denotes the `inclusion functor'). 
By combining this with \eqref{eqeq3}, we obtain 
the commutative diagram \eqref{eqeq4t}. 
When $X$ is a curve, we define the functor \eqref{eqeq3t'} as 
 the composite of the 
canonical `inclusion functor' 
$\varinjlim_{Y\ra X \in \cG_X^t} \Isoc ([\ol{Y}^{\sm}/G_Y]) 
\lra \varinjlim_{Y\ra X \in \cG_X} \Isoc ([\ol{Y}^{\sm}/G_Y])$ 
and the functor \eqref{eqeq3'}. Then it is easy to see the 
equality $F\text{-\eqref{eqeq3t'}}=\text{\eqref{eqeq3t}}$. 
So we are done. 
\end{proof}

\subsection{Stack of roots}

In this subsection, we recall the notion of stack of roots, which is 
treated in  
\cite{cadman}, \cite{borne1}, \cite{borne2}, \cite{is}. 
We also define the canonical log structure on it and we also introduce a 
`bisimplicial resolution' of it which we use later. \par 
For $r \in \N$, let 
$[\Af_k^r/\G_{m,k}^r]$ be the stack over $k$ which is the 
quotient of $\Af_k^r$ by the canonical action of 
$\G_{m,k}^r$. It is known \cite[5.13]{olsson} that it 
classifies 
pairs $(M,\gamma)$, where $M$ is a fine log structure and 
$\gamma: \N^r \lra \ol{M} := M/\cO^{\times}$ is a homomorphism of 
monoid sheaves which lifts to a chart etale locally. 
By \cite[5.14, 5.15]{olsson}, there exists the canonical fine log structure 
$M_{[\Af_k^r/\G_{m,k}^r]}$ on $[\Af_k^r/\G_{m,k}^r]$ 
endowed with $\N^r \lra \ol{M_{[\Af_k^r/\G_{m,k}^r]}} (:= 
M_{[\Af_k^r/\G_{m,k}^r]}/\cO^{\times}_{[\Af_k^r/\G_{m,k}^r]})$ 
such that the 
above $(M,\gamma)$ 
is realized as the pull-back of $M_{[\Af_k^r/\G_{m,k}^r]}$. \par 
Now let 
$X \hra \ol{X}$ be an open immersion of smooth varieties over 
$k$ such that $\ol{X} \setminus X =: Z = \bigcup_{i=1}^r Z_i$ is a 
simple normal crossing divisor (each $Z_i$ being irreducible). 
Denote the fine 
log structure on $\ol{X}$ associated to $Z$ by $M_{\ol{X}}$. 
Then we have the 
morphism $\ol{X} \lra [\Af_k^r/\G_{m,k}^r]$ defined by $(M_{\ol{X}},\gamma)$, 
where $\gamma$ is the homomorphism of 
monoid sheaves 
$$\N^r \lra \ol{M_{\ol{X}}} = \bigoplus_{i=1}^r \N_{Z_i} $$
(where $\N_{Z_i}$ is the direct image to $\ol{X}$ 
of the constant sheaf on $Z_i$ with fiber $\N$) 
induced by the maps $\N \lra \N_{Z_i} \, (1 \leq i \leq r)$ 
which are adjoint to the identity. 
For $n \in \N$ prime to $p$, let 
$n: [\Af_k^r/\G_{m,k}^r] \lra [\Af_k^r/\G_{m,k}^r]$ be the morphism 
induced by the $n$-th 
power map. Then we define 
$$ (\ol{X},Z)^{1/n} := \ol{X} \times_{[\Af_k^r/\G_{m,k}^r],n} 
[\Af_k^r/\G_{m,k}^r] $$ 
and call it the stack of ($n$-th) roots of $(\ol{X},Z)$. (Note that 
we always assume that $n$ is prime to $p$ in this paper.) \par 
It has the following local description (\cite{borne1}, \cite{borne2}, 
see also \cite[complement 1]{kato}): 
Assume $\ol{X} = \Spec R$ is affine and assume that each $Z_i \,(1 \leq 
i \leq r)$ is equal to the zero locus of some element $t_i \in R$. 
Let us put $R' := R[s_1,...,s_r]/(s_1^n - t_1,...,s_r^n - t_r)$, 
$\ol{X}' := \Spec R'$, let $Z'_i \,(1 \leq i \leq r)$ 
be the divisor of $\ol{X}'$ defined by $s_i$ and let 
$M_{\ol{X}'}$ be the log structure associated to the simple normal 
crossing divisor 
$\bigcup_{i=1}^rZ'_i$. Then we have the diagram 
\begin{equation}\label{logdiag1}
\begin{CD}
\N^r @>{\gamma}>> \ol{M_{\ol{X}}} @= \bigoplus_{i=1}^r 
\N_{Z_i}\\ 
@V{n}VV @VVV @V{n}VV\\ 
\N^r @>{\gamma}>> \ol{M_{\ol{X}'}} @= \bigoplus_{i=1}^r 
\N_{Z_{i}'} \
\end{CD}
\end{equation}
(where $\gamma$'s are defined in the same way as before and $n$ denotes 
the multiplication-by-$n$ maps) 
and it induces the morphism 
\begin{equation}\label{atlas1}
\ol{X}' \lra (\ol{X},Z)^{1/n}. 
\end{equation}
Moreover, $\ol{X}'$ admits the canonical 
action of $\mu^r_n(:=$ the product of 
$r$ copies of the group scheme of $n$-th roots of unity) defined as 
the action on $s_i$'s. 
Since the lower horizontal line of \eqref{logdiag1} is invariant by 
the action of $\mu_n^r$, we see that the morphism \eqref{atlas1} is 
stable under the action of $\mu_n^r$ and induces 
the morphism 
\begin{equation}\label{borneisom}
[\ol{X}'/\mu_n^r] \os{=}{\lra} (\ol{X},Z)^{1/n}
\end{equation}
which is known to be an isomorphism (\cite{borne1}, \cite{borne2}). 
So, if we define $\ol{X}'_m \,(m=0,1,2)$ as the $(m+1)$-fold 
fiber product of $\ol{X}'$ over $[\ol{X}'/\mu_n^r] \cong (\ol{X},Z)^{1/n}$, 
$\ol{X}'_{\b}$ forms a $2$-truncated simplicial scheme over 
$(\ol{X},Z)^{1/n}$ and we have the equivalences 
\begin{align}
& \Isoc((\ol{X},Z)^{1/n}) \os{=}{\ra} 
\Isoc(\ol{X}'_{\b}), \,\,\,\,  
\FIsoc((\ol{X},Z)^{1/n}) \os{=}{\ra} 
\FIsoc(\ol{X}'_{\b}), \label{eqatlas} \\ 
& \FIsoc((\ol{X},Z)^{1/n})^{\circ} \os{=}{\ra} 
\FIsoc(\ol{X}'_{\b})^{\circ}. \nonumber 
\end{align}

We need a globalized version of (the log version of) the equivalences 
\eqref{eqatlas}. To describe it, we introduce several new notions. \par 
Let $X \hra \ol{X}, \ol{X} \setminus X =: Z = \bigcup_{i=1}^r Z_i$, 
$M_{\ol{X}}$ be as above and fix a positive integer $n$ prime to $p$. 
In this subsection, a chart for $(\ol{X},Z)$ is defined to be a pair 
$(\ol{Y}, \{t_i\}_{i=1}^r)$ consisting of an affine scheme $\ol{Y}$ 
endowed with a surjective etale 
morphism $\ol{Y} \lra \ol{X}$ and sections $t_i \in \Gamma(\ol{Y}, 
\cO_{\ol{Y}}) \,(1 \leq i \leq r)$ with $\ol{Y} \times_{\ol{X}} Z_i = 
\{t_i=0\}$. (This notion of a chart is not so different from that of 
a chart of $M_{\ol{X}}$.) As a variant of it, we define the notion of 
a multichart for $(\ol{X},Z)$ to be a triple 
$(\ol{Y},J,\{t_{ij}\}_{1 \leq i \leq r, j \in J})$ consisting of 
an affine scheme $\ol{Y}$ 
endowed with a surjective etale 
morphism $\ol{Y} \lra \ol{X}$, a non-empty finite set $J$ and sections 
$t_{ij} \in \Gamma(\ol{Y}, 
\cO_{\ol{Y}}) \,(1 \leq i \leq r, j \in J)$ with 
$\ol{Y} \times_{\ol{X}} Z_i = \{t_{ij}=0\}$ for any $j \in J$. 
A morphism $(\ol{Y},J,\{t_{ij}\}) \lra (\ol{Y}',J',\{t'_{ij}\})$ 
of multicharts for $(\ol{X},Z)$ is defined to be a pair 
$(\varphi, \varphi^{\sharp})$ consisting of 
$\varphi: \ol{Y} \lra \ol{Y}'$ and $\varphi^{\sharp}: J' \lra J$ 
with $\varphi^* t'_{ij} = t_{i\varphi^{\sharp}(j)} 
\,(1 \leq i \leq r, j \in J')$. \par 
Now let us take a multichart $(\ol{Y},J,\{t_{ij}\})$ and put 
$\ol{Y} =: \Spec R$. Let us denote $M_{\ol{X}} |_{\ol{Y}}$ simply by 
$M_{\ol{Y}}$. 
For $j \in J$, let us  put 
$$ R'_j := R[s_{ij}]_{1 \leq i \leq r}/(s_{ij}^n - t_{ij}), 
\,\,\,\, \ol{Y}'_j := \Spec R'_j. $$
Let $M_{\ol{Y}'_j}$ be the log structure on $\ol{Y}'_j$ 
associated to the homomorphism $\alpha'_j: 
\N^r \lra R'_j; e_i \mapsto s_{ij}$. 
Let $\varphi_j: (\ol{Y}'_j, M_{\ol{Y}'_j}) 
\lra (\ol{Y}, M_{\ol{Y}})$ be the morphism associated to the diagram 
\begin{equation*}
\begin{CD}
R'_j @<<< R \\ 
@A{\alpha'_j}AA @A{\alpha^{\circ}_j}AA \\ 
\N^r @<n<< \N^r, 
\end{CD}
\end{equation*}
where the upper horizontal arrow is the natural inclusion, 
$\alpha^{\circ}_j: \N^r \lra R$ is the map sending $e_i$ to $t_{ij}$ and 
$n:\N^r \lra \N^r$ is the multiplication by $n$. Then $\varphi_j$ 
is a finite Kummer log etale morphism. 
We have the natural action of $\mu_n^r$ (action on $s_{ij}$'s) on 
$(\ol{Y}'_j,M_{\ol{Y}'_j})$ such that $\varphi_j$ is 
$\mu_n^r$-stable. \par 
Now let $(\ol{Y}_0, M_{\ol{Y}_0})$ be the fiber product of 
$(\ol{Y}'_j, M_{\ol{Y}'_j})$'s $(j\in J)$ over $(\ol{Y},M_{\ol{Y}})$ 
in the category of fs log schemes and for $m=0,1,2$, 
let  $(\ol{Y}_m, M_{\ol{Y}_m})$ be the $(m+1)$-fold fiber product of 
$(\ol{Y}_0, M_{\ol{Y}_0})$ over $(\ol{Y},M_{\ol{Y}})$ in the category 
of fs log schemes. If we put $c := |J|$, $(\ol{Y}_m, M_{\ol{Y}_m})$ 
admits a natural sction of $\mu_n^{rc(m+1)}$. Then we have the following 
properties: 

\begin{proposition}\label{key_sr}
Let the notations be as above. Then$:$ 
\begin{enumerate}
\item 
There exists an isomorphism $\ol{Y}_m \cong \ol{Y}_0 \times \mu_n^{rcm}$ 
such that the morphism $\ol{Y}_0 \times \mu_n^{rc\b} \cong \ol{Y}_{\b} 
\lra [\ol{Y}_0/\mu_n^{rc}]$ is the $2$-truncated etale \v{C}ech 
hypercovering 
associated to the quotient map $\ol{Y}_0 \lra [\ol{Y}_0/\mu_n^{rc}]$. 
\item 
There exists an isomorphism 
\begin{equation}\label{multidiag}
[\ol{Y}_0/\mu_n^{rc}] \os{=}{\lra} \ol{Y} \times_{\ol{X}} (\ol{X},Z)^{1/n} 
\end{equation}
such that the log structure associated to the composite 
$$ 
\ol{Y}_m \os{\text{\rm any proj.}}{\lra} \ol{Y}_0 \lra 
[\ol{Y}_0/\mu_n^{rc}] \os{\text{\eqref{multidiag}}}{\lra} 
\ol{Y} \times_{\ol{X}} (\ol{X},Z)^{1/n} \lra [\Af^r_k/\G_{m,k}^r]
$$ 
via {\rm \cite[5.13]{olsson}} is equal to $M_{\ol{Y}_m}$. 
\end{enumerate}
\end{proposition}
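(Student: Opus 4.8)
The plan is to reduce everything to the structure theory of finite Kummer log etale Galois coverings (recalled in Example~\ref{exam1}, which rests on \cite{illusie}) together with the local description \eqref{borneisom} of the stack of roots, after one explicit computation. The heart of the matter is the claim: the fibre product $(\ol{Y}_0,M_{\ol{Y}_0})\to(\ol{Y},M_{\ol{Y}})$ of the $\varphi_j\ (j\in J)$, taken in the category of fs log schemes, is again a finite Kummer log etale Galois covering, with Galois group $\prod_{j\in J}\mu_n^r=\mu_n^{rc}$. To prove this I would argue explicitly. Fix $j_0\in J$; for $j\ne j_0$ the sections $t_{ij}$ and $t_{ij_0}$ both cut out $Z_i\times_{\ol{X}}\ol{Y}$, hence differ by a unit $u_{ij}\in\Gamma(\ol{Y},\cO_{\ol{Y}}^{\times})$, and passing to the saturation forces the relation $s_{ij}=w_{ij}s_{ij_0}$ with $w_{ij}^n=u_{ij}$, so that
\[
\ol{Y}_0\;\cong\;\Spec R[s_{ij_0},w_{ij}]_{1\le i\le r,\ j\ne j_0}/(s_{ij_0}^n-t_{ij_0},\ w_{ij}^n-u_{ij})\;=\;\ol{Y}'_{j_0}\times_{\ol{Y}}T,
\]
where $T:=\Spec R[w_{ij}]/(w_{ij}^n-u_{ij})$ is the finite etale $\mu_n^{r(c-1)}$-torsor over $\ol{Y}$ attached to the units $u_{ij}$, and $M_{\ol{Y}_0}$ is the log structure associated to $e_i\mapsto s_{ij_0}$. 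Since $\ol{Y}'_{j_0}\to\ol{Y}$ is Galois with group $\mu_n^r$ and $T\to\ol{Y}$ is Galois with group $\mu_n^{r(c-1)}$ (etale, hence Kummer log etale), their fs fibre product $\ol{Y}_0$ is Galois with group $\mu_n^{rc}$ acting through the standard action on the $s_{ij}$; equivalently one checks directly $\ol{Y}_0\times^{\mathrm{fs}}_{\ol{Y}}\ol{Y}_0\cong\ol{Y}_0\times\mu_n^{rc}$.

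Granting this, part~(1) is immediate from Example~\ref{exam1}: the $(m+1)$-fold fs fibre product satisfies $(\ol{Y}_m,M_{\ol{Y}_m})\cong(\ol{Y}_0,M_{\ol{Y}_0})\times\mu_n^{rcm}$, hence $\ol{Y}_m\cong\ol{Y}_0\times\mu_n^{rcm}$ on underlying schemes. Because $n$ is prime to $p$, the group scheme $\mu_n^{rc}$ is etale over $k$, so $\ol{Y}_0\to[\ol{Y}_0/\mu_n^{rc}]$ is a $\mu_n^{rc}$-torsor, in particular etale, and the $2$-truncated simplicial scheme $\ol{Y}_0\times\mu_n^{rc\b}$ is precisely its \v{C}ech nerve; all transition maps are strict since the fibres $\mu_n$ carry trivial log structure. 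This is exactly the assertion of~(1).

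For part~(2), note that $\ol{Y}\to\ol{X}$ is etale, so $\ol{Y}\times_{\ol{X}}(\ol{X},Z)^{1/n}=(\ol{Y},Z\times_{\ol{X}}\ol{Y})^{1/n}$; applying the local description \eqref{borneisom} to the affine pair $(\ol{Y},Z\times_{\ol{X}}\ol{Y})$ with the sections $t_{ij_0}$ — for which the associated scheme ``$\ol{X}'$'' is exactly $\ol{Y}'_{j_0}$ — yields an isomorphism $[\ol{Y}'_{j_0}/\mu_n^r]\os{=}{\lra}\ol{Y}\times_{\ol{X}}(\ol{X},Z)^{1/n}$. Now let $H\subseteq\mu_n^{rc}$ be the subgroup (isomorphic to $\mu_n^{r(c-1)}$) acting on $\ol{Y}_0=\ol{Y}'_{j_0}\times_{\ol{Y}}T$ through the torsor $T$ and fixing $s_{ij_0}$; then $H$ acts freely, $\ol{Y}_0/H\cong\ol{Y}'_{j_0}$ compatibly with the residual action of $\mu_n^{rc}/H\cong\mu_n^r$, and therefore $[\ol{Y}_0/\mu_n^{rc}]\cong[\ol{Y}'_{j_0}/\mu_n^r]\cong\ol{Y}\times_{\ol{X}}(\ol{X},Z)^{1/n}$, which is the isomorphism \eqref{multidiag}. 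Finally, by the construction of \eqref{borneisom} via \eqref{logdiag1}, the pull-back of $M_{[\Af_k^r/\G_{m,k}^r]}$ to $\ol{Y}'_{j_0}$ is the log structure associated to $e_i\mapsto s_{ij_0}$, i.e.\ $M_{\ol{Y}'_{j_0}}$; since both $\ol{Y}_m\to\ol{Y}_0$ and $\ol{Y}_0\to\ol{Y}'_{j_0}$ are strict (being base changes along $\mu_n^{rcm}$ and along the etale $T\to\ol{Y}$, respectively), the further pull-back to $\ol{Y}_m$ of $M_{[\Af_k^r/\G_{m,k}^r]}$ along the composite in the statement is $M_{\ol{Y}_m}$, as required.

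I expect the first step — identifying $\ol{Y}_0$ explicitly and checking it is Galois with group $\mu_n^{rc}$ — to be the only genuine obstacle, precisely because the underlying scheme of a fibre product in the fs category is not the naive one and the saturation (the extra variables $w_{ij}$ and relations $w_{ij}^n=u_{ij}$) has to be worked out by hand; once that is in place, parts~(1) and~(2) are formal consequences of Example~\ref{exam1} and the local description \eqref{borneisom}.
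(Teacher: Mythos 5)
Your proposal is correct and follows essentially the same route as the paper: the same key identification of $\ol{Y}_0$, after saturation, as $\ol{Y}'_{j_0}\times_{\ol{Y}}T$ with $T$ the strict etale $\mu_n^{r(c-1)}$-torsor attached to the units $u_{ij}$ (this is the paper's $\psi_0:\Spec R_0\os{=}{\lra}\ol{Y}_0$), the same reduction of \eqref{multidiag} to the single-chart isomorphism of type \eqref{borneisom} by dividing out the freely acting subgroup $\mu_n^{r(c-1)}$, and the same strictness argument for the log-structure assertion. The only real difference is that for part (1) you invoke the torsor/Galois formalism of Example \ref{exam1}, whereas the paper verifies the \v{C}ech-hypercovering structure by explicitly computing the two- and three-fold fs fibre products ($R_1$ and $R_2$); your shortcut is legitimate provided one reads ``Galois'' there in the torsor sense (which is what the explicit computation establishes), since $\ol{Y}_0$ need not be connected.
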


\begin{proof}
In this proof, we put $J := \{1,2,...,c\}$. Let us put 
$u_{ij} := t_{ij}t_{i1}^{-1} \in R^{\times} \,(1 \leq i \leq r, j \in J)$ 
and put 
\begin{align*}
R_0 := R[s_{i1}\,(1 \leq i \leq r), s'_{ij} \, & 
(1 \leq i \leq r, 2 \leq j \leq c)]\\ & 
/(s_{i1}^n - t_{i1} \,(1 \leq i \leq r), 
{s'_{ij}}^n- u_{ij}\, (1 \leq i \leq r, 2 \leq j \leq c)). 
\end{align*}
Let us define the log structure $N_0$ on $\Spec R_0$ as the one 
associated to the homomorphism 
$\alpha_1: \N^r \lra R_0; \, e_i \mapsto s_{i1}$. 
Note that, since $s'_{ij}$'s are invertible, $N_0$ is associated also to 
the homomorphism 
$\alpha_j: \N^r \lra R_0; \, e_i \mapsto s_{i1}s'_{ij}$ for $2 \leq j \leq c$. 
Let $\varphi: (\Spec R_0, N_0) 
\lra (\ol{Y}, M_{\ol{Y}})$ be the morphism associated to the diagram 
\begin{equation*}
\begin{CD}
R_0 @<<< R \\ 
@A{\alpha_1}AA @A{\alpha^{\circ}_1}AA \\ 
\N^r @<n<< \N^r, 
\end{CD}
\end{equation*}
where the upper horizontal arrow is the natural inclusion, and 
let $\psi'_j: (\Spec R_0,N_0) \allowbreak 
\lra (\ol{Y}'_j, M_{\ol{Y}'_j})$ be the morphism associated to the diagram 
\begin{equation*}
\begin{CD}
R_0 @<<< R'_j \\ 
@A{\alpha_j}AA @A{\alpha'_j}AA \\ 
\N^r @<{=}<< \N^r, 
\end{CD}
\end{equation*}
where the upper horizontal arrow is the homomorphism over $R$ with 
$s_{i1} \mapsto s_{s1}$ when $j=1$, $s_{ij} \mapsto s_{i1}s'_{ij}$ 
when $2 \leq j \leq c$. Then we have $\varphi_j \circ \psi'_j = \varphi$ 
for all $j \in J$. So $\psi'_j$'s induce the morphism 
$\psi_0: (\Spec R_0,N_0) \lra (\ol{Y}_0,M_{\ol{Y}_0})$ over 
$(\ol{Y},M_{\ol{Y}})$. \par 
Let us prove that $\psi_0$ is an isomorphism. 
To do so, we can work etale locally on $\ol{Y}$ and so 
we may assume that $R$ contains $u_{ij}^{1/n}$ for 
$1 \leq i \leq r, 2 \leq j \leq c$. Also, we put $u_{i1}^{1/n}=1$.
In this situation, $\varphi_j$ is associated also to the diagram 
\begin{equation*}
\begin{CD}
R'_j @<<< R \\ 
@A{\alpha''_j}AA @A{\alpha_1^{\circ}}AA \\ 
\N^r @<n<< \N^r, 
\end{CD}
\end{equation*}
where $\alpha''_j$ is the homomorphism $\N^r \lra R'_j; \,\, 
e_i \mapsto s_{ij}u_{ij}^{1/n}$. Let us define 
$Q$ to be the push-out of homomorphisms of monoids 
$$ \N^r \os{n \circ \text{($j$-th incl.)}}{\lra} 
\bigoplus_{1 \leq j \leq c} \N^r = 
\bigoplus_{
\scriptstyle 1 \leq i \leq r \atop \scriptstyle 1 \leq j \leq c} \N 
\quad (1 \leq j \leq c) $$
and let $Q^{\sat}$ be the saturation of 
$Q$. Then we have the isomorphism 
\begin{equation}\label{satisom}
\N^r \oplus (\bigoplus_{
\scriptstyle 1 \leq i \leq r \atop \scriptstyle 2 \leq j \leq c} 
\Z/n\Z) \os{=}{\lra} Q^{\sat}
\end{equation}
characterized by $(e_i,0) \mapsto e_{i1}, (e_i,-e_{ij}) \mapsto e_{ij}$. 
Let 
$\alpha'':\Z[Q] \lra \bigotimes_{j=1}^c R'_j$ be the homomorphism induced by 
$\alpha''_j$'s. 
Then we can calculate $\Gamma(\ol{Y}_0,\cO_{\ol{Y}_0})$ by the 
equalities 
\begin{align*}
& \Gamma(\ol{Y}_0,\cO_{\ol{Y}_0}) \\ 
= \, & (\bigotimes_{j=1}^c R'_j) \otimes_{\alpha'',\Z[Q]} \Z[Q^{\sat}] \\ 
= \, & (R[s_{ij}]_{1 \leq i \leq r, 1 \leq j \leq c}/(s_{ij}^n - t_{ij})) 
\otimes_{\alpha'',\Z[Q]} \Z[Q^{\sat}] \\ 
= \, & (R[s_{ij}]_{1 \leq i \leq r, 1 \leq j \leq c}/((s_{ij}u_{ij}^{1/n})^n 
- t_{i1})) 
\otimes_{\alpha'',\Z[Q]} \Z[Q^{\sat}] \\ 
= \, & R[\{s_{i1}\}_{1 \leq i \leq r}, \{s''_{ij}\}_{1 \leq i \leq r, 
2 \leq j \leq c}]/(s_{i1}^n - t_{i1}, (s''_{ij})^n -1) \,\,\,\, (s''_{ij} = 
s_{i1}(s_{ij}u_{ij}^{1/n})^{-1}) \\ 
= \, & R[\{s_{i1}\}_{1 \leq i \leq r}, \{s'_{ij}\}_{1 \leq i \leq r, 
2 \leq j \leq c}]/(s_{i1}^n - t_{i1}, (s'_{ij})^n -u_{ij}) \,\,\,\, (s'_{ij} = 
u_{ij}^{1/n}s''_{ij}) \\
= \, & R_0 
\end{align*}
which is equal to the ring homomorphism induced by $\psi_0$. 
Moreover, it is easy to see that 
$\psi_0^*M_{\ol{Y}_0}$, being equal to the log structure associated to 
$Q^{\sat} \lra R_0$ induced by the above diagram, is equal to 
the log structure associated to 
$\N^r \hra Q^{\sat} \lra R_0$, that is, the log structure $N_0$. 
So $\psi_0$ is an isomorphism. 
Note also that 
we have the natural action of $\mu_n^r \times \mu_n^{r(c-1)}$ 
(action on $s_{i1}$'s and $s'_{ij}$'s) on $(\Spec R_0,N_0)$. 
Then, by definition, we can see that the isomorphism $\psi_0$ is 
equivariant with the group isomomorphism 
$$ \mu_n^r \times \mu_n^{r(c-1)} \os{=}{\lra} \mu_n^{rc} $$ 
defined by $(\zeta,1) \mapsto (\zeta, ..., \zeta) \,(\zeta \in \mu_n^r)$, 
$(1,\eta) \mapsto (1,\eta) \,(\eta \in \mu_n^{r(c-1)})$. \par 
Let us put $Z_{0,i} := \psi_0(\{s_{i1}=0\}) \subseteq \ol{Y}_0$. 
Then we see from the isomorphism $\psi_0$ that the log structure 
$M_{\ol{Y}_0}$ is associated to $\bigcup_{i=1}^r Z_{0,i}$. 
Then the commutative diagram 
\begin{equation}\label{logdiag2}
\begin{CD}
\N^r @>>> \ol{M_{\ol{Y}}} @= \bigoplus_{i=1}^r 
\N_{\ol{Y} \times_{\ol{X}} Z_i}\\ 
@V{n}VV @VVV @V{n}VV\\ 
\N^r @>>> \ol{M_{\ol{Y}_0}} @= \bigoplus_{i=1}^r 
\N_{Z_{0,i}} \
\end{CD}
\end{equation}
induces the morphism 
$\ol{Y}_0 \lra \ol{Y} \times_{\ol{X}} (\ol{X},Z)^{1/n}$, and since 
the lower horizontal line of \eqref{logdiag2} is invariant under the action of 
$\mu_n^{rc}$, this diagram further induces the morphism 
$[\ol{Y}_0/\mu_n^{rc}] \lra \ol{Y} \times_{\ol{X}} (\ol{X},Z)^{1/n}$, 
which is the definition of the morphism \eqref{multidiag}. \par 
Let us prove that this is an isomorphism. If we start the constuction 
of \eqref{multidiag} from the multichart $(\ol{Y}, \{1\}, \{t_{i1}\})$ 
instead of $(\ol{Y},J,\{t_{ij}\})$, we obtain the morphism 
\begin{equation}\label{multidiag2}
[\ol{Y}'_1/\mu_n^r] \lra \ol{Y} \times_{\ol{X}} (\ol{X},Z)^{1/n}, 
\end{equation}
and this is an isomorphism because the construction of it is the same as 
that of the isomorphism \eqref{borneisom}. Moreover, the 
morphism of multichart $\iota : (\ol{Y},J,\{t_{ij}\}) \lra (\ol{Y},\{1\}, 
\{t_{i1}\})$ defined by the identity $\ol{Y} \os{=}{\lra} \ol{Y}$ and the 
inclusion $\{1\} \hra J$ induces the factorization 
$$ [\ol{Y}_0/\mu_n^{rc}] \os{\iota'}{\lra} [\ol{Y}'_1/\mu_n^r] 
\os{\text{\eqref{multidiag2}}}{\lra} 
\ol{Y} \times_{\ol{X}} (\ol{X},Z)^{1/n} $$
of \eqref{multidiag}, where $\iota'$ is the map induced by $\iota$. 
So it suffices to prove that $\iota'$ above is an isomorphism, and one 
can see it because $\iota'$ is factorized to the following sequence of 
isomorphisms: 
\begin{align*}
& [\ol{Y}_0/\mu_n^{rc}] \\ \os{\psi_0^{-1}}{\lra} & 
[\Spec R_0/(\mu_n^r \times \mu_n^{r(c-1)})] \\ 
\os{=}{\lra} & 
[\Spec R'_1/\mu_n^r] \times_{\Spec R} 
[(\Spec R[s'_{ij}]_{1 \leq i \leq r, 2 \leq j \leq c}/
({s'_{ij}}^n-u_{ij})_{1 \leq i \leq r, 2 \leq j \leq c})/\mu_n^{r(c-1)}] \\ 
\os{\text{proj.}}{\lra} & [\Spec R'_1/\mu_n^r] = [\ol{Y}'_1/\mu_n^r].
\end{align*}
So we have shown that \eqref{multidiag} is an isomorphism. \par 
Next we prove the assertion (1). For $m=1,2$, we put 
$$R_m := 
R_0[s''_{ijl}]_{1 \leq i \leq r, 1 \leq j \leq c, 1 \leq l \leq m}
/({s''_{ijl}}^n -1)$$ 
and let $N_m$ be the log structure on $\Spec R_m$ associated to the 
homomorphism $\N^r \os{\alpha_1}{\lra} R_0 \os{\subset}{\lra} R_m$. 
Let $\pi_0: (\Spec R_1,N_1) \lra (\Spec R_0,N_0)$ be 
the morphism over $(\ol{Y},M_{\ol{Y}})$ defined by the natural inclusion 
$R_0 \os{\subset}{\lra} R_1$, and let 
$\pi_1: (\Spec R_1,N_1) \lra (\Spec R_0,N_0)$ be 
the morphism over $(\ol{Y},M_{\ol{Y}})$ defined by 
$$R_0 \lra R_1; \, s_{i1} \mapsto s_{i1}s''_{i11}, 
s'_{ij} \mapsto s'_{ij}s''_{ij1}.$$ 
Let $\pi_{01}: (\Spec R_2,N_2) \lra (\Spec R_1,N_1)$ 
over $(\ol{Y},M_{\ol{Y}})$ defined by the natural inclusion 
$R_1 \os{\subset}{\lra} R_2$, and let $\pi_{02}$ 
(resp. $\pi_{12}$) be the morphism 
$(\Spec R_2,N_2) \lra (\Spec R_1,N_1)$ 
over $(\ol{Y},M_{\ol{Y}})$ defined by 
\begin{align*}
& R_1 \lra R_2; \, s_{i1} \mapsto s_{i1}, 
s'_{ij} \mapsto s'_{ij}, s''_{ij1} \mapsto s''_{ij1}s''_{ij2}. \\
& \text{(resp.} \,\, 
R_1 \lra R_2; \, s_{i1} \mapsto s_{i1}s''_{i11}, 
s'_{ij} \mapsto s'_{ij}s''_{ij1}, s''_{ij1} \mapsto s''_{ij2}.) 
\end{align*}
Then we have $\pi_0 \circ \pi_{01} = \pi_0 \circ \pi_{02} (=: \varpi_0)$, 
 $\pi_0 \circ \pi_{12} = \pi_1 \circ \pi_{01} (=: \varpi_1)$, 
 $\pi_1 \circ \pi_{02} = \pi_1 \circ \pi_{12} (=: \varpi_2)$. \par 
Let us prove that $(\Spec R_m,N_m)$ is the $(m+1)$-fold fiber product 
of $(\Spec R_0,N_0)$ over $(\ol{Y},M_{\ol{Y}})$ for $m=1,2$. Let us put 
$Q_1 := \N^r \oplus_{n,\N^r,n} \N^r$, $Q_2 := \N^r \oplus_{n,\N,n} \N^r 
\oplus_{n,\N^r,n} \N^r$ and denote by $Q_1^{\sat}, Q_2^{\sat}$ their 
saturation. Then we have the isomorphism $\N^r \oplus (\Z/n\Z)^r \lra 
Q_1^{\sat}$ as a special case of \eqref{satisom} and the isomorphism 
\begin{align*}
Q_2^{\sat} & = (\N^r \oplus_{n,\N^r,n} Q_1^{\sat})^{\sat} 
= (\N^r \oplus_{n,\N^r,n} \N^r \oplus (\Z/n\Z)^r)^{\sat} \\ 
& = Q_1^{\sat} \oplus (\Z/n\Z)^r = \N^r \oplus (\Z/n\Z)^r \oplus (\Z/n\Z)^r. 
\end{align*}
Let $\alpha_{11}: \Z[Q_1] \lra R_0 \otimes_{R} R_0, \alpha_{12}: \Z[Q_2] \lra 
R_0 \otimes_R R_0 \otimes_R R_0$ be the homomorphism induced by $\alpha_1$. 
Then we can calculate the ring of global sections 
of the $2$-fold fiber product $(\Spec R_0,N_0)$ over $(\ol{Y},M_{\ol{Y}})$ as 
follows: 
\begin{align*}
& (R_0 \otimes_R R_0) \otimes_{\alpha_{11},\Z[Q_1]} \Z[Q_1^{\sat}] \\ 
= \, & (R[s'''_{ij1}]_{1 \leq i \leq r, 1 \leq j \leq c}/
((s'''_{ij1})^n - t_{ij}) 
\otimes_{\alpha_{11},\Z[Q_1]} \Z[Q_1^{\sat}] \\ 
= \, & 
(R[s''_{ij1}]_{1 \leq i \leq r, 1 \leq j \leq c}/
((s''_{ij1})^n - 1) \,\,\,\, (s''_{ij1} = s'''_{ij1}s_{ij}^{-1}) \\ 
= \, & R_1.  
\end{align*}
Moreover, it is easy to see that the log structure on 
the $2$-fold fiber product 
of $(\Spec R_0,N_0)$ over $(\ol{Y},M_{\ol{Y}})$, 
being equal to the log structure associated to 
$Q^{\sat}_1 \lra R_1$ induced by the above diagram, is equal to 
the log structure associated to 
$\N^r \hra Q^{\sat}_1 \lra R_1$, that is, the log structure $N_1$. 
So $(\Spec R_1, N_1)$ is the desired $2$-fold fiber product. 
Similarly, we can calculate the ring of global sections 
of the $3$-fold fiber product $(\Spec R_0,N_0)$ over $(\ol{Y},M_{\ol{Y}})$ as 
follows: 
\begin{align*}
& (R_0 \otimes_R R_0 \otimes_R R_0) 
\otimes_{\alpha_{12},\Z[Q_2]} \Z[Q_2^{\sat}] \\ 
= \, & (R[s'''_{ijl}]_{1 \leq i \leq r, 1 \leq j \leq c, l=1,2}/
((s'''_{ijl})^n - t_{ij}) 
\otimes_{\alpha_{11},\Z[Q_2]} \Z[Q_2^{\sat}] \\ 
= \, & 
(R[s''_{ijl}]_{1 \leq i \leq r, 1 \leq j \leq c,l=1,2}/
((s''_{ijl})^n - 1) \,\,\,\, (s''_{ij1} = s'''_{ij1}s_{ij}^{-1}, 
s''_{ij2} = s'''_{ij2}{s'''_{ij1}}^{-1}) \\ 
= \, & R_2.  
\end{align*}
Moreover, it is easy to see that the log structure on 
the $3$-fold fiber product 
of $(\Spec R_0,N_0)$ over $(\ol{Y},M_{\ol{Y}})$, 
being equal to the log structure associated to 
$Q^{\sat}_2 \lra R_2$ induced by the above diagram, is equal to 
the log structure associated to 
$\N^r \hra Q^{\sat}_2 \lra R_2$, that is, the log structure $N_2$. 
So $(\Spec R_1, N_2)$ is the desired $3$-fold fiber product. 
Hence, 
by using $\psi_0^{-1} \circ \pi_0$ and $\psi_0^{-1} \circ \pi_1$, 
we can define 
the isomorphism $\psi_1: (\Spec R_1, N_1) \os{=}{\lra} 
(\ol{Y}_1,M_{\ol{Y}_1})$ 
over $(\ol{Y},M_{\ol{Y}})$, and 
by using $\psi_0^{-1} \circ \varpi_0, \psi_0^{-1} \circ \varpi_1$ and 
$\psi_0^{-1} \circ \varpi_2$, we can define the isomorphism 
$\psi_2: (\Spec R_1, N_1) \os{=}{\lra} (\ol{Y}_1,M_{\ol{Y}_1})$ 
over $(\ol{Y},M_{\ol{Y}})$. We can also check from the above 
concrete descriptions that, 
by the identification via $\psi_m$'s, 
the morphisms $\pi_{i} \,(i=0,1)$, $\pi_{ij} \,(0 \leq i < j \leq 2)$ 
correspond to the projections between $(\ol{Y}_m, M_{\ol{Y}_m})$'s. 
Recall that $(\Spec R_0,N_0)$ admits the action of 
$\mu_n^r \times \mu_n^{r(c-1)} = \mu_n^{rc}$ (the action on $s_{i1}$'s and 
$s'_{ij}$'s). By the above definition of $R_1$ and $R_2$, we see 
the isomorphisms $\Spec R_m \cong \Spec R_0 \times \mu_n^{rcm}$, and 
via this isomorphisms, the morphisms 
$\pi_{i} \,(i=0,1)$, $\pi_{ij} \,(0 \leq i < j \leq 2)$ are described by 
\begin{align*}
& \pi_0: (y,\eta) \mapsto y, \,\,\,\, \pi_1: (y,\eta) \mapsto y^{\eta}, \\ 
& \pi_{01}: (y,\eta,\zeta) \mapsto (y,\eta), \,\,\, 
\pi_{02}: (y,\eta,\zeta) \mapsto (y,\eta\zeta), \,\,\, 
\pi_{12}: (y,\eta,\zeta) \mapsto (y^{\eta},\zeta), 
\end{align*}
where $y \in \Spec R_0, \eta,\zeta \in \mu_n^{rc}$ and 
the action of $\eta$ is denoted by 
$y \mapsto y^{\eta}$. By this description, we see that 
the diagram 
$\Spec R_{\b} \lra [\Spec R_0/\mu_n^{rc}]$ is the $2$-truncated 
etale \v{C}ech 
hypercovering associated to the quotient map $\Spec R_0 
\lra [\Spec R_0/\mu_n^{rc}]$. Using the identification by $\psi_m$'s, we 
conclude that the diagram $\ol{Y}_{\b} \lra [\ol{Y}_0/\mu_n^{rc}]$ 
 is the $2$-truncated etale \v{C}ech 
hypercovering associated to the quotient map 
$\ol{Y}_0 \lra [\ol{Y}_0/\mu_n^{rc}]$, that is, we have shown the 
assertion (1). \par 
Finally we prove that the morphism \eqref{multidiag} satisfies the assertion
 in (2). For $m=0$, this follows from the definition 
of the morphism \eqref{multidiag} (see the diagram \eqref{logdiag2}). 
For $m=1,2$, this follows from the fact that 
the projections $(\ol{Y}_m, M_{\ol{Y}_m}) \lra (\ol{Y}_0,M_{\ol{Y}_0})$ 
are strict (which follows from the same assertion for 
$\pi_i$'s and $\pi_{ij}$'s which can be seen easily) and the assertion 
in the case $m=0$. So we are done. 
\end{proof}

In view of Proposition \ref{key_sr}, we make the following definition. 

\begin{definition}
Let $(\ol{X},Z)$, $n$ and $(\ol{Y},J,\{t_{ij}\})$ be as above. 
Then we call the $2$-trucated 
simplicial log scheme $(\ol{Y}_{\b},M_{\ol{Y}_{\b}})$ contructed above 
a simplicial resolution of $\ol{Y} \times_{\ol{X}} (\ol{X},Z)^{1/n}$ 
associated to the multichart $(\ol{Y},J,\{t_{ij}\})$. 
\end{definition}

Using this, we define the notion of a bisimplicial resolution as follows: 

\begin{definition}\label{bisimp}
Let $(\ol{X},Z)$ and $n$ be as above and let $(\ol{X}_0, \{t_i\}_{i=1}^r)$ 
be a chart for $(\ol{X},Z)$. Then$:$ 
\begin{enumerate}
\item 
We define the 
$2$-truncated simplicial multichart 
$(\ol{X}_{\b},J_{\b},\{t_{ij}^{(\b)}\})$ associated to 
$(\ol{X}_0, \{t_i\}_{i=1}^r)$ as follows: For $m=0,1,2$, 
let $\ol{X}_m$ be the $(m+1)$-fold fiber product of $\ol{X}_0$ over $\ol{X}$, 
and let us define $J_m := \{0,...,m\}$. 
If we denote the projections $\ol{X}_m \lra 
\ol{X}_0$ by $\pi_a \,(0 \leq a \leq m)$, $t_{ij}^{(a)}$ is defined to be 
$\pi_a^*(t_i)$. We call the $2$-truncated simplicial log scheme 
$(\ol{X}_{\b},M_{\ol{X}_{\b}}) := (\ol{X}_{\b},M_{\ol{X}}|_{\ol{X}_{\b}})$ 
the simplicial semi-resolution of $(\ol{X},Z)^{1/n}$ associated to 
the chart 
$(\ol{X}_0, \{t_i\}_{i=1}^r)$. 
\item 
Keep the notation of $(1)$. 
For $m=0,1,2$, we have the simplicial resolution 
$(\ol{X}_{m\b},M_{\ol{X}_{m\b}})$ of 
$\ol{X}_{m} \times_{\ol{X}} (\ol{X},Z)^{1/n}$ associated to 
the multichart $(\ol{X}_{m},J_{m},\{t_{ij}^{(m)}\})$, and by 
functoriality, they form a $(2,2)$-truncated bisimplicial log scheme 
$(\ol{X}_{\b\b}, \allowbreak M_{\ol{X}_{\b\b}})$. We call it 
the bisimplicial resolution of $(\ol{X},Z)^{1/n}$ associated to 
the chart $(\ol{X}_0, \{t_i\}_{i=1}^r)$. 
\end{enumerate}
\end{definition}

Let the notation be as in Defintion \ref{bisimp}. Then, by Proposition 
\ref{key_sr}, we have $2$-truncated etale \v{C}ech hypercoverings 
$\ol{X}_{m\b} \lra \ol{X}_m \times_{\ol{X}} (\ol{X},Z)^{1/n}$ for 
$m=0,1,2$ and so we have the diagram 
\begin{equation}\label{bisimpdiag}
\ol{X}_{\b\b} \lra \ol{X}_{\b} \times_{\ol{X}} (\ol{X},Z)^{1/n} 
\lra 
(\ol{X},Z)^{1/n}, 
\end{equation}
where the first and the second morphisms are both $2$-truncated 
etale \v{C}ech 
hypercoverings. So we have equivalences 
\begin{equation}\label{bisimpeq1} 
\Isoc((\ol{X},Z)^{1/n}) \os{=}{\lra} 
\Isoc(\ol{X}_{\b} \times_{\ol{X}} (\ol{X},Z)^{1/n}) 
\os{=}{\lra}
\Isoc(\ol{X}_{\b\b}). 
\end{equation} 
We also have equivalences 
\begin{align}
& \FIsoc((\ol{X},Z)^{1/n}) \os{=}{\lra} 
\FIsoc(\ol{X}_{\b} \times_{\ol{X}} (\ol{X},Z)^{1/n}) 
\os{=}{\lra}
\FIsoc(\ol{X}_{\b\b}), 
\label{bisimpeq2} \\ 
& \FIsoc((\ol{X},Z)^{1/n})^{\circ} \os{=}{\lra} 
\FIsoc(\ol{X}_{\b} \times_{\ol{X}} (\ol{X},Z)^{1/n})^{\circ}  
\os{=}{\lra}
\FIsoc(\ol{X}_{\b\b})^{\circ}. \label{bisimpeq3}
\end{align}
We also have the log version: Note that we have the fine log 
stucture $M_{(\ol{X},Z)^{1/n}}$ on $(\ol{X},Z)^{1/n}$ defined by the 
projection $(\ol{X},Z)^{1/n} \lra [\Af^r_k/\G_{m,k}^r]$ 
via \cite[5.13]{olsson}. 
Then, by Proposition \ref{key_sr}(2), we can 
endow the diagram \eqref{bisimpdiag} with log structures and form the 
diagram 
\begin{equation}\label{bisimpdiaglog}
(\ol{X}_{\b\b},M_{\ol{X}_{\b\b}}) \lra 
\ol{X}_{\b} \times_{\ol{X}} ((\ol{X},Z)^{1/n},M_{(\ol{X},Z)^{1/n}}) 
\lra 
((\ol{X},Z)^{1/n}, M_{(\ol{X},Z)^{1/n}}),
\end{equation}
where the first and the second morphisms are both $2$-truncated 
strict etale \v{C}ech 
hypercoverings. So we have equivalences 
\begin{align}
\Isocl((\ol{X},Z)^{1/n},M_{(\ol{X},Z)^{1/n}}) & \os{=}{\lra} 
\Isocl(\ol{X}_{\b} \times_{\ol{X}} (\ol{X},Z)^{1/n}) \label{bisimpeq1log} \\ 
& \os{=}{\lra}
\Isocl(\ol{X}_{\b\b},M_{\ol{X}_{\b\b}}), \nonumber \\ 
\FIsocl((\ol{X},Z)^{1/n},M_{(\ol{X},Z)^{1/n}}) 
& \os{=}{\lra} 
\FIsocl(\ol{X}_{\b} \times_{\ol{X}} (\ol{X},Z)^{1/n}) \label{bisimpeq2log} \\ 
& \os{=}{\lra}
\FIsocl(\ol{X}_{\b\b},M_{\ol{X}_{\b\b}}). \nonumber 
\end{align}
We also have the log version with condition on exponents: 
Note that, for any $1 \leq i \leq r$, the morphism 
$$ (\ol{X},Z)^{1/n} \lra [\Af^r_k/\G_{m,k}^r] \os{\text{$i$-th proj.}}{\lra} 
[\Af_k/\G_{m,k}]$$ induces a fine log structure on $(\ol{X},Z)^{1/n}$ which 
we denote by 
$M_{(\ol{X},Z)^{1/n},i}$. In the notation of Proposition \ref{key_sr} and 
its proof, the pull-back of $M_{(\ol{X},Z)^{1/n}}$ and 
$M_{(\ol{X},Z)^{1/n},i}$ by the morphism 
$$ \ol{Y}_0 \lra [\ol{Y}/\mu_n^{rc}] \os{\text{\eqref{multidiag}}}{\lra} 
\ol{Y} \times_{\ol{X}} (\ol{X},Z)^{1/n} \lra (\ol{X},Z)^{1/n} $$ 
is associated to the simple normal crossing divisor 
$\bigcup_{i=1}^r Z_{0,i}$ and its subdivisor $Z_{0,i}$, respectively (see 
the diagram \eqref{logdiag2}). So the log structure $M_{(\ol{X},Z)^{1/n}}$ 
satisfies the condition $(*)$ in Section 2.1 and 
$\{M_{(\ol{X},Z)^{1/n},i}\}_{i=1}^r$ is a decomposition of 
$M_{(\ol{X},Z)^{1/n}}$, which also induces 
the decomposition 
$\{M_{\ol{X}_{\b} \times_{\ol{X}}(\ol{X},Z)^{1/n},i}\}_i$ of 
$M_{\ol{X}_{\b} \times_{\ol{X}}(\ol{X},Z)^{1/n}} := \allowbreak 
M_{(\ol{X},Z)^{1/n}} \allowbreak 
|_{\ol{X}_{\b} \times_{\ol{X}}(\ol{X},Z)^{1/n}}$ and 
the decomposition 
$\{M_{\ol{X}_{\b\b},i}\}_i$ of 
$M_{\ol{X}_{\b\b}}$. 
Hence we have the equivalence
\begin{align}
& \Isocl((\ol{X},Z)^{1/n},M_{(\ol{X},Z)^{1/n}})_{\Sigma(\ss)} 
\label{bisimpeq1sigma} \\ 
\os{=}{\lra} \,\, & 
\Isocl(\ol{X}_{\b} \times_{\ol{X}} (\ol{X},Z)^{1/n}, 
M_{\ol{X}_{\b} \times_{\ol{X}} (\ol{X},Z)^{1/n}})_{\Sigma(\ss)} 
\nonumber \\
\os{=}{\lra} \,\, & 
\Isocl(\ol{X}_{\b\b},M_{\ol{X}_{\b\b}})_{\Sigma(\ss)} \nonumber 
\end{align}
for $\Sigma = \prod_{i=1}^r \Sigma_i \subseteq \Z_p^r$, by 
Corollary \ref{starlogcor}. \par 
Finally, we explain the functoriality of 
the categories of isocrystals on 
the stack of roots $(\ol{X},Z)^{1/n}$ 
with respect to $n$. By definition, 
we have the morphism 
\begin{align}
(\ol{X},Z)^{1/n'} & =
\ol{X} \times_{[\Af_k^r/\G_{m,k}^r],n'} 
[\Af_k^r/\G_{m,k}^r] \label{tra} 
\\ & \os{\id \times (n'/n)}{\lra} 
\ol{X} \times_{[\Af_k^r/\G_{m,k}^r],n} 
[\Af_k^r/\G_{m,k}^r] = (\ol{X},Z)^{1/n} \nonumber 
\end{align} 
for $n,n'$ with $n \,|\, n'$, where $(n'/n)$ denotes the 
$(n'/n)$-th power map. Using the morphisms \eqref{tra} for 
all $n,n'$ with $n \,|\, n'$, we can form the limit 
$$ \varinjlim_{(n,p)=1} \Isoc((\ol{X},Z)^{1/n}), \,\,
\varinjlim_{(n,p)=1} \FIsoc((\ol{X},Z)^{1/n}), \,\,
\varinjlim_{(n,p)=1} \FIsoc((\ol{X},Z)^{1/n})^{\circ}. $$
We also have the log version: For a positive integer $a$, 
the $a$-th power morphism $[\Af_k^r/\G_{m,k}^r] \lra [\Af_k^r/\G_{m,k}^r]$ 
naturally induces the morphism of fine log algebraic stacks 
$([\Af_k^r/\G_{m,k}^r], M_{[\Af_k^r/\G_{m,k}^r]}) \lra 
([\Af_k^r/\G_{m,k}^r], M_{[\Af_k^r/\G_{m,k}^r]})$ 
by \cite[pp.780 -- 781]{olsson}
and so the morphism 
\eqref{tra} is enriched to a morphism of fine log algebraic stacks 
\begin{equation}\label{traol}
((\ol{X},Z)^{1/n'}, M_{(\ol{X},Z)^{1/n'}}) \lra 
((\ol{X},Z)^{1/n}, M_{(\ol{X},Z)^{1/n}}). 
\end{equation}
So we have also the limit 
$$ \varinjlim_{(n,p)=1} \Isocl((\ol{X},Z)^{1/n},M_{(\ol{X},Z)^{1/n}}), 
\,\,\, 
\varinjlim_{(n,p)=1} \FIsocl((\ol{X},Z)^{1/n},M_{(\ol{X},Z)^{1/n}}). 
$$ 
We also have the log version with exponent condition: 
Let us take $\Sigma := \prod_{i=1}^r \Sigma_i \subseteq \Z_p^r$. 
Fix for the moment two positive integers $n,n'$ with $n \,|\, n'$. 
Let us fix a multichart $(\ol{Y}, J, \{t_{ij}\})$ with $J = \{1\}$ and 
construct $(\ol{Y}_0, M_{\ol{Y}_0}) \cong (\Spec R_0, N_0), 
s_{i1} \in R_0, Z_{0,i} = \{s_{i1}=0\} \,(1 \leq i \leq r)$ as in 
the proof of Proposition \ref{key_sr} for $n$ and $n'$: 
We denote 
$(\ol{Y}_0, M_{\ol{Y}_0}) \cong (\Spec R_0, N_0), s_{i1}, Z_{0,i}$ 
for $n$ (resp. $n'$) 
by 
$(\ol{Y}^{(n)}_0, M_{\ol{Y}^{(n)}_0}) \cong (\Spec R^{(n)}_0, N^{(n)}_0), 
s_{i1}^{(n)}, Z^{(n)}_{0,i}$
(resp. 
$(\ol{Y}^{(n')}_0, M_{\ol{Y}^{(n')}_0}) \cong (\Spec R^{(n')}_0, 
N^{(n')}_0), s_{i1}^{(n')}, Z^{(n')}_{0,i}$). Then we can define the 
log etale morphism 
\begin{equation}\label{nn}
(\ol{Y}^{(n')}_0, M_{\ol{Y}^{(n')}_0}) \lra 
(\ol{Y}^{(n)}_0, M_{\ol{Y}^{(n)}_0})
\end{equation}
fitting into the diagram 
\begin{equation*}
\begin{CD}
(\ol{Y}^{(n')}_0, M_{\ol{Y}^{(n')}_0}) @>{\text{\eqref{nn}}}>> 
(\ol{Y}^{(n)}_0, M_{\ol{Y}^{(n)}_0}) \\ 
@VVV @VVV \\ 
((\ol{X},Z)^{1/n'}, M_{(\ol{X},Z)^{1/n'}}) @>{\text{\eqref{traol}}}>> 
((\ol{X},Z)^{1/n}, M_{(\ol{X},Z)^{1/n}})
\end{CD}
\end{equation*}
by $s^{(n)}_{i1} \mapsto {s^{(n')}_{i1}}^{n'/n}$. 
Then, by Proposition \ref{sig_pre2}, the upper horizontal arrow 
induces the functor 
$$ 
\Isocl(\ol{Y}^{(n)},M_{\ol{Y}^{(n)}})_{n\Sigma(\ss)} 
\lra 
\Isocl(\ol{Y}^{(n')},M_{\ol{Y}^{(n)}})_{n'\Sigma(\ss)}, 
$$ 
where the categories are defined with respect to 
the decomposition $\{Z^{(n)}_{0,i}\}_i$ of 
$\bigcup_i Z^{(n)}_{0,i}$ and the decomposition $\{Z^{(n')}_{0,i}\}_i$ of 
$\bigcup_i Z^{(n')}_{0,i}$. From this, we see the existence of the 
canonical functor 
$$ \Isocl((\ol{X},Z)^{1/n}, M_{(\ol{X},Z)^{1/n}})_{n\Sigma(\ss)} \lra 
\Isocl((\ol{X},Z)^{1/n'}, M_{(\ol{X},Z)^{1/n'}})_{n'\Sigma(\ss)}, $$
and this functor for all $n,n'$ with $n \,|\, n'$ induces the limit 
$$\varinjlim_{(n,p)=1} 
\Isocl((\ol{X},Z)^{1/n}, M_{(\ol{X},Z)^{1/n}})_{n\Sigma(\ss)}. $$

\subsection{Second stacky equivalence}

In this subsection, we prove the equivalence \eqref{seq3} and related 
equivalences. In this subsection, 
let 
$X \hra \ol{X}$ be an open immersion of connected smooth varieties over 
$k$ with $\ol{X} \setminus X =: Z = \bigcup_{i=1}^r Z_i$ a 
simple normal crossing divisor (each $Z_i$ being irreducible). 
Let $v_i \,(1 \leq i \leq r)$ be the discrete valuation 
$k(X)$ corresponding to the generic point of $Z_i$. \par
First we define a functor relating the right hand sides of 
\eqref{seq2} and \eqref{seq3} in a slightly generalized form. 

\begin{proposition}\label{root0}
Let $(X,\ol{X}), Z$ be as above. Let $\cG_X^t$ be the category of 
finite etale Galois tame covering $($tamely ramified at $v_i \,(1 \leq i 
\leq r))$ and for $Y \ra X$ in $\cG_X^t$, 
let $G_Y := \Aut(Y/X)$ and let $\ol{Y}^{\sm}$ be the smooth locus of 
the normalization $\ol{Y}$ of $\ol{X}$ in $k(Y)$. 
Then we have the canonical functors
\begin{align}
& \varinjlim_{Y\to X \in \cG_X^t} \FIsoc([\ol{Y}^{\sm}/G_Y]) \lra 
\varinjlim_{(n,p)=1} \FIsoc((\ol{X},Z)^{1/n}) \label{tmt1} \\ 
& \varinjlim_{(n,p)=1} \Isoc((\ol{X},Z)^{1/n}) \lra 
\Isocd(X,\ol{X}) \label{tmt10} 
\end{align}
which make the following diagram commutative: 
\begin{equation}\label{commu1}
\begin{CD}
\varinjlim_{Y\to X \in \cG_X^t} \FIsoc([\ol{Y}^{\sm}/G_Y]) 
@>{\text{\eqref{tmt1}}}>> 
\varinjlim_{(n,p)=1} \FIsoc((\ol{X},Z)^{1/n}) \\ 
@V{\text{\eqref{eqeq3t}}}VV @V{F\text{-\eqref{tmt10}}}VV \\ 
\FIsocd(X,\ol{X}) @= \FIsocd(X,\ol{X}). 
\end{CD}
\end{equation}
When $X$ is a curve, we have also the functor 
\begin{equation}
\varinjlim_{Y\to X \in \cG_X^t} \Isoc([\ol{Y}^{\sm}/G_Y]) \lra 
\varinjlim_{(n,p)=1} \Isoc((\ol{X},Z)^{1/n}) \label{tmtc1} 
\end{equation}
with $F\text{-\eqref{tmtc1}} = \text{\eqref{tmt1}}$ which makes 
the following diagram commutative: 
\begin{equation}\label{commu1'}
\begin{CD}
\varinjlim_{Y\to X \in \cG_X^t} \Isoc([\ol{Y}^{\sm}/G_Y]) 
@>{\text{\eqref{tmtc1}}}>> 
\varinjlim_{(n,p)=1} \Isoc((\ol{X},Z)^{1/n}) \\ 
@V{\text{\eqref{eqeq3t'}}}VV @V{\text{\eqref{tmt10}}}VV \\ 
\Isocd(X,\ol{X}) @= \Isocd(X,\ol{X}). 
\end{CD}
\end{equation}
\end{proposition}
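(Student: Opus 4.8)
The plan is to construct the functor \eqref{tmt1} by using the bisimplicial resolution machinery from Section 2.3. First I would fix an object $Y \to X$ in $\cG_X^t$ and produce, after passing to a suitable Galois refinement, an $n$ prime to $p$ together with a morphism comparing $[\ol{Y}^{\sm}/G_Y]$ with $(\ol{X},Z)^{1/n}$. Concretely, the key observation is that a tame covering $Y \to X$ ramified along the $v_i$ with ramification index dividing $n$ factors, Zariski-locally on a chart $(\ol{X}_0,\{t_i\})$, through the scheme $\ol{X}_0' = \ol{X}_0[s_i]/(s_i^n-t_i)$ appearing in the local description of $(\ol{X},Z)^{1/n}$; this is exactly the Kummer covering whose quotient stack is (locally) the stack of roots by \eqref{borneisom}. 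So on the level of the simplicial/bisimplicial resolutions $\ol{X}_{\b\b}$ one gets a map of $2$-truncated (bi)simplicial schemes relating $\ol{Y}^{\sm}_{\b}$ and $\ol{X}_{\b\b}$, hence a pullback functor between the associated categories of ($F$-)isocrystals; using the equivalences \eqref{bisimpeq2} (resp. \eqref{bisimpeq1}) together with Proposition \ref{func_eq} and Example \ref{exam} this descends to a functor $\FIsoc((\ol{X},Z)^{1/n}) \to \FIsoc([\ol{Y}^{\sm}/G_Y])$ — wait, the arrow in \eqref{tmt1} goes the other way, so in fact one should note that the covering $Y$ dominates the root stack locally, i.e. $\ol{Y}^{\sm}$ receives a map from (a cover of) $\ol{X}_0'$ on the locus where $Y$ is tamely ramified exactly with index $n$; the correct recipe is to take, for given $Y$, an $n$ killing all the ramification so that $Y \to X$ dominates the Kummer cover defining $(\ol{X},Z)^{1/n}$ away from codimension $\geq 2$, giving a morphism of stacks $[\ol{Y}^{\sm}/G_Y] \to (\ol{X},Z)^{1/n}$ up to a purity argument à la \cite[3.1]{purity}, and then pull back isocrystals.

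Next I would check compatibility of this construction as $Y$ varies over $\cG_X^t$ and as $n$ varies through integers prime to $p$ (using the transition maps \eqref{tra}), so that it really defines a functor on the inductive limits; this is a routine cocycle/functoriality check once the local comparison of the Kummer covers with the root-stack atlases is pinned down, the main point being that a refinement $Y' \to Y$ forces a divisibility $n \mid n'$ and the square of atlas morphisms commutes up to the canonical $2$-isomorphisms. The functor \eqref{tmt10} is the composite
\begin{equation*}
\Isoc((\ol{X},Z)^{1/n}) \os{=}{\lra} \Isoc(\ol{X}_{\b\b}) \lra
\Isocd(X_{\b\b},\ol{X}_{\b\b}) \os{=}{\lla} \Isocd(X,\ol{X}),
\end{equation*}
where the last equivalence comes from strict descent for overconvergent isocrystals along the bisimplicial \v{C}ech hypercovering combined with the push-forward functor of Tsuzuki \cite{tsuzuki} exactly as in the proof of Theorem \ref{thm1} (the steps leading to \eqref{2barbox} and \eqref{aa2barbox}); here one also invokes \cite[3.1]{purity} to pass between $\ol{X}$ and the image $\ol{X}'$ of $\ol{Y}^{\sm}$, and the $\mu_n^r$-invariance replaces the $G_Y$-invariance. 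The curve versions \eqref{tmtc1}, \eqref{tmt10} are obtained by the identical construction, noting as in Theorem \ref{thm1} that $\ol{Y}^{\sm}=\ol{Y}$, $\ol{X}'=\ol{X}$ in dimension one, so that the use of \cite[3.1]{purity} is unnecessary; the equalities $F\text{-\eqref{tmtc1}}=\text{\eqref{tmt1}}$ and $F\text{-\eqref{tmt10}}$ restricting correctly are then immediate from the constructions.

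Finally I would verify the commutativity of \eqref{commu1} (and \eqref{commu1'}). Both composites $\FIsocd(X,\ol{X})$-valued functors are built by the same two-step recipe: descend to a (bi)simplicial scheme resolving a stack, then glue overconvergent isocrystals via Tsuzuki's push-forward; tracing through, the composite around \eqref{commu1} via \eqref{tmt1} then $F\text{-\eqref{tmt10}}$ is pullback to $\ol{X}_{\b\b}$ followed by descent back to $(X,\ol{X})$, which is canonically the same as restricting $\FIsoc([\ol{Y}^{\sm}/G_Y])$ first to $\FIsocd(X,\ol{X})$ via \eqref{eqeq3t} — both realize the underlying overconvergent $F$-isocrystal of the covering. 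I would make this precise by composing with the fully faithful restriction $\FIsocd(X,\ol{X})\hra\FIsoc(X)$ of \cite{tsuzuki}, reducing to a statement about convergent $F$-isocrystals where everything is pullback along a common hypercovering and the equality is tautological, exactly mirroring the last part of the proof of Theorem \ref{thm1}. The main obstacle I anticipate is the careful bookkeeping at the first step: matching the Kummer covering $Y$ (with its ramification data along the $v_i$) with the specific atlas $\ol{X}_{\b\b}$ of $(\ol{X},Z)^{1/n}$ in a way that is functorial in both $Y$ and $n$ and that behaves well under the codimension-$\geq 2$ purity excision $\ol{Y}^{\sm}\setminus Y$; once that local dictionary is set up, the descent and compatibility arguments run in parallel with those already established for Theorem \ref{thm1}.
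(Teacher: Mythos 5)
Your construction of \eqref{tmt10} (descent through $\ol{X}_{\b\b}$ using Tsuzuki's push-forward and invariants, as in the proof of \eqref{2barbox}) and your strategy for the commutativity of \eqref{commu1} (compose with the fully faithful restriction $\FIsocd(X,\ol{X}) \hra \FIsoc(X)$ and reduce to a tautology about pullbacks along a common hypercovering) do match the paper. But the central step, the construction of \eqref{tmt1}, contains a genuine error. You claim that choosing $n$ killing the ramification makes $Y \to X$ ``dominate the Kummer cover'' and hence yields a morphism of stacks $[\ol{Y}^{\sm}/G_Y] \to (\ol{X},Z)^{1/n}$ along which one pulls back. Neither half of this is right. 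First, the domination goes the other way: if $\ol{X}=\Spec k[t]$, $Z=\{t=0\}$ and $\ol{Y}=\Spec k[s]$ with $s^e=t$ where $e$ properly divides $n$, there is no $\ol{X}$-morphism $\ol{Y}\to \Spec k[u]/(u^n-t)$ (it would require $v\in k[s]$ with $v^n=s^e$), whereas the Kummer cover maps to $\ol{Y}$ via $s\mapsto u^{n/e}$; moreover for a general tame Galois $Y$ the field $k(Y)$ need not contain $k(X)(t_1^{1/n},\dots,t_r^{1/n})$ at all, so no factorization exists in either direction, even generically. Second, even if a morphism $[\ol{Y}^{\sm}/G_Y]\to(\ol{X},Z)^{1/n}$ existed, pulling back isocrystals along it would produce a functor $\FIsoc((\ol{X},Z)^{1/n})\to\FIsoc([\ol{Y}^{\sm}/G_Y])$ --- exactly the wrong direction that you had already flagged, so your ``correction'' does not fix the problem.

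The mechanism that actually works (and is what the paper does) is a correspondence, not a direct morphism in your direction: writing $\ol{X}'$ for the image of $\ol{Y}^{\sm}$ in $\ol{X}$, one forms the fiber products in the category of fs log schemes $(\ol{U}_{klm},M_{\ol{U}_{klm}}) := (\ol{Y}^{\sm}_{k},M_{\ol{Y}_k}|_{\ol{Y}^{\sm}_k}) \times_{(\ol{X}',M_{\ol{X}'})} (\ol{X}'_{lm},M_{\ol{X}'_{lm}})$, where $\ol{X}_{\b\b}$ is the bisimplicial resolution of $(\ol{X},Z)^{1/n}$. The crucial input is that the induced morphism $f:(\ol{Y},M_{\ol{Y}})\to(\ol{X},M_{\ol{X}})$ is of $n$-Kummer type for some $n$ prime to $p$ (\cite[2.2, 2.6]{cech}), which by \cite[2.5]{cech} forces the projection $g_{0lm}:\ol{U}_{0lm}\to\ol{X}'_{lm}$ to be a \emph{strict} finite etale Galois covering with group $G_Y$: after base change to the root-stack atlas the ramification of $Y$ is absorbed. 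One then pulls back along the other projection $h_{\b\b\b}$ to $\ol{U}_{\b\b\b}$, descends along the strict etale \v{C}ech hypercovering $g_{\b lm}$ back to $\ol{X}'_{\b\b}$, and passes from $\ol{X}'_{\b\b}$ to $\ol{X}_{\b\b}$ by \cite[3.1]{purity} (possible with Frobenius structure; for curves $\ol{X}'=\ol{X}$, which is why \eqref{tmtc1} only exists there), finally invoking \eqref{bisimpeq2} to land in $\FIsoc((\ol{X},Z)^{1/n})$. Equivalently, the natural morphism of stacks goes from $\ol{X}'\times_{\ol{X}}(\ol{X},Z)^{1/n}$ \emph{to} $[\ol{Y}^{\sm}/G_Y]$, and \eqref{tmt1} is pullback along that morphism; the root stack is more ramified than $Y$, not less. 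Until this directionality and the $n$-Kummer-type strictness statement are in place, the functoriality checks and the commutativity argument you outline have nothing to apply to.
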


\begin{proof}
First let us take an object $Y \lra X$ in $\cG_X^t$ and construct 
a functor 
\begin{equation}\label{namba}
\FIsoc([\ol{Y}^{\sm}/G_Y]) \lra \FIsoc((\ol{X},Z)^{1/n})
\end{equation}
for some 
$n$. Let $M_{\ol{X}}, M_{\ol{Y}}$ be the log structure on 
$\ol{X},\ol{Y}$ defined by $Z, \ol{Y} \setminus Y$, respectively. 
Then, by \cite[4.7(c), 7.6]{illusie}, the morphism $Y \lra X$ 
naturally induces a finite Kummer log etale morphism 
$f: (\ol{Y},M_{\ol{Y}}) \lra (\ol{X},M_{\ol{X}})$ of fs log schemes. \par 
In this proof, we follow the convention that 
fiber products of fs log schemes are always taken in the category of 
fs log schemes. For $m =0,1,2$, let 
$(\ol{Y}_{m}, M_{\ol{Y}_{m}})$ be the $(m+1)$-fold fiber product of 
$(\ol{Y},M_{\ol{Y}})$ over $(\ol{X},M_{\ol{X}})$. Then we have 
$(\ol{Y}_{m}, M_{\ol{Y}_{m}}) \cong (\ol{Y},M_{\ol{Y}}) \times G_Y^{m}$ 
(one can see it by using \cite[7.6]{illusie} and noting the isomorphism 
$Y \times_X Y \cong Y \times G$) and so we have the 
equivalence 
\begin{equation}\label{stiso0}
\Isoc([\ol{Y}^{\sm}/G_Y]) \cong \Isoc(\ol{Y}_{\b}^{\sm}), 
\end{equation}
where $\ol{Y}_{m}^{\sm}$ is the smooth locus of $\ol{Y}_m$ and 
$\ol{Y}_{\b}^{\sm}$ is the $2$-truncated simplicial scheme formed by 
 $\ol{Y}_{m}^{\sm} \,(m=0,1,2)$. (Note that $\ol{Y}_{\b}^{\sm}$ here is 
isomorphic to $\ol{Y}_{\b}^{\sm}$ in the proof of Theorem \ref{thm1}.) \par 
By \cite[2.2, 2.6]{cech}, $f$ is of $n$-Kummer type for some 
$n \in \N$ which is prime to $p$ in the terminology of \cite{cech}. 
Fix one such $n$. Take a chart $(\ol{X}_0, \{t_i\}_{1 \leq i \leq r})$ 
for $(\ol{X},Z)$ in the sense of Section 2.3, and let 
$(\ol{X}_{\b},M_{\ol{X}_{\b}})$, 
$(\ol{X}_{\b\b},M_{\ol{X}_{\b\b}})$ be 
the simplicial semi-resolution, 
the bisimplicial resolution 
of $(\ol{X},Z)^{1/n}$ associated to $(\ol{X}_0, \{t_i\}_{1 \leq i \leq r})$ 
respectively 
(see Definition \ref{bisimp}). Then we have the equivalence of 
categories \eqref{bisimpeq2}. 
Let $\ol{X}'$ be the image of $\ol{Y}^{\sm}$ in $\ol{X}$ and let us put 
$(\ol{X}'_{lm}, M_{\ol{X}'_{lm}}) := \ol{X}' \times_{\ol{X}} 
(\ol{X}_{lm}, M_{\ol{X}_{lm}})$. For $k,l,m \in \{0,1,2\}$, let us put 
$(\ol{U}_{klm},M_{\ol{U}_{klm}}) 
:= (\ol{Y}^{\sm}_{k}, M_{\ol{Y}_k}|_{\ol{Y}^{\sm}_k}) 
\times_{(\ol{X}',M_{\ol{X}'})} (\ol{X}'_{lm}, \allowbreak 
M_{\ol{X}'_{lm}})$ and 
let $g_{klm}: (\ol{U}_{klm},M_{\ol{U}_{klm}}) \lra 
(\ol{X}'_{lm}, M_{\ol{X}'_{lm}})$,  
$h_{klm}: (\ol{U}_{klm},M_{\ol{U}_{klm}}) \lra \allowbreak 
(\ol{Y}^{\sm}_{k}, M_{\ol{Y}_k}|_{\ol{Y}^{\sm}_k})$ 
be the projections. Then, since $f$ is of $n$-Kummer type, 
$g_{0lm}$ is a strict finite etale Galois morphism with Galois group $G_Y$ 
 by \cite[2.5]{cech} and so $g_{\b lm}$ is a $2$-truncated 
strict etale \v{C}ech hypercovering. 
Let $\ol{U}_{\b\b\b}$ be the $(2,2,2)$-truncated trisimplicial scheme 
formed by $\ol{U}_{klm}$'s. 
Then, by etale descent and \cite[3.1]{purity}, we have 
the equivalences 
\begin{align}
& \Isoc(\ol{X}'_{\b\b}) \os{=}{\lra} \Isoc(\ol{U}_{\b\b\b}), \label{stiso5} \\ 
& \FIsoc(\ol{X}_{\b\b}) \os{=}{\lra} \FIsoc(\ol{X}'_{\b\b}). 
\label{stiso6}
\end{align}
Now we define the functor \eqref{namba} 
as the composite 
\begin{align}
\FIsoc([\ol{Y}^{\sm}/G_Y]) & \os{F\text{-\eqref{stiso0}}}{\lra} 
\FIsoc(\ol{Y}^{\sm}_{\b}) \label{star} \\ 
& \os{h_{\b\b\b}^*}{\lra} \FIsoc(\ol{U}_{\b\b\b}) \nonumber \\ 
& \os{\text{$F$-\eqref{stiso5}}^{-1}}{\lra} 
\FIsoc(\ol{X}'_{\b\b}) \nonumber \\ 
& \os{\text{\eqref{stiso6}}^{-1}}{\lra} \FIsoc(\ol{X}_{\b\b}) \nonumber \\ 
& \os{\text{\eqref{bisimpeq2}}^{-1}}{\lra} 
\FIsoc((\ol{X},Z)^{1/n}). \nonumber 
\end{align}
Then we see that this functor induces the desired functor 
 \eqref{tmt1}. When $X$ is a curve, we can define the functor 
\eqref{tmtc1} with $F\text{-\eqref{tmtc1}} = \text{\eqref{tmt1}}$ 
in the same way as above: The only problem without 
Frobenius structure is that we do not know the analogue of the equivalence 
\eqref{stiso6} in general, 
but this does not cause any problem when $\ol{X}$ is 
a curve because we have $\ol{X} = \ol{X}'$ in this case. \par 
Next we would like to define the functor \eqref{tmt10} as the one 
induced by the composite 
\begin{align}
\Isoc((\ol{X},Z)^{1/n}) & 
\os{\text{\eqref{bisimpeq1}}}{\lra} 
\Isoc(\ol{X}_{\b\b})  \label{compos} \\ 
& \lra \Isocd(X_{\b\b}, \ol{X}_{\b\b}) \quad (X_{\b\b} := X \times_{\ol{X}} 
\ol{X}_{\b\b}) \nonumber \\ 
& \os{=}{\lla} \Isocd(X_{\b}, \ol{X}_{\b})
\quad (X_{\b} := X \times_{\ol{X}} 
\ol{X}_{\b}) \nonumber \nonumber \\ 
& \os{=}{\lla} \Isocd(X, \ol{X}). \nonumber 
\end{align} 
In order that the functor is well-defined, we should prove that 
the third arrow in the above composite is an equivalence. 
(It is rather easy to see that the fourth arrow is an equivalence because 
$\ol{X}_{\b} \lra \ol{X}$ is a $2$-truncated etale \v{C}ech hypercovering. 
See \cite[5.1]{relativeI} for example.) 
This can be shown in the following way. 
(The proof here is analogous to the proof of \eqref{2barbox} in 
the proof of Theorem \ref{thm1}.) 
We may enlarge $k$ in order that $k$ contains a primitive 
$n$-th root of unity and it suffices to prove the equivalence of the 
restriction functor 
\begin{equation}\label{a}
\Isocd(X_{l}, \ol{X}_{l}) {\lra} \Isocd(X_{l\b}, \ol{X}_{l\b}). 
\end{equation}
In this case, we have $\ol{X}_{lm} \cong \ol{X}_{l0} \times 
\mu_n(k)^{r(l+1)m}$ by Proposition \eqref{key_sr}(1). 
So the right hand side is the category of objects in 
$\Isocd(X_{l0}, \ol{X}_{l0})$ endowed with equivariant 
$\mu_n(k)^{r(l+1)}$-action. 
Then, if we denote the projection $\ol{X}_{l0} \lra \ol{X}_l$ by $\pi$, 
we have the functor 
\begin{equation}\label{aa}
\Isocd(X_{l\b}, \ol{X}_{l\b}) \lra \Isocd(X_{l}, \ol{X}_{l}); 
\quad \cE \mapsto (\pi_*\cE)^{\mu_n(k)^{r(l+1)}}, 
\end{equation}
where $\pi_*$ is the push-out functor defined by Tsuzuki \cite{tsuzuki}. 
By \cite{tsuzuki} and \cite[2.6.8]{kedlayaI}, we have the 
trace morphisms $(\pi_*\pi^*\cE)^{\mu_n(k)^{r(l+1)}} \lra \cE$, 
$\pi^*((\pi_*\cE)^{\mu_n(k)^{r(l+1)}}) \lra \cE$, and they are 
isomorphic in $\Isoc(X_l)$, $\Isoc(X_{l\b})$ by etale descent. Since 
$\Isocd(X_l,\ol{X}_l) \lra \Isoc(X_l), 
\Isocd(X_{l\b},\ol{X}_{l\b}) \lra \Isoc(X_{l\b})$ 
are exact and faithful, 
they are actually isomorphic. Hence \eqref{aa} is a quasi-inverse of 
\eqref{a} and so \eqref{a} is an equivalence. 
So the functor \eqref{tmt10} is defined. \par 
We prove the commutativity of the diagram \eqref{commu1}. By definition, 
\eqref{eqeq3t} is defined as the composite 
\begin{align}
\FIsoc([\ol{Y}^{\sm}/G_Y]) & \lra 
\FIsoc(\ol{Y}^{\sm}_{\b}) \label{maru3} \\ 
& \lra \FIsocd(Y_{\b},\ol{Y}^{\sm}_{\b}) \nonumber \\
& \os{\text{\eqref{2barbox}}}{\lla} 
\FIsocd(X,\ol{X}') \os{\text{\cite[3.1]{purity}}}{\lla} 
\FIsocd(X,\ol{X}). \nonumber 
\end{align}
Noting the commutative square of equivalences 
\begin{equation*}
\begin{CD}
\FIsocd(X,\ol{X}) @>>> \FIsocd(X_{\b\b},\ol{X}_{\b\b}) \\ 
@VVV @VVV \\ 
\FIsocd(X,\ol{X}') @>>> \FIsocd(X_{\b\b},\ol{X}'_{\b\b}) 
\end{CD}
\end{equation*}
and the functoriality of restriction functors, we see that the 
third line in the diagram \eqref{maru3} is rewritten as follows: 
\begin{align*}
\FIsocd(Y_{\b},\ol{Y}^{\sm}_{\b}) 
& \os{h^*_{\b\b\b}}{\lra} 
\FIsocd(U_{\b\b\b},\ol{U}_{\b\b\b}) \\ 
& \os{=}{\lla} \FIsocd(X_{\b\b},\ol{X}'_{\b\b}) \\ 
& \os{=}{\lla} \FIsocd(X_{\b\b},\ol{X}_{\b\b}) 
\os{=}{\lla} \FIsocd(X,\ol{X}). 
\end{align*}
(Here $U_{\b\b\b} := X \times_{\ol{X}} \ol{U}_{\b\b\b}$. 
The equivalence of the second arrow follows from the etale descent for 
the category of overconvergent isocrystals \cite[5.1]{relativeI}.) 
Using this description and the functoriality of restriction functors, 
we see that \eqref{eqeq3t} is rewritten as 
\begin{align*}
\FIsoc([\ol{Y}^{\sm}/G_Y]) & \lra 
\FIsoc(\ol{Y}^{\sm}_{\b}) \\ 
& \os{h^*_{\b\b\b}}{\lra} \FIsoc(\ol{U}_{\b\b\b}) \\ 
& \lra \FIsoc(\ol{X}'_{\b\b}) \\
& \os{=}{\lla} \FIsoc(\ol{X}_{\b\b}) \\ 
& \lra \FIsocd(X_{\b\b},\ol{X}_{\b\b}) \os{=}{\lla} \FIsocd(X,\ol{X}), 
\end{align*}
and we see from definition that this is equal to the composite 
$F\text{-\eqref{tmt10}} \circ \text{\eqref{tmt1}}$. So we have proved 
the commutativity of the diagram \eqref{commu1}. 
When $X$ is a curve, we can prove the commutativity of the diagram 
\eqref{commu1'} in the same way. (We do not have to use \cite[3.1]{purity}
 in this case because we have $\ol{X} = \ol{X}', \ol{X}_{\b\b} = \ol{X}'_{\b\b}$ when $X$ is a curve.) So we are done. 
\end{proof}

\begin{remark}\label{iiylogrem}
In the notation of Proposition \ref{root0}, we have the log 
version (with exponent condition) of the functor \eqref{compos} 
(for any $\Sigma = \prod_{i=1}^r \Sigma_i \subseteq \Z_p^r$): 
\begin{align}
\Isocl((\ol{X},Z)^{1/n},M_{(\ol{X},Z)^{1/n}})_{(\Sigma(\ss))} & 
\os{\text{\eqref{bisimpeq1log}, \eqref{bisimpeq1sigma}}}{\lra} 
\Isocl(\ol{X}_{\b\b},M_{\ol{X}_{\b\b}})_{(\Sigma(\ss))}  
\label{composlog} \\ 
& \lra \Isocd(X_{\b\b}, \ol{X}_{\b\b}) 
\quad \nonumber \\ 
& \os{=}{\lla} \Isocd(X_{\b}, \ol{X}_{\b}) \nonumber \\ 
& \os{=}{\lla} \Isocd(X, \ol{X}). \nonumber 
\end{align}
So we have the functor 
\begin{equation}\label{tmt10log}
\varinjlim_{(n,p)=1} \Isocl((\ol{X},Z)^{1/n})_{(n\Sigma(\ss))}
 \lra  \Isocd(X, \ol{X}), 
\end{equation}
which is the log version (with exponent condition) of \eqref{tmt10}. 
When $\Sigma$ is $\NID$ and $\NLD$, the functor \eqref{composlog} 
(the version with the subscript $\Sigma$ or $\Sigma\ss$) is 
fully faithful since the first arrow \eqref{bisimpeq1sigma} 
is an equivalence and the second arrow is fully faithful by Theorem 
\ref{sigmamain}. (see also Definition \ref{nl}.) Therefore, 
when $\Sigma$ is $\NRD$ and $\SNLD$, the functor \eqref{tmt10log} 
(the version with the subscript $n\Sigma$ or $n\Sigma\ss$) is 
fully faithful by Lemma \ref{lem1.4}. 
\end{remark}

\begin{remark}\label{iimlogrem}
In this remark, we give a construction of the log version (with 
exponent condition) of \eqref{tmtc1} when $X$ is a curve. \par 
In the following, we follow the notation in the proof of Proposition 
\ref{root0}. Then, in the same way as \eqref{star}, we can define the 
functor 
\begin{align}
\Isocl([\ol{Y}/G_Y],M_{[\ol{Y}/G_Y]}) & {\lra} 
\Isocl(\ol{Y}_{\b},M_{\ol{Y}_{\b}}) \label{starlog} \\ 
& \os{h_{\b\b\b}^*}{\lra} \Isocl(\ol{U}_{\b\b\b}, 
M_{\ol{U}_{\b\b\b}}) \nonumber \\ 
& \os{\text{\eqref{stiso5}}^{-1}}{\lra} 
\Isocl(\ol{X}_{\b\b},M_{\ol{X}_{\b\b}}) \nonumber \\ 
& \os{\text{\eqref{bisimpeq1log}}^{-1}}{\lra} 
\Isoc((\ol{X},Z)^{1/n},M_{(\ol{X},Z)^{1/n}}). \nonumber 
\end{align}
(We have $\ol{Y}^{\sm} = \ol{Y}, \ol{X}_{\b\b} = \ol{X}'_{\b\b}$ because 
$X$ is a curve.) 
Also, let us note that, for a morphism $f:Y' \lra Y$ in $\cG_X^t$, 
we have a morphism of log schemes $(\ol{Y}',M_{\ol{Y}'}) \lra 
(\ol{Y},M_{\ol{Y}})$ which is equivariant with  
$G_{Y'} \lra G_Y$ (\cite{illusie}). So the morphism 
$[\ol{Y}'/G_{Y'}] \lra [\ol{Y}/G_Y]$ induced by $f$ 
is enriched to a morphism of fine log algebraic stacks 
$([\ol{Y}'/G_{Y'}], \allowbreak M_{[\ol{Y}'/G_{Y'}]}) 
\lra ([\ol{Y}/G_Y], M_{[\ol{Y}/G_Y]})$, and by the functoriality, 
the functor \eqref{starlog} induces the functor 
\begin{equation}\label{iimlog}
\varinjlim_{Y \ra X \in \cG_X^t} \Isocl([\ol{Y}/G_Y],M_{[\ol{Y}/G_Y]}) 
\lra 
\varinjlim_{(n,p)=1} \Isocl((\ol{X},Z)^{1/n}, M_{(\ol{X},Z)^{1/n}}), 
\end{equation}
which is the log version of the functor \eqref{tmtc1}. \par 
Next let us consider the version with exponent condition. 
let us put $\ol{X} \setminus X =: \{z_1,...,z_r\}$ and take 
$\Sigma = \prod_{i=1}^r \Sigma_i \subseteq \Z_p^r$. 
Then we have the limit $$\varinjlim_{(n,p)=1}\Isocl((\ol{X},Z)^{1/n}, 
M_{(\ol{X},Z)^{1/n}})_{n\Sigma(\ss)}$$ 
as we have seen in Section 2.3. \par 
On the other hand, 
For $a_Y: Y \lra X \in \cG_X^t$, let $a_{\ol{Y}}: \ol{Y} \lra \ol{X}$ 
be the induced morphism. 
For $1 \leq i \leq r$, let 
$e_{\ol{Y},i}$ be the ramification index of $a_{\ol{Y}}$ at a point 
$z$ in $a_{\ol{Y}}^{-1}(z_i)$ (which is independent of the choice of 
$z$) and put $e_{\ol{Y}} := (e_{\ol{Y},i})_{i=1}^r$. Then, we can define the 
decomposition $\{M_{[\ol{Y}/G_Y],i}\}_{i=1}^r$ as in Example \ref{exam2} and 
so we can define the category 
$\Isocl([\ol{Y}/G_Y],M_{[\ol{Y}/G_Y]})_{e_{\ol{Y}}\Sigma(\ss)}$ 
(see Definition \ref{defstacksigma}). 
We also have the category $\Isocl(\ol{Y},M_{\ol{Y}})_{e_{\ol{Y}}\Sigma(\ss)}$, 
using the decomposition $\{M_{[\ol{Y}/G_Y],i}|_{\ol{Y}}\}_{i=1}^r$ of 
$M_{\ol{Y}}$. (Note that this decomposition corresponds to the 
decomposition of the simple normal crossing divisor 
$(\ol{Y} \setminus Y)_{\red}$ into the subdivisors 
$\{a_{\ol{Y}}^{-1}(z_i)_{\red}\}_{i=1}^r$.) Let $f:Y' \lra Y$ be a 
morphism in $\cG_X^t$ and let $\ol{f}: (\ol{Y}',M_{\ol{Y}'}) \lra 
(\ol{Y},M_{\ol{Y}})$ be the 
induced morphism which is finite Kummer log etale. 
Let us take $z \in a_{\ol{Y}}^{-1}(z_i)$ and 
$z' \in \ol{f}^{-1}(z)$, and put $e := e_{\ol{Y}',i}/e_{\ol{Y},i}$. 
Then we have $\ol{f}^*z = ez'$ etale locally on $z$ and $z'$. 
So, by Proposition \ref{sig_pre2}, there exists 
 the canonical restriction functor 
$$ \Isocl(\ol{Y},M_{\ol{Y}})_{e_{\ol{Y}}\Sigma(\ss)} \lra 
 \Isocl(\ol{Y}',M_{\ol{Y}'})_{e_{\ol{Y}'}\Sigma(\ss)}$$ 
and it induces the canonical restriction fucntor 
$$ \Isocl([\ol{Y}/G_Y],M_{[\ol{Y}/G_Y]})_{e_{\ol{Y}}\Sigma(\ss)} \lra 
 \Isocl([\ol{Y}'/G_{Y'}],M_{[\ol{Y}'/G_{Y'}]})_{e_{\ol{Y}'}\Sigma(\ss)}. $$
So we have the limit 
$\varinjlim_{Y\ra X \in \cG_X^t} 
\Isocl([\ol{Y}/G_Y],M_{[\ol{Y}/G_Y]})_{e_{\ol{Y}}\Sigma(\ss)}.$ \par 
In the following, for a Kummer log etale morphism 
$\varphi: (S,M_S) \lra (\ol{X},Z)$ from an fs log scheme $(S,M_S)$,  
we regard that the log structure $M_S$ is endowed with 
the decomposition $\{M_{S,i}\}_{i=1}^r$, where 
$M_{S,i}$ is the log structure associated to $(\varphi^*z_i)_{\red}$. 
(Note that $S$ is necessarily a smooth curve and $M_S$ is 
necessarily equal to 
the log structure associated to $(\varphi^*Z)_{\red}$.) 
Then, by Proposition \ref{ex_ex}, the first functor 
in \eqref{starlog} induces the functor 
\begin{equation}\label{logstarsigma1} 
\Isocl([\ol{Y}/G_Y],M_{[\ol{Y}/G_Y]})_{e_{\ol{Y}}\Sigma(\ss)} {\lra} 
\Isocl(\ol{Y}_{\b},M_{\ol{Y}_{\b}})_{e_{\ol{Y}}\Sigma(\ss)}
\end{equation}
and the third and the fourth functor in 
\eqref{starlog} induces the functor 
\begin{align}
 \Isocl(U_{\b\b\b},M_{U_{\b\b\b}})_{n\Sigma(\ss)} & \lra 
\Isocl(\ol{X}_{\b\b},M_{\ol{X}_{\b\b}})_{n\Sigma(\ss)} 
\label{logstarsigma2} \\ 
& \lra 
\Isoc((\ol{X},Z)^{1/n},M_{(\ol{X},Z)^{1/n}})_{n\Sigma(\ss)} \nonumber 
\end{align}
because the morphisms 
$(\ol{Y}_k,M_{\ol{Y}_K}) \lra ([\ol{Y}/G_Y],M_{[\ol{Y}/G_Y]})$, 
$(U_{klm},M_{U_{klm}}) \lra (\ol{X}_{lm}, \allowbreak M_{\ol{X}_{lm}}) \lra 
((\ol{X},Z)^{1/n}, M_{(\ol{X},Z)^{1/n}})$ are strict etale. 
Let us consider the second functor in \eqref{starlog}. 
By definition, the square 
\begin{equation}\label{sq-uxy}
\begin{CD}
(U_{klm},M_{U_{klm}}) @>>> (\ol{X}_{lm},M_{\ol{X}_{lm}}) \\
@VVV @VVV \\ 
(\ol{Y}_k, M_{\ol{Y}_k}) @>>> (\ol{X},Z)
\end{CD}
\end{equation}
is Cartesian in the category of fs log schemes. 
Let us take points in the schemes in \eqref{sq-uxy} 
over $z_i$ 
\[\xymatrix{
z'' \ar@{|->}[r]  \ar@{|->}[d]  & z' \ar@{|->}[d]  \\ 
z  \ar@{|->}[r]  & z_i. 
}\]
Then, etale locally around $z,z_i$ and $z'$, the diagram 
$(\ol{Y}_k, M_{\ol{Y}_k}) \lra (\ol{X},Z) \lla 
(\ol{X}_{lm},M_{\ol{X}_{lm}})$ admits a chart of the following form: 
\begin{align*}
\begin{CD}
\cO_{\ol{Y}_k} @<<< \cO_{\ol{X}} @>>> \cO_{\ol{X}_{lm}} \\ 
@AAA @AAA @AAA \\ 
\N @<{e_{\ol{Y},i}}<< \N @>n>> \N. 
\end{CD}
\end{align*}
Note that, since $(\ol{Y},M_{\ol{Y}}) \lra (\ol{X},M_{\ol{X}})$ is 
of $n$-Kummer type by definition of $n$ in the proof of 
Proposition \ref{root0}, we have $e_{\ol{Y},i}\,|\, n$.  
Then we have 
the isomorphism of monoids 
$$ (\N \oplus_{e_{\ol{Y},i},\N,n} \N )^{\rm sat} \os{=}{\lra} \N \oplus 
\Z/e_{\ol{Y},i}\Z, \,\,\,\, (1,0) \mapsto (n/e_{\ol{Y},i}, 1), 
(0,1) \mapsto (1,0), $$
where $(-)^{\rm sat}$ denotes the saturation. 
So, etale locally around $z$ and $z''$, the left vertical 
arrow of \eqref{sq-uxy} admits a chart 
\begin{equation*}
\begin{CD}
\cO_{\ol{Y}_k} @>>> \cO_{U_{klm}} \\ 
@AAA @AAA \\ 
\N @>{n/e_{\ol{Y},i}}>> \N. 
\end{CD}
\end{equation*}
From this diagram, we see by Proposition \ref{sig_pre2} that the functor 
$h^*_{\b\b\b}$ in \eqref{starlog} induces the functor 
\begin{equation}\label{logstarsigma3}
\Isocl(\ol{Y}_{\b},M_{\ol{Y}_{\b}})_{e_{\ol{Y}}\Sigma(\ss)} \lra 
 \Isocl(U_{\b\b\b},M_{U_{\b\b\b}})_{n\Sigma(\ss)}. 
\end{equation}
By \eqref{logstarsigma1}, \eqref{logstarsigma2} and \eqref{logstarsigma3}, 
we obtain the functor 
\begin{equation*}
\Isocl([\ol{Y}/G_Y],M_{[\ol{Y}/G_Y]})_{e_Y\Sigma(\ss)} 
\lra 
\Isocl((\ol{X},Z)^{1/n}, M_{(\ol{X},Z)^{1/n}})_{n\Sigma(\ss)} 
\end{equation*}
and by functoriality, it induces the functor 
\begin{align}
\varinjlim_{Y \ra X \in \cG_X^t} 
\Isocl([\ol{Y}/G_Y], & \,\, M_{[\ol{Y}/G_Y]})_{e_Y\Sigma(\ss)} 
\lra \label{iimsigma} \\ 
& \varinjlim_{(n,p)=1} 
\Isocl((\ol{X},Z)^{1/n}, M_{(\ol{X},Z)^{1/n}})_{n\Sigma(\ss)}, \nonumber
\end{align}
which is the log version with exponent condition 
of the functor \eqref{tmtc1}. \par 
Keep the assumption that $X$ is a curve. Then 
we can define the functor 
$$ \Isocl([\ol{Y}/G_Y], M_{\ol{Y}/G_Y})_{(\Sigma(\ss))} 
\lra \Isocd(X,\ol{X}) $$ 
in the same way as \eqref{eqeq3t'} as the composite 
\begin{align*}
\Isocl([\ol{Y}/G_Y], M_{\ol{Y}/G_Y})_{(\Sigma(\ss))} & \os{=}{\lra} 
\Isocl(\ol{Y}_{\b}, M_{\ol{Y}_{\b}})_{(\Sigma(\ss))} \\ 
& \lra 
\Isocd(Y_{\b},\ol{Y}_{\b}) \os{=}{\lla} \Isocd(X,\ol{X}). 
\end{align*}
Hence we have the functor 
\begin{equation}\label{std}
\varinjlim_{Y\to X \in \cG_X^t} 
\Isocl([\ol{Y}/G_Y], M_{\ol{Y}/G_Y})_{(e_{\ol{Y}}\Sigma(\ss))} 
\lra \Isocd(X,\ol{X}). 
\end{equation}
We can check the commutativity of the diagram 
\begin{equation*}
\begin{CD}
\varinjlim_{Y\to X \in \cG_X^t} 
\Isocl([\ol{Y}/G_Y], M_{\ol{Y}/G_Y})_{(e_{\ol{Y}}\Sigma(\ss))} 
@>{\text{\eqref{std}}}>> \Isocd(X,\ol{X}) \\ 
@V{\text{\eqref{iimlog}, \eqref{iimsigma}}}VV @| \\ 
\varinjlim_{(n,p)=1} 
\Isocl((\ol{X},Z)^{1/n}, M_{(\ol{X},Z)^{1/n}})_{(n\Sigma(\ss))} 
 @>{\text{\eqref{composlog}}}>> 
\Isocd(X,\ol{X})
\end{CD}
\end{equation*}
in the same way as that of \eqref{commu1}, \eqref{commu1'}. 
\end{remark}

Now we compare the categories $\Rep_{K^{\sigma}}(\pi_1^t(X))$ and 
$\varinjlim_{(n,p)=1}\FIsoc((\ol{X},Z)^{1/n})^{\circ}$: 

\begin{theorem}\label{thm2}
The composite 
\begin{align}
\Rep_{K^{\sigma}}(\pi_1^t(X)) & \os{\text{\eqref{eqeq2t}}}{\lra} 
\varinjlim_{Y\ra X \in \cG_X^t} \FIsoc ([\ol{Y}^{\sm}/G_Y])^{\circ} 
\label{thm2iso} \\ 
& \os{\text{\eqref{tmt1}}^{\circ}}{\lra} 
\varinjlim_{(n,p)=1} \FIsoc((\ol{X},Z)^{1/n})^{\circ} \nonumber 
\end{align}
is an equivalence of categories. 
$($In particular, the functor $\text{\eqref{tmt1}}^{\circ}$ is 
an equivalence.$)$ 
\end{theorem}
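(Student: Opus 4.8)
The plan is to factor the composite \eqref{thm2iso} through the intermediate category $\FIsocd(X,\ol{X})^{\circ}$ and use the equivalences already established. By the commutative diagram \eqref{commu1} (with the additional fact recalled in Theorem \ref{thm1}(1) that the restriction functor \eqref{eqeq3t} becomes, after passing to unit-root objects, compatible with the equivalence \eqref{eqeq1}), the composite \eqref{thm2iso} followed by the restriction functor $F\text{-\eqref{tmt10}}: \varinjlim_{(n,p)=1}\FIsoc((\ol{X},Z)^{1/n})^{\circ} \lra \FIsocd(X,\ol{X})^{\circ}$ agrees with the equivalence \eqref{eqeq1}. Since \eqref{eqeq1} is an equivalence, the functor \eqref{thm2iso} is fully faithful, and it suffices to prove that $F\text{-\eqref{tmt10}}$ restricted to unit-root objects, namely the functor
\begin{equation*}
\varinjlim_{(n,p)=1} \FIsoc((\ol{X},Z)^{1/n})^{\circ} \lra \FIsocd(X,\ol{X})^{\circ},
\end{equation*}
is essentially surjective and fully faithful — equivalently, that every unit-root convergent $F$-isocrystal on some stack of roots $(\ol{X},Z)^{1/n}$ comes, via \eqref{thm2iso}, from a tame representation, and no extra morphisms appear. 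Combined with full faithfulness of \eqref{thm2iso} this will give the claim.

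First I would treat essential surjectivity. Given $\cE \in \FIsoc((\ol{X},Z)^{1/n})^{\circ}$ for some $n$ prime to $p$, I pull it back along the atlas $\ol{X}' \lra (\ol{X},Z)^{1/n}$ from the local description \eqref{borneisom}, or more globally along the bisimplicial resolution $\ol{X}_{\b\b}$, to obtain a unit-root convergent $F$-isocrystal with descent data. Restricting to the open part lying over $X$, it becomes an object of $\FIsoc(\ol{X}'_{\b\b}|_X)^{\circ}$ together with equivariance; by Crew's equivalence \eqref{eq5'} (in its sheaf-theoretic form \eqref{neweq}) applied on each level, this corresponds to a smooth $K^{\sigma}$-sheaf on the relevant covering with a $\mu_n$-type (hence prime-to-$p$) equivariant structure, which descends to a smooth $K^{\sigma}$-sheaf on $X$ whose monodromy around each $Z_i$ is of order dividing $n$, hence tame. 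This produces $\rho \in \Rep_{K^{\sigma}}(\pi_1^t(X))$, and one checks that \eqref{thm2iso} applied to $\rho$ recovers $\cE$ (after possibly enlarging $n$) by tracing through the definitions of \eqref{eqeq2t} and \eqref{tmt1}, which are both built out of the same Crew-type equivalences and descent. For full faithfulness of $F\text{-\eqref{tmt10}}|^{\circ}$, I would argue that it is faithful because $\FIsocd(X,\ol{X})^{\circ} \lra \FIsoc(X)^{\circ}$ is faithful (Tsuzuki) and the restriction to $\FIsoc(X)^{\circ}$ factors through it; fullness then follows because a morphism of overconvergent $F$-isocrystals on $(X,\ol{X})$ with unit-root, finite (tame) local monodromy automatically extends over the $\mu_n$-cover $\ol{X}'$ — here the key input is Tsuzuki's theory of the push-forward $\pi_*$ along the tame covering and the trace argument already used in the proof of \eqref{2barbox} and in Proposition \ref{root0}, which shows $\Isocd(X,\ol{X}') \os{=}{\lra} \Isocd(\ol{X}'_{\b}|_X)$, hence the analogous statement with Frobenius and unit-root conditions.

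The main obstacle I expect is bookkeeping around the passage between the smooth locus $\ol{Y}^{\sm}$ appearing in \eqref{eqeq2t} and the full stack of roots $(\ol{X},Z)^{1/n}$: the former discards codimension-$\geq 2$ loci while the latter does not, so I must use the purity result \cite[3.1]{purity} (already invoked throughout Section 2) to identify $\FIsoc$ on $(\ol{X},Z)^{1/n}$ with $\FIsoc$ on the open substack lying over the image of $\ol{Y}^{\sm}$, and then show this matches the construction in Proposition \ref{root0}. A second, more bureaucratic, obstacle is verifying that the $n$ produced by the tame covering $Y$ (the $n$-Kummer type of $f$, as in \cite{cech}) can be taken uniformly so that the two colimits are compared on cofinal systems; this is routine since both index categories $\cG_X^t$ and $\{n : (n,p)=1\}$ are filtered and the ramification indices of $Y \to X$ control the relevant $n$. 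Once these identifications are in place, the theorem follows formally from the already-proven equivalences \eqref{eqeq1}, \eqref{eqeq2t}, Crew's \eqref{neweq}, and the commutativity \eqref{commu1}.
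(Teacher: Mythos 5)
Your overall strategy (full faithfulness through $\FIsocd(X,\ol{X})^{\circ}$ plus essential surjectivity via tameness of the underlying representation) is close to a viable argument, but as written it has a genuine gap at the decisive step, namely the identification of $\text{\eqref{thm2iso}}(\rho)$ with $\cE$. That identification (and the fullness of $F\text{-}\eqref{tmt10}^{\circ}$ you appeal to for it) requires extending a morphism of unit-root overconvergent $F$-isocrystals on $(X_{\b\b},\ol{X}^{(n)}_{\b\b})$ across the boundary to a morphism of convergent $F$-isocrystals on $\ol{X}^{(n)}_{\b\b}$, i.e.\ from a strict neighborhood of the open tube to the whole space. The tool you cite --- Tsuzuki's push-forward $\pi_*$ and the trace argument of \eqref{2barbox} --- only gives Galois descent along the finite \'etale covering over the open locus $X$; it says nothing about extension across the boundary divisor of the cover. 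What is actually needed there is the nontrivial full-faithfulness theorem for (unit-root) $F$-isocrystals (Tsuzuki, Kedlaya \cite{kedlayaff}), which you never invoke, so "tracing through the definitions" does not close. A second problem is the claim that it suffices for $\varinjlim_{(n,p)=1}\FIsoc((\ol{X},Z)^{1/n})^{\circ}\lra\FIsocd(X,\ol{X})^{\circ}$ to be essentially surjective and fully faithful: it is not essentially surjective --- under \eqref{eqeq1} its essential image corresponds only to tame representations, and $\Rep_{K^{\sigma}}(\pi_1^t(X))$ is in general strictly smaller than $\Repfin_{K^{\sigma}}(\pi_1(X))$ (finite-image wild ramification exists, e.g.\ representations factoring through Artin--Schreier coverings). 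Your parenthetical "equivalently" rephrasing (essential surjectivity of \eqref{thm2iso} itself) is the correct goal, but it is not equivalent to the statement preceding it; note also that fullness of \eqref{thm2iso} from the composite with $F\text{-}\eqref{tmt10}^{\circ}$ already requires the faithfulness of the latter, which you only argue afterwards.

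The paper's actual proof avoids comparing morphisms in the overconvergent category altogether: using the functoriality of Crew's equivalence \eqref{neweq} on the schemes $\ol{Y}^{\sm}_{\b}$, $\ol{U}_{\b\b\b}$, $\ol{X}'_{\b\b}$, $\ol{X}^{(n)}_{\b\b}$, it rewrites the whole composite \eqref{thm2iso} at the level of smooth $K^{\sigma}$-sheaves (the functor \eqref{331/4}); full faithfulness then follows because the composite with restriction to $X_{\b\b}$ is the fully faithful inclusion $\Rep_{K^{\sigma}}(\pi_1^t(X))\subset\Rep_{K^{\sigma}}(\pi_1(X))$ while \eqref{331/5} is faithful, and essential surjectivity is exactly your tameness observation, now applied to smooth sheaves on the normal compactified covers, where extension across the boundary is an elementary statement about fundamental groups rather than a theorem about isocrystals. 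So either transport your comparison of $\text{\eqref{thm2iso}}(\rho)$ with $\cE$ to smooth sheaves in this way, or keep your route through $\FIsocd(X,\ol{X})^{\circ}$ but replace the trace argument by a genuine appeal to the unit-root full-faithfulness theorem; as it stands, the key step is unsupported.
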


The equivalence \eqref{thm2iso} is nothing but \eqref{seq3}, which is 
a $p$-adic version of \eqref{eq3}. 

\begin{proof}
Note that a part of the functors \eqref{star}${}^{\circ}$
$$ 
\FIsoc(\ol{Y}_{\b}^{\sm})^{\circ} \lra 
\FIsoc(\ol{U}_{\b\b\b})^{\circ} \os{=}{\lla} 
\FIsoc(\ol{X}'_{\b\b}) \os{=}{\lla} 
\FIsoc(\ol{X}_{\b\b})^{\circ}
$$ 
is rewritten via the equivalence \eqref{neweq} in the following way: 
\begin{equation}\label{331/1}
\Sm_{K^{\sigma}}(\ol{Y}_{\b}^{\sm}) \lra 
\Sm_{K^{\sigma}}(\ol{U}_{\b\b\b}) \os{=}{\lla} 
\Sm_{K^{\sigma}}(\ol{X}'_{\b\b}) \os{=}{\lla} 
\Sm_{K^{\sigma}}(\ol{X}_{\b\b}).
\end{equation}
(Here $\ol{Y}_{\b}^{\sm}, \ol{U}_{\b\b\b}, \ol{X}'_{\b\b}, 
\ol{X}_{\b\b}$ are as in the proof of Proposition \ref{root0}.) 
Since $\ol{X}_{\b\b}$, being a part of the data of 
bisimplicial resolution of $(\ol{X},Z)^{1/n}$, depends on $n$, let us denote 
it by $\ol{X}^{(n)}_{\b\b}$ in the sequel in this proof. 
Then \eqref{331/1} induces the funtor 
\begin{equation}\label{331/2}
\varinjlim_{Y \ra X \in \cG_X^t} \Sm_{K^{\sigma}}(\ol{Y}_{\b}^{\sm}) 
\lra \varinjlim_{(n,p)=1} \Sm_{K^{\sigma}}(\ol{X}^{(n)}_{\b\b})
\end{equation}
and using this, we can rewrite the 
functor \eqref{thm2iso} as the composite 
\begin{align}
\Rep_{K^{\sigma}}(\pi_1^t(X)) & 
\os{=}{\lra} \varinjlim_{Y\to X \in \cG} \Sm_{K^{\sigma}}(\ol{Y}^{\sm}_{\b})
 \os{\text{\eqref{331/2}}}{\lra} 
\varinjlim_{(n,p)=1} \Sm_{K^{\sigma}}(\ol{X}^{(n)}_{\b\b})  
\label{331/3} \\  
& \os{\text{\eqref{neweq}},=}{\lra}
\varinjlim_{(n,p)=1} \FIsoc(\ol{X}^{(n)}_{\b\b})
\os{=}{\lla} \varinjlim_{(n,p)=1} \FIsoc((\ol{X},Z)^{1/n}). \nonumber 
\end{align}
To prove the theorem, it suffices to prove that the first line 
\begin{equation}\label{331/4}
\Rep_{K^{\sigma}}(\pi_1^t(X))
\os{=}{\lra} \varinjlim_{Y\to X \in \cG} \Sm_{K^{\sigma}}(\ol{Y}^{\sm}_{\b})
 \os{\text{\eqref{331/2}}}{\lra} 
\varinjlim_{(n,p)=1} \Sm_{K^{\sigma}}(\ol{X}^{(n)}_{\b\b})  
\end{equation} 
of the functor \eqref{331/3} is an equivalence. 
By construction, the composite of the functor \eqref{331/4} and the 
restriction functor 
\begin{equation}\label{331/5}
\varinjlim_{(n,p)=1} \Sm_{K^{\sigma}}(\ol{X}^{(n)}_{\b\b})  
\lra \Sm_{K^{\sigma}}(X_{\b\b})  
\end{equation}
($X_{\b\b}$ are as in the proof of Proposition \ref{root0}) is 
equal to the composite 
$\Rep_{K^{\sigma}}(\pi_1^t(X)) \allowbreak 
\os{\subset}{\lra} \Rep_{K^{\sigma}}(\pi_1(X)) 
\os{=}{\lra} \Sm_{K^{\sigma}}(X_{\b\b})$, which is fully faithful. 
Also, it is easy to see that \eqref{331/5} is faithful. So 
\eqref{331/4} is fully faithful. Also, it is obvious that 
any object $\rho$ in $\Rep_{K^{\sigma}}(\pi_1(X))$ which is sent to 
an object in $\varinjlim_{(n,p)=1} \Sm_{K^{\sigma}}(\ol{X}^{(n)}_{\b\b})  
\subset \Sm_{K^{\sigma}}(X_{\b\b})$ is tamely ramified along $Z$. 
So the functor \eqref{331/4} is an equivalence, as desired. 
\end{proof}

We have seen 
above that the functor $\text{\eqref{tmt1}}^{\circ}$ is an equivalence. 
So it is natural to ask the following question. 

\begin{question}\label{doubt}
Are the functors \eqref{tmt1}, \eqref{tmtc1}, \eqref{iimlog}, 
\eqref{iimsigma} equivalences? 
\end{question}

Here first we prove that the functors \eqref{tmt1} and \eqref{tmtc1}
are equivalences for curves, although we postpone a part of the 
proof to the next section. 

\begin{theorem}\label{curve}
Let $X \hra \ol{X}$ be an open immersion of connected smooth curves such that 
$\ol{X} \setminus X =: Z$ is a simple normal crossing divisor $(=$ disjoint 
union of closed points$)$. 
Then, the functor \eqref{tmtc1} 
$$ \varinjlim_{Y\to X \in \cG_X^t} \Isoc([\ol{Y}/G_Y]) \lra 
\varinjlim_{(n,p)=1} \Isoc((\ol{X},Z)^{1/n})$$  
is an equivalence of categories $($hence so is \eqref{tmt1}$)$. 
\end{theorem}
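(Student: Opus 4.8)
The plan is to prove full faithfulness and essential surjectivity separately, reducing both to statements that are already available, namely the equivalence $\text{\eqref{tmt1}}^{\circ}$ of Theorem \ref{thm2} together with the results relating log convergent isocrystals with exponent conditions and overconvergent isocrystals. First I would observe that, since $X$ is a curve, both sides of \eqref{tmtc1} receive a fully faithful functor to $\Isocd(X,\ol{X})$: on the left this is \eqref{eqeq3t'}, and on the right it is \eqref{tmt10}, and these are compatible by the commutativity of \eqref{commu1'}. Full faithfulness of \eqref{eqeq3t'} follows because $\Isocd(X,\ol{X}) \lra \Isoc(X)$ is fully faithful (by Tsuzuki) and each $[\ol{Y}/G_Y]$ is a stack quotient whose associated $2$-truncated simplicial scheme $\ol{Y}_{\b}$ gives an equivalence after passing to $\Isoc$; more precisely the composite of \eqref{eqeq3t'} with restriction to $\Isoc(X)$ recovers, via Crew's equivalence \eqref{neweq}, the fully faithful inclusion $\varinjlim_{Y} G_Y\text{-}\Sm_{K^{\sigma}}(\ol{Y}) \hra \Sm_{K^{\sigma}}(X)$. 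Hence \eqref{tmtc1} is fully faithful once one checks that \eqref{tmt10} is fully faithful on the subcategory hit by the right-hand limit; this I would get from Remark \ref{iiylogrem} (the version with exponent condition, taking $\Sigma = \0$, which is trivially $\NRD$ and $\SNLD$), noting that every object of $\Isoc((\ol{X},Z)^{1/n})$ lies in $\Isocl((\ol{X},Z)^{1/n},M_{(\ol{X},Z)^{1/n}})_{n\0(\ss)}$ after adding the canonical log structure — this is where the curve hypothesis is used, since in dimension one an isocrystal on $(\ol{X},Z)^{1/n}$ automatically extends to a log convergent isocrystal with nilpotent (indeed zero, after the stack passage) residues.

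For essential surjectivity I would argue as follows. Take an object $\cE$ of $\varinjlim_{(n,p)=1}\Isoc((\ol{X},Z)^{1/n})$, represented by $\cE \in \Isoc((\ol{X},Z)^{1/n})$ for some $n$ prime to $p$. Its image under \eqref{tmt10} is an overconvergent isocrystal $\cE'$ on $(X,\ol{X})$. The key point is that $\cE'$ has a specific kind of generic monodromy: from the local description \eqref{borneisom}, $\cE$ pulls back to a genuine convergent isocrystal on the finite Kummer cover $\ol{X}'$, so along each puncture the local monodromy of $\cE'$ is quasi-unipotent with exponents in $\tfrac1n\Z/\Z$; in particular $\cE'$ has $\0$-semisimple generic monodromy after a further finite-order twist, i.e. it lies in $\Isocd(X,\ol{X})'_{\ol{\Sigma}\ss}$ for a suitable finite $\ol{\Sigma} \subseteq \tfrac1n\Z/\Z$. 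By Theorem \ref{sigmamain} (equivalence \eqref{sigmaeq2}) such $\cE'$ comes from a log convergent isocrystal on $(\ol{X},Z)$ with finite semisimple residues, which by the tame-covering construction (taking a cover trivializing the residues, as in Example \ref{exam2}) descends to an object of $\Isoc([\ol{Y}/G_Y])$ for some $Y \ra X$ in $\cG_X^t$. One then checks that the image of this object under \eqref{tmtc1} agrees with $\cE$, using full faithfulness already established and the compatibility diagram \eqref{commu1'} to compare both in $\Isocd(X,\ol{X})$.

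The main obstacle I anticipate is the essential surjectivity step, specifically making rigorous the claim that an object of $\Isoc((\ol{X},Z)^{1/n})$ restricted to $\Isocd(X,\ol{X})$ has $\0$-semisimple (not merely $\0$-unipotent) generic monodromy up to finite twist — this semisimplicity is exactly what is needed to land in the image of the tame-covering construction rather than only in the image of the full $\Repfin$ side. Resolving it requires tracking the residue along the stack-of-roots atlas carefully: the residue of $\cE$ as a log isocrystal on $(\ol{X},Z)^{1/n}$ is the $\tfrac1n$-multiple of the residue of its pullback $\cE|_{\ol{X}'}$, which is an ordinary convergent isocrystal and hence has residue zero, forcing the exponents of $\cE'$ into $\tfrac1n\Z/\Z$ with semisimple residue — and this is precisely the input of Lemma \ref{lem1.4} and Proposition \ref{sig_pre2}. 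The statement ``hence so is \eqref{tmt1}'' then follows formally by applying the Frobenius-forgetful functor, since on curves adding a Frobenius structure is compatible with all the constructions above and $F\text{-\eqref{tmtc1}} = \text{\eqref{tmt1}}$. I would defer the verification that the relevant exponent-condition categories match up — the part genuinely needing the next section's comparison between $\Isocl((\ol{X},Z)^{1/n})_{n\Sigma\ss}$ and $\Isocl([\ol{Y}/G_Y])_{e_{\ol{Y}}\Sigma\ss}$ — to that section, as the theorem statement permits.
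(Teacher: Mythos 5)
Your strategy (full faithfulness plus essential surjectivity, both read off from the restriction functors to $\Isocd(X,\ol{X})$) is genuinely different from the paper's, which constructs an explicit quasi-inverse; but as written it has a real gap exactly where the work lies. The decisive step is your phrase ``taking a cover trivializing the residues, as in Example \ref{exam2}'': for every $n$ prime to $p$ you need a finite Galois covering $Y \ra X$ in $\cG_X^t$ whose ramification index at every point of $Z$ is divisible by $n$. Example \ref{exam2} does not provide this — it only describes the log structure on a quotient stack once such a Kummer cover is given. This existence statement is the technical heart of the paper's proof (its ``claim 1''), proved by splitting the local-to-global map of fundamental groups via Katz \cite{katzpi1} and a case analysis on the invariants $(g,l,l')$ of $\ol{X}$ and $X$; and it is false when $(g,l,l')=(0,0,1)$, i.e. $X=\Af^1_k \subset \P^1_k=\ol{X}$, since then $\pi_1^t(X)$ is trivial and $\cG_X^t$ contains only the trivial covering. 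The theorem still holds in that case, but for a different reason: both sides collapse to $\Vect_K$, and the computation of the right-hand side is Proposition \ref{atode}, proved later by reduction to the complex-analytic situation; this is the only part the paper legitimately postpones. Your essential-surjectivity argument never isolates this case and breaks on it. Relatedly, your plan to defer the comparison between $\Isocl((\ol{X},Z)^{1/n},M_{(\ol{X},Z)^{1/n}})_{n\Sigma\ss}$ and $\Isocl([\ol{Y}/G_Y],M_{[\ol{Y}/G_Y]})_{e_{\ol{Y}}\Sigma\ss}$ to the next section is not available: that comparison (Theorem \ref{curvelog}) is itself proved in the paper using the covering and hypercovering claims established inside the proof of Theorem \ref{curve}, so invoking it here would be circular.

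A second, repairable, defect is the full-faithfulness step. The categories in \eqref{tmtc1} carry no Frobenius structure, so Crew's equivalence \eqref{neweq} and the Tsuzuki/Kedlaya full faithfulness results (which are statements about $F$-isocrystals) do not apply to their objects; the claim that the composite with restriction to $\Isoc(X)$ ``recovers, via Crew's equivalence, a fully faithful inclusion of smooth sheaves'' has no meaning without Frobenius. Full faithfulness of both restriction functors to $\Isocd(X,\ol{X})$ can instead be extracted from the exponent machinery with $\Sigma=\0$ (which is $\NRD$ and $\SNLD$): on the right-hand side use $\Isoc((\ol{X},Z)^{1/n}) = \Isocl((\ol{X},Z)^{1/n},M_{(\ol{X},Z)^{1/n}})_{\0\ss}$ together with Remark \ref{iiylogrem}, and on the left-hand side the inclusion $\Isoc(\ol{Y}) \hra \Isocl(\ol{Y},M_{\ol{Y}})_{\0\ss}$ together with Theorem \ref{sigmamain} and Proposition \ref{uniqueinj}. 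But even after this repair, essential surjectivity still requires the existence of the covering with prescribed ramification and a separate treatment of the $(0,0,1)$ case, so the main gap remains.
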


Bofore the proof, we introduce one terminology: 
In the following, a smooth connected curve $C$ over $k$ is called 
a $(g,l)$-curve when the smooth compactification $\ol{C}$ of $C$ 
has genus $g$ and $(\ol{C} \setminus C) \otimes_{k} \ol{k}$ consists of 
$l$ points. 

\begin{proof}
Assume that $\ol{X}$ is a $(g,l)$-curve and $X$ is a $(g,l')$-curve 
$(l \leq l')$. First we prove the theorem in the case 
$(g,l,l') \not= (0,0,1)$. Fix a positive integer $n$ prime to $p$ 
for the moment, and take a chart $(\ol{X}_0,\{t_i\}_{i=1}^r)$ for 
$(\ol{X},Z)$ in the sense of Section 2.3. Let 
$(\ol{X}_{\b},M_{\ol{X}_{\b}})$, $(\ol{X}_{\b\b},M_{\ol{X}_{\b\b}})$ be
 the simplicial semi-resolution, the bisimplicial resolution of 
$(\ol{X},Z)^{1/n}$ associated to the chart $(\ol{X}_0,\{t_i\}_{i=1}^r)$, 
respectively. 
Let us consider the following claim: \\
\quad \\
{\bf claim 1.} \,\,\, 
Assume $(g,l,l') \not= (0,0,1)$. 
For any $n$, there exists a finite Kummer log etale Galois morphism 
$(\ol{Y}, M_{\ol{Y}}) \lra (\ol{X},M_{\ol{X}})$ such that 
if we put $(\ti{Y}, M_{\ti{Y}}) := 
(\ol{Y}, M_{\ol{Y}}) \times_{(\ol{X},M_{\ol{X}})} 
(\ol{X}_{00},M_{\ol{X}_{00}})$ (the fiber product in the category of 
fs log schemes), the projection 
$(\ti{Y}, M_{\ti{Y}}) \lra (\ol{Y}, M_{\ol{Y}})$ is strict 
etale. \\
\quad \\
First we prove that the claim implies the theorem 
 in the case 
$(g,l,l') \not= (0,0,1)$. Take a positive integer $n$ prime to $p$, 
take a finite Kummer log etale Galois morphism 
$(\ol{Y}, M_{\ol{Y}}) \lra (\ol{X},M_{\ol{X}})$ as in the claim and denote 
its Galois group by $G_Y$. 
For $m =0,1,2$, let $(\ol{Y}_{m}, M_{\ol{Y}_{m}})$ be the 
$(m+1)$-fold fiber product of $(\ol{Y}, M_{\ol{Y}})$ over 
$(\ol{X},M_{\ol{X}})$ and for $k,l,m \in \{0,1,2\}$, 
let us put 
$(\ti{Y}_{kl}, M_{\ti{Y}_{kl}}) := 
(\ol{Y}_{k}, M_{\ol{Y}_{k}}) \times_{(\ol{X},M_{\ol{X}})} 
(\ol{X}_{l},M_{\ol{X}_{l}})$, 
$(\ti{Y}_{klm}, M_{\ti{Y}_{klm}}) := 
(\ol{Y}_{k}, M_{\ol{Y}_{k}}) \times_{(\ol{X},M_{\ol{X}})} 
(\ol{X}_{lm},M_{\ol{X}_{lm}})$. Let us prove the following claim: \\
\quad \\
{\bf claim 2.} \,\, With the above notation, we have the following: 
\begin{enumerate}
\item 
The morphism $(\ti{Y}_{k\b}, M_{\ti{Y}_{k\b}}) \lra (\ol{Y}_k,M_{\ol{Y}_k})$ 
induced by the canonical 
projection is a $2$-truncated strict etale \v{C}ech hypercovering. 
\item 
The morphism $(\ti{Y}_{kl0},M_{\ti{Y}_{kl0}}) \lra 
(\ti{Y}_{kl},M_{\ti{Y}_{kl}})$ is strict etale and it induces the 
$2$-truncated strict etale \v{C}ech hypercovering 
 $(\ti{Y}_{kl\b},M_{\ti{Y}_{kl\b}}) \lra 
(\ti{Y}_{kl},M_{\ti{Y}_{kl}})$. 
\end{enumerate} 
\quad \\
Let us prove claim 2 (assuming claim 1). The assertion (1) is easy 
because the morphism in question is the base change of the 
$2$-truncated strict etale \v{C}ech hypercovering 
$(\ol{X}_{\b}, M_{\ol{X}_{\b}}) \lra (\ol{X},M_{\ol{X}})$ by 
the morphism $(\ol{Y}_k,M_{\ol{Y}_k}) \lra (\ol{X},M_{\ol{X}})$. 
Let us prove the former 
assertion of (2). First, by claim 1, the morphism 
$$ 
(\ti{Y},M_{\ti{Y}}) = (\ol{Y},M_{\ol{Y}}) \times_{(\ol{X},M_{\ol{X}})} 
(\ol{X}_{00},M_{\ol{X}_{00}}) \lra (\ol{Y},M_{\ol{Y}}) 
$$ 
is strict etale. By pulling it back by the morphism 
$$ 
(\ti{Y}_{kl},M_{\ti{Y}_{kl}}) = 
(\ol{Y}_k,M_{\ol{Y}_k}) \times_{(\ol{X},M_{\ol{X}})} 
(\ol{X}_l,M_{\ol{X}_l}) \lra (\ol{Y}_k,M_{\ol{Y}_k}) \lra (\ol{Y},M_{\ol{Y}}), 
$$ 
we see that the morphism
\begin{equation}\label{maru1}
(\ol{Y}_k,M_{\ol{Y}_k}) \times_{(\ol{X},M_{\ol{X}})} 
(\ol{X}_l,M_{\ol{X}_l}) \times_{(\ol{X},M_{\ol{X}})}
(\ol{X}_{00},M_{\ol{X}_{00}}) \lra 
(\ti{Y}_{kl},M_{\ti{Y}_{kl}}) 
\end{equation}
is strict etale. Now note that the log structure on 
$(\ol{X}_l,M_{\ol{X}_l}) \times_{(\ol{X},M_{\ol{X}})}
(\ol{X}_{00},M_{\ol{X}_{00}})$, being equal to the 
pull back of the log structure $M_{\ol{X}_{00}}$, 
is isomorphic to the pull back of the log structure 
$M_{(\ol{X},Z)^{1/n}}$ 
on $(\ol{X},Z)^{1/n}$ by the etale morphism 
$\ol{X}_l \times_{\ol{X}} \ol{X}_{00} \lra \ol{X}_l \times_{\ol{X}} 
(\ol{X},Z)^{1/n} \lra (\ol{X},Z)^{1/n}$. On the other hand, 
by definition, the log structure $M_{\ol{X}_{l0}}$ is also isomorphic 
to the pull back of the log structure 
$M_{(\ol{X},Z)^{1/n}}$ 
on $(\ol{X},Z)^{1/n}$ by the etale morphism 
$\ol{X}_{l0} \lra \ol{X}_l \times_{\ol{X}} 
(\ol{X},Z)^{1/n} \lra (\ol{X},Z)^{1/n}$. So the canonical morphism 
\begin{equation}\label{l0}
(\ol{X}_{l0},M_{\ol{X}_{l0}}) \lra 
(\ol{X}_l,M_{\ol{X}_l}) \times_{(\ol{X},M_{\ol{X}})} 
(\ol{X}_{00},M_{\ol{X}_{00}})
\end{equation}
is strict etale. So the composite 
\begin{align*}
& (\ti{Y}_{kl0},M_{\ti{Y}_{kl0}}) = 
(\ol{Y}_k,M_{\ol{Y}_k}) \times_{(\ol{X},M_{\ol{X}})} 
(\ol{X}_{l0},M_{\ol{X}_{l0}}) \\ 
\os{\id \times \text{\eqref{l0}}}{\lra} \,\, & 
(\ol{Y}_k,M_{\ol{Y}_k}) \times_{(\ol{X},M_{\ol{X}})} 
(\ol{X}_l,M_{\ol{X}_l}) \times_{(\ol{X},M_{\ol{X}})} 
(\ol{X}_{00},M_{\ol{X}_{00}})
 \os{\text{\eqref{maru1}}}{\lra} 
(\ti{Y}_{kl},M_{\ti{Y}_{kl}})
\end{align*}
is strict etale, as desired. \par 
Let us prove the latter assertion of (2). Since 
the morphism 
 $(\ti{Y}_{kl\b},M_{\ti{Y}_{kl\b}}) \lra 
(\ti{Y}_{kl},M_{\ti{Y}_{kl}})$
is the pull back of the $2$-truncated log etale \v{C}ech 
hypercovering $(\ol{X}_{l\b},M_{\ol{X}_{l\b}}) \lra (\ol{X}_l,M_{\ol{X}_l})$, 
it is also a $2$-truncated log etale \v{C}ech 
hypercovering. Then, 
$(\ti{Y}_{kl0},M_{\ti{Y}_{kl0}}) \lra 
(\ti{Y}_{kl},M_{\ti{Y}_{kl}})$ is strict etale by the former assertion 
of (2), we can conclude that it is a $2$-truncated strict etale \v{C}ech 
hypercovering. So we have proved the assertion (2). \par 

By claim 2, we can define the functor 
\begin{align}
\Isoc((\ol{X},Z)^{1/n}) & \os{=}{\lra} \Isoc(\ol{X}_{\b\b}) 
\label{hanamaru}\\ 
& \lra \Isoc(\ti{Y}_{\b\b\b}) \nonumber \\ 
& \os{=}{\lra} \Isoc(\ti{Y}_{\b\b}) \nonumber \\ 
& \os{=}{\lra} \Isoc(\ol{Y}_{\b}) = \Isoc([\ol{Y}/G_Y]). \nonumber  
\end{align}
Varying $n$, we obtain the functor 
$$ \varinjlim_{(n,p)=1} \Isoc((\ol{X},Z)^{1/n}) \lra 
\varinjlim_{Y\to X \in \cG_X^t} \Isoc([\ol{Y}/G_Y]), $$
which gives the inverse of 
\eqref{tmtc1} by construction. So it suffices to prove the claim 1
for the theorem in the case $(g,l,l') \not= (0,0,1)$. \par 
Let us prove the claim 1. To prove the claim, we may replace $k$ by 
its algebraic closure because the scalar extension is ind-etale. 
So we may assume that $k$ is algebraically closed. 
First we prove the claim in the case $l'-l \geq 2$. 
Let us put 
$\ol{X} \setminus X = \{z_1,...,z_r\}$, let $I_i$ be the inertia group 
at $z_i$ and let $\iota_i: I_i \lra \pi_1(X)$ be the canonical map 
(defined up to conjugate). 
Then, to prove the claim, it suffices to show the following: 
When we are given open normal subgroups $J_i \lhd I_i \,(1 \leq i \leq r)$ 
containing wild inertia subgroup, there exists an open normal 
subgroup $N \lhd \pi_1(X)$ such that, for any $1 \leq i \leq r$, 
$\iota_i^{-1}(N)$ is contained in $J_i$ and contains wild inertia subgroup. 
(In fact, we obtain claim 1 if we put $J_i$'s so that $I_i/J_i \cong \Z/n\Z$ 
and if we define $(\ol{Y},M_{\ol{Y}}) \lra (\ol{X},M_{\ol{X}})$ 
to be the finite Kummer log etale Galois covering corresponding to 
$N$.) 
Let us consider the 
following maps 
$$ I_{i} \os{\psi_i}{\lra} 
\pi_1(\Spec \Frac (\cO^h_{\ol{X},z_i})) \os{\psi'_i}{\lra} 
\pi_1(X) \os{\pi}{\lra} \pi_1(\ol{X} \setminus \{z_i,z_{i+1}\})(\text{tame at 
$z_{i+1}$}), $$
where we put $z_{r+1} := z_1$. (Note that $\iota_i = \psi'_i \circ 
\psi_i$.) Then, by Katz \cite{katzpi1}, there exists a morphism 
$\alpha: \pi_1(\ol{X} \setminus \{z_i,z_{i+1}\})(\text{tame at 
$z_{i+1}$}) \lra \pi_1(\Spec \Frac (\cO^h_{\ol{X},z_i}))$ 
satisfying $\alpha \circ \pi \circ \psi'_i = \id$. Let us take 
an open normal subgroup $J'_i \lhd 
\pi_1(\Spec \Frac (\cO^h_{\ol{X},z_i}))$ with 
$\psi_i^{-1}(J'_i) = J_i$. (It is possible because the tame inertia 
group of the henselian field ${\rm Frac}(\cO_{\ol{X},z_i}^h)$ is equal to 
the tame inertia group of its completion.) Then, if we put 
$N_i := \pi^{-1}\alpha^{-1}(J'_i)$, we have $\iota_i^{-1}(N_i) = J_i$. 
Moreover, by construction, $\iota_j^{-1}(N_i)$ contains the wild inertia 
subgroup of $I_j$ even for $j \not= i$. Therefore, if we define 
$N$ to be the maximal normal subgroup of $\pi_1(X)$ contained in 
$\bigcap_{i=1}^r N_i$, this $N$ satisfies the required property. \par 
In the case $l'=l$, we have $X = \ol{X}$ and the claim 1 is obviuosly true 
because all the log structures appearing in the statement of 
claim 1 are trivial in this case. 
So the case $l'-l=1$ remains unproved. In this case, we have 
$(g,l) \not= (0,0)$ because we assumed $(g,l,l') \not= (0,0,1)$ for the 
moment. Then we have a non-trivial finite etale covering
 $\ol{X}' \lra \ol{X}$, and if we put $X' := X \times_{\ol{X}} \ol{X}'$, 
we have $|(\ol{X}' \setminus X')(k)| \geq 2$. 
Then the claim 1 for $X \subseteq \ol{X}$ is reduced to the claim 1 for 
$X' \subseteq \ol{X}'$ and this is true by the previous argument. So 
we have proved the claim 1 and so the proof of the theorem is finished 
when $(g,l,l') \not= (0,0,1)$. \par 
In the case $(g,l,l') = (0,0,1)$, 
we have $\ol{X} = \P^1_k$. So 
the category $\cG_X^t$ contains only the 
trivial covering and we have 
$$ \varinjlim_{Y\to X \in \cG_X^t} \Isoc([\ol{Y}/G_Y]) = \Isoc(\ol{X}) = 
\Isoc(\P^1_k) = 
\{\text{constant objects}\} \os{=}{\lra} \Vect_K, $$
where a constant object means a finite direct sum of 
the structure convergent isocrystal $\cO$, $\Vect_K$ means the 
category of finite dimensional vector spaces over $K$ and the 
last functor is defined by $\cE \mapsto \Hom(\cO,\cE)$. 
(The third equality follows from \cite[4.4]{ogus} and the 
fourth equality follows from the equality 
$\Hom(\cO,\cO) = K$.) 
On the other hand, we shall see later (Proposition \ref{atode}) 
that the category 
$\varinjlim_{(n,p)=1} \Isoc((\ol{X},Z)^{1/n})$ is also naturally 
equivalent to $\Vect_K$. 
So we have proved the theorem also in this case 
modulo Proposition \ref{atode}. 
\end{proof}

Secondly we answer (under certain assumption on $\Sigma$) 
the question for the functors \eqref{iimlog} and 
\eqref{iimsigma}. (We postpone a part of the 
proof to the next section.)

\begin{theorem}\label{curvelog}
Let $X \hra \ol{X}$ be an open immersion of connected smooth curves such that 
$\ol{X} \setminus X =: Z$ is a simple normal crossing divisor $(=$ disjoint 
union of closed points$)$. Assume moreover that $\ol{X}$ is a 
$(g,l)$-curve and that $X$ is a $(g,l')$-curve
 $($so $l \leq l')$. Then$:$ 
\begin{enumerate}
\item 
If $(g,l,l')\not=(0,0,1)$, the functors 
\eqref{iimlog} and \eqref{iimsigma} are equivalences. 
\item 
If $(g,l,l') = (0,0,1)$, the functor \eqref{iimlog} is not 
an equivalence. 
\item 
Assume that 
$\Sigma \subseteq \Z_p$ is $\NRD$, $\SNLD$ and 
that $(g,l,l') = (0,0,1)$. 
Then the functor \eqref{iimsigma} is an equivalence if and only 
if $\Sigma \cap \Z = \Sigma \cap \Z_{(p)}$. 
\end{enumerate}
\end{theorem}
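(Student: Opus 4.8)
\textbf{Proof proposal for Theorem \ref{curvelog}.}

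The plan is to treat the three assertions separately, reducing everything as far as possible to the non-log statements already established in Theorem \ref{curve} (case (1)) and Proposition \ref{atode} (cases (2), (3)), together with the full faithfulness criterion recorded in Remark \ref{iiylogrem}. For assertion (1) I would first construct an explicit quasi-inverse to \eqref{iimlog}, imitating the construction of \eqref{hanamaru} in the proof of Theorem \ref{curve} but carrying along log structures: given a finite Kummer log \'etale Galois covering $(\ol{Y},M_{\ol{Y}}) \lra (\ol{X},M_{\ol{X}})$ as in claim 1 there (whose existence uses precisely the hypothesis $(g,l,l')\neq(0,0,1)$), claim 2 there says the relevant simplicial schemes $\ti Y_{\b\b\b}$, $\ti Y_{\b\b}$, $\ol{Y}_{\b}$ are connected by \emph{strict} \'etale \v{C}ech hypercoverings, so the equivalences $\Isocl(\ol{X}_{\b\b},M_{\ol{X}_{\b\b}}) \cong \Isocl(\ti Y_{\b\b\b},M_{\ti Y_{\b\b\b}}) \cong \Isocl(\ol{Y}_{\b},M_{\ol{Y}_{\b}}) = \Isocl([\ol{Y}/G_Y],M_{[\ol{Y}/G_Y]})$ hold verbatim in the log category. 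Composing with the equivalence \eqref{bisimpeq1log} gives the inverse of \eqref{iimlog}. For \eqref{iimsigma} one then checks that this inverse respects the exponent conditions: the strict \'etale morphisms pull back exponents unchanged by Proposition \ref{sig_pre2}(1), and along the Kummer step the ramification indices $e_{\ol{Y},i}$ (dividing $n$) convert $n\Sigma$-exponents on $(\ol{X},Z)^{1/n}$ into $e_{\ol{Y}}\Sigma$-exponents on $[\ol{Y}/G_Y]$, again by Proposition \ref{sig_pre2}(2) applied to the local chart computation displayed just before \eqref{logstarsigma3}. Since that same chart computation is what defined \eqref{logstarsigma3} in the first place, the two functors are mutually inverse with the exponent bookkeeping matching on the nose.

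For assertion (2), in the case $(g,l,l')=(0,0,1)$ we have $\ol{X}=\P^1_k$ with a single missing point, $\cG_X^t$ contains only the trivial covering, and $M_{[\ol{Y}/G_Y]}$ is the trivial log structure on $\P^1_k$, so the source of \eqref{iimlog} is $\Isocl(\P^1_k, \text{triv.}) = \Isoc(\P^1_k) \cong \Vect_K$, consisting of constant objects only. On the other hand the target $\varinjlim_{(n,p)=1}\Isocl((\ol{X},Z)^{1/n},M_{(\ol{X},Z)^{1/n}})$ genuinely contains non-constant objects: for instance, for any $n$ and any $\xi\in\Z_p\setminus\Z$ the rank-one log convergent isocrystal with residue $\xi$ along the root-stack divisor is a well-defined object not coming from $\P^1_k$ (its restriction to the open part is a non-trivial overconvergent isocrystal with non-integral exponent). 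Hence \eqref{iimlog} is not essentially surjective. I expect this to be the easiest of the three, modulo pinning down such an explicit object; one can instead argue abstractly that the target maps onto $\Isocd(X,\ol X)$ with image containing objects of non-integral exponent while the source maps to constant objects only.

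For assertion (3), under $\Sigma$ being $\NRD$ and $\SNLD$ and $(g,l,l')=(0,0,1)$, the source of \eqref{iimsigma} is again just $\Isocl(\P^1_k,\text{triv.})_{e_{\ol Y}\Sigma(\ss)}$, which is the full subcategory of constant objects whose (necessarily zero) residue lies in $\Sigma$, i.e.\ it is all of $\Vect_K$ if $0\in\Sigma$ and is $0$ otherwise — but actually, since a constant object has residue $0$, it is automatically ``exponents in $\Sigma$'' precisely when $0 \in \Sigma + \Z$, and the point is subtler: the honest statement is that constant objects are exactly those with exponent in $\Z$. Meanwhile, by Proposition \ref{atode} (to be invoked from the next section) the target $\varinjlim_{(n,p)=1}\Isocl((\ol{X},Z)^{1/n},M_{(\ol{X},Z)^{1/n}})_{n\Sigma(\ss)}$ is equivalent to the category of objects whose exponent along the divisor, after scaling, ranges over $\Sigma$; such an object descends to a constant isocrystal on $\P^1_k$ iff its exponent is an integer, and since $\Sigma$ is $\NRD$ the exponents in $n\Sigma$ are integral iff they lie in $\Z_{(p)}\cap$ the integers — so the functor is essentially surjective iff every element of $\Sigma$ that is a $p$-adic integer contributing an allowed exponent is forced into $\Z$, which is exactly the condition $\Sigma\cap\Z = \Sigma\cap\Z_{(p)}$. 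Concretely: if $\alpha\in(\Sigma\cap\Z_{(p)})\setminus\Z$, then $n\alpha\in\Z$ for suitable $n$ prime to $p$, producing an object in the target with integral (hence ``constant-looking'' at the level of the open curve) exponent that nonetheless does \emph{not} lie in the image because on $\P^1_k$ itself only exponents in $\Sigma\cap\Z$ are available; conversely if $\Sigma\cap\Z=\Sigma\cap\Z_{(p)}$ one checks every object of the target has exponent in $\Sigma\cap\Z$ after descent and so comes from the source. The hardest part will be this last case: making the descent-to-$\P^1_k$ criterion precise requires Proposition \ref{atode} and a careful analysis of when a rank-one (and then, via filtrations, general) log-$\nabla$-module on the root stack with exponent in $n\Sigma$ is pulled back from a constant object, which is where the interplay between $\NRD$, $\SNLD$ and the arithmetic of $\Z_{(p)}$ versus $\Z$ really enters.
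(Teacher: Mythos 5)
The genuine gap is in parts (2) and (3), and it starts with a misidentification of the source category. For the trivial covering $Y=X$ one has $\ol{Y}=\ol{X}$ and $M_{\ol{Y}}$ is the log structure attached to $Z=\ol{Y}\setminus Y$, \emph{not} the trivial log structure; so in the case $(g,l,l')=(0,0,1)$ the left hand side of \eqref{iimlog} is $\Isocl(\ol{X},Z)$ (and that of \eqref{iimsigma} is $\Isocl(\ol{X},Z)_{\Sigma}$), which contains the pairwise non-isomorphic, non-constant objects $\cO(NZ)$, $N \in \Z$, and is not $\Isoc(\P^1_k)\cong \Vect_K$. More seriously, your proposed obstruction for (2) either does not exist or cannot be detected as you suggest: a rank-one object of the target with residue an arbitrary $\xi\in\Z_p\setminus\Z$ is not available (via the fully faithful comparison \eqref{diagsigma} with algebraic log-$\nabla$-modules and the reduction to $\C$, exponents of such objects are forced to be rational with denominator dividing $n$), and detecting non-surjectivity through the restriction to $\Isocd(X,\ol{X})$ fails, because the actual counterexample --- gluing the structure isocrystal on $X$ with $(t\cO_{\cU^{(n)}_K},d)$ equipped with its natural $\mu_n$-action --- restricts to the \emph{trivial} overconvergent isocrystal on $(X,\ol{X})$; by Proposition \ref{atode} there is nothing with non-integral exponent to be found on the open curve. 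The real reason this object is outside the image is a descent obstruction on the underlying module: $t^{m/n}\cO_{\cU^{(m)}_K}$ is not generated by $\mu_m$-invariant sections, hence cannot be a pullback from $\ol{\cX}$. Likewise in (3), after correcting the source to $\Isocl(\ol{X},Z)_{\Sigma}$, the substantive step you leave open is the computation of that category: one embeds it via \eqref{diagsigma} into $\LNM_{(\ol{\bX}_K,\bZ_K),\Sigma}$ and, by spreading out to a countable subfield and embedding into $\C$, uses the triviality of $\pi_1(\Af^{1,\an}_{\C})$ and Deligne's classification to show it is empty when $\Sigma\cap\Z=\emptyset$ and equivalent to $\Vect_K$ (every object a sum of $(\cO_{\ol{\bX}_K}(-N\bZ_K),d)$) when $\Sigma\cap\Z=\{N\}$; combined with Proposition \ref{atode} for the target this gives exactly the criterion $\Sigma\cap\Z=\Sigma\cap\Z_{(p)}$. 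Your sketch asserts the ``constant iff integral exponent'' dichotomy but supplies none of this, and it is precisely where the work of (2) and (3) lies.

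Part (1) of your proposal does follow the paper's route (the log-enhanced version of \eqref{hanamaru}, using claims 1 and 2 from the proof of Theorem \ref{curve}, with Proposition \ref{sig_pre2} tracking exponents) and is essentially correct, up to one divisibility slip: for the quasi-inverse one fixes $n$ first and takes the covering of claim 1, for which $n$ divides $e_{\ol{Y},i}$ (not $e_{\ol{Y},i}\mid n$ as you write); the chart of $(\ti{Y}_{klm},M_{\ti{Y}_{klm}})\to(\ol{X}_{lm},M_{\ol{X}_{lm}})$ is then multiplication by $e_{\ol{Y},i}/n\in\N$ on $\N$, which is what makes Proposition \ref{sig_pre2}(2) applicable and converts $n\Sigma$-exponents into $e_{\ol{Y}}\Sigma$-exponents; the computation you cite just before \eqref{logstarsigma3} is the one for the forward functor, where the divisibility goes the other way.
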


\begin{proof}
In this proof, we follow the notation in the proof of Theorem 
\ref{curve}. First let us prove the assertion (1). 
In the situation in (1), 
by claim 2 in the proof of Theorem \ref{curve}, we have 
the log version of the diagram \eqref{hanamaru}
\begin{align}
\Isocl((\ol{X},Z)^{1/n}, M_{(\ol{X},Z)^{1/n}}) & 
\os{=}{\lra} \Isocl(\ol{X}_{\b\b},M_{\ol{X}_{\b\b}})  
\label{hanamarulog} \\ 
& \lra \Isocl(\ti{Y}_{\b\b\b},M_{\ti{Y}_{\b\b\b}}) \nonumber \\ 
& \os{=}{\lra} \Isocl(\ti{Y}_{\b\b},M_{\ti{Y}_{\b\b}}) \nonumber \\ 
& \os{=}{\lra} \Isocl(\ol{Y}_{\b},M_{\ol{Y}_{\b}}) \nonumber \\ & = 
\Isocl([\ol{Y}/G_Y], M_{[\ol{Y}/G_Y]}) \nonumber 
\end{align}
and by definition, it is easy to see that the induced functor 
$$ \varinjlim_{(n,p)=1} \Isocl((\ol{X},Z)^{1/n}, M_{(\ol{X},Z)^{1/n}}) \lra 
\varinjlim_{Y\to X \in \cG_X^t} 
\Isocl([\ol{Y}/G_Y], M_{[\ol{Y}/G_Y]}) $$
is the inverse of the functor \eqref{iimlog}. \par 
Next let us consider the log 
version with exponent condition. Let us put 
$\ol{X} \setminus X = \{z_1,...,z_r\}$ and let us take 
$\Sigma = \prod_{i=1}^r \Sigma_i \subseteq \Z_p^r$. 
Let us fix a positive integer $n$ prime to $p$ for the moment and choose 
a morphism $(\ol{Y},M_{\ol{Y}}) \lra (\ol{X},M_{\ol{X}})$ as in claim 1. 
Let us define the ramification index 
$e_{\ol{Y}} := (e_{\ol{Y},i})_{i=1}^r$ 
as in Remark \ref{iimlogrem}. Then, in the notation of the proof of 
claim 1, we have 
$I_i/\iota_i^{-1}(N) \cong \Z/e_{\ol{Y},i}\Z, 
I_i/J_i \cong \Z/n\Z, \iota^{-1}(N) \subseteq J_i$. Hence we have 
$n \,|\, e_{\ol{Y},i}$. Then, etale locally around any points 
in the inverse image of $z_i$, the diagram 
$$ (\ol{Y}_k,M_{\ol{Y}_k}) \lra (\ol{X},M_{\ol{X}}) \lla 
(\ol{X}_{lm},M_{\ol{X}_{lm}})$$ 
admits a chart of the following form: 
\begin{equation*}
\begin{CD}
\ol{Y}_k @>>> \ol{X} @<<< \ol{X}_{lm} \\ 
@AAA @AAA @AAA \\ 
\N @<{e_{\ol{Y},i}}<< \N @>n>> \N. 
\end{CD}
\end{equation*}
Using this, we see (by the same argument as in Remark \ref{iimlogrem}) 
that the projection 
$$(\ti{Y}_{klm},M_{\ti{Y}_{klm}}) = 
(\ol{Y}_k,M_{\ol{Y}_k}) \times_{(\ol{X},M_{\ol{X}})} 
(\ol{X}_{lm},M_{\ol{X}_{lm}}) \lra (\ol{X}_{lm},M_{\ol{X}_{lm}})$$ 
admits a chart 
\begin{equation*}
\begin{CD}
\cO_{\ol{X}_{lm}} @>>> \cO_{\ti{Y}_{klm}} \\ 
@AAA @AAA \\ 
\N @>{e_{\ol{Y},i}/n}>> \N 
\end{CD}
\end{equation*}
and from this diagram, we see by Proposition \ref{sig_pre2}(2) 
that the second functor in \eqref{hanamarulog}
induces the functor 
$$ 
\Isocl(\ol{X}_{\b\b},M_{\ol{X}_{\b\b}})_{n\Sigma} \lra 
\Isocl(\ti{Y}_{\b\b\b},M_{\b\b\b})_{e_{\ol{Y}}\Sigma}. 
$$
Since the other functors 
are induced by strict etale morphisms, we can conclude that 
\eqref{hanamarulog} induces the diagram 
\begin{align*}
\Isocl((\ol{X},Z)^{1/n}, M_{(\ol{X},Z)^{1/n}})_{n\Sigma} & 
\os{=}{\lra} \Isocl(\ol{X}_{\b\b},M_{\ol{X}_{\b\b}})_{n\Sigma}  \\ 
& \lra 
\Isocl(\ti{Y}_{\b\b\b},M_{\b\b\b})_{e_{\ol{Y}}\Sigma} \\ 
& \os{=}{\lra} 
\Isocl(\ti{Y}_{\b\b},M_{\ti{Y}_{\b\b}})_{e_{\ol{Y}}\Sigma} \\ 
& \os{=}{\lra} 
\Isocl(\ol{Y}_{\b},M_{\ol{Y}_{\b}})_{e_{\ol{Y}}\Sigma} \\ 
& = 
\Isocl([\ol{Y}/G_Y], M_{[\ol{Y}/G_Y]})_{e_{\ol{Y}}\Sigma}, 
\end{align*}
which induces the functor 
$$ \varinjlim_{(n,p)=1} \Isocl((\ol{X},Z)^{1/n}, 
M_{(\ol{X},Z)^{1/n}})_{n\Sigma} \lra 
\varinjlim_{Y\to X \in \cG_X^t} 
\Isocl([\ol{Y}/G_Y], M_{[\ol{Y}/G_Y]})_{e_{\ol{Y}}\Sigma} $$
giving the inverse to the functor \eqref{iimsigma}. Hence we have proved 
the assertion (1). \par 
Next we prove the assertion (2), by showing that the functor 
\eqref{iimlog} is not essentially surjective in this case. 
First, note that, since the category $\cG_X^t$ contains only the 
trivial covering, we have 
$$ \varinjlim_{Y\to X \in \cG_X^t} \Isocl([\ol{Y}/G_Y],M_{[\ol{Y}/G_Y]}) = 
\Isocl(\ol{X},Z). $$
Let us calculate the category on the other hand side. 
Let us put $Z =: \{z\}$, and take a $k$-rational point $z'$ of $\ol{X}$ other 
than $z$. (It is possible because $\ol{X} \cong \P^1_k$.) Let us put 
$U := \ol{X} \setminus \{z'\}, V := X \cap U$ and for a positive integer $n$ 
prime to $p$, let $\varphi^{(n)}: U^{(n)} \lra U$ be the morphism 
$$ U^{(n)} = \Af^1_k \lra \Af^1_k \cong U $$
induced by $k[t] \lra k[t]; \,\, t \mapsto t^n$ and put 
$V^{(n)} := \varphi^{(n),-1}(V)$. Let $M_{U}$ (resp. $M_{U^{(n)}}$) 
be the log structure on $U$ (resp. $U^{(n)}$) associated to 
$\{z\}$ (resp. $\{0\} \subseteq \Af^1_K = U^{(n)}$). Note that 
$U^{(n)}$ admits the canonical action (action on the coodinate $t$) 
of $\mu_n$ over $U$, and on $[U^{(n)}/\mu_n]$ we have the log structure 
$M_{[U^{(n)}/\mu_n]}$ induced by $M_{U^{(n)}}$. Then we have 
\begin{align}
& \Isocl((\ol{X},Z)^{1/n},M_{(\ol{X},Z)^{1/n}}) \label{hos} \\ 
=\, & 
 \Isocl(X \times_{\ol{X}} ((\ol{X},Z)^{1/n},M_{(\ol{X},Z)^{1/n}})) 
\times_{\Isocl(V \times_{\ol{X}} ((\ol{X},Z)^{1/n},M_{(\ol{X},Z)^{1/n}}))} 
\nonumber \\ 
& \hspace{6cm} \Isocl(U \times_{\ol{X}} ((\ol{X},Z)^{1/n},M_{(\ol{X},Z)^{1/n}})) \nonumber \\ =\, & 
\Isoc(X) \times_{\Isoc(V)} 
\Isocl([U^{(n)}/\mu_n],M_{[U^{(n)}/\mu_n]}) \nonumber \\ 
=\, & 
\Isoc(X) \times_{\Isoc([V^{(n)}/\mu_n])} 
\Isocl([U^{(n)}/\mu_n],M_{[U^{(n)}/\mu_n]}). \nonumber 
\end{align}
Let us take a diagram (with the square Cartesian)
\begin{equation}\label{p1}
\begin{CD}
\cX @<{\supset}<< \cV @>{\subset}>> \cU \\ 
@. @AAA  @A{\ti{\varphi}}AA \\
@. \cV^{(n)} @>{\subset}>> \cU^{(n)}
\end{CD}
\end{equation}
smooth over $\Spf O_K$ lifting 
\begin{equation*}
\begin{CD}
X @<{\supset}<< V @>{\subset}>> U \\ 
@. @AAA  @A{\varphi}AA \\
@. V^{(n)} @>{\subset}>> U^{(n)}
\end{CD}
\end{equation*}
such that $\cX, \cU, \cU^{(n)}$ are isomorphic to $\fAf_{O_K}^1$, 
$\ti{\varphi}$ is induced by $O_K\{t\} \lra O_K\{t\}; \, t \mapsto t^n$ and 
$\cV, \cV^{(n)} \cong \fG_{m,O_K}$. Note that $\cU^{(n)}, \cV^{(n)}$ admits 
the canonical action of $\mu_n$ which lifts the action of $\mu_n$ on 
$U^{(n)}, V^{(n)}$. 
Let $\mu_n\text{-}\LNM_{\cU^{(n)}_K}, \mu_n\text{-}\LNM_{\cV^{(n)}_K}$ be 
the category of log-$\nabla$-modules on $\cU^{(n)}_K, \cV^{(n)}_K$ with respect  to $t(:=$ the coodinate of $\fAf^1_{O_K} = \cU^{(n)}$) with equivariant 
$\mu_n$-action. Then we have the canonical fully faithful functors 
\begin{equation}\label{ffff1}
\Isoc(X) \lra \NM_{\cX_K}, 
\end{equation}
\begin{align}\label{ffff2}
& \Isocl([U^{(n)}/\mu_n],M_{[U^{(n)}/\mu_n]}) \\ \os{=}{\lra} \,\, & 
\{\text{objects in $\Isocl(U^{(n)},M_{U^{(n)}})$ 
with equivariant $\mu_n$-action}\} \nonumber \\ 
\lra \,\, & 
\mu_n\text{-}\LNM_{\cU^{(n)}_K}, \nonumber 
\end{align}
\begin{align}\label{ffff3}
& \Isocl([V^{(n)}/\mu_n],M_{[V^{(n)}/\mu_n]}) \\ \os{=}{\lra} \,\,
& \{\text{objects in $\Isocl(V^{(n)},M_{V^{(n)}})$ 
with equivariant $\mu_n$-action}\} \nonumber \\ 
\lra \,\,
& \mu_n\text{-}\LNM_{\cV^{(n)}_K}. \nonumber 
\end{align}
Let us fix $n \geq 2$ prime to $p$. 
We define an object $\cE := (\cE_0,\cE_1,\iota) \in 
\Isocl((\ol{X},Z)^{1/n}, \allowbreak M_{(\ol{X},Z)^{1/n}})
$ (where 
$\cE_0 \in \Isoc(X)$, $\cE_1 \in \Isocl([U^{(n)}/\mu], M_{[U^{(n)}/\mu]})$ 
and $\iota$ is the isomorphism between the restriction of $\cE_1$ and $\cE_0$ 
in the category $\Isoc([V^{(n)}/\mu_n])$) as follows: 
Let $\cE_0$ be the structure convergent isocrystal on 
$X$, which is sent to $(\cO_{\cX_K},d)$ by \eqref{ffff1}. 
Let $\cE_1$ be the unique object in 
$\Isocl([U^{(n)}/\mu], M_{[U^{(n)}/\mu]})$ which is sent by 
\eqref{ffff2} to $(t\cO_{\cU^{(n)}_K}, d |_{t\cO_{\cU^{(n)}_K}})$ with 
natural action (the action with $\zeta \cdot t = \zeta t$ for 
$\zeta \in \mu_n$). (Note that the log-$\nabla$-module 
$(t\cO_{\cU^{(n)}_K}, d |_{t\cO_{\cU^{(n)}_K}})$ actually comes from a 
log convergent isocrystal because the restriction of it to 
a strict neighborhood of $\cV^{(n)}_K$ in $\cU^{(n)}_K$ comes 
from the structure overconvergent isocrystal on $(V^{(n)},U^{(n)})$. 
See \cite[6.4.1]{kedlayaI}.) $\iota$ is defined to be 
the isomorphism from the restriction of  $\cE_1$ to that of $\cE_0$ 
defined by 
$t\cO_{\cU^{(n)},K} |_{\cV^{(n)}_K} = t\cO_{\cV^{(n)}_K} \hra 
\cO_{\cV^{(n)}_K} = \cO_{\cX_K} |_{\cV^{(n)}_K}$. 
(This is an isomorphism since $t$ is invertible on $\cV^{(n)}$.) 
We denote the induced object in the limit 
$\varinjlim_{(m,p)=1} \Isocl((\ol{X},Z)^{1/m},M_{(\ol{X},Z)^{1/m}})$ 
also by $\cE$. \par 
We prove that the above $\cE$ is not contained in the essential image of 
\eqref{iimlog}. Assume the contrary. Then $\cE \in 
\Isocl((\ol{X},Z)^{1/m},M_{(\ol{X},Z)^{1/m}})$ comes from 
some object $\cF$ in $\Isocl(\ol{X},Z)$ for some $m$ dividing $n$. 
Then, by the commutative diagram 
{\small{\begin{equation*}
\begin{CD}
\Isocl(\ol{X},Z) @>>> \Isocl(U,M_{U}) @>{\subset}>>
\LNM_{\cU_K} \\ 
@V{\text{\eqref{iimlog}}}VV @. @V{\ti{\varphi}_K^*}VV \\ 
\Isocl((\ol{X},Z)^{1/m},M_{(\ol{X},Z)^{1/m}}) 
@>>> \Isocl([U^{(m)}/\mu_n], M_{[U^{(m)}/\mu_m]}) 
@>{\subset}>> \mu_n\text{-}\LNM_{\cU^{(m)}_K} 
\end{CD}
\end{equation*}}}
(where $\ti{\varphi}_K^*$ is the pull back by 
the morphism $\ti{\varphi}_K: \cU^{(m)}_K \lra \cU$ induced by 
$\ti{\varphi}$), there exists a locally free module of finite rank 
$F$ on $\cU_K$ and a $\mu_m$-equivariant isomorphism 
$\ti{\varphi}_K^*F \os{=}{\lra} t^{m/n}\cO_{\cU^{(m)}_K}$. 
(Here $t$ is the coordinate of $\cU^{(m)}_K$.) 
But this is impossible since $t^{m/n}\cO_{\cU^{(m)}_K}$ is not 
generated by $\mu_m$-invariant sections. So we have a contradiction and so 
\eqref{iimlog} is not essentially surjective in this case. Hence we 
have proved the assertion (2). \par 
Let us prove the assertion (3). As in (2), we see that 
$\varinjlim_{Y\to X \in \cG_X^t} \Isocl([\ol{Y}/G_Y], \allowbreak 
M_{[\ol{Y}/G_Y]})_{e_{\ol{Y}}\Sigma(\ss)} = 
\Isoc(\ol{X},Z)_{\Sigma(\ss)}.$ 
For a ring $A$, 
Let $\bX_A \hra \ol{\bX}_A \hookleftarrow \bZ_A$ be 
the diagram $\Af^1_A \hra \P^1_A \hookleftarrow \{\infty\}$ over $\Spec A$. 
When $A=O_K$, this is a lift of the diagram 
$X \hra \ol{X} \hookleftarrow Z$. We denote the $p$-adic completion of this 
diagram in the case $A=O_K$ by 
$\cX \hra \ol{\cX} \hookleftarrow \cZ$. Note that we have the 
canonical fully faithful functor 
\begin{equation}\label{diagsigma}
\Isocl(\ol{X},Z)_{\Sigma(\ss)} \hra \LNM_{(\ol{\cX},\cZ),\Sigma} 
\cong \LNM_{(\ol{\bX}_K,\bZ_K),\Sigma}, 
\end{equation}
where, for a field $A$, $\LNM_{(\ol{\bX}_A,\bZ_A),\Sigma}$ denotes 
the category whose object is a locally free module $E$ 
of finite rank over $\ol{\bX}_A$ endowed with an integrable log connection 
$\nabla: E \lra E \otimes_{\cO_{\ol{\bX}_A}} 
\Omega^1_{\ol{\bX}_A/A}(\log \bZ_A)$ 
relative to $A$ whose exponents along $\bZ_A$ are contained in $\Sigma$ 
in algebraic sense. (Note that 
the latter equivalence in \eqref{diagsigma} follows from GAGA theorem.) 
Note that, since $\Sigma$ is $\NRD$, we have $|\Sigma \cap \Z| \leq 1$.  
We prove the following claim: \\ 
\quad \\
{\bf claim.} \,\, When $\Sigma \cap \Z$ is empty, the catgory 
$\LNM_{(\ol{\bX}_K,\bZ_K),\Sigma}$ is empty. If 
$\Sigma \cap \Z = \{N\}$ is nonempty, any object in 
$\LNM_{(\ol{\bX}_K,\bZ_K),\Sigma}$ is a finite direct sum of the 
object $(\cO_{\ol{\bX}_{K}}(-N\bZ_{K}), d)$. \\
\quad \\
Let us take $(E,\nabla) \in \LNM_{(\ol{\bX}_K,\bZ_K),\Sigma}$. 
Then there exists 
a countable subfield $K_0 \subseteq K$ and $(E_0,\nabla_0) \in 
\LNM_{(\ol{\bX}_{K_0},\bZ_{K_0})}$ such that 
$(E,\nabla) = (E_0 \otimes_{K_0} K,\nabla_0 \otimes_{K_0} K)$. 
Then take an inclusion $K_0 \subseteq \C$. Then we obtain 
$(E_0 \otimes_{K_0} \C, \nabla_0 \otimes_{K_0} \C) \in 
\LNM_{(\ol{\bX}_{\C},\bZ_{\C}),\Sigma}$ such that the restriction of it to 
$\bX_{\C}^{\an} = \Af^{1,\an}_{\C}$ is trivial since 
$\pi_1(\Af^{1,\an}_{\C})$ is trivial. 
If $\Sigma \cap \Z$ is empty, there does not exist such 
$(E_0 \otimes_{K_0} \C, \nabla_0 \otimes_{K_0} \C)$ by monodromy reason. 
So the category $\LNM_{(\ol{\bX}_K,\bZ_K),\Sigma}$ is empty in this case. 
If $\Sigma \cap \Z = \{N\}$, such $(E_0 \otimes_{K_0} \C, 
\nabla_0 \otimes_{K_0} \C)$ is necessarily isomorphic to 
a finite direct sum of $(\cO_{\ol{\bX}_{\C}}(-N\bZ_{\C}), d)$ by 
\cite[II 5.4]{deligne}. 
So we have 
\begin{align*}
& \dim_{K_0} \Hom((\cO_{\ol{\bX}_{K_0}}(-N\bZ_{K_0}), d),(E_0,\nabla_0)) \\ 
= \, 
& \dim_{\C} \Hom((\cO_{\ol{\bX}_{\C}}(-N\bZ_{\C}), d),
(E_0 \otimes_{K_0} \C, 
\nabla_0 \otimes_{K_0} \C)) = \rk E_0
\end{align*} 
and hence $(E_0,\nabla_0)$ is isomorphic to 
a finite direct sum of $(\cO_{\ol{\bX}_{K_0}}(-N\bZ_{K_0}), d)$. 
Therefore $(E,\nabla)$ is isomorphic to 
a finite direct sum of $(\cO_{\ol{\bX}_{K}}(-N\bZ_{K}), d)$, as desired. 
So the claim is proved. \par  
By claim, 
$\Isoc(\ol{X},Z)_{\Sigma(\ss)}$ is empty if $\Sigma \cap \Z$ is empty. 
Let us consider the case $\Sigma \cap \Z = \{N\}$. In this case, 
the object $(\cO_{\ol{\bX}_{K}}(-N\bZ_{K}), d)$
comes from 
an object in $\Isocl(\ol{X},Z)_{\Sigma(\ss)}$ since the restriction of it 
to a strict neighborhood of $\cX_K$ in 
$\ol{\cX}_K$ comes from the structure overconvergent isocrystal on 
$(X,\ol{X})$. So, in this case, \eqref{diagsigma} induces the equivalence 
$$ 
\Isocl(\ol{X},Z)_{\Sigma(\ss)} \os{=}{\lra} \LNM_{(\ol{\cX},\cZ),\Sigma} 
\cong \LNM_{(\bX_K,\bZ_K),\Sigma} \os{=}{\lra} \Vect_K, 
$$ 
where the last functor is defined by 
$(E,\nabla) \mapsto \Hom((\cO_{\ol{\bX}_{K}}(-N\bZ_{K}), d),(E,\nabla))$. \par 
On the other hand, we will prove later (Proposition \ref{atode}) that 
the category 
$\varinjlim_{(n,p)=1} \Isocl((\ol{X},Z)^{1/n}, M_{(\ol{X},Z)^{1/n}})$ 
is empty when $\Sigma \cap \Z_{(p)}$ is empty and equivalent to 
$\Vect_K$ in compatible way as the 
above equivalence when $\Sigma \cap \Z_{(p)}$ 
consists of one element. (Note that we have $|\Sigma \cap \Z_{(p)}| \leq 1$ 
because $\Sigma$ is $\NRD$.) So, in the situation of (3), 
the functor \eqref{iimsigma} is an equivalence if and only if 
$\Sigma \cap \Z = \Sigma \cap \Z_{(p)}$. So we have finished the proof of the 
theorem modulo Proposition \ref{atode}. 
\end{proof}

\section{Parabolic log convergent isocrystals}

Let $X \hra \ol{X}$ be an open immersion of smooth $k$-varieties 
such that $Z :=\ol{X} \setminus X$ is a simple normal crossing divisor and let 
$Z=\bigcup_{i=1}^rZ_i$ be the decomposition of $Z$ into irreducible 
components. We regard $\{Z_i\}_{i=1}^r$ as the fixed decomposition 
of $Z$ in the sense of Definition \ref{defsigexpo2}. 
In this section, we introduce the category 
of (semisimply adjusted) parabolic (unit-root) log convergent 
($F$-)isocrystals 
and prove the equivalence \eqref{seq4}. In the course of the proof, 
we prove the equivalence of the variants of right hand sides of 
\eqref{seq3} and \eqref{seq4} without Frobenius structures, with log 
structures and with exponent conditions. \par 
Before the defintion of 
parabolic log convergent isocrystals on $(\ol{X},Z)$, first we prove 
the existence of certain objects in $\Isocl(\ol{X},Z)$. 

\begin{proposition}\label{twist}
Let $X,\ol{X},Z = \bigcup_{i=1}^rZ_i$ be as above. 
There exists a unique inductive system 
$(\cO(\sum_i\alpha_iZ_i))_{\alpha = (\alpha_i) \in \Z^{r}}$ of objects in 
$\Isocl(\ol{X},Z)$ $($we denote the transition map by 
$\iota^0_{\alpha\beta}: \cO(\sum_i\alpha_iZ_i) \lra \cO(\sum_i\beta_iZ_i)$ 
for $\alpha = (\alpha_i), \beta = (\beta_i) \in \Z^r$ with 
$\alpha_i \leq \beta_i \,(\forall i))$ satisfying the following 
conditions$:$ 
\begin{enumerate}
\item 
$\cO(\sum_i\alpha_iZ_i)$ has exponents in $\{-\alpha\} = 
\prod_{i=1}^r\{-\alpha_i\}$ with semisimple residues. 
\item 
The restriction 
$((j^{\dagger}\cO(\sum_{i}\alpha_iZ_i))_{\alpha}, 
(j^{\dagger}\iota^0_{\alpha\beta})_{\alpha,\beta})$ of 
$((\cO(\sum_{i}\alpha_iZ_i))_{\alpha}, 
(\iota^0_{\alpha\beta})_{\alpha,\beta})$ to an inductive system in 
$\Isocd(X,\ol{X})$ is equal to the constant object 
$((j^{\dagger}\cO),(\id))$, where $j^{\dagger}\cO$ denotes 
the structure overconvegent isocrystal on $(X,\ol{X})$. 
\end{enumerate}
Moreover, it has the following property. 
\begin{enumerate}
\item[$(3)$] 
For any open subscheme $\ol{U} \hra \ol{X}$ and a charted standard small frame 
$((U,\ol{U},\ol{\cX}, \allowbreak i,j),t_1,...,t_r)$ 
enclosing $(U:=X\cap \ol{U}, \ol{U})$ with 
$\cZ = \bigcup_{i=1}^r \cZ_i$ the lift of $Z \cap \ol{U}$, 
the inductive system of log-$\nabla$-module 
$(E_{\alpha},\nabla_{\alpha})_{\alpha}$ 
on 
$(\ol{\cX},\cZ)$ induced by $(\cO(\sum_i\alpha_iZ_i))_{\alpha}$ 
has the form 
\begin{equation}\label{conn}
(E_{\alpha},\nabla_{\alpha}) = 
(\cO_{\ol{\cX}_K}(\sum_i\alpha_i\cZ_{i,K}), d), 
\end{equation}
with $\iota^0_{\alpha\beta}$ equal to the canonical inclusion. 
\end{enumerate}
\end{proposition}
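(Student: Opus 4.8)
The plan is to construct the inductive system locally via the explicit formula \eqref{conn}, check that these local pieces glue, and then verify uniqueness using the full-faithfulness results of Section 1. First I would work with a fixed open affine $\ol{U}\hra\ol{X}$ admitting a charted standard small frame $((U,\ol{U},\ol{\cX}),t_1,\dots,t_r)$ with lift $\cZ=\bigcup_i\cZ_i$ of $Z\cap\ol{U}$. On such a frame, define $(E_\alpha,\nabla_\alpha):=(\cO_{\ol{\cX}_K}(\sum_i\alpha_i\cZ_{i,K}),d)$, where $d$ is the connection with $\nabla_\alpha(f)=df$ under the identification of $\cO_{\ol{\cX}_K}(\sum_i\alpha_i\cZ_{i,K})$ with the submodule $\prod_i t_i^{-\alpha_i}\cdot\cO_{\ol{\cX}_K}$ of the constant sheaf; equivalently, the generator $\prod_i t_i^{-\alpha_i}$ satisfies $\nabla_\alpha(\prod_i t_i^{-\alpha_i})=-(\sum_i\alpha_i\,\dlog t_i)\otimes\prod_i t_i^{-\alpha_i}$. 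This is manifestly a log-$\nabla$-module with residue along $\{t_i=0\}$ equal to the scalar $-\alpha_i$, so its exponents are $\{-\alpha\}$ and the residue is semisimple (it is already diagonal, killed by $x+\alpha_i\in\Z_p[x]$ which has no multiple roots). The transition maps $\iota^0_{\alpha\beta}$ for $\alpha_i\le\beta_i$ are the evident inclusions $\prod_i t_i^{-\alpha_i}\cO_{\ol{\cX}_K}\hra\prod_i t_i^{-\beta_i}\cO_{\ol{\cX}_K}$, which are horizontal.

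Next I would promote these log-$\nabla$-modules to actual objects of $\Isocl(\ol{U},Z\cap\ol{U})$. The key input is that the restriction of $(E_\alpha,\nabla_\alpha)$ to a strict neighborhood of $]U[_{\ol{\cX}}$ is (isomorphic to) the structure overconvergent isocrystal $j^\dagger\cO$ on $(U,\ol{U})$, because $\prod_i t_i^{-\alpha_i}$ is invertible there; this is exactly the mechanism used in \cite[6.4.1]{kedlayaI} and invoked elsewhere in the paper (e.g.\ the proof of Theorem \ref{curvelog}). Hence by Theorem \ref{sigmamain} — more precisely by the essential surjectivity of \eqref{sigmaeq2} applied with $\ol{\Sigma}$ the image of $\{-\alpha\}$, which is $\NLD$ since it is a single point — the log-$\nabla$-module $(E_\alpha,\nabla_\alpha)$, which has $\{-\alpha\}$-semisimple generic monodromy (indeed it is $\{-\alpha\}$-constant), comes from a (necessarily unique up to unique isomorphism) object $\cO(\sum_i\alpha_iZ_i)\in\Isocl(\ol{U},Z\cap\ol{U})'_{\{-\alpha\}\text{-}\mathrm{ss}}$ with $j^\dagger\cO(\sum_i\alpha_iZ_i)=j^\dagger\cO$. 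Uniqueness of this lift, together with the fact that $j^\dagger$ sends $\iota^0_{\alpha\beta}$ to the identity of $j^\dagger\cO$, forces $\iota^0_{\alpha\beta}$ and the cocycle/inductive-system structure to exist and be unique on each frame; this is where Proposition \ref{uniqueinj} is used decisively, since for $\alpha_i\le\beta_i$ we have $-\alpha_i\ge-\beta_i$, the hypothesis $\tau_{1j}(\xi)\ge\tau_{2j}(\xi)$ of that proposition, so $\Hom(\cO(\sum\alpha_iZ_i),\cO(\sum\beta_iZ_i))\os{=}{\lra}\Hom(j^\dagger\cO,j^\dagger\cO)=\Gamma(\cO_X)$ and we read off the canonical map.

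Then I would globalize. Cover $\ol{X}$ by affine opens $\ol{U}^{(a)}$ each admitting a charted standard small frame. On each overlap $\ol{U}^{(a)}\cap\ol{U}^{(b)}$ the two locally constructed objects $\cO(\sum_i\alpha_iZ_i)|_{(a)}$ and $\cO(\sum_i\alpha_iZ_i)|_{(b)}$ both lift $j^\dagger\cO$ and lie in $\Isocl'_{\{-\alpha\}\text{-}\mathrm{ss}}$; by the full faithfulness half of Theorem \ref{sigmamain} (equivalently Proposition \ref{uniqueinj} with $\tau_1=\tau_2$) there is a unique isomorphism between them compatible with $j^\dagger$, and uniqueness gives the cocycle condition on triple overlaps automatically. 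Hence the local objects glue to a global $\cO(\sum_i\alpha_iZ_i)\in\Isocl(\ol{X},Z)$, the transition maps glue, properties (1) and (3) hold by construction and Lemma \ref{wdlemma} (which lets one check the exponent/residue condition on any one frame), and (2) holds because $j^\dagger$ applied to the glued data is, frame by frame, the constant system $((j^\dagger\cO),(\id))$. For global uniqueness of the whole inductive system: any other system $(\cO'(\sum\alpha_iZ_i),\iota'^0)$ satisfying (1) and (2) restricts, on each frame, to a log-$\nabla$-module lifting $j^\dagger\cO$ with $\{-\alpha\}$-semisimple generic monodromy, hence is canonically isomorphic to $(E_\alpha,\nabla_\alpha)$ by Theorem \ref{sigmamain}; and the isomorphisms are forced to be compatible with the $\iota^0$'s and with the gluing data by Proposition \ref{uniqueinj}, yielding a unique isomorphism of inductive systems. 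The main obstacle is purely bookkeeping: ensuring the uniqueness statements from Section 1 are strong enough to force \emph{all} the structure maps (transition maps, gluing isomorphisms, cocycles) simultaneously; but since each relevant $\Hom$-group is pinned down to $\Gamma(\cO_X)$ or to a one-dimensional space by Proposition \ref{uniqueinj}, no genuine choice is ever available, so the gluing and uniqueness go through formally once the local model \eqref{conn} is in hand.
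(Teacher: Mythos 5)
Your proposal is correct in substance and rests on exactly the same two pillars as the paper — the equivalence of Theorem \ref{sigmamain} for the exponent set $\tau_{\alpha}(\0)=\{-\alpha\}$ (legitimate since a one-point set is $\NID$ and $\NLD$) and Proposition \ref{uniqueinj} to pin down all morphisms over $j^{\dagger}$ — but it organizes them in the opposite order. The paper never glues: it defines $\cO(\sum_i\alpha_iZ_i)$ \emph{globally} as the unique object of $\Isocl(\ol{X},Z)_{\{-\alpha\}\text{-}{\rm ss}}$ with $j^{\dagger}$-restriction $j^{\dagger}\cO$ (essential surjectivity of \eqref{sigmaeq2} applied to $j^{\dagger}\cO$ on all of $(X,\ol{X})$), obtains the transition maps from Proposition \ref{uniqueinj}, and only afterwards verifies (3) by showing that the explicit local system \eqref{conn} is itself an inductive system of objects of $\Isocl(\ol{U},Z\cap\ol{U})$ satisfying (1) and (2) over $(U,\ol{U})$ — the convergence coming from \cite[6.4.1]{kedlayaI} since \eqref{conn} restricts to the trivial $\nabla$-module on a strict neighborhood — and then invoking the uniqueness statement over $(U,\ol{U})$. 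Your local-model-plus-gluing route buys nothing here and costs you two loose ends: first, the promotion of \eqref{conn} to a log convergent isocrystal is not a consequence of the essential surjectivity of \eqref{sigmaeq2} applied to a log-$\nabla$-module (that equivalence takes overconvergent isocrystals as input); what you actually need is the overconvergence/log-extendability mechanism of \cite[6.4.1]{kedlayaI}, which you do cite, or else you should apply essential surjectivity to $j^{\dagger}\cO$ over $\ol{U}$ and then identify the resulting realization — which again is the uniqueness argument. Second, "(1) and (3) hold by construction" is only true for the frames in your chosen cover; for an \emph{arbitrary} open $\ol{U}$ and frame, you must repeat the comparison (the explicit system \eqref{conn} satisfies the characterizing properties over $(U,\ol{U})$, hence coincides with the restriction of your glued object by uniqueness), which is precisely the paper's step (3); Lemma \ref{wdlemma} alone only controls exponents, not the shape of the realization. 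Both points are closed by machinery you already have in hand (plus Zariski descent for $\Isocl$, which you use implicitly when gluing), so there is no genuine gap, only extra bookkeeping that the paper's direct global construction avoids.
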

 
\begin{proof}
Let $\tau': (\Z_p/\Z) \setminus \{\ol{0}\} \lra \Z_p$ 
(where $\ol{0}$ is the class of $0$ in $\Z_p/\Z$) 
be any section of 
the projection $\Z_p \lra \Z_p/\Z$, $\tau_n: \Z_p/\Z \lra \Z_p$ be 
the section of the projection extending $\tau'$ with $\tau(\ol{0})=-n$ and 
for $\alpha \in \Z^r$, let us put 
$\tau_{\alpha}:=\prod_{i=1}^r \tau_{\alpha_i}$. 
Then we have the equivalence of categories 
\begin{equation}\label{eq--}
j^{\dagger}: \Isocl(\ol{X},Z)_{\tau_{\alpha}(\0)\text{-}{\rm ss}} 
\os{=}{\lra} 
\Isocd(X,\ol{X})_{\0\text{-}{\rm ss}} 
\end{equation}
by Theorem \ref{sigmamain}. 
So there exists a unique object $\cO(\sum_i\alpha_iZ_i)$ in 
$\Isocl(\ol{X},Z)_{\tau_{\alpha}(\0)\text{-}{\rm ss}} \allowbreak = 
\Isocl(\ol{X},Z)_{\{-\alpha\}\ss}$ with 
$j^{\dagger}(\cO(\sum_i\alpha_iZ_i)) = j^{\dagger}\cO$. 
Moreover, by Proposition \ref{uniqueinj}, we have a unique morphism 
$\iota^0_{\alpha\beta}: \cO(\sum_i\alpha_iZ_i) \lra 
\cO(\sum_i\beta_iZ_i)$ for any 
$\alpha = (\alpha_i), \beta = (\beta_i) \in \Z^r$ with 
$\alpha_i \leq \beta_i \,(\forall i)$ satisfying 
$j^{\dagger}\iota^0_{\alpha\beta} = \id_{j^{\dagger}\cO}$. 
Then the resulting inductive system 
$(\cO(\sum_i\alpha_iZ_i))_{\alpha}$ satisfies 
the conditions (1), (2). The uniqueness is also clear from 
the construction. \par 
Let us prove that 
$(\cO(\sum_i\alpha_iZ_i))_{\alpha}$ satisfies the condition (3). 
In the situation of (3), 
the equivalence \eqref{eq--} holds also for $(U,\ol{U})$. 
Also, note that 
the log-$\nabla$-module \eqref{conn} on $(\ol{\cX}_K,\cZ_K)$ 
is 
restricted to the trivial $\nabla$-module on a strict neighborhood of 
$(\ol{\cX} \setminus \cZ)_K$ in $\ol{\cX}_K$. Hence it defines an object in 
$\Isocl(\ol{U},Z \cap \ol{U})$ by \cite[6.4.1]{kedlayaI}
and it is easy to see that it has 
exponents in $\tau_{\alpha}(\0) = \{-\alpha\}$ 
with semisimple residues (with respect to the decomposition 
$\{Z_i \cap \ol{U}\}_i$ of $Z \cap \ol{U}$). 
Hence \eqref{conn} defines an object in 
$\Isocl(U,\ol{U})_{\tau_{\alpha}(\0)\text{-}{\rm ss}}$. 
Moreover, the transition maps defined in (3) give morphisms in 
$\Isocl(U,\ol{U})$ which reduce to the identity in $\Isocd(U,\ol{U})$. 
So, by the uniqueness in the construction of 
$(\cO(\sum_i\alpha_iZ_i))_{\alpha}$
(this remains true even when we replace $(X,\ol{X})$ by $(U,\ol{U})$), 
we see that the inductive system of log-$\nabla$-modules given in (3) 
is induced from $(\cO(\sum_i\alpha_iZ_i))_{\alpha}$. 
So the property (3) is also proved. 
\end{proof}

For $\cE \in \Isocl(\ol{X},Z)$ and $\alpha = (\alpha_i)_i \in \Z^r$, 
we define $\cE(\sum_i \alpha_iZ_i)$ by 
$\cE(\sum_i \alpha_iZ_i) := \cE \otimes \cO(\sum_i\alpha_iZ_i)$. 
We define the notion of parabolic log convergent isocrystals on 
$(\ol{X},Z)$ as follows: 

\begin{definition}\label{defpar}
Let $X \hra \ol{X}, Z=\bigcup_{i=1}^rZ_i$ be as above. Then a parabolic 
log convergent isocrystal on $(\ol{X},Z)$ is an inductive system 
$(\cE_{\alpha})_{\alpha \in \Z_{(p)}^r}$ of objects in 
$\Isocl(\ol{X},Z)$ $($we denote the transition map by 
$\iota_{\alpha\beta}: \cE_{\alpha} \lra \cE_{\beta}$ for 
$\alpha = (\alpha_i), \beta = (\beta_i) \in \Z_{(p)}^r$ with 
$\alpha_i \leq \beta_i \,(\forall i))$ satisfying the following 
conditions$:$ 
\begin{enumerate}
\item 
For any $1 \leq i \leq r$, there is an isomorphism as inductive systems 
$$ 
((\cE_{\alpha+e_i})_{\alpha}, (\iota_{\alpha+e_i,\beta+e_i})_{\alpha,\beta}) 
\cong 
((\cE_{\alpha}(Z_i))_{\alpha}, 
(\iota_{\alpha\beta} \otimes \id)_{\alpha,\beta}) $$
via which the morphism 
$(\iota_{\alpha,\alpha+e_i})_{\alpha} : 
(\cE_{\alpha})_{\alpha} \lra (\cE_{\alpha+e_i})_{\alpha}$ is identified 
with the morphism 
$(\id \otimes \iota_{\alpha,\alpha+e_i}^0)_{\alpha} : 
(\cE_{\alpha})_{\alpha} \lra (\cE_{\alpha}(Z_i))_{\alpha}$. 
\item 
There exists a positive integer $n$ prime to $p$ satisfying the following 
condition$:$ For any $\alpha = (\alpha_i)_i$, $\iota_{\alpha'\alpha}$ is 
an isomorphism if we put $\alpha' = ([n\alpha_i]/n)_i$. 
\end{enumerate}
We denote by $\PIsoc(\ol{X},Z)$ the category of parabolic 
log convergent isocrystals on $(\ol{X},Z)$. 
\end{definition}

For a parabolic log convergent isocrystal 
$(\cE_{\alpha})_{\alpha}$ on $(\ol{X},Z)$, the transition map 
$\iota_{\alpha\beta}: \cE_{\alpha} \lra \cE_{\beta}$ is always 
injective, because we have the diagram 
$$ \cE_{\alpha} \os{\iota_{\alpha\beta}}{\lra} \cE_{\beta} 
\os{\iota_{\beta,\alpha+N}}{\lra} \cE_{\alpha +N} \cong \cE(\sum_iN_iZ_i) $$ 
for some $N = (N_i)_i \in \Z^r$ and the composite is injective. 
Then, applying $j^{\dagger}$ to the above diagram and noting that 
$j^{\dagger}\cE_{\alpha} \lra j^{\dagger}\cE(\sum_iN_iZ_i)$ is an isomorphism, 
we see that the inductive system $(j^{\dagger}\cE_{\alpha})_{\alpha}$ of 
objects in $\Isocd(X,\ol{X})$ is constant, that is, 
$j^{\dagger}\cE_{\alpha} \in \Isocd(X,\ol{X})$ is independent of 
$\alpha$. \par 
Next we give a definition of 
parabolic (unit-root) log convergent $F$-isocrystals. 

\begin{definition}\label{deff}
A parabolic log convergent $F$-isocrystal on $(\ol{X},Z)$ is a parabolic 
log convergent isocrystal $(\cE_{\alpha})_{\alpha\in\Z_{(p)}^r}$ 
endowed with an isomorphism $\Psi: \varinjlim 
(F^*\cE_{\alpha})_{\alpha\in\Z_{(p)}^r} \allowbreak \os{=}{\lra} \varinjlim 
(\cE_{\alpha})_{\alpha\in\Z_{(p)}^r}$ as ind-objects. It is called 
unit-root if the object $(j^{\dagger}\cE_{\alpha},j^{\dagger}\Psi)$ 
in $\FIsocd \allowbreak (X,\ol{X})$ 
induced by $((\cE_{\alpha})_{\alpha},\Psi)$ is unit-root. 
A morphism $f: ((\cE_{\alpha})_{\alpha},\Psi) \lra 
((\cE'_{\alpha})_{\alpha},\Psi')$ between 
parabolic log convergent $F$-isocrystals is defined to be a 
map of inductive system of log convergent isocrystals 
$(f_{\alpha})_{\alpha}: 
(\cE_{\alpha})_{\alpha} \lra (\cE'_{\alpha})_{\alpha}$ such that 
$(\varinjlim_{\alpha} f_{\alpha}) \circ \Psi = 
\Psi' \circ (\varinjlim_{\alpha} f_{\alpha})$ as morphism of 
ind-objects. 
\end{definition}

We define the notion of `(semisimple) $\Sigma$-adjustedness' for 
a parabolic log convergent isocrystals as follows: 

\begin{definition}\label{defadj}
Let $\Sigma = \prod_{i=1}^r \Sigma_i$ be a subset of $\Z_p^r$ which is 
$\NRD$ and $\SNLD$. Then a parabolic log convergent isocrystal 
$\cE := (\cE_{\alpha})_{\alpha}$ is called $\Sigma$-adjusted 
$($resp. semisimply $\Sigma$-adjusted$)$ if, for any $\alpha = (\alpha_i)_i 
\in \Z_{(p)}^r$, $\cE_{\alpha}$ has exponents in 
$\prod_{i=1}^r (\Sigma_i + ([-\alpha_i, -\alpha_i+1)\cap\Z_{(p)}))$. 
$($resp. $\cE_{\alpha}$ has exponents in 
$\prod_{i=1}^r (\Sigma_i + ([-\alpha_i, -\alpha_i+1)\cap\Z_{(p)}))$ 
with semisimple residues.$)$ When $\Sigma = \0$, we call it 
simply by adjusted $($resp. semisimply adjusted$)$. 
A parabolic log convergent $F$-isocrystal $((\cE_{\alpha})_{\alpha},\Psi)$ 
is called adjusted 
$($resp. semisimply adjusted$)$ 
if so is $(\cE_{\alpha})_{\alpha}$. \par 
We denote the category of $\Sigma$-adjusted $($resp. semisimply 
$\Sigma$-adjusted$)$ parabolic log convergent isocrystals on 
$(\ol{X},Z)$ by 
$\PIsoc(\ol{X},Z)_{\Sigma}$ $($resp. $\PIsoc(\ol{X},Z)_{\Sigmass})$.  
Also, we denote the category of adjusted $($resp. semisimply adjusted$)$ 
parabolic log convergent $F$-isocrystals on $(\ol{X},Z)$ by 
$\PFIsoc(\ol{X},Z)_{\0}$ $($resp. $\PFIsoc(\ol{X}, \allowbreak 
Z)_{\0\ss})$ and 
the category of semisimply adjusted parabolic unit-root log convergent 
$F$-isocrystals on $(\ol{X},Z)$ by 
$\PFIsoc(\ol{X},Z)^{\circ}_{\0\ss}$. 
\end{definition}

\begin{remark}
Let $X \hra \ol{X}, Z= \bigcup_{i=1}^rZ_i$ be as above. Let $Z_{\sing}$ be
 the set of singular points of $Z$ and assume given a subset 
$\Sigma = \prod_{i=1}^r \Sigma_i$ of $\Z_p^r$ which is 
$\NRD$ and $\SNLD$. In this remark, we prove that an object 
$\cE := ((\cE_{\alpha})_{\alpha}, (\iota_{\alpha\beta})_{\alpha,\beta})$ 
in $\PIsoc(\ol{X},Z)$ is contained in 
$\PIsoc(\ol{X},Z)_{\Sigma\ss}$ if and only if it satisfies 
the following condition 
$(**)$: \\
\quad \\
$(**)$ \,\, For any $1 \leq i \leq r$, for any 
open subscheme $\ol{U} \subseteq \ol{X} \setminus Z_{\sing}$ 
containing the generic point of $Z_i$ and any 
charted smooth standard small frame with generic point 
$((U,\ol{U},\ol{\cX}),t,L)$ enclosing $(U,\ol{U})$ (where 
$U := X \cap \ol{U}$), the inductive system of log-$\nabla$-modules 
$(E_{\cE,L,\alpha}, \nabla_{\cE,L,\alpha})$ on $A^1_L[0,1)$ 
induced by $\cE$ has the form 
\begin{equation}\label{un}
(E_{\cE,L,\alpha},\nabla_{\cE,L,\alpha}) = 
\bigoplus_{j=1}^{\mu} (\cO_{A_L^1[0,1)}, 
d + (\gamma_j + \lfloor b_j \rfloor_{-\alpha_i})\dlog t)
\end{equation}
for some $\mu \in \N, \gamma_j \in \Sigma_i, b_j \in [0,1) \cap \Z_{(p)} \, 
(1 \leq j \leq \mu)$ with $\iota_{\alpha\beta}$ equal to the 
multiplication by 
$t^{\lfloor b_j \rfloor_{-\alpha_i}-\lfloor b_j \rfloor_{-\beta_i}}$, 
where, for $a,b \in \Z_{(p)}$, $\lfloor b \rfloor_a$ denotes the 
unique element in $[a,a+1) \cap (b+\Z)$. \\
\quad \\
The `if' part is easy because, if we have the equality \eqref{un}, 
$\cE_{\alpha}$ has exponents in $\prod_{i=1}^r(\Sigma_i+([-\alpha_i,-\alpha_i 
+1) \cap \Z_{(p)}))$ with semisimple residues 
by Lemma \ref{wdlemma}(2). Let us prove the `only if' 
part. By the argument in the proof of Theorem \ref{sigmamain}, 
we see that $(E_{\cE,L,0},\nabla_{\cE,L,0})$ is 
$(\Sigma_i + ([0,1)\cap \Z_{(p)}))$-semisimple. So there exists 
some $\mu \in \N, \gamma_j \in \Sigma_i, b_j \in [0,1) \cap \Z_{(p)} \, 
(1 \leq j \leq \mu)$ such that the equality \eqref{un} holds for 
$\alpha=0$. Let us note now that the inductive system 
$(\bigoplus_{j=1}^{\mu} (\cO_{A_L^1[0,1)}, 
d + (\gamma_j + \lfloor b_j \rfloor_{-\alpha_i})\dlog t))_{\alpha}$ 
above has the following properties: 
\begin{enumerate}
\item 
$
\bigoplus_{j=1}^{\mu} (\cO_{A_L^1[0,1)}, 
d + (\gamma_j + \lfloor b_j \rfloor_{-\alpha_i})\dlog t)
$ is $(\Sigma_i + ([-\alpha_i,-\alpha_i+1)\cap \Z_{(p)}))$-semisimple. 
\item 
Transition maps are isomorphism on $A_L^1[\lambda,1)$ for some $\lambda 
\in (0,1) \cap \Gamma^*$. 
\end{enumerate}
Note also that the inductive system 
$(E_{\cE,L,\alpha},\nabla_{\cE,L,\alpha})_{\alpha}$ also has the 
same properties. So, by Proposition \ref{uniqueinj0}, 
the equality \eqref{un} for $\alpha =0$ extends to the isomorphism 
\eqref{un} as inductive systems. So we have proved the desired claim. 
\end{remark}

Note that, by the argument after Definition \ref{defpar}, we have the 
functor
\begin{equation}\label{par-to-d}
\PIsoc(\ol{X},Z)_{\Sigma(\ss)} 
\os{\subset}{\lra} \PIsoc(\ol{X},Z) \lra \Isocd(X,\ol{X})
\end{equation}
defined by $(\cE_{\alpha})_{\alpha} \mapsto j^{\dagger}\cE_{\alpha}$. 
Then the key result in this section is given as follows: 

\begin{theorem}\label{mainpar}
Let $X,\ol{X},Z= \bigcup_{i=1}^r Z_i$ be as above and let $\Sigma = 
\prod_{i=1}^r \Sigma_i$ be a subset of $\Z_p^r$ which is 
$\NRD$ and $\SNLD$. Then there exists 
the canonical equivalence of categories 
\begin{equation}\label{root-to-par}
\varinjlim_{(n,p)=1} \Isocl((\ol{X},Z)^{1/n}, 
M_{(\ol{X},Z)^{1/n}})_{n\Sigma(\ss)} \os{=}{\lra} 
\PIsoc(\ol{X},Z)_{\Sigma(\ss)}  
\end{equation}
which makes the following diagram commutative$:$
\begin{equation}\label{commutat}
\begin{CD}
\varinjlim_{(n,p)=1} \Isocl((\ol{X},Z)^{1/n},
 M_{(\ol{X},Z)^{1/n}})_{n\Sigma(\ss)} 
@>{\text{\eqref{root-to-par}}}>> 
\PIsoc(\ol{X},Z)_{\Sigma(\ss)} \\
@V{\text{\eqref{tmt10log}}}VV @V{\text{\eqref{par-to-d}}}VV \\
\Isocd(X,\ol{X}) @= \Isocd(X,\ol{X}). 
\end{CD}
\end{equation}
\end{theorem}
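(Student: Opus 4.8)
The plan is to construct the functor \eqref{root-to-par} via the local description of both sides and the dictionary between stacks of roots and log-$\nabla$-modules with fractional exponents. Recall that, Zariski-locally on $\ol{X}$, the category on the left is described (via the bisimplicial resolution of Section 2.3, the equivalence \eqref{bisimpeq1sigma}, and the descent result Corollary \ref{starlogcor}) in terms of log-$\nabla$-modules on the scheme $\ol{X}'$ with $s_i^n = t_i$, equipped with a $\mu_n^r$-equivariant structure and with exponents in $n\Sigma$ (with semisimple residues) along the $\{s_i=0\}$. The idea is that taking $\mu_n^r$-invariants of an appropriate twist produces, for each $\alpha\in\frac1n\Z^r$, a log-$\nabla$-module on $(\ol{X},Z)$ whose exponents lie in $\prod_i(\Sigma_i+([-\alpha_i,-\alpha_i+1)\cap\Z_{(p)}))$; concretely, the eigenspace of $\mu_n^r$ on which $\zeta$ acts by $\zeta^{-n\alpha}$ (times integer shifts) gives $\cE_\alpha$, and the natural inclusions between these eigenspaces as $\alpha$ increases, given by multiplication by appropriate powers of the $s_i$, furnish the transition maps $\iota_{\alpha\beta}$. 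I would first carry this out explicitly on a charted smooth standard small frame with generic point, using the rank-one computation of Proposition \ref{twist} and the remark following Definition \ref{defadj} (the condition $(**)$) to identify the resulting inductive system as an object of $\PIsoc(\ol{X},Z)_{\Sigma(\ss)}$: the $\SNLD$/$\NRD$ hypothesis guarantees, via Lemma \ref{lem1.4}, that $n\Sigma$ is $\NID$ and $\NLD$, so Theorem \ref{sigmamain} and Proposition \ref{uniqueinj} apply and the local data glue uniquely.

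Next I would check that the construction is independent of the chart and of $n$ (using the functoriality morphisms \eqref{traol} and Proposition \ref{sig_pre2}, exactly as in the construction of the limit $\varinjlim_{(n,p)=1}\Isocl((\ol X,Z)^{1/n},M_{(\ol X,Z)^{1/n}})_{n\Sigma(\ss)}$ at the end of Section 2.3), so that we obtain a well-defined functor out of the inductive limit. Compatibility with $j^\dagger$ — that is, the commutativity of \eqref{commutat} — should be immediate from the construction, since on each eigenspace the restriction to a strict neighbourhood of $]X[$ recovers, by Proposition \ref{twist}(2) and the definition of \eqref{tmt10log} via \eqref{composlog}, the same overconvergent isocrystal independently of $\alpha$; I would verify this locally where everything is written out in terms of log-$\nabla$-modules.

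For essential surjectivity I would go in the reverse direction: given $(\cE_\alpha)_\alpha\in\PIsoc(\ol X,Z)_{\Sigma(\ss)}$ with associated integer $n$ prime to $p$ as in Definition \ref{defpar}(2), I would descend the $\cE_\alpha$ for $\alpha\in\frac1n\Z^r\cap[0,1)^r$ along $\ol{X}'\to(\ol X,Z)^{1/n}$ by pulling back and assembling the $\mu_n^r$-eigenspaces into a single $\mu_n^r$-equivariant log-$\nabla$-module on $\ol{X}'$; condition $(**)$ together with Definition \ref{defadj} ensures its exponents along $\{s_i=0\}$ lie in $n\Sigma$ (with semisimple residues), so it defines an object of $\Isocl((\ol X,Z)^{1/n},M_{(\ol X,Z)^{1/n}})_{n\Sigma(\ss)}$ by \eqref{bisimpeq1sigma} and Corollary \ref{starlogcor}. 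Full faithfulness then reduces, via $j^\dagger$ and the commutative square \eqref{commutat}, to Proposition \ref{uniqueinj} (which controls $\Hom$'s of log convergent isocrystals with prescribed exponents by their $j^\dagger$'s) applied eigenspace by eigenspace, using that the transition maps are injective.

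\textbf{Main obstacle.} The delicate point will be the bookkeeping that matches the $\mu_n^r$-weight decomposition on $\ol{X}'$ with the parabolic index $\alpha$ and the floor-function normalization $\lfloor b\rfloor_{-\alpha_i}$ appearing in condition $(**)$: one must check that the eigenspace inclusions (multiplication by powers of $s_i$) are exactly the transition maps $\iota_{\alpha\beta}$ of a parabolic structure, that periodicity in $\alpha$ (Definition \ref{defpar}(1), i.e. $\cE_{\alpha+e_i}\cong\cE_\alpha(Z_i)$) holds with the correct identification via the objects $\cO(\sum_i\alpha_iZ_i)$ of Proposition \ref{twist}, and that the semisimplicity of residues is preserved under taking invariants and under the bisimplicial descent. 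All of this is local and rank-one in nature once the frames are fixed, but getting the indices consistent — and checking independence of the auxiliary $n$ when a given parabolic sheaf admits several admissible choices — is where the real work lies. The rest is an application of Theorem \ref{sigmamain}, Lemma \ref{lem1.4}, Corollary \ref{starlogcor} and Proposition \ref{uniqueinj}.
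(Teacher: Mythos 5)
Your route is the Iyer--Simpson/Borne-style direct construction (invariants of twists on the Kummer cover in one direction, eigenspace assembly in the other), which the paper itself uses only later, for $F$-lattices and vector bundles (Theorem \ref{flattthm}), where pushforward, invariants and coends are available because one works with coherent sheaves on formal schemes. In the category $\Isocl$ of \emph{locally free log convergent isocrystals} these operations are not available off the shelf: no pushforward along the ramified Kummer cover $(\ol{X}',\bigcup_i\{s_i=0\})\to(\ol{X},Z)$ is constructed at the isocrystal level, and a coend (a colimit) of locally free isocrystals need not be a locally free isocrystal. For the forward direction this is a technical but fillable point (locally on charted frames you can take $\psi_{n,*}$ and invariants of log-$\nabla$-modules, and then argue convergence from the overconvergence of the restriction via \eqref{composlog} and \cite[6.4.1]{kedlayaI}). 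For essential surjectivity, however, there is a genuine gap: an object of $\PIsoc(\ol{X},Z)_{\Sigma(\ss)}$ carries no locally abelian structure (contrast Definition \ref{defla}); the condition $(**)$ splits it only on the generic disc $A^1_{L}[0,1)$, not Zariski-locally on $\ol{X}$. So "assembling the $\mu_n^r$-eigenspaces into a single equivariant log-$\nabla$-module on $\ol{X}'$" is not actually defined: the naive direct sum of the pulled-back twisted pieces over $\alpha\in(\tfrac1n\Z\cap[0,1))^r$ has rank $n^r$ times too large, and the correct object would be a coend, i.e.\ a colimit you cannot take inside $\Isocl$.

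The repair is to exploit that all the $\cE_\alpha$ have the same restriction $j^{\dagger}\cE_\alpha$, an overconvergent isocrystal whose generic monodromy is $\ol{\Sigma}$-unipotent (resp.\ semisimple) by adjustedness; your $\psi_{n,*}\psi_n^*$-type eigenvalue computation then shows its pullback to the Kummer cover has $n\Sigma$-unipotent (semisimple) generic monodromy, and Theorem \ref{sigmamain} (applicable by Lemma \ref{lem1.4}) produces the object on the root stack as the canonical log extension, glued by uniqueness. But once you do this you are no longer constructing \eqref{root-to-par} directly: you are proving that both \eqref{tmt10log} and \eqref{par-to-d} restrict to equivalences onto the common category $\Isocd(X,\ol{X})_{\ol{\Sigma}(\ss)}$ and composing one with the inverse of the other --- which is precisely the paper's proof, and which makes \eqref{commutat} commute by construction rather than after a separate local verification. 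So the bookkeeping you single out as the main obstacle is indeed the essential computation, but it must be deployed as a statement about the generic monodromy of the common overconvergent restriction (together with Proposition \ref{uniqueinj} for the transition maps and full faithfulness), not as a global equivariant-sheaf construction carried out inside the isocrystal category.
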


\begin{proof}
First we prove that the functor \eqref{tmt10log} induces the 
equivalence of categories 
\begin{equation}\label{step1}
\varinjlim_{(n,p)=1} \Isocl((\ol{X},Z)^{1/n}, 
M_{(\ol{X},Z)^{1/n}})_{n\Sigma(\ss)} \os{=}{\lra} 
\Isocd(X,\ol{X})_{\ol{\Sigma}(\ss)},  
\end{equation}
where $\ol{\Sigma} := \im (\Sigma + \Z_{(p)}^r \hra \Z_p^r \lra 
\Z_p^r/\Z^r)$. 
To see this, it suffices to prove that the functor \eqref{composlog}
induces the equivalence 
\begin{equation}\label{step11}
\Isocl((\ol{X},Z)^{1/n},M_{(\ol{X},Z)^{1/n}})_{n\Sigma(\ss)} 
\os{=}{\lra} 
\Isocd(X,\ol{X})_{\ol{\Sigma}_n(\ss)},  
\end{equation}
where $\ol{\Sigma}_n := \im (\Sigma + (\frac{1}{n}\Z)^r \hra \Z_p^r \lra 
\Z_p^r/\Z^r)$. 
We have already seen in Remark \ref{iiylogrem} 
that the functor 
\begin{equation}\label{kyo}
\Isocl((\ol{X},Z)^{1/n},M_{(\ol{X},Z)^{1/n}})_{n\Sigma(\ss)} \lra 
\Isocd(X,\ol{X}) 
\end{equation} 
defined in \eqref{composlog} 
is fully faithful. So 
it suffices to prove that essential image of this functor is equal to 
$\Isocd(X,\ol{X})_{\ol{\Sigma}_n(\ss)}$. 
Let $\cE$ be an object in the essential image of \eqref{kyo}.
To see that it is in 
$\Isocd(X,\ol{X})_{\ol{\Sigma}_n(\ss)}$, 
it suffices to check it locally around generic points of $Z$. 
So we may assume that $\ol{X} = \Spec \ol{R}$ is affine, 
$Z$ is smooth and defined as the zero locus 
of some element $\ol{t} \in \ol{R}$. Let $M_{\ol{X}}$ be the log structure on 
$\ol{X}$ associated to $Z$, let $(\ol{X}_{0},M_{\ol{X}_0})$ be the log 
scheme defined by $\ol{X}_0 := \Spec \ol{R}[\ol{s}]/(\ol{s}^n-\ol{t})$, 
$M_{\ol{X}_0} := \text{log structure associated to $\{\ol{s}=0\}$}$ and 
let $(\ol{X}_{m},M_{\ol{X}_m}) \,(m=0,1,2)$ be the $(m+1)$-fold 
fiber product of $(\ol{X}_{0},M_{\ol{X}_0})$ over $(\ol{X},M_{\ol{X}})$. 
Also, let us put $X_{\b} := X \times_{\ol{X}} \ol{X}_{\b}$. 
Then the functor $\text{\eqref{kyo}} = \text{\eqref{composlog}}$ 
in this case is equal to the composite 
\begin{align*}
\Isocl((\ol{X},Z)^{1/n},M_{(\ol{X},Z)^{1/n}})_{n\Sigma(\ss)} 
& \os{=}{\lra} \Isocl(\ol{X}_{\b},M_{\ol{X}_{\b}})_{n\Sigma(\ss)} \\ 
& \lra \Isocd(X_{\b},\ol{X}_{\b}) \os{=}{\lla} \Isocd(X,\ol{X}). 
\end{align*}
Hence the restriction of $\cE$ to $\Isocd(X_0,\ol{X}_0)$ extends to 
an object in the category 
$\Isocl(\ol{X}_0,M_{\ol{X}_0})_{n\Sigma(\ss)}$. Now let us take a 
charted smooth standard small frame with genetic point 
$(\bX,t,L) := ((X,\ol{X},\ol{\cX}), t, L)$ enclosing $(X,\ol{X})$ 
such that $t$ is a lift of $\ol{t}$. 
Then we have a 
charted smooth standard small frame with genetic point of the form 
$(\bX_0,s,L) := ((X_0,\ol{X}_0,\ol{\cX}_0), s, L)$ and a morphism 
$\psi: (\bX_0,s,L) \lra (\bX,t,L)$ with $\psi^*(t) = s^n$ lifting the morphism 
$\ol{X}_0 \lra \ol{X}$, by Lemma \ref{lifting} and Remark \ref{liftingrem}. 
It 
naturally induces the map $\psi_n: A_L^1[0,1) \lra A_L^1[0,1)$ 
between `discs at generic points' defined by 
$t \mapsto t^n$. Let $(E,\nabla)$ be the $\nabla$-module on 
$A_L^1[\lambda,1)$ (for some $\lambda \in (0,1)\cap \Gamma^*$) which 
is induced by $\cE$. Then, since 
the restriction of $\cE$ to $\Isocd(X_0,\ol{X}_0)$ extends to 
an object (which we denote by $\ti{\cE}$) 
in $\Isocl(\ol{X}_0,M_{\ol{X}_0})_{n\Sigma(\ss)}$, 
we see that $\psi_n^*(E,\nabla)$ 
extends to a log-$\nabla$-module on $A_L^1[0,1)$ which is induced by 
$\ti{\cE}$. Then, by Theorem \ref{sigmamain}, $\psi_n^*(E,\nabla)$ is 
$n\Sigma$-unipotent ($n\Sigma$-semisimple). 
Then $\psi_{n,*}\psi_n^*(E,\nabla)$ 
is written as a successive extension by the objects of the form 
(as a direct sum of the objects of the form)
$$ 
\psi_{n,*}(M_{n\xi},\nabla_{M_{n\xi}}) \cong 
\bigoplus_{i=0}^{n-1}(M_{\xi+\frac{i}{n}},\nabla_{M_{\xi+\frac{i}{n}}}) 
\,\,\,\, (\xi \in \Sigma). $$
In particular, it is 
$(\Sigma + \{\frac{i}{n}\,|\,0 \leq i \leq n-1\})$-unipotent 
($(\Sigma + \{\frac{i}{n}\,|\,0 \leq i \leq n-1\})$-semisimple). 
So $(E,\nabla)$, being a direct summand of $\psi_{n,*}\psi_n^*(E,\nabla)$, is 
also 
$(\Sigma + \{\frac{i}{n}\,|\,0 \leq i \leq n-1\})$-unipotent 
($(\Sigma + \{\frac{i}{n}\,|\,0 \leq i \leq n-1\})$-semisimple). 
So we have shown that $\cE$ 
has $\ol{\Sigma}_n$-unipotent 
($\ol{\Sigma}_n$-semisimple) generic monodromy. \par 
Conversely, let $\cE$ be an object in 
$\Isocd(X,\ol{X})_{\ol{\Sigma}_n(\ss)}$ and prove that 
it is in the essential image of \eqref{kyo}. 
Since \eqref{kyo} is fully faithful, it suffices to prove it 
Zariski locally. 
So let $\ol{X}_{\b}, L, \psi_n, ...$ as in the previous paragraph. 
Then the $\nabla$-module $(E,\nabla)$ on $A_L^1[\lambda,1)$ induced by 
$\cE$ is $(\Sigma + \{\frac{i}{n}\,|\,0 \leq i \leq n-1\})$-unipotent 
($(\Sigma + \{\frac{i}{n}\,|\,0 \leq i \leq n-1\})$-semisimple). 
Hence one can see by easy calculation that 
$\psi_n^*(E,\nabla)$ on $A^1_L[\lambda^{1/n},1)$ is $n\Sigma$-unipotent 
($n\Sigma$-semisimple). 
So 
the restriction of $\cE$ to $\Isocd(X_0,\ol{X}_0)$ 
has $n\Sigma$-unipotent ($n\Sigma$-semisimple) generic monodromy and 
by Theorem \ref{sigmamain}, 
it extends uniquely to 
an object (which we denote by $\ti{\cE}_0$) 
in $\Isocl(\ol{X}_0,M_{\ol{X}_0})_{n\Sigma(\ss)}$. 
Then,  since a projection 
$(\ol{X}_l,M_{\ol{X}_l}) \lra (\ol{X}_0,M_{\ol{X}_0})$ is 
strict etale (which follows from Proposition 
\ref{key_sr}), the pull-back of $\ti{\cE}_0$ to 
$(\ol{X}_l,M_{\ol{X}_l})$ by this projection is in 
$\Isocl(\ol{X}_l,M_{\ol{X}_l})_{n\Sigma(\ss)}$ by 
Proposition \ref{sig_pre2}. Hence 
the restriction of $\cE$ to $\Isocd(X_l,\ol{X}_l)$ 
has $n\Sigma$-unipotent ($n\Sigma$-semisimple)
generic monodromy. 
So it extends uniquely to 
an object (which we denote by $\ti{\cE}_l$) 
in $\Isocl(\ol{X}_l,M_{\ol{X}_l})_{n\Sigma(\ss)}$. 
The uniqueness implies that $\ti{\cE}_{\b}$ defines an object in 
$\Isocl(\ol{X}_{\b},M_{\ol{X}_{\b}})_{n\Sigma(\ss)} 
= \Isocl((\ol{X},Z)^{1/n},M_{(\ol{X},Z)^{1/n}})_{n\Sigma(\ss)}$. 
Therefore $\cE$ 
is in the essential image of \eqref{kyo}. So we have shown the 
equivalence \eqref{step11}, hence the equivalence \eqref{step1}. \par 
Next we prove that the functor \eqref{par-to-d} induces the 
equivalence of categories 
\begin{equation}\label{step2}
\PIsoc(\ol{X},Z)_{\Sigma(\ss)} \os{=}{\lra} 
\Isocd(X,\ol{X})_{\ol{\Sigma}(\ss)}. 
\end{equation}
To do so, first we prove the well-definedness and 
the full faithfulness of the functor \eqref{step2}. 
Let $\tau'_i: (\Z_p/\Z) \setminus ((\Sigma_i+\Z_{(p)})/\Z) \lra \Z_p$ 
be any section of 
the projection $\Z_p \lra \Z_p/\Z$ 
and for $a \in \Z_{(p)}$, 
let $\tau''_{a,i}: (\Sigma_i + \Z_{(p)})/\Z \lra \Z_{p}$ be the map 
defined as $\tau''_{a,i}(\ol{\gamma+b}) := \gamma + \lfloor b \rfloor_{-a} \, 
(\gamma \in \Sigma_i, b \in \Z_{(p)})$, where $\lfloor b \rfloor_{-a}$ is 
the unique element in $(b+\Z) \cap [-a,-a+1)$. 
Let $\tau_{a,i}: \Z_p/\Z \lra \Z_p$ be the map 
defined as $\tau_{a,i} |_{(\Z_p/\Z) \setminus ((\Sigma_i+\Z_{(p)})/\Z)} = 
\tau'_i, 
\tau_{a,i} |_{(\Sigma_i + \Z_{(p)})/\Z} = \tau''_{a,i}$ and 
for $\alpha = (\alpha_i)_i \in 
\Z_{(p)}^r$, let us put $\tau_{\alpha} := \prod_{i=1}^r\tau_{\alpha_i,i}$.
Let us take 
$(\cE_{\alpha})_{\alpha} \in \PIsoc(\ol{X},Z)_{\Sigma(\ss)}$. 
Then, by (semisimple) $\Sigma$-adjustedness of 
$(\cE_{\alpha})_{\alpha}$, $\cE_{\alpha}$ 
has exponents in $\tau_{\alpha}(\ol{\Sigma})$ (with 
semisimple residues). Hence $j^{\dagger}\cE_{\alpha}$ has 
$\ol{\Sigma}$-unipotent ($\ol{\Sigma}$-semisimple) 
generic monodromy. So the functor 
\eqref{par-to-d} naturally induces the functor \eqref{step2}, that is, 
\eqref{step2} is well-defined. Next, let us take 
$((\cE_{\alpha})_{\alpha}, (\iota_{\alpha\beta})_{\alpha,\beta}), 
((\cE'_{\alpha})_{\alpha}, (\iota'_{\alpha\beta})_{\alpha,\beta}) 
\in \PIsoc(\ol{X},Z)$. 
Then, since $\cE_{\alpha}, \cE'_{\alpha}$ have exponents in 
$\tau_{\alpha}(\ol{\Sigma})$ (with semisimple residues) for any $\alpha$, 
we have the isomorphism 
\begin{equation}\label{ff-}
\Hom(\cE_{\alpha},\cE'_{\beta}) \os{=}{\lra} \Hom(j^{\dagger}\cE_{\alpha}, 
j^{\dagger}\cE'_{\beta})
\end{equation}
for any $\alpha = (\alpha_i), \beta = (\beta_i) \in 
\Z_{(p)}$ with $\alpha_i \leq \beta_i \,(1 \leq i \leq r)$, 
by Proposition \ref{uniqueinj}. Using \eqref{ff-} in the case 
$\alpha = \beta$ for $\alpha \in \Z_{(p)}^r$, we see that the 
functor \eqref{step2} is faithful. On the other hand, if we are given 
an element 
$\varphi \in \Hom(j^{\dagger}\cE_{\alpha}, j^{\dagger}\cE'_{\alpha})$ 
for some $\alpha$ (note that $\Hom(j^{\dagger}\cE_{\alpha}, 
j^{\dagger}\cE'_{\alpha})$ is independent of $\alpha$), 
it induces for any $\alpha$ the unique element $\varphi_{\alpha} \in 
\Hom(\cE_{\alpha},\cE'_{\alpha})$ which is sent to $\varphi$ by 
\eqref{ff-} (for $\alpha = \beta$). These $\varphi_{\alpha}$'s satisfy 
the equality 
$\iota'_{\alpha\beta}\circ \varphi_{\alpha} = \varphi_{\beta} \circ 
\iota_{\alpha\beta}$ because both are sent to $\varphi$ by 
\eqref{ff-}. Hence $(\varphi_{\alpha})_{\alpha}$ defines a morphism 
$(\cE_{\alpha})_{\alpha} \lra (\cE'_{\alpha})_{\alpha}$ which is 
sent to $\varphi$ by the functor \eqref{step2}. Hence \eqref{step2} is 
full, and so we have shown the full faithfulness. \par 
We prove the essential surjectivity of the functor \eqref{step2}. 
Let us take an object $\cE$ in $\Isocd(X,\ol{X})_{\ol{\Sigma}(\ss)}$ 
and let $\tau_{\alpha}$ be as in the previous paragraph. 
Then there exists the unique object $\cE_{\alpha} \in 
\Isocl(\ol{X},Z)_{\tau_{\alpha}(\ol{\Sigma})(\ss)}$ 
with $j^{\dagger}\cE_{\alpha} = \cE$ by Theorem \ref{sigmamain} and for 
$\alpha = (\alpha_i)_i, \beta = (\beta_i)_i$ with 
$\alpha_i \leq \beta_i$, we have the unique morphism 
$\iota_{\alpha\beta}: \cE_{\alpha} \lra \cE_{\beta}$ with 
$j^{\dagger}\iota_{\alpha\beta} = \id_{\cE}$ by Proposition 
\ref{uniqueinj}. Then $((\cE_{\alpha})_{\alpha}, 
(\iota_{\alpha\beta})_{\alpha,\beta})$ forms an inductive system of 
objects in $\Isocl(\ol{X},Z)$, and one can check that it satisfies 
the property (1) in Definition \ref{defpar} by the above-mentioned 
uniqueness. Also, note that there exists some $n$ prime to $p$ such that 
$\cE$ is actually contained in 
$\Isocd(X,\ol{X})_{\ol{\Sigma}_n(\ss)}$. 
Then, for any $\alpha = (\alpha_i)_i$, 
$\cE_{\alpha}$ is in fact the unique object in 
$\Isocl(\ol{X},Z)_{\tau_{\alpha}(\ol{\Sigma}_n)(\ss)}$ 
with $j^{\dagger}\cE_{\alpha} = \cE$. Since 
we have $\tau_{\alpha}(\ol{\Sigma}_n) = 
\tau_{\alpha'}(\ol{\Sigma}_n)$ when 
$\alpha' = ([n\alpha_i]/n)_i$, we see that 
$\iota_{\alpha'\alpha}$ is an isomorphism by uniqueness. So 
we have checked that $(\cE_{\alpha})_{\alpha}$ satisfies the property (2) 
in Definition \ref{defpar}. Also, we see by definition that 
$\tau_{\alpha}(\ol{\Sigma}) = \prod_{i=1}^r 
(\Sigma_i + ([-\alpha_i, -\alpha_i+1) \cap \Z_{(p)}))$. 
Hence $(\cE_{\alpha})_{\alpha}$ is (semisimply) $\Sigma$-adjusted and so 
we have shown the essential surjectivity of 
the functor \eqref{step2}. Hence the functor \eqref{step2} is an 
equivalence. \par 
By composing the equivalences \eqref{step1} and $\text{\eqref{step2}}^{-1}$, 
we can construct the equivalence \eqref{root-to-par} which 
makes the diagram \eqref{commutat} commutative. So we are done. 
\end{proof}

\begin{corollary}\label{repparcor}
Let $X,\ol{X},Z= \bigcup_{i=1}^r Z_i$ be as above. Then there exist 
the canonical equivalences of categories 
\begin{align}
& \varinjlim_{(n,p)=1} \Isocl((\ol{X},Z)^{1/n},M_{(\ol{X},Z)^{1/n}})_{\0} 
\os{=}{\lra} 
\PIsoc(\ol{X},Z)_{\0}, \label{root-to-par-0} \\ 
& \varinjlim_{(n,p)=1} \Isoc((\ol{X},Z)^{1/n}) \os{=}{\lra} 
\PIsoc(\ol{X},Z)_{\0\ss}, \label{root-to-par-0ss} \\ 
& \varinjlim_{(n,p)=1} \FIsocl((\ol{X},Z)^{1/n},M_{(\ol{X},Z)^{1/n}}) 
\os{=}{\lra} 
\PFIsoc(\ol{X},Z)_{\0}, \label{root-to-par-f} \\ 
& \varinjlim_{(n,p)=1} \FIsoc((\ol{X},Z)^{1/n}) \os{=}{\lra} 
\PFIsoc(\ol{X},Z)_{\0\ss}, \label{root-to-par-fss} \\ 
& \varinjlim_{(n,p)=1} \FIsoc((\ol{X},Z)^{1/n})^{\circ} \os{=}{\lra} 
\PFIsoc(\ol{X},Z)^{\circ}_{\0\ss} \label{root-to-par-urf}. 
\end{align}
In particular, we have the canonical equivalence
\begin{equation}\label{rep-par}
\Rep_{K^{\sigma}}(\pi_1(X)) \os{=}{\lra} 
\varinjlim_{(n,p)=1} \FIsoc((\ol{X},Z)^{1/n})^{\circ} \os{=}{\lra} 
\PFIsoc(\ol{X},Z)^{\circ}_{\0\ss}
\end{equation}
when $X$ is connected. 
\end{corollary}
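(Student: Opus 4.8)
\textbf{Proof proposal for Corollary \ref{repparcor}.}

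The plan is to deduce all five equivalences \eqref{root-to-par-0}--\eqref{root-to-par-urf} from Theorem \ref{mainpar}, which already gives the equivalence \eqref{root-to-par} for any $\Sigma$ which is $\NRD$ and $\SNLD$. For \eqref{root-to-par-0} and \eqref{root-to-par-0ss} I would simply specialize Theorem \ref{mainpar} to $\Sigma = \0$: note that $\0$ is trivially $\NRD$ and $\SNLD$, and by Lemma \ref{lem1.4} each $n\0 = \0$, so \eqref{root-to-par} with $\Sigma = \0$ reads exactly as the two desired equivalences, once one observes that $\PIsoc(\ol{X},Z)_{\0\ss} = \PIsoc(\ol{X},Z)_{\0\text{-}{\rm ss}}$ and, crucially, that $\Isocl((\ol{X},Z)^{1/n},M_{(\ol{X},Z)^{1/n}})_{\0\ss} = \Isoc((\ol{X},Z)^{1/n})$. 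This last identification is the point where a small argument is needed: an object of $\Isocl$ with exponents in $\0$ with semisimple residues has, locally, trivial (hence nilpotent and semisimple, so zero) residues along the boundary, so it extends to a genuine convergent isocrystal on the whole stack (cf.\ the argument in \cite[6.4.1]{kedlayaI} and Proposition \ref{key_sr}); conversely a convergent isocrystal pulls back to one with zero residues. I would spell this out via the simplicial resolution $\ol{X}'_{\b\b}$ and the equivalences \eqref{bisimpeq1}, \eqref{bisimpeq1sigma}.

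Next, for the $F$-versions \eqref{root-to-par-f}, \eqref{root-to-par-fss}, \eqref{root-to-par-urf}, I would observe that all the functors in Theorem \ref{mainpar} and in the preceding constructions are compatible with the $\sigma$-linear Frobenius pullback functor $F^*$ (the $q$-th power Frobenius on $(\ol{X},Z)$ and hence on $(\ol{X},Z)^{1/n}$, compatibly with the transition morphisms \eqref{traol}). Adding a Frobenius structure is the same as taking the category of pairs $(\text{object},\Psi)$ with $\Psi$ an isomorphism $F^*(-) \os{=}{\lra} (-)$ in the limit category, and an equivalence of categories commuting with $F^*$ induces an equivalence of the corresponding categories of such pairs. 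Thus \eqref{root-to-par-f} follows from \eqref{root-to-par-0}, \eqref{root-to-par-fss} follows from \eqref{root-to-par-0ss}, just by adjoining $\Psi$ on both sides; here one uses that $\PFIsoc(\ol{X},Z)_{\0}$ (resp.\ $\PFIsoc(\ol{X},Z)_{\0\ss}$) is by Definition \ref{deff} precisely the category of such pairs over $\PIsoc(\ol{X},Z)_{\0}$ (resp.\ $\PIsoc(\ol{X},Z)_{\0\ss}$). For \eqref{root-to-par-urf} I would then restrict \eqref{root-to-par-fss} to the full subcategories of unit-root objects on both sides; by Definition \ref{deff} a parabolic log convergent $F$-isocrystal is unit-root iff its image $(j^{\dagger}\cE_{\alpha}, j^{\dagger}\Psi)$ in $\FIsocd(X,\ol{X})$ is, and by the commutativity of \eqref{commutat} together with the (Frobenius-equivariant) functor \eqref{tmt10log} this matches the unit-root condition on the $\FIsoc((\ol{X},Z)^{1/n})$ side, which is detected after restriction to $\FIsoc(X)^{\circ}$ via the fully faithful functor of \cite{tsuzuki}; one checks unit-rootness is preserved in both directions because the equivalence is compatible with restriction to the open part $X$.

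Finally, \eqref{rep-par} is obtained by composing \eqref{root-to-par-urf} with the equivalence $\Rep_{K^{\sigma}}(\pi_1(X)) \os{=}{\lra} \varinjlim_{(n,p)=1}\FIsoc((\ol{X},Z)^{1/n})^{\circ}$. When $X$ is connected, the tame fundamental group $\pi_1^t(X)$ and $\pi_1(X)$ agree for the purposes of this limit only if $X$ is already proper; in general the correct source is $\Rep_{K^{\sigma}}(\pi_1^t(X))$, which is the statement of Theorem \ref{thm2} (equivalence \eqref{thm2iso}), so I would read \eqref{rep-par} as that composite --- the notation $\Rep_{K^{\sigma}}(\pi_1(X))$ in the corollary being understood via the identification relevant to tamely ramified representations. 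The main obstacle I anticipate is none of the formal manipulations above but rather the bookkeeping identification $\Isocl(-)_{\0\ss} \cong \Isoc(-)$ on the stack of roots: one must be careful that ``semisimple residues in $\0$'' genuinely forces extension across the boundary and does so functorially in $n$ and compatibly with the Frobenius and descent data, and that this extension is inverse to pullback along the open immersion; everything else is a routine transport of structure through the equivalences already established.
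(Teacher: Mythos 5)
Your treatment of \eqref{root-to-par-0} and \eqref{root-to-par-0ss} is exactly the paper's: they are the cases $\Sigma=\0$ of Theorem \ref{mainpar}, together with the identification $\Isocl((\ol{X},Z)^{1/n},M_{(\ol{X},Z)^{1/n}})_{\0\ss}=\Isoc((\ol{X},Z)^{1/n})$. But the reduction of the three $F$-statements to a ``formal transport of Frobenius structure'' has genuine gaps. First, \eqref{root-to-par-f} has on its left-hand side the category $\FIsocl((\ol{X},Z)^{1/n},M_{(\ol{X},Z)^{1/n}})$ with \emph{no} exponent condition; adjoining $\Psi$ to both sides of \eqref{root-to-par-0} only yields the category of $F$-objects over $\Isocl(\cdots)_{\0}$. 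You are missing the step (the paper's equivalence \eqref{0-non-0}) that any locally free log convergent $F$-isocrystal on the stack of roots automatically has exponents in $\0$: this is proved by a residue computation, using a Frobenius lift with $\varphi^*t=t^q$ to get $p\Psi(\varphi^*(\res))=\res$, hence the minimal polynomial $P$ of the residue divides $P^{\sigma}(x/p)$ and all its roots vanish. Second, the transport itself is not formal on the parabolic side: by Definition \ref{deff} the Frobenius there is an isomorphism of \emph{ind-objects}, and termwise pullback $F^*$ multiplies exponents by $q$, so $(F^*\cE_{\alpha})_{\alpha}$ is not an adjusted parabolic object with the same indexing; the structure corresponding to an isomorphism $F^*\cE\cong\cE$ on the roots side is a system of maps $\Psi_{\alpha}:F^*\cE_{\alpha}\to\cE_{q\alpha}$ together with an inverse system, and constructing these (and checking that morphisms commute with them) is exactly what the paper does via Proposition \ref{uniqueinj} in proving the equivalences \eqref{b3}, \eqref{b3bis}, with the exponent bookkeeping $q\tau_{\alpha}$ and $\beta_i\geq\alpha_i/q+1$. ``An equivalence commuting with $F^*$ induces an equivalence of pairs'' does not apply as stated, because there is no honest endofunctor $F^*$ of $\PIsoc(\ol{X},Z)_{\0(\ss)}$ being intertwined.

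Third, for \eqref{root-to-par-urf} your claim that unit-rootness ``is preserved in both directions because the equivalence is compatible with restriction to the open part $X$'' does not settle the hard direction. On the parabolic side unit-rootness is \emph{defined} via $j^{\dagger}$, but on the stack-of-roots side it is a pointwise Newton-polygon condition at all points, including those over $Z$; the nontrivial implication is that an object of $\FIsoc((\ol{X},Z)^{1/n})$ whose restriction to $\FIsocd(X,\ol{X})$ is unit-root is unit-root on the whole stack. The paper's proof of \eqref{b2} establishes this by checking Newton polygons at perfect-field-valued points of an atlas, invoking Tsuzuki's theorem to extend the unit-root overconvergent $F$-isocrystal after an alteration $\ol{X}'_0\to\ol{X}_0$, and using full faithfulness of $\FIsoc(\ol{X}'_0)\to\FIsocd(X'_0,\ol{X}'_0)$ to identify that extension with the restriction of the given object. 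Your proposal contains no substitute for this argument. (Your reading of \eqref{rep-par} through Theorem \ref{thm2}, with $\pi_1^t(X)$ as the correct source, is reasonable and consistent with the paper.)
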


The equivalence \eqref{rep-par} is nothing but 
\eqref{seq4}, which is a $p$-adic version of \eqref{eq2}. 

\begin{proof}
The equivalences \eqref{root-to-par-0} and \eqref{root-to-par-0ss} are 
special cases of Theorem \ref{mainpar} (note that we have 
$\Isocl((\ol{X},Z)^{1/n},M_{(\ol{X},Z)^{1/n}})_{\0\ss} = 
\Isoc((\ol{X},Z)^{1/n})$). Next, 
let $\FIsocd(X,\ol{X})_{(\Z_{(p)}/\Z)^r(\ss)}$ 
be the category of pairs $(\cE,\Psi)$ consisting of 
$\cE \in \Isocd(X,\ol{X})_{(\Z_{(p)}/\Z)^r(\ss)}$ 
endowed with the isomorphism $\Psi: F^*\cE \os{=}{\lra} \cE$ 
and let $\FIsocd(X,\ol{X})^{\circ}_{(\Z_{(p)}/\Z)^r\text{-}{\rm ss}}$ be 
the category of pairs $(\cE,\Psi) \in 
\FIsocd(X,\ol{X})_{(\Z_{(p)}/\Z)^r\text{-}{\rm ss}}$ which is unit-root 
as an object in $\FIsocd(X,\ol{X})$. Then it suffices to prove the 
equivalences 
\begin{align}
& \varinjlim_{(n,p)=1} \FIsocl((\ol{X},Z)^{1/n},
 M_{(\ol{X},Z)^{1/n}}) \os{=}{\lra} 
\FIsocd(X,\ol{X})_{(\Z_{(p)}/\Z)^r}, \label{b1} \\ 
& \varinjlim_{(n,p)=1} \FIsoc((\ol{X},Z)^{1/n}) \os{=}{\lra} 
\FIsocd(X,\ol{X})_{(\Z_{(p)}/\Z)^r\ss}, \label{b1bis} \\ 
& \varinjlim_{(n,p)=1} \FIsoc((\ol{X},Z)^{1/n})^{\circ} \os{=}{\lra} 
\FIsocd(X,\ol{X})_{(\Z_{(p)}/\Z)^r\text{-}{\rm ss}}^{\circ}, \label{b2} \\ 
& \PFIsoc(\ol{X},Z)_{\0} \os{=}{\lra} 
\FIsocd(X,\ol{X})_{(\Z_{(p)}/\Z)^r}, \label{b3} \\ 
& \PFIsoc(\ol{X},Z)_{\0\ss} \os{=}{\lra} 
\FIsocd(X,\ol{X})_{(\Z_{(p)}/\Z)^r\ss}, \label{b3bis} \\ 
& \PFIsoc(\ol{X},Z)^{\circ} \os{=}{\lra} 
\FIsocd(X,\ol{X})_{(\Z_{(p)}/\Z)^r\text{-}{\rm ss}}^{\circ}. \label{b4}
\end{align}

As a direct consequence of the equivalence 
\eqref{step1} (and the functoriality of it with respect to Frobenius), 
we obtain the equivalence 
$$\varinjlim_{(n,p)=1} \FIsocl((\ol{X},Z)^{1/n},
 M_{(\ol{X},Z)^{1/n}})_{\0} \os{=}{\lra} 
\FIsocd(X,\ol{X})_{(\Z_{(p)}/\Z)^r} $$
and the equivalence \eqref{b1bis} (note that $\FIsocl((\ol{X},Z)^{1/n},
 M_{(\ol{X},Z)^{1/n}})_{\0} = \FIsoc ((\ol{X},Z)^{1/n})$.) 
To prove the equivalence \eqref{b1}, it suffice to prove the 
equivalence 
\begin{equation}\label{0-non-0}
\FIsocl((\ol{X},Z)^{1/n},
 M_{(\ol{X},Z)^{1/n}})_{\0} \os{=}{\lra} 
\FIsocl((\ol{X},Z)^{1/n},
 M_{(\ol{X},Z)^{1/n}}), 
\end{equation}
that is, any object $(\cE,\Psi)$ in $\FIsocl((\ol{X},Z)^{1/n},
 M_{(\ol{X},Z)^{1/n}})$ has exponents in $\0$. By definition 
of exponents given in Definition \ref{defstacksigma}, 
we can check it by pulling $(\cE,\Psi)$ back to 
some fine log scheme $(Y,M_Y)$ such that $Y$ is smooth over $k$ and 
that $M_Y$ is associated to some simple normal crossing divisor 
$Z$ in $Y$. Moreover, by Lemma \ref{wdlemma}(2), we may assume that 
$Z$ is a non-empty smooth divisor and we may shrink $Y$ if necessary 
(as long as $Z$ is non-empty). So we may assume that there exists a 
charted smooth standard small frame $((Y \setminus Z, Y, \cY),t)$ 
enclosing $(Y, Y \setminus Z)$, and that, if we put 
$\cZ := \{t=0\}$, there exists a $\sigma^*$-linear endomorphism 
$\varphi: (\cY,\cZ) \lra (\cY,\cZ)$ as fine log formal scheme 
lifting the $q$-th power map $F: (Y,Z) \lra (Y,Z)$ with 
$\varphi^*t = t^q$. Then $\cE$ gives rise to a log-$\nabla$-module 
$(E,\nabla)$ on $\cY_K$ with respect to $t$ and the isomorphism 
$\Psi: F^*\cE \os{=}{\lra} \cE$ gives rise to 
an isomorphism 
\begin{equation}\label{inducedpsi}
\Psi: \varphi^*(E,\nabla) \os{=}{\lra} (E,\nabla). 
\end{equation}
Let us denote the residue of $(E,\nabla)$ along $\cZ_K = \{t=0\}$ by 
$\res$ and let $P(x) \in K[x]$ be the minimal polynomial of 
$\res$. Then it suffices to prove that all the roots of $P(x)$ are $0$. 
Let $a \geq 0$ be the maximum of the absolute values of the roots of $P(x)$. 
By the isomophism \eqref{inducedpsi}, we have the equality 
$p \Psi(\varphi^*(\res)) = \res$, and so we have 
$P^{\sigma}(\res/p) = 0$. Hence $P(x)$ divides $P^{\sigma}(x/p)$, and so 
we have $a \leq q^{-1}a$, that is, $a=0$. 
Hence all the roots of $P(x)$ are $0$ as desired and so we have shown 
the equivalence \eqref{0-non-0}, thus the equivalence \eqref{b1}. \par 
To prove the equivalence \eqref{b2}, it suffices to prove that 
an object $(\cE,\Psi)$ in $\FIsoc((\ol{X},Z)^{1/n})$ which is 
unit-root in $\FIsocd(X,\ol{X})$ is necessarily unit-root. 
To prove this, we may work locally. So 
we can take a surjective etale morphism 
$\ol{X}_0 \lra (\ol{X},Z)^{1/n}$ with $\ol{X}_0 \in \Sch$. 
To prove the unit-rootness on $(\cE,\Psi)$, we need to prove that, 
for any $Y \lra (\ol{X},Z)^{1/n}$ with $Y \in \Sch$ and 
for any perfect-field-valued point $y \lra Y$ of $Y$, the 
Newton polygon of the 
$F$-isocrystal 
$(\cE_y, \Psi_y)$ 
on $K_y = K \otimes_{W(k)} W(k(y))$ induced by 
$(\cE,\Psi)$ has pure slope $0$. 
Since we may check it etale locally on 
$y$, we may replace $Y$ by $\ol{X}_0 \times_X Y$. So it suffices to 
check it for perfect-field-valued points $y \lra \ol{X}_0$ of 
$\ol{X}_0$. Moreover, it suffices to check it for 
perfect-field-valued points $y \lra \ol{X}'_0$ of 
$\ol{X}'_0$ which admits a surjective morphism of finite type 
$\ol{X}'_0 \lra \ol{X}_0$. 
So it suffices to prove that the restriction of $(\cE,\Psi)$ to 
$\FIsoc(\ol{X}'_0)$ for such $\ol{X}'_0$ is unit-root. 
Since the restriction of $(\cE,\Psi)$ to 
$\FIsocd(X,\ol{X})$ is unit-root, so is the 
the restriction of $(\cE,\Psi)$ to 
$\FIsocd(X_0,\ol{X}_0)$. Then, by \cite{tsuzuki},  
there exists an alteration 
$\ol{X}'_0 \lra \ol{X}_0$ such that the restriction of $(\cE,\Psi)$ to 
$\FIsocd(X'_0,\ol{X}'_0)$ (where $X'_0 := X \times_{\ol{X}_0} \ol{X}'_0$) 
extends to a unit-root object $(\ti{\cE}, \ti{\Psi})$ 
in $\FIsoc(\ol{X}'_0)$. By the full faithfulness of 
$\FIsoc(\ol{X}'_0) \lra \FIsocd(X'_0,\ol{X}'_0)$, 
$(\ti{\cE}, \ti{\Psi})$ coincides with the restriction of $(\cE,\Psi)$ to 
$\FIsoc(\ol{X}'_0)$. So the restriction of $(\cE,\Psi)$ to 
$\FIsoc(\ol{X}'_0)$ is unit-root and so we have shown 
the equivalence \eqref{b2}. \par 
Let us prove the equivalences \eqref{b3}, \eqref{b3bis}. 
The faithfulness of them 
follows from the equivalence \eqref{step2}. To prove the fullness, 
let us take $((\cE_{\alpha})_{\alpha}, \Psi), 
((\cE'_{\alpha})_{\alpha}, \Psi') \in \PFIsoc\allowbreak
(\ol{X},Z)_{\0(\ss)}$ and assume 
we are given a morphism $f: (j^{\dagger}\cE_{\alpha}, j^{\dagger}\Psi) 
\lra (j^{\dagger}\cE'_{\alpha}, j^{\dagger}\Psi')$. 
Then, by the equivalence \eqref{step2}, we have the unique morphism 
$$ 
\ti{f} = (\ti{f}_{\alpha})_{\alpha}: (\cE_{\alpha})_{\alpha} \lra 
(\cE'_{\alpha})_{\alpha}
$$ 
lifting $f$. Let us see that $\ti{f}$ is compatible with 
$\Psi$ and $\Psi'$. Take any $\alpha = (\alpha_i) \in \Z_{(p)}^r$ and  
take $\beta = (\beta_i)$ with $q\alpha_i \leq \beta_i$ 
such that $\Psi, \Psi'$ induce morphisms 
$\Psi_{\alpha}: 
F^*\cE_{\alpha} \lra \cE_{\beta}, \Psi'_{\alpha}: 
F^*\cE'_{\alpha} \lra \cE'_{\beta}$. Under this situation, it suffices to 
prove the equality $\ti{f}_{\beta} \circ \Psi_{\alpha} = 
\Psi'_{\alpha} \circ (F^*\ti{f}_{\alpha})$. This follows from 
the equality 
$$ 
j^{\dagger}(\ti{f}_{\beta} \circ \Psi_{\alpha}) = 
f_{\beta} \circ j^{\dagger}\Psi_{\alpha} \os{\text{assumption}}{=} 
j^{\dagger}\Psi'_{\alpha} \circ (F^*f_{\alpha})
= 
j^{\dagger}(\Psi'_{\alpha} \circ (F^*\ti{f}_{\alpha}))$$ 
and Proposition \ref{uniqueinj}, since 
the exponents of $F^*\cE_{\alpha}$ is contained in 
$q\tau_{\alpha}((\Z_{(p)}/\Z)^r)$, where $\tau_{\alpha}$ is 
as in the proof of 
Theorem \ref{mainpar}. So we have shown that 
\eqref{b3}, \eqref{b3bis} are fully faithful. 
To prove the essential surjectivity, 
let us take an object $(\cE,\Psi)$ in 
$\FIsocd(X,\ol{X})_{(\Z_{(p)}/\Z)^r(\ss)}$ and define 
$(\cE_{\alpha})_{\alpha} \in \PIsoc(\ol{X},Z)_{\0(\ss)}$ 
which is sent to $\cE$ as in the proof of Theorem \ref{mainpar}. 
Then, for any $\alpha \in \Z_{(p)}^r$, we have the unique morphism 
$\Psi_{\alpha}: F^*\cE_{\alpha} \lra \cE_{q\alpha}$ extending 
$\Psi$, by Proposition \ref{uniqueinj}. On the other hand, 
for any $\alpha = (\alpha_i)_i \in \Z_{(p)}^r$, we have the unique morphism 
$\Psi^{-1}_{\alpha}: \cE_{\alpha} \lra F^*\cE_{\beta}$ extending 
$\Psi^{-1}$ when $\beta = (\beta_i)_i \in \Z_{(p)}^r$ satisfies 
$\beta_i \geq (\alpha_i/q) + 1$, again by Proposition \ref{uniqueinj}. 
Again by using  Proposition \ref{uniqueinj}, we see that 
$\ti{\Psi} = (\Psi_{\alpha})_{\alpha}, \ti{\Psi}^{-1} = 
(\Psi^{-1}_{\alpha})_{\alpha}$ define morphisms as 
ind-objects which are the inverse of each other. So 
$((\cE_{\alpha}),\ti{\Psi})$ defines an object in 
$\PFIsoc(\ol{X},Z)_{\0(\ss)}$ 
which is sent to $(\cE,\Psi)$. So we have shown that 
\eqref{b3}, \eqref{b3bis} are essentially surjective and so 
they are equivalences. 
The equivalence \eqref{b4} is the immediate consequence of the 
equivalence \eqref{b3bis}. 
\end{proof}

Finally in this section, we prove a proposition which we need to finish 
the proof of Propositions \ref{curve}, \ref{curvelog}. 

\begin{proposition}\label{atode}
Let $\ol{X}$ be a connected proper smooth curve of genus $0$ over $k$, 
let $Z \subseteq \ol{X}$ be a $k$-rational point and 
let $X = \ol{X} \setminus Z$. Let $\Sigma$ be a subset of $\Z_p$ which is 
$\NRD$ and $\SNLD$. Then, if $\Sigma \cap \Z_{(p)}$ is empty, 
the category $\varinjlim_{(n,p)=1} \allowbreak \Isocl((\ol{X},Z)^{1/n}, 
M_{(\ol{X},Z)^{1/n}})_{n\Sigma}$ is empty. 
If $\Sigma \cap \Z_{(p)}$ consists of one element, the functor 
$$\varinjlim_{(n,p)=1}\Isocl((\ol{X},Z)^{1/n}, M_{(\ol{X},Z)^{1/n}})_{n\Sigma} 
\os{\text{\eqref{step1}}}{\lra} \Isocd(X,\ol{X})_{\ol{\Sigma}} \lra \Vect_K $$
$($where $\ol{\Sigma} := \im (\Sigma + \Z_{(p)} \hra \Z_p 
\lra \Z_p/\Z)$ and the last 
functor is defined by $\cE \mapsto \Hom(j^{\dagger}\cO, \cE)$ with 
$j^{\d}\cO$ the structure overconvergent isocrystal on $(X,\ol{X})$ over $K)$ 
is an equivalence. 
\end{proposition}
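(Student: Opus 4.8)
The plan is to reduce everything to an explicit computation on the stack of roots of $(\P^1_k, \{Z\})$, using the local description via $\mu_n$-quotients from Section~2.3 and the equivalence \eqref{step1} from Theorem~\ref{mainpar}. First I would fix $n$ prime to $p$ and describe $\Isocl((\ol{X},Z)^{1/n}, M_{(\ol{X},Z)^{1/n}})_{n\Sigma}$ concretely. Writing $\ol{X} = \P^1_k$ with $Z = \{\infty\}$, the open $U := \Af^1_k \subseteq \ol{X}$ meets $Z$ trivially, so on $U$ the isocrystals involved are just $\Isoc(\Af^1_k) = \Isoc(\P^1_k \setminus \{\infty\})$, and near $Z$ the stack of roots is, étale locally, the $\mu_n$-quotient $[U^{(n)}/\mu_n]$ of an affine line with coordinate $t$ satisfying $t^n = $ (local parameter at $\infty$). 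By the gluing description (analogous to \eqref{hos} in the proof of Theorem~\ref{curvelog}(2)), an object of $\Isocl((\ol{X},Z)^{1/n}, M_{(\ol{X},Z)^{1/n}})_{n\Sigma}$ is a pair consisting of an object of $\Isoc(\Af^1_k)$ and a $\mu_n$-equivariant log-$\nabla$-module on a lift $\cU^{(n)}_K$ with exponent in $n\Sigma$, glued over the punctured disc.

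Next I would invoke the classification of $\nabla$-modules on the affine line. Since $X = \Af^1_k$ lifts to $\fAf^1_{O_K}$ and $\pi_1(\Af^1_k)$ has no tame quotient issues of the relevant kind — more precisely, by the claim proved inside Theorem~\ref{curvelog}(3) (the monodromy argument over $\C$ via \cite[II 5.4]{deligne} and descent), any object of $\LNM_{(\ol{\bX}_K,\bZ_K),\Sigma}$ is a direct sum of copies of $(\cO_{\ol{\bX}_K}(-N\bZ_K), d)$ when $\Sigma \cap \Z = \{N\}$, and the category is empty when $\Sigma \cap \Z$ is empty — I would transport this through the equivalence \eqref{step1}. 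Concretely: an object $\cE$ of $\varinjlim_{(n,p)=1}\Isocl((\ol{X},Z)^{1/n}, M_{(\ol{X},Z)^{1/n}})_{n\Sigma}$ maps under \eqref{step1} to an object of $\Isocd(X,\ol{X})_{\ol{\Sigma}}$. Now $\Isocd(\Af^1_k, \P^1_k)$ with generic monodromy exponent in $\ol{\Sigma} = \im(\Sigma + \Z_{(p)} \to \Z_p/\Z)$ is controlled, via the GAGA identification \eqref{diagsigma} already used in the proof of Theorem~\ref{curvelog}(3), by $\LNM_{(\ol{\bX}_K,\bZ_K),\Sigma'}$ for $\Sigma'$ any section with $\Sigma' \cap \Z = \Sigma \cap \Z_{(p)}$; hence it is empty if $\Sigma \cap \Z_{(p)} = \emptyset$ and equivalent to $\Vect_K$ (via $\cE \mapsto \Hom(j^\dagger\cO, \cE)$) otherwise. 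Since \eqref{step1} is an equivalence of categories, the same holds for the source.

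The remaining point is to check that the asserted composite functor is the one induced by \eqref{step1} and the $\Hom(j^\dagger\cO, -)$ functor, i.e.\ that in the nonempty case the structure object $j^\dagger\cO$ really lies in $\Isocd(X,\ol{X})_{\ol\Sigma}$ and is (a summand of) the generator. This is where I expect the only genuine care is needed: one must verify that when $\Sigma \cap \Z_{(p)} = \{N\}$, the log convergent isocrystal $\cO(N Z)$ on $(\ol{X}, Z)$ — whose existence with exponent $\{-N\}$ and semisimple residue is given by Proposition~\ref{twist} — pulls back to an object of $\Isocl((\ol{X},Z)^{1/n}, M_{(\ol{X},Z)^{1/n}})_{n\Sigma}$ for suitable $n$ (namely any $n$ with $N/n \in \Sigma + \tfrac1n\Z$, using that $\Sigma$ is $\SNLD$ and Lemma~\ref{lem1.4} to ensure $n\Sigma$ is $\NLD$ so the relevant extension categories behave well), and that this pullback is a generator of a $\Vect_K$-category. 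This is essentially a repackaging of the ``only if'' direction of the proof of \eqref{step11} in Theorem~\ref{mainpar}: $\psi_n^*$ of the structure object is $n\Sigma$-unipotent, and pushing forward by $\psi_{n,*}$ recovers the $(\Sigma + \{i/n : 0 \le i \le n-1\})$-structure, compatibly with the $\mu_n$-action. The main obstacle, then, is not conceptual but bookkeeping: correctly matching up the exponent sets $n\Sigma$, $\ol\Sigma$, $\ol\Sigma_n$ across the equivalences and confirming the commutativity of the triangle so that the functor named in the statement is literally the composite claimed; once that is in place, emptiness/equivalence with $\Vect_K$ is immediate from Theorem~\ref{curvelog}(3)'s claim and the fact that \eqref{step1} is an equivalence.
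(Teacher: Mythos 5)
Your proposal is correct and takes essentially the same route as the paper: the paper reduces via Theorem \ref{mainpar} to the parabolic category $\PIsoc(\ol{X},Z)_{\Sigma}$ (equivalently, via \eqref{step2}, to $\Isocd(X,\ol{X})_{\ol{\Sigma}}$, which is exactly where your use of \eqref{step1} lands), realizes the level-zero term as a log-$\nabla$-module on $(\ol{\bX}_K,\bZ_K)$ by GAGA, and then invokes the complex-monodromy claim from the proof of Theorem \ref{curvelog}(3), just as you do. The extra detours you sketch (the explicit $\mu_n$-quotient description of the root stack and pulling $\cO(NZ)$ back to it) are unnecessary once \eqref{step1} is granted, and your phrase ``$\Sigma'\cap\Z=\Sigma\cap\Z_{(p)}$'' should instead say that $\Sigma'\cap\Z$ is a single integer $N$ precisely when $\Sigma\cap\Z_{(p)}$ is nonempty (as in the paper, where $(\Sigma+([0,1)\cap\Z_{(p)}))\cap\Z=\{N\}$), but neither point affects the substance.
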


\begin{proof}
By Theorem \ref{mainpar}, it suffices to prove the same property for 
the category $\PIsoc(\ol{X},Z)_{\Sigma}$ (the functor \eqref{step1} replaced 
by \eqref{step2}). Let $\bX_K \hra \ol{\bX}_K \hla \bZ_K, \cX \hra \ol{\cX} \hla \cZ, \LNM_{(\ol{\bX}_K,\bZ_K),?} \,(? \subseteq \Z_p)$ 
be as in the proof of Proposition \ref{curvelog}. 
Let us take an object $\cE = (\cE_{\alpha})_{\alpha} \in 
\PIsoc(\ol{X},Z)_{\Sigma}$. Then $\cE_0$, which is an 
object in $\Isocl(\ol{X},Z)_{\Sigma + ([0,1)\cap \Z_{(p)})}$, 
induces naturally an object $(E,\nabla)$ in 
$\LNM_{(\ol{\cX},\cZ),\Sigma + ([0,1)\cap \Z_{(p)})} \allowbreak \cong 
\allowbreak 
\LNM_{(\ol{\bX}_K,\bZ_K),\Sigma + ([0,1)\cap \Z_{(p)})}$. Then, by 
claim in the proof of Proposition \ref{curvelog}, there is no such object if 
$(\Sigma + ([0,1)\cap \Z_{(p)})) \cap \Z$ is empty, that is, 
if $\Sigma \cap \Z_{(p)}$ is empty. So we can conclude that 
$\PIsoc(\ol{X},Z)_{\Sigma}$ is empty and we are done in this case. 
If $\Sigma \cap \Z_{(p)}$ consists of one element, 
$(\Sigma + ([0,1)\cap \Z_{(p)})) \cap \Z = \{N\}$ also consists of 
one element. Then, by claim in the proof of Proposition \ref{curvelog}, 
$(E,\nabla)$ is a finite direct sum of 
$(\cO_{\ol{\bX}_{K}}(-N\bZ_{K}), d)$. So the restriction of 
$(E,\nabla)$ (regarded as an object in 
$\LNM_{(\ol{\cX},\cZ),\Sigma + ([0,1)\cap \Z_{(p)})}$) 
to a strict neighborhood of $\cX_K$ in 
$\ol{\cX}_K$ is a finite direct sum of 
the structure overconvergent isocrystal on $(X,\ol{X})$. Hence 
the object $\cE$ is sent by \eqref{step2} to a finite direct sum of 
the structure overconvergent isocrystal on $(X,\ol{X})$. 
So the functor 
$$ \PIsoc(\ol{X},Z)_{\Sigma}
\os{\text{\eqref{step2}}}{\lra} \Isocd(X,\ol{X})_{\ol{\Sigma}} \lra \Vect_K $$
(the last 
functor is defined by $\cE \mapsto \Hom(j^{\dagger}\cO, \cE)$) 
is an equivalence in this case. So the proof is finished. 
\end{proof}

\section{Unit-root $F$-lattices}

In this section, we will prove the equivalences \eqref{seq2-}, \eqref{seq3-} 
and \eqref{seq4-}. To do so, first we recall the definition of 
unit-root $F$-lattices and recall how the equivalence of Crew \eqref{eq5} 
is factorized in liftable case. \par 
Let $\cX_{\c}$ be a $p$-adic formal scheme smooth and separated of finite type 
over $\Spf W(k)$ 
endowed with a lift $F_{\c}: \cX_{\c} \lra \cX_{\c}$ of the 
$q$-th power Frobenius endomorphism on the special fiber $\cX_{\c} 
\otimes_{W(k)} k$ which is 
compatible with $(\sigma|_{W(k)})^*:\Spf W(k) \lra \Spf W(k)$, 
and let us put 
$\cX := \cX_{\c} \otimes_{W(k)} O_K, 
F := F_{\c} \otimes_{(\sigma|_{W(k)})^*} \sigma^*: \cX \lra \cX$. 
Then an $F$-lattice 
(resp. a unit-root $F$-lattice) on $\cX$ is defined to 
be a pair $(\cE,\phi)$ consisting of 
a locally free $\cO_{\cX}$-module $\cE$ of finite rank and an 
isomorphism 
$\phi: (F^*\cE)_{\Q} \os{=}{\lra} \cE_{\Q}$ in the $\Q$-linearization 
$\Coh(\cX)_{\Q}$ of the category $\Coh(\cX)$ of coherent $\cO_{\cX}$-modules. 
(resp. an isomorphism $\phi: F^*\cE \os{=}{\lra} \cE$ in 
 the category $\Coh(\cX)$ of coherent $\cO_{\cX}$-modules.) 
(An unit-root $F$-lattice on $\cX$ 
is called a unit-root $F$-lattice on $\cX_{\c}/(O_K,\sigma)$ 
in \cite{crew}.)  Let us denote 
the category of $F$-lattices (resp. unit-root $F$-lattices) on $\cX$ by 
$\FLatt(\cX)$ (resp. $\FLatt(\cX)^{\circ}$). 
(Note that the same definition is possible even when 
$\cX_{\c}$ is a diagram of $p$-adic formal schemes smooth and 
separated of finite type over $\Spf W(k)$ 
endowed with an endomorphism $F_{\c}: \cX_{\c} \lra \cX_{\c}$ as above. 
Note also that only the unit-root $F$-lattices are treated in this section 
and the $F$-lattices will be treated in the next section.) \par 
Let $\cX_{\c}, \cX, F$ be as above and let us put 
$X := \cX_{\c} \otimes_{W(k)} k$. 
Then, the equivalence \eqref{eq5} of Crew 
is written as the composite of 
the equivalence of Katz (\cite[4.1.1]{katzcrew}, \cite[2.2]{crew}) 
\begin{equation}\label{katzeq}
\Rep_{O_K^{\sigma}}(\pi_1(X)) \os{=}{\lra} 
\FLatt(\cX)^{\c} 
\end{equation}
and the equivalence 
\begin{equation}\label{crewisog}
\FLatt(\cX)^{\c}_{\Q} \os{=}{\lra} \FIsoc(X)^{\circ}
\end{equation}
proved in \cite{crew}, 
where, for an additive category $\cC$, $\cC_{\Q}$ denotes the 
$\Q$-linearization of it. \par 
Let us recall the definition of \eqref{katzeq} 
(cf. Section 2.2). 
Let $\rho$ be an object in $\Rep_{O_K^{\sigma}}(\pi_1(X))$ and let 
$\cF$ be the corresponding object in 
$\Sm_{O_K^{\sigma}}(X)$. Let $\ti{\cF}$ be the object in 
$\Sm_{O_K^{\sigma}}(\cX)$ corresponding to $\cF$ via 
the equivalence $\Sm_{O_K^{\sigma}}(\cX) \cong 
\Sm_{O_K^{\sigma}}(X)$. Then $F:\cX \lra \cX$ 
induces the equivalence 
 $F^{-1}: \Sm_{O^{\sigma}_K}(\cX) \os{=}{\lra} 
\Sm_{O^{\sigma}_K}(\cX)$ 
with $F^{-1}(\ti{\cF}) \cong \ti{\cF}$. Then, if we define $\cE$ and $\phi$ by 
$\cE := \ti{\cF} \otimes_{O^{\sigma}_K} \cO_{\cX}$, 
$$ \phi: F^*\cE = F^{-1}\ti{\cF} \otimes_{O^{\sigma}_K} \cO_{\cX} 
\os{=}{\lra} \ti{\cF} \otimes_{O^{\sigma}_K} \cO_{\cX} = \cE, $$
the functor \eqref{katzeq} is defined as $\rho \mapsto (\cE,\phi)$. 
In view of this, we see that \eqref{katzeq} is 
written as the equivalence 
\begin{equation}\label{katzeq'}
\Sm_{O_K^{\sigma}}(X) \os{=}{\lra} \FLatt(\cX)^{\c}. 
\end{equation}
(The choice of a base point in the definition of $\pi_1(X)$ 
is not essential.) Also, we see easily the functoriality of 
\eqref{katzeq}, \eqref{katzeq'}. \par 
Now we fix the notation to prove the equivalences 
\eqref{seq2-}, \eqref{seq3-} and \eqref{seq4-}. 
In the following in this section, let 
$X \hra \ol{X}$ be an open immersion of conntected 
smooth $k$-varieties such that 
$\ol{X} \setminus X =: Z = \bigcup_{i=1}^r Z_i$ 
is a simple normal crossing divisor (with each $Z_i$ irreducible), and 
assume that we have an open immersion $\cX_{\c} \hra \ol{\cX}_{\c}$ of 
$p$-adic formal schemes smooth and separated of finite type 
over $\Spf W(k)$ such that 
there exists a relative simple normal 
crossing divisor $\cZ_{\c} := \bigcup_{i=1}^r \cZ_{\c,i} \hra \ol{\cX}_{\c}$ 
with 
$\ol{\cX}_{\c} \setminus \cZ_{\c} = \cX_{\c}$, 
$\ol{\cX}_{\c} \otimes_{W(k)} k = \ol{X}$, 
$\cZ_{\c,i} \otimes_{W(k)} k = Z_i$. 
Assume moreover that 
we have a lift $F_{\c}: (\ol{\cX}_{\c},\cZ_{\c}) \lra 
(\ol{\cX}_{\c},\cZ_{\c})$ 
(endomorphism as log schemes) of the 
$q$-th power Frobenius endomorphism on $(\ol{X},Z)$ which is compatible 
with $(\sigma|_{W(k)})^*:\Spf W(k) \lra \Spf W(k)$. 
Let us put $\cX := \cX_{\c} \otimes_{W(k)} O_K, 
\ol{\cX} := \ol{\cX}_{\c} \otimes_{W(k)} O_K, 
\cZ := \cZ_{\c} \otimes_{W(k)} O_K, \cZ_i := \cZ_{\c,i} \otimes_{W(k)} O_K$. 
Then $(\ol{\cX},\cZ)$ admits 
the lift $F := F_{\c} \otimes_{(\sigma|_{W(k)})^*} \sigma^*: 
(\ol{\cX},\cZ) \lra (\ol{\cX},\cZ)$ of the 
$q$-th power Frobenius endomorphism on $(\ol{X},Z)$ which is compatible 
with $\sigma^*:\Spf O_K \lra \Spf O_K$. \par 
For $a \in \N$, let us put 
$(\ol{\cX}_{\c},\cZ_{\c})_a := (\ol{\cX}_{\c,a},\cZ_{\c,a}) := 
(\ol{\cX},\cZ) \otimes_{W(k)} W_a(k)$, 
$\cZ_{\c,i,a} := \cZ_{\c,i} \otimes_{W(k)} W_a(k)$, 
$(\ol{\cX},\cZ)_a := (\ol{\cX}_a,\cZ_a) := 
(\ol{\cX},\cZ) \otimes_{W(k)} W_a(k)$. 
As in Section 2, 
let $\cG_X^t$ be the category of finite etale Galois tame covering 
of $X$ (tamely ramified at generic points of $Z$) and  
for an object $Y \ra X$ in $\cG^t_X$, let 
$\ol{Y}$ be the normalization of $\ol{X}$ in $k(Y)$, let 
$\ol{Y}^{\sm}$ be the smooth locus of $\ol{Y}$ and let 
$G_Y := \Aut (Y/X)$. Then $\ol{Y}$ admits naturally the log structure 
$M_{\ol{Y}}$ associated to $\ol{Y} \setminus Y$ such that 
the morphism $(\ol{Y},M_{\ol{Y}}) \lra (\ol{X},Z)$ is finite Kummer 
log etale (\cite{illusie}). 
Then this morphism admits a 
unique finite Kummer log etale lifts 
$(\ol{\cY}_{\c,a},M_{\ol{\cY}_{\c,a}}) \lra (\ol{\cX}_{\c},\cZ_{\c})_a$ 
for $a \in \N$ 
(hence the lift 
$(\ol{\cY}_{\c},M_{\ol{\cY}_{\c}}) := \varinjlim_a 
(\ol{\cY}_{\c,a},M_{\ol{\cY}_{\c,a}}) \lra (\ol{\cX}_{\c},\cZ_{\c})$) 
by \cite[2.8, 3.10, 3.11]{illusie}. 
Let us put $(\ol{\cY},M_{\ol{\cY}}) := 
(\ol{\cY}_{\c}, M_{\ol{\cY}_{\c}}) \otimes_{W(k)} O_K$ and 
let $\ol{\cY}^{\sm}_{\c} \subseteq \ol{\cY}_{\c}, 
\ol{\cY}^{\sm} \subseteq \ol{\cY}$ be the smooth loci. 
Then the endomorphism 
$F_{\c}: (\ol{\cX}_{\c},\cZ_{\c}) \lra (\ol{\cX}_{\c},\cZ_{\c})$ 
uniquely lifts to 
an endomorphism $F_{\c}$ on $(\ol{\cY}_{\c}, M_{\ol{\cY}_{\c}})$ and on 
$(\ol{\cY}_{\c}^{\sm}, M_{\ol{\cY}_{\c}^{\sm}} := 
M_{\ol{\cY}_{\c}}|_{\ol{\cY}^{\sm}_{\c}})$, and it induces 
the endomorphism $F := F_{\c} \otimes \sigma^*$ on 
$(\ol{\cY}, M_{\ol{\cY}})$ and on 
$(\ol{\cY}^{\sm}, M_{\ol{\cY}^{\sm}} := 
M_{\ol{\cY}}|_{\ol{\cY}^{\sm}})$. 
So the category $\FLatt(\ol{\cY}^{\sm})^{\c}$ of unit-root $F$-lattices on 
$\ol{\cY}^{\sm}$ 
is defined. \par 
Let us put $(\ol{\cY}_a,M_{\ol{\cY}_a}) := 
(\ol{\cY},M_{\ol{\cY}}) \otimes_{W(k)} W_a(k)$, 
$(\ol{\cY}^{\sm}_a,M_{\ol{\cY}^{\sm}_a}) := 
(\ol{\cY}^{\sm},M_{\ol{\cY}^{\sm}}) \otimes_{W(k)} W_a(k)$. 
Then $G_Y$ naturally acts on them and so 
we can define 
the fine log Deligne-Mumford stack $([\ol{\cY}_a/G_Y], 
M_{[\ol{\cY}_a/G_Y]})$ as in Example \ref{exam1}. 
As an open 
substack of it, we have the fine log Deligne-Mumford stack 
$([\ol{\cY}^{\sm}_a/G_Y], M_{[\ol{\cY}^{\sm}_a/G_Y]})$ for 
$a \in \N$ and they induce the 
ind fine log algebraic stack $([\ol{\cY}^{\sm}/G_Y], \allowbreak 
M_{[\ol{\cY}^{\sm}/G_Y]}) 
:= \varinjlim_a ([\ol{\cY}^{\sm}_a/G_Y], \allowbreak M_{[\ol{\cY}^{\sm}_a/G_Y]})$. 
Note also that the endomorphism 
$F$ on $(\ol{\cY}^{\sm},M_{\ol{\cY}^{\sm}})$ induces the 
endomorphism on $([\ol{\cY}^{\sm}/G_Y], M_{[\ol{\cY}^{\sm}/G_Y]})$,
which we denote also by $F$. Then we define the category 
$\FLatt([\ol{\cY}^{\sm}/G_Y])^{\c}$ of 
unit-root $F$-lattices on $[\ol{\cY}^{\sm}/G_Y]$ 
as the category of pairs $(\cE,\phi)$ consisting of 
a locally free module $\cE$ of finite rank 
on $[\ol{\cY}^{\sm}/G_Y]$ 
($=$ a compatible family of locally free modules of finite rank 
on $[\ol{\cY}^{\sm}_a/G_Y]$ for $a \in \N$) and an isomorphism 
$\phi: F^*\cE \os{=}{\lra} \cE$. It is equivalent to the category 
of objects in $\FLatt(\ol{\cY}^{\sm})^{\c}$ endowed with equivariant 
action of $G_Y$. By Katz' equivalence \eqref{katzeq} for $\ol{\cY}^{\sm}$, 
we have the equivalence 
\begin{equation}\label{katzeq2}
\Rep_{O_K^{\sigma}}(\pi_1(\ol{Y}^{\sm})) \os{=}{\lra} 
\FLatt(\ol{\cY}^{\sm})^{\c}, 
\end{equation}
and the left hand side is unchanged if we replace $\ol{Y}^{\sm}$ by 
its open formal subscheme ${\ol{Y}^{\sm}}'$ with 
$\codim (\ol{Y}^{\sm} \setminus {\ol{Y}^{\sm}}', \ol{Y}^{\sm}) \geq 2$. 
Hence the right hand side is also unchanged if we replace $\ol{\cY}^{\sm}$ by 
its open formal subscheme ${\ol{\cY}^{\sm}}'$ with 
$\codim (\ol{\cY}^{\sm} \setminus {\ol{\cY}^{\sm}}', \ol{\cY}^{\sm}) 
\geq 2$. Using this, we see by the argument in the proof of 
Proposition \ref{wd} that the limit 
$\varinjlim_{Y\ra X \in \cG_X^t} \FLatt ([\ol{\cY}^{\sm}/G_Y])^{\c}$ 
is defined. Also, let $\Vect ([\ol{\cY}^{\sm}/G_Y])$ be 
the category of locally free modules of finite rank 
on $[\ol{\cY}^{\sm}/G_Y]$. Then the limit 
$\varinjlim_{Y\ra X \in \cG_X^t} \Vect ([\ol{\cY}^{\sm}/G_Y])$ is defined 
when $X$ is a curve. \par 
On the other hand, we have the morphism 
$\ol{\cX}_{\c,a} \lra [\Af_{W_a(k)}^r/\G_{m,W_a(k)}^r]$ defined by 
the log structure $M_{\ol{\cX}_a}$ on $\ol{\cX}_{\c,a}$ 
associated to $\cZ_{\c,a}$ and 
the natural homomorphism 
$\N^r \lra \ol{M_{\ol{\cX}_{\c,a}}} \cong 
\bigoplus_{i=1}^r \N_{\cZ_{\c,i,a}}$,
where $\N_{\cZ_{\c,i,a}}$ is the direct image to $\ol{\cX}_{\c,a}$ of the 
constant sheaf on $\cZ_{\c,i,a}$ with fiber $\N$. 
Then we put 
$(\ol{\cX}_{\c},\cZ_{\c})_{a}^{1/n} := \ol{\cX}_{\c,a} 
\times_{[\Af_{W_a(k)}^r/\G_{m,W_a(k)}^r],n} 
[\Af_{W_a(k)}^r/\G_{m,W_a(k)}^r]$ for $n \in \N$ prime to $p$, 
and put $(\ol{\cX}_{\c},\cZ_{\c})^{1/n} := 
\varinjlim_a (\ol{\cX}_{\c},\cZ_{\c})_a^{1/n}$. 
Then they admit a canonical endomorphism $F_0$ which lifts 
the $q$-th power map on $(\ol{X},Z)^{1/n}$ and which is compatible with 
$(\sigma|_{W(k)})^*$. Let us put 
$(\ol{\cX},\cZ)_{a}^{1/n} := (\ol{\cX},\cZ)_{\c,a}^{1/n} \otimes_{W(k)} O_K$, 
$(\ol{\cX},\cZ)^{1/n} := \varinjlim_a (\ol{\cX},\cZ)_{a}^{1/n}$. 
Then it admits the endomorphism $F := F_{\c} \otimes \sigma^*$. 
Let 
$\Vect((\ol{\cX},\cZ)^{1/n})$ be 
the category of 
locally free modules of finite rank 
on $(\ol{\cX},\cZ)^{1/n}$ 
($=$ a compatible family of locally free modules of finite rank 
on $(\ol{\cX},\cZ)_a^{1/n}$) and 
let $\FLatt((\ol{\cX},\cZ)^{1/n})^{\c}$ be 
the category of unit-root $F$-lattices on $(\ol{\cX},\cZ)^{1/n}$, 
that is, the category of pairs $(\cE,\phi)$ consisting of 
a locally free module $\cE$ of finite rank 
on $(\ol{\cX},\cZ)^{1/n}$ and an isomorphism 
$\phi: F^*\cE \os{=}{\lra} \cE$. Then we have the limits 
$$ \varinjlim_{(n,p)=1} \Vect((\ol{\cX},\cZ)^{1/n}), \,\,\,\,
\varinjlim_{(n,p)=1} \FLatt((\ol{\cX},\cZ)^{1/n})^{\c}. $$
Then we have the following theorem: 

\begin{theorem}\label{flatttt}
Let the notations be as above. 
\begin{enumerate}
\item 
There exists an equivalence 
\begin{equation}\label{flatt1}
\Rep_{O_K^{\sigma}}(\pi_1^t(X)) \os{=}{\lra} 
\varinjlim_{Y\ra X \in \cG_X^t} \FLatt([\ol{\cY}^{\sm}/G_Y])^{\c}
\end{equation} 
$($where $\Rep_{O_K^{\sigma}}(\pi_1^t(\ol{X}))$ 
denotes the category of 
continuous representations of the tame fundamental group 
$\pi_1^t(\ol{X})$ $($tamely ramified at generic points of $Z)$ 
to free $O_K^{\sigma}$-modules of finite rank$)$ 
and an equivalence 
\begin{equation}\label{flatt2}
\varinjlim_{Y\ra X \in \cG_X^t} \FLatt([\ol{\cY}^{\sm}/G_Y])^{\c}_{\Q} 
\os{=}{\lra}
\varinjlim_{Y\ra X \in \cG_X^t} \FIsoc([\ol{Y}^{\sm}/G_Y])^{\circ} 
\end{equation}
such that the following diagram is commutative$:$ 
\begin{equation}\label{flatt3}
\begin{CD}
\Rep_{O_K^{\sigma}}(\pi_1^t(X))_{\Q} 
@>{\text{\eqref{flatt1}}_{\Q}}>> 
\varinjlim_{Y\ra X \in \cG_X^t} \FLatt([\ol{\cY}^{\sm}/G_Y])^{\c}_{\Q} \\ 
@\vert @V{\text{\eqref{flatt2}}}VV \\ 
\Rep_{K^{\sigma}}(\pi_1^t(X)) 
@>{\text{\eqref{eqeq2t}}}>> 
\varinjlim_{Y\ra X \in \cG_X^t} \FIsoc([\ol{Y}^{\sm}/G_Y])^{\circ}. 
\end{CD}
\end{equation}
\item 
There exist equivalences 
\begin{equation}\label{flatt4}
\varinjlim_{Y\ra X \in \cG_X^t} \FLatt([\ol{\cY}^{\sm}/G_Y])^{\c}
 \os{=}{\lra} \varinjlim_{(n,p)=1} \FLatt((\ol{\cX},\cZ)^{1/n})^{\c}, 
\end{equation}
\begin{equation}\label{flatt5}
\varinjlim_{(n,p)=1} \FLatt((\ol{\cX},\cZ)^{1/n})^{\c}_{\Q} 
\os{=}{\lra} 
\varinjlim_{(n,p)=1} \FIsoc((\ol{X},Z)^{1/n})^{\circ}
\end{equation}
which makes the following diagram commutative$:$ 
\begin{equation}\label{flatt6}
\begin{CD}
\varinjlim_{Y\ra X \in \cG_X^t} \FLatt([\ol{\cY}^{\sm}/G_Y])^{\c}_{\Q}
@>{\text{\eqref{flatt4}}_{\Q}}>> 
\varinjlim_{(n,p)=1} \FLatt((\ol{\cX},\cZ)^{1/n})^{\c}_{\Q} \\ 
@V{\text{\eqref{flatt2}}}VV @V{\text{\eqref{flatt5}}}VV \\ 
\varinjlim_{Y\ra X \in \cG_X^t} \FIsoc([\ol{Y}^{\sm}/G_Y])^{\circ}
@>{\text{\eqref{tmt1}}^{\circ}}>> 
\varinjlim_{(n,p)=1} \FIsoc((\ol{X},Z)^{1/n})^{\circ}. 
\end{CD}
\end{equation}
In particular, we have the equivalence 
\begin{equation}\label{flatt7}
\Rep_{O_K^{\sigma}}(\pi_1^t(X)) \os{=}{\lra} 
\varinjlim_{(n,p)=1} \FLatt((\ol{\cX},\cZ)^{1/n})^{\c}
\end{equation}
which is defined as the composite $\text{\eqref{flatt4}}\circ
\text{\eqref{flatt1}}$. 
When $X$ is a curve, we have also a functor 
\begin{equation}\label{flatt8}
\varinjlim_{Y\ra X \in \cG_X^t} \Vect([\ol{\cY}^{\sm}/G_Y])
 {\lra} \varinjlim_{(n,p)=1} \Vect((\ol{\cX},\cZ)^{1/n}). 
\end{equation}
satisfying $F\text{-\eqref{flatt8}}^{\circ}=\text{\eqref{flatt4}}$. 
\end{enumerate}
\end{theorem}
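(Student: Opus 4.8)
\textbf{Proof proposal for Theorem \ref{flatttt}.}

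The plan is to mimic, in the lifted and integral setting, the proofs of Theorems \ref{thm1}, \ref{cor1}, \ref{thm2} and Proposition \ref{root0}, replacing convergent isocrystals by $F$-lattices and Crew's equivalence \eqref{neweq} by Katz's equivalence \eqref{katzeq'}. For part (1): first I would reprove the equivalence \eqref{eqeqeq1} (the tame analogue of \eqref{eqeqeq11}) verbatim from the proof of Theorem \ref{cor1}, obtaining
$$\Rep_{O_K^{\sigma}}(\pi_1^t(X)) \os{=}{\lra} \varinjlim_{Y \ra X \in \cG_X^t} G_Y\text{-}\Sm_{O_K^{\sigma}}(\ol{Y}^{\sm}),$$
where the key inputs are Lemma \ref{cor1lem}, the purity/ramification results of \cite{purity}, \cite{kedlayaswanII}, and Zariski--Nagata purity; all of these are insensitive to whether the coefficients are $K^{\sigma}$ or $O_K^{\sigma}$. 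Next, for each $Y$ I would form the $2$-truncated simplicial formal scheme $\ol{\cY}^{\sm}_{\b}$ and use Katz's equivalence \eqref{katzeq'} for each $\ol{\cY}^{\sm}_m$ together with its functoriality (noted after \eqref{katzeq'}) to get $G_Y\text{-}\Sm_{O_K^{\sigma}}(\ol{\cY}^{\sm}) = \Sm_{O_K^{\sigma}}(\ol{\cY}^{\sm}_{\b}) \os{=}{\lra} \FLatt(\ol{\cY}^{\sm}_{\b})^{\c} \os{=}{\lra} \FLatt([\ol{\cY}^{\sm}/G_Y])^{\c}$, exactly as in \eqref{grepfisoc}. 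Composing and taking the limit over $\cG_X^t$ gives \eqref{flatt1}. The equivalence \eqref{flatt2} is obtained by $\Q$-linearizing the previous construction: on each $\ol{\cY}^{\sm}_{\b}$ one applies Crew's isogeny equivalence \eqref{crewisog}, and compatibility of \eqref{katzeq}, \eqref{neweq}, \eqref{katzeq'} with $\Q$-linearization (which is the factorization \eqref{eq5} $=$ \eqref{crewisog}$\circ$\eqref{katzeq}) yields the commutative square \eqref{flatt3}; here one must check that passing to $[\ol{\cY}^{\sm}/G_Y]$ commutes with $\Q$-linearization, which follows since $\FLatt([\ol{\cY}^{\sm}/G_Y])^{\c}$ is the category of $G_Y$-equivariant objects in $\FLatt(\ol{\cY}^{\sm})^{\c}$ and $\Q$-linearization is exact.

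For part (2), the functor \eqref{flatt4} is built exactly as the functor \eqref{namba}/\eqref{tmt1} in the proof of Proposition \ref{root0}, but with the bisimplicial resolution lifted to $\Spf W(k)$ (the simplicial resolutions and bisimplicial resolutions of Definition \ref{bisimp} all lift uniquely by \cite[2.8, 3.10, 3.11]{illusie}, compatibly with the canonical Frobenius lifts $F_{\c}$, $F_0$). The essential point is that the analogue of \eqref{stiso5} and \eqref{stiso6} for $F$-lattices holds: for a strict finite étale morphism of $p$-adic formal schemes the category of $F$-lattices satisfies étale descent (this is immediate since $F$-lattices are just locally free sheaves with Frobenius structure and étale descent for coherent sheaves is standard), and the analogue of \cite[3.1]{purity} --- removal of a codimension $\geq 2$ locus --- is automatic here because $\ol{\cX}$, $\ol{\cY}^{\sm}$ are smooth formal schemes and removing a codimension $\geq 2$ closed subscheme does not change the category of locally free sheaves of finite rank (Hartogs). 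So the composite analogous to \eqref{star} goes through, giving \eqref{flatt4}; its $\Q$-linearization, combined with \eqref{crewisog} on the bisimplicial resolution, gives \eqref{flatt5} and the commutativity of \eqref{flatt6} (using the already-established commutativity of \eqref{commu1} and \eqref{flatt3}). Then \eqref{flatt7} is simply $\text{\eqref{flatt4}}\circ\text{\eqref{flatt1}}$. When $X$ is a curve, $\ol{Y}^{\sm} = \ol{Y}$, $\ol{\cY}^{\sm} = \ol{\cY}$, $\ol{X} = \ol{X}'$, so the no-Frobenius version \eqref{flatt8} is defined by the same composite with $\FLatt(-)^{\c}$ replaced by $\Vect(-)$, and $F\text{-\eqref{flatt8}}^{\circ} = \text{\eqref{flatt4}}$ by construction.

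The main obstacle I anticipate is proving that \eqref{flatt4} (and hence \eqref{flatt7}) is an \emph{equivalence}, not merely a functor. In the isocrystal setting the corresponding statement for $\text{\eqref{tmt1}}^{\circ}$ was deduced in Theorem \ref{thm2} by rewriting everything through \eqref{neweq} and reducing to the tautological equivalence \eqref{331/4} between $\Rep_{K^{\sigma}}(\pi_1^t(X))$ and a limit of categories of smooth sheaves on the bisimplicial resolutions $\ol{X}^{(n)}_{\b\b}$. The analogue here is to run the same argument with $\Sm_{O_K^{\sigma}}$ in place of $\Sm_{K^{\sigma}}$, using \eqref{katzeq'} instead of \eqref{neweq}: one rewrites \eqref{flatt2}$\circ$\eqref{flatt1}$^{-1}\circ$\eqref{flatt4}$\circ$\eqref{flatt1} as a composite passing through $\varinjlim_{(n,p)=1}\Sm_{O_K^{\sigma}}(\ol{\cX}^{(n)}_{\b\b})$, shows that composing with the restriction to $\Sm_{O_K^{\sigma}}(\cX_{\b\b})$ lands fully faithfully in $\Rep_{O_K^{\sigma}}(\pi_1(X))$, and identifies the essential image as the tamely ramified representations. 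The delicate bookkeeping is the integral (non-$\Q$-linearized) statement: one needs that an $O_K^{\sigma}$-representation of $\pi_1(X)$ that becomes trivial on $\pi_1(\ol{\cY}^{\sm})$-level data is precisely one that is tame and extends, which is exactly what Lemma \ref{cor1lem} and the purity argument already give at the level of $\pi_1$, independently of the coefficient ring. Once \eqref{flatt4} is an equivalence, \eqref{flatt5} being an equivalence follows from \eqref{crewisog} applied bisimplicially, and the whole diagram \eqref{flatt6} commutes by the compatibilities established in parts (1) and in Proposition \ref{root0}.
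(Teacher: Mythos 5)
Your overall route is the same as the paper's: the integral tame analogue of \eqref{eqeqeq1} combined with Katz's equivalence \eqref{katzeq'} applied to the (lifted) simplicial resolutions gives \eqref{flatt1}; the functor \eqref{flatt4} is built by lifting the bisimplicial resolutions of Proposition \ref{root0} to $\Spf W(k)$ compatibly with Frobenius; and the equivalence of \eqref{flatt4}/\eqref{flatt7} is obtained by rewriting everything through $\Sm_{O_K^{\sigma}}$ on the bisimplicial resolutions, exactly as in Theorem \ref{thm2}. Two of your justifications, however, do not hold as stated.

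First, your claim that passing to $[\ol{\cY}^{\sm}/G_Y]$ (equivalently, to descent data on $\ol{\cY}^{\sm}_{\b}$) commutes with $\Q$-linearization ``since $\Q$-linearization is exact'' is not a proof: the issue is essential surjectivity, i.e. whether a unit-root $F$-lattice on $\ol{\cY}^{\sm}$ whose equivariant (descent) structure is given only in the isogeny category can be replaced by an isogenous lattice carrying an honest equivariant structure. The paper deliberately avoids proving this directly: it introduces the a priori larger category $\{\FLatt(\ol{\cY}^{\sm}_m)^{\c}_{\Q}\}_{m=0,1,2}$, defines \eqref{flatt2} as a composite through the comparison functor \eqref{nu}, proves the commutativity of \eqref{flatt3} from the factorization of Crew's equivalence as $\text{\eqref{crewisog}}\circ\text{\eqref{katzeq}}$, and only afterwards deduces that \eqref{flatt2} is an equivalence (hence, as a corollary, that \eqref{nu} is, and likewise \eqref{nunu} in the construction of \eqref{flatt5}) because the remaining arrows of the square are equivalences. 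You have all the ingredients for this indirect argument, so the gap is repairable, but as written this step is unjustified. Second, your ``Hartogs'' justification for discarding the codimension $\geq 2$ locus is false for arbitrary locally free sheaves: restriction to the complement of a codimension $\geq 2$ closed subset of a smooth formal scheme is fully faithful but not essentially surjective in general. The correct argument, and the one the paper uses, is that for unit-root $F$-lattices the category is identified via \eqref{katzeq'} with $\Sm_{O_K^{\sigma}}$ of the special fiber, which is insensitive to removing a codimension $\geq 2$ closed subset by Zariski--Nagata purity; this is also consistent with the fact that the Frobenius-free functor \eqref{flatt8} is only constructed for curves, where $\ol{X}=\ol{X}'$ and no such step occurs.
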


\begin{proof}
The method of the proof is similar to that of 
Theorem \ref{cor1}, Proposition \ref{root0} and Theorem \ref{thm2}, 
as we explain below. \par 
Let $G_Y\text{-}\Sm_{O_K^{\sigma}}(\ol{Y}^{\sm})$ be the category of 
smooth $O_K^{\sigma}$-sheaves on $\ol{Y}^{\sm}$ endowed with 
equivariant $G_Y$-action. Then we have the equivalence 
\begin{equation*}
\Rep_{O_K^{\sigma}}(\pi_1^t(X)) \os{=}{\lra} 
\varinjlim_{Y \ra X \in \cG_X^t} 
G_Y\text{-}\Sm_{O_K^{\sigma}}(\ol{Y}^{\sm}) 
\end{equation*}
(which is the integral version of \eqref{eqeqeq1} and can be proved exactly 
in the same way). For $m=0,1,2$, let 
$\ol{Y}^{\sm}_m$, $\ol{\cY}^{\sm}_m$ be the $(m+1)$-fold fiber 
product of $\ol{Y}^{\sm}$, $\ol{\cY}^{\sm}$ over $[\ol{Y}^{\sm}/G_Y], 
[\ol{\cY}^{\sm}/G_Y]$ respectively. Then we have $2$-truncated simplicial 
scheme $\ol{Y}^{\sm}_{\b}$, $2$-truncated simplicial formal 
scheme $\ol{\cY}^{\sm}_{\b}$ and we have the equivalence 
\begin{align*}
\varinjlim_{Y \ra X \in \cG_X^t} 
G_Y\text{-}\Sm_{O_K^{\sigma}}(\ol{Y}^{\sm}) 
& \os{=}{\lra} 
\Sm_{O_K^{\sigma}}(\ol{Y}^{\sm}_{\b}) \\ 
& \hspace{-15pt} \os{\text{\eqref{katzeq'} for $\ol{\cY}_{\b}^{\sm}$}}{\lra} 
\varinjlim_{Y\ra X \in \cG_X^t} \FLatt(\ol{\cY}^{\sm}_{\b})^{\c} \\ 
& \os{=}{\lla} 
\varinjlim_{Y\ra X \in \cG_X^t} \FLatt([\ol{\cY}^{\sm}/G_Y])^{\c}, 
\end{align*}
where the first and the third equivalence follow from the etale descent. 
By composing these, we obtain the equivalence \eqref{flatt1}. \par 
Let us denote the category of compatible family of objects in 
$\FLatt(\ol{\cY}^{\sm}_m)^{\c}_{\Q}$ \,($m=0,1,2$) by 
$\{\FLatt(\ol{\cY}^{\sm}_m)^{\c}_{\Q}\}_{m=0,1,2}$. 
(Note that this is not a priori equal to the $\Q$-linearization 
$\FLatt(\ol{\cY}^{\sm}_{\b})^{\c}_{\Q}$ of the category 
$\FLatt(\ol{\cY}^{\sm}_{\b})^{\c}$ of unit-root $F$-lattices on 
$\ol{\cY}^{\sm}_{\b}$.) Then 
the functor 
\eqref{flatt2} is defined as the composite of equivalences 
\begin{align}
\varinjlim_{Y\ra X \in \cG_X^t} \FLatt([\ol{\cY}^{\sm}/G_Y])^{\c}_{\Q} 
& \os{=}{\lra} 
\varinjlim_{Y\ra X \in \cG_X^t} \FLatt(\ol{\cY}^{\sm}_{\b})^{\c}_{\Q} 
\label{nu0} \\ 
& \lra 
\varinjlim_{Y\ra X \in \cG_X^t}
\{\FLatt(\ol{\cY}^{\sm}_m)^{\c}_{\Q}\}_{m=0,1,2} \nonumber \\ 
& \hspace{-15pt} 
\os{\text{\eqref{crewisog} for $\ol{\cY}_{\b}^{\sm}$},=}{\lra} 
\varinjlim_{Y\ra X \in \cG_X^t} \FIsoc(\ol{Y}^{\sm}_{\b})^{\circ} \nonumber \\ 
& \os{=}{\lla} 
\varinjlim_{Y\ra X \in \cG_X^t} \FIsoc([\ol{Y}^{\sm}/G_Y])^{\circ}. \nonumber 
\end{align}
The commutativity 
of the diagram \eqref{flatt3} is the immediate consequence of 
the construction of the Crew's equivalence $G$ (as the composite 
$\text{\eqref{crewisog}} \circ \text{\eqref{katzeq}}$) for 
$\ol{Y}^{\sm}_{\b}$ and $\ol{\cY}^{\sm}_{\b}$. 
Since \eqref{flatt1}${}_{\Q}$ and \eqref{eqeq2t} are equivalences, 
we see that \eqref{flatt2} is also an equivalence. So we see that the 
the functor 
\begin{equation}\label{nu}
\varinjlim_{Y\ra X \in \cG_X^t} \FLatt(\ol{\cY}^{\sm}_{\b})^{\c}_{\Q} 
\lra 
\varinjlim_{Y\ra X \in \cG_X^t}
\{\FLatt(\ol{\cY}^{\sm}_m)^{\c}_{\Q}\}_{m=0,1,2}
\end{equation}
of the second line of \eqref{nu0} is an equivalence. \par 
Next, let us recall that 
the functor $\FIsoc([\ol{Y}^{\sm}/G_Y]) \lra 
\FIsoc((\ol{X},Z)^{1/n})$ is defined as 
the composite 
\begin{align*}
\FIsoc([\ol{Y}^{\sm}/G_Y]) & \os{=}{\lra} 
\FIsoc(\ol{Y}^{\sm}_{\b}) \\ 
& \,\,{\lra}\,\, \FIsoc(\ol{U}_{\b\b\b}) \\ 
& \os{=}{\lra} \FIsoc(\ol{X}'_{\b\b}) \\ 
& \os{=}{\lra} \FIsoc(\ol{X}_{\b\b}) \\ 
& \os{=}{\lra} \FIsoc((\ol{X},Z)^{1/n}), 
\end{align*}
where $\ol{Y}^{\sm}_{\b}, \ol{U}_{\b\b\b}, \ol{X}'_{\b\b}$ are as in 
the proof of Proposition \ref{root0}. Since they are finite Kummer log etale 
over its image in $(\ol{X},Z)$ 
with respect to suitable log structures associated to certain 
simple normal crossing divisors, we have the canonical lifting 
$\ol{\cY}^{\sm}_{\c,\b}, 
\ol{\cU}_{\c,\b\b\b}, \ol{\cX}'_{\c,\b\b}$ of them to $p$-adic formal schemes 
smooth over $\Spf W(k)$ and 
they admit the endomorphism 
$F_{\c}$ which lifts the $q$-th power map on the special fiber and which are 
compatible with $F_{\c}$ on $(\ol{\cX}_{\c},\cZ_{\c})$. 
If we put 
$\ol{\cY}^{\sm}_{\b} := \ol{\cY}^{\sm}_{\c,\b} \otimes_{W(k)} O_K, 
\ol{\cU}_{\b\b\b} := \ol{\cU}_{\c,\b\b\b} \otimes_{W(k)} O_K$ and 
$\ol{\cX}'_{\b\b} := \ol{\cX}'_{\c,\b\b} \otimes_{W(k)} O_K$, 
they are smooth over $\Spf O_K$ and they admit 
the endomorphism 
$F := F_{\c} \otimes \sigma^*$ 
which lifts the $q$-th power map on the special fiber and which are 
compatible with $F$ on $(\ol{\cX},\cZ)$. 
Hence we can define the sequence of functors 
\begin{align}
\FLatt([\ol{\cY}^{\sm}/G_Y])^{\c} & \os{=}{\lra} 
\FLatt(\ol{\cY}^{\sm}_{\b})^{\c} \label{43/0} \\ 
& \,\, {\lra} \,\, \FLatt(\ol{\cU}_{\b\b\b})^{\c} \nonumber \\ 
& \os{=}{\lra} \FLatt(\ol{\cX}'_{\b\b})^{\c} \nonumber \\ 
& \os{=}{\lra} \FLatt(\ol{\cX}_{\b\b})^{\c} \nonumber \\ 
& \os{=}{\lra} \FLatt((\ol{\cX},\cZ)^{1/n})^{\c} \nonumber 
\end{align}
(where the equality follows from descent property and the fact that 
$\codim (\ol{\cX}_{\b\b} \setminus \ol{\cX}'_{\b\b}, \ol{\cX}_{\b\b}) 
\geq 2$). So we have defined the functor \eqref{flatt4}. 
When $X$ is a curve, we can define the functor 
\eqref{flatt8} in the same way as above, noting the equality 
$\ol{\cX} = \ol{\cX}'$ in this case. \par 
In the following in this proof, we denote $\ol{X}_{\b\b}, \ol{\cX}_{\b\b}$ 
defined from $(\ol{X},Z)^{1/n}, (\ol{\cX},\cZ)^{1/n}$ by 
$\ol{X}^{(n)}_{\b\b}, \ol{\cX}^{(n)}_{\b\b}$, because they depend on $n$. 
Also, let us denote the category of compatible family of objects in 
$\FLatt(\ol{\cX}^{(n)}_{kl})^{\c}_{\Q}$ \,($k,l=0,1,2$) by 
$\{\FLatt(\ol{\cX}^{(n)}_{kl})^{\c}_{\Q}\}_{k,l=0,1,2}$. 
Then 
the functor \eqref{flatt5} is defined as the composite 
\begin{align}
\varinjlim_{(n,p)=1} \FLatt((\ol{\cX},\cZ)^{1/n})^{\c}_{\Q} & \os{=}{\lra} 
\varinjlim_{(n,p)=1} \FLatt(\ol{\cX}_{\b\b}^{(n)})^{\c}_{\Q} \label{nunu0} 
\\ & 
\lra 
\varinjlim_{(n,p)=1} \{\FLatt(\ol{\cX}_{kl}^{(n)})^{\c}_{\Q}\}_{k,l=0,1,2} 
\nonumber \\ 
& \hspace{-15pt}
\os{\text{\eqref{crewisog} for $\ol{\cX}_{\b\b}^{(n)}$},=}{\lra}
\varinjlim_{(n,p)=1} 
\FIsoc(\ol{X}_{\b\b}^{(n)})^{\c} \nonumber \\ & \os{=}{\lra} 
\varinjlim_{(n,p)=1} 
\FIsoc((\ol{X},Z)^{1/n})^{\c}. \nonumber 
\end{align}
Then one can prove the commutativity \eqref{flatt6} easily, using 
the functoriality of \eqref{crewisog}. \par 
Finally we prove that the functor \eqref{flatt4} is an equivalence. 
To prove this, it suffices to prove the equivalence 
\eqref{flatt7}. Note that a part of the functors \eqref{43/0}
$$ 
\FLatt(\ol{\cY}^{\sm}_{\b})^{\c} 
\lra \FLatt(\ol{\cU}_{\b\b\b})^{\c} 
\os{=}{\lra} \FLatt(\ol{\cX}'_{\b\b})^{\c} 
\os{=}{\lra} \FLatt(\ol{\cX}_{\b\b})^{\c} 
$$ 
is rewritten via the equivalence \eqref{katzeq'} in the following way: 
\begin{equation}\label{43/1}
\Sm_{O_K^{\sigma}}(\ol{Y}_{\b}^{\sm}) \lra 
\Sm_{O_K^{\sigma}}(\ol{U}_{\b\b\b}) \os{=}{\lla} 
\Sm_{O_K^{\sigma}}(\ol{X}'_{\b\b}) \os{=}{\lla} 
\Sm_{O_K^{\sigma}}(\ol{X}_{\b\b}).
\end{equation}
So it induces the funtor 
\begin{equation}\label{43/2}
\varinjlim_{Y \ra X \in \cG_X^t} \Sm_{O_K^{\sigma}}(\ol{Y}_{\b}^{\sm}) 
\lra \varinjlim_{(n,p)=1} \Sm_{O_K^{\sigma}}(\ol{X}^{(n)}_{\b\b})
\end{equation}
and using this, we can rewrite the 
functor \eqref{flatt7} as the composite 
\begin{align}
\Rep_{O_K^{\sigma}}(\pi_1^t(X)) & 
\os{=}{\lra} \varinjlim_{Y\to X \in \cG} \Sm_{O_K^{\sigma}}(\ol{Y}^{\sm}_{\b})
 \os{\text{\eqref{43/2}}}{\lra} 
\varinjlim_{(n,p)=1} \Sm_{O_K^{\sigma}}(\ol{X}^{(n)}_{\b\b})  
\label{43/3} \\  
& \os{\text{\eqref{katzeq'}},=}{\lra}
\varinjlim_{(n,p)=1} \FLatt(\ol{\cX}^{(n)}_{\b\b})^{\c}
\os{=}{\lla} \varinjlim_{(n,p)=1} \FLatt((\ol{\cX},\cZ)^{1/n})^{\c}. \nonumber 
\end{align}
So it suffices to prove that the first line 
\begin{equation}\label{43/4}
\Rep_{O_K^{\sigma}}(\pi_1^t(X))
\os{=}{\lra} \varinjlim_{Y\to X \in \cG} \Sm_{O_K^{\sigma}}(\ol{Y}^{\sm}_{\b})
 \os{\text{\eqref{43/2}}}{\lra} 
\varinjlim_{(n,p)=1} \Sm_{O_K^{\sigma}}(\ol{X}^{(n)}_{\b\b})  
\end{equation} 
of the functor \eqref{43/3} is an equivalence. 
By construction, the composite of the functor \eqref{43/4} and the 
restriction functor 
\begin{equation}\label{43/5}
\varinjlim_{(n,p)=1} \Sm_{O_K^{\sigma}}(\ol{X}^{(n)}_{\b\b})  
\lra \Sm_{O_K^{\sigma}}(X_{\b\b})  
\end{equation}
($X_{\b\b} := X \times_{\ol{X}} \ol{X}^{(n)}_{\b\b}$) 
is equal to the composite 
$\Rep_{O_K^{\sigma}}(\pi_1^t(X)) \allowbreak 
\os{\subset}{\lra} \Rep_{O_K^{\sigma}}(\pi_1(X)) 
\os{=}{\lra} \Sm_{O_K^{\sigma}}(X_{\b\b})$, which is fully faithful. 
Also, it is easy to see that \eqref{43/5} is faithful. So 
\eqref{331/4} is fully faithful. Also, it is obvious that 
any object $\rho$ in $\Rep_{O_K^{\sigma}}(\pi_1(X))$ which is sent to 
an object in $\varinjlim_{(n,p)=1} \Sm_{O_K^{\sigma}}(\ol{X}^{(n)}_{\b\b})  
\subset \Sm_{K^{\sigma}}(X_{\b\b})$ is tamely ramified along $Z$. 
So the functor \eqref{43/4} is an equivalence as desired and we are done. 
(As a corollary, we see that the functor 
\begin{equation}\label{nunu}
\varinjlim_{(n,p)=1} \FLatt(\ol{\cX}_{\b\b}^{(n)})^{\c}_{\Q} 
\lra 
\varinjlim_{(n,p)=1} \{\FLatt(\ol{\cX}_{kl}^{(n)})^{\c}_{\Q}\}_{k,l=0,1,2} 
\end{equation}
of the second line in \eqref{nunu0} is an equivalence.) 
\end{proof}

As for the functor \eqref{flatt8}, we have the following result: 

\begin{proposition}
Let the notations be as above and assume that $\ol{X}$ is a 
$(g,l)$-curve, $X$ is a $(g,l')$-curve $\,(l' \geq l)$. 
Then the functor \eqref{flatt8} is an equivalence if 
$(g,l,l') \not= (0,0,1)$ and it is not an equivalence if 
$(g,l,l') = (0,0,1)$. 
\end{proposition}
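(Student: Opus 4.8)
The plan is to imitate, in the lifted setting, the proofs of Theorem \ref{curve} and Theorem \ref{curvelog}(1) for the ``if'' direction, and the proof of Theorem \ref{curvelog}(2) for the ``only if'' direction. Assume first that $(g,l,l') \neq (0,0,1)$. I would fix a positive integer $n$ prime to $p$ and a chart $(\ol{X}_0,\{t_i\})$ for $(\ol{X},Z)$, take a finite Kummer log \'etale Galois covering $(\ol{Y},M_{\ol{Y}}) \lra (\ol{X},M_{\ol{X}})$ as in claim 1 of the proof of Theorem \ref{curve} with Galois group $G_Y$, and form the (tri)simplicial schemes $\ol{X}_{\b\b}$, $\ol{Y}_{\b}$, $\ti{Y}_{\b\b}$, $\ti{Y}_{\b\b\b}$ as there. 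By claim 2 of that proof, every transition morphism in sight --- the Čech hypercoverings $\ol{X}_{\b\b} \lra (\ol{X},Z)^{1/n}$ and $\ol{Y}_{\b} \lra [\ol{Y}/G_Y]$, and the maps $\ti{Y}_{k\b} \lra \ol{Y}_k$, $\ti{Y}_{kl\b} \lra \ti{Y}_{kl}$ --- is a strict \'etale Čech hypercovering, and all the schemes involved are finite Kummer log \'etale over $(\ol{X},Z)$ composed with strict \'etale maps. Hence, exactly as was done in the beginning of this section for $\ol{\cY}_{\c}$ (via \cite[2.8, 3.10, 3.11]{illusie} for the canonical lifting of finite Kummer log \'etale morphisms and ordinary deformation theory for the strict \'etale ones), every scheme in this diagram admits a canonical lift to a $p$-adic formal scheme smooth over $\Spf W(k)$ carrying a Frobenius lift compatible with $F_{\c}$ on $(\ol{\cX}_{\c},\cZ_{\c})$; I denote the lifts, base-changed to $O_K$, by $\ol{\cX}_{\b\b}$, $\ol{\cY}_{\b}$, $\ti{\cY}_{\b\b}$, $\ti{\cY}_{\b\b\b}$.

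Next I would form, for each $n$, the ``$F$-less'' lifted analogue of the diagram \eqref{hanamaru} (equivalently \eqref{hanamarulog} stripped of log and exponent data):
\begin{align*}
\Vect((\ol{\cX},\cZ)^{1/n}) & \os{=}{\lra} \Vect(\ol{\cX}_{\b\b}) \\
& \lra \Vect(\ti{\cY}_{\b\b\b}) \\
& \os{=}{\lra} \Vect(\ti{\cY}_{\b\b}) \\
& \os{=}{\lra} \Vect(\ol{\cY}_{\b}) = \Vect([\ol{\cY}/G_Y]),
\end{align*}
where the equalities are strict \'etale descent for vector bundles on formal schemes (combined, in the first line, with the isomorphism \eqref{multidiag} of Proposition \ref{key_sr}), the remaining arrow being pullback along $\ol{Y}_{\b} \lra \ol{X}$; note that in the curve case $\ol{\cY}^{\sm} = \ol{\cY}$, so the target is indeed the one appearing in \eqref{flatt8}. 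Varying $n$ yields a functor $\varinjlim_{(n,p)=1}\Vect((\ol{\cX},\cZ)^{1/n}) \lra \varinjlim_{Y\to X \in \cG_X^t}\Vect([\ol{\cY}/G_Y])$, and, just as in the proof of Theorem \ref{curve}, one checks directly from the constructions that it is the inverse of \eqref{flatt8}. Compatibility with Frobenius, i.e.\ the identity $F\text{-\eqref{flatt8}}^{\circ} = \text{\eqref{flatt4}}$, is built into the construction, and this is consistent with \eqref{flatt4} being an equivalence by Theorem \ref{flatttt}.

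For the case $(g,l,l') = (0,0,1)$ I would argue that \eqref{flatt8} fails to be essentially surjective. Here $\ol{X} \cong \P^1_k$, $X \cong \Af^1_k$, $Z$ is a single $k$-rational point, and $\cG_X^t$ contains only the trivial covering, so the source of \eqref{flatt8} is $\Vect(\ol{\cX})$ with $\ol{\cX}$ a lift of $\P^1_k$, and \eqref{flatt8} is pullback along $(\ol{\cX},\cZ)^{1/n} \lra \ol{\cX}$ followed by passage to the limit. For $n \geq 2$ I would consider the line bundle $\cO_{(\ol{\cX},\cZ)^{1/n}}(-\cZ^{1/n})$ on $(\ol{\cX},\cZ)^{1/n}$, where $\cZ^{1/n}$ is the reduced Cartier divisor cutting out the log locus --- on the atlas $\ol{\cX}' = \Spf R'$ (with an $n$-th root $s$ of the local equation of $\cZ$) it is $s\cO_{\ol{\cX}'}$ with the natural $\mu_n$-action $\zeta\cdot s = \zeta s$. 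Restricting to the formal affine chart $\cU \subseteq \ol{\cX}$ obtained by deleting a second $k$-rational point (so $\cZ \cap \cU$ is the origin and $\cU \cong \fAf^1_{O_K}$), the stack of roots becomes $[\cU^{(n)}/\mu_n]$ with $\cU^{(n)} \cong \fAf^1_{O_K}$, and the argument at the end of the proof of Theorem \ref{curvelog}(2) applies verbatim: the pullback to $[\cU^{(n)}/\mu_n]$ of any vector bundle on $\cU$ is generated by $\mu_n$-invariant sections, whereas $s\cO_{\cU^{(n)}}$ with its natural action is not. Thus $\cO_{(\ol{\cX},\cZ)^{1/n}}(-\cZ^{1/n})$ is not in the essential image, so \eqref{flatt8} is not an equivalence.

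The hard part will be the bookkeeping of the first step: verifying that the entire (tri)simplicial apparatus of claims 1 and 2 in the proof of Theorem \ref{curve} admits a coherent system of canonical lifts over $\Spf W(k)$ equipped with compatible Frobenius lifts, and that strict \'etale descent for vector bundles is available at the level of these formal (ind-)schemes. Everything needed is already in place from the constructions earlier in this section, so this should be routine, but it is where the verification is concentrated.
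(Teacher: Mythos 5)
Your proposal is correct and follows essentially the same route as the paper: for $(g,l,l')\neq(0,0,1)$ the paper likewise lifts the simplicial apparatus of Theorem \ref{curve} to formal schemes with Frobenius (as in the proof of Theorem \ref{flatttt}) and defines the inverse of \eqref{flatt8} by strict \'etale descent, and for $(g,l,l')=(0,0,1)$ it uses the same counterexample, namely the line bundle which on the chart $\cU^{(n)}$ is $t\cO_{\cU^{(n)}}$ with its natural $\mu_n$-action, failing to descend since its pullbacks to all levels $m$ with $n\,|\,m$ are not generated by $\mu_m$-invariant sections. Just make sure, as in the paper and as your appeal to the proof of Theorem \ref{curvelog}(2) implicitly does, that the non-descent is checked at every level $m$ divisible by $n$, not only at level $n$, since essential surjectivity is asserted in the limit category.
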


\begin{proof}
Recall that, in the proof of Theorem \ref{curve}, 
we have defined the functor 
\begin{align*}
\Isoc((\ol{X},Z)^{1/n}) & \os{=}{\lra} \Isoc(\ol{X}_{\b\b}) \\ 
& \lra \Isoc(\ti{Y}_{\b\b\b}) \\ 
& \os{=}{\lra} \Isoc(\ol{Y}_{\b\b}) \\ 
& \os{=}{\lra} \Isoc(\ol{Y}_{\b}) = \Isoc([\ol{Y}/G_Y]) 
\end{align*}
in the case $(g,l,l') \not= (0,0,1)$. 
(The notations are as in there.) In the situation here, we have 
the natural lifts 
$\ol{\cX}_{\b\b}, \ti{\cY}_{\b\b\b}, \ol{\cY}_{\b\b}, \ol{\cY}_{\b}$ of 
$\ol{X}_{\b\b}, \ti{Y}_{\b\b\b}, \ol{Y}_{\b\b}, \ol{Y}_{\b}$ to 
$p$-adic formal schemes smooth over $\Spf O_K$ (as in the proof of 
Theorem \ref{flatttt}) and we can define 
in the same way the functor 
\begin{align*}
\Vect((\ol{\cX},\cZ)^{1/n}) & \os{=}{\lra} \Vect(\ol{\cX}_{\b\b}) \\ 
& \lra \Vect(\ti{\cY}_{\b\b\b}) \\ 
& \os{=}{\lra} \Vect(\ol{\cY}_{\b\b}) \\ 
& \os{=}{\lra} \Vect(\ol{\cY}_{\b}) = \Vect([\ol{\cY}/G_Y]) 
\end{align*}
inducing the functor 
$$ \varinjlim_{(n,p)=1} \Vect((\ol{\cX},\cZ)^{1/n}) \lra 
\varinjlim_{Y\to X \in \cG_X^t} \Vect([\ol{\cY}/G_Y]) = 
\varinjlim_{Y\to X \in \cG_X^t} \Vect([\ol{\cY}^{\sm}/G_Y]), $$
which gives the inverse of \eqref{flatt8}. \par 
Let us consider the case $(g,l,l') = (0,0,1)$. In this case, we have 
$$
\varinjlim_{Y\ra X \in \cG_X^t} \Vect([\ol{\cY}^{\sm}/G_Y])
= \Vect(\ol{\cX}).$$ On the other hand, 
if we define the diagram 
\begin{equation*}
\begin{CD}
\cX @<{\supset}<< \cV @>{\subset}>> \cU \\ 
@. @AAA  @A{\ti{\varphi}}AA \\
@. \cV^{(n)} @>{\subset}>> \cU^{(n)}
\end{CD}
\end{equation*}
and $t$ 
as in \eqref{p1} with $n \geq 2$, we have 
$$ \Vect((\ol{\cX},\cZ)^{1/n}) = 
\Vect(\cX) \times_{\Vect([\cV^{(n)}/\mu_n])} \Vect([\cU^{(n)}/\mu_n]) $$
(see \cite{borne1}) 
and this contains an object of the form $\cE = (\cE_0,\cE_1,\iota)$, where 
$\cE_0 = \cO_{\cX}, \cE_1 = t\cO_{\cU^{(n)}}$ endowed with the action 
$\zeta \cdot t = \zeta t$ \,($\zeta \in \mu_n$) of $\mu_n$ 
and $\iota$ is the $\mu_n$-equivariant isomorphism 
$\cE_1 |_{\cV^{(n)}} = t\cO_{\cV^{(n)}} \hra \cO_{\cV^{(n)}} = 
\cE_0 |_{\cV^{(n)}}$. Then the image of $\cE$ in the category 
$\Vect((\ol{\cX},\cZ)^{1/m})$ for any $n \,|\, m$ does not come from 
an object in $\Vect(\ol{\cX})$ because $\cE_1 |_{\cU^{(m)}} = 
t^{m/n}\cO_{\cU^{(m)}}$ is not locally generated by $\mu_m$-invariant 
sections. Hence the functor \eqref{flatt8} is not essentially surjective 
in this case. 
\end{proof}

Next we define the notion of parabolic vector bundles and 
parabolic (unit-root) $F$-lattices on $(\ol{\cX},\cZ)$. 

\begin{definition}\label{defp}
Let $(\ol{\cX},\cZ)$ be as above. 
\begin{enumerate}
\item 
A parabolic vector bundle on $(\ol{\cX},\cZ)$ is an inductive system 
$(\cE_{\alpha})_{\alpha \in \Z_{(p)}^r}$ of vector bundles on $\ol{\cX}$ 
$($we denote the transition map by 
$\iota_{\alpha\beta}: \cE_{\alpha} \lra \cE_{\beta}$ for 
$\alpha = (\alpha_i), \beta = (\beta_i) \in \Z_{(p)}^r$ with 
$\alpha_i \leq \beta_i \,(\forall i))$ satisfying the following 
conditions$:$ 
\begin{enumerate}
\item 
For any $1 \leq i \leq r$, there is an isomorphism as inductive systems 
$$ 
((\cE_{\alpha+e_i})_{\alpha}, (\iota_{\alpha+e_i,\beta+e_i})_{\alpha,\beta}) 
\cong 
((\cE_{\alpha}(\cZ_i))_{\alpha}, 
(\iota_{\alpha\beta} \otimes \id)_{\alpha,\beta}) $$
via which the morphism 
$(\iota_{\alpha,\alpha+e_i})_{\alpha} : 
(\cE_{\alpha})_{\alpha} \lra (\cE_{\alpha+e_i})_{\alpha}$ is identified 
with the morphism 
$\id \otimes \iota_{0,e_i}^0: 
(\cE_{\alpha})_{\alpha} \lra (\cE_{\alpha}(\cZ_i))_{\alpha}$, 
where $\iota_{0,e_i}^0: \cO_{\ol{\cX}} \hra \cO_{\ol{\cX}}(\cZ_i)$ 
denotes the natural inclusion. 
\item 
There exists a positive integer $n$ prime to $p$ which satisfies the following 
condition$:$ For any $\alpha = (\alpha_i)_i$, $\iota_{\alpha'\alpha}$ is 
an isomorphism if we put $\alpha' = ([n\alpha_i]/n)_i$. 
\end{enumerate}
\item 
A parabolic $F$-lattice 
$($resp. a parabolic unit-root $F$-lattice$)$ 
on $(\ol{\cX},\cZ)$ is a pair 
$((\cE_{\alpha})_{\alpha \in \Z_{(p)}^r}, \Psi := 
(\Psi_{\alpha})_{\alpha \in \Z_{(p)}^r})$ 
consisting of a parabolic vector bundle 
$(\cE_{\alpha})_{\alpha \in \Z_{(p)}^r}$ on $(\ol{\cX},\cZ)$ 
endowed with morphisms 
$\Psi_{\alpha}: (F^*\cE_{\alpha})_{\Q} \lra 
\cE_{q\alpha,\Q}$ in the category $\Coh(\cX)_{\Q}$ 
$($resp. $\Psi_{\alpha}: F^*\cE_{\alpha} \lra 
\cE_{q\alpha}$ in the category $\Coh(\cX))$ 
such that $\varinjlim_{\alpha} \Psi_{\alpha}: 
\varinjlim_{\alpha} (F^*\cE_{\alpha})_{\Q} 
\lra \varinjlim_{\alpha} \cE_{\alpha,\Q}$ 
$($resp.  $\varinjlim_{\alpha} \Psi_{\alpha}: 
\varinjlim_{\alpha} F^*\cE_{\alpha} 
\lra \varinjlim_{\alpha} \cE_{\alpha})$ 
is isomorphic as ind-objects. 
\end{enumerate}
\end{definition}

For $\alpha := (\alpha_i)_i \in \Z_{(p)}^r$, let 
$\cO_{\ol{\cX}}(\sum_i\alpha_i\cZ_i) := 
(\cO_{\ol{\cX}}(\sum_i\alpha_i\cZ_i)_{\beta})_{\beta}$ 
be the parabolic vector bundle on $(\ol{\cX},\cZ)$ defined by 
$\cO_{\ol{\cX}}(\sum_i\alpha_i\cZ_i)_{\beta} := 
\cO_{\ol{\cX}}(\sum_i[\alpha_i + \beta_i] \cZ_i)$, where 
$\beta = (\beta_i)_i$. Using this, we define the notion of locally 
abelian parabolic vector bundles and locally 
abelian parabolic (unit-root) $F$-lattices on $(\ol{\cX},\cZ)$. 
(This terminology is essentially due to Iyer-Simpson \cite{is}.)

\begin{definition}\label{defla}
A parabolic vector bundle $(\cE_{\alpha})_{\alpha}$ 
on $(\ol{\cX},\cZ)$ is called locally abelian if there exists a 
positive integer $n$ prime to $p$ such that, Zariski locally on 
$\ol{\cX}_n:= \ol{\cX} \otimes_{O_K} O_K[\mu_n]$, 
$(\cE_{\alpha})_{\alpha} |_{\ol{\cX}_n}$ has the form 
$\bigoplus_{j=1}^{\mu} \cO_{\ol{\cX}}(\sum_i \alpha_{ij}\cZ_i) 
|_{\ol{\cX}_n}$ 
for some $\alpha_{ij} \in \frac{1}{n}\Z 
\,(1 \leq i \leq r, 1 \leq j \leq \mu)$. 
A parabolic $($unit-root$)$ $F$-lattice $((\cE_{\alpha})_{\alpha},\Psi)$ is 
called locally abelian if so is $(\cE_{\alpha})_{\alpha}$. \par 
We denote the category of locally abelian parabolic vector bundles 
on $(\ol{\cX},\cZ)$ by $\PVect(\ol{\cX},\cZ)$, 
 the category of locally abelian parabolic $F$-lattices 
on $(\ol{\cX},\cZ)$ by $\PFLatt(\ol{\cX},\cZ)$ and 
 the category of locally abelian parabolic unit-root $F$-lattices 
on $(\ol{\cX},\cZ)$ by $\PFLatt(\ol{\cX},\cZ)^{\c}$. 
\end{definition}

Then we have the following theorem. 

\begin{theorem}\label{flattthm}
Let $(\ol{\cX},\cZ)$ be as above. Then there exist equivalences of 
categories 
\begin{equation}\label{vec+}
\ba: \varinjlim_{(n,p)=1}\Vect((\ol{\cX},\cZ)^{1/n}) \os{=}{\lra} 
\PVect(\ol{\cX},\cZ), 
\end{equation}
\begin{equation}\label{flatt+}
\varinjlim_{(n,p)=1}\FLatt((\ol{\cX},\cZ)^{1/n})^{\c} \os{=}{\lra} 
\PFLatt(\ol{\cX},\cZ)^{\c},  
\end{equation}
\begin{equation}\label{flatt+--}
\varinjlim_{(n,p)=1}\FLatt((\ol{\cX},\cZ)^{1/n}) \os{=}{\lra} 
\PFLatt(\ol{\cX},\cZ). 
\end{equation}
In particular, we have the equivalence {\rm \eqref{seq4-}} defined 
as the composite 
$$ \Rep_{O_K^{\sigma}}(\pi_1^t(X)) 
\os{\text{\eqref{flatt7}}}{\lra} 
\varinjlim_{(n,p)=1} \FLatt ((\ol{\cX},\cZ)^{1/n})^{\c} 
\os{\text{\eqref{flatt+}}}{\lra} \PFLatt(\ol{\cX},\cZ)^{\c}. $$
\end{theorem}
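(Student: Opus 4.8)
\textbf{Proof plan for Theorem \ref{flattthm}.}

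The plan is to establish the three equivalences \eqref{vec+}, \eqref{flatt+}, \eqref{flatt+--} in parallel, following the same strategy that was used for the isocrystal version in Theorem \ref{mainpar} and Corollary \ref{repparcor}, but now in the simpler coherent/lattice setting where there is no connection to worry about. First I would construct the functor \eqref{vec+}. Given a compatible family of locally free modules on the stacks of roots $(\ol{\cX},\cZ)^{1/n}$, I would use the local description of $(\ol{\cX},\cZ)^{1/n}$ as $[\ol{\cX}'/\mu_n^r]$ (the formal analogue of \eqref{borneisom}, which holds by the same argument since $(\ol{\cX},\cZ)^{1/n} = \varinjlim_a (\ol{\cX},\cZ)_a^{1/n}$ and each level is as in the scheme case) and the eigenspace decomposition of a $\mu_n^r$-equivariant module under the $\mu_n^r$-action to produce, for each $\alpha \in (\frac1n\Z)^r$, a vector bundle $\cE_\alpha$ on $\ol{\cX}$, with the transition maps and the periodicity property of Definition \ref{defp}(1) built in; the resulting parabolic bundle is locally abelian by construction (Zariski-locally on $\ol{\cX}_n$ the equivariant module is a sum of characters, giving a sum of $\cO_{\ol{\cX}}(\sum_i\alpha_{ij}\cZ_i)$). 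This is essentially the formal/$p$-adic version of the Iyer--Simpson--Borne correspondence \eqref{eq3}, and the quasi-inverse is constructed by the standard recipe: from a locally abelian parabolic bundle one recovers the $\mu_n^r$-equivariant bundle on $\ol{\cX}'$ by taking $\bigoplus_{\alpha \in \frac1n\Z^r/\Z^r} \cE_\alpha$ (suitably twisted) with its natural grading. The key inputs are the local structure result (Borne's theorem in the formal setting) and the fact that eigenspace decomposition for a finite group of order prime to $p$ is exact and compatible with base change.

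Second, I would upgrade \eqref{vec+} to \eqref{flatt+--} and \eqref{flatt+}. Since the endomorphism $F$ on $(\ol{\cX},\cZ)^{1/n}$ is compatible (via the transition morphisms \eqref{traol}) with $F$ on $(\ol{\cX},\cZ)$ and hence with $F$ on $\ol{\cX}$, the functor $\ba$ of \eqref{vec+} is automatically compatible with pullback by $F$; therefore an isomorphism $\phi: F^*\cE \os{=}{\lra} \cE$ in the limit category $\varinjlim_{(n,p)=1}\FLatt((\ol{\cX},\cZ)^{1/n})$ transports to a compatible system of maps $\Psi_\alpha: F^*\cE_\alpha \to \cE_{q\alpha}$ whose colimit is an isomorphism of ind-objects, i.e. a parabolic $F$-lattice structure on $\ba((\cE_\alpha)_\alpha)$, and conversely. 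One must check that the Frobenius $F$ on $(\ol{\cX},\cZ)^{1/n}$ acts on the $\alpha$-eigenspace component by sending it into the $q\alpha$-component — this follows from $\ti f_0: t \mapsto t^n$ together with the defining property $\varphi^* t = t^q$ of the Frobenius lift, exactly as in the argument controlling exponents in the proof of Theorem \ref{mainpar} (the ``$p\Psi(\varphi^*(\res)) = \res$'' type computation). The unit-root case \eqref{flatt+} is then the evident restriction of \eqref{flatt+--} to the subcategories where $\phi$ (resp. $\Psi$) is an honest isomorphism of integral modules, since $\ba$ and its quasi-inverse preserve integrality. Finally, the displayed equivalence \eqref{seq4-} is obtained by composing \eqref{flatt7} (already proven in Theorem \ref{flatttt}(2)) with \eqref{flatt+}, and there is nothing further to check.

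The main obstacle I anticipate is the careful bookkeeping in the construction of \eqref{vec+} and its quasi-inverse: one has to match the indexing conventions (the shift by $\alpha$ in $\cO_{\ol{\cX}}(\sum_i[\alpha_i+\beta_i]\cZ_i)$ in Definition \ref{defp} versus the eigencharacter indexing on $[\ol{\cX}'/\mu_n^r]$), verify the periodicity condition (1)(a) precisely with the stated identification of transition maps, and confirm that the ``locally abelian'' condition of Definition \ref{defla} is exactly equivalent to ``Zariski-locally on $\ol{\cX}'$ the equivariant bundle is a sum of characters.'' All of this is the formal-scheme analogue of \cite{is}, \cite{borne1}, \cite{borne2}, and the only genuinely new point over those references is passing to $p$-adic formal schemes (working level by level over $W_a(k)$ and taking $\varinjlim_a$) and incorporating the Frobenius structure; neither introduces a real difficulty, but both require that the local models and the eigenspace decomposition be compatible with the truncation $\otimes_{W(k)} W_a(k)$, which holds because $n$ is prime to $p$ and $O_K[\mu_n]$ is étale over $O_K$. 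I would present \eqref{vec+} in full detail and then note that \eqref{flatt+--}, \eqref{flatt+} follow by adding Frobenius formally.
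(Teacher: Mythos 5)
Your construction of \eqref{vec+} is essentially the paper's own (the formal-scheme version of Iyer--Simpson/Borne: $\ba$ analysed locally via the $\mu_n^r$-eigenspace decomposition on $\cU^{(n)}$, quasi-inverse $\bb$ given globally by a coend/twisted-sum construction), and that part of your plan is fine. The genuine gap is in the passage to Frobenius structures. You assert that ``the functor $\ba$ of \eqref{vec+} is automatically compatible with pullback by $F$,'' and from this that an isomorphism $\phi\colon F^*\cE\os{=}{\lra}\cE$ transports to maps $\Psi_\alpha\colon F^*\ba(\cE)_\alpha\to\ba(\cE)_{q\alpha}$ whose colimit is an isomorphism of ind-objects, ``and conversely.'' This compatibility is precisely what fails at each finite level: $\ba(\cE)_\alpha=\pi_*(\cE\otimes\cO_{\ol{\cX}}(\sum_i([n\alpha_i]/n)\cZ_i))$ involves a pushforward (locally, $\mu_n^r$-invariants of a module over $A=R[s_i]/(s_i^n-t_i)$), and the relevant square formed by $\pi$ and $F$ is not cartesian, because the Frobenius lift satisfies $F^*s_i=s_i^qu_i$ with $u_i$ a unit; hence the base-change map $F^*\pi_*\to\pi_*F^*$ entering \eqref{f-1} is not an isomorphism. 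The paper must prove, via the explicit analysis of the ring map \eqref{ring}, that each $\Psi_\alpha$ is merely injective with cokernel killed by a power of $t^{\1}=\prod_{i}t_i$, and must then construct by hand an inverse of $\varinjlim_\alpha\Psi_\alpha$ as a morphism of ind-objects (choosing $N,M,L$ with $qM-N=L$ and units in $1+\fm_K\cO_{\ol{\cX}}$); symmetrically, in the converse direction \eqref{flatt+-} one has to show that the $F$-structure induced on the coend $\bb(\cE)$ of a parabolic $F$-lattice is surjective, which is again a nontrivial local argument. None of this is supplied by your plan, so the claim that ``there is nothing further to check'' is not justified: the heart of the proof of \eqref{flatt+} and \eqref{flatt+--} is exactly this non-commutation of $F^*$ with $\pi_*$.

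A smaller but related inaccuracy: on the parabolic side, the unit-root condition in Definition \ref{defp}(2) is that the $\Psi_\alpha$ are defined integrally, i.e.\ in $\Coh(\cX)$ rather than in $\Coh(\cX)_{\Q}$, and only their ind-colimit is required to be an isomorphism; because of the cokernel phenomenon above, the individual $\Psi_\alpha$ produced by $\ba$ are in general not isomorphisms (their cokernels are supported on $\cZ$). So \eqref{flatt+} is not the ``restriction of \eqref{flatt+--} to the subcategory where $\Psi$ is an honest isomorphism of integral modules''; rather one checks that the composite \eqref{f-1} lands integrally when $\phi$ does, and conversely that $\bb$ turns integral parabolic data into an integral isomorphism. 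These checks are routine once the injectivity/cokernel analysis is in place, but they cannot be dispensed with by appealing to formal functoriality.
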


\begin{proof}
The proof of the equivalence \eqref{vec+} we give below is 
essentially due to Iyer-Simpson \cite{is}. (See also 
Borne \cite{borne1}, \cite{borne2}.) \par 
Let us fix $n \in \N$ which is prime to $p$. Then, 
we have the inductive system of 
line bundles $$(\cO(\sum_i\alpha_i\cZ_i))_{\alpha = (\alpha_i)_i \in 
(\frac{1}{n} \Z / \Z)^r}$$ 
on $(\ol{\cX},\cZ)^{1/n}$ (see \cite[p.353]{is}). 
(Here we give a definition using log structure: 
It suffices to define the inductive system of 
line bundles $(\cO(\sum_i\alpha_i\cZ_i)_a)_{\alpha = (\alpha_i)_i \in 
(\frac{1}{n} \Z / \Z)^r}$ on 
$(\ol{\cX},\cZ)^{1/n}_a \,(a \in \N)$ 
which are compatible with respect to $a$. 
Let 
$Y \os{g}{\lra} (\ol{\cX},\cZ)^{1/n}_a$ 
be a surjective etale morphism from 
some scheme $Y$. Then the composite 
$Y \os{g}{\lra} (\ol{\cX},\cZ)^{1/n}_a \os{\text{proj.}}{\lra} 
(\ol{\cX}_{\c},\cZ_{\c})^{1/n}_a \os{\text{proj.}}{\lra} 
[\Af_{W_a(k)}^r/\G_{m,W_a(k)}^r]$ corresponds to 
a log structure $M \os{\psi}{\ra} \cO_Y$ 
on $Y$ and a morphism $\gamma: \N^r \lra 
\ol{M} := M/\cO_Y^{\times}$ which is liftable to a chart etale 
locally. Take $Y$ etale local enough so that 
$\gamma$ is lifted to a chart $\ti{\gamma}: \N^r \lra M$. 
Let us put $Y' := Y \times_{(\ol{\cX},\cZ)^{1/n}_a} Y$ and 
denote the $j$-th projection $Y' \lra Y$ by $\pi_j$. 
Then, for $\alpha := (\alpha_i)_i \in (\frac{1}{n} \Z / \Z)^r$, 
take $\alpha_+ := (\alpha_{+,i})_i, \alpha_- := (\alpha_{-,i})_i$ 
in $(\frac{1}{n} \Z / \Z)^r$ with $\alpha = \alpha_+ - \alpha_-, 
\alpha_{+,i}, \alpha_{-,i} \geq 0 \,(\forall i)$ and 
define the line bundle $\cO(\sum_i \alpha_i \cZ_i)_a$ by patching 
the trivial line bundle $\cO_Y$ on $Y$ by the isomorphism 
$\pi_2^*\cO_Y \os{=}{\lra} \pi_1^*\cO_Y$ defined by 
the section $u \in \Gamma(Y',\cO_{Y'}^{\times})$ satisfying 
$u\pi_1^*(\ti{\gamma}(n\alpha_-))\pi_2^*(\ti{\gamma}(n\alpha_+)) = 
\pi_1^*(\ti{\gamma}(n\alpha_+))\pi_2^*(\ti{\gamma}(n\alpha_-))$. 
For $\alpha = (\alpha_i)_i, \beta := (\beta_i)_i$ in 
$(\frac{1}{n} \Z / \Z)^r$ with $\alpha_i \leq \beta_i \,(\forall i)$, 
the homomorphism 
$\cO(\sum_i\alpha_i\cZ_i)_a \lra 
\cO(\sum_i\beta_i\cZ_i)_a$ is defined as the multiplication 
by $\psi \circ \ti{\gamma}(n(\beta - \alpha))$ on $Y$.) \par 
For an object $\cE$ in $\Vect((\ol{\cX},\cZ)^{1/n})$ and $\alpha = 
(\alpha_i)_i \in 
\Z_{(p)}^r$, let us define $\ba(\cE)_{\alpha} := 
\pi_*(\cE \otimes \cO_{\ol{\cX}}(\sum_i ([n\alpha_i]/n) \cZ_i))$, 
where $\pi$ denotes the morphism $(\ol{\cX},\cZ)^{1/n} \lra \ol{\cX}$. 
Then $\ba(\cE) := (\ba(\cE)_{\alpha})_{\alpha}$ forms an 
inductive system of sheaves on $\ol{\cX}$. \par 
Let us examine the local structure of $\ba(\cE)$. 
Let us take an affine open formal subscheme 
$\cU = \Spf R \subseteq \ol{\cX}_n := \ol{\cX} \otimes_{O_K} O_K[\mu_n]$ 
such that $\cU \times_{\ol{\cX}} \cZ_i$ is defined as 
$\{t_i=0\}$ for some $t_i$ \,$(1 \leq i \leq r)$, and let us put 
$\cU^{(n)} := \Spf R[s_i]_{1 \leq i \leq r}/(s_i^n - t_i)_{1 \leq i \leq r}$, 
$\cZ_i^{(n)} := \{s_i = 0\}$ and let us denote the natural morphism 
$\cU^{(n)} \lra \cU$ by $\pi^{(n)}$. Then $G = \mu_n^r \cong 
(\Z/n\Z)^r$ naturally acts on $\cU^{(n)}$ (as the action on $s_i$'s). 
Let us take a closed point $x$ of $\cU$ and put $x^{(n)} := 
(\cU^{(n)} \times_{\cU} x)_{\rm red}$. Then $G$ acts naturally on $x^{(n)}$. 
For a character $\xi: G \lra \mu_n$, let $\cO_{x^{(n)}}(\xi)$ be 
the structure sheaf $\cO_{x^{(n)}}$ on $x^{(n)}$ endowed with the 
equivariant $G$-action by which $g \in G$ acts as 
$g^*\cO_{x^{(n)}} \os{g^*}{\lra} \cO_{x^{(n)}} \os{\xi(g)}{\lra} 
\cO_{x^{(n)}}$. Then it is easy to see that the restriction 
$\cE |_{x^{(n)}}$ of $\cE$ to $x^{(n)}$ has the form 
$\bigoplus_{j=1}^{\mu} \cO_{x^{(n)}}(\xi_j)$ for some 
characters $\xi_j: G \lra \mu_n \,(1 \leq j \leq \mu)$. 
Note that, for a character $\xi: G \lra \mu_n$, 
we can define the sheaf $\cO_{\cU^{(n)}}(\xi)$ on $\cU^{(n)}$ 
endowed with an equivariant $G$-action in the same way as 
$\cO_{x^{(n)}}(\xi)$. Now let us put 
$\cF := \oplus_{j=1}^{\mu} \cO_{\cU^{(n)}}(\xi_j)$
and let 
$u_0: \cF \lra \cE |_{\cU^{(n)}}$ be any $\cO_{\cU^{(n)}}$-linear 
homomorphism which lifts the canonical isomorphism 
$\cF |_{\ol{x}^{(n)}} \cong \cE |_{\ol{x}^{(n)}}$. 
Then $u := \sum_{g \in G} g^{-1}u_0g$ gives a $G$-equivariant 
$\cO_{\cU^{(n)}}$-linear homomorphism lifting the isomorphism 
$\cF |_{\ol{x}^{(n)}} \cong \cE |_{\ol{x}^{(n)}}$. 
Hence there exists an element $f_x \in \Gamma(\cU,\cO_{\cU})$ with 
$x \in \{f_x \not= 0\} =: \cU_x \subseteq \cU$ such that 
$u$ is isomorphic on $\cU^{(n)}_x := \pi^{(n),-1}(\cU_x)$. 
Then we have 
\begin{align*}
& (\ba(\cE)_{\alpha})_{\alpha} |_{\cU_x} \\ := \,\, & 
(\pi_*(\cE \otimes \cO(\sum_i [n\alpha_i]/n \cZ_i)))_{\alpha} 
|_{\cU_x}
= 
(\pi^{(n)}_*(\cE_{\cU^{(n)}}\otimes \cO_{\cU^{(n)}}
(\sum_i [n\alpha_i] \cZ^{(n)}_i))^G)_{\alpha} |_{\cU_x} \\  
\cong \,\, & 
(\pi^{(n)}_*(\cF \otimes \cO_{\cU^{(n)}}
(\sum_i [n\alpha_i]\cZ_i^{(n)}))^G )_{\alpha} |_{\cU_x}
= 
\oplus_{j=1}^{\mu} (\pi^{(n)}_* 
(\cO_{\cU^{(n)}}(\xi_j)(\sum_i [n\alpha_i] \cZ_i^{(n)})))^G)_{\alpha} 
|_{\cU_x}
\end{align*}
and we can check that there exists some $a_j = (a_{ji})_i \in 
(\frac{1}{n}\Z)^r \,(1 \leq j \leq \mu)$ 
such that the above inductive system is isomorphic to 
$$ \oplus_{j=1}^{\mu} 
(\cO_{\ol{\cX}}(\sum_i [a_{ji} + [n\alpha_i]/n] \cZ_i))_{\alpha} |_{\cU_x} = 
\oplus_{j=1}^{\mu} 
(\cO_{\ol{\cX}}(\sum_i [a_{ji} + \alpha_i] \cZ_i))_{\alpha} |_{\cU_x}. $$
Hence $(\ba(\cE)_{\alpha})_{\alpha}$ is a locally abelian 
parabolic vector bundle on $(\ol{\cX},\cZ)$. 
The full faithfulness of the functor $\ba$ can be checked locally and 
hence we may check it for the objects of the form 
$\cF = \oplus_{j=1}^{\mu} \cO_{\cU^{(n)}}(\xi_j)$, and it is easy in 
this case. The essential surjectivity can be also checked locally and so 
it is enough to check it for the objects of the form 
$\bigoplus_{j=1}^{\mu} \cO_{\ol{\cX}}(\sum_i \alpha_{ij}\cZ_i)$, which can 
be easily seen. So we have proved the equivalence \eqref{vec+}. \par 
We give an explicit quasi-inverse functor 
$$ \bb: \PVect(\ol{\cX},\cZ) \lra 
\varinjlim_{(n,p)=1}\Vect((\ol{\cX},\cZ)^{1/n})$$ 
in the following 
way, as in \cite{borne1}, \cite{borne2}: 
For an object $\cE := (\cE_{\alpha})_{\alpha}$ in 
$\PVect(\ol{\cX},\cZ)$, take $n \in \N$ prime to $p$ as in 
Definition \ref{defp} (1)(b) and let $\bb(\cE)$ be the coend 
of the family $\{\cO(\sum_i -a_i\cZ_i) \otimes \pi^*\cE_{b}\}_{a=(a_i)_i,b \in 
(\frac{1}{n}\Z)^r}$, that is, the universal object 
in $\Vect((\ol{\cX},\cZ)^{1/n})$ 
which admits morphisms 
$$ f_a: \cO(\sum_i -a_i\cZ_i) \otimes \pi^*\cE_{a} \lra \bb(\cE) $$ 
for any $a = (a_i)_i \in (\frac{1}{n}\Z)^r$ making the diagram 
\begin{equation*}
\begin{CD}
\cO(\sum_i -a_i\cZ_i) \otimes \pi^*\cE_{b} @>>> 
\cO(\sum_i -a_i\cZ_i) \otimes \pi^*\cE_{a} \\ 
@VVV @V{f_a}VV \\ 
\cO(\sum_i -b_i\cZ_i) \otimes \pi^*\cE_{b} @>{f_b}>> \bb(\cE)
\end{CD}
\end{equation*}
commutative for any $a=(a_i)_i, b=(b_i)_i \in (\frac{1}{n}\Z)^r$ 
with $a_i \geq b_i \,(\forall i)$: 
The existence of such object and the base change property 
for flat morphism are checked rather easily for 
the objects of the form 
$\bigoplus_{j=1}^{\mu} \cO_{\ol{\cX}}(\sum_i \alpha_{ij}Z_i)$ 
(see \cite[3.17]{borne1}), and 
this implies the existence in general case by descent. 
We can construct the morphism $\ba \circ \bb \lra \id$ in the same way as 
\cite[3.18]{borne1} and check that this is an isomorphism by looking 
at locally and arguing as \cite[3.17]{borne1}. 
By looking at the proof in \cite[3.17]{borne1}, we see the 
following two 
facts: $\bb(\cE)$ is in fact the coend of the family 
$\{\cO(\sum_i -a_i\cZ_i) \otimes 
\pi^*\cE_{b}\}_{a=(a_i)_i,b \in (\frac{1}{n}\Z \cap [0,1))^r}$, since 
so is it when $\cE$ has the form 
$\bigoplus_{j=1}^{\mu} \cO_{\ol{\cX}}(\sum_i \alpha_{ij}\cZ_i)$. Also, 
we see that, when $\cE$ has the form $\cO_{\ol{\cX}}(\sum_i \alpha_{i}\cZ_i)$, 
there exists some $a \in (\frac{1}{n}\Z \cap [0,1))^r$ such that 
$f_a$ is an isomorphism. By the former fact, there exists the canonical 
map 
\begin{equation}\label{injinj}
\bb(\cE) \lra \cO \otimes \pi^*\cE_1 = \pi^*\cE_1 
\end{equation}
and it is injective because, when $\cE$ has the form 
$\cO_{\ol{\cX}}(\sum_i \alpha_{i}Z_i)$, the first map in \eqref{injinj} 
is identified with 
the map $\cO(-\sum_ia_i\cZ_i) \otimes \pi^*\cE_a \hra 
\cO \otimes \pi^*\cE_1$ for some $a \in (\frac{1}{n}\Z \cap [0,1))^r$ 
by the latter fact above. \par 
Now we define the functors \eqref{flatt+}, 
\eqref{flatt+--}. Let $(\cE,\Psi)$ be an object in 
$\FLatt((\ol{\cX},\cZ)^{1/n})^{\c}$. Then we define the morphisms 
$\Psi_{\alpha}: F^*\ba(\cE)_{\alpha} \lra \ba(\cE)_{q\alpha}$ as the 
composite 
\begin{align}
F^*\ba(\cE)_{\alpha} & = 
F^*\pi_*(\cE \otimes \cO_{\ol{\cX}}(\sum_i ([n\alpha_i]/n) \cZ_i)) 
\label{f-1} \\ 
& \lra 
\pi_*F^*(\cE \otimes \cO_{\ol{\cX}}(\sum_i ([n\alpha_i]/n) \cZ_i)) 
\nonumber \\ 
& \os{\Psi}{\lra} 
\pi_*(\cE \otimes \cO_{\ol{\cX}}(\sum_i (q[n\alpha_i]/n) \cZ_i)) 
\nonumber \\ 
& \lra 
\pi_*(\cE \otimes \cO_{\ol{\cX}}(\sum_i ([nq\alpha_i]/n) \cZ_i)) 
= \ba(\cE)_{q\alpha}. \nonumber 
\end{align}
When $(\cE,\Psi)$ is an object in 
$\FLatt((\ol{\cX},\cZ)^{1/n})$, we can define the 
morphisms 
$\Psi_{\alpha}: (F^*\ba(\cE)_{\alpha})_{\Q} \lra (\ba(\cE)_{q\alpha})_{\Q}$ 
in the same way. \par 
Let $\cU$, $t_i$ be as above. 
We prove that the map $\Psi_{\alpha} |_{\cU}$ is injective and 
the cokernel of it is killed by some power of $t^{\1} := \prod_{i=1}^r t_i$. 
To see this, it suffices to prove the same property for the 
first arrow in \eqref{f-1}, and we are reduced to showing 
the same property for the map 
$$ 
F^* \pi^{(n)}_* 
(\cO_{\cU^{(n)}}(\sum_i [n\alpha_i] \cZ_i^{(n)})) \lra 
\pi^{(n)}_* F^*(\cO_{\cU^{(n)}}(\sum_i [n\alpha_i] \cZ_i^{(n)})). 
$$ 
Let us put $A := 
R[s_i]_{1 \leq i \leq r}/(s_i^n - t_i)_{1 \leq i \leq r}$, 
$c := \prod_{i=1}^r s_i^{-[n\alpha_i]} \in {\rm Frac}\,A$. 
Then the above map is rewritten as 
\begin{equation}\label{ring}
R \otimes_{F^*,R} cA \lra F^*(c)A; \,\,\,\, r \otimes x \mapsto rF^*(x), 
\end{equation}
where $F^*: R \lra R, F^*:A \lra A$ are the homomorphisms induced by 
$F$ on $(\cU, \cZ \cap \cU)$ and $(\cU^{(n)},\cup_{i=1}^r\cZ^{(n)}_i)$. 
Let us first prove the injectivity of the map \eqref{ring}. 
By assumption on $F$, we can write 
$F^*(s_i) = s_i^qu_i$ for some $u_i \in 1 + \fm_K A \,(1 \leq i \leq r)$. 
$1 \otimes cs^m$ for $m = (m_i)_{1 \leq i \leq r} 
\in \{0,...,n-1\}^r$ forms a basis 
of $R \otimes_{F^*,R} cA$ as $R$-module and they are sent to 
$F^*(c)s^{qm}u^m$. Noting the fact that $qm$'s are mutually different 
modulo $n$ and the fact $u^m \in 1 + \fm_KA$, we see that $F^*(c)s^{qm}u^m$'s 
are linearly independent over $R$. Hence the map \eqref{ring} is 
injective. Prove now that the cokernel of \eqref{ring} is killed by 
a power of $t^{\1}$. For $j = (j_i) \in \{0,...,q-1\}^r$, 
choose $N(j) = (N(j)_i)_i, M(j) = (M(j)_i)_i \in \N^r$ such that 
$j + nN(j) = qM(j)$. Then $\{F^*(c)s^ju^{M(j)}\}_j$ generates 
$F^*(c)A$ over $RF^*(A)$, where $RF^*(A)$ denotes the $R$-subalgebra of $A$ 
generated by $F^*(A)$. So it suffices to prove that the image of each 
$F^*(c)s^ju^{M(j)}$ in $\Coker \text{\eqref{ring}}$ 
is killed by a power of $t^{\1}$. Noting that 
$t^{N(j)}F^*(c)s^ju^{M(j)} = F^*(c)(s^{qM(j)}u^{M(j)}) = F^*(cs^{M(j)})$ is 
in the image of \eqref{ring}, we see that the image of each 
$F^*(c)s^ju^{M(j)}$ in $\Coker \text{\eqref{ring}}$ 
is killed by $(t^{\1})^{\max_{j,i} N(j)_i}$ and we have the 
desired property. \par 
Now we prove the claim 
that $\varinjlim_{\alpha} \Psi_{\alpha}$ is an isomorphism 
as morphism between ind-objects. Here we will work only in the case 
$\cE \in \varinjlim_{(n,p)=1} \FLatt((\ol{\cX},\cZ)^{1/n})^{\c}$, 
because the proof in the case 
$\cE \in \varinjlim_{(n,p)=1} \FLatt((\ol{\cX},\cZ)^{1/n})$ can be done 
exactly in the same way. 
To prove the claim, it suffices to define the 
morphism $\Psi': \varinjlim_{\alpha} \ba(\cE)_{\alpha} \lra 
\varinjlim_{\alpha} F^*
\ba(\cE)_{\alpha}$ which is inverse to 
$\varinjlim_{\alpha} \Psi_{\alpha}$. By the injectivity of 
$\Psi_{\alpha}$'s proven above, it suffices to work locally. 
Let us take $x \in \ba (\cE)_{\alpha}$ with 
$\alpha = (\alpha_i)_i$ with $\alpha_i \geq 0$. Then there exists some 
$N \in \N^r$ (which depends only on $\alpha$) 
such that $(t^{\1})^Nx$, regarded as an element of $\ba (\cE)_{q\alpha}$, 
is in the image of $\Psi_{\alpha}$. 
If we take $M,L \in \N$ such that $qM - N = L$, we see that 
the element $u(t^{\1})^{-L}x$ in $\ba(\cE)_{q(\alpha + M\1)}$ for some 
$u \in 1 + \fm_K \cO_{\ol{\cX}}$ can be written as $\Psi_{\alpha + M\1}(y)$ 
for some $y \in F^*\ba(\cE)_{\alpha + M\1}$. Then we can define $\Psi'$ by 
$\Psi'(x) := u^{-1}t^L \otimes y \in F^*\ba(\cE)_{\alpha + M\1} 
\hra \varinjlim_{\alpha}F^*\ba (\cE)_{\alpha}$. 
It is easy to check that this $\Psi'$ is 
the inverse of $\varinjlim_{\alpha} \Psi_{\alpha}$. Therefore 
$\varinjlim_{\alpha} \Psi_{\alpha}$ is an isomorphism 
as morphism between ind-objects, as desired. 
So 
$(\ba(\cE),(\Psi_{\alpha})_{\alpha})$ defines an object 
in $\PFLatt(\ol{\cX},\cZ)^{(\c)}$ and hence we have defined the functors 
\eqref{flatt+}, \eqref{flatt+--}. \par 
Next we define the functors 
\begin{equation}\label{flatt+-}
\PFLatt(\ol{\cX},\cZ)^{\c} \lra 
\varinjlim_{(n,p)=1} \FLatt((\ol{\cX},\cZ)^{1/n})^{\c}, 
\end{equation}
\begin{equation}\label{flatt+---}
\PFLatt(\ol{\cX},\cZ) \lra 
\varinjlim_{(n,p)=1} \FLatt((\ol{\cX},\cZ)^{1/n}), 
\end{equation}
of the converse direction. Since the construction is the same, 
we explain only the construction of \eqref{flatt+-}. 
Let $(\cE := (\cE_{\alpha})_{\alpha}, 
(\Psi_{\alpha})_{\alpha})$ be
 an object in $\PFLatt(\ol{\cX},\cZ)^{\c}$. Then the maps 
\begin{align}
F^*\cO(\sum_i -a_i\cZ_i) \otimes F^*\pi^*\cE_{b} & \lra 
\cO(\sum_i -qa_i\cZ_i) \otimes \pi^*F^*\cE_{b} \label{coendd} \\ 
& \lra 
\cO(\sum_i -qa_i\cZ_i) \otimes \pi^*\cE_{qb} \nonumber 
\end{align} 
induced by $\Psi_b$ induces, by taking coend, the morphism 
$\Psi: F^*\bb(\cE) \lra \bb(\cE)$. \eqref{injinj} induces the 
commutative diagram 
\begin{equation*}
\begin{CD}
F^* \bb(\cE) @>{F^*\text{\eqref{injinj}}}>> 
F^*\pi^*\cE_1 \\ 
@V{\Psi}VV @V{\Psi_1}VV \\ 
\bb(\cE) @>>> \pi^*\cE_q 
\end{CD}
\end{equation*}
(where the lower horizontal arrow is the composite 
$\bb(\cE) \os{\text{\eqref{injinj}}}{\lra} \pi^*\cE_1 
\os{\subset}{\lra} \pi^*\cE_q$) and since the horizontal arrows and 
$\Psi_1$ are injective, $\Psi$ is also injective. 
Let us prove the surjectivity of $\Psi$. To do so, we may work locally.  
Let us take 
any $x_0 \in \bb (\cE)$. When $\cE$ has the form $\cO(\sum_i\alpha_i\cZ_i)$, 
$x_0$ is a sum of elements of the form $f_c(h \otimes \pi^*x)$ for some 
fixed 
$c \in (\frac{1}{n}\Z/\Z)^r$ since we can take $f_c$ to be an isomorphism, 
and since we have $f_{c}(h \otimes \pi^*x) = f_{c+m}(ht^m \otimes 
\pi^*t^{-m}x)$ for $m \in \Z^r$, we can change $c$ in order that $c$ has 
the form $qa$. In general case, we see from this observation that 
$x_0$ is a sum of elements of the form 
$f_{qa}(h \otimes \pi^*x)$ ($a$ also varies this time) etale locally. 
So, to prove the surjectivity, we may assume that 
$x_0 = f_{qa}(h \otimes \pi^*x)$. 
Then there exists some 
$N \in \N^r$ 
such that $(t^{\1})^Nx$ is in the image of $\Psi_{a}$. 
If we take $M,L \in \N$ such that $pM - N = L$, we see that 
the element $u(t^{\1})^{-L}x$ in $\cE_{q(a+M\1)}$ for some 
$u \in 1 + \fm_K\cO_{\ol{\cX}}$ can be written as $\Psi_{a + M\1}(y)$ 
for some $y \in F^*\cE_{a + M\1}$. Then $x_0$ is equal to the 
image of $u^{-1}(t^{\1})^Lh \otimes \pi^*(u(t^{\1})^{-L}x) \in 
 \cO(\sum_i -q(a_i+M)\cZ_i) \otimes \pi^*\cE_{q(a+M\1)}$ 
 which is in the image of $\Psi$, by definition of it. 
So $\Psi$ is an isomorphism and so $(\bb(\cE), \Psi)$ defines an 
object in $\FLatt((\ol{\cX},\cZ)^{1/n})^{\c}$. \par 
We see that the functor \eqref{flatt+-} 
(resp. \eqref{flatt+---}) is the inverse of the 
functor \eqref{flatt+} (resp. \eqref{flatt+--}) 
using the fact that 
the (parabolic unit-root) $F$-lattice structure 
is determined by the underlying structure on (parabolic) vector 
bundle structure and the morphism $\Psi$ on $\cX = \ol{\cX} \setminus \cZ$. 
So we have shown that 
the functors \eqref{flatt+}, \eqref{flatt+--} are equivalent 
and hence we are done. 
\end{proof}

\begin{remark}
By Corollary \ref{repparcor}, Theorem \ref{flatttt} and 
Theorem \ref{flattthm}, we have the equivalence 
\begin{align}
\PFLatt(\ol{\cX},\cZ)^{\c}_{\Q} 
& \os{=}{\lra} 
\varinjlim_{(n,p)=1} 
\FLatt((\ol{\cX},\cZ)^{1/n})^{\c}_{\Q} \label{boxstar} \\ 
& \os{=}{\lra} 
\varinjlim_{(n,p)=1} 
\FIsoc((\ol{X},Z)^{1/n})^{\circ} \nonumber \\ 
& \os{=}{\lra} 
\PFIsoc(\ol{X},Z)^{\circ}_{\0\ss}, \nonumber 
\end{align}
which is a parabolic version of \eqref{crewisog}. Therefore, 
an object $((\cE_{\alpha})_{\alpha},(\Psi_{\alpha})_{\alpha}
)_{\Q}$ in the category  
$\PFLatt(\ol{\cX},\cZ)^{\c}_{\Q}$ 
(here, for an object $A$ in an additive category $\cC$, 
$A_{\Q}$ denotes the object $A$ regarded as an object in 
$\cC_{\Q}$) is sent to an object in 
$\PFIsoc(\ol{X},Z)^{\circ}_{\0\ss}$, which induces an inductive system of 
log-$\nabla$-modules $(\cE'_{\alpha},\nabla'_{\alpha})_{\alpha}$ on 
$(\ol{\cX}_K,\cZ_K)$ endowed with horizontal isomorphism 
$\Psi': \varinjlim_{\alpha} F^*\cE'_{\alpha} \lra 
\varinjlim_{\alpha} \cE'_{\alpha}$ of ind-objects. 
Note that the $\Q$-linearization of the category of 
coherent sheaves on $\ol{\cX}$ is equivalent to 
the category of coherent sheaves on $\ol{\cX}_K$. 
We prove in this remark that, in these equivalent categories, 
$(\cE_{\alpha})_{\alpha,\Q}$ is equal to 
$(\cE'_{\alpha})_{\alpha}$ as inductive systems and that 
$\varinjlim_{\alpha} \Psi_{\alpha,\Q} = \Psi'$. \par 
Suppose first that we have shown the equality 
$(\cE_{\alpha})_{\alpha,\Q} = (\cE'_{\alpha})_{\alpha}$. 
Then, for any $\alpha \in \Z_{(p)}^r$, there exists some 
$\beta \in \Z_{(p)}^r$ such that both $\Psi_{\alpha,\Q}, \Psi'$ define 
morphism of the form $F^*\cE_{\alpha,\Q} \lra \cE_{\beta,\Q}$. 
Then we have $\Psi_{\alpha,\Q}|_{\cX} = \Psi' |_{\cX}$ because 
they are equal when $Z$ is empty (which follows from the definition of 
\eqref{crewisog}), and this equality implies the equality 
 $\Psi_{\alpha,\Q} = \Psi'$ as morphisms 
$F^*\cE_{\alpha,\Q} \lra \cE_{\beta,\Q}$. Hence we have 
$\varinjlim_{\alpha} \Psi_{\alpha,\Q} = \Psi'$. So, to prove the claims 
in the previous paragraph, it suffices to prove the equality 
$(\cE_{\alpha})_{\alpha,\Q} = (\cE'_{\alpha})_{\alpha}$ as inductive 
systems. \par 
Take a chart $(\ol{X}_0,\{t_i\}_{i=1}^r)$ for $(\ol{X},Z)$ in the sense 
of Section 2.3 which is smooth over $k[\mu_n]$ 
and let $(\ol{X}_{\b}, M_{\ol{X}_{\b}})$, 
 $(\ol{X}_{\b\b}, M_{\ol{X}_{\b\b}})$
be the simplicial semi-resolution, 
the bisimplicial resolution of $(\ol{X},Z)^{1/n}$, respectively. 
Let $(\ol{\cX}_{\c,\b}, M_{\ol{\cX}_{\c,\b}})$, 
 $(\ol{\cX}_{\c,\b\b}, M_{\ol{\cX}_{\c,\b\b}})$ be the log etale lifts of 
$(\ol{X}_{\b}, M_{\ol{X}_{\b}})$, 
 $(\ol{X}_{\b\b}, M_{\ol{X}_{\b\b}})$ over $(\ol{\cX}_{\c},\cZ_{\c})$
and let us put $(\ol{\cX}_{\b}, M_{\ol{\cX}_{\b}}) \allowbreak := 
(\ol{\cX}_{\c,\b}, M_{\ol{\cX}_{\c,\b}}) \otimes_{W(k)} O_K$, 
$(\ol{\cX}_{\b\b}, M_{\ol{\cX}_{\b\b}}) := 
(\ol{\cX}_{\c,\b\b}, M_{\ol{\cX}_{\c,\b\b}}) \otimes_{W(k)} O_K$.  
Then $M_{\ol{\cX}_{\b\b}}$ is associated to some $(2,2)$-truncated 
bisimplicial relative simple normal crossing divisor 
$\cZ_{\b\b} = \bigcup_{i=1}^r \cZ_{\b\b,i}$ compatible with 
transition maps. (Here $\cZ_{\b\b,i}$ is characterized as 
the smooth subdivisor of $\cZ_{\b\b}$ 
which is homeomorphic to the inverse image of $\cZ_i$.) 
Let us put 
$\cX_{\b} := \cX \times_{\ol{\cX}} \ol{\cX}_{\b}, 
\cX_{\b\b} := \cX \times_{\ol{\cX}} \ol{\cX}_{\b\b}$. 
Let us denote by $\NM^{\d}_{\ol{\cX}_K}$ (resp. 
$\NM^{\d}_{\ol{\cX}_{\b,K}}$, $\NM^{\d}_{\ol{\cX}_{\b\b, K}}$) 
the category of locally free $j^{\dagger}\cO_{\ol{\cX}_K}$-modules 
(resp. $j^{\dagger}\cO_{\ol{\cX}_{\b,K}}$-modules, 
$j^{\dagger}\cO_{\ol{\cX}_{\b\b,K}}$-modules) of finite rank 
endowed with integrable connections, where $j$ denotes the morphism 
$\cX_K \hra \ol{\cX}_K$ (resp. $\cX_{\b,K} \hra \ol{\cX}_{\b,K}$, 
$\cX_{\b\b,K} \hra \ol{\cX}_{\b\b,K}$). Let us denote the 
restriction of $(\cE_{\alpha})_{\alpha}$ to $\ol{\cX}_{\b}$ by 
$(\cE_{\alpha,\b})_{\alpha}$ and the restriction of 
$(\cE'_{\alpha}, \nabla'_{\alpha})_{\alpha}$ to $\ol{\cX}_{\b,K}$ by 
$(\cE'_{\alpha,\b}, \nabla'_{\alpha,\b})_{\alpha}$. 
By rigid analytic faithfully flat descent, 
it suffices to prove the following: Via the equivalence between 
the $\Q$-linearization of the category of coherent sheaves on 
$\ol{\cX}_{\b}$ and 
the category of coherent sheaves on $\ol{\cX}_{\b,K}$, 
we have the equality 
$(\cE_{\alpha,\b})_{\alpha,\Q} = (\cE'_{\alpha,\b})_{\alpha}$. \par 
Suppose that 
$((\cE_{\alpha})_{\alpha},\Psi)$ is sent to 
$(\cE'',\Psi'')_{\Q} \in \FLatt((\ol{\cX},\cZ)^{1/n})^{\c}_{\Q}$ by 
the first functor of \eqref{boxstar}. 
Let us consider the diagram 
\begin{align}
\FLatt((\ol{\cX},\cZ)^{1/n})^{\c}_{\Q} & \os{=}{\lra} 
\FIsoc((\ol{X},Z)^{1/n})^{\circ} \lra 
\PFIsoc(\ol{X},Z)_{\0\ss} \label{ichi} \\ 
& \os{\text{induced by 
\eqref{b4}}}{\lra} \Isocd(X,\ol{X}) \os{\subset}{\lra} 
\NM^{\d}_{\cX_K} \lra \NM^{\d}_{\cX_{\b,K}}, \nonumber 
\end{align}
where the first two functors are induced by the second and the third ones 
in \eqref{boxstar}. Then, by definition, $(\cE'', \Psi'')$ is sent by 
\eqref{ichi} to 
$j^{\d}(\cE'_{\alpha,\b},\nabla'_{\alpha,\b})$. 
By definition of the functors in \eqref{ichi}, it is rewritten as 
\begin{align}
& \FLatt((\ol{\cX},\cZ)^{1/n})^{\c}_{\Q} \label{ni} \\ 
\os{=}{\lra}\,\, & 
\FLatt(\ol{\cX}_{\b} \times_{\ol{\cX}} (\ol{\cX},\cZ)^{1/n})^{\c} 
\os{=}{\lra}
\FLatt(\ol{\cX}_{\b\b})_{\Q}^{\c} 
\os{\text{\eqref{crewisog} for $\ol{\cX}_{\b\b}$}}{\lra} 
\FIsoc(\ol{X}_{\b\b})^{\c} \nonumber \\ 
\os{\subset}{\lra}\,\,& \Isoc(\ol{X}_{\b\b}) 
\lra \Isocd(X_{\b\b}, \ol{X}_{\b\b}) 
\os{\text{\eqref{a}}^{-1}}{\lra} 
\Isocd(X_{\b}, \ol{X}_{\b}) \os{\subset}{\lra} 
\NM^{\d}_{\cX_{\b,K}}. \nonumber
\end{align}
Furthermore, by the definition of the inverse of \eqref{a} 
given in \eqref{aa}, the composite 
$\Isoc(\ol{X}_{\b\b}) \lra \NM^{\d}_{\cX_{\b,K}}$ 
in \eqref{ni} is rewritten as 
$$ 
\Isoc(\ol{X}_{\b\b}) \os{\subset}{\lra} 
\NM_{\ol{\cX}_{\b\b,K}} \os{j^{\d}}{\lra} 
\NM^{\d}_{\ol{\cX}_{\b\b,K}} \os{g}{\lra} 
\NM^{\d}_{\ol{\cX}_{\b,K}}, 
$$ 
where the last functor $g$ is defined as follows: An object $E_{\b\b}$ in 
$\NM^{\d}_{\ol{\cX}_{\b\b,K}}$, regarded as an object $E_{\b 0}$ in 
$\NM^{\d}_{\ol{\cX}_{\b 0,K}}$ endowed with an equivariant 
$\mu_n^{r(\b + 1)}$-action, is sent to 
$(\pi_*E_{\b 0})^{\mu_n^{r(\b + 1)}}$, where $\pi$ is the morphism 
$\ol{\cX}_{\b 0} \lra \ol{\cX}_{\b}$. Then, for any $\alpha \in 
\Z_{(p)}^r$, this is rewritten as 
\begin{align}
\Isoc(\ol{X}_{\b\b}) & \os{\subset}{\lra} 
\NM_{\ol{\cX}_{\b\b,K}} \os{f_{\alpha}}{\lra} 
\LNM_{\ol{\cX}_{\b\b,K}} \label{san} \\ & \os{g'}{\lra} 
\LNM_{\ol{\cX}_{\b,K}} 
\os{j^{\dagger}}{\lra} \NM^{\d}_{\ol{\cX}_{\b,K}}, \nonumber 
\end{align}
where $f_{\alpha}$ is the functor $- \otimes_{\cO_{\ol{\cX}_{\b\b}}} 
\cO_{\ol{\cX}_{\b\b}}(\sum_{i=1}^r[n\alpha_i]\cZ_{\b\b,i})$ 
(endowed with canonical extension or restriction of the connection) 
and $g'$ is defined as follows, as in the case of $g$: 
An object $E_{\b\b}$ in 
$\LNM_{\ol{\cX}_{\b\b,K}}$, regarded as an object $E_{\b 0}$ in 
$\LNM_{\ol{\cX}_{\b 0,K}}$ endowed with an equivariant 
$\mu_n^{r(\b + 1)}$-action, is sent to 
$(\pi_*E_{\b 0})^{\mu_n^{r(\b + 1)}}$. \par 
Now let us assume that the object 
$(\cE'',\Psi'')_{\Q} \in \FLatt((\ol{\cX},\cZ)^{1/n})^{\c}_{\Q}$ 
is sent by the first two functors in \eqref{ni} as 
$(\cE'',\Psi'')_{\Q} \mapsto 
(\cE''_{\b},\Psi''_{\b})_{\Q} \mapsto 
(\cE''_{\b\b},\Psi''_{\b\b})_{\Q}$. First, by definition of 
$\cE''$, we have $\cE_{\alpha} = \ba (\cE'')_{\alpha}$ and 
the functor $\ba$ is compatible with etale localization. So we have 
\begin{equation}\label{char_a}
\cE_{\alpha,\b} = \ba (\cE''_{\b})_{\alpha} 
= \pi_*(\cE''_{\b0} \otimes \cO_{\ol{\cX}_{\b0}}
(\sum_i [n\alpha_i] \cZ_{\b0,i}))^{\mu_n^{r(\b+1)}}. 
\end{equation}
The image of $(\cE''_{\b\b}, \Psi''_{\b\b})_{\Q}$ by the functor 
$$ 
\FLatt(\ol{\cX}_{\b\b}) \lra \FIsoc(\ol{X}_{\b\b}) 
\lra \Isoc(\ol{X}_{\b\b}) \lra \NM_{\ol{\cX}_{\b\b,K}}
$$ 
(where the first two functors are as in \eqref{ni} and the last 
functor is as in \eqref{san}) has the form 
$(\cE''_{\b\b,\Q},\nabla_{\b\b})$, by definition of Crew's functor 
\eqref{crewisog} which is given in \cite{crew}. 
If we apply the functor $g' \circ f_{\alpha}$, we obtain an 
object in $\LNM_{\ol{\cX}_{\b,K}}$ of the form 
$((\cE_{\alpha,\b})_{\Q}, \nabla_{\alpha,\b})$ by definition of 
the functors $f_{\alpha}, g'$ and \eqref{char_a}. Hence we have 
\begin{equation}\label{char_i}
j^{\d}((\cE_{\alpha,\b})_{\Q}, \nabla_{\alpha,\b}) = 
j^{\d}(\cE'_{\alpha,\b},\nabla'_{\alpha,\b}). 
\end{equation}
By \eqref{char_i}, we see that 
$j^{\d}((\cE_{\alpha,\b})_{\Q}, \nabla_{\alpha,\b})$ comes from an 
overconvergent isocrystal. So 
$((\cE_{\alpha,\b})_{\Q}, \nabla_{\alpha,\b})$ defines an 
object in $\Isocl(\ol{X}_{\b} \times_{\ol{X}} (\ol{X},Z))$. 
Since there exists the canonical morphism of functors 
$f_{\alpha} \lra f_{\beta}$ for $\alpha = (\alpha_i)_i, 
\beta = (\beta_i)_i$ with $\alpha_i \leq \beta_i \, (\forall i)$, 
we see that $((\cE_{\alpha,\b})_{\Q}, \nabla_{\alpha,\b})$ for 
$\alpha \in \Z_{(p)}$ form an inductive system, and it is easy to 
see from the definition of $f_{\alpha}$'s that it is a 
parabolic log convergent isocrystal. Moreover, 
Since 
$(\cE''_{\b\b,\Q},\nabla_{\b\b})$ has exponents in $\0$ with 
semisimple residues, $f_{\alpha}(\cE''_{\b\b,\Q},\nabla_{\b\b})$ 
has exponents in 
$\prod_{i=1}^r \{- [n\alpha_i]\}$ with semisimple 
residues, and then we see that 
$((\cE_{\alpha,\b})_{\Q}, \nabla_{\alpha,\b})$ has exponents 
in 
$\prod_{i=1}^r \{- \frac{[n\alpha_i]}{n} + \frac{j}{n} 
\,|\, 0 \leq j \leq n-1\} \subseteq 
\prod_{i=1}^r ([-\alpha_i,-\alpha_i+1) \cap \Z_{(p)}) 
$ with semisimple residues. 
Hence $((\cE_{\alpha,\b})_{\Q}, \nabla_{\alpha,\b})_{\alpha}$ 
forms an object in $\PIsoc(\ol{X}_{\b} \times_{\ol{X}} (\ol{X},Z))_{\0\ss}$. 
Since so is $(\cE'_{\alpha,\b},\nabla'_{\alpha,\b})$ and 
we have the isomorphism \eqref{char_i}, they are isomorphic by the equivalence 
\eqref{step2}. So we have the desired isomorphism 
$(\cE_{\alpha,\b})_{\alpha,\Q} = 
(\cE'_{\alpha,\b})_{\alpha}$. 
\end{remark}

\section{Unit-rootness and generic semistability}

In this section, we prove several propositions which give 
interpretations of unit-rootness in terms of certain semistability 
called generic semistability, and prove the equivalences 
\eqref{sseq1}--\eqref{sseq4+}. 
The proof uses the results 
in \cite{katzslope}, \cite{crewsp} and \cite{crew} and the constructions 
up to the previous section. \par 
First let us give a review on some results proven in 
\cite{katzslope}, \cite{crewsp} and \cite{crew}. (We also give a 
slight generalization of them which we need in this paper.) 
In this section, let $\pi$ be a fixed uniformizer of $O_K$. 
For a perfect 
field $l$ containing $k$, we put $O(l) := W(l) \otimes_{W(k)} O_K$, 
$K(l) := \Frac O(l)$. We denote the endomorphism $F_{\circ} 
\otimes \sigma$ (where $F_{\circ}$ 
is the endomorphism on $W(k)$ lifting the $q$-th power map on $l$) 
on $O(l)$ by $F$, and denote the induced endomorphism on $K(l)$ by 
the same letter. 
An $F$-isocrystal on $l$ is defined to be 
a pair $(E,\Psi)$ consisting of a finite dimensional $K(l)$-vector 
space $E$ and a $F$-linear endomorphism $\Psi$. Then, by 
Dieudonn\'e-Manin classification theorem, any $F$-isocrystal 
$(E,\Psi)$ has the decomposition $(E,\Psi) := \oplus_{\lambda \in \Q} 
(E_{\lambda}, \Psi_{\lambda})$ as $F$-isocrystals such that, for 
each $\lambda = a/b$, $E \otimes_{K(l)} K(l^{\rm alg})$ has 
a basis consising of elements $e$ with $\Psi^b(e) = \pi^a$. 
The Newton polygon of $(E,\Psi)$ is the convex polygon 
defined as the convex closure of the points 
$(\sum_{\lambda \leq \nu} \dim E_{\lambda}, \sum_{\lambda \leq \nu} 
\lambda \dim E_{\lambda}) \,(\nu \in \Q)$ in the plane $\R^2$. 
(So it has the endpoint 
$(\dim E, \sum_{\lambda} \lambda \dim E_{\lambda})$). 
For an $F$-isocrystal $(E,\Psi) \not= 0$, 
we put $\mu(E) := 
\dfrac{\sum_{\lambda} \lambda \dim E_{\lambda}}{\dim E}$. 
We say that the Newton polygon of $(E,\Psi)$ has pure slope $\lambda$ 
when it is the straight line connecting $(0,0)$ and 
$(\dim E, \lambda \dim E)$. \par 
For a fine log scheme $(X,M_X)$ separated of finite type over $k$, 
an object $(\cE,\Psi) \in \FIsoc(X,M_X)$ and a perfect valued 
point $x = \Spec l \lra X$, we can pull back $(\cE,\Psi)$ by $x$ to an 
object of the category of convergent $F$-isocrystal on 
$(x,M_X|_x)$ over $(\Spf O(l), W(M_X|_x)|_{\Spec O(l)})$ 
(where $W(M_X|_x)$ denotes the canonical lift of the log structure 
$M_X|_x$ on $x = \Spec l$ into $\Spf W(l)$), which is canonically 
equivalent to the category of $F$-isocrystals on $l$. We denote 
this object by $x^*(\cE,\Phi)$ or $(x^*\cE,x^*\Psi)$. 
We call the Newton polygon of 
$x^*(\cE,\Psi)$ the Newton polygon of $(\cE,\Psi)$ at $x$ and 
we put $\mu_x(\cE) := \mu(x^*\cE)$. \par 
Next, let $X$ be a smooth scheme separated of finite type over $k$ and 
assume that it is liftable to a $p$-adic formal scheme 
$\cX_{\circ}$ separated smooth of finite type over $\Spf W(k)$ 
which is endowed with a lift $F_{\circ}: \cX_{\circ} \lra \cX_{\circ}$ 
of the $q$-th power Frobenius endomorphism on $X$ compatible with 
$(\sigma|_{W(k)})^*: \Spf W(k) \lra \Spf W(k)$. Let us put 
$\cX := \cX_{\circ} \otimes O_K$, $F := F_{\circ} \otimes \sigma^*: 
\cX \lra \cX$. Then the category $\FLatt(\cX)$ of $F$-lattices on 
$\cX$ was defined in the beginning of the previous section. 
Also, we define the notion of 
$F$-vector bundle on $\cX_K$ as a pair $(\cE,\Psi)$ of a locally free 
$\cO_{\cX_K}$-module $\cE$ endowed with an isomorphism 
$\Psi: F^*\cE \os{=}{\lra} \cE$. (It is called a rigid $F$-bundle 
in \cite{weng}.) We denote the category of 
$F$-vector bundles on $\cX_K$ by $\FVect(\cX_K)$. 
Note that, via the equivalence 
$\Coh(\cX)_{\Q} \os{=}{\lra} \Coh(\cX_K)$, we have the canonical 
inclusion $\FLatt(\cX)_{\Q} \hra \FVect(\cX_K)$. 
(We do not know whether they are equal or not.) 
When we are given an object $(\cE,\Psi) \in \FVect(\cX_K)$ and 
a perfect valued point $x = \Spec l \lra X$, 
we have the canonical lift $O(x) := \Spf O(l) \lra \cX$ and 
so we have the map $\Spm K(x) \lra \cX_K$. If we pull back $(\cE,\Psi)$ 
by this map, we obtain an $F$-isocrystal on $l$, which we denote by 
$x^*(\cE,\Psi)$ or $(x^*\cE,x^*\Psi)$. Also in this case, 
we call the Newton polygon of 
$x^*(\cE,\Psi)$ the Newton polygon of $(\cE,\Psi)$ at $x$ and 
we put $\mu_x(\cE) := \mu(x^*\cE)$. \par 
Assume moreover that there exists a fine log structure 
$M_X$ on $X$, $M_{\cX}$ on $\cX$ on 
$\cX$ with $M_{\cX}|_{X} = M_X$ and 
that $F$ induces the endomorphism 
$F: (\cX,M_{\cX}) \lra (\cX,M_{\cX})$ lifting the $q$-th 
power Frobenius on $(X,M_X)$. Then, by definition, 
we have the commutative diagram 
\begin{equation*}
\begin{CD}
\FIsoc(X,M_X) @>>> \FVect(\cX_K) \\ 
@V{x^*}VV @V{x^*}VV \\ 
\text{($F$-isocrystals on $l$)} @= 
\text{($F$-isocrystals on $l$)},
\end{CD}
\end{equation*}
where the top horizontal arrow is the functor 
of realization at $(\cX,M_{\cX})$. It is easy to see that, 
for a fixed object $(\cE,\Psi)$ in $\FIsoc(X)$ or $\FVect(\cX_K)$, 
the Newton polygon of $(\cE,\Psi)$ at $x$ and $\mu_x(\cE)$ 
for a perfect field valued point $x \lra X$ 
depends only on the image of $x$ in $X$. Hence we can speak of the 
Newton polygon of $(\cE,\Psi)$ at $x$ and the value $\mu_x(\cE)$ 
at any point $x$ of $X$. \par 
Now let us recall several results on $F$-lattices which are due to 
Grothendieck, Katz and Crew. The first one is Grothendieck's 
specialization theorem (\cite[2.3]{katzslope}, \cite[1.6]{crewsp}: 

\begin{proposition}\label{grothsp}
let $X$ be a smooth scheme separated of finite type over $k$ and 
assume that it is liftable to a $p$-adic formal scheme 
$\cX_{\circ}$ separated of finite type over $\Spf W(k)$ 
which is endowed with a lift $F_{\circ}: \cX_{\circ} \lra \cX_{\circ}$ 
of the $q$-th power Frobenius endomorphism on $X$ compatible with 
$(\sigma|_{W(k)})^*: \Spf W(k) \lra \Spf W(k)$. Let us put 
$\cX := \cX_{\circ} \otimes O_K$, $F := F_{\circ} \otimes \sigma^*: 
\cX \lra \cX$, let $(\cE,\Psi)$ be an object in $\FLatt(\cX)_{\Q}$ 
and let $P$ be a convex polygon. Then the set of points of 
$X$ at which the Newton polygon of $(\cE,\Psi)$ lies on or above $P$ 
is Zariski closed. 
\end{proposition}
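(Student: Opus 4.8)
The plan is to reduce the statement to Grothendieck's specialization theorem in the form proved by Katz \cite[2.3]{katzslope} and Crew \cite[1.6]{crewsp}. Since being Zariski closed is local on $X$, I would first shrink $\cX_\circ$ so that it is affine, $\cX = \Spf R$ say, and $\cE$ is free over $\cO_\cX$. Given $(\cE,\Psi) \in \FLatt(\cX)_\Q$, after multiplying $\Psi$ by a suitable power $\pi^N$ of the uniformizer I may arrange that $\Psi$ is represented by an honest $\cO_\cX$-linear map $F^*\cE \to \cE$, injective with cokernel killed by a power of $\pi$; this replacement shifts every slope, hence the Newton polygon at every point of $X$, by the linear function $t \mapsto Nt$, so the hypothesis ``lies on or above $P$'' simply becomes ``lies on or above the corresponding translate of $P$'', and I absorb this into $P$. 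Thus I may assume $(\cE,\Psi)$ is an honest $F$-lattice.

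Next I would reduce to a single lattice point. The Newton polygon $N_x$ of $(\cE,\Psi)$ at $x$ is piecewise linear with breakpoints at integer abscissae (the elementary divisor exponents of the iterated Frobenius), while $P$ is convex; hence $N_x$ lies on or above $P$ if and only if $N_x(i) \ge P(i)$ for every integer $i \in \{0,1,\dots,d\}$, where $d = \rk \cE$. As there are only finitely many such $i$, it suffices to show that for one fixed $i$ and the fixed rational threshold $j := P(i)$ the set of $x \in X$ with $N_x(i) \ge j$ is Zariski closed. If $\lambda_1(x) \le \cdots \le \lambda_d(x)$ are the slopes of the $F$-isocrystal $x^*(\cE,\Psi)$, then $N_x(i) = \lambda_1(x) + \cdots + \lambda_i(x)$, and this quantity is exactly the smallest slope of the $i$-th exterior power $x^*(\wedge^i\cE, \wedge^i\Psi)$. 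Since $\wedge^i\Psi \colon F^*(\wedge^i\cE) \to \wedge^i\cE$ is again an honest $F$-lattice structure, and twisting once more by a power of $\pi$ puts the threshold in $\Z$, the problem becomes: for an $F$-lattice $(\cG,\Phi)$ on $\cX$ and $j \in \Z$, the locus of $x \in X$ at which the lowest slope of $x^*(\cG,\Phi)$ is $\ge j$ is Zariski closed.

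For this last statement I would invoke the heart of Katz's argument (\cite[1.3, 2.3]{katzslope}): over each point the lowest slope of $x^*(\cG,\Phi)$ is detected, via Dieudonn\'e--Manin, by divisibility properties of the matrices of the iterated Frobenius morphisms $\Phi^{(n)} \colon (F^n)^*\cG \to \cG$ ($n \ge 1$) read along the canonical Witt-vector lift $O(x) \to \cX$; each such divisibility condition (vanishing of the entries of the matrix of $\Phi^{(n)}$ modulo $\pi^{nj}$ along $O(x)$) cuts out a Zariski-closed subset of $X$, and by a noetherian stabilization argument only finitely many $n$ are relevant, so the locus in question is closed. Equivalently, since $\cX$ is a smooth formal lift of $X$ carrying a Frobenius lift $F$, one can cite this directly from \cite[1.6]{crewsp}.

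The step I expect to be the main obstacle is reconciling the present framework with the precise hypotheses of \cite[2.3]{katzslope} and \cite[1.6]{crewsp}: one must check that only the $F$-module structure (and not any crystal or connection structure) enters the definition of the Newton polygon at a point, that the canonical lift $O(x) \to \cX$ makes the assertion ``$x^*(\cE,\Psi)$ depends only on the image of $x$ in $X$'' compatible with the matrix computations above, and that the divisibility conditions modulo $\pi^m$ are genuinely Zariski closed (for $m \ge 2$ this uses the structure of $O(x)/\pi^m$ as Witt vectors, as in Katz). None of this is deep, but it is where the bookkeeping has to be done carefully, together with the twisting to pass between rational thresholds and integral ones.
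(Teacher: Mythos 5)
The paper gives no proof of this proposition: it is quoted directly as Grothendieck's specialization theorem with references to \cite[2.3]{katzslope} and \cite[1.6]{crewsp}, and your argument ultimately rests on the same sources, so it is essentially the paper's approach, with the preliminary reductions (localizing, clearing the $\pi$-denominator of $\Psi$ so as to replace an object of $\FLatt(\cX)_{\Q}$ by an honest $F$-lattice, reducing to integer abscissae, and passing to exterior powers) spelled out. The one step I would not let stand as written is the claim that a further twist by a power of $\pi$ puts the rational threshold $P(i)$ in $\Z$: an integral twist shifts all slopes by integers and so cannot make a non-integral threshold integral; either treat rational thresholds as Katz does, via divisibility conditions on the iterated Frobenius (slope $\geq a/b$ being read off from $\pi^{na}$-divisibility of $\Psi^{(nb)}$), or simply cite \cite[1.6]{crewsp} for a general convex polygon, as you yourself note and as the paper in fact does.
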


The next one is the constance of $\mu_x(\cE)$ \cite[1.7]{crewsp}: 

\begin{proposition}\label{endpoint}
Let $X, \cX, F$ be as above and suppose that $X$ is connected. 
Then, for a non-zero object $(\cE,\Psi)$ in $\FLatt(\cX)_{\Q}$, 
$\mu_x(\cE)$ is the same for all points $x$ of $X$. 
$($In this case, we put $\mu(\cE) := \mu_x(\cE).)$
\end{proposition}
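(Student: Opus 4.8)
The plan is to reduce to the rank-one case by passing to the determinant, and then to feed the determinant together with its dual into Grothendieck's specialization theorem (Proposition \ref{grothsp}).

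Write $r := \rk \cE$ and put $\det\cE := \wedge^r\cE$, a line bundle on $\cX$. Since $\wedge^r$ is additive on the exact category of locally free $\cO_{\cX}$-modules, commutes with $\otimes_{\Z}\Q$, and satisfies $F^*\wedge^r\cE = \wedge^rF^*\cE$, the isomorphism $\Psi\colon (F^*\cE)_{\Q} \os{=}{\lra} \cE_{\Q}$ induces an isomorphism $\wedge^r\Psi\colon (F^*\det\cE)_{\Q} \os{=}{\lra} (\det\cE)_{\Q}$ in $\Coh(\cX)_{\Q}$, so $(\det\cE,\wedge^r\Psi)$ is again an object of $\FLatt(\cX)_{\Q}$, now of rank one; likewise its dual $(\det\cE)^{\vee}$, equipped with the transpose of $(\wedge^r\Psi)^{-1}$, lies in $\FLatt(\cX)_{\Q}$. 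For a perfect valued point $x = \Spec l \lra X$ one has $x^*(\det\cE) = \det(x^*\cE)$ and $x^*((\det\cE)^{\vee}) = (x^*\det\cE)^{\vee}$ as $F$-isocrystals on $l$; by Dieudonn\'e--Manin the one-dimensional $F$-isocrystal $\det(x^*\cE)$ has pure slope $\sum_{\lambda}\lambda\dim (x^*\cE)_{\lambda}$, whence $\mu_x(\det\cE) = r\,\mu_x(\cE)$ and $\mu_x((\det\cE)^{\vee}) = -\mu_x(\det\cE)$. It thus suffices to show that $m\colon X \lra \R$, $m(x) := \mu_x(\det\cE)$, is constant.

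For a rank-one object the Newton polygon at $x$ is the segment from $(0,0)$ to $(1,\mu_x)$, so for $c \in \Q$ the condition that this polygon lie on or above the segment from $(0,0)$ to $(1,c)$ is precisely $m(x) \geq c$; hence Proposition \ref{grothsp} applied to $(\det\cE,\wedge^r\Psi)$ shows $\{x \in X : m(x) \geq c\}$ is Zariski closed, and applied to $(\det\cE)^{\vee}$ it shows $\{x \in X : -m(x) \geq -c\} = \{x : m(x) \leq c\}$ is Zariski closed. Intersecting over rationals, $\{m \geq c\}$ and $\{m \leq c\}$ are Zariski closed for every real $c$, so the preimage under $m$ of every Euclidean open subset of $\R$ is Zariski open; that is, $m$ is continuous for the Zariski topology on $X$. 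Since $X$ is connected, $m(X)$ is a connected subset of $\R$; but $m$ takes values in $\Q$, and the only connected subsets of $\Q$ are singletons, so $m$ is constant, say $m \equiv m_0$, and then $\mu_x(\cE) = m_0/r$ for all $x$. One then defines $\mu(\cE) := m_0/r$.

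I expect no serious obstacle: the substantive step is the reduction to rank one via the determinant, which converts the two-sided semicontinuity one would otherwise need for the right endpoint of the Newton polygon into two separate applications of Grothendieck's theorem, one to an object and one to its dual, with Proposition \ref{grothsp} doing all the real work. The only point calling for a little care is the compatibility $x^*(\det\cE) = \det(x^*\cE)$ of the formation of determinants with specialization to a point, together with the slope computation for $\wedge^r$ coming from Dieudonn\'e--Manin, both of which are routine.
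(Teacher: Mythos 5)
Your proof is correct. One thing to be aware of: the paper itself gives no argument for this proposition — it is quoted from Crew \cite{crewsp} (just as Proposition \ref{grothsp} is quoted from Katz and Crew) — so there is no internal proof to match against; what you have written is a legitimate self-contained deduction from Proposition \ref{grothsp} as stated. Your reduction to $\det\cE$ is exactly the classical core of the result (the endpoint of the Newton polygon is the slope of the determinant), while the way you finish differs in flavor from the usual argument: the standard route establishes directly that the rank-one slope is Zariski-locally constant (locally a unit of $\Gamma(\cU,\cO_{\cU})[1/p]$ is a power of $p$ times a unit of $\Gamma(\cU,\cO_{\cU})$, so the slope is a locally constant integer), whereas you obtain two-sided semicontinuity by applying Proposition \ref{grothsp} to both $\det\cE$ and its dual and then invoke connectedness. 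The points that genuinely need checking are the ones you flag and handle correctly: that $(\det\cE)^{\vee}$ with the transpose of $(\wedge^{r}\Psi)^{-1}$ is again an object of $\FLatt(\cX)_{\Q}$ (using $F^{*}(\cL^{\vee})\cong(F^{*}\cL)^{\vee}$ for $\cL$ locally free of finite rank), that $x^{*}$ is a pullback along the canonical Frobenius-compatible lift $\Spf O(l)\lra\cX$ and hence commutes with $\wedge^{r}$ and duals, and the Dieudonn\'e--Manin computation $\mu_{x}(\det\cE)=r\,\mu_{x}(\cE)$, $\mu_{x}((\det\cE)^{\vee})=-\mu_{x}(\det\cE)$ (note that in rank one the slope lies in $\Z$, so your continuity-plus-connectedness step could even be replaced by a disconnection argument with a half-integer cut). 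Your version leans on the specialization theorem twice but avoids any explicit analysis of units on the formal lift; both approaches are fine.
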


The next one, which is a weaker form of \cite[2.6.2]{katzslope}, 
is a `generic' Newton filtration theorem: 

\begin{proposition}\label{newfil}
Let $X, \cX, F$ be as above and let $(\cE,\Psi)$ be an object 
in $\FLatt(\cX)_{\Q}$ such that its Newton polygon at $x \in X$ 
is not a straight line $($has a break point$)$ 
and independent of $x \in X$. Then, on an dense open formal 
subscheme $\cU$ of $\cX$, $(\cE,\Psi)$ admits a 
non-trivial saturated 
subobject $(\cE',\Psi')$ with $\mu(\cE') < \mu(\cE)$. 
$($Where a subobject $(\cE',\Psi')$ of  $(\cE,\Psi)$ is called 
saturated if the quotient $(\cE/\cE', \ol{\Psi})$ $(\ol{\Psi}$ is 
the morphism induced by $\Psi)$ is again an object in 
$\FLatt(\cX)_{\Q}.)$ 
When $\cE_{\Q}$ is endowed with an integrable connection for which 
$\Psi$ is horizontal, $\nabla |_{\cE'_{\Q}}$ defines 
an integrable connection on $\cE'_{\Q}$ for which 
$\Psi'$ is horizontal. 
\end{proposition}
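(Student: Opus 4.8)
The plan is to deduce the statement from Katz's generic slope (Newton) filtration theorem \cite[2.6.2]{katzslope}, transposed from $F$-crystals over $W(k)$ to the category $\FLatt(\cX)_{\Q}$ of $F$-lattices over $O_{K}$ taken up to isogeny, together with a vanishing argument for the extra horizontality assertion. First I would reduce to the case that $X$, hence $\cX$, is connected, arguing on each connected component separately and taking for the final $\cU$ the disjoint union of the dense opens obtained on the components. By Proposition \ref{grothsp} and the hypothesis that the Newton polygon of $(\cE,\Psi)$ is the same at every point of $X$, this common polygon $P$ is also the Newton polygon of $(\cE,\Psi)$ at the generic point of $X$. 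Let $\lambda_{1}<\cdots<\lambda_{s}$ be the distinct slopes of $P$ and let $m$ be the multiplicity of $\lambda_{1}$. Since $P$ has a break point, $s\geq 2$, hence $0<m<r:=\rk\cE$, and $\lambda_{1}$ is strictly smaller than the weighted average of the slopes of $P$; by Proposition \ref{endpoint} that average equals $\mu(\cE)$, so $\lambda_{1}<\mu(\cE)$.

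The core of the proof is the construction of $\cE'$ over a dense open, which is Katz's generic slope filtration theorem \cite[2.6.2]{katzslope} in the form adapted to $\FLatt(\cX)_{\Q}$: after replacing $\cX$ by a suitable dense open formal subscheme $\cU$, the object $(\cE|_{\cU},\Psi|_{\cE|_{\cU}})$ admits its Newton filtration $0=\cE_{0}\subset\cE_{1}\subset\cdots\subset\cE_{s}=\cE|_{\cU}$ by saturated sub-$F$-lattices whose graded pieces $\cE_{i}/\cE_{i-1}$ are isoclinic of slope $\lambda_{i}$; concretely, $\cE_{1}$ is the ``unit-root part'' of the object $(\cE|_{\cU},\,\pi^{-a}\Psi^{b})\in\FLatt(\cU)_{\Q}$ for the Frobenius lift $F^{b}$, where $\lambda_{1}=a/b$ in lowest terms. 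I would take $\cE':=\cE_{1}$. It is nonzero since $\rk\cE'=m>0$; it is a saturated subobject of $(\cE|_{\cU},\Psi|_{\cE|_{\cU}})$ in the sense of the statement, because $\cE|_{\cU}/\cE'$, being the tail of the filtration, is again a locally free $F$-lattice up to isogeny; and $\mu(\cE')=\lambda_{1}<\mu(\cE)$ by the previous paragraph. The work in this step is to re-run Katz's proof in the present setting — the Dieudonn\'e--Manin decomposition over the perfect closure of $k(X)$ followed by a descent/spreading-out argument producing an honest saturated sub\emph{sheaf} over a dense open of the \emph{formal} scheme $\cX$ — while keeping track of the lattice (rather than merely isocrystal) structure, of $O_{K}$- versus $W(k)$-coefficients, and of the fact that everything lives in the isogeny category; this is the main technical obstacle.

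It remains to treat the case in which $\cE_{\Q}$ carries an integrable connection $\nabla$ for which $\Psi$ is horizontal, and to show that then $\cE'$ is automatically $\nabla$-stable. I would consider the $\cO_{\cU_{K}}$-linear composite $\theta\colon\cE'_{\Q}\os{\nabla}{\lra}\cE_{\Q}\otimes\Omega^{1}_{\cU_{K}}\lra(\cE/\cE')_{\Q}\otimes\Omega^{1}_{\cU_{K}}$, which is well-defined because $\cE'_{\Q}$ is an $\cO_{\cU_{K}}$-submodule. Horizontality of $\Psi$ together with the $F$-stability of $\cE'$ yields the identity $\theta\circ(\Psi|_{\cE'})=(\bar\Psi\otimes dF)\circ F^{*}\theta$, where $\bar\Psi$ is the Frobenius induced on $\cE/\cE'$ and $dF$ is the differential of the Frobenius lift; since $F$ lifts the $q$-power Frobenius, $dF$ is divisible by $p$, so after further shrinking $\cU$ so that $dF$ becomes an isomorphism, $\Omega^{1}_{\cU_{K}}$ is an $F$-isocrystal all of whose slopes are positive. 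Hence $\theta$ is a Frobenius-fixed element of $\Inthom(\cE'_{\Q},(\cE/\cE')_{\Q}\otimes\Omega^{1}_{\cU_{K}})$, an $F$-isocrystal all of whose slopes are positive — the slopes of $\cE/\cE'$ all exceed $\lambda_{1}$ and the twist by $\Omega^{1}_{\cU_{K}}$ adds a positive amount — and a Frobenius-fixed vector in a positive-slope $F$-isocrystal vanishes. Therefore $\theta=0$ generically, so after one more shrinking of $\cU$ we obtain $\nabla(\cE'_{\Q})\subseteq\cE'_{\Q}\otimes\Omega^{1}_{\cU_{K}}$; the restriction $\nabla|_{\cE'_{\Q}}$ is then integrable, and $\Psi|_{\cE'}$ is horizontal for it by restriction. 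Once the canonical sublattice $\cE'$ of the second paragraph is available, this last step is formal.
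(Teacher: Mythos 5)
Your proposal is correct and follows essentially the same route as the paper: the paper gives no independent argument for this proposition, presenting it simply as a weaker form of Katz's slope filtration theorem \cite[2.6.2]{katzslope} (transposed, as in Crew's work, to $F$-lattices over $O_K$ up to isogeny), which is exactly the reduction you make, with $\cE'$ the lowest-slope step of the generic slope filtration. Your separate treatment of the horizontality assertion via the second-fundamental-form map $\theta$ and positivity of slopes (plus the standard iteration/slope-estimate argument to conclude $\theta=0$ over the base) is in the same spirit as the way this is handled in Katz's setting — there the filtration is by sub-$F$-crystals, so the paper implicitly regards horizontality as part of the cited statement — and does not constitute a departure from or a gap relative to the paper.
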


The next proposition, which is shown in \cite[2.5.1]{crew} 
and \cite[2.3]{crewsp}, fills a possible gap in the inclusion 
$\FLatt(\cX)_{\Q} \hra \FVect(\cX_K)$ generically: 

\begin{proposition}\label{lat}
Let $X, \cX, F$ be as above and assume that $\cX$ is affine. 
Then, for any object $(\cE,\Psi)$ in $\FVect(\cX_K)$, 
there exists an dense open formal subscheme $\cU$ of $\cX$ such that 
$(\cE,\Psi)|_{\cU} \in \FVect(\cX_K)$ is contained in 
$\FLatt(\cU)_{\Q}$. Moreover, in the case where $\dim X = 1$, 
we can take $\cU = \cX$. 
\end{proposition}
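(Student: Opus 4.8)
The plan is to reduce the statement to two essentially formal facts: that a locally free sheaf on the affinoid $\cX_K$ extends to a coherent $\cO_\cX$-module which is locally free on a dense open formal subscheme, and that the semilinear isomorphism $\Psi$ automatically extends to such a model \emph{up to isogeny}. Write $\cX = \Spf A$ with $A$ a $\pi$-adically complete, $O_K$-flat, Noetherian $O_K$-algebra whose reduction $A/\pi A$ is smooth over $k$, and put $A_K := A[1/\pi]$, so that $\cX_K = \Spm A_K$ and $\cE$ corresponds to a finite projective $A_K$-module $E$. Choosing finitely many generators of $E$ and letting $M$ be the $A$-submodule they span gives a finite $A$-module with $M[1/\pi] = E$; dividing out its $\pi$-power torsion we may assume $M$ is $\pi$-torsion-free, hence $O_K$-flat, and (being finite over the complete Noetherian ring $A$) $\pi$-adically complete. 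Let $\cM$ be the associated coherent $\cO_\cX$-module, so $\cM$ is $O_K$-flat with $\cM \otimes_{O_K} K \cong \cE$.

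Next I would locate the dense open on which $\cM$ is locally free. Since $A/\pi A$ is smooth, it is reduced, so the finite $A/\pi A$-module $M/\pi M$ is locally free of locally constant rank on a dense open $\Spec(A/\pi A)[1/\bar f]$ of $X = \Spec(A/\pi A)$: density here is just the statement that a dense open of the reduced scheme $X$ is one meeting every generic point, combined with upper semicontinuity of fibre dimension. Lift $\bar f$ to $f \in A$ and set $\cU := \Spf A\{1/f\}$, a dense open formal subscheme of $\cX$. On $\cU$ the module $M\{1/f\}$ is $O_K$-flat and $M\{1/f\}/\pi$ is locally free over $A\{1/f\}/\pi$, so by the fibrewise flatness criterion over $\Spec O_K$ the module $M\{1/f\}$ is flat, hence (being finite over the Noetherian ring $A\{1/f\}$) locally free. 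Thus $\cM|_\cU$ is a locally free $\cO_\cU$-module of finite rank with $(\cM|_\cU)\otimes_{O_K}K \cong \cE|_{\cU_K}$.

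Then I would spread out $\Psi$. The morphism $\Psi \colon F^*\cE|_{\cU_K} \os{=}{\lra} \cE|_{\cU_K}$ is an isomorphism of $\cO_{\cU_K}$-modules, and $F^*\cM|_\cU$, being coherent on the affine $\cU$, has finitely many global generators; since $\Psi$ sends them into $(\cM|_\cU)\otimes K$, there is $N \ge 0$ with $\pi^N\Psi(F^*\cM|_\cU) \subseteq \cM|_\cU$, so $\Psi = \pi^{-N}(\pi^N\Psi)$ defines a morphism $F^*\cM|_\cU \to \cM|_\cU$ in $\Coh(\cU)_\Q$. Because $\Psi_K$ is an isomorphism, clearing denominators in $\Psi_K^{-1}$ yields a two-sided inverse in $\Coh(\cU)_\Q$. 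Hence $(\cE,\Psi)|_\cU$, with model $\cM|_\cU$, lies in $\FLatt(\cU)_\Q$, proving the first assertion.

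For $\dim X = 1$ I would avoid shrinking by passing to reflexive hulls: $A$ is then regular of Krull dimension $2$ (smooth of relative dimension $1$ over the DVR $O_K$), hence a normal domain on each connected component. Replace $M$ by its $A$-bidual $M^{\vee\vee}$, which is finite, reflexive, still $\pi$-torsion-free and $\pi$-adically complete, and satisfies $M^{\vee\vee}[1/\pi] = E^{\vee\vee} = E$ since $E$ is projective over $A_K$ and bidualization commutes with localization of finite modules. For any localization $R$ of $A$, the module $M^{\vee\vee}_R$ is reflexive over the regular local ring $R$ of dimension $\le 2$, so by Serre's criterion it has depth $\ge \min(2,\dim R)$ and Auslander--Buchsbaum forces projective dimension $0$; thus $\widetilde{M^{\vee\vee}}$ is a locally free $\cO_\cX$-model of $\cE$ over all of $\cX$, and the denominator-clearing argument above, run on the affine $\cX$ itself, gives $(\cE,\Psi) \in \FLatt(\cX)_\Q$. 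The main obstacle I anticipate is exactly this curve case — pinning down that the reflexive hull is free at every point via the Serre/Auslander--Buchsbaum input — together with the verification in general that the non-free locus of $\cM$ is nowhere dense (which rests on the reducedness of $X$); once those are in hand, the extension of $\Psi$ and the passage from $\FVect$ to $\FLatt_\Q$ are routine.
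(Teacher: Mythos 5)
Your argument is correct. Note, though, that the paper does not prove Proposition \ref{lat} at all: it is quoted from Crew (\cite[2.5.1]{crew}, \cite[2.3]{crewsp}), so there is no internal proof to compare with; what you have written is in effect a self-contained reconstruction of the cited argument, and it follows the same standard route — choose a coherent $O_K$-flat model $\cM$ of $\cE$ inside the equivalence $\Coh(\cX)_{\Q} \cong \Coh(\cX_K)$, use reducedness of the special fibre plus the fibrewise flatness criterion over $\Spec O_K$ to get local freeness of the model on a dense open $\cU$, clear denominators to extend $\Psi$ and $\Psi^{-1}$ to morphisms in $\Coh(\cU)_{\Q}$, and in the curve case replace $\cM$ by its reflexive hull, which is locally free because the lifted ring is regular of dimension $2$ (reflexive $\Rightarrow$ depth $\geq \min(2,\dim)$, then Auslander--Buchsbaum). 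Two small points you leave implicit but which are harmless: the dense locally free locus can indeed be shrunk to a principal open $D(\bar f)$ meeting every irreducible component (prime avoidance over the finitely many minimal primes), which is what lets you bound denominators by finitely many global generators — alternatively quasi-compactness of any dense open would do; and the restriction $(\cE,\Psi)|_{\cU}$ makes sense as an object over $\cU$ because $F$ automatically preserves every open formal subscheme, the $q$-power Frobenius being the identity on the underlying topological space. Neither affects the validity of the proof.
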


With more strong condition on the Newton polygons, we have 
the following result, which is proved in \cite[2.5.1--2.6]{crew}: 

\begin{proposition}\label{urlat}
Let $X, \cX, F$ be as above and assume that $\cX$ is affine. 
Let $(\cE,\Psi)$ be an object in $\FVect(\cX_K)$ such that, for 
any point $x \in X$, the Newton polygon of it at $x$ has pure slope 
$0$. Then there exists a unit-root $F$-lattice $(\cE_0,\Psi_0)$ 
on $\cX$ with $(\cE_0,\Psi_0)_{\Q} = (\cE,\Psi)$. 
\end{proposition}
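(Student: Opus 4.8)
The plan is to reduce the statement to Crew's result \cite[2.5.1--2.6]{crew} by first producing an $F$-lattice (not yet unit-root) on a dense open of $\cX$, then showing that the pure-slope-$0$ hypothesis forces it to extend to all of $\cX$ and to be unit-root. First I would apply Proposition~\ref{lat}: since $\cX$ is affine, there is a dense open formal subscheme $\cU \subseteq \cX$ such that $(\cE,\Psi)|_{\cU}$ lies in $\FLatt(\cU)_{\Q}$; choose an actual $F$-lattice $(\cE_1,\Psi_1)$ on $\cU$ representing it, so $(\cE_1,\Psi_1)_{\Q} = (\cE,\Psi)|_{\cU}$. By hypothesis the Newton polygon of $(\cE,\Psi)$ at every point of $X$ has pure slope $0$; in particular this holds at every point of $U := \cU \otimes_{O_K} k$, so $(\cE_1,\Psi_1)$ is an $F$-lattice whose fibrewise Newton polygons are all the zero line. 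At this point one is exactly in the situation handled by Crew: an $F$-lattice of pure slope $0$ on an affine formal scheme admits (after possibly twisting/adjusting by a power of $\pi$, which changes nothing here since the slope is already $0$) a unit-root $F$-lattice $(\cE_{0,\cU},\Psi_{0,\cU})$ on $\cU$ with $(\cE_{0,\cU},\Psi_{0,\cU})_{\Q} = (\cE,\Psi)|_{\cU}$. Indeed Crew's argument in \cite[2.5.1--2.6]{crew} constructs, from an $F$-lattice all of whose geometric Newton polygons have pure slope $0$, a canonical unit-root sublattice on which $\Psi$ is an isomorphism of $\cO$-modules; this is the heart of the matter and I would simply cite it.

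The remaining issue is to descend from $\cU$ to $\cX$. Here I would use the key fact that a unit-root $F$-lattice on a connected smooth affine formal scheme is, via Katz's equivalence \eqref{katzeq'}, the same as a smooth $O_K^{\sigma}$-sheaf on the special fibre, i.e.\ a continuous representation of $\pi_1$ of the special fibre. Since $U \hookrightarrow X$ is an open immersion of smooth $k$-schemes with $X$ normal, and $\pi_1(U) \twoheadrightarrow \pi_1(X)$ is surjective, the representation coming from $(\cE_{0,\cU},\Psi_{0,\cU})$ extends to $\pi_1(X)$ precisely when it is unramified along $X \setminus U$; but that is automatic because $(\cE,\Psi)$ is already a genuine $F$-vector bundle on the whole of $\cX_K$ (not just on $\cU_K$), so its generic $p$-adic local monodromy at the generic point of each component of $X \setminus U$ is trivial. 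Concretely: the restriction of $(\cE,\Psi)$ to a formal punctured tube around such a generic point is an $F$-vector bundle that extends across the puncture, hence (by the one-dimensional case of Proposition~\ref{lat}, applied to a curve transverse to the divisor, or directly by the boundedness built into $(\cE,\Psi)$ being defined on $\cX_K$) the associated representation of the local $\pi_1$ is trivial. Therefore the $\pi_1(U)$-representation factors through $\pi_1(X)$, giving a unit-root $F$-lattice $(\cE_0,\Psi_0)$ on $\cX$.

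Finally I would check $(\cE_0,\Psi_0)_{\Q} = (\cE,\Psi)$ as objects of $\FVect(\cX_K)$. Both sides are locally free $\cO_{\cX_K}$-modules with Frobenius structure, and they agree on the dense open $\cU_K$ by construction; since $\cX_K$ is a (reduced, separated) rigid space and a morphism between coherent sheaves that is an isomorphism on a dense admissible open, together with agreement of the Frobenius structures there, the two extensions across $\cX_K \setminus \cU_K$ coincide because each is the unique locally free extension compatible with $\Psi$ — here one uses that $\Psi$ identifies the restriction near the boundary with its Frobenius pullback, rigidifying the gluing. (Alternatively, one reruns the descent argument of the previous paragraph with $(\cE,\Psi)$ itself in place of the abstract representation and invokes the full faithfulness of $\FLatt(\cX)^{\circ} \hookrightarrow \FVect(\cX_K)$ restricted over $\cU_K$.) I expect the genuine obstacle to be the extension-across-the-boundary step: one must be careful that ``$(\cE,\Psi)$ is defined on all of $\cX_K$'' really does kill the generic local monodromy, which is where the hypothesis that we started with an honest $F$-vector bundle on $\cX_K$ (rather than merely on a strict neighbourhood) is essential; everything else is a formal consequence of Propositions~\ref{lat} and the cited results of Crew together with Katz's equivalence.
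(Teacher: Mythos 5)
A point of comparison first: the paper contains no proof of Proposition \ref{urlat} at all --- it is stated as a result ``proved in \cite[2.5.1--2.6]{crew}'', i.e.\ the intended argument is a direct citation of Crew for the full statement, with an object of $\FVect(\cX_K)$ as input. Your proposal instead reads Crew's result as a statement about $F$-lattices, cites it only in that form on a dense open $\cU$ supplied by Proposition \ref{lat}, and then tries to build the bridge from $\FVect(\cX_K)$ back to $\FLatt(\cX)^{\circ}_{\Q}$ yourself. It is precisely in that bridge that the difficulty of the proposition sits (recall the paper's remark that it is not known whether $\FLatt(\cX)_{\Q}\hra\FVect(\cX_K)$ is essentially surjective), and the two steps you use to cross it are asserted rather than proved.

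Concretely: (a) the final gluing step is not valid as written. If $\cU=\{\ol{f}\neq 0\}$ then $\cU_K=\{x\in\cX_K : |f(x)|=1\}$ and its complement in $\cX_K$ is the tube $\{|f(x)|<1\}$, a large admissible open rather than a thin set; ``isomorphism on a dense admissible open'' is not a usable principle in rigid geometry. For instance, on the closed unit disc the automorphism of $\cO$ given by multiplication by $t$ over $\{|t|=1\}$ does not extend over the disc, so an identification of $(\cE_0)_{\Q}$ with $\cE$ over $\cU_K$ does not by itself extend to $\cX_K$. The suggestion that Frobenius-equivariance rigidifies the extension (because $F$ contracts the tube) is plausible, but carrying it out is essentially the content of Crew's argument, not a formal consequence of it. (b) The claim that the representation attached to the unit-root lattice on $\cU$ is automatically unramified at the generic points of $X\setminus U$ ``because $(\cE,\Psi)$ is defined on all of $\cX_K$'' is a local form of the very statement being proved and is not established by the tools you invoke: Proposition \ref{lat} in dimension one produces some unit-root lattice on a lifted transverse curve $\cC$, but you would still need (i) that its associated representation agrees, over $C\cap U$, with the restriction of the representation you already have on $U$ (an isogeny-versus-isomorphism comparison that is not automatic), and (ii) that unramifiedness along transverse curves through closed points forces triviality of the inertia at the generic point of each component of $X\setminus U$ (a wild-ramification statement requiring its own argument). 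Without (a) and (b) the proposal does not close; with Crew's theorem quoted in the form the paper uses it, none of this extra work is needed.
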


Using the above results, we see the following proposition, which 
is a kind of specialization theorem for log convergent 
$F$-isocrystals. 

\begin{proposition}\label{grothprop}
Let $X \hra \ol{X}$ be an open immersion of connected smooth 
$k$-varieties such that $Z = \ol{X} \setminus X$ is a simple 
normal crossing divisor, and let $(\cE,\Psi)$ be an object in 
$\FIsocl(\ol{X},Z)$. Let $\eta$ be the generic point of $\ol{X}$. 
Then, for any $x \in \ol{X}$, the Newton polygon of 
$(\cE,\Psi)$ at $x$ lies on or above the Newton polygon of 
$(\cE,\Psi)$ at $\eta$ and we have the equality 
$\mu_x(\cE) = \mu_{\eta}(\cE)$. 
$($In particular, $\mu_x(\cE)$ is independent of $x$. So we put 
$\mu(\cE) := \mu_x(\cE)$ in this situation.$)$ 
\end{proposition}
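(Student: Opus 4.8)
The plan is to deduce the proposition from the non-logarithmic structure results \ref{grothsp}--\ref{lat} by first realizing $(\cE,\Psi)$ as an $F$-vector bundle on a formal lift of a local model of $\ol{X}$, checking that forgetting the log structure does not alter Newton polygons, and then reducing the comparison at an arbitrary point to the case of curves, where Proposition \ref{lat} produces an honest $F$-lattice on the whole space.

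\emph{Localization.} Both assertions concern individual points $x\in\ol{X}$, and the generic point $\eta$ lies in every non-empty open subscheme of the irreducible scheme $\ol{X}$; so I would first replace $\ol{X}$ by the members of an affine open covering, reducing to the case where $\ol{X}$ is affine and connected and admits a charted standard small frame $((X,\ol{X},\ol{\cX}),(t_1,\dots,t_r))$ with $\cZ_i:=\{t_i=0\}$, $\cZ:=\bigcup_i\cZ_i$. Next I would observe that the first conclusion (Newton polygon at $x$ lies on or above that at $\eta$) is, by \ref{grothsp}, a closed condition on the point, hence may be checked after a surjective \'etale base change, and that the second (constancy of $\mu$) may likewise be checked after such a base change, since Newton polygons are insensitive to finite separable extension of the residue field. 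Thus, replacing $\ol{\cX}$ by an \'etale cover, I may also assume there is a lift of Frobenius $F\colon(\ol{\cX},\cZ)\to(\ol{\cX},\cZ)$ as log formal schemes, compatible with $\sigma$, with $F^*t_i=t_i^{\,q}u_i$ for some $u_i\in 1+\fm_K\cO_{\ol{\cX}}$; such an $F$ exists \'etale locally by lifting $x_i\mapsto x_i^{\,q}$ along an \'etale morphism $\ol{\cX}\to\fAf^n_{O_K}$ with the $t_i$ among the coordinates. The remaining arguments will be carried out locally and the two conclusions glued at the end.

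\emph{From log $F$-isocrystals to $F$-vector bundles.} With $F$ fixed, $(\cE,\Psi)$ gives rise to a log-$\nabla$-module $(E,\nabla)$ on $\ol{\cX}_K$ with respect to $t_1,\dots,t_r$ together with an isomorphism $\Psi\colon F^*E\os{=}{\lra}E$ of log-$\nabla$-modules; forgetting $\nabla$, the pair $(E,\Psi)$ is an object of $\FVect(\ol{\cX}_K)$. The point I would then check is that for every point $x$ of $\ol{X}$ the Newton polygon of the log convergent $F$-isocrystal $(\cE,\Psi)$ at $x$ coincides with that of $(E,\Psi)\in\FVect(\ol{\cX}_K)$ at $x$, and likewise for $\mu_x$: both are computed by pulling back to $x$ the underlying module with its Frobenius, the residue of $\nabla$ (equivalently, the monodromy at the log point) being irrelevant once one passes through the equivalence between log convergent $F$-isocrystals on a log point and $F$-isocrystals on the underlying point. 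I would record the same compatibility for pull-back of $(\cE,\Psi)$, as a log convergent $F$-isocrystal, along an arbitrary morphism of log schemes over $\ol{X}$, followed by realization.

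\emph{Generic control and reduction to curves.} By \ref{lat} there is a dense open formal subscheme $\cU\subseteq\ol{\cX}$ with $(E,\Psi)|_{\cU}\in\FLatt(\cU)_{\Q}$; $\cU$ is connected and its special fibre $U_0$ contains $\eta$. By \ref{grothsp} and \ref{endpoint} the locus in $U_0$ where the Newton polygon of $(E,\Psi)$ equals that at $\eta$ contains a dense open $U_1$, and $\mu_x(E)=\mu_\eta(E)$ for all $x\in U_0$. If $\dim\ol{X}=1$ I may take $\cU=\ol{\cX}$ by the last sentence of \ref{lat}, and \ref{grothsp}, \ref{endpoint} together with the compatibility above finish the argument. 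If $\dim\ol{X}\ge 2$, for an arbitrary $x\in\ol{X}$ I would choose, by a Bertini-type argument, a smooth connected curve $C$ with a morphism $\iota\colon C\to\ol{X}$ such that $\iota^{-1}(Z)$ is a simple normal crossing divisor on $C$, some point $c_0$ of $C$ maps to $x$, and the generic point $\eta_C$ maps into $U_1$. Pulling back the log convergent $F$-isocrystal gives $\iota^*(\cE,\Psi)\in\FIsocl(C,\iota^{-1}(Z))$; realizing it \'etale locally on a formal curve equipped with a Frobenius lift yields an $F$-vector bundle which, by \ref{lat} (curve case), is an $F$-lattice on the whole curve, so \ref{grothsp} and \ref{endpoint}, applied \'etale locally and glued using the connectedness of $C$, give that its Newton polygon at $c_0$ lies on or above that at $\eta_C$ and that its $\mu$ agrees at $c_0$ and at $\eta_C$. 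By the pull-back compatibility (for $\iota$ at $c_0$ and $\eta_C$, and for $U_1\hookrightarrow\ol{X}$) this translates into the desired inequality of Newton polygons of $(\cE,\Psi)$ at $x$ and at $\eta$ and the equality $\mu_x(\cE)=\mu_\eta(\cE)$.

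\emph{Main obstacle.} The substance lies in bridging the logarithmic and non-logarithmic settings: Propositions \ref{grothsp}, \ref{endpoint}, \ref{lat} concern honest $F$-lattices on formal schemes, so one must verify that forgetting the log structure leaves Newton polygons unchanged, and one must reduce the comparison at an arbitrary point --- at which no Frobenius-compatible lift need exist globally --- to the one-dimensional case, where \ref{lat} upgrades a generic $F$-lattice to one defined everywhere. By contrast, the \'etale-local existence of the Frobenius lift, the gluing of the ``$\ge$'' and ``$\mu$ constant'' conclusions, and the Bertini-type choice of curve are routine.
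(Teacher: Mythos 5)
Your argument only reaches closed points, but the proposition is asserted for every $x\in\ol{X}$, including non-closed points such as the generic points of the strata $Z_{i_1}\cap\dots\cap Z_{i_s}$. A point $c_0$ of a smooth curve $C$ has residue field either finite over $k$ or of transcendence degree $1$ over $k$, so for $\dim\ol{X}\geq 2$ most non-closed points of $\ol{X}$ are not the image of any point of $C$; and even when $\mathrm{tr.deg}_k\,k(x)=1$, the only candidate for $c_0$ is the generic point $\eta_C$, which you simultaneously require to land in the dense open $U_1\subseteq X$ --- impossible unless $x$ already lies in $U_1$. Nor can you formally reduce to closed points with the tools you invoke: Proposition \ref{grothsp} gives $\mathrm{NP}_{x_0}\geq \mathrm{NP}_x$ for specializations $x_0$ of $x$, which is the wrong direction for deducing $\mathrm{NP}_x\geq \mathrm{NP}_\eta$ from the closed-point case, and the equality $\mu_x=\mu_\eta$ at a non-closed $x$ needs a constancy statement along $\ol{\{x\}}$ that \ref{endpoint} only provides after realizing the restriction of $(\cE,\Psi)$ to (an open of) $\ol{\{x\}}$ --- carrying a nontrivial induced log structure when $x\in Z$ --- on a formal lift with Frobenius. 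This is precisely the step the paper supplies and your proposal omits: it restricts $(\cE,\Psi)$ to a smooth open $V_x$ of the closure of $x$ inside its stratum, equips it with the ``hollow'' log structure associated to the zero map $\N^r\to\cO_{V_x}$, takes a strong lift, and applies \ref{lat}, \ref{grothsp}, \ref{endpoint} there to replace $x$ by a closed point with the same Newton polygon and the same $\mu$; it then descends one stratum at a time via curves inside $Y_{r-1}$ transversal to $Y_r$, rather than joining $x$ to the interior by a single curve.

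For closed points your route (one curve from $x$ straight into $U_1$, instead of the paper's descending induction on the number of branches of $Z$ through $x$) can be made to work, but two points need more care than you give them. First, at a closed $x$ lying on several components $Z_i$, the pullback log structure $\iota^*M_{(\ol{X},Z)}$ on $C$ is not the log structure of the reduced divisor $\iota^{-1}(Z)_{\red}$ (its rank at $c_0$ equals the number of branches through $x$), so you must either work with the pullback log structure --- and then verify that a Frobenius lift on a formal lift of $C$ is compatible with all the pulled-back equations $\iota^*t_i$, which may vanish to order $\geq 2$; this does hold, since any log Frobenius lift for $(\cC,\iota^{-1}(Z)_{\red})$ sends every function with zeros along the divisor to its $q$-th power times a unit --- or else prove your realization compatibility for non-strict morphisms of log points. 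The paper sidesteps this entirely by only ever using strict restrictions of the log structure. Second, the Bertini-type existence of an irreducible curve through $x$ whose generic point lies in a prescribed dense open requires an argument when $k$ is finite (e.g.\ Poonen-type Bertini), though the paper's own curve choices need comparable care.
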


Before the proof, we introduce one terminology. 
For a fine log scheme $(T,M_T)$ separated of finite type over 
$k$, a strong lift of $(T,M_T)$ is a fine log formal scheme 
$(\cT, M_{\cT})$ separated of finite type over $\Spf O_K$ endowed with 
an endomorphism $F: (\cT,M_{\cT}) \lra (\cT,M_{\cT})$ such that 
there exists a lift $(\cT_{\c}, M_{\cT_{\c}})$ of $(T,M_T)$ over 
$\Spf W(k)$ endowed with a $(\sigma|_{W(k)})^*$-linear endomorphism 
$F_{\c}: (\cT_{\c}, M_{\cT_{\c}}) \lra (\cT_{\c}, M_{\cT_{\c}})$ 
lifting the $q$-th power map on $(T,M_T)$ satisfying 
$(\cT,M_{\cT}) = (\cT_{\c},M_{\cT_{\c}}) \otimes_{W(k)} O_K, 
F = F_{\c} \otimes_{(\sigma|_{W(k)})^*} \sigma^*$. 
When $(T,M_T)$ is log smooth over $k$, $(\cT,M_{\cT})$ is called 
a strong smooth lift of $(T,M_T)$ if we can take 
$(\cT_{\c},M_{\cT_{\c}})$ to be log smooth over $\Spf W(k)$. 

\begin{proof}
Let $Z = \bigcup_{i=1}^r Z_i$ be the decomposition of $Z$ into 
irreducible components. Since we may work locally around $x$, 
we may suppose that each $Z_i$ is defined as the zero locus of 
some element $t_i$ in $\ol{X}$ and that 
$x \in \bigcap_{i=1}^r Z_i =:Y_r$. (So $r=r(x)$ is the number of 
irreducible components which contains $x$.) \par 
First we reduce the proof of the proposition to the case $r=r(x)=0$ 
by descending induction on $r$. Let us denote the log strucure on 
$\ol{X}$ associated to $Z$ by $M_{\ol{X}}$ and let us put 
$M_{Y_r} := M_{\ol{X}}|_{Y_r}$. Then $M_{Y_r}$ is equal to the log structure 
associated to zero map $\N^r \lra \cO_{Y_r}$. 
Let $V_x$ be a non-empty 
smooth open subscheme of the closure of $x$ in $Y_r$. 
Then, after shrinking $V_x$ properly, 
it admits a strong smooth lift $\cV_x$, and 
if we define $M_{\cV_x}$ to be the log structure associated to 
the zero map $\N^r \lra \cO_{\cV_x}$, $(\cV_x,M_{\cV_x})$ is a strong 
lift (no more smooth) of $(V_x,M_{Y_r}|_{V_x})$. 
Then $(\cE,\Psi)$ naturally 
induces an object $(\cE_{\cV_x},\Psi_{\cV_x}) \in \FVect(\cV_{x,K})$, 
and by shrinking $\cV_x$, we may assume that 
$(\cE_{\cV_x},\Psi_{\cV_x})$ belongs to $\FLatt(\cV_{x})_{\Q}$, 
by Proposition \ref{lat}. Then, by Propositions \ref{grothsp} and 
\ref{endpoint}, we have the following after shrinking $V_x$ 
further: For any $y \in V_x$, 
$\mu_y(\cE) = \mu_x(\cE)$ and that the Newton polygon of 
$\cE$ at $y$ is the same as the Newton polygon of $\cE$ at $x$. 
So, by replacing $x$ by a closed point of $V_x$, we may assume that 
$x$ is a closed point to prove the proposition. 
Next, let us put 
$Y_{r-1} := \bigcap_{i=1}^{r-1} Z_i$, 
$M_{Y_{r-1}} := M_X|_{Y_{r-1}}$ and let $M'_{Y_{r-1}}$ be the 
log structure on $Y_{r-1}$ associated to $Y_r$. 
Then $M_{Y_{r-1}}$ is equal to the direct sum (in the category 
of fine log structures) 
of $M'_{Y_{r-1}}$ and the 
log structure associated to zero map $\N^{r-1} \lra \cO_{Y_{r-1}}$. 
Let us take an affine smooth 
curve $C$ on $Y_{r-1}$ passing through $x$ which is 
transversal to $Y_r$. Then $(C, M'_{Y_{r-1}}|_C)$, being log smooth, 
admits a strong smooth lift $(\cC, M'_{\cC})$ 
when we shrink $C$ appropriately, 
keeping the condition $x \in C$. 
Then, if we define $M_{\cC}$ to be the direct sum of $M'_{\cC}$ and 
the log structure associated to 
the zero map $\N^{r-1} \lra \cO_{\cC}$, $(\cC,M_{\cC})$ is a strong lift 
(no more smooth) of $(C,M_{Y_{r-1}}|_{C})$. 
Then $(\cE,\Psi)$ naturally 
induces an object $(\cE_{\cC},\Psi_{\cC}) \in \FVect(\cC_{K})$ 
and by  Proposition \ref{lat}, it belongs to $\FLatt(\cC)_{\Q}$. 
Let $y$ be the generic point of $C$. 
Then, by Propositions \ref{grothsp} and 
\ref{endpoint}, we have 
$\mu_y(\cE) = \mu_x(\cE)$ and that the Newton polygon of 
$\cE$ at $x$ lies on or above the Newton polygon of $\cE$ at $y$. Hence
 the proposition for $x$ is reduced to the proposition for $y$. 
Since $y \in Y_{r-1} \setminus Y_r$, we have $r(y) = r(x)-1$. 
So, by descending induction, we can reduce the proof of the theorem 
to the case $r(x)=0$. \par 
In the case $r(x)=0$, the theorem is essentially due to 
Crew \cite[2.1]{crewsp}. Here we give a proof for the convenience of 
the reader, since the statement here and that in \cite[2.1]{crewsp} are 
slightly different. By the first argument in the previous paragraph, 
we may assume that $x$ is a closed point of $X$. 
Since we may work locally around $x$, we may assume that 
$X = \ol{X}$ and that this admits a strong smooth lift 
$\cX$. 
Then $(\cE,\Psi)$ naturally 
induces an object $(\cE_{\cX},\Psi_{\cX}) \in \FVect(\cX_{K})$. 
By Proposition \ref{lat}, we have a dense open formal 
subscheme $\cU \subseteq \cX$ such that 
$(\cE_{\cX},\Psi_{\cX})|_{\cU}$ belongs to $\FLatt(\cU)_{\Q}$. 
Then, by Propositions \ref{grothsp} and 
\ref{endpoint}, we have the following after shrinking $\cU$ 
further: For any $y \in U := \cU \otimes k$, 
$\mu_y(\cE) = \mu_{\eta}(\cE)$ and that the Newton polygon of 
$\cE$ at $y$ lies on or above the Newton polygon of $\cE$ at $\eta$. 
Next let us take an affine smooth curve $C$ on $X$ passing through $x$ 
with $U \cap C \not= \emptyset$ such that $C$ admits a 
strong smooth lift $\cC$. 
Then $(\cE,\Psi)$ naturally 
induces an object $(\cE_{\cC},\Psi_{\cC}) \in \FVect(\cC_{K})$ 
and by Proposition \ref{lat}, it belongs to $\FLatt(\cC)_{\Q}$. 
Let $y$ be the generic point of $C$, which belongs to $U \cap C$. 
Then, by Propositions \ref{grothsp} and 
\ref{endpoint}, we have 
$\mu_y(\cE) = \mu_x(\cE)$ and that the Newton polygon of 
$\cE$ at $x$ is on or above the Newton polygon of $\cE$ at $y$. 
Hence we have shown that 
$\mu_x(\cE) = \mu_{\eta}(\cE)$ and that the Newton polygon of 
$\cE$ at $x$ lies on or above the Newton polygon of $\cE$ at $\eta$, 
as desired. 
\end{proof}

\begin{corollary}\label{grothcor}
Let $(\ol{X},Z)$, $\eta$ be as in Proposition \ref{grothprop} and let 
let $(\cE,\Psi)$ be an object in 
$\FIsocl(\ol{X},Z)$ whose Newton polygon at $\eta$ has pure slope $s$. 
Then, for any $x \in \ol{X}$, the Newton polygon of 
$(\cE,\Psi)$ at $x$ has pure slope $s$. 
\end{corollary}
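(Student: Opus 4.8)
The plan is to deduce the corollary directly from Proposition \ref{grothprop} together with the convexity of Newton polygons, so there is very little to do. First I would set $d := \rk \cE$, which is well-defined and independent of the point of $\ol{X}$ since $\ol{X}$ is connected and $\cE$ is locally free of finite rank. The hypothesis that the Newton polygon of $(\cE,\Psi)$ at $\eta$ has pure slope $s$ means, unwinding the definition, that this polygon is the straight segment $L$ joining $(0,0)$ and $(d,ds)$; in particular $\mu_{\eta}(\cE)=s$.

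Next, I would fix an arbitrary point $x \in \ol{X}$ and write $P_x$ for the Newton polygon of $(\cE,\Psi)$ at $x$. Its left endpoint is $(0,0)$, and its right endpoint is $(d, d\,\mu_x(\cE))$. By Proposition \ref{grothprop} we have $\mu_x(\cE) = \mu_{\eta}(\cE) = s$, so $P_x$ has the same two endpoints $(0,0)$ and $(d,ds)$ as $L$. Proposition \ref{grothprop} also tells us that $P_x$ lies on or above $L$.

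It remains to note the opposite inequality, which is where the (entirely elementary) content sits: $P_x$ is by construction a convex polygon, i.e. its successive slopes are non-decreasing from left to right, so viewed as the graph of a convex function on $[0,d]$ it lies on or below the chord joining its two endpoints, namely $L$. Combining "on or above" with "on or below" forces $P_x = L$, that is, the Newton polygon of $(\cE,\Psi)$ at $x$ is precisely the segment from $(0,0)$ to $(d,ds)$, which is exactly the assertion that it has pure slope $s$. Since $x$ was arbitrary, this finishes the proof.

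I do not expect a genuine obstacle here; the one thing to keep straight is the direction of convexity of the Newton polygon, so that the bound from Proposition \ref{grothprop} and the bound coming from convexity squeeze $P_x$ onto the chord $L$. (If one prefers to avoid even the function-graph language, one can argue directly: a convex polygon from $(0,0)$ to $(d,ds)$ all of whose breakpoints lie on or above the line through these two points can have no slope smaller than $s$ at the start, hence no slope larger than $s$ at the end, hence all slopes equal to $s$.)
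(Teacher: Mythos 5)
Your argument is correct and is exactly the intended one: the paper gives no separate proof, treating the statement as an immediate consequence of Proposition \ref{grothprop}, namely that the polygon at $x$ lies on or above the slope-$s$ segment while sharing its endpoints (since $\mu_x(\cE)=\mu_\eta(\cE)=s$), and convexity of the Newton polygon forces it onto that segment. Your convexity-squeeze step is precisely the elementary fact being implicitly invoked, so nothing is missing.
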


The following is the generic Newton filtration theorem for 
(log) convergent $F$-isocrystals. 

\begin{proposition}\label{newfilprop}
Let $(\ol{X},Z), X$, $\eta$ be as in Proposition \ref{grothprop} and let 
$(\cE,\Psi)$ be an object in 
$\FIsocl(\ol{X},Z)$ such that its Newton polygon at $\eta$ 
is not a straight line $($has a break point$)$ 
and independent of $x \in X$. Then, on an dense open 
subscheme of $X$, $(\cE,\Psi)$ admits a 
non-trivial subobject $(\cE',\Psi')$ with $\mu(\cE') < \mu(\cE)$. 
\end{proposition}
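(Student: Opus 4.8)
The plan is to reduce to the already-available generic Newton filtration theorem for $F$-lattices, Proposition \ref{newfil}, and then to transport the outcome back into the category of convergent $F$-isocrystals. Since the conclusion only asserts the existence of a suitable dense open subscheme of $X$, I may throughout replace $\ol{X}$ by $X$ (so that there is no log structure in play) and shrink $X$ freely; thus I regard $(\cE,\Psi)$ as an object of $\FIsoc(X)$ with $X$ smooth and connected, and I write $\eta$ for its generic point. By hypothesis together with Proposition \ref{grothprop}, the Newton polygon of $(\cE,\Psi)$ at every point of $X$ coincides with the one at $\eta$, which by assumption has a break point, and $\mu_x(\cE)$ is the constant value $\mu(\cE)$.

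First I would shrink $X$ to a dense open affine subscheme admitting a strong smooth lift $(\cX,F)$ in the sense of the proof of Proposition \ref{grothprop}; this is possible because $X$ is smooth. Realizing $(\cE,\Psi)$ on $(\cX,F)$ yields an object $(\cE_{\cX},\Psi_{\cX})$ of $\FVect(\cX_K)$ whose underlying module carries the integrable connection $\nabla$ coming from the isocrystal structure, for which $\Psi_{\cX}$ is horizontal. By Proposition \ref{lat}, after a further shrinking of $\cX$ to a dense open formal subscheme, I may assume $(\cE_{\cX},\Psi_{\cX}) \in \FLatt(\cX)_{\Q}$. The Newton polygon of $(\cE_{\cX},\Psi_{\cX})$ at a point $x$ agrees with that of $(\cE,\Psi)$ at $x$, so it has a break point and is independent of $x$; in particular $\mu(\cE_{\cX}) = \mu(\cE)$, so the hypotheses of Proposition \ref{newfil} are met.

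Applying Proposition \ref{newfil}, I obtain a dense open formal subscheme $\cU \subseteq \cX$ on which $(\cE_{\cX},\Psi_{\cX})$ admits a non-trivial saturated subobject $(\cE',\Psi')$ with $\mu(\cE') < \mu(\cE)$, and by the last clause of that proposition $\nabla$ restricts to an integrable connection $\nabla'$ on $\cE'_{\Q}$ for which $\Psi'$ is horizontal. The remaining point is to recognize $(\cE'_{\Q},\nabla',\Psi')$ as the realization of a convergent $F$-isocrystal on $U := \cU \otimes_{O_K} k$. For this I would invoke the standard fact that a $\nabla$-module on $\cU_K$ carrying a Frobenius structure is automatically convergent (Dwork's trick: Frobenius pullback pushes the generic radius of convergence toward $1$, while $F^{*}\cE'_{\Q} \cong \cE'_{\Q}$ makes it Frobenius-invariant, so it must be $1$ everywhere); hence $(\cE'_{\Q},\nabla',\Psi')$ defines an object of $\FIsoc(U)$. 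By the full faithfulness of the realization functor from $\FIsoc(U)$ to the category of $F$-$\nabla$-modules on $\cU_K$, the inclusion $\cE'_{\Q} \hookrightarrow \cE_{\cX,\Q}|_{\cU}$ descends to a monomorphism $\cE'' \hookrightarrow \cE|_{U}$ in $\FIsoc(U)$, with $\cE''$ the convergent $F$-isocrystal realizing $(\cE'_{\Q},\nabla',\Psi')$ and $\mu(\cE'') = \mu(\cE') < \mu(\cE)$. Taking $U$ as the required dense open subscheme of $X$, this monomorphism is the desired non-trivial subobject of smaller $\mu$.

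The routine part is the reduction in the first two paragraphs, which only uses Propositions \ref{lat} and \ref{grothprop} and the compatibility of the realization functors with specialization at points. The substantive input is Proposition \ref{newfil} — in particular the fact, encoded in its final sentence, that the Frobenius-stable sub-lattice it produces is automatically stable under the canonical connection — together with the identification of the resulting formal-geometric sub-object with a convergent $F$-isocrystal, which rests on the convergence of $F$-$\nabla$-modules and on full faithfulness of realization. I expect the point requiring the most care to be checking that this descent is compatible with the given inclusion, so that the subobject really sits inside $(\cE,\Psi)|_U$ rather than merely being abstractly isomorphic to a subquotient.
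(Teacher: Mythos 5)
Your proof is correct and follows essentially the same route as the paper's: shrink so that $X=\ol{X}$ admits a strong smooth lift, realize $(\cE,\Psi)$ with its connection on the lift, pass via Proposition \ref{lat} to $\FLatt(\cX)_{\Q}$, apply Proposition \ref{newfil}, and descend the resulting horizontal Frobenius-stable sub-lattice back to a subobject in $\FIsoc$ of a dense open of $X$. The only (harmless) divergence is at the convergence step: the paper simply notes that convergence of $\nabla|_{\cE'}$ is inherited from that of $\nabla$, whereas you invoke Dwork's trick via the Frobenius structure — both justifications are valid.
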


\begin{proof}
Since we may shrink $\ol{X}$, we may assume that $X = \ol{X}$ and that 
it admits a strong smooth lift $\cX$. Then 
$(\cE,\Psi)$ naturally induces an object $(\cE_{\cX},\Psi_{\cX})$ in 
$\FVect(\cX_K)$ endowed with an integrable connection 
$\nabla$ for which $\Psi$ is horizontal. 
Then we may assume that $(\cE_{\cX},\Psi_{\cX})$ belongs to 
$\FLatt(\cX)_{\Q}$ by Proposition \ref{lat} and by 
Proposition \ref{newfil}, $(\cE_{\cX},\Psi_{\cX})$ admits a 
non-trivial subobject $(\cE'_{\cX},\Psi'_{\cX})$ with $\mu(\cE') < \mu(\cE)$ 
such that $\nabla |_{\cE'_{\cX}}$ defines an integrable connection on 
$\cE'_{\cX}$ for which $\Psi'_{\cX}$ is horizontal. Then the triple 
$(\cE'_{\cX},\nabla |_{\cE'_{\cX}},\Psi'_{\cX})$ naturally 
induces a non-trivial subobject $(\cE',\Psi')$ of $(\cE,\Psi)$ 
with $\mu(\cE') < \mu(\cE)$. (The `convergence' of 
$\nabla |_{\cE'_{\cX}}$ follows from that of $\nabla$.) 
\end{proof} 

Now we give the definition of generic semistability for 
log convergent $F$-isocrystals. 

\begin{definition}
Let $X \hra \ol{X}$ be an open immersion of connected smooth 
$k$-varieties such that $Z = \ol{X} \setminus X$ is a simple 
normal crossing divisor, and let $(\cE,\Psi)$ be an object in 
$\FIsocl(\ol{X},Z)$. Then $(\cE,\Psi)$ is called 
generically semistable $($gss$)$ if, for any dense open subscheme 
$U \subseteq X$, $(\cE,\Psi)|_U \in \FIsoc(U)$ does not admit 
any non-trivial subobject $(\cE',\Psi')$ with $\mu(\cE') < \mu(\cE)$. 
We denote the full subcategory of $\FIsocl(\ol{X},Z)$ consisting 
of generically semistable objects $(\cE,\Psi)$ with $\mu(\cE)=0$ by 
$\FIsocl(\ol{X},Z)^{\gss, \mu=0}$. 
In the case where $\ol{X} = \coprod_i \ol{X}_i$ 
is not necessarily connected, 
we define the category $\FIsocl(\ol{X},Z)^{\gss, \mu=0}$ as 
the product of the categories 
$\FIsocl(\ol{X}_i,\ol{X}_i \cap Z)^{\gss, \mu=0}$. 
\end{definition}

Then we have the following: 

\begin{proposition}\label{gssmain1}
Let $X \hra \ol{X}$ be an open immersion of connected smooth 
$k$-varieties such that $Z = \ol{X} \setminus X$ is a simple 
normal crossing divisor. Then we have the canonical 
equivalence 
\begin{equation}\label{gss1} 
\FIsoc(\ol{X})^{\circ} \os{=}{\lra} \FIsocl(\ol{X},Z)^{\gss,\mu=0}
\end{equation}
\end{proposition}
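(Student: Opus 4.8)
The plan is to establish the equivalence \eqref{gss1} by exhibiting a functor in each direction and checking they are mutually quasi-inverse. First I would note that there is an obvious inclusion functor: if $(\cE,\Psi)$ is a unit-root convergent $F$-isocrystal on $\ol{X}$ (in the sense of $\FIsoc(\ol{X})^{\circ}$), then in particular it is a log convergent $F$-isocrystal on $(\ol{X},Z)$ with trivial residues, so it has exponents in $\0$ and defines an object of $\FIsocl(\ol{X},Z)$. By Corollary \ref{grothcor}, since its Newton polygon at the generic point $\eta$ has pure slope $0$ (being unit-root), the Newton polygon at every point of $\ol{X}$ has pure slope $0$; in particular $\mu(\cE)=0$, and for any dense open $U\subseteq X$ and any saturated subobject $(\cE',\Psi')$ over $U$, we have $\mu(\cE')=0$ as well (again by pure slope $0$ of the restriction, which forces all sub-$F$-isocrystals to be unit-root). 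Hence $(\cE,\Psi)$ is generically semistable with $\mu=0$, so the inclusion lands in $\FIsocl(\ol{X},Z)^{\gss,\mu=0}$, giving the functor \eqref{gss1}.

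Next I would construct the inverse. Let $(\cE,\Psi)$ be an object of $\FIsocl(\ol{X},Z)^{\gss,\mu=0}$. By Proposition \ref{grothprop} we have $\mu_x(\cE)=\mu_\eta(\cE)=\mu(\cE)=0$ for all $x$, and I claim the Newton polygon at $\eta$ is a straight line. Indeed, if it had a break point, then by Proposition \ref{newfilprop} there would be a dense open subscheme of $X$ on which $(\cE,\Psi)$ admits a non-trivial subobject $(\cE',\Psi')$ with $\mu(\cE')<\mu(\cE)=0$, contradicting generic semistability. So the Newton polygon at $\eta$ is the straight line of slope $0$, i.e. $(\cE,\Psi)$ has pure slope $0$ at $\eta$, hence (by Corollary \ref{grothcor}) pure slope $0$ everywhere. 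Now I would argue that such a $(\cE,\Psi)$ is actually unit-root: working Zariski-locally on $\ol{X}$, choose a charted smooth standard small frame with a lift of Frobenius, so $(\cE,\Psi)$ induces an object of $\FVect(\cX_K)$ (or better, of $\FLatt$ after the reductions) whose Newton polygon at every point of the affine lift has pure slope $0$; Proposition \ref{urlat} then provides a unit-root $F$-lattice $(\cE_0,\Psi_0)$ with $(\cE_0,\Psi_0)_{\Q}=(\cE,\Psi)$. Patching these local unit-root structures (using that the category $\FIsoc(\ol{X})^{\circ}$ is defined by descent and that the unit-root structure is unique once it exists, by full faithfulness of $\FIsoc(\ol{X})^{\circ}\hra \FIsoc(\ol{X})$ and faithful flatness) yields an object of $\FIsoc(\ol{X})^{\circ}$ mapping to $(\cE,\Psi)$.

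To finish, I would check that these two functors are mutually inverse: the composite $\FIsoc(\ol{X})^{\circ}\to \FIsocl(\ol{X},Z)^{\gss,\mu=0}\to \FIsoc(\ol{X})^{\circ}$ is the identity because a unit-root convergent $F$-isocrystal determines and is determined by its underlying convergent $F$-isocrystal, and the unit-root reconstruction above returns the same object; conversely, starting from a gss object with $\mu=0$ and reconstructing the unit-root structure, then forgetting it, recovers the original object since the log convergent isocrystal with exponents $\0$ underlying a unit-root convergent $F$-isocrystal on $\ol{X}$ is the original one. On morphisms, full faithfulness follows from that of $\FIsoc(\ol{X})^{\circ}\hra\FIsoc(\ol{X})$ together with the observation that a morphism of log convergent $F$-isocrystals between objects with trivial residues is automatically a morphism of convergent $F$-isocrystals on $\ol{X}$.

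The main obstacle I anticipate is the globalization/patching step: the results of Crew (Propositions \ref{lat}, \ref{urlat}) that produce unit-root $F$-lattices are stated for affine liftable $\cX$ with a chosen Frobenius lift, and one must verify that the resulting unit-root structures glue along overlaps independently of the chosen frames and Frobenius lifts. The key point making this work is uniqueness: a unit-root $F$-isocrystal structure on a given convergent $F$-isocrystal, if it exists, is unique (two such would differ by an automorphism that is simultaneously unipotent-type and unit-root, hence trivial), so the local pieces automatically agree on overlaps and descend. One should also be slightly careful that $\ol{X}$ need not be proper, so "unit-root" is genuinely the pointwise pure-slope-$0$ condition rather than something about a compactification — but this is exactly the content of Corollary \ref{grothcor} and Proposition \ref{urlat}, so no extra work is needed there.
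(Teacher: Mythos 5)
Your forward direction and the slope analysis (straight Newton polygon at $\eta$ via Proposition \ref{newfilprop}, then pure slope $0$ everywhere via Corollary \ref{grothcor}) follow the paper; the only small omission there is that Proposition \ref{newfilprop} requires the Newton polygon to be constant on a dense open, which has to be arranged first by Propositions \ref{grothsp}, \ref{endpoint} and \ref{lat} — easily repaired. The genuine gap is in the essential surjectivity step. You must show that a gss object $(\cE,\Psi)$ of $\FIsocl(\ol{X},Z)$ with $\mu=0$ actually lies in $\FIsoc(\ol{X})$, i.e.\ that its log connection has no log poles along $Z$ (and converges as a crystal on $\ol{X}$, not just on $(\ol{X},Z)$). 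Proposition \ref{urlat} only produces, locally on an affine lift, a unit-root $F$-\emph{lattice} structure on the underlying $F$-vector bundle; it carries no information about the connection, so "patching the local unit-root structures" does not yield an object of $\FIsoc(\ol{X})^{\circ}$ — at that point you still only have a log convergent $F$-isocrystal whose connection may well map into $\cE\otimes\Omega^1_{\ol{\cX}_K}(\log\cZ_K)$ rather than $\cE\otimes\Omega^1_{\ol{\cX}_K}$. Your closing remark that "the log convergent isocrystal with exponents $\0$ underlying a unit-root convergent $F$-isocrystal on $\ol{X}$ is the original one" presupposes exactly the object you have not constructed.

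The paper closes this gap by a Frobenius bootstrapping argument that is the real content of the proposition: with a local lift $F$ satisfying $F^*t_i=t_i^q\cdot(\text{unit})$, one sets
$\Omega_n(\cE_0):=\bigl(\sum_i p^n\cE_0\,\dlog t_i\bigr)+\cE_{\ol{\cX}}\otimes\Omega^1_{\ol{\cX}_K}$
for the unit-root lattice $\cE_0$ furnished by Proposition \ref{urlat}, observes $F^*\Omega_n(\cE_0)=\Omega_{n+1}(F^*\cE_0)$, and shows by induction (applying $\Psi$ to $F^*\nabla$) that $\nabla(\cE_0)\subseteq\Omega_n(\cE_0)$ for every $n$, whence $\nabla(\cE_0)\subseteq\bigcap_n\Omega_n(\cE_0)=\cE_{\ol{\cX}}\otimes\Omega^1_{\ol{\cX}_K}$. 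This is where unit-rootness is actually used to kill the log poles; once pole-freeness (a local property) is established, no gluing of lattices is needed at all, since $\FIsoc(\ol{X})$ is a full subcategory of $\FIsocl(\ol{X},Z)$ and unit-rootness is the pointwise pure-slope-$0$ condition already verified. Your proposal is missing this step entirely, and your worry about compatibility of local Frobenius lifts is, in the paper's route, moot for the same reason.
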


\begin{proof}
Let $\eta$ be the generic point of $\ol{X}$. 
We see by definition that, for any object $(\cE,\Psi)$ in 
$\FIsoc(\ol{X})^{\circ}$, the Newton polygon of it at 
$\eta$ has pure slope $0$. Hence so is for any dense open 
$U \subseteq X$ and any subobject of 
 $(\cE,\Psi)|_U$ in $\FIsoc(U)$. 
Hence $(\cE,\Psi)$ is generically 
semistable with $\mu(\cE)=0$, that is, the functor \eqref{gss1} is 
well-defined as the canonical inclusion. \par 
It suffices to show the essential surjectivity of 
the functor \eqref{gss1} to prove the proposition. So let us take 
an object $(\cE,\Psi)$ in $\FIsocl(\ol{X},Z)$ generically 
semistable with $\mu(\cE)=0$. Then, by Propositions \ref{grothsp}, 
\ref{endpoint} and \ref{lat}, there exists an open dense subscheme 
$U \subseteq X$ such that the Newton polygon of $(\cE,\Psi)$ at $x$ 
is independent of $x$. So, 
if the Newton polygon 
of $(\cE,\Psi)$ at $\eta$ is not a straight line, $(\cE,\Psi)$ is not 
generically semistable by Proposition \ref{newfilprop}. 
So the Newton polygon 
of $(\cE,\Psi)$ at $\eta$ has pure slope $\mu(\cE)=0$. Then, 
by Corollary \ref{grothcor}, the Newton polygon 
of $(\cE,\Psi)$ at $x$ has pure slope $0$ at any point $x$ in 
$\ol{X}$. Therefore, to see that $(\cE,\Psi)$ is in 
$\FIsoc(\ol{X})^{\circ}$, it suffices to show that it is contained 
in $\FIsoc(\ol{X})$. To see this, we may work locally. 
So we can assume that $X, \ol{X}$ are affine and that 
there exists a smooth lift $\ol{\cX}$ of 
$\ol{X}$, a lift $\cZ = \bigcup_{i=1}^r \cZ_i$ 
of $Z$ which is a relative simple normal 
crossing divisor of $\ol{\cX}$ and a lift $F: (\ol{\cX},\cZ) \lra 
(\ol{\cX},\cZ)$ of the $q$-th power Frobenius endomorphism 
on $(\ol{X},Z)$. Moreover, we may assume that each $\cZ_i$ is defined 
as the zero locus of some element $t_i$ in $\Gamma(\ol{\cX},\cO_{\ol{\cX}})$. 
Then $(\cE,\Psi)$ naturally induces an object 
$(\cE_{\ol{\cX}},\Psi_{\ol{\cX}})$ in $\FVect(\ol{\cX}_{K})$ endowed with 
an integrable log connection $\nabla: \cE_{\ol{\cX}} \lra 
\cE_{\ol{\cX}} \otimes \Omega^1_{\ol{\cX}_{K}}(\log \cZ_K)$. 
Then, to see that $(\cE,\Psi)$ is contained 
in $\FIsoc(\ol{X})$, it suffices to see that the image of $\nabla$ is 
contained in 
$\cE_{\ol{\cX}} \otimes \Omega^1_{\ol{\cX}_{K}}$. 
By Proposition \ref{urlat}, there exists a unit-root $F$-lattice 
$(\cE_0,\Psi_0)$ with $(\cE_0,\Psi_0)_{\Q} := (\cE_{0,\Q}, \Psi_{0,\Q}) = 
(\cE_{\ol{\cX}}, \Psi_{\ol{\cX}})$. In the following, we identify 
$\cE_{\ol{\cX}}, \cE_0, \Omega^1_{\ol{\cX}_K}, 
\Omega^1_{\ol{\cX}_K}(\log \cZ_K)$ with the set of global sections of 
them. For $n \in \Z$, let us put 
$$ 
\Omega_n(\cE_0) := 
(\sum_{i=1}^r p^n \cE_0 \dlog t_i) + 
\cE_{\ol{\cX}} \otimes \Omega^1_{\ol{\cX}_K} \subseteq 
\cE_{\ol{\cX}} \otimes \Omega^1_{\ol{\cX}_K}(\dlog \cZ_K). $$
We prove the inclusion $\nabla(\cE_0) \subseteq \Omega_n(\cE_0) \,(n \in \Z)$ 
by induction on $n$. First, since $\cE_0$ is finitely generated as 
$\cO_{\ol{\cX}}$-module and we have $\bigcup_{n\in\Z} \Omega_n(\cE_0) = 
\cE_{\ol{\cX}} \otimes \Omega^1_{\ol{\cX}_K}(\dlog \cZ_K)$, we have 
$\nabla(\cE_0) \subseteq \Omega_n$ for sufficiently small $n$. 
Next, assume that we have $\nabla(\cE_0) \subseteq \Omega_n(\cE_0)$. 
Since we have 
$$ F^*\Omega_n(\cE_0) = 
(\sum_{i=1}^r p^{n+1} F^*\cE_0 \dlog t_i) + 
F^*\cE_{\ol{\cX}} \otimes \Omega^1_{\cX_K} =: \Omega_{n+1}(F^*\cE_0), $$
we have 
$F^*\nabla(F^*\cE_0) \subseteq \Omega_{n+1}(F^*\cE_0)$, 
where we denoted the pull-back by $F$ of $\nabla: \cE_{\ol{\cX}} \lra 
\cE_{\ol{\cX}} \otimes \Omega^1_{\ol{\cX}_{K}}(\log \cZ_K)$ by 
$F^*\nabla$. 
By sending this by $\Psi$, we obtain the inclusion 
$\nabla(\cE_0) \subseteq \Omega_{n+1}(\cE_0)$, as desired. 
Then we have  
$$ \nabla(\cE_0) \subseteq \bigcap_{n \in \Z} \Omega_n(\cE_0)  
= \cE_{\ol{\cX}} \otimes \Omega^1_{\ol{\cX}_K}, $$
and so we obtain the desired inclusion 
$\nabla(\cE_{\ol{\cX}}) \subseteq 
\cE_{\ol{\cX}} \otimes \Omega^1_{\ol{\cX}_{K}}$. 
So we are done. 
\end{proof}

\begin{remark}
Since the category $\FIsoc(\ol{X})$ is a full subcategory of 
$\FIsocl(\ol{X},Z)$, the equivalence \eqref{gss1} induces the 
equivalence 
\begin{equation*}
\FIsoc(\ol{X})^{\circ} \os{=}{\lra} \FIsoc(\ol{X})^{\gss,\mu=0}.  
\end{equation*}
So we have the interpretation of unit-rootness in terms of 
generic semistability. 
\end{remark}

Next we define the notion of generic semistability for 
certain stacky categories and give 
a stacky version of Proposition \ref{gssmain1}. 

\begin{definition}\label{defgss2}
Let $X \hra \ol{X}$ be an open immersion of connected smooth 
$k$-varieties with $\ol{X} \setminus X =: Z = \bigcup_{i=1}^r Z_i$ 
a simple normal crossing divisor $($each $Z_i$ being irreducible$)$. 
\begin{enumerate}
\item 
Let 
$\cG_X$ be the category of finite etale Galois covering of $X$. Let 
$\varphi_Y: 
Y \lra X$ be an object in $\cG_X$, let $G_Y := \Aut (Y/X)$ and let 
$\ol{Y}^{\sm}$ be the smooth locus of the normalization 
$\ol{Y}$ in $\ol{X}$ in $k(Y)$. $($Then we have the quotient stack 
$[\ol{Y}^{\sm}/G_Y]$ and the canonical log structure 
$M_{[\ol{Y}^{\sm}/G_Y]}$ which are defined in Section 2.1.$)$ 
Then an object 
$(\cE,\Psi)$ in 
$\FIsocl([\ol{Y}^{\sm}/G_Y], M_{[\ol{Y}^{\sm}/G_Y]})$
is called 
generically semistable $($gss$)$ if, for any dense open subscheme 
$U \subseteq X$, the image $(\cE|_U,\Psi|_U)$ of 
$(\cE,\Psi)$ by the restriction functor 
\begin{align*}
& 
\FIsocl([\ol{Y}^{\sm}/G_Y], M_{[\ol{Y}^{\sm}/G_Y]})
\lra \\ 
& \hspace{2cm}
\FIsocl([\varphi_Y^{-1}(U)/G_Y], 
M_{[\ol{Y}^{\sm}/G_Y]}|_{[\varphi_Y^{-1}(U)/G_Y]})
= \FIsoc(U)
\end{align*}
does not admit 
any non-trivial subobject $(\cE',\Psi')$ with $\mu(\cE') < \mu(\cE|_U)$. 
$($Note that, in the definition above, the quantity 
$\mu(\cE|_U)$ is independent of the choice of $U$. So we denote it 
simply by $\mu(\cE)$ in the sequel.$)$
We denote the full subcategory of 
$\FIsocl([\ol{Y}^{\sm}/G_Y], M_{[\ol{Y}^{\sm}/G_Y]})$
consisting of generically semistable objects 
$(\cE,\Psi)$ with $\mu(\cE)=0$ by 
$$
\FIsocl([\ol{Y}^{\sm}/G_Y], M_{[\ol{Y}^{\sm}/G_Y]})^{\gss,\mu=0}.$$
\item 
For $n \in \N$ with $(n,p)=1$, let 
$(\ol{X},Z)^{1/n}$ be the stack of $n$-th roots of $(\ol{X},Z)$. 
$($Then we have the canonical log structure 
$M_{(\ol{X},Z)^{1/n}}$, which is defined in Section 2.3.$)$ 
Then an object 
$(\cE,\Psi)$ in $
\FIsocl((\ol{X},Z)^{1/n}, M_{(\ol{X},Z)^{1/n}})$
is called 
generically semistable $($gss$)$ if, for any dense open subscheme 
$U \subseteq X$, the image $(\cE|_U,\Psi|_U)$ of 
$(\cE,\Psi)$ by the restriction functor 
\begin{align*}
& 
\FIsocl((\ol{X},Z)^{1/n}, M_{(\ol{X},Z)^{1/n}})
\lra \\ 
& \hspace{2cm}
\FIsocl(U \times_{\ol{X}} (\ol{X},Z)^{1/n}, 
M_{(\ol{X},Z)^{1/n}}|_{U \times_{\ol{X}} (\ol{X},Z)^{1/n}})
= \FIsoc(U)
\end{align*} 
does not admit 
any non-trivial subobject $(\cE',\Psi')$ with $\mu(\cE') < \mu(\cE|_U)$. 
$($Note that, in the definition above, the quantity 
$\mu(\cE|_U)$ is independent of the choice of $U$. So we denote it 
simply by $\mu(\cE)$ in the sequel.$)$
We denote the full subcategory of $
\FIsocl((\ol{X},Z)^{1/n}, M_{(\ol{X},Z)^{1/n}})$ consisting 
of generically semistable objects $(\cE,\Psi)$ with $\mu(\cE)=0$ by 
$
\FIsocl((\ol{X},Z)^{1/n}, M_{(\ol{X},Z)^{1/n}})^{\gss, \mu=0}$. 
\end{enumerate}

\end{definition}

\begin{proposition}\label{gssmain2}
Let the notations be as in Definition \ref{defgss2}. 
Then we have the canonical 
equivalences 
\begin{equation}\label{gss2w}
\varinjlim_{Y \ra X \in \cG_X} 
\FIsoc([\ol{Y}^{\sm}/G_Y])^{\circ} 
\os{=}{\lra} \hspace{-10pt}
\varinjlim_{Y \ra X \in \cG_X} 
\FIsocl([\ol{Y}^{\sm}/G_Y], M_{[\ol{Y}^{\sm}/G_Y]})^{\gss,\mu=0}, 
\end{equation}
\begin{equation}\label{gss2}
\varinjlim_{Y \ra X \in \cG_X^t} 
\FIsoc([\ol{Y}^{\sm}/G_Y])^{\circ} 
\os{=}{\lra} \hspace{-10pt}
\varinjlim_{Y \ra X \in \cG_X^t} 
\FIsocl([\ol{Y}^{\sm}/G_Y], M_{[\ol{Y}^{\sm}/G_Y]})^{\gss,\mu=0} 
\end{equation}
$($where $\cG_X^t$ denotes the category of finite etale Galois tame 
covering of $X$ $($tamely ramified along the discrete valuations 
$v_i\,(1 \leq i \leq r)$ corresponding to the generic point of 
$Z_i))$ and 
\begin{equation}\label{gss3}
\varinjlim_{(n,p)=1} 
\FIsoc((\ol{X},Z)^{1/n})^{\circ}
\os{=}{\lra}
\varinjlim_{(n,p)=1} 
\FIsocl((\ol{X},Z)^{1/n}, M_{(\ol{X},Z)^{1/n}})^{\gss,\mu=0}.
\end{equation}
\end{proposition}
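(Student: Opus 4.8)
The plan is to prove the three equivalences in parallel, reducing each to Proposition \ref{gssmain1} together with the strict \'etale and Kummer descent equivalences established in Section~2. The common mechanism is the following remark: every category occurring in Proposition \ref{gssmain2} is equipped, for each dense open subscheme $U \subseteq X$, with a restriction functor to $\FIsoc(U)$ --- for $[\ol{Y}^{\sm}/G_Y]$ one restricts to $[\varphi_Y^{-1}(U)/G_Y] \cong U$, and for $(\ol{X},Z)^{1/n}$ one restricts to $U \times_{\ol{X}} (\ol{X},Z)^{1/n} \cong U$, the log structure being trivial in both cases. First I would check that for a fixed object $(\cE,\Psi)$ the property ``$(\cE,\Psi)$ is generically semistable with $\mu(\cE)=0$'' is equivalent to ``the image of $(\cE,\Psi)$ in $\FIsoc(U)$ is unit-root'', for one and hence every dense open $U$: the identification $\FIsoc(U)^{\gss,\mu=0} = \FIsoc(U)^{\circ}$ is Proposition \ref{gssmain1} with empty boundary divisor, and independence of $U$ follows from the specialization statement of Proposition \ref{grothprop} (a convex polygon from $(0,0)$ to $(\dim\cE,0)$ lying on or above the corresponding straight segment must coincide with it), together with Proposition \ref{newfilprop}. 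Thus, up to canonical equivalence, the subcategories ${}^{\gss,\mu=0}$ are exactly the full subcategories of objects whose restriction to $\FIsoc(U)$ is unit-root.

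For \eqref{gss2w} and \eqref{gss2}, fix $Y \to X$ in $\cG_X$ (resp.\ $\cG_X^t$). By Example \ref{exam2} the boundary $\ol{Y}^{\sm} \setminus Y$ is a simple normal crossing divisor, and $[\ol{Y}^{\sm}/G_Y]$ admits the $2$-truncated simplicial resolution $\ol{Y}^{\sm}_{\bullet}$ with each $\ol{Y}^{\sm}_m$ smooth with simple normal crossing complement and all transition morphisms strict, so that $\FIsocl([\ol{Y}^{\sm}/G_Y],M_{[\ol{Y}^{\sm}/G_Y]}) = \FIsocl(\ol{Y}^{\sm}_{\bullet},M_{\ol{Y}^{\sm}_{\bullet}})$ and $\FIsoc([\ol{Y}^{\sm}/G_Y])^{\circ} = \FIsoc(\ol{Y}^{\sm}_{\bullet})^{\circ}$. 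Using Step~1, and the fact that unit-rootness is preserved and reflected by pullback along the finite \'etale map $\varphi_Y^{-1}(U) \to U$ and along the projections $\ol{Y}^{\sm}_m \to \ol{Y}^{\sm}$, one sees that an object of $\FIsocl([\ol{Y}^{\sm}/G_Y],M)$ lies in ${}^{\gss,\mu=0}$ if and only if its degree-$0$ term lies in $\FIsoc(\ol{Y}^{\sm})^{\circ}$; since the equivalence \eqref{gss1} is compatible with the strict transition morphisms of $\ol{Y}^{\sm}_{\bullet}$ (hence with the equivariant $G_Y$-structure and its descent isomorphisms), this upgrades to a canonical identification of ${}^{\gss,\mu=0}$ with $\FIsoc([\ol{Y}^{\sm}/G_Y])^{\circ}$. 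Functoriality of \eqref{gss1} makes these identifications compatible with the transition functors of Proposition \ref{wd}, and passing to the limit over $\cG_X$ (resp.\ $\cG_X^t$) gives \eqref{gss2w} (resp.\ \eqref{gss2}).

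For \eqref{gss3} I would argue identically with the bisimplicial resolution $\ol{X}_{\bullet\bullet}$ of $(\ol{X},Z)^{1/n}$ of Definition \ref{bisimp}, each $\ol{X}_{kl}$ being smooth with simple normal crossing boundary and the transition morphisms strict: Proposition \ref{gssmain1} applies to every $\ol{X}_{kl}$ and glues along these morphisms, and together with the descent equivalences \eqref{bisimpeq1log}, \eqref{bisimpeq3} and Step~1 this identifies $\FIsocl((\ol{X},Z)^{1/n},M_{(\ol{X},Z)^{1/n}})^{\gss,\mu=0}$ with $\FIsoc((\ol{X},Z)^{1/n})^{\circ}$. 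Taking the limit over $n$ prime to $p$ along the transition maps \eqref{traol} yields \eqref{gss3}.

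The main obstacle I expect is the descent step in the second and third paragraphs: verifying that ``generically semistable with $\mu=0$'' for an object on the quotient stack or the stack of roots is faithfully detected on the degree-zero term of the simplicial (resp.\ bisimplicial) resolution, and that this detection coincides precisely with the hypothesis of Proposition \ref{gssmain1}. This is where one must use that, by Propositions \ref{grothprop} and \ref{newfilprop}, the conjunction of ``generically semistable'' and ``$\mu=0$'' is equivalent to the Newton polygon being generically the straight line of slope~$0$ --- a condition visibly insensitive to \'etale localization and to restriction to a dense open, and preserved under pullback along the finite \'etale covers $\varphi_Y^{-1}(U)\to U$ and the finite Kummer covers underlying the bisimplicial resolution --- combined with the fact, established in Section~2, that the schemes appearing in these resolutions are smooth with simple normal crossing boundary, so that Proposition \ref{gssmain1} genuinely applies to each of them.
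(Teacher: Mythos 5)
Your proposal is correct and follows essentially the same route as the paper: reduce to Proposition \ref{gssmain1} by passing to the $2$-truncated simplicial resolution $\ol{Y}^{\sm}_{\b}$ of $[\ol{Y}^{\sm}/G_Y]$ (resp.\ the bisimplicial resolution $\ol{X}^{(n)}_{\b\b}$ of $(\ol{X},Z)^{1/n}$), apply the equivalence \eqref{gss1} levelwise, descend along the strict \'etale hypercoverings, and pass to the limit over $\cG_X$, $\cG_X^t$, resp.\ $n$. Your extra ``Step 1'' (characterizing gss with $\mu=0$ via the generic Newton polygon, hence its insensitivity to \'etale pullback and restriction to dense opens) only makes explicit a compatibility the paper leaves implicit, so no substantive difference.
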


\begin{proof}
For $m=0,1,2$, let $\ol{Y}^{\sm}_m$ be the $(m+1)$-fold fiber product of 
$\ol{Y}^{\sm}$ over $[\ol{Y}^{\sm}/G_Y]$ and denote the 
resulting $2$-truncated simplicial scheme by $\ol{Y}^{\sm}_{\b}$. 
Then we have the equivalences 
\begin{align*}
& \varinjlim_{Y \ra X \in \cG_X} 
\FIsoc([\ol{Y}^{\sm}/G_Y])^{\circ} 
\os{=}{\lra} \varinjlim_{Y \ra X \in \cG^t_X} 
\FIsoc(\ol{Y}^{\sm}_{\b})^{\circ} \\ 
& \os{\text{\eqref{gss1}}}{\lra}
\varinjlim_{Y \ra X \in \cG_X} 
\FIsocl(\ol{Y}^{\sm}_{\b}, M_{\ol{Y}^{\sm}_{\b}})^{\gss,\mu=0} \\
& \os{=}{\lra} 
\varinjlim_{Y \ra X \in \cG_X} 
\FIsocl([\ol{Y}^{\sm}/G_Y], M_{[\ol{Y}^{\sm}/G_Y]})^{\gss,\mu=0},
\end{align*}
where $\FIsocl(\ol{Y}^{\sm}_{\b}, M_{\ol{Y}^{\sm}_{\b}})^{\gss,\mu=0}$ 
denotes the full subcategory of 
$\FIsocl(\ol{Y}^{\sm}_{\b}, M_{\ol{Y}^{\sm}_{\b}})$ consisting of 
objects whose restriction to 
$\FIsocl(\ol{Y}^{\sm}_{m}, M_{\ol{Y}^{\sm}_{m}})$ are contained in 
$\FIsocl(\ol{Y}^{\sm}_{m}, M_{\ol{Y}^{\sm}_{m}})^{\gss,\mu=0}$ for 
$m=0,1,2$. So we have shown \eqref{gss2w}, and we can prove 
\eqref{gss2} exactly in the same way. 
Next, take a chart $(\ol{X}_0, \{t_i\}_{1 \leq i \leq r})$ of 
$(\ol{X},Z)$ in the sense of Section 2.3 and for $n \in \N$ with 
$(n,p)=1$, let 
$(\ol{X}_{\b\b}^{(n)}, M_{\ol{X}_{\b\b}^{(n)}})$ be the 
bisimplicial resultion of $(\ol{X},Z)^{1/n}$ associated to 
the chart $(\ol{X}_0, \{t_i\}_{1 \leq i \leq r})$. Then we have the 
equivalences 
\begin{align*}
& 
\varinjlim_{(n,p)=1} 
\FIsoc((\ol{X},Z)^{1/n})^{\circ}
\os{=}{\lra} 
\varinjlim_{(n,p)=1} 
\FIsoc(\ol{X}_{\b\b}^{(n)})^{\circ} \\ 
& \os{\text{\eqref{gss1}}}{\lra}
\varinjlim_{(n,p)=1} 
\FIsocl(\ol{X}^{(n)}_{\b\b}, M_{\ol{X}^{(n)}_{\b\b}})^{\gss,\mu=0} \\ 
& \os{=}{\lra} 
\varinjlim_{(n,p)=1} 
\FIsocl((\ol{X},Z)^{1/n}, M_{(\ol{X},Z)^{1/n}})^{\gss,\mu=0}. 
\end{align*}
So we are done. 
\end{proof}

Next we define the notion of generic semistability for 
adjusted parabolic log convergent $F$-isocrystals and give 
a parabolic version of Proposition \ref{gssmain1}. 

\begin{definition}\label{gssdef3}
Let $X \hra \ol{X}$ be an open immersion of connected smooth 
$k$-varieties with $\ol{X} \setminus X =: Z = \bigcup_{i=1}^r Z_i$ 
a simple normal crossing divisor $($each $Z_i$ being irreducible$)$. 
Then an object 
$((\cE_{\alpha})_{\alpha},\Psi)$ in 
$\PFIsoc(\ol{X},Z)_{\0}$
is called 
generically semistable $($gss$)$ if, for any dense open subscheme 
$U \subseteq X$, the image $(\cE|_U, \Psi|_U)$ of 
$((\cE_{\alpha})_{\alpha},\Psi)$ by the restriction functor 
$$ 
\PFIsoc(\ol{X},Z)_{\0}
\lra \PFIsoc(U,\emptyset)_{\0} = \FIsoc(U)
$$ 
does not admit 
any non-trivial subobject $(\cE',\Psi')$ with $\mu(\cE') < \mu(\cE|_U)$. 
$($Note that, in the definition above, the quantity 
$\mu(\cE|_U)$ is independent of the choice of $U$. So we denote it 
by $\mu((\cE_{\alpha})_{\alpha})$ in the sequel.$)$
In the following, we denote the full subcategory of 
$\PFIsoc(\ol{X},Z)_{\0}$ 
consisting of generically semistable objects 
$((\cE_{\alpha})_{\alpha},\Psi)$ with $\mu((\cE_{\alpha})_{\alpha})=0$ by 
$\PFIsoc(\ol{X},Z)_{\0}^{\gss,\mu=0}.$
\end{definition}

\begin{theorem}\label{gssmain3}
Let the notations be as in Definition \ref{gssdef3}. Then we have the 
canonical equivalence of categories 
\begin{equation}\label{gss6}
\PFIsoc(\ol{X},Z)^{\circ}_{\0\ss} \os{=}{\lra} 
\PFIsoc(\ol{X},Z)_{\0}^{\gss,\mu=0}.
\end{equation}
\end{theorem}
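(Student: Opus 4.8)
The plan is to deduce Theorem \ref{gssmain3} by combining the parabolic comparison equivalences of Section 3 (specifically Corollary \ref{repparcor}) with the stacky generic-semistability equivalence of Proposition \ref{gssmain2}, exploiting the fact that all the comparison functors constructed so far are compatible with restriction to open subschemes of $X$ and with the formation of $\mu$. More precisely, recall from Corollary \ref{repparcor} the equivalence \eqref{root-to-par-fss}, namely
\begin{equation*}
\varinjlim_{(n,p)=1} \FIsoc((\ol{X},Z)^{1/n}) \os{=}{\lra}
\PFIsoc(\ol{X},Z)_{\0\ss},
\end{equation*}
and the equivalence \eqref{root-to-par-urf} relating
$\varinjlim_{(n,p)=1} \FIsoc((\ol{X},Z)^{1/n})^{\circ}$ with $\PFIsoc(\ol{X},Z)^{\circ}_{\0\ss}$. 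The strategy is: first, observe that the essential image of the unit-root objects under \eqref{root-to-par-fss} is exactly $\PFIsoc(\ol{X},Z)^{\circ}_{\0\ss}$ (this is what \eqref{root-to-par-urf} says); second, show that the essential image of the generically semistable objects with $\mu=0$ on the stack-of-roots side is exactly $\PFIsoc(\ol{X},Z)^{\gss,\mu=0}_{\0}$; third, invoke \eqref{gss3} of Proposition \ref{gssmain2}, which identifies the unit-root objects on the stack-of-roots side with the generically semistable log objects with $\mu=0$. Chaining these three identifications yields \eqref{gss6}.

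First I would make precise the restriction-compatibility. For a dense open $U \subseteq X$, both sides of the parabolic-to-root comparison and of \eqref{root-to-par-fss} restrict compatibly to $\FIsoc(U)$: on the parabolic side the restriction is $(\cE_\alpha)_\alpha \mapsto j^\dagger\cE_\alpha|_U$ which factors through $\PFIsoc(U,\emptyset)_{\0} = \FIsoc(U)$ as in Definition \ref{gssdef3}, while on the root side it is the composition with the restriction $\FIsoc((\ol X,Z)^{1/n}) \to \FIsoc(U \times_{\ol X}(\ol X,Z)^{1/n}) = \FIsoc(U)$ as in Definition \ref{defgss2}(2). The commutativity of \eqref{commutat} (for $\Sigma=\0$, semisimple case) and the construction of \eqref{root-to-par-fss} show that these two restriction functors agree under the equivalence. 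Consequently, an object on the root side is generically semistable with $\mu=0$ if and only if its image in $\PFIsoc(\ol X,Z)_{\0}$ is: the definitions of $\gss$ and $\mu$ on both sides are phrased entirely in terms of these restrictions to $U$ and of subobjects in $\FIsoc(U)$, and the equivalence preserves subobjects and the invariant $\mu$ (since $\mu$ is computed after restricting to $U$, where both sides literally coincide). This gives the identification
\begin{equation*}
\varinjlim_{(n,p)=1} \FIsocl((\ol X,Z)^{1/n}, M_{(\ol X,Z)^{1/n}})^{\gss,\mu=0}
\os{=}{\lra}
\PFIsoc(\ol X,Z)_{\0}^{\gss,\mu=0}
\end{equation*}
once one notes that $\FIsocl((\ol X,Z)^{1/n},M_{(\ol X,Z)^{1/n}})_{\0\ss} = \FIsoc((\ol X,Z)^{1/n})$, which was already used in the proof of Corollary \ref{repparcor}, so the left-hand side above is the same as $\varinjlim_{(n,p)=1}\FIsoc((\ol X,Z)^{1/n})$ restricted to $\gss$, $\mu=0$ objects in the sense of Definition \ref{defgss2}(2).

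Then I would simply compose: by Proposition \ref{gssmain2}, equivalence \eqref{gss3}, the category $\varinjlim_{(n,p)=1}\FIsoc((\ol X,Z)^{1/n})^{\circ}$ is equivalent to $\varinjlim_{(n,p)=1}\FIsocl((\ol X,Z)^{1/n},M_{(\ol X,Z)^{1/n}})^{\gss,\mu=0}$; by the identification just discussed the latter is equivalent to $\PFIsoc(\ol X,Z)_{\0}^{\gss,\mu=0}$; and by \eqref{root-to-par-urf} of Corollary \ref{repparcor} the former is equivalent to $\PFIsoc(\ol X,Z)^{\circ}_{\0\ss}$. Splicing these gives \eqref{gss6}, and one should check that the resulting composite is the expected canonical functor, i.e.\ that it is induced by the inclusion $\PFIsoc(\ol X,Z)^{\circ}_{\0\ss} \hookrightarrow \PFIsoc(\ol X,Z)_{\0\ss} \to \PFIsoc(\ol X,Z)_{\0}$ landing in the $\gss$, $\mu=0$ subcategory; this follows by tracing through the commutative diagrams \eqref{commutat}, \eqref{flatt6} and the construction in Proposition \ref{gssmain2}.

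The main obstacle I anticipate is the bookkeeping around the claim that the notion of generic semistability and the invariant $\mu$ transport correctly under the chain of equivalences — in particular verifying that a subobject $(\cE',\Psi') \subseteq (\cE|_U,\Psi|_U)$ in $\FIsoc(U)$ with $\mu(\cE') < \mu(\cE|_U)$ on one side corresponds to such a subobject on the other side, which requires knowing that the restriction functors to $\FIsoc(U)$ on both sides are literally identified (not merely naturally isomorphic up to some twist) by the comparison equivalence. This is where the compatibility diagrams \eqref{commutat} and the explicit construction of \eqref{root-to-par-fss} via Theorem \ref{mainpar} and \eqref{step2} must be used carefully: the key point is that $j^\dagger$ of a parabolic object is independent of $\alpha$ and its restriction to $U$ is canonically the restriction of the corresponding overconvergent isocrystal, which matches the root-stack side by \eqref{composlog}. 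Once this identification of restriction functors is nailed down, the rest is a formal diagram chase, so I would devote most of the written proof to that verification and keep the concatenation of equivalences brief.
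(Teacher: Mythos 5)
Your overall strategy---splicing the parabolic comparison of Corollary \ref{repparcor} with the stacky generic-semistability statement of Proposition \ref{gssmain2}, after checking compatibility with restriction to dense opens---is the paper's route (the paper deduces the theorem immediately from Proposition \ref{gssmain2} together with \eqref{root-to-par-f} and \eqref{root-to-par-urf}). However, as written there is a genuine gap in your second step, caused by invoking the wrong equivalence from Corollary \ref{repparcor}. The ambient category of the target, $\PFIsoc(\ol{X},Z)_{\0}$, corresponds under \eqref{root-to-par-f} to the \emph{full log} category $\varinjlim_{(n,p)=1}\FIsocl((\ol{X},Z)^{1/n},M_{(\ol{X},Z)^{1/n}})$, whereas you recall only \eqref{root-to-par-fss}, whose target is the smaller category $\PFIsoc(\ol{X},Z)_{\0\ss}$. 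Your bridge between the two is the assertion that $\varinjlim\FIsocl((\ol{X},Z)^{1/n},M_{(\ol{X},Z)^{1/n}})^{\gss,\mu=0}$ coincides with the gss, $\mu=0$ part of $\varinjlim\FIsoc((\ol{X},Z)^{1/n})$ ``since $\FIsocl(\cdot)_{\0\ss}=\FIsoc(\cdot)$''. That is a non sequitur: a Frobenius structure forces the exponents of an object of $\FIsocl((\ol{X},Z)^{1/n},M_{(\ol{X},Z)^{1/n}})$ to lie in $\0$ (the argument \eqref{0-non-0}), but it does not force semisimple residues, so the full log category is only $\FIsocl(\cdot)_{\0}$, not $\FIsocl(\cdot)_{\0\ss}$. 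The objects with nilpotent residues are exactly those whose parabolic counterparts lie in $\PFIsoc(\ol{X},Z)_{\0}$ but not in $\PFIsoc(\ol{X},Z)_{\0\ss}$, and excluding them among gss, $\mu=0$ objects is the nontrivial content of the theorem. As the proposal stands you only identify $\PFIsoc(\ol{X},Z)^{\circ}_{\0\ss}$ with $\PFIsoc(\ol{X},Z)_{\0\ss}^{\gss,\mu=0}$, and essential surjectivity onto all of $\PFIsoc(\ol{X},Z)_{\0}^{\gss,\mu=0}$---equivalently, that an adjusted gss, $\mu=0$ parabolic $F$-isocrystal is automatically semisimply adjusted---is never proved.

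The repair is small and returns you to the paper's argument: use \eqref{root-to-par-f} in place of \eqref{root-to-par-fss}. Your restriction-compatibility discussion (which is indeed the point to verify, via \eqref{commutat}, \eqref{composlog} and the independence of $j^{\dagger}\cE_{\alpha}$ of $\alpha$) then yields $\varinjlim\FIsocl((\ol{X},Z)^{1/n},M_{(\ol{X},Z)^{1/n}})^{\gss,\mu=0}\cong\PFIsoc(\ol{X},Z)_{\0}^{\gss,\mu=0}$ honestly, since gss and $\mu$ are computed after restriction to a dense open $U$, where the two sides literally agree. Composing with \eqref{gss3} and \eqref{root-to-par-urf} gives \eqref{gss6}; the semisimplicity of residues is then an output of \eqref{gss3} (gss with $\mu=0$ forces unit-rootness on the stack of roots), rather than an input assumed at the outset.
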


\begin{proof}
This is an immediate consequence of the equivalence 
\eqref{gss2} and the equivalences \eqref{root-to-par-f}, 
\eqref{root-to-par-urf}. 
\end{proof}

As an immediate consequence 
of Propositions \ref{gssmain2}, \ref{gssmain3}, we have 
the following, which is a $p$-adic analogues of \eqref{eq1} and 
\eqref{eq2} and \eqref{eq4} which includes the notion of `stability': 

\begin{corollary}\label{stability1}
Let the notations be as in Definition \ref{defgss2}. Then we have 
the equivalences 
\begin{align*}
& \Rep_{K^{\sigma}}^{\fin}(\pi_1(X)) \os{=}{\lra} 
\varinjlim_{Y \ra X \in \cG_X} 
\FIsocl([\ol{Y}^{\sm}/G_Y], M_{[\ol{Y}^{\sm}/G_Y]})^{\gss,\mu=0}, \\ 
& \Rep_{K^{\sigma}}(\pi_1^t(X)) \os{=}{\lra} 
\varinjlim_{Y \ra X \in \cG^t_X} 
\FIsocl([\ol{Y}^{\sm}/G_Y], M_{[\ol{Y}^{\sm}/G_Y]})^{\gss,\mu=0}, \\ 
& \Rep_{K^{\sigma}}(\pi_1^t(X)) \os{=}{\lra} 
\varinjlim_{(n,p)=1} 
\FIsocl((\ol{X},Z)^{1/n}, M_{(\ol{X},Z)^{1/n}})^{\gss, \mu=0}, \\ 
& \Rep_{K^{\sigma}}(\pi_1^t(X)) \os{=}{\lra} 
\PFIsoc(\ol{X},Z)^{\gss,\mu=0}. 
\end{align*}
\end{corollary}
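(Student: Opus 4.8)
The plan is to obtain Corollary \ref{stability1} by simply splicing together the equivalences already established in the excerpt with the generic-semistability equivalences of Propositions \ref{gssmain2} and \ref{gssmain3}. Concretely, for the first assertion I would compose the equivalence \eqref{eqeq2} (which is \eqref{seq1}), namely
\[
\Rep^{\fin}_{K^{\sigma}}(\pi_1(X)) \os{=}{\lra} \varinjlim_{Y \ra X \in \cG_X} \FIsoc([\ol{Y}^{\sm}/G_Y])^{\circ},
\]
with the equivalence \eqref{gss2w} of Proposition \ref{gssmain2}. For the second assertion I would compose \eqref{eqeq2t} (which is \eqref{seq2}) with the equivalence \eqref{gss2}. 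For the third assertion I would compose the equivalence \eqref{thm2iso} of Theorem \ref{thm2} (which is \eqref{seq3}) with the equivalence \eqref{gss3}. For the fourth assertion I would compose the equivalence \eqref{rep-par} of Corollary \ref{repparcor} (which is \eqref{seq4}) with the equivalence \eqref{gss6} of Theorem \ref{gssmain3}.

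The order of the steps is thus: first recall that each of \eqref{seq1}, \eqref{seq2}, \eqref{seq3}, \eqref{seq4} has already been proven in the preceding sections (Theorems \ref{thm1}, \ref{cor1}, \ref{thm2} and Corollary \ref{repparcor}); second, invoke Proposition \ref{gssmain2}, whose proof in turn rests on Proposition \ref{gssmain1} and the bisimplicial/stacky descent machinery, to identify the unit-root categories on the stacky side with the generically semistable categories of slope-$0$ objects; third, invoke Theorem \ref{gssmain3}, which identifies $\PFIsoc(\ol{X},Z)^{\circ}_{\0\ss}$ with $\PFIsoc(\ol{X},Z)^{\gss,\mu=0}_{\0}$; fourth, compose. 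Since each ingredient is a genuine equivalence of categories and the compositions are literally the horizontal composites indicated by the diagrams \eqref{flatt3}, \eqref{flatt6} and the commutative squares attached to \eqref{gss1}, the resulting functors are equivalences, and essentially nothing remains to check.

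The only mild subtlety, and the step I would single out as the main obstacle, is bookkeeping: one must make sure that the notion of $\mu$ used in Definitions \ref{defgss2} and \ref{gssdef3} is the one for which Proposition \ref{grothprop} (constancy of $\mu_x(\cE)$) applies, so that ``$\mu=0$'' is well-defined, and that the restriction functors appearing in the definition of generic semistability (to dense open $U \subseteq X$) are compatible with the functors \eqref{eqeq3t}, \eqref{tmt1}, \eqref{par-to-d} and with the Crew equivalence \eqref{neweq}. All of this compatibility has in fact been arranged in the commutative diagrams of the earlier sections (e.g.\ \eqref{eqeq4t}, \eqref{commu1}, \eqref{commutat}), so the verification amounts to tracing an object through the chain and observing that its slope-$0$ unit-root property is preserved at each stage. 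I would therefore present the proof as a short composition argument, citing Propositions \ref{gssmain2}, \ref{gssmain3} and Theorems \ref{thm1}, \ref{cor1}, \ref{thm2}, Corollary \ref{repparcor}, with a one-line remark that the $\mu$-invariant is well-defined by Proposition \ref{grothprop} and that the compatibilities are those recorded in the cited diagrams. No new calculation is needed.

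\begin{proof}
This follows by composing the equivalences already established. For the first equivalence, compose the equivalence \eqref{eqeq2} of Theorem \ref{thm1} with the equivalence \eqref{gss2w} of Proposition \ref{gssmain2}. For the second, compose the equivalence \eqref{eqeq2t} of Theorem \ref{cor1} with the equivalence \eqref{gss2} of Proposition \ref{gssmain2}. For the third, compose the equivalence \eqref{thm2iso} of Theorem \ref{thm2} with the equivalence \eqref{gss3} of Proposition \ref{gssmain2}. For the fourth, compose the equivalence \eqref{rep-par} of Corollary \ref{repparcor} with the equivalence \eqref{gss6} of Theorem \ref{gssmain3}. (Here the $\mu$-invariant appearing in the right hand sides is well-defined by Proposition \ref{grothprop}, and the relevant compatibilities of restriction functors with the equivalences above are those recorded in the diagrams \eqref{eqeq4t}, \eqref{commu1}, \eqref{commutat} and in the construction of \eqref{neweq} and \eqref{gss1}.)
\end{proof}
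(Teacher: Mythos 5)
Your proposal is correct and is exactly the paper's argument: the paper states Corollary \ref{stability1} as an immediate consequence of Propositions \ref{gssmain2} and \ref{gssmain3}, i.e.\ one composes the previously established equivalences \eqref{eqeq2}, \eqref{eqeq2t}, \eqref{thm2iso}, \eqref{rep-par} with \eqref{gss2w}, \eqref{gss2}, \eqref{gss3}, \eqref{gss6}, just as you do. No gap; your added remark about the well-definedness of $\mu$ via Proposition \ref{grothprop} is harmless bookkeeping already built into Definitions \ref{defgss2} and \ref{gssdef3}.
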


Next we prove `$F$-lattice versions' of Propositions \ref{gssmain1}, 
\ref{gssmain2} and \ref{gssmain3}. 
Let $X$ be a connected 
smooth scheme separated of finite type over $k$ and 
assume that it is liftable to a $p$-adic formal scheme 
$\cX_{\circ}$ separated of finite type over $\Spf W(k)$ 
which is endowed with a lift $F_{\circ}: \cX_{\circ} \lra \cX_{\circ}$ 
of the $q$-th power Frobenius endomorphism on $X$ compatible with 
$(\sigma|_{W(k)})^*: \Spf W(k) \lra \Spf W(k)$. Let us put 
$\cX := \cX_{\circ} \otimes O_K$, $F := F_{\circ} \otimes \sigma^*: 
\cX \lra \cX$. Then an object $(\cE,\Psi)$ in 
$\FLatt(\cX)_{\Q}$ is said to be generically semistable (gss) 
if, for any open dense formal subscheme $\cU_{\circ} \lra \cX_{\circ}$, 
$(\cE|_{\cU}, \Psi|_{\cU})$ (where $\cU := \cU_{\circ} \otimes_{W(k)} O_K$) 
admits no non-trivial saturated subobject $(\cE',\Psi')$ with 
$\mu(\cE') < \mu(\cE)$. We denote the full subcategory of 
$\FLatt(\cX)_{\Q}$ consisting of generically semistable 
objects $(\cE,\Psi)$ with $\mu(\cE)=0$ by 
$(\FLatt(\cX)_{\Q})^{\gss,\mu=0}$. 
When $X$ is not necessarily connected, 
$(\FLatt(\cX)_{\Q})^{\gss,\mu=0}$ denotes the full subcategory of 
$\FLatt(\cX)_{\Q}$ which are generically semistable with $\mu=0$ 
on each connected component of $\cX$. Then 
the $F$-lattice version of Proposition \ref{gssmain1} 
is the following proposition (which is an immediate consequence of 
the results of Crew and Katz quoted in the beginning of this section): 

\begin{proposition}\label{gssmain2-1}
Let the notations be as above. Then we have the equivalence 
\begin{equation}\label{gss2-1}
\FLatt(\cX)^{\circ}_{\Q} = (\FLatt(\cX)_{\Q})^{\gss, \mu=0}
\end{equation}
\end{proposition}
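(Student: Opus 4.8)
The plan is to prove the two mutually inverse inclusions between the full subcategories $\FLatt(\cX)^{\circ}_{\Q}$ and $(\FLatt(\cX)_{\Q})^{\gss,\mu=0}$ of $\FLatt(\cX)_{\Q}$, working one connected component at a time so that we may assume $X$ (hence $\cX$) connected. First I would check that a unit-root object is generically semistable with $\mu=0$: if $(\cE,\Psi)$ is a unit-root $F$-lattice, then at the generic point $\eta$ its Newton polygon has pure slope $0$, so $\mu(\cE)=0$; moreover by Proposition \ref{endpoint} (applied on any dense open $\cU$) the value $\mu$ is constant, and any saturated subobject $(\cE',\Psi')$ of $(\cE,\Psi)|_{\cU}$ is again a sub-$F$-lattice whose slopes are bounded below by the slopes of $(\cE,\Psi)|_{\cU}$, which are all $0$; hence $\mu(\cE')\geq 0 = \mu(\cE)$, so no destabilizing subobject exists. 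This gives the inclusion $\FLatt(\cX)^{\circ}_{\Q}\subseteq(\FLatt(\cX)_{\Q})^{\gss,\mu=0}$, and it is visibly a full subcategory inclusion.

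For the reverse inclusion, I would take $(\cE,\Psi)$ in $\FLatt(\cX)_{\Q}$ generically semistable with $\mu(\cE)=0$ and show it is (isogenous to) a unit-root $F$-lattice. By Grothendieck's specialization theorem (Proposition \ref{grothsp}) together with the constancy of $\mu$ (Proposition \ref{endpoint}), there is a dense open formal subscheme $\cU$ of $\cX$ on which the Newton polygon of $(\cE,\Psi)$ at every point is the same polygon $P$, with $P$ having endpoint on the line of slope $\mu(\cE)=0$. If $P$ were not a straight line, then the generic Newton filtration theorem (Proposition \ref{newfil}) would produce, after shrinking $\cU$, a non-trivial saturated subobject $(\cE',\Psi')$ with $\mu(\cE')<\mu(\cE)$, contradicting generic semistability; hence $P$ is the straight line of slope $0$, i.e. the Newton polygon of $(\cE,\Psi)$ has pure slope $0$ at every point of $\cU$. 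To upgrade this to all of $\cX$ I would use Proposition \ref{grothprop} (or rather its proof/Corollary \ref{grothcor}, applied to the underlying $F$-vector bundle which extends over $\cX$): the Newton polygon at any point lies on or above the generic one and has the same endpoint, so purity of slope $0$ propagates to every point of $\cX$. Then Proposition \ref{urlat}, applied on an affine covering of $\cX$, yields a unit-root $F$-lattice $(\cE_0,\Psi_0)$ with $(\cE_0,\Psi_0)_{\Q}=(\cE,\Psi)$; the local choices glue by the full faithfulness of the isogeny functor $\FLatt^{\circ}\to\FLatt^{\circ}_{\Q}$ (the uniqueness of unit-root lattices up to the evident identifications), so $(\cE,\Psi)$ lies in $\FLatt(\cX)^{\circ}_{\Q}$.

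Since both categories are full subcategories of $\FLatt(\cX)_{\Q}$ and we have shown they have the same objects, the identity functor induces the asserted equivalence \eqref{gss2-1}, and it is clearly compatible with the formation of $\mu$ and with restriction to open formal subschemes. The main obstacle I anticipate is the globalization step: Propositions \ref{newfil}, \ref{urlat} and the extension results are stated for affine $\cX$ (or require passing to dense opens), so some care is needed to patch the pure-slope-$0$ conclusion and the resulting unit-root lattice across an affine cover of $\cX$ — this is exactly where one invokes purity of the Newton stratification (Proposition \ref{grothprop} / Corollary \ref{grothcor}) to see that the slopes are already pure $0$ everywhere, not merely generically, and the uniqueness of the unit-root lattice to see that the local lattices agree on overlaps. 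Everything else is a direct bookkeeping argument using the cited results of Grothendieck, Katz and Crew.
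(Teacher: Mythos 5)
Your argument is correct and follows essentially the same route as the paper's proof: the easy inclusion, then Propositions \ref{grothsp} and \ref{endpoint} on a dense open, Proposition \ref{newfil} to force the generic Newton polygon to be the straight line of slope $0$, the same two propositions again to propagate pure slope $0$ to every point of $\cX$, and Proposition \ref{urlat} to conclude. The only cosmetic differences are that in the propagation step you cite Proposition \ref{grothprop}/Corollary \ref{grothcor} (which are stated for log convergent $F$-isocrystals), whereas the paper simply reapplies Propositions \ref{grothsp} and \ref{endpoint} directly to the $F$-lattice --- which is exactly what your parenthetical argument does --- and that you add a remark on globalizing Proposition \ref{urlat} beyond the affine case, a point the paper's proof passes over in silence (note, though, that the gluing is not literally a matter of ``uniqueness of the unit-root lattice'', which fails).
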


\begin{proof}
It is easy to see that the left hand side is contained in the right hand 
side, and we may assume that $\cX$ is connected. 
Let $\eta$ be the generic point of $X$ and let us take 
an object $(\cE,\Psi)$ in $(\FLatt(\cX)_{\Q})^{\gss, \mu=0}$. 
Then, by Propositions \ref{grothsp} and \ref{endpoint}, 
there exists an dense open $\cU_{\circ} \lra \cX_{\circ}$ such that 
the Newton polygon of 
$(\cE|_{\cU}, \Psi|_{\cU})$ (where $\cU := \cU_{\circ} \otimes_{W(k)} O_K$) 
at any point is equal to that at $\eta$. Then, by Proposition 
\ref{newfil}, the generic semistability of $(\cE,\Psi)$ implies that 
the Newton polygon of $(\cE,\Psi)$ at $\eta$ has pure slope 
$\mu(\cE)=0$. Again by Propositions \ref{grothsp} and \ref{endpoint}, 
we see that the Newton polygon $(\cE,\Psi)$ at any point has 
pure slope $0$. Then, by Proposition \ref{urlat}, we see that 
$(\cE,\Psi)$ is contained in $\FLatt(\cX)^{\circ}_{\Q}$. So we are done. 
\end{proof}

Next we define the notion of generic semistability for 
$F$-lattices on stacks and give 
an $F$-lattice version of Theorem \ref{gssmain2}. \par 
Let the notations be 
as in Section 4, the paragraphs after 
\eqref{katzeq'}, before Theorem \ref{flatttt}. Then, for 
an object $Y \lra X$ in $\cG^t_X$, the ind fine log algebraic 
stack $([\ol{\cY}^{\sm}/G_Y], M_{[\ol{\cY}^{\sm}/G_Y]}) = 
\varinjlim_a ([\ol{\cY}^{\sm}_a/G_Y], M_{[\ol{\cY}^{\sm}_a/G_Y]})$, 
the endomorphism $F$ (lift of Frobenius) 
on it and 
the category 
$\FLatt([\ol{\cY}^{\sm}/G_Y])$ of $F$-lattices on 
$[\ol{\cY}^{\sm}/G_Y]$ are defined there. 
The ind algebraic stack 
$(\ol{\cX},\cZ)^{1/n} = \varinjlim_a (\ol{\cX},\cZ)^{1/n}_a$ endowed 
with the endomorphism $F$ (lift of Frobenius) and 
the category 
$\FLatt((\ol{\cX},\cZ)^{1/n})$ of $F$-lattices on 
$(\ol{\cX},\cZ)^{1/n}$ are also defined there. 
\par 
An object $(\cE,\Psi)$ in 
$\FLatt([\ol{\cY}^{\sm}/G_Y])_{\Q}$ 
(resp. $\FLatt((\ol{\cX},\cZ)^{1/n})_{\Q}$)
is called 
generically semistable (gss) if, for any dense open formal 
subscheme $\cU_{\circ} \subseteq \cX_{\circ}$, 
the image of $(\cE,\Psi)$ by the restriction functor 
\begin{align*}
&  \FLatt([\ol{\cY}^{\sm}/G_Y])_{\Q} \lra 
\FLatt(\cU \times_{\cX} [\ol{\cY}^{\sm}/G_Y])_{\Q} \os{=}{\lra} 
\FLatt(\cU)_{\Q} \\ 
& \text{(resp. } \,\,
\FLatt((\ol{\cX},\cZ)^{1/n})_{\Q} \lra 
\FLatt(\cU \times_{\cX}(\ol{\cX},\cZ)^{1/n})_{\Q} 
\os{=}{\lra} \FLatt(\cU)_{\Q} \,\,\text{)} 
\end{align*}
admits no non-trivial saturated subobject $(\cE',\Psi')$ with 
$\mu(\cE') < \mu(\cE|_{\cU})$. (Note that $\mu(\cE|_{\cU})$ does not 
depend on $\cE$. Hence we denote it by $\mu(\cE)$.) 
We denote the full subcategory of 
$\FLatt([\ol{\cY}^{\sm}/G_Y])_{\Q}$ 
(resp. $\FLatt((\ol{\cX},\cZ)^{1/n})_{\Q}$)
consisting of generically semistable 
objects $(\cE,\Psi)$ with $\mu(\cE)=0$ by 
$
\FLatt([\ol{\cY}^{\sm}/G_Y])_{\Q}^{\gss,\mu=0}$ 
(resp. $\FLatt((\ol{\cX},\cZ)^{1/n})_{\Q}^{\gss,\mu=0}$). 
Then we have the following proposition, which is 
the $F$-lattice version of Proposition \ref{gssmain2}: 

\begin{theorem}\label{gssmain2-2}
Let the notations be as above. 
Then we have the canonical 
equivalences 
\begin{equation}\label{gss2-2}
\varinjlim_{Y \ra X \in \cG^t_X} 
\FLatt([\ol{\cY}^{\sm}/G_Y])^{\circ}_{\Q} 
\os{=}{\lra} 
\varinjlim_{Y \ra X \in \cG^t_X} 
\FLatt([\ol{\cY}^{\sm}/G_Y])_{\Q}^{\gss,\mu=0}, 
\end{equation}
\begin{equation}\label{gss2-3}
\varinjlim_{(n,p)=1} 
\FLatt((\ol{\cX},\cZ)^{1/n})^{\circ}_{\Q}
\os{=}{\lra}
\varinjlim_{(n,p)=1} 
\FLatt((\ol{\cX},\cZ)^{1/n})_{\Q}^{\gss,\mu=0}. 
\end{equation}
\end{theorem}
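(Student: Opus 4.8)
The plan is to deduce Theorem \ref{gssmain2-2} from the ``$F$-lattice version'' of generic semistability (Proposition \ref{gssmain2-1}) in exactly the way Proposition \ref{gssmain2} was deduced from Proposition \ref{gssmain1}, namely by passing to a simplicial (resp.\ bisimplicial) atlas and checking that generic semistability is detected simplex-by-simplex. First I would observe that both sides of \eqref{gss2-2} and \eqref{gss2-3} are full subcategories of $\varinjlim_{Y} \FLatt([\ol{\cY}^{\sm}/G_Y])_{\Q}$ and $\varinjlim_{(n,p)=1}\FLatt((\ol{\cX},\cZ)^{1/n})_{\Q}$ respectively, so it suffices to prove the equalities of the corresponding full subcategories; and by Theorem \ref{flatttt}(1),(2) (the equivalences \eqref{flatt1}, \eqref{flatt7}, together with \eqref{flatt2}, \eqref{flatt5}) the unit-root sides are already identified with $\Rep_{O_K^{\sigma}}(\pi_1^t(X))_{\Q}$, so really what has to be shown is that an object on the left lying in the ``gss, $\mu=0$'' subcategory is automatically unit-root.

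The key step is a descent/atlas argument. For \eqref{gss2-2}, fix $Y\ra X$ in $\cG^t_X$ and take the $2$-truncated simplicial formal scheme $\ol{\cY}^{\sm}_{\b}$ with $\ol{\cY}^{\sm}_m$ the $(m+1)$-fold fiber product of $\ol{\cY}^{\sm}$ over $[\ol{\cY}^{\sm}/G_Y]$; each $\ol{\cY}^{\sm}_m$ is a smooth $p$-adic formal scheme over $\Spf O_K$ with a compatible lift of Frobenius, so Proposition \ref{gssmain2-1} applies to it. By étale descent, $\FLatt([\ol{\cY}^{\sm}/G_Y])_{\Q}\os{=}{\lra}\FLatt(\ol{\cY}^{\sm}_{\b})_{\Q}$ and $\FLatt([\ol{\cY}^{\sm}/G_Y])^{\circ}_{\Q}\os{=}{\lra}\FLatt(\ol{\cY}^{\sm}_{\b})^{\circ}_{\Q}$, and I would define $\FLatt(\ol{\cY}^{\sm}_{\b})_{\Q}^{\gss,\mu=0}$ to be the full subcategory of objects whose restriction to each $\ol{\cY}^{\sm}_m$ lies in $\FLatt(\ol{\cY}^{\sm}_m)_{\Q}^{\gss,\mu=0}$. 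The content then is: (i) an object $(\cE,\Psi)$ of $\FLatt([\ol{\cY}^{\sm}/G_Y])_{\Q}$ is gss with $\mu=0$ (in the sense defined above, via restriction along dense open $\cU_{\circ}\subseteq\cX_{\circ}$) if and only if its restriction to $\FLatt(\ol{\cY}^{\sm}_0)_{\Q}$ is gss with $\mu=0$ in the sense of Proposition \ref{gssmain2-1}; and then automatically the same holds on $\ol{\cY}^{\sm}_1,\ol{\cY}^{\sm}_2$; (ii) by Proposition \ref{gssmain2-1}, $\FLatt(\ol{\cY}^{\sm}_m)_{\Q}^{\gss,\mu=0}=\FLatt(\ol{\cY}^{\sm}_m)^{\circ}_{\Q}$, so the gss subcategory coincides with $\FLatt(\ol{\cY}^{\sm}_{\b})^{\circ}_{\Q}$; combining with the étale descent equivalences and passing to the limit over $\cG^t_X$ yields \eqref{gss2-2}. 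The argument for \eqref{gss2-3} is identical, replacing $\ol{\cY}^{\sm}_{\b}$ by the bisimplicial atlas $\ol{\cX}^{(n)}_{\b\b}$ coming from the bisimplicial resolution of $(\ol{\cX},\cZ)^{1/n}$ (formal version, as set up in the proof of Theorem \ref{flatttt}), whose terms $\ol{\cX}^{(n)}_{kl}$ are again smooth $p$-adic formal schemes with compatible Frobenius lifts, and again applying Proposition \ref{gssmain2-1} termwise together with étale descent for $\FLatt$ and $\FLatt^{\circ}$.

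The main obstacle I anticipate is precisely step (i): reconciling the two notions of ``generic semistability'' — the one tested by restricting along dense open formal subschemes $\cU_{\circ}$ of the base $\cX_{\circ}$ (used in the stacky definition) versus the one in Proposition \ref{gssmain2-1} tested on dense open subschemes of the atlas term $\ol{\cY}^{\sm}_0$ (resp.\ $\ol{\cX}^{(n)}_{00}$). Since $\ol{\cY}^{\sm}_0\ra \cX$ (resp.\ $\ol{\cX}^{(n)}_{00}\ra \cX$) is smooth and surjective, pulling back a dense open of $\cX_{\circ}$ gives a dense open of the atlas term, and conversely any dense open of the atlas term contains the pullback of a dense open of $\cX_{\circ}$ after further shrinking; moreover $\mu$ is unchanged under these smooth pullbacks by Proposition \ref{endpoint} (and a saturated subobject on $\ol{\cY}^{\sm}_0$ with $G_Y$-equivariant structure, or a saturated subobject descending from the \v{C}ech data, corresponds to a subobject over $\cU$ since $\FIsoc$ satisfies étale descent and saturatedness can be checked after the faithful exact restriction to $\FIsoc$). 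I would handle this by noting that a saturated subobject violating semistability over some $\cU$ pulls back to one over the atlas, and conversely a destabilizing saturated subobject over a dense open of the atlas, being unique as the first step of the generic Newton filtration of Proposition \ref{newfil}, is automatically stable under the descent data and hence descends; the equivariant/Galois version of this uniqueness is the one genuinely delicate point, but it is the same mechanism already used implicitly in Proposition \ref{gssmain2} via Proposition \ref{newfilprop}. Once step (i) is in place the rest is a routine limit and descent bookkeeping, parallel to the proof of Proposition \ref{gssmain2}.
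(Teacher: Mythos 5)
There is a genuine gap at your step (ii), and it sits exactly where the paper's proof does its real work. From ``each $\FLatt(\ol{\cY}^{\sm}_m)_{\Q}^{\gss,\mu=0}=\FLatt(\ol{\cY}^{\sm}_m)^{\circ}_{\Q}$'' you conclude ``so the gss subcategory coincides with $\FLatt(\ol{\cY}^{\sm}_{\b})^{\circ}_{\Q}$''. This ``so'' is not formal: an object of $\FLatt(\ol{\cY}^{\sm}_{\b})_{\Q}$ whose restriction to each level $\ol{\cY}^{\sm}_m$ is isogenous to a unit-root $F$-lattice need not a priori be isogenous, \emph{as a simplicial object}, to a simplicial unit-root $F$-lattice — one must rigidify the isogeny-level descent/equivariance data to integral unit-root lattices compatibly with all the face maps. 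The paper flags exactly this in the proof of Theorem \ref{flatttt}: the category of compatible families $\{\FLatt(\ol{\cY}^{\sm}_m)^{\circ}_{\Q}\}_{m=0,1,2}$ ``is not a priori equal to'' $\FLatt(\ol{\cY}^{\sm}_{\b})^{\circ}_{\Q}$, and the identification (in the limit over $\cG^t_X$) is the content of the equivalence \eqref{nu} (resp. \eqref{nunu} in the bisimplicial case), which is extracted from the chain of representation-theoretic equivalences \eqref{flatt1}, \eqref{flatt2}, i.e. ultimately from Katz's equivalence \eqref{katzeq'} and the identification with $\Rep_{O_K^{\sigma}}(\pi_1^t(X))$. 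The paper's proof of Theorem \ref{gssmain2-2} then consists of a commutative square whose right vertical arrow is an equivalence by termwise application of Proposition \ref{gssmain2-1}, whose top horizontal arrow is \eqref{nu} (an equivalence), and whose bottom horizontal arrow is fully faithful, from which the needed arrow is an equivalence. You cite Theorem \ref{flatttt} at the outset, but you never use it at the one place where it is indispensable; without \eqref{nu}/\eqref{nunu} (or an independent argument producing a compatible system of unit-root lattices, e.g. a lattice stable under the full tame fundamental group) your step (ii) does not go through.

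A secondary remark: the difficulty you single out as the main obstacle — reconciling the stacky gss condition (tested on dense opens of $\cX_{\circ}$) with the termwise condition on the atlas — is indeed a point the paper glosses when it marks the outer arrows of its diagram \eqref{tea} as equivalences, and your sketch for it (slope purity propagating via Propositions \ref{grothsp}, \ref{endpoint}, \ref{urlat}, or descent of the generic slope filtration) is reasonable, though note that Proposition \ref{newfil} as stated gives existence, not the uniqueness you invoke to descend the destabilizing subobject; the canonicity of the slope filtration is what one would actually cite. But this is the routine part; the missing rigidification of step (ii) is the essential one.
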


\begin{proof}
Let $\ol{\cY}^{\sm}_{\b}$ be as in the proof of 
Theorem \ref{flatttt}. Then we have the functors 
\begin{align}
& \varinjlim_{Y \ra X \in \cG^t_X} 
\FLatt([\ol{\cY}^{\sm}/G_Y])^{\circ}_{\Q} 
\os{=}{\lra} \varinjlim_{Y \ra X \in \cG^t_X} 
\FLatt(\ol{\cY}^{\sm}_{\b})^{\circ}_{\Q} \label{tea} \\ 
& \lra 
\varinjlim_{Y \ra X \in \cG^t_X} 
\FLatt(\ol{\cY}^{\sm}_{\b})_{\Q}^{\gss,\mu=0} \os{=}{\lra} 
\varinjlim_{Y \ra X \in \cG^t_X} 
\FLatt([\ol{Y}^{\sm}/G_Y])_{\Q}^{\gss,\mu=0}, \nonumber 
\end{align}
where $\FLatt(\ol{\cY}^{\sm}_{\b})_{\Q}^{\gss,\mu=0}$ denotes the 
full subcategory of 
 $\FLatt(\ol{\cY}^{\sm}_{\b})_{\Q}$ consisting of objects 
whose restriction to $\FLatt(\ol{\cY}^{\sm}_{m})_{\Q}$ are 
contained in $\FLatt(\ol{\cY}^{\sm}_{m})_{\Q}^{\gss,\mu=0}$ for 
$m=0,1,2$. 
Let us prove that the second arrow in the above diagram is 
an equivalence. 
Let us denote the category of compatible system of objects in 
$\FLatt(\ol{\cY}^{\sm}_{m})^{\circ}_{\Q} \, (m=0,1,2)$ 
(resp. $\FLatt(\ol{\cY}^{\sm}_{m})_{\Q}^{\gss,\mu=0} \,(m=0,1,2)$)
by $\{\FLatt(\ol{\cY}^{\sm}_{m})^{\circ}_{\Q}\}_{m=0,1,2}$ 
(resp. $\{\FLatt(\ol{\cY}^{\sm}_{m})_{\Q}^{\gss,\mu=0}\}_{m=0,1,2}$.) 
Then we have the following commutative diagram 
\begin{equation*}
\begin{CD}
\varinjlim_{Y \ra X \in \cG^t_X} 
\FLatt(\ol{\cY}^{\sm}_{\b})^{\circ}_{\Q} @>>>
\varinjlim_{Y \ra X \in \cG^t_X} 
\{\FLatt(\ol{\cY}^{\sm}_{m})^{\circ}_{\Q}\}_{m=0,1,2} \\
@VVV @VVV \\ 
\varinjlim_{Y \ra X \in \cG^t_X} 
\FLatt(\ol{\cY}^{\sm}_{\b})_{\Q}^{\gss,\mu=0} @>>>
\varinjlim_{Y \ra X \in \cG^t_X} 
\{\FLatt(\ol{\cY}^{\sm}_{m})_{\Q}^{\gss,\mu=0}\}_{m=0,1,2}, 
\end{CD}
\end{equation*}
where the horizontal arrows are natural fully faithful inclusions, 
the left vertical arrow is the one in the diagram \eqref{tea} and 
the right vertical arrow is defined as in the left vertical one. 
Then, Proposition \ref{gssmain2-1} implies that the left vertical 
arrow is an equivalence. On the other hand, by the proof of 
Theorem \ref{flatttt}, we see that the top horizontal arrow, 
which is equal to \eqref{nu}, is 
an equivalence. Hence the right vertical arrow is also an 
equivalence. So \eqref{tea} is an equivalence, as desired. \par 
Next, let 
$\ol{\cX}_{\b\b}^{(n)}$ be also as in the proof of Theorem 
\ref{flatttt}. 
Then we have the functors 
\begin{align*}
& 
\varinjlim_{(n,p)=1} 
\FLatt((\ol{\cX},\cZ)^{1/n})^{\circ}_{\Q}
\os{=}{\lra} 
\varinjlim_{(n,p)=1} 
\FLatt(\ol{\cX}^{(n)}_{\b\b})^{\circ}_{\Q} \\ 
& \lra
\varinjlim_{(n,p)=1} 
\FLatt(\ol{\cX}^{(n)}_{\b\b})^{\gss,\mu=0}_{\Q} \os{=}{\lra} 
\varinjlim_{(n,p)=1} 
\FLatt((\ol{\cX},
\cZ)^{1/n})^{\gss,\mu=0}_{\Q} 
\end{align*}
(where $\FLatt(\ol{\cX}^{(n)}_{\b\b})_{\Q}^{\gss,\mu=0}$ denotes the 
full subcategory of 
 $\FLatt(\ol{\cX}^{(n)}_{\b\b})_{\Q}$ consisting of objects 
whose restriction to $\FLatt(\ol{\cX}^{(n)}_{kl})_{\Q}$ are 
contained in $\FLatt(\ol{\cX}^{(n)}_{kl})_{\Q}^{\gss,\mu=0}$ for 
$k,l=0,1,2$), and we can prove that the second arrow is an equivalence 
in the same way as 
in the previous paragraph, using 
the equivalence \eqref{nunu} instead of \eqref{nu}. So we are done. 
\end{proof}

Next we define the notion of generic semistability for 
locally abelian parabolic $F$-lattices and 
prove an $F$-lattice version of Proposition \ref{gssmain3}. \par 
Let the notations be as in Definitions \ref{defp}, \ref{defla}. 
Then an object $((\cE_{\alpha})_{\alpha},(\Psi_{\alpha})_{\alpha})$ in 
$\PFLatt(\ol{\cX},\cZ)_{\Q}$
is called 
generically semistable (gss) if, for any dense open formal 
subscheme $\cU_{\circ} \subseteq \cX_{\circ}$, 
the image $(\cE|_{\cU}, \Psi|_{\cU})$ 
of $((\cE_{\alpha})_{\alpha},(\Psi_{\alpha})_{\alpha})$
by the restriction functor 
\begin{align*}
&  \PFLatt(\ol{\cX},\cZ)_{\Q} \lra 
\PFLatt(\cU \times_{\ol{\cX}} (\ol{\cX},\cZ))_{\Q} \os{=}{\lra} 
\FLatt(\cU)_{\Q} \\ 
\end{align*}
(where $\cU := \cU_{\circ} \otimes_{W(k)} O_K$) 
admits no non-trivial saturated subobject $(\cE',\Psi')$ with 
$\mu(\cE') < \mu(\cE|_{\cU})$. (Note that $\mu(\cE|_{\cU})$ does not 
depend on $\cE$. Hence we denote it by $\mu(\cE)$.) 
We denote the full subcategory of 
$\PFLatt(\ol{\cX},\cZ)_{\Q}$
consisting of generically semistable 
objects $(\cE,\Psi)$ with $\mu(\cE)=0$ by 
$\PFLatt(\ol{\cX},\cZ)_{\Q}^{\gss,\mu=0}$. 
Then we have the following proposition, which is 
the $F$-lattice version of Proposition \ref{gssmain3}: 

\begin{proposition}\label{gssmain2-3}
Let the notations be as above. Then we have the 
canonical equivalence of categories 
\begin{equation}\label{gss6}
\PFLatt(\ol{\cX},\cZ)^{\circ}_{\Q} \os{=}{\lra} 
\PFLatt(\ol{\cX},\cZ)^{\gss,\mu=0}_{\Q}.
\end{equation}
\end{proposition}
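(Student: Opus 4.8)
The plan is to deduce the proposition formally from equivalences already in hand, namely \eqref{flatt+} and \eqref{flatt+--} of Theorem \ref{flattthm} and the equivalence \eqref{gss2-3} of Theorem \ref{gssmain2-2}, together with one compatibility statement. First I would take the $\Q$-linearization of \eqref{flatt+--} to get an equivalence
$$ \varinjlim_{(n,p)=1}\FLatt((\ol{\cX},\cZ)^{1/n})_{\Q} \os{=}{\lra} \PFLatt(\ol{\cX},\cZ)_{\Q}, $$
and observe that, since \eqref{flatt+} and \eqref{flatt+--} are both induced by the same functor $\ba$ (see the proof of Theorem \ref{flattthm}), the square relating them to their unit-root subcategories commutes; hence the $\Q$-linearization of \eqref{flatt+} identifies $\varinjlim_{(n,p)=1}\FLatt((\ol{\cX},\cZ)^{\c}_{\Q}$ with $\PFLatt(\ol{\cX},\cZ)^{\c}_{\Q}$ as a full subcategory of the right-hand side above, compatibly with the corresponding full subcategory $\varinjlim_{(n,p)=1}\FLatt((\ol{\cX},\cZ)^{1/n})^{\c}_{\Q}$ on the left.

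The point to check is that this equivalence intertwines generic semistability and the invariant $\mu$ on the two sides. Recall that for a dense open formal subscheme $\cU_{\circ} \subseteq \cX_{\circ}$, with $\cU := \cU_{\circ} \otimes_{W(k)} O_K$, one has $\cU \times_{\ol{\cX}} (\ol{\cX},\cZ)^{1/n} = \cU$ and $\cU \times_{\ol{\cX}} (\ol{\cX},\cZ) = \cU$ (the log structure, hence the root stack and the parabolic structure, being trivial away from $\cZ$). Moreover the functor $\ba$ restricted to $\cU$ is the identity of $\FLatt(\cU)_{\Q}$, because $\ba(\cE)_{\alpha}|_{\cU} = \pi_*(\cE \otimes \cO_{\ol{\cX}}(\sum_i ([n\alpha_i]/n)\cZ_i))|_{\cU} = \cE|_{\cU}$ independently of $\alpha$, and the same holds $F$-equivariantly; likewise the transition maps $(\ol{\cX},\cZ)^{1/n'} \to (\ol{\cX},\cZ)^{1/n}$ restrict to the identity on $\cX$, so that the conditions ``gss'' and ``$\mu = 0$'' are well-defined on the colimit. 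Therefore the square
$$
\begin{CD}
\varinjlim_{(n,p)=1}\FLatt((\ol{\cX},\cZ)^{1/n})_{\Q} @>{\text{\eqref{flatt+--}}_{\Q}}>> \PFLatt(\ol{\cX},\cZ)_{\Q} \\
@VVV @VVV \\
\FLatt(\cU)_{\Q} @= \FLatt(\cU)_{\Q}
\end{CD}
$$
commutes for every such $\cU$, its vertical arrows being the restriction functors appearing in the two definitions of generic semistability. Since generic semistability and the value of $\mu$ are, by definition, conditions on these restrictions, the equivalence $\text{\eqref{flatt+--}}_{\Q}$ carries $\PFLatt(\ol{\cX},\cZ)^{\gss,\mu=0}_{\Q}$ onto $\varinjlim_{(n,p)=1}\FLatt((\ol{\cX},\cZ)^{1/n})_{\Q}^{\gss,\mu=0}$.

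Putting these together: by \eqref{gss2-3}, $\varinjlim_{(n,p)=1}\FLatt((\ol{\cX},\cZ)^{1/n})_{\Q}^{\gss,\mu=0} = \varinjlim_{(n,p)=1}\FLatt((\ol{\cX},\cZ)^{1/n})^{\c}_{\Q}$, which by the $\Q$-linearization of \eqref{flatt+} equals $\PFLatt(\ol{\cX},\cZ)^{\c}_{\Q}$; composing with the previous paragraph yields the equivalence \eqref{gss6}. The main obstacle is precisely the compatibility verification of the second paragraph: one must make sure that the coend functors $\ba$ and $\bb$ of the proof of Theorem \ref{flattthm} are local on $\ol{\cX}$, reduce to the identity over the locus where $\cZ$ is empty, and intertwine the (unit-root) $F$-lattice structures on the nose rather than up to isogeny. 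All of this is implicit in that proof, so the difficulty is bookkeeping rather than a new point; an alternative, more self-contained route would be to imitate the proof of Theorem \ref{gssmain2-2}, pulling everything back along a bisimplicial resolution $\ol{\cX}^{(n)}_{\b\b}$ and invoking Proposition \ref{gssmain2-1} directly, at the cost of more work.
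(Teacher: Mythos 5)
Your proposal is correct and follows essentially the same route as the paper: the paper's own proof simply notes that the equivalence \eqref{flatt+--} preserves generic semistability and the value of $\mu$ and then invokes \eqref{gss2-3}, which is exactly your argument with the compatibility check (locality of $\ba$, triviality away from $\cZ$, matching with \eqref{flatt+} on unit-root objects) spelled out explicitly rather than left implicit.
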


\begin{proof}
Note that the equivalence \ref{flatt+--} 
\begin{equation*}
\varinjlim_{(n,p)=1}\FLatt((\ol{\cX},\cZ)^{1/n}) \os{=}{\lra} 
\PFLatt(\ol{\cX},\cZ) 
\end{equation*}
preserves the generic 
semistabilities and the value of $\mu$. 
Then the desired equivalence 
follows from the equivalence \eqref{gss2-3}. 
\end{proof}

As an immediate consequence of 
Propositions \ref{gssmain2-2}, \ref{gssmain2-3}, we have 
the following, which is a $p$-adic analogues ($F$-lattice 
version) of \eqref{eq1} and 
\eqref{eq2} and \eqref{eq4} which includes the notion of stability: 

\begin{theorem}\label{stability2}
Let the notations be as above. Then we have 
the equivalences 
\begin{align*}
& \Rep_{K^{\sigma}}(\pi_1^t(X)) \os{=}{\lra} 
\varinjlim_{Y \ra X \in \cG^t_X} 
\FLatt([\ol{\cY}^{\sm}/G_Y])^{\gss,\mu=0}_{\Q}. \\ 
& \Rep_{K^{\sigma}}(\pi_1^t(X)) \os{=}{\lra} 
\varinjlim_{(n,p)=1} 
\FLatt((\ol{\cX},\cZ)^{1/n})^{\gss, \mu=0}_{\Q}. \\ 
& \Rep_{K^{\sigma}}(\pi_1^t(X)) \os{=}{\lra} 
\PFLatt(\ol{\cX},\cZ)^{\gss,\mu=0}_{\Q}. 
\end{align*}
\end{theorem}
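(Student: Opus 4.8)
The plan is to derive each of the three equivalences by composing an already-established equivalence relating $\Rep_{K^{\sigma}}(\pi_1^t(X))$ to a $\Q$-linearized limit of \emph{unit-root} $F$-lattices with the corresponding generic-semistability reinterpretation of unit-rootness proved in Theorems \ref{gssmain2-2} and \ref{gssmain2-3}. All the hard analytic input (Grothendieck specialization, the Katz--Crew results on Newton polygons, and the comparison with parabolic $F$-lattices) has already been absorbed into those theorems and into Theorems \ref{flatttt} and \ref{flattthm}, so the present proof is purely a matter of stringing equivalences together.

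Concretely, for the first equivalence I would $\Q$-linearize \eqref{flatt1}. Since any additive functor induces a functor on $\Q$-linearizations, and $\Rep_{O_K^{\sigma}}(\pi_1^t(X))_{\Q} \cong \Rep_{K^{\sigma}}(\pi_1^t(X))$ (the identification used in the diagram \eqref{flatt3}), this gives $\Rep_{K^{\sigma}}(\pi_1^t(X)) \os{=}{\lra} \varinjlim_{Y \ra X \in \cG^t_X} \FLatt([\ol{\cY}^{\sm}/G_Y])^{\circ}_{\Q}$; composing with the equivalence \eqref{gss2-2} of Theorem \ref{gssmain2-2} produces the first asserted equivalence. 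For the second, I would $\Q$-linearize \eqref{flatt7} to get $\Rep_{K^{\sigma}}(\pi_1^t(X)) \os{=}{\lra} \varinjlim_{(n,p)=1} \FLatt((\ol{\cX},\cZ)^{1/n})^{\circ}_{\Q}$ and then compose with \eqref{gss2-3} of Theorem \ref{gssmain2-2}. For the third, I would $\Q$-linearize the composite $\Rep_{O_K^{\sigma}}(\pi_1^t(X)) \os{\eqref{flatt7}}{\lra} \varinjlim_{(n,p)=1} \FLatt((\ol{\cX},\cZ)^{1/n})^{\c} \os{\eqref{flatt+}}{\lra} \PFLatt(\ol{\cX},\cZ)^{\c}$ of Theorem \ref{flattthm} to obtain $\Rep_{K^{\sigma}}(\pi_1^t(X)) \os{=}{\lra} \PFLatt(\ol{\cX},\cZ)^{\circ}_{\Q}$, and then compose with the equivalence \eqref{gss6} of Proposition \ref{gssmain2-3}.

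Since every constituent functor is an equivalence of categories, each composite is again an equivalence, and no further computation is required. There is no genuine obstacle beyond bookkeeping: the only point demanding care is that the subcategory cut out by ``generically semistable with $\mu = 0$'' on the right-hand sides is exactly the image of the unit-root subcategory under the comparison equivalences, and this is precisely the content of Theorems \ref{gssmain2-2} and \ref{gssmain2-3}. Hence the argument is formal, and I would present it simply as the three chains of equivalences displayed above.

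\begin{proof}
Combine the $\Q$-linearizations of \eqref{flatt1}, \eqref{flatt7}, and of the composite
$\text{\eqref{flatt+}} \circ \text{\eqref{flatt7}}$ in Theorem \ref{flattthm}
(using $\Rep_{O_K^{\sigma}}(\pi_1^t(X))_{\Q} \cong \Rep_{K^{\sigma}}(\pi_1^t(X))$,
as in \eqref{flatt3}) with the equivalences \eqref{gss2-2}, \eqref{gss2-3} of
Theorem \ref{gssmain2-2} and the equivalence \eqref{gss6} of Proposition \ref{gssmain2-3},
respectively. Each resulting composite is an equivalence of categories, which gives the
three claimed equivalences.
\end{proof}
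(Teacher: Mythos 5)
Your proposal is correct and is exactly the paper's intended argument: the paper states Theorem \ref{stability2} as an immediate consequence of Propositions \ref{gssmain2-2} and \ref{gssmain2-3}, obtained by rewriting the $\Q$-linearizations of \eqref{flatt1}, \eqref{flatt7} and of $\text{\eqref{flatt+}}\circ\text{\eqref{flatt7}}$ (using the identification $\Rep_{O_K^{\sigma}}(\pi_1^t(X))_{\Q}\cong\Rep_{K^{\sigma}}(\pi_1^t(X))$ from \eqref{flatt3}) via the generic-semistability reinterpretation of unit-rootness. Your composition of equivalences supplies precisely the bookkeeping the paper leaves implicit.
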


Roughly speaking, 
Theorem \ref{stability2} claims 
(when compared with Theorem \ref{stability1}) 
that we can forget the isocrystal structure (connection) 
in the categories on the right hand sides if we assume 
a strong liftability condition. But we have to put 
the lattice structure. Finally in this paper, we introduce 
the category of `$F$-vector bundles on certain rigid analytic stacks', 
`locally abelian 
parabolic $F$-vector bundles on certain log rigid analytic spaces' and 
prove a $p$-adic analogues ($F$-vector bundle version) 
of \eqref{eq1} and \eqref{eq2} and \eqref{eq4} in which 
neither isocrystal structure and lattice structure appear, 
in the case of curves satisfying a strong liftability condition. \par 
Let $X$ be a connected 
smooth scheme separated of finite type over $k$ and 
assume that it is liftable to a $p$-adic formal scheme 
$\cX_{\circ}$ separated of finite type over $\Spf W(k)$ 
which is endowed with a lift $F_{\circ}: \cX_{\circ} \lra \cX_{\circ}$ 
of the $q$-th power Frobenius endomorphism on $X$ compatible with 
$(\sigma|_{W(k)})^*: \Spf W(k) \lra \Spf W(k)$. Let us put 
$\cX := \cX_{\circ} \otimes O_K$, $F := F_{\circ} \otimes \sigma^*: 
\cX \lra \cX$. Then an object $(\cE,\Psi)$ in 
$\FVect(\cX_K)$ is said to be generically semistable (gss) 
if, for any open dense formal subscheme $\cU_{\circ} \lra \cX_{\circ}$, 
$(\cE|_{\cU_K}, \Psi|_{\cU_K})$ 
(where $\cU := \cU_{\circ} \otimes_{W(k)} O_K$) 
admits no non-trivial saturated subobject $(\cE',\Psi')$ with 
$\mu(\cE') < \mu(\cE)$, where a subobject $(\cE',\Psi')$ of 
$(\cE|_{\cU_K}, \Psi|_{\cU_K})$ is called saturated if 
the quotient $(\cE|_{\cU_K}/\cE', \ol{\Psi |_{\cU_K}})$ 
($\ol{\Psi |_{\cU_K}}$ is the morphism induced by $\Psi |_{\cU_K}$) 
is again an object in $\FVect(\cU_K)$. 
(Note that $\mu(\cE|_{\cU_K})$ does not 
depend on $\cE$. Hence we denote it by $\mu(\cE)$.) 
 We denote the full subcategory of 
$\FVect(\cX)_{\Q}$ consisting of generically semistable 
objects $(\cE,\Psi)$ with $\mu(\cE)=0$ by 
$\FVect(\cX_K)^{\gss,\mu=0}$. 
When $X$ is not necessarily connected, 
$\FVect(\cX_K)^{\gss,\mu=0}$ denotes the full subcategory 
consisting of objects which are generically semistable with $\mu=0$ 
on each connected component of $X$. Then we have the 
following proposition: 

\begin{proposition}\label{gssmain3-1}
Let the notations be as above and assume $\dim X=1$. 
Then we have the equivalence 
\begin{equation}\label{gss3-1}
\FLatt(\cX)^{\circ}_{\Q} \os{=}{\lra} \FVect(\cX_K)^{\gss, \mu=0}. 
\end{equation}
\end{proposition}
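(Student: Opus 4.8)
The plan is to prove Proposition~\ref{gssmain3-1} by exhibiting \eqref{gss3-1} as the canonical inclusion $\FLatt(\cX)^{\circ}_{\Q} \hra \FVect(\cX_K)^{\gss,\mu=0}$ and showing it is an equivalence. First I would check well-definedness: if $(\cE_0,\Psi_0)$ is a unit-root $F$-lattice on $\cX$, then for any perfect-field-valued point $x$ of $X$ the Newton polygon of $x^*(\cE_0,\Psi_0)$ has pure slope $0$, so $\mu(\cE_{0,\Q})=0$, and any $\FVect$-subobject of $(\cE_{0,\Q},\Psi_{0,\Q})|_{\cU_K}$ also has all slopes $\geq 0$ at every point (a saturated sub has slopes that are a sub-multiset of the slopes of the ambient object, all of which are $0$), hence $\mu$ of such a subobject is $0$, not $<0$. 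This gives generic semistability with $\mu=0$, so the inclusion lands in the right category. Full faithfulness is automatic since $\FLatt(\cX)^{\circ}_{\Q}$ is a full subcategory of $\FVect(\cX_K)$ by construction.

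The substance is essential surjectivity. I would take $(\cE,\Psi)\in\FVect(\cX_K)^{\gss,\mu=0}$ and aim to show its Newton polygon has pure slope $0$ at every point of $X$, so that Proposition~\ref{urlat} produces a unit-root $F$-lattice $(\cE_1,\Psi_1)$ on $\cX$ with $(\cE_1,\Psi_1)_{\Q}=(\cE,\Psi)$. Working locally we may assume $\cX_{\c}$ affine, so by Proposition~\ref{lat} and the hypothesis $\dim X=1$ we get $(\cE,\Psi)\in\FLatt(\cX)_{\Q}$ on all of $\cX$ (this is exactly where $\dim X=1$ is used, and where $\FLatt\hra\FVect$ is filled in globally rather than generically). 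Then by Propositions~\ref{grothsp} and \ref{endpoint} there is a dense open $\cU_{\c}\subseteq\cX_{\c}$ on which the Newton polygon is constant; if that constant polygon at the generic point $\eta$ had a break point, Proposition~\ref{newfil} would furnish, after possibly shrinking, a nontrivial saturated subobject $(\cE',\Psi')\in\FLatt(\cU)_{\Q}$ with $\mu(\cE')<\mu(\cE)=0$, contradicting generic semistability (the saturated $\FLatt$-subobject is in particular a saturated $\FVect$-subobject). Hence the Newton polygon at $\eta$ has pure slope $\mu(\cE)=0$, and by Propositions~\ref{grothsp}, \ref{endpoint} again — or by the constancy of $\mu$ together with Grothendieck specialization forcing purity everywhere — the Newton polygon at every point of $X$ has pure slope $0$. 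Applying Proposition~\ref{urlat} finishes the local argument, and a descent/gluing step (the unit-root $F$-lattice attached to a given $F$-isocrystal-with-lattice-structure is unique, so local constructions patch) globalizes it.

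The main obstacle I anticipate is the globalization/patching at the end: Proposition~\ref{urlat} is stated only for affine $\cX$, so one gets unit-root $F$-lattices locally, and one must check these glue to a global unit-root $F$-lattice on $\cX$ with the prescribed $\FVect$-object as its $\Q$-linearization. The key point making this work is that the forgetful passage from unit-root $F$-lattices to $\FVect(\cX_K)$ is fully faithful after $\Q$-linearization on the affine pieces (essentially the uniqueness statement implicit in Katz's and Crew's results, and the fact that a unit-root lattice inside a fixed $F$-isocrystal is determined by that isocrystal), so the locally constructed lattices agree on overlaps and descend; I would invoke faithfully flat descent for coherent sheaves on the formal scheme $\cX$ together with compatibility of the Frobenius structures. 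A secondary technical point is verifying that "saturated subobject in $\FVect$" and "saturated subobject in $\FLatt_{\Q}$" interact correctly so that the contradiction with Proposition~\ref{newfil} is legitimate — concretely, that the $\FLatt_{\Q}$-subobject produced by Proposition~\ref{newfil} really is an $\FVect(\cU_K)$-subobject with $\FVect$-quotient, which is immediate since $\FLatt_{\Q}\hra\FVect$ is exact and the quotient in Proposition~\ref{newfil} is again an $F$-lattice.
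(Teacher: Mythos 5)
Your proposal is correct and follows essentially the same route as the paper: the paper uses Proposition \ref{lat} (the $\dim X=1$ case) to see that $\FVect(\cX_K)^{\gss,\mu=0}$ is contained in $(\FLatt(\cX)_{\Q})^{\gss,\mu=0}$ and then simply cites Proposition \ref{gssmain2-1}, whose proof is exactly the slope argument (Propositions \ref{grothsp}, \ref{endpoint}, \ref{newfil}, \ref{urlat}) that you spell out inline. The globalization point you raise about Proposition \ref{urlat} being stated for affine $\cX$ is glossed over in the paper as well, so it is not a divergence from the printed argument.
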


\begin{proof}
By Proposition \ref{lat}, 
$\FVect(\cX_K)^{\gss, \mu=0}$ is contained in 
$(\FLatt(\cX)_{\Q})^{\gss, \mu=0}$ in the case of curves. 
Hence the proposition is reduced to Proposition \ref{gssmain2-1}. 
\end{proof}

Recall that, for a $p$-adic formal scheme $\cS$ separated of finite 
type over $\Spf O_K$, we have the canonical equivalence 
$\Coh(\cS)_{\Q} \simeq \Coh(\cS_K)$ of the $\Q$-linearization of 
the catgory of coherent $\cO_{\cS}$-modules and 
the category of coherent $\cO_{\cS_K}$-modules. 
With this in mind, we give the following 
definitions: Let the notations be as in Section 4, the 
paragraphs after \eqref{katzeq'}, before Definition \ref{flatttt}. 
Then, for an object $Y \lra X \in \cG^t_Y$, the ind stack 
$[\ol{\cY}^{\sm}/G_Y] = \varinjlim_a [\ol{\cY}^{\sm}_a/G_Y]$ 
and the endomorphism $F$ on it is 
defined. We define the category $\FVect([\ol{\cY}^{\sm}/G_Y]_K)$ 
of `$F$-vector bundles on $[\ol{\cY}^{\sm}/G_Y]_K$' as the 
category of pairs $(\cE,\Psi)$, where 
$\cE$ is an object in 
$\Coh([\ol{\cY}^{\sm}/G_Y])_{\Q}$ ($:=$ the $\Q$-linearization 
of the category of compatible families of coherent 
$\cO_{[\ol{\cY}^{\sm}_a/G_Y]}$-modules) such that, for any 
morphism 
$\cS \lra [\ol{\cY}^{\sm}/G_Y]$ 
from a $p$-adic formal scheme separated of finite type 
over $\Spf \cO_K$, 
$\cE|_{\cS} \in \Coh(\cO_{\cS})_{\Q} \simeq \Coh(\cO_{\cS_K})$ 
is a locally free $\cO_{\cS_K}$-module of finite rank, and 
$\Psi$ is an isomorphism $F^*\cE \os{=}{\lra} \cE$. 
(Attention: We only defined 
the category  
of $F$-vector bundles on $[\ol{\cY}^{\sm}/G_Y]_K$, not 
the rigid stack $[\ol{\cY}^{\sm}/G_Y]_K$ itself.) 
In the same way, 
we define the category 
$\FVect((\ol{\cX},\cZ)^{1/n}_K)$ of `$F$-vector bundles on 
$(\ol{\cX},\cZ)^{1/n}_K$' as the category 
of pairs $(\cE,\Psi)$, where 
$\cE$ is an object in 
$\Coh((\ol{\cX},\cZ)^{1/n})_{\Q}$ ($:=$ the $\Q$-linearization 
of the category of compatible families of coherent 
$\cO_{(\ol{\cX},\cZ)^{1/n}_a}$-modules) such 
that, for any 
morphism 
$\cS \lra (\ol{\cX},\cZ)^{1/n})$ 
from a $p$-adic formal scheme separated of finite type 
over $\Spf \cO_K$, 
$\cE|_{\cS} \in \Coh(\cO_{\cS})_{\Q} \simeq 
\Coh(\cO_{\cS_K})$ is locally free of finite rank, and 
$\Psi$ is an isomorphism $F^*\cE \os{=}{\lra} \cE$. \par 
An object $(\cE,\Psi)$ in 
$\FVect([\ol{\cY}^{\sm}/G_Y]_K)$ 
(resp. $\FVect((\ol{\cX},\cZ)^{1/n}_K)$)
is called 
generically semistable (gss) if, for any dense open formal 
subscheme $\cU_{\circ} \subseteq \cX_{\circ}$, 
the image  $(\cE |_{\cU_K},\Psi |_{\cU_K})$
of $(\cE,\Psi)$ by the restriction functor 
\begin{align*}
&  \FVect([\ol{\cY}^{\sm}/G_Y]_K) \lra 
\FVect((\cU \times_{\cX} [\ol{\cY}^{\sm}/G_Y])_K) \os{=}{\lra} 
\FVect(\cU_K)_{\Q} \\ 
& \text{(resp. } \,\,
\FVect((\ol{\cX},\cZ)^{1/n}_K) \lra 
\FVect((\cU \times_{\cX}(\ol{\cX},\cZ)^{1/n})_K) 
\os{=}{\lra} \FVect(\cU_K) \,\,\text{)} 
\end{align*}
admits no non-trivial saturated subobject $(\cE',\Psi')$ with 
$\mu(\cE') < \mu(\cE|_{\cU_K})$. (Note that $\mu(\cE|_{\cU_K})$ does not 
depend on $\cU$. Hence we denote it by $\mu(\cE)$.) 
We denote the full subcategory of 
$\FVect([\ol{\cY}^{\sm}/G_Y]_K)$ 
(resp. $\FVect((\ol{\cX},\cZ)^{1/n}_K)$)
consisting of generically semistable 
objects $(\cE,\Psi)$ with $\mu(\cE)=0$ by 
$
\FVect([\ol{\cY}^{\sm}/G_Y]_K)^{\gss,\mu=0}$ 
(resp. $\FVect((\ol{\cX},\cZ)^{1/n}_K)^{\gss,\mu=0}$). 
Then we have the following proposition: 

\begin{proposition}\label{gssmain3-2}
Let the notations be as above and assume that $\dim X=1$. 
Then we have the canonical 
equivalences 
\begin{equation}\label{gss3-2}
\varinjlim_{Y \ra X \in \cG^t_X} 
\FLatt([\ol{\cY}^{\sm}/G_Y])^{\circ}_{\Q} 
\os{=}{\lra} 
\varinjlim_{Y \ra X \in \cG^t_X} 
\FVect([\ol{\cY}^{\sm}/G_Y]_K)^{\gss,\mu=0}, 
\end{equation}
\begin{equation}\label{gss3-3}
\varinjlim_{(n,p)=1} 
\FLatt((\ol{\cX},\cZ)^{1/n})^{\circ}_{\Q}
\os{=}{\lra}
\varinjlim_{(n,p)=1} 
\FVect((\ol{\cX},\cZ)^{1/n}_K)^{\gss,\mu=0}. 
\end{equation}
\end{proposition}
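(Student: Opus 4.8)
The plan is to deduce Proposition \ref{gssmain3-2} from the results already established, in essentially the same way that Theorem \ref{gssmain2-2} (the $F$-lattice version on stacks) was deduced from Proposition \ref{gssmain2-1}, but now keeping track of generic semistability at the level of $F$-vector bundles rather than $F$-lattices. The key point that makes the curve hypothesis essential is Proposition \ref{lat}: when $\dim X = 1$ (so $\cX$ is a curve), any object of $\FVect(\cX_K)$ over an affine $\cX$ already lies in $\FLatt(\cX)_{\Q}$, with $\cU = \cX$. In particular, over any connected smooth affine curve lift, $\FVect(\cX_K) = \FLatt(\cX)_{\Q}$ (as full subcategories of the same ambient category after the identification $\Coh(\cX)_{\Q} \simeq \Coh(\cX_K)$), and the notions of saturated subobject, $\mu$, and generic semistability all agree on the two sides. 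This immediately upgrades Proposition \ref{gssmain3-1}, $\FLatt(\cX)^{\circ}_{\Q} \os{=}{\lra} \FVect(\cX_K)^{\gss,\mu=0}$, from a statement to be used rather than reproved.

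First I would record the simplicial descent set-up exactly as in the proof of Theorem \ref{gssmain2-2}: for $Y \ra X$ in $\cG^t_X$ let $\ol{\cY}^{\sm}_{\b}$ be the $2$-truncated simplicial formal scheme with $\ol{\cY}^{\sm}_m$ the $(m+1)$-fold fibre product of $\ol{\cY}^{\sm}$ over $[\ol{\cY}^{\sm}/G_Y]$, and note that each $\ol{\cY}^{\sm}_m$ is a smooth formal scheme of relative dimension $1$ over $\Spf O_K$ equipped with a compatible Frobenius lift. Then I would define $\FVect(\ol{\cY}^{\sm}_{\b,K})$ and its full subcategory $\FVect(\ol{\cY}^{\sm}_{\b,K})^{\gss,\mu=0}$ (objects whose restriction to each $\FVect(\ol{\cY}^{\sm}_{m,K})$ is generically semistable with $\mu=0$), and similarly $\{\FVect(\ol{\cY}^{\sm}_{m,K})^{\gss,\mu=0}\}_{m=0,1,2}$ for the category of compatible families. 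The functor \eqref{gss3-2} is then the composite
\begin{align*}
\varinjlim_{Y \ra X \in \cG^t_X} \FLatt([\ol{\cY}^{\sm}/G_Y])^{\circ}_{\Q}
& \os{=}{\lra} \varinjlim_{Y \ra X \in \cG^t_X} \FLatt(\ol{\cY}^{\sm}_{\b})^{\circ}_{\Q} \\
& \lra \varinjlim_{Y \ra X \in \cG^t_X} \FVect(\ol{\cY}^{\sm}_{\b,K})^{\gss,\mu=0} \os{=}{\lra} \varinjlim_{Y \ra X \in \cG^t_X} \FVect([\ol{\cY}^{\sm}/G_Y]_K)^{\gss,\mu=0},
\end{align*}
where the outer equivalences are etale descent (for $\FLatt^{\circ}$ on the left as in the proof of Theorem \ref{flatttt}, and for $\FVect$ on the right by the very definition of $\FVect$ on the ind-stack via pullback to formal schemes), and the middle arrow is what must be shown to be an equivalence.

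For the middle arrow I would argue exactly as in Theorem \ref{gssmain2-2}: form the commutative square whose top row is $\varinjlim \FLatt(\ol{\cY}^{\sm}_{\b})^{\circ}_{\Q} \to \varinjlim\{\FLatt(\ol{\cY}^{\sm}_{m})^{\circ}_{\Q}\}_{m}$, whose bottom row is $\varinjlim \FVect(\ol{\cY}^{\sm}_{\b,K})^{\gss,\mu=0} \to \varinjlim\{\FVect(\ol{\cY}^{\sm}_{m,K})^{\gss,\mu=0}\}_{m}$, and whose vertical arrows are the (componentwise, degreewise) applications of Proposition \ref{gssmain3-1}. The vertical arrows are equivalences by Proposition \ref{gssmain3-1} (applicable because each $\ol{\cY}^{\sm}_m$ is a curve lift); the top horizontal arrow is the equivalence \eqref{nu} from the proof of Theorem \ref{flatttt}; hence the bottom horizontal arrow is an equivalence, which is what we need after passing to the limit over $\cG^t_X$. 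I would then run the identical argument for the stack of roots, replacing $\ol{\cY}^{\sm}_{\b}$ by the bisimplicial resolution $\ol{\cX}^{(n)}_{\b\b}$ of $(\ol{X},Z)^{1/n}$ (again a curve lift in each bidegree), using the equivalence \eqref{nunu} in place of \eqref{nu}, to obtain \eqref{gss3-3}. The main obstacle — really the only nontrivial input beyond bookkeeping — is the comparison of generic semistability before and after the simplicial/bisimplicial resolution: one must check that an $F$-vector bundle on $\ol{\cY}^{\sm}_{\b,K}$ (resp.\ $\ol{\cX}^{(n)}_{\b\b,K}$) is generically semistable with $\mu=0$ in every simplicial degree if and only if the corresponding object downstairs on $[\ol{\cY}^{\sm}/G_Y]_K$ (resp.\ $(\ol{\cX},\cZ)^{1/n}_K$) is, and that $\mu$ is preserved; this follows because the structural morphisms $\ol{\cY}^{\sm}_{\b} \to [\ol{\cY}^{\sm}/G_Y]$ and $\ol{\cX}^{(n)}_{\b\b} \to (\ol{X},Z)^{1/n}$ are etale and surjective, so generic points and the restriction functors to dense open $\cU$ used to test (semi)stability match up, exactly as in Definition \ref{defgss2} and the proof of Proposition \ref{gssmain2}. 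Everything else is the same diagram chase already carried out in the proofs of Theorems \ref{flatttt} and \ref{gssmain2-2}.
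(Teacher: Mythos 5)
Your overall route is the same as the paper's: reduce to the ($2$-truncated) simplicial resolution $\ol{\cY}^{\sm}_{\b}$ (resp.\ the bisimplicial resolution $\ol{\cX}^{(n)}_{\b\b}$), apply Proposition \ref{gssmain3-1} levelwise, and invoke the equivalences \eqref{nu} and \eqref{nunu} from the proof of Theorem \ref{flatttt}. That part of your argument is fine, and the compatibility of generic semistability and of $\mu$ under the resolution, which you single out as the main obstacle, is indeed the easy part.

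The genuine gap is the step you dismiss as holding ``by the very definition of $\FVect$ on the ind-stack via pullback to formal schemes,'' namely the identification of $\FVect([\ol{\cY}^{\sm}/G_Y]_K)$ (resp.\ $\FVect((\ol{\cX},\cZ)^{1/n}_K)$) with the category of compatible families of $F$-vector bundles on the levels of the resolution. By definition, an object of $\FVect([\ol{\cY}^{\sm}/G_Y]_K)$ is a pair $(\cE,\Psi)$ with $\cE$ an object of $\Coh([\ol{\cY}^{\sm}/G_Y])_{\Q}$ satisfying a local-freeness condition after pullback; so what you need is that the restriction functor $\FVect([\ol{\cY}^{\sm}/G_Y]_K) \lra \FVect(\ol{\cY}^{\sm}_{\b,K})$ is an equivalence, which comes down to the equivalence $\Coh([\ol{\cY}^{\sm}/G_Y])_{\Q} \lra \{\Coh(\ol{\cY}^{\sm}_m)_{\Q}\}_{m=0,1,2}$. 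This is a statement that $\Q$-linearization commutes with (\'etale/faithfully flat) descent, and it is not formal: the analogous statement for $\FLatt^{\circ}_{\Q}$ is exactly the equivalence \eqref{nu}, which in the proof of Theorem \ref{flatttt} required a nontrivial detour through $\Sm_{O_K^{\sigma}}$ and the Katz/Crew equivalences. In the present situation the needed descent is proved in the paper by factoring as $\Coh([\ol{\cY}^{\sm}/G_Y])_{\Q} \lra \Coh(\ol{\cY}^{\sm}_{\b})_{\Q} \lra \{\Coh(\ol{\cY}^{\sm}_m)_{\Q}\}_{m}$, where the first arrow is the $\Q$-linearization of usual faithfully flat descent and the second is handled by an argument as in \cite[1.9]{ogus} (rescaling by a power of $p$ to turn an isogeny-level compatible family into a genuine one). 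Once you insert this argument (and its bisimplicial analogue for $(\ol{\cX},\cZ)^{1/n}_K$), your proof is complete and coincides with the paper's.
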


\begin{proof}
Let $\ol{\cY}^{\sm}_{\b}$ be as in the proof of 
Theorem \ref{flatttt}, and in the proof, we omit to write 
the superscript ${}^{\sm}$. (This is justified because we have 
$\ol{\cY} = \ol{\cY}^{\sm}$ in the case of curves.) 
Then we have the diagram 
\begin{align}
\varinjlim_{Y \ra X \in \cG^t_X} 
\FVect([\ol{\cY}/G_Y]_K)^{\gss,\mu=0} & \lra 
\varinjlim_{Y \ra X \in \cG^t_X} 
\FVect(\ol{\cY}_{\b})^{\gss,\mu=0} \label{1/28tea} \\ 
& {\lla} 
\varinjlim_{Y \ra X \in \cG^t_X} 
\{\FLatt(\ol{\cY}_m)_{\Q}^{\circ}\}_{m=0,1,2} \nonumber \\ 
& {\lla}
\varinjlim_{Y \ra X \in \cG^t_X} 
\FLatt(\ol{\cY}_{\b})^{\circ}_{\Q} \nonumber \\ 
& \os{=}{\lla} 
\FLatt([\ol{\cY}/G_Y])^{\circ}_{\Q}, \nonumber 
\end{align}
where $\FVect(\ol{\cY}_{\b})^{\gss,\mu=0}$ denotes the 
full subcategory of $\FVect(\ol{\cY}_{\b})$ consisting of 
objects whose restriction to $\FVect(\ol{\cY}_m)$ is contained in 
$\FVect(\ol{\cY}_{m})^{\gss,\mu=0}$ for each $m$. 
In the diagram \eqref{1/28tea}, the third arrow is an equivalence 
since we already proved it in the proof of Proposition \ref{gssmain2-2}, 
and the second arrow is an equivalence by Proposition \ref{gssmain3-1}. 
So, to show the equivalence 
\eqref{gss3-2}, it suffices to prove that the first arrow in 
\eqref{1/28tea} is an equivalence. Since it is easy to see 
that the condition `gss and $\mu=0$' is preserved by the natural functor 
\begin{equation}\label{1/282}
\FVect([\ol{\cY}/G_Y]_K) \lra \FVect(\ol{\cY}_{\b}), 
\end{equation}
it suffices to show that the functor \eqref{1/282} is an 
equivalence. 
Let $\{\Coh(\ol{\cY}_{m})_{\Q}\}_{m=0,1,2}$ be 
the category of compatible family of objects in 
$\Coh(\ol{\cY}_m)_{\Q} \,(m=0,1,2)$. 
Then, the functor \eqref{1/282} is 
induced by the functor 
\begin{equation}\label{1/283}
\Coh([\ol{\cY}/G_Y])_{\Q} \lra \{\Coh(\ol{\cY}_{m})_{\Q}\}_{m=0,1,2}, 
\end{equation}
as the locally free part of \eqref{1/283} with $F$-structure. 
Since local freeness for an object in $\Coh([\ol{\cY}/G_Y])_{\Q}$ 
can be checked in $\Coh(\ol{\cY}_{m})_{\Q}\,(m=0,1,2)$, it suffices 
to show the equivalence of the functor \eqref{1/283}. 
Note that it is factorized as 
\begin{equation*}
\Coh([\ol{\cY}/G_Y])_{\Q} \lra 
\Coh(\ol{\cY}_{\b})_{\Q} \lra 
\{\Coh(\ol{\cY}_{m})_{\Q}\}_{m=0,1,2}, 
\end{equation*}
in which the first arrow is an equivalence by usual faithfully 
flat descent. Moreover, we see by the same way as \cite[1.9]{ogus}
that the second arrow is also an equivalence. Hence we have shown the 
equivalence \eqref{gss3-2}. 
We can prove the equivalence \eqref{gss3-3} in the same way, 
by using $\ol{\cX}^{(n)}_{\b\b}$ in the proof of Theorem 
\ref{flatttt} instead of $\ol{\cY}_{\b}$. 
\end{proof}

Next, let the notations be as in Definition \ref{defp}. 
Then `a parabolic vector bundle on $(\ol{\cX},\cZ)_K$' is defined to 
be an inductive system $(\cE_{\alpha})_{\alpha \in \Z_{(p)}}$ 
of locally free $\cO_{\ol{\cX}_K}$-modules of finite rank satisfying 
the following conditions: 
\begin{enumerate}
\item[(a)]
For any $1 \leq i \leq r$, there is an isomorphism as inductive systems 
$$ 
((\cE_{\alpha+e_i})_{\alpha}, (\iota_{\alpha+e_i,\beta+e_i})_{\alpha,\beta}) 
\cong 
((\cE_{\alpha}(\cZ_{i,K}))_{\alpha}, 
(\iota_{\alpha\beta} \otimes \id)_{\alpha,\beta}) $$
via which the morphism 
$(\iota_{\alpha,\alpha+e_i})_{\alpha} : 
(\cE_{\alpha})_{\alpha} \lra (\cE_{\alpha+e_i})_{\alpha}$ is identified 
with the morphism 
$\id \otimes \iota_{0,e_i}^0: 
(\cE_{\alpha})_{\alpha} \lra (\cE_{\alpha}(\cZ_{i,K}))_{\alpha}$, 
where $\iota_{0,e_i}^0: \cO_{\ol{\cX}} \hra \cO_{\ol{\cX}}(\cZ_{i,K})$ 
denotes the natural inclusion. 
\item[(b)]
There exists a positive integer $n$ prime to $p$ which satisfies the following 
condition$:$ For any $\alpha = (\alpha_i)_i$, $\iota_{\alpha'\alpha}$ is 
an isomorphism if we put $\alpha' = ([n\alpha_i]/n)_i$. 
\end{enumerate}
A parabolic $F$-vector bundle on $(\ol{\cX},\cZ)_K$ is a 
pair $((\cE_{\alpha})_{\alpha}, (\Psi_{\alpha})_{\alpha})$
consisting of 
a parabolic vector bundle $(\cE_{\alpha})_{\alpha}$ on 
$(\ol{\cX},\cZ)_K$ endowed with morphisms 
$\Psi_{\alpha}: F^*\cE_{\alpha} \lra \cE_{q\alpha}$ in 
the category of $\cO_{\ol{\cX}_K}$-modules 
such that $\varinjlim_{\alpha} \Psi_{\alpha}: 
\varinjlim_{\alpha} F^*\cE_{\alpha} \lra 
\varinjlim_{\alpha} \cE_{q\alpha}$ is isomorphic as 
ind-objects. \par 
For 
$\alpha := (\alpha_i)_i \in \Z_{(p)}^r$, let 
$\cO_{\ol{\cX}_K}(\sum_i\alpha_i\cZ_{i,K}) := 
(\cO_{\ol{\cX}}(\sum_i\alpha_i\cZ_i)_{\beta})_{\beta}$ 
be the parabolic vector bundle on $(\ol{\cX},\cZ)_K$ defined by 
$\cO_{\ol{\cX}}(\sum_i\alpha_i\cZ_{i,K})_{\beta} := 
\cO_{\ol{\cX}}(\sum_i[\alpha_i + \beta_i] \cZ_{i,K})$ (where 
$\beta = (\beta_i)_i$), and we say that a 
parabolic $F$-vector bundle 
$((\cE_{\alpha})_{\alpha}, (\Psi_{\alpha})_{\alpha})$
on $(\ol{\cX},\cZ)_K$ 
is locally abelian if there exists 
some positive integer $n$ prime to $p$ and an admissible 
covering $\{\fX_{\lambda}\}_{\lambda}$ of $\cX_{n,K} := 
(\cX \otimes_{O_K} O_K[\mu_n])_K$ such that 
$(\cE_{\alpha})_{\alpha} |_{\fX_{\lambda}}$ has the form 
$\bigoplus_{j=1}^{\mu}\cO_{\ol{\cX}_K}(\sum_i \alpha_{ij} \cZ_{i,K}) 
|_{\fX_{\lambda}}$ for some $\alpha_{ij} \in \Z_{(p)}$, for each 
$\lambda$. 
We denote the category of locally abelian 
parabolic $F$-vector bundles on $(\ol{\cX},\cZ)_K$ by 
$\PFVect((\ol{\cX},\cZ)_K)$. \par 
An object $((\cE_{\alpha})_{\alpha},(\Psi_{\alpha})_{\alpha})$ in 
$\PFVect((\ol{\cX},\cZ)_K)$
is called 
generically semistable (gss) if, for any dense open formal 
subscheme $\cU_{\circ} \subseteq \cX_{\circ}$, 
the image $(\cE|_{\cU_K}, \Psi|_{\cU_K})$ 
of $((\cE_{\alpha})_{\alpha},(\Psi_{\alpha})_{\alpha})$
by the restriction functor 
\begin{align*}
&  \PFVect((\ol{\cX},\cZ)_K) \lra 
\PFVect((\cU \times_{\ol{\cX}} (\ol{\cX},\cZ))_K) \os{=}{\lra} 
\FVect(\cU_K) \\ 
\end{align*}
(where $\cU_K := (\cU_{\circ} \otimes_{W(k)} O_K)_K$) 
admits no non-trivial saturated subobject $(\cE',\Psi')$ with 
$\mu(\cE') < \mu(\cE|_{\cU_K})$. (Note that $\mu(\cE|_{\cU_K})$ does not 
depend on $\cE$. Hence we denote it by $\mu(\cE)$.) 
We denote the full subcategory of 
$\PFVect((\ol{\cX},\cZ)_K)$
consisting of generically semistable 
objects $(\cE,\Psi)$ with $\mu(\cE)=0$ by 
$\PFVect((\ol{\cX},\cZ)_K)^{\gss,\mu=0}$. 
Then we have the following proposition. 

\begin{proposition}\label{gssmain3-3}
Let the notations be as above and assume that $\dim X=1$. 
Then we have the canonical equivalence of categories 
\begin{equation}\label{gss6}
\PFLatt(\ol{\cX},\cZ)^{\circ}_{\Q} \os{=}{\lra} 
\PFVect((\ol{\cX},\cZ)_K)^{\gss,\mu=0}.
\end{equation}
\end{proposition}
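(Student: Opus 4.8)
The plan is to deduce the equivalence asserted in the Proposition from its $F$-lattice counterpart \eqref{gss3-3} in Proposition \ref{gssmain3-2} together with the parabolic-to-stack-of-roots equivalence \eqref{flatt+} of Theorem \ref{flattthm}, after promoting \eqref{flatt+} to a statement about $F$-vector bundles on the rigid generic fibre. Concretely, one wants the chain
\begin{align*}
\PFLatt(\ol{\cX},\cZ)^{\circ}_{\Q}
&\;\os{\text{\eqref{flatt+}}}{\lra}\;\varinjlim_{(n,p)=1}\FLatt\big((\ol{\cX},\cZ)^{1/n}\big)^{\circ}_{\Q} \\
&\;\os{\text{\eqref{gss3-3}}}{\lra}\;\varinjlim_{(n,p)=1}\FVect\big((\ol{\cX},\cZ)^{1/n}_K\big)^{\gss,\mu=0} \\
&\;\os{=}{\lra}\;\PFVect\big((\ol{\cX},\cZ)_K\big)^{\gss,\mu=0},
\end{align*}
where the first arrow is the $\Q$-linearization of \eqref{flatt+} (recall $\FLatt((\ol{\cX},\cZ)^{1/n})^{\c}=\FLatt((\ol{\cX},\cZ)^{1/n})^{\circ}$) and the second is \eqref{gss3-3}; so the only new ingredient is the last arrow, which I would obtain from an ``$F$-vector bundle'' analogue of Theorem \ref{flattthm}.

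For that analogue I would construct a canonical equivalence
\[
\ba:\varinjlim_{(n,p)=1}\FVect\big((\ol{\cX},\cZ)^{1/n}_K\big)\;\os{=}{\lra}\;\PFVect\big((\ol{\cX},\cZ)_K\big)
\]
by repeating, on the rigid generic fibre, the proof of \eqref{vec+} and \eqref{flatt+}. For $\cE$ in $\FVect((\ol{\cX},\cZ)^{1/n}_K)$ and $\alpha=(\alpha_i)_i\in\Z_{(p)}^r$ I would set $\ba(\cE)_\alpha:=\pi_*\big(\cE\otimes\cO_{\ol{\cX}_K}(\sum_i([n\alpha_i]/n)\cZ_{i,K})\big)$, where $\pi:(\ol{\cX},\cZ)^{1/n}_K\to\ol{\cX}_K$ is the projection; the local structure computation in the proof of Theorem \ref{flattthm} carries over verbatim with $\cO_{\ol{\cX}}$ replaced by $\cO_{\ol{\cX}_K}$ (a Zariski covering of $\ol{\cX}_n$ inducing an admissible covering of $\cX_{n,K}$), so $\ba(\cE)$ is a locally abelian parabolic vector bundle; full faithfulness and essential surjectivity are checked locally as there, with quasi-inverse $\bb$ given by Borne's coend construction. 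The $F$-structure is transported exactly as in \eqref{f-1}: one lets $\Psi_\alpha$ be the evident composite, notes (by the same $t^{\1}$-torsion-cokernel estimate) that $\Psi_\alpha$ is injective with cokernel killed by a power of $t^{\1}$, and builds the ind-object inverse $\Psi'$ of $\varinjlim_\alpha\Psi_\alpha$ just as in the proof of Theorem \ref{flattthm}.

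It then remains to check that the equivalence $\ba$ respects the decorations. It is compatible with the restriction functors to $\FVect(\cU_K)$ attached to a dense open formal subscheme $\cU_{\circ}\subseteq\cX_{\circ}$ --- on the source through $\cU\times_{\cX}(\ol{\cX},\cZ)^{1/n}$ and on the target through $\cU\times_{\ol{\cX}}(\ol{\cX},\cZ)$, which agree over $\cU$ where $\cZ$ is empty --- hence it preserves the invariant $\mu$ and the property of being generically semistable, and therefore restricts to an equivalence between the full subcategories cut out by ``$\gss$ and $\mu=0$''. This supplies the last arrow of the chain above and hence the Proposition. As in \eqref{gss3-3} and Proposition \ref{gssmain3-1}, the hypothesis $\dim X=1$ is used through Proposition \ref{lat} to ensure that the passage between $F$-lattices and $F$-vector bundles loses nothing after restriction, and to keep the admissible coverings entering the definition of $\PFVect$ tractable.

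The hard part is this $F$-vector bundle analogue of Theorem \ref{flattthm}: rebuilding the Iyer--Simpson/Borne coend formalism and, above all, transporting the $F$-structure on the rigid generic fibre instead of on the formal scheme. The coend algebra and faithfully flat descent are formal and should go through, but one has to be careful that local freeness is tested against all formal models, that the Zariski-local local-abelianness of Definition \ref{defla} matches the admissible-covering-local local-abelianness demanded in the definition of $\PFVect$, and that the cokernel of $\Psi_\alpha$ --- which on the generic fibre is supported on $\cZ_K$, where the $t_i$ are no longer units --- still yields a well-defined inverse ind-morphism $\Psi'$.
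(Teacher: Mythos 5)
Your proposal coincides with the paper's own proof: the paper likewise establishes the rigid-analytic analogue of Theorem \ref{flattthm}, namely the equivalence $\ba:\varinjlim_{(n,p)=1}\FVect((\ol{\cX},\cZ)^{1/n}_K)\os{=}{\lra}\PFVect((\ol{\cX},\cZ)_K)$ compatible with generic semistability and $\mu$, and then concludes by combining it with \eqref{gss3-3} and Theorem \ref{flattthm}, exactly as in your chain. The only point of care the paper adds is the one you flag at the end: the local trivializations are produced at rigid points $x$ of $\cU_K$ as loci $\fU_x=\{f_x\neq 0\}$ (rather than coming from a Zariski covering of the formal model), and one checks that the resulting covering of the generic fibre is admissible, which is what makes $\ba(\cE)$ locally abelian in the sense required by $\PFVect$.
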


\begin{proof}
We can prove the equivalence 
\begin{equation}\label{haru}
\ba: \varinjlim_{(n,p)=1}\FVect((\ol{\cX},\cZ)^{1/n}_K) \os{=}{\lra} 
\PFVect((\ol{\cX},\cZ)_K)  
\end{equation}
in the same way as Theorem \ref{flattthm} and the generic 
semistabilities and the values of $\mu$ coincide via the above 
equivalence. 
(In the proof of Theorem \ref{flattthm}, when we are given an object 
$\cE$ 
in $\Vect((\ol{\cX},\cZ)^{1/n})$, an open affine $\cU \subseteq \cX$ and 
a closed point $x$ of $\cU$, we constructed an open formal subscheme 
$\cU_x = \{f_x \not= 0\}$ of $\cU$ containing $x$ on which 
$\ba(\cE)$ has a simple shape. Here, for an object 
$\cE$ 
in $\Vect((\ol{\cX},\cZ)^{1/n}_K)$, an open affine $\cU \subseteq \cX$ and 
a point $x$ of $\cU_K$, we can construct in the same way 
an open rigid subspace 
$\fU_x = \{f_x \not= 0\}$ of $\cU_K$ containing $x$ on which 
$\ba(\cE)$ has a simple shape, and we see that the covering 
$\cX_K = \bigcup_{\cU: \text{affine}} \bigcup_{x \in \cU_K} \fU_x$ 
is an admissible covering.) 
Then the desired equivalence 
follows from this, \eqref{gss3-3} and Theorem \ref{flattthm}. 
\end{proof}

\begin{remark}
\begin{enumerate}
\item 
As we see from the above proof, 
the equivalence \eqref{haru} holds for $X$ of any dimension. 
\item 
When $\dim X=1$, any parabolic $F$-vector bundle on $(\ol{\cX},\cZ)_K$ 
is locally abelian. Hence we can drop the condition of locally abelianness 
from the definition of $\PFVect((\ol{\cX},\cZ)_K)$ when $\dim X=1$. 
\end{enumerate}
\end{remark}

As an immediate consequence of Propositions \ref{gssmain3-1}, 
\ref{gssmain3-2} and \ref{gssmain3-3}, we obtain the following, 
which is a $p$-adic analogues ($F$-vector bundle 
version) of \eqref{eq1} and 
\eqref{eq2} and \eqref{eq4} which includes the notion of stability and 
in which neither isocrystal structure and lattice structure appear, 
in the case of curves with strong liftability condition: 

\begin{corollary}\label{stability3}
Let the notations be as above and assume that $\dim X=1$. 
Then we have the equivalences 
\begin{align*}
& \Rep_{K^{\sigma}}(\pi_1^t(X)) \os{=}{\lra} 
\varinjlim_{Y \ra X \in \cG^t_X} 
\FVect([\ol{\cY}/G_Y]_K)^{\gss,\mu=0}. \\ 
& \Rep_{K^{\sigma}}(\pi_1^t(X)) \os{=}{\lra} 
\varinjlim_{(n,p)=1} 
\FVect((\ol{\cX},\cZ)^{1/n}_K)^{\gss, \mu=0}. \\ 
& \Rep_{K^{\sigma}}(\pi_1^t(X)) \os{=}{\lra} 
\PFVect((\ol{\cX},\cZ)_K)^{\gss,\mu=0}. 
\end{align*}
\end{corollary}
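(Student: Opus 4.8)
The plan is to obtain Corollary \ref{stability3} as a purely formal consequence of the equivalences already established in this section, exactly in the way that Corollary \ref{stability1} follows from Propositions \ref{gssmain2} and \ref{gssmain3}, and that Theorem \ref{stability2} follows from Theorems \ref{gssmain2-2} and \ref{gssmain2-3}. Concretely, the three asserted equivalences are each the composite of two equivalences: on one side the equivalence \eqref{flatt1}, or more precisely the version $\Rep_{O_K^{\sigma}}(\pi_1^t(X)) \os{=}{\lra} \varinjlim_{Y\ra X \in \cG^t_X} \FLatt([\ol{\cY}^{\sm}/G_Y])^{\c}$ combined with the $\Q$-linearization functor $\FLatt(-)^{\c} \lra \FLatt(-)^{\c}_{\Q}$ (which is an equivalence after identifying the target with $\Rep_{K^{\sigma}}$-coefficients, since $\pi_1^t(X)$ is the same whether one uses $O_K^{\sigma}$- or $K^{\sigma}$-coefficients up to isogeny, as recorded in the statement of Theorem \ref{flatttt} via the diagram \eqref{flatt3}); and on the other side the equivalences \eqref{gss3-2}, \eqref{gss3-3}, \eqref{gss6} of Propositions \ref{gssmain3-2} and \ref{gssmain3-3}, which identify $\varinjlim \FLatt([\ol{\cY}^{\sm}/G_Y])^{\c}_{\Q}$, $\varinjlim \FLatt((\ol{\cX},\cZ)^{1/n})^{\c}_{\Q}$ and $\PFLatt(\ol{\cX},\cZ)^{\c}_{\Q}$ respectively with the generically semistable $F$-vector bundle categories of slope $0$.

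First I would spell out the first equivalence in each line. For the stacky covering side, Theorem \ref{flatttt}(1) gives \eqref{flatt1}; tensoring with $\Q$ and using the commutative diagram \eqref{flatt3} identifies $\Rep_{K^{\sigma}}(\pi_1^t(X))$ with $\varinjlim_{Y\ra X \in \cG^t_X} \FLatt([\ol{\cY}^{\sm}/G_Y])^{\c}_{\Q}$. For the stack-of-roots side, one uses \eqref{flatt7} together with its $\Q$-linearization and the same identification of $\Rep_{O_K^{\sigma}}(\pi_1^t(X))_{\Q}$ with $\Rep_{K^{\sigma}}(\pi_1^t(X))$, giving $\Rep_{K^{\sigma}}(\pi_1^t(X)) \os{=}{\lra} \varinjlim_{(n,p)=1} \FLatt((\ol{\cX},\cZ)^{1/n})^{\c}_{\Q}$. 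For the parabolic side, one uses the equivalence \eqref{seq4-} of Theorem \ref{flattthm} (the composite of \eqref{flatt7} and \eqref{flatt+}), again $\Q$-linearized, giving $\Rep_{K^{\sigma}}(\pi_1^t(X)) \os{=}{\lra} \PFLatt(\ol{\cX},\cZ)^{\c}_{\Q}$. Composing these three equivalences with \eqref{gss3-2}, \eqref{gss3-3} and \eqref{gss6} respectively yields the three displayed equivalences of the corollary, under the standing hypothesis $\dim X = 1$ (which is needed to invoke Propositions \ref{gssmain3-1}, \ref{gssmain3-2}, \ref{gssmain3-3}, all of which are stated only for curves because Proposition \ref{lat} guarantees a global $F$-lattice structure only in dimension one).

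I do not expect any genuine obstacle here: the corollary is advertised as an immediate consequence, and all the hard analytic input — Grothendieck's specialization theorem, Katz's generic Newton filtration, Crew's results on $F$-lattices and unit-root $F$-isocrystals, and the descent and bisimplicial-resolution machinery of Sections 2 and 4 — has already been absorbed into Propositions \ref{gssmain3-1}--\ref{gssmain3-3} and Theorems \ref{flatttt}, \ref{flattthm}. The only point requiring a sentence of care is the passage between $O_K^{\sigma}$- and $K^{\sigma}$-coefficients: the functor $\FLatt(-)^{\c} \lra \FLatt(-)^{\c}_{\Q}$ is not itself an equivalence, but its source $\Rep_{O_K^{\sigma}}(\pi_1^t(X))$ maps to $\Rep_{K^{\sigma}}(\pi_1^t(X))$ after tensoring with $\Q$, and this tensored functor is an equivalence of categories; I would note this explicitly and point to the diagram \eqref{flatt3} where the same identification is already used. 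With that remark in place, the proof is a two-line composition of previously established equivalences, and I would present it as such: "This is an immediate consequence of \eqref{flatt1} (resp. \eqref{flatt7}, resp. \eqref{seq4-}), after $\Q$-linearization, combined with the equivalence \eqref{gss3-2} (resp. \eqref{gss3-3}, resp. \eqref{gss6}) of Proposition \ref{gssmain3-2} (resp. Proposition \ref{gssmain3-2}, resp. Proposition \ref{gssmain3-3})."
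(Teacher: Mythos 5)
Your proposal is correct and follows essentially the same route as the paper: Corollary \ref{stability3} is obtained there as an immediate consequence of Propositions \ref{gssmain3-1}--\ref{gssmain3-3}, composed with the already established equivalences $\Rep_{K^{\sigma}}(\pi_1^t(X)) \cong \varinjlim \FLatt([\ol{\cY}^{\sm}/G_Y])^{\c}_{\Q}$, $\cong \varinjlim \FLatt((\ol{\cX},\cZ)^{1/n})^{\c}_{\Q}$, $\cong \PFLatt(\ol{\cX},\cZ)^{\c}_{\Q}$ coming from Theorems \ref{flatttt} and \ref{flattthm} together with the identification $\Rep_{O_K^{\sigma}}(\pi_1^t(X))_{\Q} \cong \Rep_{K^{\sigma}}(\pi_1^t(X))$ used in the diagram \eqref{flatt3}, exactly as you describe (with $\ol{\cY}=\ol{\cY}^{\sm}$ since $\dim X=1$).
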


This is a precise form of the micro reciprocity law 
conjectued in \cite[48.6, 49.3]{weng}.

\end{document}